\crefname{equation}{}{} 
\numberwithin{equation}{section}
\newcommand{\secref}[1]{\ref{#1}. \nameref{#1}}
\tikzstyle{startstop} = [rectangle, rounded corners, minimum width=3cm, minimum height=1cm,text centered, draw=black, fill=red!20]
\tikzstyle{io} = [trapezium, trapezium left angle=70, trapezium right angle=110, minimum width=3cm, minimum height=1cm, text centered, draw=black, fill=blue!30]
\tikzstyle{process} = [rectangle, minimum width=3cm, minimum height=1cm, text centered, draw=black,fill=orange!25]
\tikzstyle{decision} = [diamond, minimum width=3cm, minimum height=1cm, text centered, draw=black, fill=green!30]
\tikzstyle{arrow} = [thick,->,>=stealth]
\newtheorem{theoI}{Theorem}
\newtheorem{theo}{Theorem}[section]
\newtheorem{coro}[theo]{Corollary}
\newtheorem{lemm}[theo]{Lemma}
\newtheorem{prop}[theo]{Proposition}
\theoremstyle{definition}
\newtheorem{defi}[theo]{Definition}
\newtheorem{exam}[theo]{Example}
\newtheorem{rema}[theo]{Remark}
\newtheorem{assumption}[theo]{Assumption}
\newtheorem{setting}[theo]{Setting}
\newtheorem{conv}[theo]{Convention}
\newcommand{\bD}{{\bf D}}
\newcommand{\B}{\mathbb B}
\newcommand{\D}{\mathbb D}
\newcommand{\E}{\mathbb E}
\newcommand{\bF}{\mathbb F}
\newcommand{\bN}{\mathbb N}
\newcommand{\p}{\mathbb P}
\newcommand{\Q}{\mathbb Q}
\newcommand{\R}{\mathbb R}
\newcommand{\Z}{\mathbb Z}
\newcommand{\1}{\mathbbm{1}}
\newcommand{\od}{\mathrm{d}}
\newcommand{\im}{\mathrm{i}}
\newcommand{\cB}{\mathcal B}
\newcommand{\F}{\mathcal F}
\newcommand{\cG}{\mathcal G}
\newcommand{\cI}{\mathcal I}
\newcommand{\cL}{\mathcal L}
\newcommand{\cN}{\mathcal N}
\newcommand{\cR}{{\mathcal R}}
\newcommand{\cS}{\mathcal S}
\newcommand{\cT}{\mathcal T}
\newcommand{\cU}{\mathcal U}
\newcommand{\ep}{\varepsilon}
\newcommand{\bmo}{\mathrm{bmo}}
\newcommand{\BMO}{\mathrm{BMO}}
\newcommand{\bvlocal}{{{\mathrm{BV} _{\rm loc}(\R)}}}
\newcommand{\lbvlocal}[1]{{{\mathrm{LBV}^{{#1}}_{\rm loc}(\R)}}}
\newcommand{\CL}{\mathrm{CL}}
\newcommand{\defX}{{\mathcal D}_X}
\newcommand{\dom}{{\rm Dom}}
\newcommand{\hoel}[1]{{\textnormal{H\"ol}}_{#1}(\R)}
\newcommand{\hoell}[2]{{\textnormal{H\"ol}}_{#1}^{#2}(\R)}
\newcommand{\hoelO}[1]{{\textnormal{H\"ol}}_{#1}(\R)}
\newcommand{\RO}{{{\mathbb R}\!\setminus\!\{0\}}}
\newcommand{\defg}{{\mathcal R_Y}}
\newcommand{\cSM}{{\mathcal {SM}}}
\newcommand{\supp}{\operatorname{supp}}
\newcommand{\timed}{{\mathbb I}}
\newcommand{\ce}[2]{\E^{#1}\hspace{-.1cm}\left [ #2 \right ]}
\newcommand{\opD}{{\overline{D}}} 
\newcommand{\e}{\operatorname{e}}
\newcommand{\intsq}[2]{[#1;#2]}
\newcommand{\intsqw}[3]{[#1;#3]^{#2}}
\newcommand{\losc}{\underline{{\rm Osc}}}
\newcommand{\uosc}{\overline{{\rm Osc}}}
\newcommand{\sign}{{\rm sign}}
\newcommand{\wt}{\widetilde}
\newcommand{\pd}{\partial}
\newcommand{\sptext}[3]{\hspace{#1 em}\mbox{#2}\hspace{#3 em}}
\def\({\left(}
\def\){\right)}
\newcommand{\ov}{\overline}
\newcommand{\un}{\underline}
\newcommand{\equa}{\begin{eqnarray*}}
\newcommand{\tion}{\end{eqnarray*}}
\begin{document}

\title[Riemann-Liouville type operators]{On Riemann-Liouville type operators, BMO, gradient estimates 
       in the L\'evy-It\^o space, and approximation}
	
\author{Stefan Geiss}
\address{Department of Mathematics and Statistics, P.O.Box 35, FI-40014 University of Jyv\"askyl\"a, Finland}
\email{stefan.geiss@jyu.fi}
\author{Tran-Thuan Nguyen}
\address{Fachrichtung Mathematik, Saarland University, P.O.Box 151150, D-66041 Saarbr\"ucken, Germany}
\email{nguyen@math.uni-sb.de}

\thanks{The authors were supported by the Project 298641 \textit{'Stochastic Analysis and Nonlinear Partial Differential Equations,
        Interactions and Applications'} of the Academy of Finland.}
\subjclass[2010]{
Primary 
26A33, 
46B70, 
60G44, 
60H10, 
60G51, 
Secondary
60G07, 
60Hxx. 
}
\keywords{Riemann-Liouville operator, 
          real interpolation, 
          bounded mean oscillation,
          diffusion process,
          L\'evy process, 
          gradient estimate,
          H\"older space}
\begin{abstract}
We discuss in a stochastic framework the interplay between 
Riemann-Liouville type operators applied to stochastic processes, 
real interpolation, 
bounded mean oscillation, 
and an approximation problem for stochastic integrals.
We provide upper and lower bounds for gradient processes on the L\'evy-It\^o space,
which arise in the special case of the Wiener space from the Feynman-Kac theory for parabolic PDEs. 
The upper bounds are formulated in terms of BMO-conditions on the fractional integrated 
gradient, the lower bounds in terms of oscillatory quantities.
On the general L\'evy-It\^o space we are concerned with gradient processes with values in a Hilbert space, where the regularity 
depends on the direction in this Hilbert space.
We discuss two applications of our techniques: on the Wiener space an approximation problem for H\"older functionals and on the 
L\'evy-It\^o space an orthogonal decomposition of H\"older functionals into a sum of stochastic integrals with a control 
of the corresponding integrands.
\end{abstract}
\today
\maketitle
\setcounter{tocdepth}{1}
\tableofcontents


\section{Introduction}
\label{sec:introduction}

This article investigates the interplay between Riemann-Liouville type operators applied 
to c\`adl\`ag processes,
gradient estimates for functionals on the Wiener and the L\'evy-It\^o space, 
bounded mean oscillation (BMO), 
approximation theory, and the real interpolation method 
from Banach space theory. To explain this, we assume  
a stochastic basis $(\Omega,\F,\p,(\F_t)_{t\in [0,T]})$  
with finite time-horizon $T>0$. There are various applications in which 
stochastic processes $L=(L_t)_{t\in [0,T)}$ appear that have a singularity when
$t\uparrow T$, for example in $L_p$ for some $p\in [1,\infty]$. 
Examples are processes obtained from (semi-linear) parabolic backward PDEs within the 
Feynman-Kac theory, where these processes occur as integrands in stochastic integral representations 
(see \eqref{eqn:intro:def_varphi_Z} and \cref{sec:application_brownian_case})
or in backward stochastic differential equations as so-called $Z$-processes. The same type of processes also appears
as integrands in stochastic integral representations, based on generalised Galtchouk-Kunita-Watanabe projections,
on the L\'evy-It\^o space (see the integrand $(\psi_{t-}^j)_{t\in [0,T)}$ in \eqref{eqn:statement:intro:decomposition_hoelder_functional}
and \cref{sec:GKW_projection_general}).
Because these processes are often obtained by differentials that get singular when approaching the time of maturity, 
i.e. when $t\uparrow T$, we also call general processes with a singularity sometimes (with an abuse of notation)
{\it gradient processes}.

\smallskip

If one analyzes these examples, then one realizes the following:
\smallskip
\begin{enumerate}
\item [--]{\sc Self-similarity:}
      There is a Markovian structure behind that generates  a self-similarity in the sense that, 
      given $a\in (0,T)$ and $A\in \F_a$ of positive measure, then $(L_t)_{t\in [a,T)}$ restricted 
      to $A$ has similar properties as $(L_t)_{t\in [0,T)}$ has. 
      If one is interested in good distributional estimates for $(L_t)_{t\in [0,T)}$, then 
      the theory of weighted BMO-spaces is worthy to be considered because 
      conditional $L_2$-estimates (that are often accessible)
      yield exponential estimates (relative to a weight) by  John-Nirenberg type theorems. \medskip
\item [--]{\sc Polynomial blow-up:} In the problems mentioned above the size of the singularity
      of $L$ increases polynomially in time with a rate $(T-t)^{-\alpha}$ for some $\alpha>0$.
      In particular, this occurs in the presence of
H\"older functionals as terminal conditions in backward problems.
\end{enumerate}
\smallskip

The above observations will lead to an interplay between {\it Riemann-Liouville type operators}, $\BMO$, 
and the {\it real interpolation method}. As one starting point to investigate these connections was an 
{\it approximation problem} for stochastic integrals, we deal eventually with four objects that interact with each other.
To make the $\BMO$-context more transparent, we first investigate in \cref{theo:intro_L2} 
the connections between Riemann-Liouville type operators, interpolation, and approximation in the $L_2$-setting
(see also \cref{subsec:intro:L2} below). 
The consideration of the $\BMO$-setting follows a methodology known for singular integral operators or martingale transforms:
$L_p$-$L_p$ estimates for, say $p\in (1,\infty)$, yield to $\BMO$-$L_\infty$ endpoint estimates when $p\uparrow \infty$.
We will prove $\BMO$-H\"older estimates with the H\"older spaces as natural $L_\infty$-endpoint of the Besov spaces. 
\medskip

Before we explain the contents of the article in detail we provide definitions needed for this.
\medskip

{\sc Riemann-Liouville type operators.} 
For $\alpha>0$ and a c\`adl\`ag function $K:[0,T)\to \R$ we define 
the Riemann-Liouville type operator $\cI^\alpha K := (\cI_t^\alpha K)_{t\in [0,T)}$ by
\begin{equation}\label{eqn:intro:defin_RL}
   \cI_t^\alpha K:= \frac{\alpha}{T^\alpha} \int_0^T (T-u)^{\alpha-1} K_{u\wedge t} \od u
   \sptext{1}{and}{1}
   \cI_t^0 K:=K_t.
\end{equation}
\cref{sec:RL-operators} deals with general results 
for $\cI^\alpha K$ when $K$ is deterministic $K$, and for
$\cI^\alpha L$ when $L$ is a martingale.
The results are used  later in the article, but can be applied in other contexts 
as well.
\medskip

{\sc Weighted bounded mean oscillation.}
For $p\in(0,\infty)$ and adapted c\`adl\`ag processes $Y$ and $\Phi$, where $\Phi$ is non-negative, we let
$\|Y\|_{\BMO_p^{\Phi}([0,T))}:=\inf c$, where the infimum is taken over all $c\in [0,\infty)$ such that,
for all $t\in [0,T)$ and stopping times $\sigma:\Omega \to [0,t]$ one has
\[ \ce{\F_\sigma}{|Y_t-Y_{\sigma-}|^p} \leqslant c^p \Phi_{\sigma}^p \mbox{ a.s.} \]
If $\Phi\equiv 1$, then we let $\BMO_p([0,T)):=\BMO_p^{\Phi}([0,T))$.
\medskip

{\sc Maximal oscillation.} In \cref{sec:oscillation_general} for a stochastic process  $L=(L_t)_{t\in [0,T)}$ 
and $t\in (0,T)$ we introduce and investigate the oscillatory quantity
\[ \losc_t(L) := \inf_{s \in [0,t)} \| L_t  - L_s \|_{L_\infty} \]
and call $L$ of {\it maximal oscillation} with constant $c\geqslant 1$ if for all $t\in (0,T)$ one has
\[ \losc_t(L) \geqslant \frac{1}{c} \| L_t - L_0 \|_{L_\infty}.\]

{\sc H\"older spaces.}
To describe the regularity of terminal conditions (see Theorems  
\ref{statement:intro:gradient_Wiener_space},
\ref{statement:intro:gradient_LI_space_upper_boud}, 
\ref{statement:intro:decomposition_hoelder_functional})
we use a two-parametric scale of H\"older spaces: 
if $C_b^0(\R)$ consists  of the bounded continuous functions and $\hoell{1}{0}$ of the Lipschitz functions, both defined on $\R$ 
and vanishing at zero, then we define the H\"older spaces 
\[ \hoelO{\theta,q}:= (C_b^0(\R),\hoell{1}{0})_{\theta,q} \sptext{1}{for}{1}(\theta,q)\in (0,1)\times [1,\infty] \]
by real interpolation.
For example, if  we set 
\[ h_{\theta,a}(x):=0 
   \sptext{.75}{if}{.75} x<0
   \sptext{1.5}{and}{1.5}
   h_{\theta,a}(x):= \theta \int_0^{1\wedge x} y^{\theta -1} \left ( \frac{A}{A-\log y} \right )^a  \od y 
   \sptext{.75}{if}{.75} x\geqslant 0 \]
for $\theta\in (0,1)$ and $0 \leqslant a< (1-\theta)A$, then we get (see \cref{sec:proof_example_Hoelder})
\[ h_{\theta,0}(x) = ( \max\{0,x\})^\theta \wedge 1 \in \hoelO{\theta,\infty}
   \sptext{1.3}{and}{1.3} 
   h_{\theta,a} \in \hoelO{\theta,q} 
   \sptext{.5}{for}{.5} a>1/q
   \sptext{.5}{and}{.5} q\in [1,\infty). \]
\smallskip

{\sc Oscillation along a time-net.}
Let $\cT$ be the set of all deterministic time-nets $\tau=\{t_i\}_{i=0}^n$, $0=t_0<\cdots<t_n=T$, 
$n\geqslant 1$. For $\theta \in (0,1]$ and $\tau\in \cT$ we define the mesh-size
\[ \| \tau\|_\theta := \sup_{i=1,\ldots,n} \frac{t_i-t_{i-1}}{(T-t_{i-1})^{1-\theta}}
   \sptext{1}{for}{1}
   \tau=\{t_i\}_{i=0}^n\in \cT.
\]

The mesh-size $\|\cdot \|_\theta$ assigns more and more weight to grid-points close to $T$
when $\theta$ gets smaller. This is desirable to handle singularities when $t\uparrow T$.
The pro-type of nets $\tau_n$ of cardinality $n+1$ such that $\|\tau_n \|_\theta\sim 1/n$
are defined as $\tau_n^\theta:= \left \{T-T(1-(i/n))^\frac{1}{\theta}\right \}_{i=0}^n$ and satisfy
\[ \| \tau_n^\theta \|_\theta \le \frac{T^\theta}{\theta n}. \]
These time-nets go back (at least) to  \cite{Kusuoka:01} and \cite{Ge02}.
We define (in \cref{defi:R_process} and \cref{ass:random_measures_simplified})
for a c\`adl\`ag process $L=(L_t)_{t\in [0,T)}$, a positive c\`adl\`ag process $\sigma=(\sigma_t)_{t\in [0,T]}$,
$a \in [0,T]$, and  $\tau=\{t_i\}_{i=0}^n\in \cT$,
\[ \intsq{L}{\tau}_a^\sigma := \int_0^a \left| L_u - \sum_{i=1}^n L_{t_{i-1}} \1_{(t_{i-1},t_i]}(u) 
   \right|^2 \sigma_u^2 \od u
   \sptext{1}{and}{1} 
   \intsq{L}{\tau}_a := \intsq{L}{\tau}_a^1
   \sptext{.75}{if}{.75} \sigma\equiv 1. \]
Now we divide this introduction into the following parts:

\begin{itemize}
\item[--] \cref{subsec:intro:L2}
      An $L_2$-result behind.
\item[--] \cref{subsec:intro:from_L2_to_BMO}
      From $L_2$ to $\BMO$ - general results.
\item [--] \cref{subsec:intro:gradients_Wiener}
      Gradient estimates on the Wiener space.
\item [--] \cref{subsec:intro:gradients_Ito}
      Gradient estimates on the L\'evy-It\^o space.
\item [--] \cref{subsec:intro:approximation}
      An application to an approximation problem on the Wiener space.
\item [--] \cref{subsec:intro:holder_Ito}
      An application to representations of H\"older functionals on the  L\'evy-It\^o space.
\end{itemize}

\subsection{An $L_2$-result behind}
\label{subsec:intro:L2}
Given $\theta \in (0,1)$ and  a c\`adl\`ag martingale $L=(L_t)_{t\in [0,T)} \subseteq L_2$ we prove 
in \cref{theo:intro_L2} that 
\smallskip
\begin{align}
  \sup_{\tau\in \cT}\frac{\E [L;\tau]_T}{\|\tau\|_\theta}<\infty 
& \Longleftrightarrow (\cI_t^\frac{1-\theta}{2} L)_{t\in [0,T)} \mbox{  is a martingale closable in } L_2 \label{eqn:1:intro:RL_vs_intsqf} \\
& \Longleftrightarrow "L \in \Big (L_2([0,T);L_2(\Omega)),L_\infty([0,T);L_2(\Omega)) \Big )_{\theta,2}".\label{eqn:2:intro:RL_vs_intsqf}
\end{align}
The LHS of \eqref{eqn:1:intro:RL_vs_intsqf} is the central term to quantify Riemann approximations of stochastic integrals 
and to obtain optimal approximation rates. In the BMO-context we explain this in \cref{subsec:intro:approximation} on the Wiener space,
the term is also used on the L\'evy-It\^o space in \cref{sec:application_levy_case}.
The RHS of \eqref{eqn:1:intro:RL_vs_intsqf} is interpreted as follows: After {\it smoothing the process $L$} by applying the 
Riemann-Liouville type operator of order $\frac{1-\theta}{2}$ we get an object closable in $L_2$.
So giving to such an element the smoothness $1$, we obtain that $L$ has a fractional smoothness of 
order $1-\frac{1-\theta}{2} = \frac{1+\theta}{2}$ in $L_2$.
The term \eqref{eqn:2:intro:RL_vs_intsqf}, whose exact meaning is explained before \cref{theo:intro_L2}, says that $L$ belongs to a space 
resulting from real interpolation between two end-points: the first end-point consists of martingales $L$ with $\int_0^T\|L_t\|_{L_2}^2 \od t < \infty$, 
which is a typical condition for integrands of stochastic integrals, the other end-point consists of martingales 
$L$ with $\sup_{t \in [0, T)} \|L\|_{L_2} <\infty$, i.e. martingales closable in $L_2$.

The functional $[L;\tau]$ relates (under regularity assumptions) to 
$[ \cI^{\frac{1-\theta}{2}}  L ]$, the quadratic variation process of 
$\cI^{\frac{1-\theta}{2}}  L$, in terms of a scaling limit
\[
    [ \cI^{\frac{1-\theta}{2}}  L ]_b = L_1\mbox{-}\lim_{n\to \infty} \frac{2\theta n}{T}  \intsq{L}{\tau_n^{\theta}}_b   
 \sptext{1}{for}{1} b\in [0,T), \]
see \cref{statement:pointwise_convergence_sq} (and \cref{statement:pointwise_convergence_sq_random} using randomized time-nets).
\smallskip

\subsection{From $L_2$ to $\BMO$ - general results}
\label{subsec:intro:from_L2_to_BMO}
First we find the correct form of the equivalence \eqref{eqn:1:intro:RL_vs_intsqf} in the BMO-setting as follows: 

\begin{theoI}[{\bf upper bounds}]
\label{statement:intro_sec:var_vs_curvature}
For a c\`adl\`ag martingale $L=(L_t)_{t\in [0,T)} \subseteq L_2$ and 
$\theta \in (0,1]$ the following is equivalent:
\begin{enumerate}[{\rm (1)}]
\item $\sup_{\tau\in\cT}\|\tau\|_\theta^{-1} \|\intsqw {L}{}{\tau}\|_{\BMO_1([0,T))}
                             <\infty$
\item For some $c>0$ and $\bmo_2([0,T))$ defined as in \cref{definition:weighted_bmo} one has
\[ \cI^{\frac{1-\theta}{2}}L - L_0\in \bmo_2([0,T)) 
   \sptext{1.5}{and}{1.}
   |L_a -L_s| \leqslant c
               \frac{(T -s)^\frac{\theta}{2}}
                      {(T -a)^\frac{1}{2}} 
        \sptext{0.5}{for}{.5}
        0\leqslant   s < a < T \mbox{ a.s.}  \]
\end{enumerate}
\end{theoI}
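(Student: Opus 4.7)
The plan is to prove $(1) \Leftrightarrow (2)$ separately for each direction. Set $\alpha := (1-\theta)/2$ and $M := \cI^\alpha L$; note that $M_0 = L_0$ and that $M$ is a martingale by Fubini. For the forward direction I would first establish the pointwise bound via a stopping-time argument. Fix $0 \leq s < T$ and take a grid $\tau_s \in \cT$ whose last two points are $s$ and $T$, padded on $[0, s]$ with $\tau_n^\theta$-type points so that $\|\tau_s\|_\theta \leq c(T-s)^\theta$. On this grid the preceding-grid-point map $\pi_{\tau_s}$ equals $s$ throughout $(s, T]$, so $[L;\tau_s]_t - [L;\tau_s]_{s-} = \int_s^t |L_u - L_s|^2 \od u$. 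For $\lambda > 0$ introduce
\[ \rho_\lambda := \inf\{a \in [s, T) : (T-a)|L_a - L_s|^2 > \lambda^2(T-s)^\theta\} \wedge T, \]
a stopping time by right-continuity. Applying the BMO-$1$ hypothesis of (1) with stopping time $\rho_\lambda \wedge t$ and sending $t \uparrow T$ by monotone convergence yields
\[ \ce{\F_{\rho_\lambda}}{\int_{\rho_\lambda}^T |L_u - L_s|^2 \od u} \leq c_0 (T-s)^\theta \sptext{.5}{on}{.5} \{\rho_\lambda < T\}. \]
Martingale orthogonality $\ce{\F_{\rho_\lambda}}{|L_u - L_s|^2} \geq |L_{\rho_\lambda} - L_s|^2$ for $u \geq \rho_\lambda$ bounds the left-hand side below by $(T-\rho_\lambda)|L_{\rho_\lambda} - L_s|^2$, while right-continuity of $L$ forces the reverse bound $\geq \lambda^2(T-s)^\theta$ on $\{\rho_\lambda < T\}$. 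For $\lambda^2 > c_0$ this gives $\p(\rho_\lambda < T) = 0$, whence the pointwise bound (after extending from rational to all $s$ via c\`adl\`ag). The $\bmo_2$-part follows from the scaling identity $[M]_b = L_1\text{-}\lim_n \frac{2\theta n}{T}[L;\tau_n^\theta]_b$ of \cref{statement:pointwise_convergence_sq}: the uniform BMO-$1$ bound on the rescaled processes $\frac{2\theta n}{T}[L;\tau_n^\theta]$ passes through the $L_1$-limit by lower semicontinuity, and Garsia's inequality $\|M\|_{\BMO_2}^2 \lesssim \|[M]\|_{\BMO_1}$ delivers $M - L_0 \in \bmo_2$.

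For the reverse direction I would work from the inversion identity
\[ (T-t)^\alpha (L_t - L_s) = T^\alpha (M_t - M_s) - \alpha \int_s^t (T-u)^{\alpha-1}(L_u - L_s) \od u, \]
valid for $0 \leq s \leq t < T$, obtained by splitting the defining integral of $\cI_t^\alpha L - \cI_s^\alpha L$ at $u = s$ and $u = t$. Solving for $L_t - L_s$, squaring, and substituting into $[L;\tau]_T = \sum_i \int_{t_{i-1}}^{t_i}|L_u - L_{t_{i-1}}|^2 \od u$ decomposes each interval's contribution into an $M$-increment piece (controlled in BMO-$1$ by the $\bmo_2$ norm of $M - L_0$) and an integral-remainder piece (controlled pointwise by the increment bound of (2)). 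Evaluating $\int_{t_{i-1}}^{t_i}(T-u)^{-2\alpha}\bigl((T-u)^{-\theta/2} - (T-t_{i-1})^{-\theta/2}\bigr)^2 \od u$ explicitly, together with $-2\alpha = \theta - 1$, produces the required $\|\tau\|_\theta$-scaling after summation over $i$.

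The principal obstacle, as I see it, is the pointwise-bound step in the forward direction. The two insights that make it work --- and that are not evident from the statement --- are (i) that a grid with $s$ as the last grid point before $T$ converts the BMO-$1$ inequality directly into a statement about $\int |L_u - L_s|^2 \od u$, and (ii) that defining $\rho_\lambda$ from the \emph{conjectured} pointwise bound traps $\{\rho_\lambda < T\}$ between two contradictory estimates produced by martingale orthogonality and the right-continuous definition of $\rho_\lambda$.
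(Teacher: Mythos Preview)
Your forward pointwise-bound argument is sound and uses the same martingale-orthogonality core as the paper, though the paper arrives there without stopping times: in \cref{statemtent:equivalence_simplified_setting_new}, inequality \eqref{eq:2:item:2:statemtent:equivalence_simplified_setting_new}, one applies the $\BMO_1$ hypothesis at the \emph{deterministic} time $a$ to a net with last interval $(s,T]$ and reads off $(T-a)|L_a-L_s|^2\leqslant c\,\|\tau\|_\theta=(T-s)^\theta$ directly from $\ce{\F_a}{\int_a^T|L_u-L_s|^2\,\od u}\geqslant (T-a)|L_a-L_s|^2$.

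There are two genuine gaps. First, your $\bmo_2$ step invokes \cref{statement:pointwise_convergence_sq}, whose hypotheses are that $L$ is \emph{continuous} with $[L]_b=\int_0^b D(u)\,\od u$; neither is assumed here. The paper handles this direction without any limit theorem via \cref{statement:lower_bounds_randommeasure}\eqref{item:2:statement:lower_bounds_randommeasure} (leading to \eqref{eq:1:item:2:statemtent:equivalence_simplified_setting_new}): two interlaced $\theta$-adapted nets containing $a$ are constructed so that, by \eqref{eqn:ass:random_measures}, the $\BMO_1$ quotient along one of them dominates $\ce{\F_a}{\int_{(a,T)}(T-u)^{1-\theta}\,\od\langle L\rangle_u}$, which equals the squared $\bmo_2$ increment of $\cI^{\frac{1-\theta}{2}}L$ by \eqref{eqn:1:statement:RL_for_martinagles}. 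Second, your reverse direction does not deliver the $\|\tau\|_\theta$ scaling. After your inversion the $M$-increment piece is $2T^{2\alpha}\sum_i\int_{t_{i-1}}^{t_i}(T-u)^{\theta-1}|M_u-M_{t_{i-1}}|^2\,\od u$; in conditional expectation at $\F_a$ this is at most $\frac{2T^{2\alpha}}{\theta}\ce{\F_a}{\int_a^T(T-v)^\theta\,\od[M]_v}$, an $O(1)$ quantity under (2) but not $O(\|\tau\|_\theta)$. The paper's route in \cref{statement:upper_bound_randommeasure} avoids the inversion and applies martingale orthogonality to $|L_u-L_{t_{i-1}}|^2$ \emph{directly}; Fubini then produces a factor $(t_i-v)$ inside the $\od\langle L\rangle_v$-integral, and it is the bound $(t_i-v)\leqslant\|\tau\|_\theta(T-v)^{1-\theta}$ from \eqref{eq:adapted_net_monotnicity} that yields the required scaling.
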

\smallskip
\cref{statement:intro_sec:var_vs_curvature} is part of \cref{equivalence_martingale},
where we take $M_t:= L_t-L_0$, $\sigma \equiv 1$, and $\Phi\equiv 1$, and 
which is formulated (and needed)
for the more general weighted setting. Opposite to \cref{statement:intro_sec:var_vs_curvature}, in \cref{thm:general_lower_bound}
(note that \eqref{eq:assum:thm:general_lower_bound} holds when $L$ is a martingale) we will verify:
\medskip

\begin{theoI}[{\bf lower bounds}]
\label{statement:intro_sec:lower_osciallation}
Assume $\theta \in (0,1]$, a c\`adl\`ag martingale 
$L=(L_t)_{t\in [0,T)}\subseteq L_2$, and that $ \infty> \| [L;\tau]\|_{\BMO_1([0,T))}\to 0$ for
$\|\tau\|_1\to0$. Then the following assertions are equivalent:
\begin{enumerate}[{\rm(1)}]
\item \label{it:1:statement:intro_sec:lower_osciallation}
      $\inf_{t\in (0,T)} (T-t)^{\frac{1-\theta}{2}}\losc_t(L) >0$.
\item \label{it:2:statement:intro_sec:lower_osciallation}
      There is a $c >0$ such that  for all $\tau\in \cT$ one has 
      $\| [L;\tau]\|_{\BMO_1([0,T))} \geqslant c  \| \tau \|_\theta$.
\end{enumerate}
\end{theoI}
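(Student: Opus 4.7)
The plan is to prove $(1)\Leftrightarrow(2)$ by connecting the oscillation quantity $\losc_t(L)$ with the BMO norm via intervals of $\tau$ on which the weighted mesh $\|\tau\|_\theta$ is attained. The central identity, which holds because $L$ is a martingale, is that for deterministic $t_{i-1}<t_i$ in $\tau$,
\[
\E^{\F_{t_{i-1}}}\bigl[[L;\tau]_{t_i}-[L;\tau]_{t_{i-1}}\bigr]=\int_{t_{i-1}}^{t_i}\E^{\F_{t_{i-1}}}\bigl[(L_u-L_{t_{i-1}})^2\bigr]\,\od u,
\]
which couples BMO discrepancy to conditional second moments of martingale increments.

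For $(1)\Rightarrow(2)$, given $\tau$ I would select $i^*$ so that $t_{i^*}-t_{i^*-1}\geqslant \tfrac{1}{2}\|\tau\|_\theta(T-t_{i^*-1})^{1-\theta}$, then use the BMO bound with the deterministic stopping time $\sigma=t_{i^*-1}$ and terminal $t=t_{i^*}$. The assumption $\losc_u(L)\geqslant c_0(T-u)^{-(1-\theta)/2}$ yields a pointwise $L_\infty$-lower bound on $|L_u-L_{t_{i^*-1}}|$ for $u$ in the upper half of $(t_{i^*-1},t_{i^*}]$; transferring this into a lower bound on the conditional $L_2$-expectation inside the integrand requires the decay hypothesis $\|[L;\tau']\|_{\BMO_1}\to0$ as $\|\tau'\|_1\to0$, since in general $L_\infty$ and conditional $L_2$ are not comparable. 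The mechanism is a refinement argument: if the conditional $L_2$-bound failed on a positive-measure $\F_{t_{i^*-1}}$-set, one could refine the problematic interval and produce BMO mass exceeding what the decay rate permits, a contradiction.

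For $(2)\Rightarrow(1)$ I would argue by contradiction. Suppose $(1)$ fails, so there exist $t_n\in(0,T)$ and $s_n<t_n$ with $(T-t_n)^{(1-\theta)/2}\|L_{t_n}-L_{s_n}\|_{L_\infty}\to0$. Build test nets $\tau_n$ containing $\{s_n,t_n\}$ but with refined complement $[0,s_n]\cup[t_n,T]$ whose mesh tends to zero. The decay hypothesis removes the refined parts' contribution to $\|[L;\tau_n]\|_{\BMO_1}$ in the limit, so the residual contribution from $(s_n,t_n]$ must carry the BMO mass demanded by $(2)$. On this residual, one applies Doob's $L^2$-inequality to obtain an upper bound of order $(t_n-s_n)\|L_{t_n}-L_{s_n}\|_{L_\infty}^2$; matching this against the lower bound $c\|\tau_n\|_\theta$ forced by $(2)$, and choosing the refinement so that $\|\tau_n\|_\theta$ is governed by an interval of length comparable to $T-t_n$ adjacent to $t_n$, gives a contradiction with the vanishing assumption.

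The main obstacle is the pointwise-to-conditional transfer in $(1)\Rightarrow(2)$: a martingale can a priori carry large $L_\infty$ fluctuations on rare events where the conditional variance is negligible, so one cannot bound $\esssup_\omega\E^{\F_{t_{i^*-1}}}[(L_u-L_{t_{i^*-1}})^2]$ from below by $\|L_u-L_{t_{i^*-1}}\|_{L_\infty}^2$ in any direct way. The decay hypothesis on $\|[L;\tau]\|_{\BMO_1}$ is the precise regularity input that rules out such spiky configurations, and quantifying the transfer via a refinement/covering argument (in combination with the upper bound~\cref{statement:intro_sec:var_vs_curvature} applied to finer sub-nets) is the heart of the proof.
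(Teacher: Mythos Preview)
Your plan contains a genuine misdiagnosis of where the difficulty lies, and consequently proposes the wrong fix.

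\textbf{The direction $(1)\Rightarrow(2)$.} You condition at $\sigma=t_{i^*-1}$ and then worry that an $L_\infty$ lower bound on $L_u-L_{t_{i^*-1}}$ cannot be pushed into a lower bound on $\bigl\|\E^{\F_{t_{i^*-1}}}[(L_u-L_{t_{i^*-1}})^2]\bigr\|_{L_\infty}$. That concern is legitimate \emph{for that choice of conditioning time}, but your remedy---invoking the decay hypothesis through a refinement argument---is not how the paper closes the gap, and the decay hypothesis is in fact \emph{not used at all} in this direction. The trick you are missing is to condition at the midpoint $a_0:=\tfrac12(t_{i^*-1}+t_{i^*})$. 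Because $L$ is a martingale and $L_{t_{i^*-1}}$ is $\F_{a_0}$-measurable, one has pointwise
\[
\E^{\F_{a_0}}\!\Big[\tfrac{1}{t_{i^*}-a_0}\int_{a_0}^{t_{i^*}}|L_u-L_{t_{i^*-1}}|^2\,\od u\Big]\;\geqslant\;|L_{a_0}-L_{t_{i^*-1}}|^2\quad\text{a.s.},
\]
so taking $L_\infty$ gives directly $\bigl\|\E^{\F_{a_0}}[\,\cdots\,]\bigr\|_{L_\infty}\geqslant(t_{i^*}-a_0)\|L_{a_0}-L_{t_{i^*-1}}\|_{L_\infty}^2\geqslant\tfrac12(t_{i^*}-t_{i^*-1})\,\losc_{a_0}(L)^2$. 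This is the content of the paper's condition~\eqref{eq:assum:thm:general_lower_bound}, which holds with constant~$1$ for martingales; no refinement argument and no appeal to \cref{statement:intro_sec:var_vs_curvature} is needed here.

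\textbf{The direction $(2)\Rightarrow(1)$.} Your contradiction scheme is close in spirit to the paper's argument, but the bookkeeping is muddled. If you keep $(s_n,t_n]$ as a single interval and refine the complement, then $\|\tau_n\|_\theta$ is governed by $(s_n,t_n]$ itself, giving the lower bound $c(t_n-s_n)/(T-s_n)^{1-\theta}$; comparing with the upper bound $(t_n-s_n)\|L_{t_n}-L_{s_n}\|_{L_\infty}^2$ yields $\|L_{t_n}-L_{s_n}\|_{L_\infty}\gtrsim(T-s_n)^{(\theta-1)/2}$, not $(T-t_n)^{(\theta-1)/2}$. Since the sequence $s_n$ arising from the negation of~(1) is uncontrolled---it may sit far from $t_n$---this does not contradict your hypothesis. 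Your suggestion to instead make $\|\tau_n\|_\theta$ be attained on an interval adjacent to $t_n$ of length $\sim T-t_n$ is incompatible with also keeping $(s_n,t_n]$ as a single block when $t_n-s_n$ is large. The paper avoids this by working directly rather than by contradiction: for \emph{arbitrary} $0\leqslant s<t<T$ it deduces $\sup_{u\in(s,t]}\|L_u-L_s\|_{L_\infty}\geqslant c(T-s)^{(\theta-1)/2}$, then restricts to $s\in[(2t-T)^+,t)$ to convert $(T-s)$ into $(T-t)$, obtains a lower bound on $\uosc_t(L)$, and finally invokes $\losc_t(L)\geqslant\tfrac12\uosc_t(L)$ for martingales (\cref{statement:relations_oscillation}).
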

\bigskip

Comparing Theorems \ref{statement:intro_sec:var_vs_curvature} and \ref{statement:intro_sec:lower_osciallation},
one realizes a dichotomy in the behaviour of $\| [L;\tau]\|_{\BMO_1([0,T))}$ typical
for the gradient processes appearing on the L\'evy-It\^o space
(see Theorems \ref{statement:intro:gradient_LI_space_upper_boud} and \ref{statement:intro:gradient_LI_space_lower_boud}).
Regarding the equivalence between the RHS of \eqref{eqn:1:intro:RL_vs_intsqf} and \eqref{eqn:2:intro:RL_vs_intsqf}, 
the  proof of \cref{theo:intro_L2} reveals
\[ \eqref{eqn:2:intro:RL_vs_intsqf} \Longleftrightarrow \int_0^T  (T-t)^{-\theta}  \| L_t \|_{L_2}^2 \od t < \infty
   \sptext{1}{for}{1} \theta\in (0,1). \]
So, the next result, which is proven in
\cref{statement:properties_Besov-spaces}\eqref{item:2:statement:properties_Besov-spaces}, is natural:
\medskip

\begin{theoI}
\label{statement:intro:properties_Besov-spaces}
For $\theta \in (0,1)$, $\alpha:=\frac{1-\theta}{2}$, and a c\`adl\`ag martingale $L=(L_t)_{t\in [0,T)}$ one has
\begin{equation}\label{eqn:statement:intro:properties_Besov-spaces}
  \| \cI^\alpha L \|_{\BMO_2([0,T))}   \leqslant 3 \frac{\sqrt{2 \alpha}}{T^{\alpha}} 
     \left ( \int_0^T  (T-t)^{-\theta}  \| L_t \|_{L_\infty}^2 \od t \right )^\frac{1}{2}.
\end{equation}
\end{theoI}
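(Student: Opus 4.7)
The strategy is to realise $\cI^\alpha L$ as a stochastic integral against $L$ and then apply the conditional It\^o isometry. Applying the It\^o product rule to $(T-t)^\alpha L_t$ (the first factor being deterministic and of finite variation, so no bracket contributions arise) and rearranging the resulting integral identity yields
\[\cI_t^\alpha L \;=\; L_0 \;+\; \frac{1}{T^\alpha}\int_0^t (T-u)^\alpha\, dL_u.\]
In particular $\cI^\alpha L$ is itself a martingale, and for any $t \in [0,T)$ and stopping time $\sigma \leqslant t$ its increment $\cI_t^\alpha L - \cI_{\sigma-}^\alpha L$ equals $T^{-\alpha}$ times a stochastic integral over the closed interval $[\sigma, t]$, which carries the jump $(T-\sigma)^\alpha \Delta L_\sigma$ at the left endpoint.

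The conditional It\^o isometry then gives
\[T^{2\alpha}\,\E^{\F_\sigma}\bigl[(\cI_t^\alpha L - \cI_{\sigma-}^\alpha L)^2\bigr] \;=\; \E^{\F_\sigma}\!\left[\int_{[\sigma,\,t]} (T-u)^{2\alpha}\, d[L]_u\right].\]
Writing $(T-u)^{2\alpha} = (T-t)^{2\alpha} + 2\alpha\int_u^t (T-v)^{2\alpha-1}\, dv$ and applying Fubini turns the right-hand side into
\[(T-t)^{2\alpha}\,\E^{\F_\sigma}\bigl[(L_t - L_{\sigma-})^2\bigr] + 2\alpha\int_\sigma^t (T-v)^{2\alpha-1}\,\E^{\F_\sigma}\bigl[(L_v - L_{\sigma-})^2\bigr]\, dv,\]
where I use the martingale identity $\E^{\F_\sigma}\bigl[[L]_v - [L]_{\sigma-}\bigr] = \E^{\F_\sigma}[(L_v - L_{\sigma-})^2]$ valid for $v \geqslant \sigma$ (in which the atom $[L]_\sigma - [L]_{\sigma-} = (\Delta L_\sigma)^2$ exactly accounts for the jump of $L$ at $\sigma$).

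It remains to estimate $\E^{\F_\sigma}[(L_v - L_{\sigma-})^2]$ pointwise by a constant multiple of $\|L_v\|_{L_\infty}^2$. Conditional Jensen applied to the closing $L_s = \E^{\F_s}[L_v]$ gives $|L_s| \leqslant \|L_v\|_{L_\infty}$ a.s. for every $s \leqslant v$, hence $|L_{\sigma-}| \leqslant \|L_v\|_{L_\infty}$ a.s.; combined with $\E^{\F_\sigma}[L_v^2] \leqslant \|L_v\|_{L_\infty}^2$ and $(a-b)^2 \leqslant 2a^2+2b^2$ this produces $\E^{\F_\sigma}[(L_v - L_{\sigma-})^2] \leqslant 4\|L_v\|_{L_\infty}^2$ a.s. The boundary contribution $(T-t)^{2\alpha}\|L_t\|_{L_\infty}^2$ is absorbed into the integral via $(T-t)^{2\alpha} = 2\alpha\int_t^T (T-v)^{2\alpha-1}\, dv$ together with the monotonicity of $v \mapsto \|L_v\|_{L_\infty}$, so that with $2\alpha - 1 = -\theta$ everything collapses to an absolute constant times $2\alpha\int_0^T (T-v)^{-\theta}\|L_v\|_{L_\infty}^2\, dv$, yielding the claimed estimate up to the precise value of the leading factor.

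The main obstacle is the endpoint bookkeeping at the stopping time $\sigma$: the $\BMO_2$-norm centers at $\cI_{\sigma-}^\alpha L$, so both the stochastic integral representation and the bracket increment have to be taken over the closed interval $[\sigma, t]$ in order to include the atom at $\sigma$; it is precisely the matching of the jump of $\cI^\alpha L$ at $\sigma$ with $(\Delta L_\sigma)^2$ inside $[L]_\sigma - [L]_{\sigma-}$ that makes the identity $\E^{\F_\sigma}[[L]_v - [L]_{\sigma-}] = \E^{\F_\sigma}[(L_v - L_{\sigma-})^2]$ hold and lets all jump contributions cancel.
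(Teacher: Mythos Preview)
Your argument is correct. Both you and the paper start from the stochastic integral representation $\cI_t^\alpha L = L_0 + T^{-\alpha}\int_0^t (T-u)^\alpha\,dL_u$ and the conditional It\^o isometry followed by Fubini, so the core computation is the same. The difference lies in how the jump at the stopping time is handled. The paper works only with deterministic times $a$ and the \emph{open} interval $(a,t]$, obtaining the $\bmo_2$-bound $\sqrt{2\alpha}\,T^{-\alpha}\|L\|_{\B_{\infty,2}^\alpha}$ via the orthogonality identity $\E^{\F_a}[|L_{u\wedge t}-L_a|^2]+|L_a|^2=\E^{\F_a}[|L_{u\wedge t}|^2]$; it then bounds $|\Delta\cI_a^\alpha L|\leqslant 2\sqrt{2\alpha}\,T^{-\alpha}\|L\|_{\B_{\infty,2}^\alpha}$ separately and passes from $\bmo_2$ to $\BMO_2$ via the general relation $\|Y\|_{\BMO_2}\leqslant\|Y\|_{\bmo_2}+\sup|\Delta Y|$, which produces the constant $3$. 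You instead work directly with stopping times $\sigma$ and the \emph{closed} interval $[\sigma,t]$, so that the jump $(\Delta L_\sigma)^2$ is already contained in $[L]_\sigma-[L]_{\sigma-}$ and no separate jump estimate is needed; the price is the cruder bound $\E^{\F_\sigma}[(L_v-L_{\sigma-})^2]\leqslant 4\|L_v\|_{L_\infty}^2$ in place of the paper's orthogonality, but in fact your route yields the constant $2$ rather than $3$. Your approach is more self-contained (it avoids the $\bmo\to\BMO$ machinery of the appendix), while the paper's splitting makes the $\bmo_2$-bound sharp and isolates the jump contribution explicitly.
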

\medskip

Later we use in \cref{statement:intro:gradient_LI_space_upper_boud}, inequality \eqref{eqn:1:statement:intro:gradient_LI_space_upper_boud}, 
the RHS of \eqref{eqn:statement:intro:properties_Besov-spaces} to handle gradient processes 
of functionals $f(X_T)$, where $X=(X_t)_{t\in [0,T]}$ is a L\'evy process and $f\in \hoelO{\eta,2}$.
This takes us back to real interpolation and provides a replacement 
for the implication \eqref{eqn:2:intro:RL_vs_intsqf} $\Rightarrow$ \eqref{eqn:1:intro:RL_vs_intsqf}. 
\medskip

\subsection{Gradient estimates on the Wiener space}
\label{subsec:intro:gradients_Wiener}
Let 
$W=(W_t)_{t\in [0,T]}$ be a standard Brownian motion, 
$Y=(Y_t)_{t\in [0,T]}$ be the geometric Brownian motion
$Y_t := \e^{W_t - \frac{t}{2}}$, and $g:(0,\infty) \to \R$ 
be a Borel function from $C_Y$ defined by \eqref{eqn:CY}. If $G(t,y):=\E g(y Y_{T-t})$, then
$G$ satisfies the backward parabolic equation
\[ \frac{\partial G}{\partial t} + \frac{y^2}{2} \frac{\partial^2 G}{\partial y^2} = 0
   \sptext{1}{on}{1} [0,T)\times (0,\infty), \]
so that by It\^o's formula 
\[ g(Y_T) - \E g(Y_T) =   \int_{(0,T)} \varphi_t \od Y_t 
                    =  \int_{(0,T)} Z_t \od W_t \mbox{ a.s.} \]
with 
\begin{equation}\label{eqn:intro:def_varphi_Z} 
   \varphi_t:= \frac{\partial G}{\partial y}(t,Y_t) 
   \sptext{1}{and}{1}
   Z_t:= Y_t \frac{\partial G}{\partial y}(t,Y_t) 
\sptext{1}{for}{1} t\in [0,T).
\end{equation}
Here $(Z_t)_{t\in [0,T)}$ becomes a martingale. For example, 
let $\theta \in (0,1)$ and $g^{(\theta)}(y):= h_{\theta,0}(y-e^{-\frac{T}{2}})$.
If 
\[ f^{(\theta)}(x):= g^{(\theta)}(e^{x-\frac{T}{2}})
   \sptext{1}{and}{1} F^{(\theta)}(t,x):= \E f^{(\theta)}(x+W_{T-t})
   \sptext{1}{for}{1}
   (t,x)\in [0,T]\times \R, \]
then $Z=Z^{(\theta)}$ satisfies 
$Z^{(\theta)}_t = \frac{\partial F^{(\theta)}}{\partial x}(t,W_t)$. 
Therefore, \cref{statement:lower_bound_complete}\eqref{item:2:statement:lower_bound_complete} yields
the lower bound
\[ \inf_{t\in (0,T)} (T-t)^{-\frac{1-\theta}{2}} \losc_t (Z^{(\theta)}) > 0. \]
On the other side, passing from $\hoelO{\theta,\infty}$ to the slightly smaller space $\hoelO{\theta,2}$
(which corresponds in our accompanying example to pass from $h_{\theta,0}$ to $h_{\theta,a}$ with $a>1/2$), we obtain: 
\medskip

\begin{theoI} 
\label{statement:intro:gradient_Wiener_space}
\begin{enumerate}[{\rm (1)}]
\item For $g\in C_Y$ the process $\varphi$ is of maximal oscillation.
\item If $\theta \in (0,1)$, $\Phi_t := Y_t^\theta$, and if $\cI^\frac{1-\theta}{2} Z$
      is defined by \eqref{eqn:intro:defin_RL}, then for all $p\in (0,\infty)$ the condition $g\in  \hoelO{\theta,2}$ implies that
      \medskip

      \begin{enumerate}[{\rm (a)}]
      \item $\cI^\frac{1-\theta}{2} Z -Z_0\in \BMO_p^\Phi([0,T))$,
      \item $\cI_T^\frac{1-\theta}{2} Z := \lim \limits_{t \uparrow T}\cI_t^\frac{1-\theta}{2} Z$
            in $L_p$ and a.s.
      \end{enumerate}
\end{enumerate}
\end{theoI}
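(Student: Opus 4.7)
Both parts rest on the Markovian self-similarity of the geometric Brownian motion $Y$: conditioning on $\F_\sigma$ gives $Y_{\sigma+h}=Y_\sigma \tilde Y_h$ with $\tilde Y$ an independent geometric Brownian motion starting at $1$. Writing $H:=\partial G/\partial y$, we have $\varphi_s=H(s,Y_s)$ and $Z_s=Y_s H(s,Y_s)$, both deterministic functions of $(s,Y_s)$.

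\emph{Part (1).} Fix $t\in(0,T)$ and an arbitrary $s\in(0,t)$; the goal is $\|\varphi_t-\varphi_s\|_{L_\infty}\geqslant \|\varphi_t-\varphi_0\|_{L_\infty}$. For $\epsilon>0$ the set $A_{s,\epsilon}:=\{|Y_s-1|<\epsilon\}$ has positive probability, and continuity of $H$ yields $\esssup_{A_{s,\epsilon}}|\varphi_s-\varphi_0|\to 0$ as $s,\epsilon\downarrow 0$. Conditioning on $\F_s$ restricted to $A_{s,\epsilon}$ and using the factorisation $Y_t=Y_s\tilde Y_{t-s}$ gives
\[ \esssup_{A_{s,\epsilon}} |\varphi_t-\varphi_0| \;=\; \sup_{|y-1|<\epsilon}\esssup \bigl|H(t,y\tilde Y_{t-s})-H(0,1)\bigr| \;\longrightarrow\; \|\varphi_t-\varphi_0\|_{L_\infty} \]
in the same limit. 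Combining the two estimates (and using the restriction inequality $\esssup_{A_{s,\epsilon}}|\varphi_t-\varphi_s|\leqslant \|\varphi_t-\varphi_s\|_{L_\infty}$) yields maximal oscillation with constant $c=1$.

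\emph{Part (2)(a).} For a stopping time $\sigma\leqslant t$, the Markov shift at $\sigma$ recasts the conditional problem on $[\sigma,T)$ as the analogous one with horizon $T-\sigma$ and rescaled terminal condition $\tilde g_y(z):=g(yz)$ evaluated at $y=Y_\sigma$. Since $\|\tilde g_y\|_{\hoelO{\theta,2}}\lesssim y^\theta \|g\|_{\hoelO{\theta,2}}$, the weight $\Phi_\sigma=Y_\sigma^\theta$ arises naturally from this scaling. For the resulting unweighted $\BMO_2$-bound I would apply \cref{statement:intro:properties_Besov-spaces} with $\alpha=(1-\theta)/2$. Setting $f(x):=g(\e^{x-T/2})$, splitting $f=f_0+f_1$ with $f_0\in C_b^0(\R)$ and $f_1\in \hoell{1}{0}$, and using the standard Gaussian endpoint estimates
\[ \|Z_u^{(f_0)}\|_{L_\infty}\leqslant C\|f_0\|_{C_b^0}/\sqrt{T-u}, \qquad \|Z_u^{(f_1)}\|_{L_\infty}\leqslant C\|f_1\|_{\hoell{1}{0}}, \]
the infimum over decompositions gives $\|Z_u\|_{L_\infty}\lesssim K(\sqrt{T-u},f)/\sqrt{T-u}$, where $K$ is the K-functional of the interpolation couple defining $\hoelO{\theta,q}$. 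The substitution $s=\sqrt{T-u}$ then identifies the right-hand side of \eqref{eqn:statement:intro:properties_Besov-spaces} with a constant multiple of the $\hoelO{\theta,2}$-norm of $f$, finite by assumption. This yields $\cI^{(1-\theta)/2}Z-Z_0\in \BMO_2^\Phi([0,T))$; John--Nirenberg for weighted martingale BMO (the weight is $\F_\sigma$-measurable, so the iteration goes through conditionally) upgrades this to every $p\in(0,\infty)$.

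\emph{Part (2)(b) and main obstacle.} Since $Z$ is a martingale, a direct computation from \eqref{eqn:intro:defin_RL} shows that so is $\cI^{(1-\theta)/2}Z$; combined with (a), $\cI^{(1-\theta)/2}Z-Z_0$ is a BMO-martingale, hence closable in $L_p$ and $\p$-a.s.\ convergent as $t\uparrow T$, which gives (b). The main obstacle is the self-similar reduction underlying (2)(a): one must verify carefully that the Markov shift at $\sigma$ produces exactly the form of \cref{statement:intro:properties_Besov-spaces} on horizon $T-\sigma$, with constants uniform in $\sigma$, so that the $y^\theta$-scaling of $\|\tilde g_y\|_{\hoelO{\theta,2}}$ is captured precisely by the weight $\Phi_\sigma=Y_\sigma^\theta$.
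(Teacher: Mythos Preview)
Your argument for Part~(1) does not work. You fix an arbitrary $s\in(0,t)$ but then send ``$s,\epsilon\downarrow 0$'', which is inconsistent; and if you send only $\epsilon\downarrow 0$ for fixed $s$, then on $A_{s,\epsilon}$ you get $\varphi_s\to H(s,1)$, not $\varphi_0=H(0,1)$, so the limiting inequality is $\|\varphi_t-\varphi_s\|_{L_\infty}\geqslant\|\varphi_t-H(s,1)\|_{L_\infty}$, which is not what you need. The paper's route is different: positivity and continuity of the transition density of $Y$ give $\supp(Y_s,Y_t)=\defg\times\defg$ and hence $\|\varphi_t-\varphi_s\|_{L_\infty}\geqslant\inf_{a\in\R}\|\varphi_t-a\|_{L_\infty}$ (\cref{exam:oscillation_Markov_process}); the decisive second ingredient, which you do not invoke, is that $\varphi$ is a martingale under an \emph{equivalent} measure $\hat\p$ obtained by Girsanov (\cref{lemma:varphi_martingale}), so that \cref{statement:basic_lemma_oscillation} yields the constant~$2$.

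For Part~(2)(a) the $K$-functional route has a structural mismatch that your ``main obstacle'' paragraph does not resolve. If you run the endpoint estimates on $f(x)=g(\e^{x-T/2})$, then the Markov shift acts on $f$ by \emph{translation}, so $\|\tilde f_y\|_{\hoelO{\theta,2}}\asymp\|f\|_{\hoelO{\theta,2}}$ and no factor $y^\theta$ appears; worse, $g\in\hoelO{\theta,2}$ does not force $f\in\hoelO{\theta,2}$ (already $g$ Lipschitz does not make $f$ Lipschitz). If instead you keep the scaling on $g$, the Lipschitz endpoint reads $|\tilde Z_s|\leqslant c\,|\tilde g_y|_1\,\tilde Y_s$, which is not an $L_\infty$-bound, so \cref{statement:intro:properties_Besov-spaces} (which needs $\|L\|_{\B_{\infty,2}^\alpha}$) is not applicable. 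The paper avoids this by working one level lower: it interpolates the \emph{conditional $L_2$-estimates} for $H_u=\sigma_u^2\,\partial_y^2 G(u,Y_u)$ via Stein--Weiss (\cref{eqn:interpol:0}--\cref{eqn:interpol:theta}), where the bounded endpoint carries the weight $(T-u)$ and the Lipschitz endpoint carries the measure factor $\int_A\sigma_a^2\,\od\p_A$; interpolation then produces $(T-u)^{1-\theta}$ and, after a dyadic slicing argument, the weight $\sigma_a^{2\theta}=Y_a^{2\theta}$. Finally \cref{statment:IZ_vs_HdW_new} transfers this from $\cI^{\frac{1-\theta}{2}}M$ to $\cI^{\frac{1-\theta}{2}}Z-Z_0$, and John--Nirenberg (\cref{statement:Lp-BMO_new}) together with $\sigma(\theta)\in\cSM_p$ upgrades $\BMO_2^\Phi$ to $\BMO_p^\Phi$.

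Your reasoning for Part~(2)(b) is essentially correct in the introduction's setting where $Z$ is a genuine martingale; in the paper's general framework $Z$ need not be one, and the limit is obtained via \cref{statement:Z_like_a_martingale} by again passing through $\cI^{\frac{1-\theta}{2}}M$.
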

\cref{statement:intro:gradient_Wiener_space} is a consequence of 
Theorems 
\ref{statement:oscillation_brownian_case},
\ref{statement:Hoelder_new_new}\eqref{item:3:statement:Hoelder_new_new}, and
\ref{statement:Z_like_a_martingale}.
The results in \cref{sec:application_brownian_case} are proven for 
a more general setting in which $Z$ is not necessarily a martingale.
The process $\varphi=(\varphi_t)_{t\in [0,T)}$ can be interpreted as trading strategy in the Black-Scholes model, so that its
maximal oscillation is of interest. The control
of $\cI^{\frac{1-\theta}{2}}Z$ in BMO is needed in our approximation problem in terms of 
\eqref{eqn:intro:BMO-estimate_vs_RL} and describes
a fractional smoothness of order 
$\frac{1+\theta}{2}$ in $\BMO_2^\Phi([0,T))$
by the same methodology as described after \eqref{eqn:2:intro:RL_vs_intsqf} for $L_2$.
\medskip

\subsection{Gradient estimates on the L\'evy-It\^o space}
\label{subsec:intro:gradients_Ito}
We use for the introduction the standing assumptions that
\begin{enumerate}
\item [--] $X=(X_t)_{t\in [0,T]}$ is a L\'evy process without Brownian part with L\'evy measure $\nu(\R)\in (0,\infty]$,
\item [--] $\rho$ is a probability measure on $(\R,\cB(\R))$ with $\rho(\{0\})=0$.
\end{enumerate}
Now we consider the questions from \cref{subsec:intro:gradients_Wiener} on the L\'evy-It\^o space.
What is the replacement of the processes $\varphi$ and $Z$?
As we shall use the L\'evy process $X$ as an integrator and not an exponential of it,  
$\varphi$ and $Z$ from  \cref{subsec:intro:gradients_Wiener} 
 can be considered to be the same (it would correspond to $Y=W$).
However, with an abuse of notation, we still consider two types of processes: 
the first process, again called $Z$ since its role is close to the role of $Z$ in 
\cref{subsec:intro:gradients_Wiener}, 
\[ Z_t:\Omega \to H:=L_2(\R,z^2 \nu (\od z)),\] 
takes values in a Hilbert space instead of being $\R$-valued.
The second type of processes are the scalar valued $\varphi$ processes (with a similar  
role as $\varphi$ in \cref{subsec:intro:gradients_Wiener} as integrand in a stochastic integral) derived from $Z$ by applying linear functionals.
\smallskip

The process $Z$ is obtained as follows:
if $f:\R\to \R$ is a Borel function with $f(X_T)\in L_2$, $t\in [0,T)$, then  
$\ce{\F_t}{f(X_T)}$ is Malliavin differentiable for $t\in [0,T)$ and for the derivative one has
\[ \bD \big (\ce{\F_t}{f(X_T)} \big )\in L_2((0,T]\times \R, \lambda\otimes z^2 \nu (\od z)), \]
where $(\F_t)_{t\in [0,T]}$ is the augmented natural filtration of $X$ and 
the Malliavin derivative is defined by the chaos expansion (cf. \cite[Section 2.2]{SUV:07a}
and \cref{sec:malliavin_calculus:basics}).
Under our conditions it will turn out that there is a version of $(\bD_{s,z} \big (\ce{\F_t}{f(X_T)} \big ))_{(s,z)\in (0,T]\times \R}$ 
which is constant in $s\in (0,t]$ and vanishes for  $s\in (t,T]$, so that we may use for $t\in (0,T)$
and 
\[ F(t,x) := \E f(x+X_{T-t}) \]
the Ansatz
\[  Z_t := \left \{ z \mapsto \frac{F(t,z+X_t)-F(t,X_t)}{z} \right \}
         \stackrel{\mbox{a.s.}}{=} \frac{1}{t} \int_0^t \bD_{s,\cdot} \big (\ce{\F_t}{f(X_T)} \big )\od s \in H, \]
where the almost sure equality is explained in \cref{chaos-decom}.
Our first objective is to understand the singularity properties of $(\langle Z_t,D \rangle )_{t\in [0,T)}$ in dependence 
on the functional $D$.

For this  we proceed as follows: in \cref{statement:definition_Gamma} we introduce a linear sub-space $\dom(\Gamma_\rho^0)$ of
Borel functions $f:\R\to \R$ and the operator
\[ \Gamma_{t,\rho}^0 :  \dom(\Gamma^0_\rho) \to \R
   \sptext{1}{with}{1}
   \langle f,\Gamma_{t,\rho}^0 \rangle
   := \int_{\RO} \frac{F(t,z)-F(t,0)}{z} \rho(\od z). \]
\smallskip
Our approach goes beyond the $L_2$-approach as
generally  $\bD (\ce{\F_t}{f(X_T)})$ is interpreted
in $L_2((0,T]\times \R, \lambda\otimes z^2 \nu (\od z))$ as equivalence class only. 
For example, choosing $\rho$ as Dirac measure, due to the choice of the range of definition $\dom(\Gamma^0_\rho)$, 
we can interpret and estimate  
$\frac{1}{t} \int_0^t \bD_{s,z} \big (\ce{\F_t}{f(X_T)} \big )\od s$ for a fixed $z\in\RO$.

The operators $\Gamma_{t,\rho}^0$ are
{\it deterministic} and  linear 
and allow therefore for the application of interpolation techniques from Banach space theory. To understand $\Gamma_{t,\rho}^0 $ as  mathematical object we associate
to the probability measure $\rho$  and to the L\'evy process $X$ a probability density $\gamma_{t,\rho}\in L_1(\R)$
for which it follows from \cref{statement:properties:Gamma1} that 
$\Gamma_{t,\rho}^0 = - \partial  \gamma_{t,\rho}$, i.e. $\Gamma_{t,\rho}^0 $ can be seen as a distributional derivative of a distribution of $L_1$-type.
In \cref{defi:uppper_bounds_X_rho} and \cref{defi:lower_bounds_X_rho} we introduce the following concepts,
where we let $c>0$ and  $A\subseteq \R$ be closed and non-empty:
\medskip
\begin{align}
     X\in \cU(\beta,A;c) \sptext{.9}{for}{.7} \beta \in (0,\infty] \quad 
&:\Longleftrightarrow  \sup_{z\in A} \sup_{s\in (0,T]} s^{\frac{1}{\beta}} |z|^{-1} \| \p_{z+X_s} - \p_{X_s} \|_{\rm TV} \leqslant  c \notag \\
     X\in \cL(\beta) \sptext{.4}{for}{.7} \beta \in (0,2] \quad
&:\Longleftrightarrow \exists r>0 \,\, \left (\inf\limits_{-r \leqslant a < b \leqslant r} 
     \inf\limits_{s\in (0,T]} \p \left ( s^{-\frac{1}{\beta}}X_s \in (a,b) \right )/(b-a) > 0\right ) \notag \\
     \rho\in U(\varepsilon;c) \sptext{.7}{for}{.7} \varepsilon \geqslant 0 \quad 
&:\Longleftrightarrow \forall  d \in [0,1] \,\, \Big ( \rho([-d,d]) \leqslant c d^\varepsilon \Big ) \label{eqn:intro:defin_Uepsilon}\\
     \rho\in L(\varepsilon)\sptext{.7}{for}{.7}  \varepsilon > 0 \quad 
&:\Longleftrightarrow  \liminf_{d\downarrow 0} \frac{\rho([-d,d])}{d^\varepsilon}>0 \notag
\end{align}
\smallskip

Here $\p_{z+{X_s}}$ and $\p_{X_s}$ are the laws of $z+{X_s}$ and $X_s$, 
and $\|\cdot\|_{\rm TV}$ stands for the total variation. 
The properties $U(\varepsilon;c)$ and $L(\varepsilon)$ are complementary small ball properties 
of the measure $\rho$, upper bounds for
$\| \p_{z+{X_s}} - \p_{X_s}\|_{\rm TV}$ were investigated in the literature (see \cite[Theorem 3.1]{SSW12})
in connection with the coupling property of L\'evy processes.
The properties $\cU(\beta,A;c)$ and $X\in \cL(\beta)$ are complementary as well. They appear to be the correct ones for the questions
we consider and are motivated by the 
$\beta$-stable processes as we show 
in \cref{statement:U-L-for-beta-like} that $X\in   \cU(\beta,\R;c) \cap \cL(\beta)$ for $\beta \in (0,2)$ and 
some $c>0$ if $X$ is a symmetric $\beta$-stable like process.
Now we let
\begin{equation}\label{eqn:intro:definition_varphi} D_\rho F(t,x) := \langle f(x+\cdot),\Gamma_{t,\rho}^0 \rangle
\sptext{.75}{and}{.75} \varphi(f,\rho) \mbox{ be a c\`adl\`ag version of }
(D_\rho F(t,X_t))_{t\in [0,T)},
\end{equation}
where we use that $(D_\rho F(t,X_t))_{t\in [0,T)}$ is a martingale (see \cref{statement:easy_properties_Gamma}).
The following two statements describe the regularity properties of $\varphi(f,\rho)$:
\medskip

\begin{theoI}[{\bf upper bounds for $D_\rho F$}]
\label{statement:intro:gradient_LI_space_upper_boud}
Let $\varepsilon\in [0,1)$, $(\eta,q,\beta)\in (0,1-\varepsilon)\times [1,\infty)\times (0,\infty)$, $(X_t)_{t\in [0,T]} \subseteq L_{\eta+\gamma}$ for some $\gamma>0$,
\[ \rho \in \bigcup_{c>0} U(\varepsilon;c),
   \sptext{1}{}{1} X \in \bigcup_{c>0} \cU(\beta,\supp(\rho);c),
   \sptext{1}{and}{1} \alpha:= \frac{1-(\varepsilon+\eta)}{\beta}.\]
Then there are  $c_\eqref{eqn:1:statement:intro:gradient_LI_space_upper_boud},c_\eqref{eqn:2:statement:intro:gradient_LI_space_upper_boud}>0$
such that for $f\in \hoel{\eta,q}$ and $f\in \hoel{\eta,2}$, respectively,
\begin{align}
 \left ( \int_0^T (T-t)^{\alpha q-1}  \sup_{x\in \R} |D_\rho F(t,x)|^q \od t \right )^\frac{1}{q} 
&\leqslant c_\eqref{eqn:1:statement:intro:gradient_LI_space_upper_boud} \, \| f \|_{\hoel{\eta,q}}, \label{eqn:1:statement:intro:gradient_LI_space_upper_boud}\\
           \sup_{a\in [0,T)} (T-a)^{\alpha} \| \varphi_a(f,\rho) \|_{L_\infty} 
             + \big \| \cI^\alpha \varphi(f,\rho) -  \varphi_0(f,\rho) \big \|_{\BMO_2([0,T))}
&\leqslant c_\eqref{eqn:2:statement:intro:gradient_LI_space_upper_boud} \| f \|_{\hoelO {\eta,2}}.\label{eqn:2:statement:intro:gradient_LI_space_upper_boud}
\end{align}
\end{theoI}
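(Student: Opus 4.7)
The plan is to deduce both assertions from pointwise estimates on $D_\rho F(t,x)$: two endpoint bounds in $f$ will be combined via the $K$-method of real interpolation to yield \eqref{eqn:1:statement:intro:gradient_LI_space_upper_boud}, after which \cref{statement:intro:properties_Besov-spaces} applied to the martingale $\varphi(f,\rho) - \varphi_0(f,\rho)$ upgrades the $q = 2$ case to the $\BMO_2$ half of \eqref{eqn:2:statement:intro:gradient_LI_space_upper_boud}.

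First I would establish the two endpoint bounds. For $f \in C_b^0(\R)$, the total-variation estimate $|F(t,x+z) - F(t,x)| \le \|f\|_\infty \|\p_{z+X_{T-t}} - \p_{X_{T-t}}\|_{\mathrm{TV}} \le c(T-t)^{-1/\beta}|z|\|f\|_\infty$ (provided by $X \in \cU(\beta,\supp\rho;c)$) truncated against the trivial $2\|f\|_\infty$ gives an integrand of order $\|f\|_\infty \min\{2|z|^{-1}, c(T-t)^{-1/\beta}\}$. Splitting the $\rho$-integral at $|z| \sim (T-t)^{1/\beta}$, applying $\rho \in U(\ep;c)$ on the small-$z$ piece and an integration-by-parts bound $\int_{|z|>d}|z|^{-1}\rho(\od z) \lesssim d^{\ep-1}$ on the large-$z$ piece yields $\sup_{x \in \R}|D_\rho F(t,x)| \le c(T-t)^{-(1-\ep)/\beta}\|f\|_\infty$. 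For $f \in \hoell{1}{0}$ the Lipschitz bound $|F(t,x+z) - F(t,x)| \le \|f\|_{\mathrm{Lip}}|z|$ together with $\rho(\R) = 1$ gives $\sup_{x \in \R}|D_\rho F(t,x)| \le \|f\|_{\mathrm{Lip}}$.

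Summing the endpoints for any decomposition $f = f_0 + f_1 \in C_b^0(\R) + \hoell{1}{0}$ and minimising produces
$$\sup_{x \in \R}|D_\rho F(t,x)| \le c\,(T-t)^{-(1-\ep)/\beta}\,K\bigl((T-t)^{(1-\ep)/\beta}, f\bigr),$$
with $K$ the K-functional of $(C_b^0(\R), \hoell{1}{0})$. Inserting this into the left-hand side of \eqref{eqn:1:statement:intro:gradient_LI_space_upper_boud}, the change of variable $s = (T-t)^{(1-\ep)/\beta}$ combined with the reiteration identification of $\hoelO{\eta,q}$ as an intermediate space delivers the bound with $\alpha = (1-(\ep+\eta))/\beta$. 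For \eqref{eqn:2:statement:intro:gradient_LI_space_upper_boud}, the weighted $L_\infty$ summand follows from the $q = \infty$ specialisation of the pointwise estimate combined with the embedding $\hoelO{\eta,2} \hookrightarrow \hoelO{\eta,\infty}$. For the $\BMO_2$ summand, since $\cI^\alpha$ fixes constants one has $\cI^\alpha\varphi(f,\rho) - \varphi_0(f,\rho) = \cI^\alpha(\varphi(f,\rho) - \varphi_0(f,\rho))$ and $\varphi(f,\rho) - \varphi_0(f,\rho)$ is a c\`adl\`ag martingale starting at zero; applying \cref{statement:intro:properties_Besov-spaces} with $\theta = 1 - 2\alpha$ then controls its $\BMO_2$-norm by $\bigl(\int_0^T (T-t)^{2\alpha - 1}\|\varphi_t - \varphi_0\|_{L_\infty}^2\,\od t\bigr)^{1/2}$, which is precisely the $q = 2$ instance of \eqref{eqn:1:statement:intro:gradient_LI_space_upper_boud} up to the constant $\|\varphi_0(f,\rho)\|_{L_\infty}$.

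The main obstacle is the exponent bookkeeping in the interpolation step: a bare change of variables produces a K-functional integral corresponding to the parameter $\eta/(1-\ep)$ rather than the sharp $\eta$, and recovering $\alpha = (1-(\ep+\eta))/\beta$ requires either an application of the reiteration theorem through an intermediate H\"older endpoint $\hoelO{1-\ep,\infty}$, or a direct mollifier decomposition at a $t$-dependent scale that distributes the $\ep$-gain from $U(\ep;c)$ correctly between the small- and large-$z$ regimes of the $\rho$-integral. A secondary issue is that \cref{statement:intro:properties_Besov-spaces} requires $\theta = 1 - 2\alpha \in (0,1)$, so $\alpha < 1/2$; the complementary range must be handled by a direct $\BMO_2$ estimate based on the same pointwise bounds.
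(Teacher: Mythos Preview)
Your approach to \eqref{eqn:2:statement:intro:gradient_LI_space_upper_boud} matches the paper's: once \eqref{eqn:1:statement:intro:gradient_LI_space_upper_boud} is available for $q=2$, the $\BMO_2$ summand follows from \cref{statement:properties_Besov-spaces}\eqref{item:2:statement:properties_Besov-spaces} applied to the c\`adl\`ag martingale $\varphi-\varphi_0$, and the sup summand from the embedding \eqref{eqn:relations_between_Bpqalpha}. Your secondary concern about needing $\alpha<1/2$ is unfounded: the intro version \cref{statement:intro:properties_Besov-spaces} is parametrised by $\theta\in(0,1)$, but the full statement \cref{statement:properties_Besov-spaces}\eqref{item:2:statement:properties_Besov-spaces} holds for every $\alpha>0$.

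For \eqref{eqn:1:statement:intro:gradient_LI_space_upper_boud} there is a genuine gap, which you correctly locate but whose remedy you misdescribe. Your $K$-functional bound
\[
\sup_{x}|D_\rho F(t,x)| \le c\,(T-t)^{-(1-\ep)/\beta}\,K\bigl((T-t)^{(1-\ep)/\beta},f;C_b^0(\R),\hoell{1}{0}\bigr)
\]
is correct but not sharp: for $f\in\hoelO{\eta,\infty}$ it delivers the exponent $(1-\ep)(1-\eta)/\beta$, which exceeds the claimed $\alpha=(1-\ep-\eta)/\beta$ by $\ep\eta/\beta>0$. The loss arises because you integrate over $\rho$ \emph{before} interpolating, so the $\ep$-gain from $U(\ep;c)$ is applied only at the $C_b^0$ endpoint (the Lipschitz endpoint receives none) and the $K$-method then interpolates linearly between the exponents $(1-\ep)/\beta$ and $0$. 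Your proposed fix through $\hoelO{1-\ep,\infty}$ does not work either: that is precisely the critical endpoint at which the $\rho$-integrated bound becomes logarithmic (cf.\ \cref{statement:X-rho_to_TV}\eqref{item:1:statement:X-rho_to_TV}), so it cannot serve as a power-type interpolation endpoint.

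The paper's route (\cref{statement:upper_bounds_Drho}\eqref{item:2:statement:upper_bounds_Drho} via the abstract \cref{statement:abstract_interpolation_new}) performs \emph{two} rounds of interpolation. First, for each fixed $(t,x,z)$, interpolate the scalar functional $f\mapsto (F(t,x+z)-F(t,x))/z$ between $C_b^0(\R)$ and $\hoell{1}{0}$ at an intermediate parameter $\eta_i\in(0,1-\ep)$, obtaining the pointwise-in-$z$ estimate $\|f\|_{\hoelO{\eta_i,\infty}}\min\{(2/|z|)^{1-\eta_i},(c')^{1-\eta_i}(T-t)^{-(1-\eta_i)/\beta}\}$ (this is \eqref{eq:interpol_between_01}). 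Integrating \emph{this} over $\rho$ with $U(\ep;c)$ via \cref{statement:upper_bound_function_A} yields the correct power $(T-t)^{-(1-\ep-\eta_i)/\beta}$ at every such $\eta_i$. Only then does one interpolate a second time, between two endpoints $0<\eta_0<\eta<\eta_1<\min\{1-\ep,\eta+\gamma\}$, and reiteration $(\hoelO{\eta_0,\infty},\hoelO{\eta_1,\infty})_{\delta,q}=\hoelO{\eta,q}$ recovers the fine index $q$. The order matters: the $\rho$-integration must come after the first pointwise interpolation so that the truncation level in the $\min$ scales with $\eta_i$ and the $\ep$-gain is applied at both endpoints.
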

\medskip
\cref{statement:intro:gradient_LI_space_upper_boud} is verified at the end of
\cref{sec:application_levy_case}.
For our accompanying example it gives
that the LHS of \eqref{eqn:1:statement:intro:gradient_LI_space_upper_boud} is finite for 
$f=f_{\eta,a}$  with $a>1/q$. 
In the following lower bound the space $\lbvlocal \eta$ is defined in \cref{defi:lbvlocal}. 
A typical example for $f\in \lbvlocal \eta \cap \hoel \eta$ is $f=f_{\eta,0}$.
\medskip

\begin{theoI}[\bf lower bounds for $D_\rho F$]
\label{statement:intro:gradient_LI_space_lower_boud}
Let 
$(\varepsilon,\beta)\in (0,1)\times (0,2]$, 
$\eta\in (0,1-\varepsilon)$, 
$(X_t)_{t\in [0,T]}\subseteq L_\eta$,
\[ \rho \in L(\varepsilon),
   \sptext{1}{}{1}
   X \in \cL(\beta), \sptext{1}{and}{1} \alpha:= \frac{1-(\varepsilon+\eta)}{\beta}. \]
If $\|X_s\|_{{\rm TV}(\rho,\eta)}< \infty$ for $s\in (0,T]$
(see \cref{definition:TVrho})
 and $f\in \lbvlocal \eta \cap \hoel \eta$,
then 
\begin{enumerate}[{\rm (1)}]
\item $\varphi(f,\rho)$ is an $L_\infty$-martingale of maximal oscillation,
\item $\inf_{t\in (0,T)} (T-t)^\alpha \losc_t(\varphi(f,\rho))>0$.
\end{enumerate}
\end{theoI}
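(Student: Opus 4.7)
The plan is to reduce both claims to deterministic estimates on the kernel
\[
D_\rho F(t, x) = \int_\RO \frac{F(t, z+x) - F(t, x)}{z} \rho(\od z), \qquad F(t,x) = \E f(x+X_{T-t}),
\]
and then lift them through the identity $\varphi_t(f,\rho) = D_\rho F(t, X_t)$. The martingale property of $\varphi(f,\rho)$ is already provided by \cref{statement:easy_properties_Gamma}. Pointwise membership $\varphi_t\in L_\infty$ at each $t\in[0,T)$ follows from the hypothesis $\|X_s\|_{{\rm TV}(\rho,\eta)}<\infty$ combined with an integration-by-parts against the $\lbvlocal \eta$-decomposition of $f$ (using the distributional identity $\Gamma_{t,\rho}^0 = -\partial \gamma_{t,\rho}$ from \cref{statement:properties:Gamma1}), which rewrites $D_\rho F(t,x)$ as the integral of an $\eta$-H\"older oscillation of $f$ against a finite signed measure whose total mass is bounded uniformly in $x$.

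For the lower bound (2) I would exploit the complementary small-ball/density hypotheses $\rho\in L(\varepsilon)$ and $X\in\cL(\beta)$ together with the non-trivial oscillation of $f$ guaranteed by $f\in\lbvlocal \eta\cap\hoel \eta$. By $\cL(\beta)$ the density of $X_{T-t}$ is bounded below on a window of size $(T-t)^{1/\beta}$ by a quantity of order $(T-t)^{-1/\beta}$; combining this with the small-ball lower bound $\rho([-d,d])\geq c\, d^\varepsilon$ and a local H\"older oscillation of $f$ of amplitude $\sim (T-t)^{\eta/\beta}$ at the relevant scale, a direct size estimate tracking the three exponents yields $\sup_x |D_\rho F(t,x) - D_\rho F(0,x)| \geq c (T-t)^{-\alpha}$ with $\alpha = \frac{1-(\varepsilon+\eta)}{\beta}$. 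Transferring the deterministic supremum to $\|\varphi_t-\varphi_0\|_{L_\infty}$ uses again $\cL(\beta)$: the support of $X_t$ reaches any fixed neighbourhood of the point where the supremum is essentially attained.

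Maximal oscillation --- i.e.\ $\losc_t(\varphi) \geq \frac{1}{c}\|\varphi_t-\varphi_0\|_{L_\infty}$ --- is then combined with (2) by verifying the matching upper bound $\|\varphi_s\|_{L_\infty}\leq C(T-s)^{-\alpha}$ for $s\in[0,t)$ (a by-product of the first step applied at time $s$): for $s$ with $T-s\geq\lambda(T-t)$ and $\lambda$ sufficiently large, a triangle inequality gives $\|\varphi_t-\varphi_s\|_{L_\infty} \geq c(T-t)^{-\alpha} - C\lambda^{-\alpha}(T-t)^{-\alpha} \geq c'(T-t)^{-\alpha}$; for $s$ close to $t$ a Markov/self-similarity argument on the increment $X_t-X_s$ together with the c\`adl\`ag regularity of $\varphi$ yields the same proportional lower bound. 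The main obstacle is the sharp estimate in step (2), where the three small-scale quantities ($\rho$ near $0$, the density of $X_{T-t}$ near $0$, and the oscillation of $f$) have to be balanced to produce exactly the exponent $\alpha$; a secondary difficulty is the $s$-close-to-$t$ regime in the $\losc_t$ estimate, where the two scales $(T-s)^{-\alpha}$ and $(T-t)^{-\alpha}$ are comparable and the missing oscillation must be extracted from the randomness of $X$ on the small interval $(s,t]$.
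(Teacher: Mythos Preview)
Your sketch has a genuine structural gap concerning item~(1), and this gap propagates into your argument for~(2).

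The hypotheses of the theorem are one-sided: you are given $\rho\in L(\varepsilon)$ and $X\in\cL(\beta)$, together with the qualitative finiteness $\|X_s\|_{{\rm TV}(\rho,\eta)}<\infty$. You do \emph{not} have $\rho\in U(\varepsilon;c)$ or $X\in\cU(\beta,\supp(\rho);c')$, and without these the rate upper bound $\|\varphi_s\|_{L_\infty}\leqslant C(T-s)^{-\alpha}$ is unavailable --- the finiteness of $\|X_{T-s}\|_{{\rm TV}(\rho,\eta)}$ gives only $\|\varphi_s\|_{L_\infty}<\infty$ with no control on the blow-up rate. Your triangle-inequality argument for $s$ with $T-s\geqslant\lambda(T-t)$ therefore collapses, and your fallback for $s$ close to $t$ (``Markov/self-similarity'') is where all the difficulty would live; as stated it is not a proof. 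So your route does not establish maximal oscillation, and consequently does not close~(2) either.

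The paper proceeds in the opposite order. Maximal oscillation with constant~$2$ is obtained \emph{first and for free}, with no rates at all, from the L\'evy support structure: writing $\varphi_t=H(t,Y_t)$ with $H$ $Y$-consistent (this uses $\|X_s\|_{{\rm TV}(\rho,\eta)}<\infty$ via \cref{statement:TV_to_Gamma}), the identity $\supp(Y_s,Y_t-Y_s)=Q\times Q$ and $Q+Q=Q$ give $\|\varphi_t-\varphi_s\|_{L_\infty}\geqslant\|\varphi_t-H(s,0)\|_{L_\infty}\geqslant\inf_{a\in\R}\|\varphi_t-a\|_{L_\infty}$, and the elementary centering \cref{statement:basic_lemma_oscillation} then yields $\|\varphi_t-\varphi_s\|_{L_\infty}\geqslant\frac{1}{2}\|\varphi_t-\varphi_0\|_{L_\infty}$ (\cref{exam:oscillation_Levy_process}, \cref{statement:lower-bound-Levy-gen_new}). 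For~(2) the paper then bounds $D_\rho F(t,0)$ from below directly --- your three-scale balancing is exactly \cref{statement:lower_bounds_levy_new}, but applied at $x=0$ using the positivity coming from $f\in\lbvlocal\eta$ non-decreasing --- and uses $0\in\supp(X_t)$ (a consequence of $X\in\cL(\beta)$) to get $\|\varphi_t\|_{L_\infty}\geqslant c(T-t)^{-\alpha}$. Maximal oscillation turns this into $\losc_t(\varphi)\geqslant\frac{1}{2}\|\varphi_t\|_{L_\infty}-\frac{1}{2}\|\varphi_0\|_{L_\infty}$, which handles $t$ near $T$; the remaining range of $t$ is covered not by an upper bound but by a \emph{forward uniqueness} argument (the equivalences in \cref{statement:lower-bound-Levy-gen_new}): since $\|\varphi_t-\varphi_0\|_{L_\infty}\to\infty$, the process is non-constant, hence $\inf_{t\in(0,T)}\losc_t(\varphi)>0$.
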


The statement is verified in
\cref{statement:maximal_oscillation_application}(\eqref{item:3:statement:maximal_oscillation_application},\eqref{item:4:statement:maximal_oscillation_application})
and \cref{statement:lower_bound_complete}\eqref{item:1:statement:lower_bound_complete}.
\bigskip

\subsection{An application to an approximation problem on the Wiener space}
\label{subsec:intro:approximation}
We return to the setting of \cref{subsec:intro:gradients_Wiener}. For $\tau\in \cT$ 
we define the approximation error for the Riemann approximation of the stochastic integral as
\begin{equation}\label{eqn:intro:error}
 E_t(g;\tau) := \int_0^t \varphi_s \od Y_s - \sum_{i=1}^n \varphi_{t_{i-1}} (Y_{t_i\wedge t} - Y_{t_{i-1}\wedge t})
 \sptext{1}{with}{1}
 \varphi_t = \frac{\partial G}{\partial y}(t,Y_t).
\end{equation}
The term $E_t(g;\tau)$ can be interpreted as discrete time hedging error 
based on the grid $\tau$ at time $t$ of an European option with pay-off $g(Y_T)$  in the Black-Scholes model. By It\^o's isometry one has 
\medskip
\[ \ce{\F_a}{|E_T(g;\tau)-E_a(g;\tau)|^2} = 
 \ce{\F_a}{\intsq{\varphi}{\tau}_T^\sigma-\intsq{\varphi}{\tau}_a^\sigma} \mbox{ a.s.}
  \sptext{1}{for}{1}
   \sigma=Y. \]
\medskip
One has $\|E_T(g;\tau)\|_{L_2} \geqslant \frac{c}{\sqrt{n}}$ for some $c>0$ for all
$\tau\in \cT$ of cardinality $n+1$
if there are no $a,b\in\R$ such that $g(Y_T)=a+ b Y_T$ a.s. (\cite[Theorem 2.5]{GG04}). 
To estimate $E_T(g;\tau)$ from above usually the $L_2$-setting is used to exploit orthogonality
(cf. \cite{Ge02,GG04,Geiss:Hujo:07} for the Wiener space and \cite{GGL13} for the L\'evy-It\^o space). 
On the Wiener space, the approximation in $L_p$ for $p\in [2,\infty)$ is considered in 
\cite{Geiss:Toivola:15}, whereas a different route is taken in \cite{Geiss:05} by exploiting 
weighted BMO (both for $T=1$):
\smallskip

\begin{enumerate}
\item [--] In \cite[Theorems 7 and 8]{Geiss:05} it is shown that 
      \begin{equation}\label{eqn:intro:BMO-estimate_vs_Lipschitz}
      \sup_{\tau\in\cT} \frac{\|E(g;\tau)\|_{\BMO_2^Y([0,T])} }{ \sqrt{\| \tau \|_1}  } <\infty
      \Longleftrightarrow g \mbox{ is (equivalent to) a Lipschitz function}.
      \end{equation}
\item [--] In \cite[Theorems 1.2 and 5.5]{Geiss:Toivola:15} it is shown for $\theta\in (0,1]$, $p\in [2,\infty)$,
      and $G(t,y)=\E g(y Y_{T-t})$ that 
       \begin{equation}\label{eqn:intro:Lp-estimate_vs_curvature}
      \sup_{\tau\in\cT} \frac{\|E_T(g;\tau)\|_{L_p}}{ \sqrt{\| \tau \|_\theta}  } <\infty
      \Longleftrightarrow 
      \left ( \int_0^T (T-t)^{1-\theta} \left | Y_t^2 \frac{\partial^2 G}{\partial y^2}(t,Y_t) \right |^2 \od t \right )^\frac{1}{2} \in L_p.
      \end{equation}
\end{enumerate}
\medskip

In this article we solve the missing case 
$(p,\theta)\in \{\infty\} \times (0,1)$
in \eqref{eqn:intro:BMO-estimate_vs_Lipschitz} and \eqref{eqn:intro:Lp-estimate_vs_curvature}
by the BMO-approach by extending the techniques from \cite{Geiss:05} from the Lipschitz case
to the H\"older case and the results in \cite{Geiss:Toivola:15} from $p<\infty$ to $p=\infty$:
besides that we consider slightly more general processes than in \cite{Geiss:05,Geiss:Toivola:15}, we 
prove for $\theta\in (0,1)$, and $\varphi$ and $Z$ from \eqref{eqn:intro:def_varphi_Z},
in \cref{statement:brownian_case_theta_new} that
\smallskip
\begin{equation}\label{eqn:intro:BMO-estimate_vs_RL}
 \sup_{\tau\in\cT} \frac{\|E(g;\tau)\|_{\BMO_2^{\Phi}([0,T))}}{\sqrt{\| \tau\|_\theta}} < \infty
	 \,\,\Longleftrightarrow\,\,
      \begin{cases}
      \cI^\frac{1-\theta}{2} Z - Z_0\in \BMO^\Phi_2([0,T))\\
                  (T-t)^{\frac{1-\theta}{2}} |\varphi_t-\varphi_0| 
         \leqslant \, c \, \Phi_t \, Y_t^{-1} \quad \mbox{a.s. for all } t\in [0,T) 
         \end{cases}
\end{equation}
\smallskip
under a monotonicity condition on $\Phi$ and an a-priori condition on $Z$.
As by Theorems \ref{statement:intro:gradient_Wiener_space} 
and \ref{statement:Hoelder_new_new},
\begin{equation}\label{eqn:implications_of_hoeltheta2}
g\in \hoel{\theta,2} \implies 
 \begin{cases}
      \cI^\frac{1-\theta}{2} Z - Z_0\in \BMO^{Y^\theta}_2([0,T))\\
                  (T-t)^{\frac{1-\theta}{2}} |\varphi_t| 
         \leqslant c \, Y_t^{\theta-1} \quad {\mbox{a.s. for all } t\in [0,T)}
         \end{cases}
\end{equation}
\smallskip

for $\theta \in (0,1)$, we get the right-hand side of \eqref{eqn:intro:BMO-estimate_vs_RL}
if $g\in \hoel{\theta,2}$ and $\Phi$ is chosen accordingly.
Using the John-Nirenberg theorem for the $\BMO_q^\Phi$-spaces we also verify 
in \cref{proof:statement:intro:tail_hoeleta2}:
\medskip

\begin{theoI}
\label{statement:intro:tail_hoeleta2}
For $\theta\in (0,1)$, $T>0$, and $g=g^{(\theta)}+g^{(1)}$ with $g^{(\theta)}\in \hoel {\theta,2}$ and $g^{(1)}:\R\to \R$ being 
Lipschitz there are $A,c>0$ and $\lambda_0\geqslant 1$
such that for $n\in \bN$ and 
$a\in  \tau_n^\theta$ with $a<T$ it holds
\[ \p_{\F_a} \left ( \sup_{t\in [a,T]} \left | E_t(g;\tau_n^\theta)-  E_a(g;\tau_n^\theta) \right|
    \geqslant c\, \lambda \frac{Y_a \vee Y_a^\theta}{\sqrt{n}} \right ) 
\leqslant A \e^{- \left ( \frac{\ln \lambda}{A} \right )^2} \mbox{ a.s.}
 \sptext{1}{for}{1} \lambda\geqslant \lambda_0.
\]
\end{theoI}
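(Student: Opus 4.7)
The plan is to exploit the linearity $E_t(g;\tau_n^\theta)=E_t(g^{(\theta)};\tau_n^\theta)+E_t(g^{(1)};\tau_n^\theta)$ and bound each summand separately via the appropriate weighted BMO estimate, then pass from BMO to a tail estimate through the John--Nirenberg theorem for weighted $\BMO_2^\Phi$-spaces. Since $Y_a\vee Y_a^\theta$ dominates both $Y_a$ and $Y_a^\theta$, a triangle inequality reduces the claim to proving analogous tail bounds separately for $\sup_{t\in[a,T]}|E_t(g^{(\theta)};\tau_n^\theta)-E_a(g^{(\theta)};\tau_n^\theta)|$ in units of $Y_a^\theta/\sqrt{n}$ and for $\sup_{t\in[a,T]}|E_t(g^{(1)};\tau_n^\theta)-E_a(g^{(1)};\tau_n^\theta)|$ in units of $Y_a/\sqrt{n}$.

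First I would extract the two BMO bounds on the $\theta$-net. For the Lipschitz piece the equivalence \eqref{eqn:intro:BMO-estimate_vs_Lipschitz}, combined with $\|\tau_n^\theta\|_1\leqslant T^{1-\theta}\|\tau_n^\theta\|_\theta\leqslant T/(\theta n)$, gives $\|E(g^{(1)};\tau_n^\theta)\|_{\BMO_2^Y([0,T))}\leqslant C_1/\sqrt{n}$. For the H\"older piece the implication \eqref{eqn:implications_of_hoeltheta2} supplies the RHS of \eqref{eqn:intro:BMO-estimate_vs_RL} with weight $\Phi=Y^\theta$, and then the forward direction of \eqref{eqn:intro:BMO-estimate_vs_RL} together with $\|\tau_n^\theta\|_\theta\leqslant T^\theta/(\theta n)$ yields $\|E(g^{(\theta)};\tau_n^\theta)\|_{\BMO_2^{Y^\theta}([0,T))}\leqslant C_2/\sqrt{n}$. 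The monotonicity requirement on $\Phi$ is satisfied for $\Phi=Y^\theta$ and $\Phi=Y$, and the a-priori control of $Z$ follows from the regularity of $G$ (respectively $F^{(\theta)}$) built from the Feynman--Kac representation. Next I would invoke the John--Nirenberg theorem for weighted $\BMO_2^\Phi$ spaces referenced in the paper; since $t\mapsto E_t(g^{(i)};\tau_n^\theta)-E_a(g^{(i)};\tau_n^\theta)$ is a martingale on $[a,T]$, applying the weighted JN inequality to its maximal function produces a tail bound of the form $\p_{\F_a}(\sup_{t\in[a,T]}|E_t(g^{(i)};\tau_n^\theta)-E_a(g^{(i)};\tau_n^\theta)|\geqslant c\lambda\Phi^{(i)}_a/\sqrt{n})\leqslant A\exp(-\phi(\lambda))$ with explicit $\phi$.

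The main obstacle is to pin down the precise shape of the tail $A\exp(-(\ln\lambda/A)^2)$ rather than a pure exponential decay. A single, clean application of John--Nirenberg to a scalar-weighted BMO estimate would give exponential decay in $\lambda$; the lognormal-in-$\lambda$ form here reflects that in the weighted $\BMO_2^\Phi$ setting with $\Phi$ a power of the geometric Brownian motion $Y$, the constants in the JN inequality depend on integrability properties of the weight, and one has to carry out a truncation/layer-cake argument that trades exponential decay against the cost of freezing the weight level. Controlling this trade-off uniformly in $a\in\tau_n^\theta$ with $a<T$ and in $\lambda\geqslant\lambda_0$, while keeping the rate $1/\sqrt n$, is where the bulk of the technical work will lie.
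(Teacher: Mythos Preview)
Your overall architecture is right: split $E(g;\tau_n^\theta)$ linearly, control each piece in a weighted $\BMO_2$-space, and pass to a tail bound via John--Nirenberg. But two concrete points do not go through as stated.

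\smallskip

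\textbf{The weight $Y^\theta$ does not verify the hypotheses of \eqref{eqn:intro:BMO-estimate_vs_RL}.} The equivalence \eqref{eqn:intro:BMO-estimate_vs_RL} (equivalently \cref{statement:brownian_case_theta_new}) requires $\Phi=\sigma\Psi$ with $\Psi$ path-wise \emph{non-decreasing}. For $\Phi=Y^\theta\sim\sigma^\theta$ this would force $\Psi\sim\sigma^{\theta-1}$, which is not monotone. Moreover the second condition on the RHS of \eqref{eqn:intro:BMO-estimate_vs_RL} becomes $(T-t)^{(1-\theta)/2}|\varphi_t-\varphi_0|\leqslant c\,Y_t^{\theta-1}$, and this fails whenever $\varphi_0\neq 0$ since $Y_t^{\theta-1}\to 0$ as $Y_t\to\infty$. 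The paper instead uses the \emph{net-dependent} weight $\Phi(\tau,\theta)_a=\sigma_a^\theta+\sigma_{\underline{a}(\tau)}^{\theta-1}\sigma_a$ from \cref{statement:Hoelder_new_new}\eqref{item:3b:statement:Hoelder_new_new}, which for $a\in\tau$ collapses to $2\sigma_a^\theta$ but for general stopping times genuinely contains the extra term. Combining this with the Lipschitz bound gives $\|E(g;\tau)\|_{\BMO_2^{\overline\Phi}}\leqslant\overline c\sqrt{\|\tau\|_\theta}$ for $\overline\Phi=\sigma+\Phi(\tau,\theta)$.

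\smallskip

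\textbf{The lognormal shape is not a layer-cake artefact but comes from the weight's running supremum.} The paper does not truncate; it applies the two-parameter John--Nirenberg inequality \eqref{eqn:1:statement:Lp-BMO_new},
\[
\p_{\F_a}\Big(\sup_{t\in[a,T)}|E_t-E_a|>b\,\overline c\sqrt{\|\tau\|_\theta}\,\mu\nu\Big)\leqslant \e^{1-\mu}+\beta\,\p_{\F_a}\Big(\sup_{t\in[a,T)}\overline\Phi_t>\nu\,\overline d(\sigma_a\vee\sigma_a^\theta)\Big),
\]
and then uses \cref{rema:statement:Hoelder_new_new}(1): for $a\in\tau$ one has $\sup_{t\in[a,T)}\overline\Phi_t\leqslant \overline d(\sigma_a\vee\sigma_a^\theta)R_a$ with $R_a:=\e^{2\sup_{t\in[a,T]}|\int_a^t\widehat\sigma(X_u)\od W_u|}$, which by the Gaussian tail of a bounded-integrand Brownian martingale satisfies $\p_{\F_a}(R_a>\nu)\leqslant A\e^{-(\ln\nu/A)^2}$. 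Setting $\mu=\nu=\sqrt\lambda$ balances $\e^{1-\sqrt\lambda}$ against $A\e^{-(\ln\sqrt\lambda/A)^2}$, and both are absorbed into a bound of the form $A'\e^{-(\ln\lambda/A')^2}$ for $\lambda\geqslant\lambda_0$. So the lognormal exponent is produced by the tail of $R_a$, not by any trade-off in the JN constants.
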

\medskip

So we may condition on $(Y_a \vee Y_a^\theta)$ and obtain uniform explicit error estimates along the time-nets $\tau_n^\theta$,
that are stronger than arising from an $L_p$-estimate.
In the context of stochastic finance this applies to power type options or options of powered type with exponent 
$\gamma\in (0,1)$ and can be exploited to derive shortfall probabilities for
the discrete time super-hedging. A typical example is a powered call option
$g_\gamma(Y_T):=((Y_T-K)^+)^\gamma$
for $K>0$.
This option interpolates between a call option and a binary option as
\[ g_1(Y_T)=(Y_T-K)^+ \sptext{1}{and}{1} \lim_{\gamma\downarrow 0} g_\gamma(Y_T)=\1_{\{Y_T>K\}}, \]
and we have $g_\gamma \in \hoel {\theta,2}+\hoell{1}{0}$ for 
$\theta\in (0,\gamma)$ (see \cref{statement:smoothness_powered_option}).

\subsection{An application to representations of H\"older functionals on the  L\'evy-It\^o space}
\label{subsec:intro:holder_Ito}
Let us additionally assume for the L\'evy process in 
\cref{subsec:intro:gradients_Ito}
that $(X_t)_{t\in [0,T]}$
is symmetric and square integrable. 
For $D\in L_2(\R,z^2 \nu(\od z))$ with 
$\| D\|_{L_2(\R,z^2 \nu(\od z))}=1$
we let
\[ X_t^D := \int_{(0,t]\times \R} D(x)x \wt{N}(\od s,\od x), \]
where $\wt{N}$ is the compensated Poisson random measure associated to $X$. 
In a sense, $X^D$ is the projection of $X$ into the direction of $D$.
In \cref{sec:orthogonal_decomposition_Levy-Ito} we prove the following decomposition:
\bigskip

\begin{theoI}
\label{statement:intro:decomposition_hoelder_functional}
Let 
$\beta \in (0,2)$, 
$\eta\in [0,1]$,
$\alpha:= \frac{1}{2}-\frac{\eta}{\beta}$,
and let $(D_j)_{j\in J}\subseteq  L_2(\R,z^2 \nu(\od z))$ be an orthonormal basis.
Then, for an $\eta$-H\"older continuous function \rm{(}$f\in \hoel \eta$\rm{)} 
with H\"older constant $|f|_\eta$ one has the orthogonal decomposition 
\begin{equation}\label{eqn:statement:intro:decomposition_hoelder_functional}
 f(X_T) = \E f(X_T) + \sum_{j\in J} \int_{(0,T)} \psi_{t-}^j \od X^{D_j}_t,
 \end{equation}
 where the sum is taken in $L_2$ if $|J|=\infty$, with 
 \[   \psi_t^j 
    =   \| D_j^+\|_{L_1(\R,z^2 \od \nu)} \varphi_t(f,\rho_j^+)
      - \| D_j^-\|_{L_1(\R,z^2 \od \nu)} \varphi_t(f,\rho_j^-)
   \sptext{.8}{and}{.8}
   \rho^{\pm}_j(\od z) := \frac{D_j^\pm(z)z^2 \nu(\od z)}{\| D_j^\pm\|_{L_1(\R,z^2 \od \nu)}} \]
 where we use the notation from \eqref{eqn:intro:definition_varphi}
 and omit corresponding terms when $\| D_j^+\|_{L_1(\R,z^2 \od \nu)}$ or $\| D_j^-\|_{L_1(\R,z^2 \od \nu)}$
 vanish. This decomposition satisfies:
 \begin{enumerate}[{\rm (1)}]
 \item There exists a $c>0$ such that for all $j\in J$, $f \in \hoel{\eta}$, and $t\in [0,T)$ one has
       \[ \|\psi_t^j\|_{L_\infty}
          \leqslant  c \,\, |f|_\eta \,\, 
 		\begin{cases}
 			(T-t)^{-\alpha}            &:  \eta \in [0,\frac{\beta}{2}) \\
 			\ln \left ( 1+(T-t)^{-\frac{1-\eta}{\beta}} \right ) &:  \eta = \frac{\beta}{2}  \\
 			1                                                      &:  \eta \in (\frac{\beta}{2},1]
 		\end{cases}. \]
 		
 \item Assume additionally $\eta\in (0,\frac{\beta}{2})$ and
       $q\in [1,\infty)$. Then there exists a $c>0$ such that, for all $j\in J$ and $f \in \hoel{\eta,q}$,
       \[ \left ( \int_0^T (T-t)^{\alpha q-1} \| \psi_t^j\|_{L_\infty}^q \od t \right )^\frac{1}{q}  
 	\leqslant  c \, \| f\|_{\hoelO{\eta,q}}. \]
   \end{enumerate}
\end{theoI}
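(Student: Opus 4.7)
The plan is to first obtain the series representation via a Galtchouk-Kunita-Watanabe (GKW) expansion in the Hilbert direction $L_2(\R,z^2\nu(\od z))$, then prove the pointwise $L_\infty$-bound of item (1) by a single Cauchy-Schwarz step that collapses the $j$-dependence and reduces matters to a scalar kernel estimate, and finally extract the weighted $L_q$-bound of item (2) from \cref{statement:intro:gradient_LI_space_upper_boud} with a specific choice of small-ball exponent.

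\emph{Step 1: Decomposition.} Using the predictable representation from \cref{sec:GKW_projection_general}, write $f(X_T)-\E f(X_T)=\int_{(0,T)\times \R}\Psi_{t-}(z)\wt N(\od t,\od z)$, where $\Psi_t(z)$ is a version of $[F(t,z+X_t)-F(t,X_t)]/z$ in $L_2((0,T]\times \R,\lambda\otimes z^2\nu)$. Expanding $\Psi_t(\cdot)=\sum_{j\in J}\psi_t^j D_j(\cdot)$ in the orthonormal basis gives the Fourier coefficients
\[
\psi_t^j=\int_{\R}\bigl[F(t,z+X_t)-F(t,X_t)\bigr]D_j(z)\,z\,\nu(\od z),
\]
and Parseval combined with the isometry of compensated Poisson integration against the martingales $X^{D_j}$ yields the series representation (convergent in $L_2$ when $|J|=\infty$). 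Writing $D_j=D_j^+-D_j^-$ and normalizing $D_j^{\pm}(z)z^2\nu(\od z)$ to the probability measures $\rho_j^{\pm}$, the definition \eqref{eqn:intro:definition_varphi} of $\varphi(\cdot,\rho)$ identifies $\psi_t^j$ with the claimed combination of $\|D_j^\pm\|_{L_1(\R,z^2\nu)}\,\varphi_t(f,\rho_j^{\pm})$.

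\emph{Step 2: Pointwise bound, item (1).} Cauchy-Schwarz in $L_2(\R,z^2\nu)$ together with $\|D_j\|_{L_2(\R,z^2\nu)}=1$ yields the $j$-independent estimate $|\psi_t^j(\omega)|^2\leq \int_{\R}|F(t,z+X_t(\omega))-F(t,X_t(\omega))|^2\nu(\od z)$, so matters reduce to bounding $I(t,x):=\int|F(t,z+x)-F(t,x)|^2\nu(\od z)$ uniformly in $x\in\R$. Combine the inherited Hölder bound $|F(t,z+x)-F(t,x)|\leq|f|_\eta|z|^\eta$ (via Jensen applied to $F(t,\cdot)=\E f(\cdot+X_{T-t})$) with a smoothing bound $|F(t,z+x)-F(t,x)|\leq c|f|_\eta|z|(T-t)^{-(1-\eta)/\beta}$ (obtained from the total-variation estimate underlying $\cU(\beta,\R;c)$, interpolated with the Hölder regularity of $f$). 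Take the minimum and split $I$ at the crossover scale $z_0:=(T-t)^{1/\beta}$. On $|z|\leq z_0$ the smoothing bound contributes $|f|_\eta^2(T-t)^{-2\alpha}$; on $|z|>z_0$ the Hölder bound yields a matching $(T-t)^{-2\alpha}$ for $\eta<\beta/2$, a logarithmic factor for $\eta=\beta/2$ (after a fixed-scale cutoff absorbed by the standing $\int z^2\nu<\infty$), and a uniformly bounded contribution for $\eta\in(\beta/2,1]$. Taking square roots produces the three stated rates.

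\emph{Step 3: Weighted $L_q$-bound, item (2); main obstacles.} For $\eta\in(0,\beta/2)$ and $q\in[1,\infty)$, apply \eqref{eqn:1:statement:intro:gradient_LI_space_upper_boud} to $\varphi_t(f,\rho_j^{\pm})$ with parameter $\varepsilon=1-\beta/2$, which matches $\alpha=(1-\varepsilon-\eta)/\beta=\tfrac12-\tfrac{\eta}{\beta}$. The small-ball condition $\rho_j^{\pm}\in U(1-\beta/2;c_j^{\pm})$ with $c_j^{\pm}\leq C\|D_j^{\pm}\|_{L_1(\R,z^2\nu)}^{-1}$ follows from Cauchy-Schwarz and the stable-like estimate $\int_{-d}^d z^2\nu(\od z)\lesssim d^{2-\beta}$. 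Since the constant in \cref{statement:intro:gradient_LI_space_upper_boud} is linear in the $U$-constant, the $\|D_j^{\pm}\|_{L_1(\R,z^2\nu)}^{-1}$ factor in $c_j^{\pm}$ cancels the prefactor $\|D_j^{\pm}\|_{L_1(\R,z^2\nu)}$ in the representation of $\psi_t^j$, yielding the desired uniform-in-$j$ $L_q$-bound. The main obstacles are the critical regime $\eta=\beta/2$ in Step~2, where the outer Hölder bound just fails to be $\nu$-integrable and the logarithm must be extracted by a delicate fixed-scale cutoff controlled by $\int z^2\nu<\infty$, and in Step~3 verifying that the constant in \cref{statement:intro:gradient_LI_space_upper_boud} really does depend linearly on the $U$-constant, so that uniformity in $j$ survives the cancellation.
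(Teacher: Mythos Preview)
Your proposal is correct. Steps~1 and~3 follow the paper's route: the result is proved there as \cref{statement:space_time_approximation_random_measure} by invoking \cref{statement:upper_bounds_alpha_like} with $r=2$, and the linearity-in-the-$U$-constant you flag in Step~3 is exactly the $(1\vee c)$ factor in \cref{statement:upper_bounds_Drho}, with the cancellation carried out in \eqref{eqn:Dpm_cpm_vs_D1r}. One detail you glide over: when $c_j^\pm<1$ the cancellation does not fire, and uniformity in $j$ then rests on $\|D_j^\pm\|_{L_1(z^2\nu)}\leq \mu(\R)^{1/2}$ from $\mu(\R)<\infty$ and $\|D_j\|_{L_2}=1$; the paper packages both regimes into the single bound $\vvvert D_j\vvvert_{(1,2)}\leq \mu(\R)^{1/2}\vee 1$.

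Step~2, however, is a genuinely different argument. The paper does \emph{not} use Cauchy--Schwarz; it keeps the sign-splitting $\psi_t^j=\|D_j^+\|_{L_1}D_{\rho_j^+}F-\|D_j^-\|_{L_1}D_{\rho_j^-}F$, bounds each piece via the functional estimate \eqref{eqn:item:1:statement:upper_bounds_Drho} using $\rho_j^\pm\in U\bigl(\tfrac{2-\beta}{2};c_j^\pm\bigr)$ from \cref{statement:D_to_rho}, and then cancels as in \eqref{eqn:Dpm_cpm_vs_D1r}. Your Cauchy--Schwarz step $|\psi_t^j|\leq\|\Psi_t\|_{L_2(z^2\nu)}$ followed by the direct split of $\int|F(t,z+x)-F(t,x)|^2\nu(\od z)$ at $|z|=(T-t)^{1/\beta}$ is shorter and immediately $j$-uniform, bypassing the $U(\varepsilon;c)$ abstraction entirely for item~(1). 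The trade-off is that your route is specific to orthonormal systems in $L_2$ and to the square-integrable setting (you need $\int z^2\nu<\infty$ both to control the $|z|>1$ tail and to have $D_j\in L_1$), whereas the paper's argument via \cref{statement:upper_bounds_alpha_like} handles the full scale $r\in[1,2]$ and reuses, for item~(1), the same $D_{\rho^\pm}$-apparatus that is in any case required for the interpolation-based item~(2).
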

\medskip
One main point of \cref{statement:intro:decomposition_hoelder_functional} is the degree of the singularity 
$\alpha=\frac{1}{2} -\frac{\eta}{\beta} \in (0,\frac{1}{2}]$
in the most singular case $\eta\in [0,\frac{\beta}{2})$, i.e. when the singularity is of form $(T-t)^{-\alpha}$ with
$\alpha>0$. To illustrate this, we let $\eta >0$ and observe 
that \cref{statement:intro:gradient_LI_space_lower_boud} provides the general lower bound 
\[ S(\varepsilon,\eta,\beta):= \frac{1-(\varepsilon+\eta)}{\beta} \]
for $(\varepsilon,\beta) \in (0,1)\times (0,2]$ and $\eta\in (0,1-\varepsilon)$. It turns out that, in general,
the singularity $S(\varepsilon,\eta,\beta)$ can be made arbitrarily large for small $\beta$. But, for example, to apply the approximation techniques 
from \cref{statement:intro_sec:var_vs_curvature} we would need to have $\alpha< \frac{1}{2}$.
What additionally holds true in the setting of
\cref{statement:intro:decomposition_hoelder_functional} is the following: the particular 
measures $\rho^\pm_j$ originate from $L_2$-variables
$D_j^\pm$ and {\it in this case} we get that $\rho^\pm_j\in \bigcup_{c>0} U\left (\varepsilon;c\right )$
(defined in \eqref{eqn:intro:defin_Uepsilon}) with
$\varepsilon:=\frac{2-\beta}{2}$ so that
\[ \alpha = \frac{1-(\varepsilon+\eta)}{\beta} 
          = \frac{1-(\frac{2-\beta}{2}+\eta)}{\beta} 
          = \frac{1}{2} - \frac{\eta}{\beta} < \frac{1}{2}.\]
Finally let us comment on \underline{\cref{sec:interpolation}}: This section provides with \cref{statement:abstract_interpolation_new} an interpolation result adapted
to gradient estimates in the L\'evy setting, which is formulated in a general context and is of possible independent interest. 
\medskip

The sections of the article interact as follows: 
\bigskip

\begin{tikzpicture}[node distance=1.6cm]
 \node(Preliminaries)[process]{\secref{sec:preliminaries}};
 \node(RL)[process,below of=Preliminaries,xshift=-5.5cm,draw, align=left]{\secref{sec:RL-operators}\\ 
                                                                \secref{sec:convergence_sqfunction}\\                                                                        
                                                                       \secref{sec:random_measures}
                                                                        };
 \node(Oscillation)[process,below of=Preliminaries,xshift=0cm]{\secref{sec:oscillation_general}};
 \node(BM)[process,xshift=0cm,below of=RL]{\secref{sec:application_brownian_case}};
 \node(Interpolation)[process,xshift=3.5cm,below of=Preliminaries]{\secref{sec:interpolation}}; 
 \node(Levy)[process,xshift=1.8cm,below of=Oscillation,align=left]{\secref{sec:application_levy_case}\\
                                                        \secref{sec:approximation_random_measure}};
 \draw [arrow] (Preliminaries) -- (RL);
 \draw [arrow] (Preliminaries) -- (Oscillation);
 \draw [arrow] (Preliminaries) -- (Interpolation);
 \draw [arrow] (Interpolation) -- (Levy);
 \draw [arrow] (RL) -- (BM);
 \draw [arrow] (Oscillation) -- (BM);
 \draw [arrow] (RL) -- (Levy);
 \draw [arrow] (Oscillation) -- (Levy);
\end{tikzpicture}
\bigskip
\bigskip


\newpage
\section{Preliminaries}
\label{sec:preliminaries}

\subsection{General notation}
We let $\bN:=\{1,2,\ldots\}$ and $\bN_0:= \{0,1,2,\ldots\}$. For $a, b \in \R$ we use $a\vee b : =\max\{a, b\}$,
$a\wedge b :=\min\{a, b\}$, $a^+:= a\vee 0$, $a^-:= (-a)\vee 0$, and for $A, B \geqslant 0$ and $c\geqslant 1$ the notation $A\sim_c B$ for $\frac{1}{c}B\leqslant A \leqslant c B$. 
The corresponding one-sided inequalities are abbreviated by $A \succeq_c B$ and
$A \preceq_c B$.
Given $x\in \R$, $\sign(x):=1$ for $x\geqslant 0$ and $\sign(x):=-1$ for $x< 0$ is the standard sign function,
and we agree about $0^0:=1$.
Given a metric space $M$, $\cB(M)$ denotes the Borel $\sigma$-algebra generated by the open sets.
For a probability space $(\Omega,\F,\p)$ and a measurable map $X:\Omega \to \R^d$, 
where $\R^d$ is equipped with the Borel $\sigma$-algebra $\cB(\R^d)$,
the law of $X$ is denoted by $\p_X$.
Given a measurable space $(\Omega,\F)$, we let  
$\cL_0(\Omega,\F)$ be the space of all $\F$-measurable maps
$X:\Omega \to \R$, and for
$p\in (0,\infty]$ and a measure space 
$(\Omega,\F,\mu)$ we use the standard Lebesgue spaces $L_p(\Omega,\F,\mu)$.
We drop parts of the corresponding measure space in the notation  
if there is no risk of confusion.
Given a (finite) signed measure $\mu$ on $(\R,\cB(\R))$, we denote by $|\mu|:=\mu^+ + \mu^-$ its \textit{variation} 
and by $\|\mu \|_{\textrm{TV}} := |\mu|(\R)$ its \textit{total variation}. 
The Lebesgue measure on $(\R,\cB(\R))$ will be denoted by $\lambda$.
For two measures $\mu$ and $\nu$ on a measurable space $(\Omega,\F)$ we write
$\nu\ll\mu$ if $\nu$ is absolutely continuous with respect to $\mu$.
For a set $A\in \F$ with $\mu(A)\in (0,\infty)$ we let $\mu_A$ be the normalized restriction
of $\mu$ to the trace $\sigma$-algebra $\F|_A$. 
For $0<p\leqslant q \leqslant \infty$, $\sigma$-finite measure spaces $(M,\Sigma,\mu)$ and
$(N,\cN,\nu)$, and a measurable map $f:M\times N\to [0,\infty)$ we use the inequality
\begin{equation}\label{eqn:interchange_Lp_Lq}
          \left \| \left \| f \right  \|_{L_p(\mu)} \right \|_{L_q(\nu)}
\leqslant \left \| \left \| f \right  \|_{L_q(\nu)} \right \|_{L_p(\mu)}.
\end{equation}

\subsection{Support of a measure}
Let $\mu$ be a measure on $\cB(\R^d)$, then $\supp(\mu)$ denotes the 
closed set 
$\{ x\in \R^d: \mu(U_\varepsilon(x))>0 \mbox{ for all } \varepsilon>0 \}$, where 
$U_\varepsilon(x)$ is the open euclidean ball centered at $x$ with radius $\varepsilon>0$.
Given a random variable $X:\Omega \to \R^d$, we let $\supp(X) := \supp(\p_X)$. One knows that 
$\p(\{ X \in \supp(X)\})=1$ and that for 
independent random variables $X:\Omega\to\R^m$ and $Y:\Omega\to\R^n$ it holds
$\supp((X,Y)) = \supp(X) \times \supp(Y)$.
Finally, for a random variable $X:\Omega \to \R^d$
and a Borel measurable $H:\R^d\to \R$ that is continuous on $\supp(X)$ (with respect to the induced topology) 
it holds that $\|H(X)\|_{L_\infty(\Omega,\F,\p)}= \sup_{x\in \supp(X)} |H(x)|$.

\subsection{Interpolation spaces}
\label{sub:sec:interpolation}
We will only consider Banach spaces over $\R$. Let $(E_0,E_1)$ be a couple of Banach spaces such that $E_0$ and $E_1$ are continuously embedding into 
some topological Hausdorff space $X$ ($(E_0,E_1)$  is called an interpolation couple).
We equip $E_0+E_1:= \{ x = x_0+x_1 : x_i \in E_i\}$ with the norm 
$\|x\|_{E_0+E_1} := \inf \{  \|x_0\|_{E_0}+\|x_1\|_{E_1}  : x_i\in E_i, x=x_0+x_1\}$
and $E_0 \cap E_1$ with the norm $\|x\|_{E_0\cap E_1} := \max \{ \|x\|_{E_0}, \|x\|_{E_1}\}$ to get Banach spaces 
$E_0\cap E_1 \subseteq E_0+E_1$.
For $x\in E_0+E_1$ and $v \in (0,\infty)$ we define the $K$-functional
$$ K(v,x;E_0,E_1) := \inf \{ \|x_0\|_{E_0} + v \|x_1\|_{E_1} : x = x_0+x_1 \}. $$
Given $(\theta,q)\in (0,1)\times [1,\infty]$ we set 
\[ (E_0,E_1)_{\theta,q} := \left \{ x \in E_0+E_1 : \|x\|_{(E_0,E_1)_{\theta,q}}:=\left \| v \mapsto  v^{-\theta} K(v,x;E_0,E_1) 
                              \right \|_{L_q\left ((0,\infty), \frac{\od v}{v }\right )} < \infty \right \}.\]
We obtain a family of Banach spaces $\left (  (E_0,E_1)_{\theta,q},\|\cdot\|_{(E_0,E_1)_{\theta,q}} \right )$ with the  lexicographical 
ordering
\[ (E_0,E_1)_{\theta,q_0} \subseteq (E_0,E_1)_{\theta,q_1} \sptext{1}{for all}{1} \theta \in (0,1)
                                                           \sptext{.5}{and}{.5} 1\leqslant  q_0 < q_1 \leqslant \infty, \]
and, under the additional assumption that $E_1 \subseteq E_0$ with $\|x\|_{E_0} \leqslant c \| x\|_{E_1}$
for some $c>0$,  
\[ (E_0,E_1)_{\theta_0,q_0} \subseteq (E_0,E_1)_{\theta_1,q_1}
   \sptext{1}{for all}{1}
   0< \theta_1 < \theta_0<1 \sptext{.5}{and}{.5} q_0,q_1\in [1,\infty]. \]
Given a linear operator $T:E_0+E_1\to F_0+F_1$ with $T:E_i\to F_i$ for $i=0,1$, we use that
the real interpolation method is an exact interpolation functor, i.e.
\begin{equation}\label{eq:functor_with_constant_1_real_interpolation} 
\| T:(E_0,E_1)_{\theta,q} \to (F_0,F_1)_{\theta,q} \|
   \leqslant 
  \| T:E_0 \to F_0 \|^{1-\theta} \| T:E_1\to F_1 \|^{\theta}.
\end{equation}
For more information about the real interpolation method the reader is referred to 
\cite{Bennet:Sharpley:88,Bergh:Loefstroem:76,Triebel:78}.
Given a Banach space $E$ and $(q,s)\in [1,\infty]\times \R$, we will use the Banach spaces 
\begin{equation}\label{eqn:definition_ellqsE}
 \ell_q^s(E) := \{ (x_k)_{k=0}^\infty \subseteq E : \| (x_k)_{k=0}^\infty \|_{\ell_q^s(E)} := 
                   \| (2^{ks} \| x_k\|_E)_{k=0}^\infty \|_{\ell_q} < \infty\} 
\end{equation}
and the notation $\ell_q(E):= \ell_q^0(E)$.
For $q_0,q_1,q\in [1,\infty]$ and $s_0,s_1\in \R$ with $s_0\not = s_1$, and $\theta  \in (0,1)$,
one has according to \cite[Theorem 5.6.1]{Bergh:Loefstroem:76} that
\begin{equation}\label{eq:interpol_ell_q^s}
 (\ell_{q_0}^{s_0}(E),\ell_{q_1}^{s_1}(E))_{\theta,q} = \ell_{q}^{s}(E)
   \sptext{1}{where}{1}
   s:= (1-\theta)s_0 + \theta s_1
\end{equation}
and there is a $c_\eqref{eq:interpol_ell_q^s_constant}\geqslant 1$ that depends at most on $(s_0,s_1,q_0,q_1,\theta,q)$ such that
\begin{equation}\label{eq:interpol_ell_q^s_constant}
                                          \| \cdot \|_{\ell_{q}^{s}(E)} 
\sim_{c_\eqref{eq:interpol_ell_q^s_constant}} \| \cdot \|_{(\ell_{q_0}^{s_0}(E),\ell_{q_1}^{s_1}(E))_{\theta,q}}.
\end{equation}

\subsection{Function spaces}
\label{sec:intro:function_spaces}

Given $\emptyset \not = A \in \cB(\R)$, we let $B_b(A)$ be the Banach space of bounded Borel functions 
$f\colon A \to \R$ with
$\| f\|_{B_b(A)} := \sup_{x\in A} |f(x)|$,
$C_b^0(\R)$ be the closed subspace of $B_b(\R)$ of continuous functions vanishing at zero,
and $C_b^\infty(\R)\subseteq B_b(\R)$ the infinitely often differentiable functions such that the
derivatives satisfy $f^{(k)}\in B_b(\R)$, $k \geqslant 1$. 
The space $C^1(\R)$ consists of differentiable functions with continuous derivative and 
$C^\infty(\R)$ of the functions that are infinitely often differentiable.
For $\eta\in [0,1]$ we  use the H\"older spaces
\begin{align}
\hoel \eta    &:=  \left\{f\in \cL_0(\R,\cB(\R)): \,\,\, |f|_{\eta}:=\sup_{- \infty <x<y<\infty}\frac{|f(x)-f(y)|}{|x-y|^\eta} <\infty \right\},
                   \label{eqn:definition:eta_Hoelder}\\
\hoell \eta 0 & := \{ f \in \hoel \eta : f(0)=0 \},\notag \\  
\hoelO{\eta,q}&:= (C_b^0(\R),\hoell{1}{0})_{\eta,q} \sptext{1}{for}{1}(\eta,q)\in (0,1)\times [1,\infty]. \notag
\end{align}
Note that we can define the Banach space $C_b^0(\R) +\hoell{1}{0}$, so that $(C_b^0(\R),\hoell{1}{0})$ forms an
interpolation pair.
If we use on $C_b^0(\R)$ the equivalent norm 
$\| f \|^0_{C_b^0(\R)}:= \sup \{|f(x)-f(y)| : x,y\in \R \}$, then $\frac{1}{2} \|f\|^0_{C_b^0(\R)} \leqslant \|f\|_{C_b^0(\R)} \leqslant  \|f\|^0_{C_b^0(\R)}$
and build with this norm the interpolation spaces $\hoelO{\eta,q}$ and denote the norms by 
$\| f \|_{\hoelO{\eta,q}}^0$, then we get the 'translation invariance' (useful later for us)
\[ \| f \|_{\hoelO{\eta,q}}^0 = \| f(x+\cdot) - f(x) \|_{\hoelO{\eta,q}}^0
   \sptext{1}{for all}{1}
   x\in \R.\]
We also note that for $0<\eta_0<\eta_1<1$ that 
\begin{equation}\label{eqn:inclusion_Hoelder_spaces}
 \hoelO{\eta_1,\infty} \cap C_b^0(\R) \subseteq \hoelO{\eta_0,1}
\end{equation}
as $\sup_{v\in [1,\infty)} K(v,f;C_b^0(\R),\hoell{1}{0}) < \infty$
for $f\in  \hoelO{\eta_1,\infty} \cap C_b^0(\R)$. Moreover,
by the reiteration theorem (see
\cite[Theorem 3.5.3]{Bergh:Loefstroem:76} or \cite[Theorem 5.2.4]{Bennet:Sharpley:88})
it follows  
\begin{equation}\label{eq:reiteration_for_Hoelder}
(\hoelO{\eta_0,q_0},\hoelO{\eta_1,q_1})_{\theta,q}
= \hoelO{\eta,q}
\end{equation}
for $\theta,\eta_0,\eta_1\in (0,1)$ with $\eta_0\not = \eta_1$, $q,q_0,q_1\in [1,\infty]$,
$\eta:=(1-\theta)\eta_0+\theta \eta_1$,
where the norms are equivalent up to a multiplicative constant.
By the above definitions we obtain Banach space by $(\hoell \eta 0,|\cdot|_\eta)$
and for $\eta\in (0,1)$ we have that $\hoelO{\eta,\infty} = \hoell \eta 0$ with equivalent norms
up to a multiplicative constant (a direct proof can be obtained by an adaptation of \cite[Lemma A.3]{Laukkarinen:19}, see also 
\cite[Theorem 2.7.2/1]{Triebel:78}). 

\subsection{Stochastic basis}
\label{subsec:stochastic_basis}

We fix a time horizon $T\in (0,\infty)$, let $(\Omega, \F, \p)$ be a complete probability space
equipped with a right continuous filtration $\bF=(\F_t)_{t\in[0,T]}$ such that $\F_0$ is generated by the 
$\p$-null sets and $\F=\F_T$. For 
\[ \timed =[0,T] \sptext{1}{or}{1} \timed = [0,T) \]
we denote by $\CL(\mathbb I)$ the set of $\bF$-adapted \textit{c\`adl\`ag} 
(right continuous with left limits) processes $Y=(Y_t)_{t\in \mathbb I}$, by $\CL^+(\mathbb I)$ 
the sub-set of $Y\in\CL(\mathbb I)$ with $Y_t(\omega) \geqslant 0$ on $\mathbb I \times \Omega$,
and by  $\CL_0(\mathbb I)$ the sub-set of $Y\in\CL(\mathbb I)$ with $Y_0\equiv 0$.
For $Y \in \CL(\mathbb I)$ we use 
\begin{enumerate}
\item $Y^*=(Y^*_t)_{t\in \mathbb I}$ with $Y^*_t := \sup_{s \in [0, t]} |Y_s|$,
\item $\Delta Y =(\Delta Y_t)_{t\in \mathbb I}$ with $\Delta Y_t:=Y_t-Y_{t-}$, where $Y_{0-} := Y_0$ and 
      $Y_{t-} := \lim_{s<t,\,s\uparrow t} Y_s$ for $t>0$.
\end{enumerate}

The collection of all stopping times $\rho \colon \Omega \to [0,t]$ is denoted by $\cS_t$. We write $\ce{\cG}{X}$ for the 
conditional expectation of $X$ given $\cG$ (where we exploit extended conditional expectations if $X$ is non-negative)
and use $\p_\cG(B):= \ce{\cG}{\1_B}$ for $B\in \cB(\R)$.
The usual conditions imposed on $\bF$ allow us to assume that every martingale adapted to this filtration is c\`adl\`ag.
Given a c\`adl\`ag $L_2$-martingale $X=(X_t)_{t\in \timed}$, the sharp bracket process
is denoted by $\langle X \rangle = (\langle X \rangle_t)_{t\in \timed}$ and the 
square bracket process by $[X] = ([X]_t)_{t\in \timed}$ (see \cite[Chapter VII]{DM82}).
Both processes are assumed to be non-negative, c\`adl\`ag, and non-decreasing on $\Omega$,
and $[X]_0\equiv 0$ if $X_0\equiv 0$.
In particular, the process $\langle X \rangle = (\langle X \rangle_t)_{t\in \timed}$ is 
the unique (up to indistinguishability) non-decreasing, predictable, c\`adl\`ag process with
$\langle X \rangle_0 \equiv 0$ such that $(X_t^2 - \langle X \rangle_t)_{t\in \timed}$ is a martingale.

\subsection{Bounded mean oscillation and regular weights}
We use the following weighted BMO spaces, where we agree about $\inf\emptyset := \infty$ in this subsection.

\begin{defi}\label{definition:weighted_bmo} 
Let $p\in(0,\infty)$.
\begin{enumerate}[(1)]
\item For $Y\in \CL_0(\timed)$ and $\Phi \in \CL^+(\timed)$
      we let $\|Y\|_{\BMO_p^{\Phi}(\timed)}:=\inf c$, where the infimum is taken over all $c\in [0,\infty)$ such that,
      for all $t\in \timed$ and $\rho\in  \cS_t$,
      \begin{equation} \label{eqn:definition:weighted_bmo:item_1}
      \ce{\F_\rho}{|Y_t-Y_{\rho-}|^p} \leqslant c^p \Phi_{\rho}^p \mbox{ a.s.}
      \end{equation}
\item For $Y\in \CL_0(\timed)$ and $\Phi \in \CL^+(\timed)$
      we let $\|Y\|_{\bmo_p^{\Phi}(\timed)}:=\inf c$, where the infimum is taken over all $c\in [0,\infty)$ such that,
      for all $t\in \timed$ and $\rho\in  \cS_t$,
      \begin{equation}\label{eqn:definition:weighted_bmo:item_2}
        \ce{\F_\rho}{|Y_t-Y_{\rho}|^p} \leqslant c^p \Phi_{\rho}^p \quad \mbox{a.s.}
      \end{equation}
\end{enumerate}
For $\|Y\|_\Theta<\infty$ we write $Y\in \Theta$ with $\Theta\in \{ \BMO_p^\Phi(\timed),\bmo_p^\Phi(\timed)\}$.
If $\Phi\equiv 1$, then we use the notation $\BMO_p(\timed)$ and $\bmo_p(\timed)$, respectively.
\end{defi}
\smallskip
If $Y_0\equiv 0$ is not necessarily satisfied, then we use the notation $\| Y-Y_0\|_{\BMO^\Phi(\timed)}$
for $\| (Y_t-Y_0)_{t\in \timed} \|_{\BMO^\Phi(\timed)}$.
If $Y\in \CL_0(\timed)$ has continuous paths a.s., then $\|Y\|_{\BMO_p^{\Phi}(\timed)}=\|Y\|_{\bmo_p^{\Phi}(\timed)}$.
The theory of classical non-weighted $\BMO$-martingales can be found in \cite[Ch.VII]{DM82} or \cite[Ch.IV]{Pr05}; 
non-weighted $\bmo$-martingales were mentioned in \cite[Ch.VII, Remark 87]{DM82} and used after that in \cite{CKS98, DMSSS97}. 
The $\BMO_p^\Phi$ space was introduced and discussed in \cite{Geiss:05}. Some relations between $\bmo_p^\Phi$
and $\BMO_p^\Phi$, that are necessary for us,  are discussed in the appendix below. 
Next we recall (and adapt) the class $\cSM_p$, introduced in \cite[Definition 3]{Geiss:05}:
\smallskip

\begin{defi}
\label{def:SM_p}
For $p\in (0,\infty)$ and $\Phi\in \CL^+(\timed)$ 
we let $\| \Phi\|_{\cSM_p(\timed)} := \inf c$, where the infimum is taken over all 
$c \in [1, \infty)$ such that for all stopping times $\rho:\Omega \to \timed$ one has
\[ \ce{\F_\rho}{\sup_{\rho \leqslant t \in \timed} \Phi_t^p} \leqslant c^p \Phi_\rho^p \quad \mbox{a.s.} \]
If $\|\Phi\|_{\cSM_p(\timed)} <\infty$, then we write $\Phi\in \cSM_p(\timed)$.
\end{defi}
\smallskip
By choosing $\rho\equiv 0$, $\Phi\in \cSM_p(\timed)$ implies that $\E\sup_{t\in \timed} \Phi_t^p<\infty$.
Moreover, it follows directly from the definition that $\cSM_p(\timed) \subseteq \cSM_r(\timed)$ whenever $0<r\leqslant p<\infty$.
Simplifications in \cref{definition:weighted_bmo}  and  \cref{def:SM_p} and relations between the $\BMO$- and $\bmo$-spaces are 
recalled in \cref{sec:general_properties_BMO}.
If $p\in (1,\infty)$ and $\Phi$ is a martingale, then 
$\Phi\in \CL^+(\timed)$ is equivalent to the standard reverse H\"older condition
$\ce{\F_a}{\Phi_t^p} \leqslant d^p \Phi_a^p$ a.s. for $0\leqslant a \leqslant t < T$.

\smallskip
\subsection{Uniform quantization and time-nets}\label{subsec:time-net}
For $\theta \in (0,1]$ and $n\in \bN$ we introduce the non-uniform time-nets 
$\tau_n^{\theta}=\{t_{i,n}^{\theta}\}_{i=0}^n$ with 
\begin{equation}\label{eqn:theta_time-nets}
   t_{i,n}^{\theta} := T - T \( 1 - \tfrac{i}{n} \)^\frac{1}{\theta}
\end{equation}
for $i=0,\ldots,n$, that are characterized by the uniform quantization property
\[ \frac{\theta}{T^\theta} \int_{t_{i-1,n}^\theta}^{t_{i,n}^\theta} (T-u)^{\theta-1} \od u = \frac{1}{n}
   \sptext{1}{for}{1}
   i=1,\ldots,n. \]
We define the set of all {\em deterministic} time-nets
\[ \mathcal T := \{\tau = \{t_i\}_{i=0}^n : 0=t_0< t_1 < \cdots < t_n=T, \,  n\in \bN\} \]
and, for $\theta\in (0,1]$ and $\tau=\{t_i\}_{i=0}^n \in {\mathcal T}$, 
\[
\|\tau\|_\theta := \sup_{i=1,\ldots,n} \frac{t_i - t_{i-1}}{(T-t_{i-1})^{1-\theta}}.
\]
Note that 
\begin{align}\label{eqn:upper_bound_adapted_nets}
\| \tau_n^\theta \|_1 \leqslant \frac{T}{\theta n}
\sptext{1}{and}{1}
\| \tau_n^\theta \|_\theta \leqslant \frac{T^\theta}{\theta n},
\end{align}
and
\begin{align}\label{eq:adapted_net_monotnicity}
\frac{t_i-u}{(T-u)^{1-\theta}} \leqslant \frac{t_i-t_{i-1}}{(T-t_{i-1})^{1-\theta}}
\sptext{1}{for}{1} u\in [t_{i-1},t_i]\cap [0,T).
\end{align}

\subsection{List of notation}
Finally we list some important notations used in the article.
\medskip

{\bf Operators and spaces}

\begin{tabular}{|l|l|l|}\hline 
$\hoelO {\eta,q}$ & H\"older spaces & \cref{sec:intro:function_spaces}\\
$\BMO_p^\Phi(\timed)$, $\bmo_p^\Phi(\timed)$ & BMO spaces & \cref{definition:weighted_bmo} \\
$\cSM_p(\timed)$ & regularity for weights $\Phi$ & \cref{def:SM_p} \\
$\cI^\alpha$ and $\cI^\alpha_t$  & Riemann-Liouville type operators & \cref{definition:RL} \\
$\B_{p,q}^\alpha$, $\B_{b,q}^\alpha$ & norms for martingales & Definitions \ref{definition:B_pq^alpha} and \ref{definition:B_bq^alpha} \\
$\bvlocal$, $\lbvlocal \eta$ &  spaces of bounded variation & Definitions \ref{definition:bvloc} and \ref{defi:lbvlocal} \\ 
$\defX$, $D_\rho F$, $\dom(\Gamma_\rho^i)$, $\Gamma_\rho^i $ & weighted difference operators
                                         & \cref{statement:definition_Gamma} \\ \hline
\end{tabular}
\bigskip

{\bf Typical meaning of parameters}

\begin{tabular}{|l|l|l|} \hline
$\alpha$ & smoothing parameter for Riemann-Liuoville operator & \\ 
$\theta$ & mesh-size parameter in $\| \tau \|_\theta$ for time-net $\tau\in \cT$ and & \cref{subsec:time-net} \\
         &  smoothness of terminal condition $g(Y_T)$ & \cref{sec:application_brownian_case} \\
$\varepsilon$ & behaviour of measure $\rho$ around zero & \cref{sec:application_levy_case} and
\cref{sec:approximation_random_measure} \\
$\beta$ & small jump behaviour  of L\'evy process  $X$ & \cref{sec:application_levy_case} and
\cref{sec:approximation_random_measure} \\
$\eta$ & smoothness of terminal condition $f(X_T)$ & \cref{sec:application_levy_case} and
\cref{sec:approximation_random_measure} \\ \hline
\end{tabular}
\bigskip

{\bf Square functions}

\begin{tabular}{|l|l|}\hline 
$\intsq{\varphi}{\tau}_a$ & \cref{def:integrated_square_function} \\
$\intsqw{\varphi}{\pi}{\tau}_a$ & \cref{ass:random_measures} \\
$\intsqw{\varphi}{\sigma}{\tau}_a$  & \cref{ass:random_measures_simplified}\\ \hline
\end{tabular}
\bigskip

{\bf Oscillation of process, characteristics of L\'evy process $X$ and 
functional $\rho$}

\begin{tabular}{|l|l|l|}\hline 
$\losc_t(\varphi)$, $\uosc_t(\varphi)$ & oscillation of processes & \cref{definition:oscillation} \\
${\rm TV}(\rho,\eta)$ & weighted total variation & \cref{definition:TVrho} \\
$U(\varepsilon;c)$, $\cU(\beta,A;c)$ & upper bounds & \cref{defi:uppper_bounds_X_rho} \\
$L(\varepsilon)$, $\cL(\beta)$ & lower bounds & \cref{defi:lower_bounds_X_rho} \\ \hline
\end{tabular}


\section[Riemann-Liouville type operators]{Riemann-Liouville type operators}
\label{sec:RL-operators}

Riemann-Liouville operators are a central object and tool in fractional calculus. It is natural and useful 
to extend them to random frameworks. There are two principal approaches:
Directly translating the approach from fractional calculus, that uses Volterra kernels, leads to the notion of fractional processes,
in particular fractional martingales.
In our setting one would take a c\`adl\`ag process $K$ and would consider
\[ t \mapsto \int_0^t (t-u)^{\alpha-1} K_u \od u. \] 
This yields to an approach natural for path-wise fractional calculus of stochastic processes and is used, for example, for Gaussian processes 
\cite{Hu_etal:09}. For our purpose we use the different approach
\[ t\mapsto \int_0^T (T-u)^{\alpha-1} K_{u\wedge t} \od u \]  
to define $\cI_t^\alpha K$ in Definition \ref{definition:RL} below. 
The idea behind the operator $\cI^\alpha$ is to 
remove or reduce singularities of a c\`adl\`ag process $(K_t)_{t\in [0,T)}$ when $t\uparrow T$.
As we see in  \cref{theo:intro_L2} below, this approach is the right one to handle fractional smoothness in the Malliavin sense 
and in the sense of interpolation theory.
One basic difference to the Volterra-kernel approach is that, starting with a (sub-, super-) martingale $L$, we again obtain a (sub-, super-) martingale 
$\cI_t^\alpha L$.
This second approach was used in \cite[Definition 4.2]{Geiss:Toivola:09}, \cite[Section 4]{Geiss:Toivola:15}, and \cite{Applebaum:Banuelos:15},
and relates to fractional integral transforms of martingales  
(see, for example, \cite{Arai_etal:20}). This  
corresponds to equation \eqref{eqn:1:statement:RL_for_martinagles} of our \cref{statement:RL_for_martinagles-II}.

\begin{defi}
\label{definition:RL}
For $\alpha>0$ and a c\`adl\`ag function $K:[0,T)\to \R$ we define 
$\cI^\alpha K := (\cI_t^\alpha K)_{t\in [0,T)}$ 
by
\[ \cI_t^\alpha K:= \frac{\alpha}{T^\alpha} \int_0^T (T-u)^{\alpha-1} K_{u\wedge t} \od u. \]
Moreover, for $\alpha=0$ we let $\cI_t^0 K:=K_t$.
\end{defi}
\medskip

The c\`adl\`ag property implies the  boundedness of $K$ on any compact interval of $[0, T)$. 
Therefore, $\cI^\alpha K$ is well-defined and c\`adl\`ag on $[0, T)$.
The above definition can be re-formulated in terms of the classical Riemann-Liouville operator 
$\mathcal{R}_a^\alpha(f) := \frac{1}{\Gamma(\alpha)} \int_0^a  (a-u)^{\alpha -1} f(u) \od u$
by
\begin{align*}
\mathcal{R}_T^\alpha(K^{(t)}) & = \frac{T^\alpha}{\Gamma(\alpha+1)} \cI_t^\alpha K
  \sptext{1}{with}{1} 
  K^{(t)}_u:= K_{u\wedge t}
\end{align*}
where we compute the Riemann-Liouville operator, applied to the function $u\mapsto K^{(t)}_u$,
at $a=T$.
We use a different normalisation as we want to interpret the kernel in the Riemann-Liouville integral
as density of a probability measure. 
To shorten the  notation we will also call $\cI_t^\alpha K$ Riemann-Liouville operator although 
it is variant  of the classical Riemann-Liouville operator.
It follows directly from the definition that we have, for $\alpha \geqslant 0$, 
\begin{align}\label{eqn:RL-split}
\cI_t^\alpha K =  \frac{\alpha}{T^\alpha} \int_0^t (T-u)^{\alpha -1} K_u \od u + \left ( \frac{T-t}{T} \right )^\alpha K_t.
\end{align}
In the following we only need $\cI^\alpha K$ for $\alpha \geqslant 0$. However, to derive an inversion formula we extend the
definition by \eqref{eqn:RL-split} to the case $\alpha<0$ and prove that there is a group structure behind:
\smallskip

\begin{prop}
\label{statemant:RL_group_structure}
Define for $\alpha<0$, a c\`adl\`ag function $K:[0,T)\to \R$, and $t \in [0,T)$,
$\cI_t^\alpha K$ by formula \eqref{eqn:RL-split}. Then 
\begin{enumerate}[{\rm (1)}]
\item \label{item:1:statemant:RL_group_structure}
      $\cI^\alpha_t (\cI^\beta_\cdot K) = \cI^{\alpha+\beta}_t K$ for all $\alpha,\beta\in \R$,
\item \label{item:2:statemant:RL_group_structure}
      $\cI_t^{-\alpha} (\cI^\alpha_ \cdot K) = K_t$ for all $\alpha\in \R$.
\end{enumerate}
\end{prop}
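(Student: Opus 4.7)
I would first observe that assertion \eqref{item:2:statemant:RL_group_structure} is the special case $\beta=-\alpha$ of \eqref{item:1:statemant:RL_group_structure} combined with $\cI^0 K=K$, so it is enough to prove \eqref{item:1:statemant:RL_group_structure}. The key observation on which the plan rests is that formula \eqref{eqn:RL-split},
\[ \cI^\alpha_t K=\frac{\alpha}{T^\alpha}\int_0^t (T-u)^{\alpha-1}K_u\od u+\Bigl(\frac{T-t}{T}\Bigr)^\alpha K_t, \]
now holds uniformly for every $\alpha\in\R$ (as definition when $\alpha<0$ and as shown in the text for $\alpha\geqslant 0$), so one can work with this single closed-form expression throughout. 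The cases $\alpha=0$ or $\beta=0$ of \eqref{item:1:statemant:RL_group_structure} are immediate since $\cI^0$ is the identity, so I would therefore restrict the main computation to $\alpha,\beta\neq 0$.

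The strategy is then to set $L_t:=\cI^\beta_t K$, apply the formula above to $\cI^\alpha_t L$, and substitute the corresponding expression for $L$ into both resulting terms. Swapping the order of integration in the arising double integral by Fubini and evaluating the inner integral $\int_u^t (T-v)^{\alpha-1}\od v=\alpha^{-1}[(T-u)^\alpha-(T-t)^\alpha]$ (permissible because $\alpha\neq 0$) produces four contributions. Two of them share the factor $\beta(T-t)^\alpha T^{-(\alpha+\beta)}\int_0^t(T-u)^{\beta-1}K_u\od u$ with opposite signs and cancel exactly; the remaining two combine into
\[ \frac{\alpha+\beta}{T^{\alpha+\beta}}\int_0^t (T-u)^{\alpha+\beta-1}K_u\od u+\Bigl(\frac{T-t}{T}\Bigr)^{\alpha+\beta}K_t=\cI^{\alpha+\beta}_t K, \]
which is precisely \eqref{item:1:statemant:RL_group_structure}.

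The main difficulty I anticipate is the bookkeeping of the cancellations, together with the borderline case $\alpha+\beta=0$: there the surviving integral $\int_0^t(T-u)^{-1}K_u\od u$ remains finite (since $K$ is bounded on $[0,t]\subset[0,T)$) but is multiplied by the vanishing prefactor $\alpha+\beta$, so it drops out and one recovers $\cI^0_t K=K_t$, in agreement with \eqref{item:2:statemant:RL_group_structure}. Local integrability of $(T-u)^{\gamma-1}$ on $[0,t]$ for every $\gamma\in\R$, together with boundedness of $K$ on compact sub-intervals of $[0,T)$ guaranteed by the c\`adl\`ag property, ensures that all intermediate integrals are well defined for $\alpha,\beta$ of arbitrary sign, so no further regularity of $K$ needs to be imposed.
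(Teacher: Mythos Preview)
Your approach is essentially identical to the paper's: both reduce \eqref{item:2:statemant:RL_group_structure} to \eqref{item:1:statemant:RL_group_structure}, then expand $\cI^\alpha_t(\cI^\beta_\cdot K)$ via the split formula \eqref{eqn:RL-split}, apply Fubini to the double integral, evaluate the inner $(T-u)^{\alpha-1}$-integral, and observe that the two $\beta(T-t)^\alpha T^{-(\alpha+\beta)}\int_0^t(T-u)^{\beta-1}K_u\,\od u$ terms cancel. Your treatment is slightly more careful in isolating the degenerate cases $\alpha=0$, $\beta=0$, and $\alpha+\beta=0$ (the paper's computation implicitly assumes $\alpha\neq 0$ when dividing by $\alpha$ in the inner integral), but the substance of the argument is the same.
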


\begin{proof}
As \eqref{item:2:statemant:RL_group_structure} follows from \eqref{item:1:statemant:RL_group_structure},
we only verify \eqref{item:1:statemant:RL_group_structure}, which follows from

\begin{align*}
\cI_t^{\alpha}(\cI^\beta K) & = \frac{\alpha}{T^\alpha} \int_0^t (T-u)^{\alpha -1} \cI^\beta_u K \od u + \(\frac{T-t}{T}\)^\alpha \cI_t^\beta K \\
& = \frac{\alpha}{T^\alpha} \int_0^t (T-u)^{\alpha -1} \(\frac{\beta}{T^\beta} \int_0^u (T-v)^{\beta -1} K_v \od v + \(\frac{T-u}{T}\)^\beta K_u\) \od u \\
& \qquad + \(\frac{T-t}{T}\)^\alpha \( 
  \frac{\beta}{T^\beta} \int_0^t (T-u)^{\beta -1} K_u \od u + \(\frac{T - t}{T}\)^\beta K_t\)\\
& = \frac{\alpha \beta}{T^{\alpha +\beta}}\int_0^t (T-u)^{\alpha -1} \int_0^u (T-v)^{\beta -1} K_v \od v \od u + \frac{\alpha}{T^{\alpha +\beta}} \int_0^t (T-u)^{\alpha + \beta -1} K_u \od u\\
& \qquad + \frac{\beta (T-t)^\alpha}{T^{\alpha + \beta}} \int_0^t (T-u)^{\beta -1} K_u \od u + \(\frac{T-t}{T}\)^{\alpha + \beta} K_t\\
& = \frac{\beta}{T^{\alpha +\beta}}\int_0^t (T-v)^{\beta -1} K_v \((T-v)^\alpha - (T-t)^\alpha\) \od v + \frac{\alpha}{T^{\alpha +\beta}} \int_0^t (T-u)^{\alpha + \beta -1} K_u \od u\\
& \qquad + \frac{\beta (T-t)^\alpha}{T^{\alpha + \beta}} \int_0^t (T-u)^{\beta -1} K_u \od u + \(\frac{T-t}{T}\)^{\alpha + \beta} K_t\\ 
& = \frac{\beta}{T^{\alpha +\beta}}\int_0^t (T-v)^{\alpha + \beta -1} K_v \od v - \frac{\beta(T-t)^\alpha}{T^{\alpha + \beta}}\int_0^t (T-v)^{\beta -1} K_v \od v \\
& \quad + \frac{\alpha}{T^{\alpha +\beta}} \int_0^t (T-u)^{\alpha + \beta -1} K_u \od u
         + \frac{\beta (T-t)^\alpha}{T^{\alpha + \beta}} \int_0^t (T-u)^{\beta -1} K_u \od u + \(\frac{T-t}{T}\)^{\alpha + \beta} K_t\\ 
& = \cI^{\alpha + \beta}_t K. \qedhere
\end{align*}
\end{proof}
We continue with some more structural properties:
\medskip

\begin{prop}
\label{statemant:RL_general}
For a c\`adl\`ag function $K:[0,T)\to \R$ and $t \in [0,T)$ one has:
\begin{enumerate}[{\rm (1)}]
\item \label{item:1:statemant:RL_general}
      $\lim_{\alpha\downarrow 0} \cI_t^\alpha K = K_t$.
\item \label{item:2:statemant:RL_general}
      $\lim_{\alpha\uparrow \infty} \cI_t^\alpha K = K_0$.
\item \label{item:5:statemant:RL_general}
      $\Delta \cI_t^\alpha K = \left ( \frac{T-t}{T} \right )^\alpha \Delta K_t$ for $\alpha\in \R$.
\end{enumerate}
\end{prop}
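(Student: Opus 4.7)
The plan is to derive all three parts from the explicit decomposition \eqref{eqn:RL-split}, namely
\[
\cI_t^\alpha K =  \frac{\alpha}{T^\alpha} \int_0^t (T-u)^{\alpha -1} K_u \od u + \left ( \frac{T-t}{T} \right )^\alpha K_t,
\]
which cleanly separates an integral contribution from a boundary contribution carrying the value $K_t$ itself. Since $K$ is c\`adl\`ag on $[0,T)$, it is bounded on any compact sub-interval, so the integral is well-defined and the estimates below are justified. Note the elementary identity
\[
\frac{\alpha}{T^\alpha}\int_0^t (T-u)^{\alpha-1}\od u = 1 - \left(\frac{T-t}{T}\right)^\alpha,
\]
which I will use repeatedly as a book-keeping device.

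For part \eqref{item:1:statemant:RL_general}, I would bound the integral term by $\|K\|_{L_\infty([0,t])}\bigl(1-((T-t)/T)^\alpha\bigr)$, which tends to $0$ as $\alpha\downarrow 0$, while the boundary term $((T-t)/T)^\alpha K_t$ tends to $K_t$. For part \eqref{item:5:statemant:RL_general}, the Lebesgue integral $\int_0^t(T-u)^{\alpha-1}K_u\od u$ is continuous in $t$ for any $\alpha\in\R$ (for $\alpha<0$ the factor $(T-u)^{\alpha-1}$ is locally bounded on $[0,t]\subseteq[0,T)$), and the factor $((T-t)/T)^\alpha$ is continuous in $t$. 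Hence the only jump of $\cI_t^\alpha K$ comes from the jump of $K_t$ through the boundary term, which produces the claimed formula uniformly for $\alpha\in\R$ by virtue of \cref{statemant:RL_group_structure} extending the definition to negative indices.

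For part \eqref{item:2:statemant:RL_general}, the case $t=0$ is trivial since $\cI_0^\alpha K=K_0$. For $t\in(0,T)$ the boundary term $((T-t)/T)^\alpha K_t$ tends to $0$ geometrically in $\alpha$. For the integral term I would substitute $v=(T-u)/T$ to rewrite it as
\[
\alpha\int_{(T-t)/T}^{1} v^{\alpha-1} K_{T(1-v)}\od v.
\]
Here $\alpha v^{\alpha-1}\od v$ is the density of the Beta$(\alpha,1)$ law on $[0,1]$, which concentrates at $v=1$ as $\alpha\uparrow\infty$. By right-continuity of $K$ at $0$, one has $K_{T(1-v)}\to K_0$ as $v\uparrow 1$, so a standard $\varepsilon$-$\delta$ splitting of the integration domain into a small left-neighbourhood of $1$ (where $|K_{T(1-v)}-K_0|<\varepsilon$) and its complement in $[(T-t)/T,1]$ (where the Beta$(\alpha,1)$ mass decays as $((1-\delta/T))^\alpha - ((T-t)/T)^\alpha$) yields the limit $K_0$.

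The only non-trivial step is part \eqref{item:2:statemant:RL_general}: the concentration argument for Beta$(\alpha,1)$ must be combined with the c\`adl\`ag (not continuous) behaviour of $K$ at $0$, so care is needed to use right-continuity of $K$ at $0$ rather than any two-sided continuity, and to keep track of the residual mass $((T-t)/T)^\alpha$ which vanishes geometrically. Parts \eqref{item:1:statemant:RL_general} and \eqref{item:5:statemant:RL_general} are essentially immediate from the decomposition.
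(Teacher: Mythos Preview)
Your proposal is correct and follows exactly the approach the paper takes: the paper's proof is a single line stating that parts \eqref{item:1:statemant:RL_general} and \eqref{item:5:statemant:RL_general} follow from the decomposition \eqref{eqn:RL-split}, and part \eqref{item:2:statemant:RL_general} from the c\`adl\`ag property of $K$. You have simply supplied the details the paper leaves implicit, in particular the Beta$(\alpha,1)$ concentration argument for \eqref{item:2:statemant:RL_general}, which is a clean way to make the one-line appeal to right-continuity at $0$ rigorous.
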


\begin{proof}
\eqref{item:1:statemant:RL_general} and \eqref{item:5:statemant:RL_general} follow from \cref{eqn:RL-split},
and \eqref{item:2:statemant:RL_general} from the c\`adl\`ag property of $K$.
\end{proof}

With the next statement we derive properties of $K$ from properties of $\cI_t^\alpha K$:

\begin{prop}\label{statement:properties_inverse_RL}
For $\alpha>0$, a c\`adl\`ag function $K:[0,T)\to \R$, and $0 \leqslant s < t < T$ we have 
\begin{equation}\label{eqn:statement:properties_inverse_RL}
 K_t -K_s =   \left ( \frac{T}{T-t} \right )^\alpha \left ( \cI_t^\alpha K - \cI_s^\alpha K \right ) 
              - \alpha T^\alpha \int_s^t (T-u)^{-\alpha-1} \left ( \cI_u^\alpha K - \cI_s^\alpha K \right ) \od u.
\end{equation}
Consequently the following holds:
\begin{enumerate}[{\rm (1)}]
\item \label{item:1:statement:properties_inverse_RL}
      $|K_t -K_s| \leqslant 2 \left ( \frac{T}{T-t} \right )^\alpha \sup_{u \in [s,t]}  \left | \cI_u^\alpha K - \cI_s^\alpha K \right |$.
\item \label{item:2:statement:properties_inverse_RL}
      If $\cI_T^\alpha K:=\lim_{t\uparrow T} \cI_u^\alpha K\in \R$ does exist, then $\lim_{t\uparrow T} (T-t)^\alpha K_t = 0$.
\end{enumerate}
\end{prop}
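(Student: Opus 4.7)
Writing $J_t := \cI_t^\alpha K$, my starting point is the inversion formula already available from \cref{statemant:RL_group_structure}\eqref{item:2:statemant:RL_group_structure}, namely $K_t = \cI_t^{-\alpha}(\cI^\alpha K)$. Expanding $\cI_t^{-\alpha}$ via the extended definition \eqref{eqn:RL-split} with $\alpha$ replaced by $-\alpha$ gives
\[
K_t \;=\; \left(\frac{T}{T-t}\right)^{\!\alpha} J_t \;-\; \alpha\, T^{\alpha} \int_0^t (T-u)^{-\alpha-1}\, J_u\, \od u.
\]
Subtracting the same identity at $s$ and adding/subtracting $\left(\frac{T}{T-t}\right)^{\alpha} J_s$ yields
\[
K_t - K_s \;=\; \left(\frac{T}{T-t}\right)^{\!\alpha}(J_t - J_s)\;+\; J_s\!\left[\left(\frac{T}{T-t}\right)^{\!\alpha}-\left(\frac{T}{T-s}\right)^{\!\alpha}\right]\;-\;\alpha T^\alpha \int_s^t (T-u)^{-\alpha-1} J_u\, \od u.
\]
Now I invoke the elementary identity
\[
\left(\frac{T}{T-t}\right)^{\!\alpha}-\left(\frac{T}{T-s}\right)^{\!\alpha}\;=\;\alpha\, T^{\alpha}\int_s^t (T-u)^{-\alpha-1}\,\od u,
\]
to combine the middle and last term into a single integral, yielding exactly \eqref{eqn:statement:properties_inverse_RL}.

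\textbf{Consequences.} For \eqref{item:1:statement:properties_inverse_RL}, I apply the triangle inequality to \eqref{eqn:statement:properties_inverse_RL} and bound the integral by
\[
\alpha T^\alpha \int_s^t (T-u)^{-\alpha-1}|J_u-J_s|\,\od u \;\leqslant\; \sup_{u\in[s,t]}|J_u-J_s|\cdot T^\alpha\bigl[(T-t)^{-\alpha}-(T-s)^{-\alpha}\bigr]\;\leqslant\; \left(\frac{T}{T-t}\right)^{\!\alpha}\sup_{u\in[s,t]}|J_u-J_s|,
\]
using the same elementary identity evaluated explicitly; since $|J_t-J_s|$ is also bounded by the same sup, combining gives the factor $2$.

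\textbf{The limit.} For \eqref{item:2:statement:properties_inverse_RL}, the plan is a classical two-step limiting argument. Multiplying \eqref{eqn:statement:properties_inverse_RL} by $(T-t)^\alpha/T^\alpha$ yields
\[
\left(\frac{T-t}{T}\right)^{\!\alpha}(K_t - K_s) \;=\; (J_t - J_s)\;-\;\alpha(T-t)^\alpha\!\int_s^t (T-u)^{-\alpha-1}(J_u - J_s)\,\od u.
\]
Given $\varepsilon>0$, since $\cI_T^\alpha K$ exists, I choose $s<T$ so that $|J_u-J_s|<\varepsilon$ for all $u\in[s,T)$; then both summands on the right are bounded by $\varepsilon$ (using $\alpha(T-t)^\alpha\int_s^t (T-u)^{-\alpha-1}\od u \leqslant 1$), giving $(T-t)^\alpha |K_t-K_s|\leqslant 2T^\alpha\varepsilon$. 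Fixing this $s$ and letting $t\uparrow T$, the term $(T-t)^\alpha K_s$ goes to zero, so $\limsup_{t\uparrow T}(T-t)^\alpha|K_t|\leqslant 2T^\alpha \varepsilon$, and letting $\varepsilon\downarrow 0$ gives the claim.

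\textbf{Expected difficulty.} None of the steps is hard; the only non-routine moment is spotting the identity that converts the difference of the two prefactors $(T/(T-t))^\alpha-(T/(T-s))^\alpha$ into a Riemann integral of the same kernel $(T-u)^{-\alpha-1}$, which is what makes the two integral terms merge into $J_u-J_s$. I would use Proposition \ref{statemant:RL_group_structure} as a black box rather than re-deriving the inversion from scratch via \eqref{eqn:RL-split}, since the group-structure route makes the origin of the formula transparent.
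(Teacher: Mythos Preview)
Your proof is correct and, for the main identity \eqref{eqn:statement:properties_inverse_RL} and item \eqref{item:1:statement:properties_inverse_RL}, essentially identical to the paper's: both invoke the inversion $K_t=\cI_t^{-\alpha}(\cI^\alpha K)$ from \cref{statemant:RL_group_structure} and expand via \eqref{eqn:RL-split}, with you spelling out the intermediate add-subtract step that the paper compresses into one line.

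For item \eqref{item:2:statement:properties_inverse_RL} there is a small but genuine difference. The paper does not use the increment identity \eqref{eqn:statement:properties_inverse_RL} at all; instead it rewrites
\[
\left(\frac{T-t}{T}\right)^{\!\alpha} K_t \;=\; \cI_t^\alpha K \;-\; \int_0^t (\cI_u^\alpha K)\,\alpha\,\frac{(T-t)^\alpha}{(T-u)^{\alpha+1}}\,\od u
\]
and argues via an approximate-identity mechanism: the kernel integrates to a quantity tending to $1$ and concentrates near $u=t$, so the integral converges to $\cI_T^\alpha K$ and cancels the first term. Your route---fix $s$ close to $T$ by the Cauchy criterion, bound both pieces of the multiplied increment identity by $\varepsilon$, then let $t\uparrow T$---is equally short and arguably cleaner, since it reuses \eqref{eqn:statement:properties_inverse_RL} directly rather than introducing a separate representation. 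Neither approach offers more generality here; yours just keeps the whole proposition running on a single formula.
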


\begin{proof}
To verify relation \eqref{eqn:statement:properties_inverse_RL} we define $L_t := \cI_t^\alpha K$ for $t\in [0,T)$, 
express $K_t-K_s$ as $\cI_t^{-\alpha} L - \cI_s^{-\alpha} L$, and use \eqref{eqn:RL-split} to get
\[    \cI_t^{-\alpha} L - \cI_s^{-\alpha} L 
   = \left ( \frac{T}{T-t} \right )^\alpha (L_t-L_s) - \alpha T^\alpha \int_s^t (T-u)^{-\alpha-1} (L_u-L_s) \od u.\]
\eqref{item:1:statement:properties_inverse_RL} follows from \eqref{eqn:statement:properties_inverse_RL}.
\eqref{item:2:statement:properties_inverse_RL}
We use 
\[    \left ( \frac{T-t}{T} \right )^\alpha K_t  
   =  \cI_t^\alpha K - \int_0^t  (\cI_u^\alpha K) \left (  \alpha \frac{(T-t)^\alpha}{(T-u)^{\alpha+1}} \right ) \od u, \]
$\lim_{t\uparrow T} \int_0^t  \left (  \alpha \frac{(T-t)^\alpha}{(T-u)^{\alpha+1}} \right ) \od u =1$, and that 
$\lim_{t\uparrow T} \sup_{u\in [0,v]} \frac{(T-t)^\alpha}{(T-u)^{\alpha+1}} =0$ for all $v\in [0,T)$.
\end{proof}
\bigskip

The particular case that the function $K$ is a path of a c\`adl\`ag martingale $L$ is of our interest. 
The following statement is obvious, but useful:

\begin{prop}
\label{statement:RL_for_martinagles-I}
If $\alpha \geqslant 0$ and $L=(L)_{t\in [0, T)}$ is a c\`adl\`ag martingale (c\`adl\`ag super-, or sub-martingale), 
then $(\cI_t^\alpha L)_{t\in [0, T)}$
is a c\`adl\`ag martingale (c\`adl\`ag super-, or sub-martingale).
\end{prop}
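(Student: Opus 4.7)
The plan is to reinterpret $\cI_t^\alpha L$ as a probability average. The function $u \mapsto \frac{\alpha}{T^\alpha}(T-u)^{\alpha-1}$ is a probability density on $[0,T]$, so writing $\mu$ for this probability measure gives
\[
   \cI_t^\alpha L = \int_0^T L_{u \wedge t}\, \mu(\od u).
\]
From this representation everything should fall out cleanly.

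First I would dispatch the structural properties using the split formula \eqref{eqn:RL-split}, i.e.\ $\cI_t^\alpha L = \frac{\alpha}{T^\alpha}\int_0^t (T-u)^{\alpha-1} L_u \od u + \left(\frac{T-t}{T}\right)^\alpha L_t$. Adaptedness to $\F_t$ is immediate, the pathwise c\`adl\`ag property follows since the integral term is continuous in $t$ (by dominated convergence using the local boundedness of c\`adl\`ag paths) and the second term is c\`adl\`ag, and $L_1$-integrability is inherited via Fubini from the integrability of $L_u$ on $[0,t]$ and of $L_t$.

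Next I would verify the (sub-, super-)martingale property by interchanging conditional expectation with the $\mu$-integral, which is legal by the integrability established above. The key pointwise identity is that for a martingale $L$ and $0 \leqslant s \leqslant t < T$ one has
\[
 \ce{\F_s}{L_{u \wedge t}} = L_{u \wedge s} \quad \text{for all } u \in [0,T],
\]
which is checked by the three cases $u \leqslant s$, $s < u \leqslant t$, and $u > t$; in each case both sides reduce to the appropriate value by the martingale property of $L$. Integrating against $\mu$ then yields $\ce{\F_s}{\cI_t^\alpha L} = \cI_s^\alpha L$. For a super- or sub-martingale, the corresponding one-sided inequality holds in each of the three cases (trivially when $u \leqslant s$, and by the super-/sub-martingale property otherwise), and integration against the non-negative measure $\mu$ preserves it.

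I do not anticipate any real obstacle; the only mildly technical point is the Fubini-type exchange between $\E[\,\cdot \mid \F_s]$ and $\int_0^T \cdot\, \mu(\od u)$, but this is routine once the integrability bound $\int_0^T \E|L_{u\wedge t}|\,\mu(\od u) < \infty$ is in place, which in turn follows from $\E|L_v|$ being finite and monotone on $[0,t]$ for sub-, super-, or martingales.
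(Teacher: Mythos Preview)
Your argument is correct and is exactly the routine verification the paper has in mind; the paper simply declares the proposition ``obvious, but useful'' and gives no proof. One small correction: for sub- and super-martingales the map $v\mapsto \E|L_v|$ need not be monotone on $[0,t]$ (consider a deterministic positive supermartingale decreasing to zero), but it is \emph{bounded} there --- e.g.\ for a submartingale one has $\E|L_v|\leqslant 2\E L_t^+ - \E L_0$ via the submartingale property of $L^+$ and of $L$ --- and boundedness is all you need for the Fubini exchange.
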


The following functional $\intsq{L}{\tau}$ measures the oscillation of a martingale along a time-net in terms of an area.
Besides the functional occurs in various approximation problems for stochastic integrals, it is particularly designed 
to deal with the oscillation of non-closable martingales.
In \cref{theo:intro_L2} we characterize by the behaviour of this functional 
the degree of singularity of a martingale not closable in $L_2$.
Moreover, in \cref{sec:convergence_sqfunction} we investigate 
the convergence of this functional to a classical square function as the time-nets refine.

\begin{defi}
\label{def:integrated_square_function}
For a deterministic  time-net $\tau=\{t_i\}_{i=0}^n$, $0=t_0 < \cdots < t_n =T$, $a \in [0,T)$,
and a c\`adl\`ag process $L=(L_t)_{t\in [0,T)}$ we let
\[ \intsq{L}{\tau}_a := \int_0^a \left| L_u - \sum_{i=1}^n L_{t_{i-1}} \1_{(t_{i-1},t_i]}(u) 
   \right|^2 \od u \in [0,\infty). \]
Moreover, we define $\intsq{L}{\tau}_T := \lim_{a\uparrow T} \intsq{L}{\tau}_a\in [0,\infty]$. 
\end{defi}
\medskip

Now we give in \cref{theo:intro_L2} a first link between the Riemann-Liouville operators $\cI^\alpha$, real interpolation,
and the square function $\intsq{L}{\tau}$. To do this as simple as possible, we replace a martingale 
$L=(L_t)_{t\in [0,T)}$ by its discrete time version
\[ L^d := (L_{t_k})_{k=0}^\infty
   \sptext{1}{with}{1}
   t_k:=T\left (1-\frac{1}{2^k} \right ). \]
For the vector-valued interpolation we use $H:=L_2(\Omega,\F,\p)$ and
the end-point spaces
\begin{align*}
L^d \in \ell_2^{-\frac{1}{2}}(H) & \Longleftrightarrow \int_0^T \| L_t\|_{L_2}^2 \od t < \infty, \\
L^d \in \ell_\infty(H) & \Longleftrightarrow \|L^d\|_{\ell_\infty(H)} = \sup_{t\in [0,T)} \|L_t\|_{L_2} < \infty,
\end{align*}
where the first equivalence follows from \eqref{eq:discrete_interpol_continuous_interpol} below
and the spaces $\ell_q^s(H)$ and $\ell_\infty(H)$ were introduced in Section \ref{sub:sec:interpolation}.
The first condition, $\int_0^T \| L_t\|_{L_2}^2 \od t < \infty$, 
is a typical condition on martingales that appear as gradient processes. The other end-point,
$\sup_{t\in [0,T)} \| L_t \|_{L_2} < \infty$, consists of the martingales $L$ that are closable in $L_2$.
We will interpolate between these two end-points by the real interpolation method:
\smallskip

\begin{theo}
\label{theo:intro_L2}
For $\theta \in (0,1)$, $\alpha:=\frac{1-\theta}{2}$, and  a c\`adl\`ag martingale $L=(L_t)_{t\in [0,T)} \subseteq L_2$ 
the following assertions are equivalent:
\begin{enumerate}[{\rm (1)}]
\item \label{item:1:theo:intro_L2}
      $L^d \in (\ell_2^{-\frac{1}{2}}(H),\ell_\infty(H))_{\theta,2}$
      where  $\ell_2^{-\frac{1}{2}}(H)$ is defined in \eqref{eqn:definition_ellqsE}.
\item \label{item:2:theo:intro_L2}
      $(\cI_t^\alpha L)_{t\in [0,T)}$ is closable in $L_2$.
\item \label{item:3:theo:intro_L2}
      There is a $c>0$ such that  $\E [L;\tau]_T \leqslant c \|\tau\|_\theta$ for all $\tau \in \cT$.
\end{enumerate}
\end{theo}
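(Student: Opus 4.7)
The plan is to reduce all three conditions to the single analytic condition
\[
(\star)\qquad \int_0^T (T-t)^{-\theta}\|L_t\|_{L_2}^2\,\od t<\infty,
\]
exploiting that $g(u):=\|L_u\|_{L_2}^2$ is non-decreasing for an $L_2$-martingale, so that $\od g$ is a positive Stieltjes measure on $[0,T)$.

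For $(1)\Leftrightarrow(\star)$, I would apply the identification \eqref{eq:interpol_ell_q^s} with $s_0=-\tfrac12$, $q_0=2$, $s_1=0$, $q_1=\infty$, $q=2$ to get $(\ell_2^{-1/2}(H),\ell_\infty(H))_{\theta,2}=\ell_2^{-\alpha}(H)$. Its defining sum $\sum_k 2^{k(\theta-1)}g(t_k)$ with $t_k=T(1-2^{-k})$ is, by monotonicity of $g$, dyadically comparable to $\int_0^T (T-u)^{-\theta} g(u)\,\od u$.

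For $(2)\Leftrightarrow(\star)$, note that $\cI^\alpha L$ is a martingale by \cref{statement:RL_for_martinagles-I}, so closability in $L_2$ is equivalent to $\sup_{t<T}\|\cI_t^\alpha L\|_{L_2}<\infty$. Writing $\cI_t^\alpha L=\int_{[0,T]} L_{u\wedge t}\,\mu_\alpha(\od u)$ with the probability measure $\mu_\alpha(\od u)=\alpha T^{-\alpha}(T-u)^{\alpha-1}\,\od u$ and using $\E L_{u_1\wedge t} L_{u_2\wedge t}=g(u_1\wedge u_2\wedge t)$ together with Fubini gives
\[
\|\cI_t^\alpha L\|_{L_2}^2=\iint g(u_1\wedge u_2\wedge t)\,\mu_\alpha(\od u_1)\mu_\alpha(\od u_2),
\]
and monotone convergence as $t\uparrow T$ combined with $\mu_\alpha((s,T])=((T-s)/T)^\alpha$ and $2\alpha=1-\theta$ then yields $\sup_{t<T}\|\cI_t^\alpha L\|_{L_2}^2=g(0)+T^{\theta-1}\int_0^T(T-s)^{1-\theta}\,\od g(s)$. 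A Stieltjes integration by parts (the boundary term $(T-s)^{1-\theta}g(s)$ vanishes as $s\uparrow T$ whenever either side is finite) rewrites this as $(\star)$ up to constants.

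For $(3)\Leftrightarrow(\star)$, orthogonality of martingale increments rewrites
\[
\E[L;\tau]_T=\sum_i\int_{t_{i-1}}^{t_i}\bigl(g(u)-g(t_{i-1})\bigr)\,\od u=\int_0^T(t_{i(u)}-u)\,\od g(u),
\]
with $t_{i(u)}$ the right endpoint of the $\tau$-subinterval containing $u$. For $(\star)\Rightarrow(3)$, I would bound $t_{i(u)}-u\leqslant\|\tau\|_\theta(T-t_{i-1})^{1-\theta}$ and split into two regimes: either $T-u\geqslant\tfrac12(T-t_{i-1})$, so the bound translates directly into $(T-u)^{1-\theta}$, or else $(T-t_{i-1})^\theta\leqslant 2\|\tau\|_\theta$, which controls a possible tail interval. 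Conversely, testing (3) against the adapted nets $\tau_n^\theta$ (with $\|\tau_n^\theta\|_\theta\leqslant T^\theta/(\theta n)$) and using the asymptotic $t_{i,n}^\theta-t_{i-1,n}^\theta\sim T^\theta(T-t_{i-1,n}^\theta)^{1-\theta}/(\theta n)$ produces on the left half of each subinterval a lower bound proportional to $\tfrac1n\int_0^T(T-u)^{1-\theta}\,\od g(u)$, which via integration by parts returns to $(\star)$. The hard part will be the uniformity in $\tau$ of $(\star)\Rightarrow(3)$: an irregular net can have a long tail interval $(t_{n-1},T)$ with $T-t_{n-1}$ as large as $\|\tau\|_\theta^{1/\theta}$, so the naive bound of $(T-t_{i-1})^{1-\theta}$ by a multiple of $(T-u)^{1-\theta}$ fails there and must be replaced by the case split above, with constants tracked carefully to remain independent of $\tau$.
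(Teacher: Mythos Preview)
Your reduction to the single condition $(\star)$ is exactly the paper's strategy for $(1)\Leftrightarrow(2)$, and your arguments for those two equivalences are correct; for $(2)\Leftrightarrow(\star)$ your covariance computation is in fact the same content as \cref{statement:RL_for_martinagles-II}, equation~\eqref{eqn:5:statement:RL_for_martinagles} at $a=0$, just reached without the It\^o-isometry detour.

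For $(2)\Leftrightarrow(3)$ your route differs from the paper, which invokes the general machinery behind \eqref{eq:item:1:statemtent:equivalence_simplified_setting_new_global} (itself resting on Theorems~\ref{statement:upper_bound_randommeasure}--\ref{statemtent:equivalence_simplified_setting_new}). Your direct Stieltjes approach is more elementary and self-contained, but two points deserve comment. First, the direction $(\star)\Rightarrow(3)$ that you flag as the ``hard part'' is actually immediate: inequality~\eqref{eq:adapted_net_monotnicity} says $(t_{i(u)}-u)\leqslant\|\tau\|_\theta(T-u)^{1-\theta}$ for \emph{every} $u\in[0,T)$ and every net, so your case split is unnecessary and the uniformity in $\tau$ comes for free. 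Second, your sketch of $(3)\Rightarrow(\star)$ has a genuine gap: testing only against $\tau_n^\theta$ and restricting to the left halves of subintervals yields the bound $\int_{A_n}(T-u)^{1-\theta}\,\od g(u)\leqslant C$ for the half-covering sets $A_n$, \emph{not} a bound on the full integral as you claim. A single family of nets cannot recover the mass on the right halves. The standard fix---and what the paper effectively does in the proof of \cref{statement:lower_bounds_randommeasure}\eqref{item:2:statement:lower_bounds_randommeasure}---is to use a second, interleaved family of nets whose left halves are the right halves of $\tau_n^\theta$; summing the two bounds then covers $(0,t_{n-1,n}^\theta]$, and monotone convergence as $n\to\infty$ finishes the argument.
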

\medskip

Before we prove \cref{theo:intro_L2} let us comment on it:

\begin{rema}
From Item \eqref{item:2:theo:intro_L2} we get for all $\varepsilon>0$ a $t(\varepsilon)\in [0,T)$ 
such that for $s\in [t(\varepsilon),T)$ one has
\begin{equation}\label{eqn:convergence_RL_martingale}
 \E \sup_{t\in [s,T)} \left | \int_s^T  ( L_{u\wedge t} - L_s )  (T-u)^{\alpha-1} \frac{\alpha}{T^\alpha} \od u \right |^2 < \varepsilon.
\end{equation}
Without the supremum the left-hand side is equal to $\E|\cI_t^\alpha L- \cI_s^\alpha L|^2$,  
the statement including the supremum follows from Doob's maximal inequality. The convergence in \eqref{eqn:convergence_RL_martingale} 
when $s\uparrow T$ is the replacement of the $L_2$- and a.s. convergence of $L$ in the case $L$ 
would be closable in $L_2$.
\end{rema}

For the proof of \cref{theo:intro_L2} and later in the article we need the following \cref{statement:RL_for_martinagles-II}.
We remark that \cref{statemant:RL_group_structure}\eqref{item:1:statemant:RL_group_structure} for $\alpha,\beta \geqslant 0$ 
can be also understood from equation  \eqref{eqn:1:statement:RL_for_martinagles} of \cref{statement:RL_for_martinagles-II} in the martingale setting.
\bigskip

\begin{prop}
\label{statement:RL_for_martinagles-II}
For $\alpha>0$, a c\`adl\`ag martingale $L=(L_t)_{t\in [0,T)}\subseteq L_2$ and $0\leqslant a < t < T$ one has, a.s.,
\begin{align}
    \cI_t^\alpha L 
& = L_0 + \int_{(0,t]}   \left ( \frac{T-u}{T} \right )^{\alpha} \od L_u,
    \label{eqn:1:statement:RL_for_martinagles}\\
    \ce{\F_a}{\left |\cI^{\alpha}_t  L - \cI^{\alpha}_a L\right |^2} 
& = 2  \alpha\ce{\F_a}{\int_a^T |L_{u\wedge t} - L_a|^2 \left (\frac{T-u}{T} \right)^{2\alpha -1} \frac{\od u}{T}}, 
    \label{eqn:3:statement:RL_for_martinagles}\\
    \ce{\F_a}{\left |\cI^{\alpha}_t  L - \cI^{\alpha}_a L\right |^2}
    + \left ( \frac{T-a}{T} \right )^{2 \alpha} |L_a|^2
& = 2 \alpha \ce{\F_a}{\int_a^T |L_{u\wedge t}|^2 \left (\frac{T-u}{T} \right)^{2\alpha -1} \frac{\od u}{T}}.
    \label{eqn:5:statement:RL_for_martinagles}
\end{align}
\end{prop}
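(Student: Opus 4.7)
The plan is to prove the three identities in order, since \eqref{eqn:3:statement:RL_for_martinagles} is extracted from \eqref{eqn:1:statement:RL_for_martinagles} via the conditional It\^o isometry, and \eqref{eqn:5:statement:RL_for_martinagles} then follows from \eqref{eqn:3:statement:RL_for_martinagles} by expanding the square and using the martingale property of $L$.

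For \eqref{eqn:1:statement:RL_for_martinagles}, I would apply the semimartingale product formula to $L_u \, g(u)$ where $g(u) := \left(\tfrac{T-u}{T}\right)^{\alpha}$. Since $g$ is deterministic, continuous, and of finite variation, the quadratic covariation $[L,g]$ vanishes and $\od g(u) = -\tfrac{\alpha}{T^{\alpha}}(T-u)^{\alpha-1}\od u$, yielding
\[
g(t)L_t - L_0 = \int_{(0,t]} g(u)\, \od L_u - \frac{\alpha}{T^{\alpha}}\int_0^t (T-u)^{\alpha-1} L_u\, \od u.
\]
Rearranging and comparing with \eqref{eqn:RL-split} identifies the left-hand side with $\cI_t^{\alpha} L - L_0$.

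For \eqref{eqn:3:statement:RL_for_martinagles}, identity \eqref{eqn:1:statement:RL_for_martinagles} gives $\cI_t^{\alpha} L - \cI_a^{\alpha} L = \int_{(a,t]} g(u)\, \od L_u$, and the conditional It\^o isometry delivers
\[
\ce{\F_a}{|\cI_t^{\alpha} L - \cI_a^{\alpha} L|^2} = \ce{\F_a}{\int_{(a,t]} g(u)^{2}\, \od \langle L\rangle_u}.
\]
Since $g^{2}$ is $C^{1}$ on $[a,t]$, a pathwise integration by parts yields
\[
\int_a^t g(u)^{2}\, \od \langle L\rangle_u = g(t)^{2}(\langle L\rangle_t - \langle L\rangle_a) - \int_a^t (g^{2})'(u)(\langle L\rangle_u - \langle L\rangle_a)\, \od u.
\]
Taking $\ce{\F_a}{\cdot}$, using Fubini together with $\ce{\F_a}{\langle L\rangle_v - \langle L\rangle_a} = \ce{\F_a}{(L_v - L_a)^{2}}$ for $v \in \{t\}\cup [a,t]$, and substituting $(g^{2})'(u) = -\tfrac{2\alpha}{T}\left(\tfrac{T-u}{T}\right)^{2\alpha-1}$ gives
\[
\ce{\F_a}{\left(\tfrac{T-t}{T}\right)^{2\alpha}(L_t - L_a)^{2} + 2\alpha \int_a^t (L_u - L_a)^{2}\left(\tfrac{T-u}{T}\right)^{2\alpha-1}\tfrac{\od u}{T}}.
\]
I would then glue the boundary term into the integral over $(a,T]$ by noticing that $L_{u\wedge t} - L_a$ equals $L_u - L_a$ on $[a,t]$ and equals $L_t - L_a$ on $[t,T]$, together with the elementary computation $\int_t^T \left(\tfrac{T-u}{T}\right)^{2\alpha-1}\tfrac{\od u}{T} = \tfrac{1}{2\alpha}\left(\tfrac{T-t}{T}\right)^{2\alpha}$. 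I expect this rewriting of the $\langle L\rangle$-integral as a Lebesgue integral against $|L_{u\wedge t}-L_a|^{2}$ to be the main (though routine) bookkeeping obstacle; the conceptual content is that the integration by parts trades the bracket $\langle L\rangle$ for the martingale increments $(L - L_a)^{2}$.

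For \eqref{eqn:5:statement:RL_for_martinagles}, I would expand $|L_{u\wedge t} - L_a|^{2} = |L_{u\wedge t}|^{2} - 2 L_a L_{u\wedge t} + L_a^{2}$ inside the conditional expectation in \eqref{eqn:3:statement:RL_for_martinagles}. Because $u\wedge t \geqslant a$, the martingale property gives $\ce{\F_a}{L_{u\wedge t}} = L_a$, so the cross term contributes $-2 L_a^{2}$. Combining this with $L_a^{2}$ and computing $2\alpha \int_a^T \left(\tfrac{T-u}{T}\right)^{2\alpha-1}\tfrac{\od u}{T} = \left(\tfrac{T-a}{T}\right)^{2\alpha}$ produces exactly the term $\left(\tfrac{T-a}{T}\right)^{2\alpha}|L_a|^{2}$ that is moved to the left-hand side of \eqref{eqn:5:statement:RL_for_martinagles}.
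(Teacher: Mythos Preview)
Your proposal is correct and follows essentially the same route as the paper. The only cosmetic differences are that the paper works with the square bracket $[L]$ rather than $\langle L\rangle$ (immaterial here, since the integrand is deterministic and bounded), and in step \eqref{eqn:3:statement:RL_for_martinagles} the paper writes $\left(\tfrac{T-u}{T}\right)^{2\alpha} = \tfrac{2\alpha}{T}\int_u^T \left(\tfrac{T-v}{T}\right)^{2\alpha-1}\od v$ and applies Fubini, whereas you integrate by parts and then glue the boundary term into the tail $[t,T]$; these are two presentations of the same rearrangement.
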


\begin{proof}
Relation \eqref{eqn:1:statement:RL_for_martinagles}:
We apply partial integration to $\left ( \left ( \frac{T-t}{T} \right )^{\alpha} L_t\right )_{t\in [0,T)}$ and obtain,
for $t\in [0,T)$, that
\[    \left ( \frac{T-t}{T} \right )^{\alpha} L_t 
   =  \left ( \frac{T-0}{T} \right )^{\alpha} L_0 
                + \int_{(0,t]}   \left ( \frac{T-u}{T} \right )^{\alpha} \od L_u
                - \frac{\alpha}{T^\alpha} \int_{(0,t]}  (T-u)^{\alpha-1} L_u \od u \mbox{ a.s.} \]
Taking the last term  to the left side, we obtain \eqref{eqn:1:statement:RL_for_martinagles}.
For \eqref{eqn:3:statement:RL_for_martinagles} we use It\^o's isometry to get, a.s.,
\begin{align}
    \ce{\F_a}{\left | \cI_t^\alpha L - \cI_a^\alpha L\right|^2}
& = \ce{\F_a}{\int_{(a,t]} \left ( \frac{T-u}{T} \right )^{2\alpha} \od [L]_u} \label{eqn:squarefunction_RL} \\
& = \frac{1}{2\alpha T^{2\alpha}} \ce{\F_a}{\int_{(a,t]} \int_{[u,T)} (T-v)^{2\alpha -1} \od v \od [L]_u} \notag \\
& = \frac{1}{2\alpha T^{2\alpha}} \ce{\F_a}{\int_{(a,T)} \int_{(a,v\wedge t]}\od [L]_u  (T-v)^{2\alpha -1} \od v } \notag \\
& = \frac{1}{2\alpha T^{2\alpha}} \ce{\F_a}{\int_{(a,T)} |L_{v\wedge t} - L_a|^2 (T-v)^{2\alpha -1} \od v }. \notag
\end{align}
\smallskip
Relation \eqref{eqn:5:statement:RL_for_martinagles} follows directly from
\eqref{eqn:3:statement:RL_for_martinagles} and the orthogonality of $L_{u\wedge t}-L_a$ and $L_a$.
\end{proof}

\begin{proof}[Proof of \cref{theo:intro_L2}]
Because $(\|L_{t_k}\|_H)_{k=0}^\infty$ is non-decreasing we observe for $s\in \R$ that 
\begin{equation}\label{eq:discrete_interpol_continuous_interpol}
  \frac{\|(L_{t_k})_{k=0}^\infty\|_{\ell_2^s(H)}^2}{2 T^{2s}} =
        \sum_{k=0}^\infty (T-t_k)^{-1-2s} (t_{k+1}-t_k) \|L_{t_k}\|^2_H
   \sim_{c_{T,s}} \int_0^T(T-t)^{-1-2s}\|L_t\|_H^2 \od t 
\end{equation}
for some $c_{T,s}\geqslant 1$. For $s:=(1-\theta) \left ( -\frac{1}{2} \right) + \theta 0$
(so that $-1-2s= -\theta$) we use \cref{statement:RL_for_martinagles-II} (equation \eqref{eqn:5:statement:RL_for_martinagles})
with $a=0$ to get 
\[     \int_0^T (T-t)^{-\theta} \|L_t\|_H^2 \od t 
    = \sup_{t\in [0,T)} \frac{T^{2\alpha}}{2\alpha} \E [|\cI^\alpha_t L - L_0|^2+|L_0|^2]
    = \sup_{t\in [0,T)} \frac{T^{2\alpha}}{2\alpha} \E |\cI^\alpha_t L|^2. \]
Now the equivalence \eqref{item:1:theo:intro_L2} $\Leftrightarrow$ \eqref{item:2:theo:intro_L2}
follows from \eqref{eq:interpol_ell_q^s} and \eqref{eq:discrete_interpol_continuous_interpol}. 
The equivalence \eqref{item:2:theo:intro_L2} $\Leftrightarrow$ \eqref{item:3:theo:intro_L2}
follows from equation \eqref{eq:item:1:statemtent:equivalence_simplified_setting_new_global}
applied to $M:=L$ and $\sigma\equiv 1$.
\end{proof}

With the next definition we introduce the norms 
$\| \cdot \|_{\B_{p,q}^\alpha}$ for martingales. One origin of these norms 
is the $K$-method in real interpolation theory.
In \cref{statement:properties_Besov-spaces} we relate this norm to the BMO-norm of a martingale after transformed by the
Riemann-Liouville operator. As the latter BMO-norms serve in our approximation problem as upper bounds,
\cref{statement:properties_Besov-spaces} is the   key to relate real interpolation spaces to approximation
properties later.

\begin{defi}
\label{definition:B_pq^alpha}
For $\alpha>0$, $p,q\in [1,\infty]$, and a martingale $L=(L_t)_{t\in [0,T)}$ 
we let
\[    \| L \|_{\B_{p,q}^\alpha}
   := \left \|  t\mapsto (T-t)^\alpha  \| L_t \|_{L_p} \right \|_{L_q([0,T),\frac{\od t}{T-t})}. \]
\end{defi}
Because $[0,T)\ni t \mapsto \| L_t \|_{L_p}\in [0,\infty]$ is non-decreasing, this map is measurable and   
$ \| L \|_{\B_{p,q}^\alpha}$ is well-defined.
Moreover, for $q\in [1,\infty)$ it follows that
\begin{equation}\label{eqn:relations_between_Bpqalpha}
\| L \|_{\B_{p,\infty}^\alpha} \leqslant \sqrt[q]{\alpha q} \| L \|_{\B_{p,q}^\alpha}.
\end{equation}
In dependence on the fine-tuning index $q\in \{1,2,\infty\}$  in 
$\| L \|_{\B_{p,q}^\alpha}$ we have the following relations:
\medskip

\begin{theo}
\label{statement:properties_Besov-spaces}
For $\alpha>0$ and a c\`adl\`ag martingale $L=(L_t)_{t\in [0,T)}$ one has
\begin{enumerate}[{\rm (1)}]
\item \label{item:1:statement:properties_Besov-spaces} 
      $\| \sup_{t\in [0,T)} |\cI_t^\alpha L| \|_{L_\infty} \leqslant \frac{\alpha}{T^\alpha}  \| L \|_{\B_{\infty,1}^\alpha}$,
\item \label{item:2:statement:properties_Besov-spaces} 
      $\| \cI^\alpha L \|_{\BMO_2([0,T))}   \leqslant 3 \frac{\sqrt{2 \alpha}}{T^{\alpha}}  \| L \|_{\B_{\infty,2}^\alpha}$
      if $L_0\equiv 0$,
\item \label{item:3:statement:properties_Besov-spaces} 
      $\| L_t \|_{L_\infty}          \leqslant \frac{1}{(T-t)^\alpha} \| L \|_{\B_{\infty,\infty}^\alpha}$
      for $t\in [0,T)$.
\end{enumerate}
\end{theo}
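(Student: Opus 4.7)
Parts (1) and (3) are essentially immediate. Since $L$ is a martingale, conditional Jensen yields that $u \mapsto \|L_u\|_{L_\infty}$ is non-decreasing, so a.s.\ $|L_{u \wedge t}| \leqslant \|L_{u \wedge t}\|_{L_\infty} \leqslant \|L_u\|_{L_\infty}$ for all $u,t \in [0,T)$. Substituting this bound (which is uniform in $t$) inside the defining integral of $\cI_t^\alpha L$ and rewriting $(T-u)^{\alpha-1}\, du = (T-u)^\alpha\, \frac{du}{T-u}$ gives (1). Part (3) is read off directly from the definition of $\|L\|_{\B_{\infty,\infty}^\alpha}$.

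For (2), the process $M := \cI^\alpha L$ is a c\`adl\`ag martingale with $M_0 = L_0 = 0$ by \cref{statement:RL_for_martinagles-I}. For $t \in [0,T)$ and $\sigma \in \cS_t$ the conditional orthogonality between $M_t - M_\sigma$ and $\Delta M_\sigma = M_\sigma - M_{\sigma-}$ produces the exact identity
\[
 \ce{\F_\sigma}{|M_t - M_{\sigma-}|^2} = \ce{\F_\sigma}{|M_t - M_\sigma|^2} + |\Delta M_\sigma|^2,
\]
so that $\sqrt{a+b}\leqslant \sqrt a + \sqrt b$ splits the $\BMO_2$-norm into a continuous and a jump contribution. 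The stopping-time analogue of equation \eqref{eqn:5:statement:RL_for_martinagles}, obtained by the same It\^o isometry plus Fubini argument with $\sigma$ in place of the deterministic $a$, gives
\[
   \ce{\F_\sigma}{|M_t - M_\sigma|^2}
   \leqslant \frac{2\alpha}{T^{2\alpha}} \ce{\F_\sigma}{\int_\sigma^T |L_{u\wedge t}|^2 (T-u)^{2\alpha - 1}\, du}
   \leqslant \frac{2\alpha}{T^{2\alpha}} \|L\|_{\B_{\infty,2}^\alpha}^2
\]
after dropping the non-negative boundary term $((T-\sigma)/T)^{2\alpha}|L_\sigma|^2$ and using $|L_{u\wedge t}|^2 \leqslant \|L_u\|_{L_\infty}^2$ a.s.\ as in the proof of~(1). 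Taking square roots yields the continuous contribution $\frac{\sqrt{2\alpha}}{T^\alpha} \|L\|_{\B_{\infty,2}^\alpha}$.

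For the jump part, \cref{statemant:RL_general}\eqref{item:5:statemant:RL_general} gives $|\Delta M_\sigma| \leqslant ((T-\sigma)/T)^\alpha (|L_\sigma| + |L_{\sigma-}|)$, so the remaining task is to control $(T-\sigma)^\alpha |L_\sigma|$ and $(T-\sigma)^\alpha |L_{\sigma-}|$ in $L_\infty$. This is the main obstacle: part (3) supplies only the fixed-time bound $(T-s)^\alpha \|L_s\|_{L_\infty} \leqslant \|L\|_{\B_{\infty,\infty}^\alpha}$, and the exceptional null set a priori depends on $s$, whereas we need an almost-sure bound valid at the random times $\sigma$ and $\sigma-$. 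Since the process $s \mapsto (T-s)^\alpha L_s$ is c\`adl\`ag, applying (3) along the countable dense set $\Q \cap [0,T)$ and then passing to right-continuous values (resp.\ left limits) delivers the uniform bound $(T-\sigma)^\alpha(|L_\sigma| + |L_{\sigma-}|) \leqslant 2 \|L\|_{\B_{\infty,\infty}^\alpha}$ a.s. Combined with the embedding $\|L\|_{\B_{\infty,\infty}^\alpha} \leqslant \sqrt{2\alpha}\, \|L\|_{\B_{\infty,2}^\alpha}$ from \eqref{eqn:relations_between_Bpqalpha}, this produces a jump contribution of $\frac{2\sqrt{2\alpha}}{T^\alpha}\|L\|_{\B_{\infty,2}^\alpha}$; summing the two pieces yields the stated constant $3$.
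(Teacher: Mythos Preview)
Your proof is correct and follows essentially the same strategy as the paper's, with one organisational difference in part~(2). The paper applies identity~\eqref{eqn:5:statement:RL_for_martinagles} only at \emph{deterministic} times~$a$: this already yields both the $\bmo_2$-bound on $\cI^\alpha L$ (via \cref{statement:bmo_determinstic}) and, from the boundary term $((T-a)/T)^{2\alpha}|L_a|^2$, the pointwise estimate $(T-a)^\alpha|L_a|\leqslant\sqrt{2\alpha}\,\|L\|_{\B_{\infty,2}^\alpha}$, which after passing to a c\`adl\`ag modification controls $|\Delta\cI_a^\alpha L|$. The conversion from $\bmo_2$ plus jump bound to $\BMO_2$ is then delegated to \cref{statement:relation_BMO_bmo}\eqref{item:1:relation-bmo}. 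You instead work directly at stopping times, decomposing $\ce{\F_\sigma}{|M_t-M_{\sigma-}|^2}$ by martingale orthogonality and invoking a stopping-time version of~\eqref{eqn:5:statement:RL_for_martinagles}; your density argument for the jump is the same c\`adl\`ag-modification idea, and your use of part~(3) together with~\eqref{eqn:relations_between_Bpqalpha} recovers exactly the bound the paper reads off from the boundary term. The paper's route is slightly more economical since it avoids re-deriving~\eqref{eqn:5:statement:RL_for_martinagles} at stopping times (which does go through, but needs the observation $\1_{\{\sigma<v\}}([L]_{v\wedge t}-[L]_\sigma)=[L]_{(v\wedge t)\vee\sigma}-[L]_\sigma$ to apply optional stopping cleanly), whereas your route is more self-contained in that it does not appeal to the appendix propositions.
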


\begin{proof}
\eqref{item:3:statement:properties_Besov-spaces} re-writes the definition, 
\eqref{item:1:statement:properties_Besov-spaces} is obvious because
\[
             \left \| \sup_{t\in [0,T)} |\cI_t^\alpha L| \right \|_{L_\infty}
   \leqslant \frac{\alpha}{T^\alpha} \int_0^T (T-t)^\alpha \|L_t\|_{L_\infty} \frac{\od t}{T-t}. \]
\eqref{item:2:statement:properties_Besov-spaces} 
From \cref{statement:RL_for_martinagles-II} it follows that, a.s.,
\begin{align*}
\ce{\F_a}{|\cI_t^\alpha L - \cI_a^\alpha L|^2} + \left ( \frac{T-a}{T} \right )^{2\alpha} |L_a|^2 
&    =     2 \alpha \ce{\F_a}{\int_a^T |L_{u\wedge t}|^2 \left ( \frac{T-u}{T} \right )^{2 \alpha - 1} \frac{\od u}{T}} \\
&\leqslant \frac{2 \alpha}{T^{2 \alpha}} \int_a^T (T-u)^{2\alpha} \|L_u\|_{L_\infty}^2 \frac{\od u}{T-u} \\
&\leqslant \frac{2 \alpha}{T^{2 \alpha}} \| L \|_{\B_{\infty,2}^\alpha}^2.
\end{align*}
Therefore there is a c\`adl\`ag modification $\tilde L$ of $L$ such that 
$|\tilde L_t| \leqslant \sqrt{2\alpha} (T-t)^{-\alpha} \| L \|_{\B_{\infty,2}^\alpha}$ on $[0,T)\times \Omega$.
In order to prove \eqref{item:2:statement:properties_Besov-spaces} we may assume w.l.o.g. that $L=\tilde L$.
For $a\in [0,T)$ this implies
\[          | \Delta \cI_a^\alpha L | 
       =     \left ( \frac{T-a}{T} \right )^{\alpha} |\Delta L_a|
   \leqslant 2 \frac{\sqrt{2 \alpha}}{T^{\alpha}} \| L \|_{\B_{\infty,2}^\alpha}
\]
and, by \cref{statement:bmo_determinstic} and \cref{statement:relation_BMO_bmo}\eqref{item:1:relation-bmo},
$\| \cI^\alpha L \|_{\BMO_2([0,T))}   \leqslant 3 \frac{\sqrt{2 \alpha}}{T^{\alpha}} \| L \|_{\B_{\infty,2}^\alpha}$.
\end{proof}

\begin{rema}
\label{statement:sharpness_Besov_to_RL}
Regarding the sharpness of parameters in \cref{statement:properties_Besov-spaces} we have the following
(partially preliminary) results for the case $\alpha\in \left (0,\frac{1}{2} \right )$
(and $T=1$).

\cref{statement:properties_Besov-spaces}\eqref{item:1:statement:properties_Besov-spaces}:
      For all $q \in (1,\infty]$ there is a  continuous martingale $L^{(q)}$ such that
      \[   \| L^{(q)} \|_{\B_{\infty,q}^\alpha} < \infty 
           \sptext{1}{but}{1} 
           \int_0^1 (1-t)^{\alpha-1} \| L_t^{(q)} \|_{L_\infty} \od t 
         = \| L^{(q)} \|_{\B_{\infty,1}^\alpha}
         = \infty. \]
      In fact, assume $\eta \in (0,1)$ such that $\alpha = (1-\eta)/2$. Using \cite[Corollary 4.4]{Geiss:20}, for all $q\in (1,\infty]$ there is an 
      $f^{(q)} \in \hoelO{\eta,q}$ such that 
      $f^{(q)} \not \in \B_{2,1}^\eta(\R,\gamma)$, where
      $\B_{2,1}^\eta(\R,\gamma)$ is the Gaussian Besov space of fractional order $\eta$ used in
      \cite{Geiss:Toivola:15,Geiss:20} and obtained by real interpolation, i.e.
      $\B_{2,1}^\eta(\R,\gamma)=(L_2(\R,\gamma),\D_{1,2}(\R,\gamma))_{\eta,1}$.
      Because $f^{(q)}\in L_2(\R,\gamma)$ we can use \cite[Theorem 3.1]{Geiss:Toivola:15} and must have
      \[ \int_0^1 (1-t)^{\alpha-1} \left \| \frac{\partial F^{(q)}}{\partial x}(t,W_t) \right \|_{L_2} \od t  = \infty \]
      with $F^{(q)}(t,x) := \E f^{(q)}(x+W_{1-t})$. As martingale $L^{(q)}$ we use 
      $L_t^{(q)} := \frac{\partial F^{(q)}}{\partial x}(t,W_t)$, $t\in [0,1)$, and obtain 
      $\int_0^1 (1-t)^{\alpha-1} \| L_t^{(q)} \|_{L_\infty} \od t = \infty$,
      whereas $ \| L^{(q)} \|_{\B_{\infty,q}^\alpha} < \infty$ by
      \eqref{eqn:Bb_bound_H_part_1} and \eqref{eqn:item:3:statement:consistence-upper_bound_sigma>0}.
\medskip

\cref{statement:properties_Besov-spaces}\eqref{item:2:statement:properties_Besov-spaces}:
      For all $q \in (2,\infty]$ there is a continuous martingale $L^{(q)}$ with 
      $\| L^{(q)}\|_{\B_{\infty,q}^\alpha}<\infty$ and $L^{(q)}_0 \equiv 0$, but
      \begin{equation}\label{eqn:item:2:statement:sharpness_Besov_to_RL}
      \| \cI^\alpha L^{(q)} \|_{\BMO_2([0,T))} \geqslant \sup_{t\in [0,T)} \| \cI_t^\alpha L^{(q)} \|_{L_2}  = \infty.
      \end{equation}
      To verify this, we take $f$ and $F$ from the proof of \cite[Theorem 5.1]{Geiss:20}, choose 
      $L_t^{(q)} := \frac{\partial F}{\partial x}(t,W_t)-\frac{\partial F}{\partial x}(0,0)$, and proceed,
      taking again into the account \eqref{eqn:Bb_bound_H_part_1} and \eqref{eqn:item:3:statement:consistence-upper_bound_sigma>0},
      as for \cref{statement:properties_Besov-spaces}\eqref{item:1:statement:properties_Besov-spaces}.
      
\end{rema}


\section{$L_1$-convergence of $\intsq{L}{\tau_n^{\theta}}_b$} 
\label{sec:convergence_sqfunction}
If $L=(L)_{t\in [0,T)}$ is a semi-martingale with $L_0\equiv 0$, then its 
quadratic variation has the natural representation
\[ [L]_b = \lim_{n\to \infty} \sum_{i=1}^n |L_{t_i^n\wedge b} - L_{t_{i-1}^n\wedge b}|^2
   \sptext{1}{if}{1} b\in [0,T) \]
and the limit is taken in probability, for any sequence of deterministic nets
$\tau_n=\{ 0=t_0^n < \cdots < t_n^n = T \}$ with 
$\lim_{n\to \infty} \| \tau_n \|_1=0$. 
Investigating the same question for $\intsq{L}{\tau_n}_b$
to get a geometric interpretation of scaling limits of
$\intsq{L}{\tau_n}_b$, the natural candidate for a limit 
is the quadratic variation $[ \cI^{\frac{1-\theta}{2}} L ]_b$ of the fractional integral 
$\cI^{\frac{1-\theta}{2}} L$. And indeed, after an obvious re-scaling in $n$ we obtain 
the desired result, however one needs to choose the nets $\tau_n$ according to the parameter 
$\theta$. Moreover, conditions on $[L]$ must be imposed in case deterministic nets are taken, 
otherwise such a result cannot hold: take a martingale $L=(L)_{t\in [0,T)}$ with $L_0\equiv 0$ that is constant
on all $[s_{l-1},s_l)$, $0=s_0<\cdots <s_N=T$, and take nets $\tau_n$, $n\geqslant 1$, such that
$\{ s_l\}_{l=0}^N \subseteq \tau_n$. Then
$\intsq{L}{\tau_n}_b\equiv 0$ for $b\in [0,T)$.
This is due to the fact that the particular time-nets $\tau_n$ minimize $\intsq{L}{\tau_n}_b$,
but do not maximize  $\intsq{L}{\tau_n}_b$.
The aim of this section is to present two cases where we have the desired convergence. In \cref{statement:pointwise_convergence_sq}
we assume a regularity  of the quadratic variation $[L]$, whereas in \cref{statement:pointwise_convergence_sq_random}
we exploit random nets in order to avoid conditions on $[L]$.
\medskip

\begin{theo}
\label{statement:pointwise_convergence_sq}
For a continuous martingale $L=(L_t)_{t\in [0,T)}\subseteq L_2$ with $L_0\equiv 0$ and such that we have a representation
\begin{equation}\label{eqn:regularity_bracket}
   [L]_b(\omega) = \int_0^b D(u,\omega) \od u<\infty
   \sptext{1}{for}{1} (b,\omega)\in [0,T)\times \Omega,
\end{equation}
where $D:[0,T)\times \Omega \to [0,\infty)$ is progressively measurable, one has for $\theta\in (0,1]$ that 
\[ \lim_{n\to \infty}
    \left \| \frac{2\theta n}{T}  \intsq{L}{\tau_n^{\theta}}_b - [ \cI^{\frac{1-\theta}{2}}     
             L ]_b \right \|_{L_1} = 0 \sptext{1}{for}{1} b\in [0,T).\]
\end{theo}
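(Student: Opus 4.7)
The strategy is to split $[L;\tau_n^\theta]_b$ via It\^o's formula into a pathwise piece driven by $D$ and a residual stochastic integral, and to handle each separately. Applying It\^o's formula to $(L_u-L_{t_{i-1}})^2$ on each interval $(t_{i-1},t_i\wedge b]$ of $\tau_n^\theta=\{t_i\}_{i=0}^n$, integrating in $u$, and invoking stochastic Fubini gives $[L;\tau_n^\theta]_b = 2 N_b^n + A_b^n$ with
\[
N_b^n := \sum_i \int_{t_{i-1}}^{t_i\wedge b}(t_i\wedge b-s)(L_s-L_{t_{i-1}}) \od L_s, \quad A_b^n := \sum_i \int_{t_{i-1}}^{t_i\wedge b}(t_i\wedge b-u) D(u) \od u.
\]
From \eqref{eqn:1:statement:RL_for_martinagles} one reads $\od[\cI^\alpha L]_u = ((T-u)/T)^{1-\theta} D(u) \od u$ for $\alpha:=(1-\theta)/2$, so the target limit is $\int_0^b \rho(u) D(u) \od u$ where $\rho(u) := ((T-u)/T)^{1-\theta}$.

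For the convergence $\tfrac{2\theta n}{T} A_b^n \to [\cI^\alpha L]_b$ in $L_1$, I decompose the error-integrand as $\tfrac{2\theta n}{T}(t_i\wedge b - u) - \rho(u) = A_i(u) + B_i(u)$, where $\bar\alpha_n^i$ is the $u$-average of $\tfrac{2\theta n}{T}(t_i\wedge b-u)$ on $(t_{i-1},t_i\wedge b)$, $A_i(u) := \tfrac{2\theta n}{T}(t_i\wedge b-u)-\bar\alpha_n^i$, and $B_i(u) := \bar\alpha_n^i-\rho(u)$. The uniform quantization identity $(T-t_{i-1})^\theta-(T-t_i)^\theta = T^\theta/n$ combined with the mean value theorem gives $\bar\alpha_n^i = \rho(\xi_i^n)$ for some $\xi_i^n\in(t_{i-1},t_i)$, so uniform continuity of $\rho$ on $[0,b]\subset[0,T)$ together with $\|\tau_n^\theta\|_1\to 0$ forces $\max_i\sup_u|B_i(u)|\to 0$; the $B$-contribution then vanishes in $L_1$ by dominated convergence, the majorant being $C[L]_b\in L_1$ since $\E[L]_b=\E L_b^2<\infty$ by $L_b\in L_2$. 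For $A_i$, the defining identity $\int_{t_{i-1}}^{t_i\wedge b}A_i\od u=0$ lets me rewrite $\sum_i\int A_i D \od u = \sum_i\int A_i(D-\bar D_i)\od u$ with $\bar D_i$ the interval average of $D$; the uniform bound $|A_i|\le C$ and the classical Lebesgue-point fact $\sum_i\int_{t_{i-1}}^{t_i\wedge b}|D-\bar D_i|\od u\to 0$ a.s.\ (valid pathwise because \eqref{eqn:regularity_bracket} places $D(\cdot,\omega)$ in $L_1(0,b)$) close this piece, dominated again by $C[L]_b$.

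For $N_b^n$ I use BDG: since $|\Phi_n(s)|\le \|\tau_n^\theta\|_1\cdot\sup_{s\in[0,b]}|L_s-L_{t_-^n(s)}|$ for $t_-^n(s)$ the predecessor of $s$ in $\tau_n^\theta$,
\[
\E|N_b^n| \le C \|\tau_n^\theta\|_1 \, \E\bigl[\sup_{s\in[0,b]}|L_s-L_{t_-^n(s)}| \sqrt{[L]_b}\,\bigr],
\]
and $\tfrac{2\theta n}{T}\|\tau_n^\theta\|_1 \le 2$ by \eqref{eqn:upper_bound_adapted_nets}; Cauchy-Schwarz then reduces the right-hand expectation to $\sqrt{\E\sup_{s\in[0,b]}|L_s-L_{t_-^n(s)}|^2\cdot\E[L]_b}\to 0$, using a.s.\ uniform continuity of $L$ on the compact $[0,b]$ and Doob's $L_2$-maximal inequality as the square-integrable majorant $(2L_b^*)^2$. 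The delicate step is the $A$-term, since the prefactor $\tfrac{2\theta n}{T}(t_i-u)$ has no pointwise limit as $n\to\infty$ (the relative position of $u$ inside its shrinking interval oscillates); the mean/oscillation split above is the key device, and its oscillation part rests on a pathwise $L_1$-Riemann sum approximation enabled precisely by the absolute continuity assumption \eqref{eqn:regularity_bracket}.
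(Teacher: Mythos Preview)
Your proof is correct and shares the paper's overall architecture: the It\^o decomposition $[L;\tau_n^\theta]_b = 2N_b^n + A_b^n$ is exactly the paper's \cref{statement:exact_formula_approximation}, and your BDG treatment of $N_b^n$ mirrors the stochastic-integral part of \cref{statement:elimination_first_term}. The one genuine difference is in the predictable part $A_b^n$. The paper does not use your mean/oscillation split; instead it proves (in \cref{statement:convergence_vs_nets}, equation \eqref{eqn:3:statement:convergence_vs_nets}) that the measures $\mu_{n,\theta}(\od u):=\frac{2\theta n}{T}(\overline u_{\tau_n^\theta}\wedge b-u)\od u$ converge weakly on $[0,b]$ to $\rho(u)\od u$, applies this to continuous test functions, and then extends to $D(\cdot,\omega)\in L_1([0,b])$ by density. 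Your route is more hands-on: the MVT computation $\bar\alpha_n^i=\rho(\xi_i^n)$ directly pins down the mean, and the oscillation piece reduces to $L_1$-convergence of piecewise-constant interpolants of $D$. Both arguments ultimately rest on approximating $D$ by something regular and exploiting the absolute continuity \eqref{eqn:regularity_bracket}; the paper's weak-convergence formulation is set up so that the same lemma also feeds into the randomized-net companion result \cref{statement:pointwise_convergence_sq_random}, whereas your argument is tailored to the deterministic $\tau_n^\theta$ and is a bit more elementary for that case. One cosmetic point: the boundary interval containing $b$ does not quite satisfy $\bar\alpha_n^i=\rho(\xi_i^n)$, but its contribution is $O(\int_{t_{k-1}}^b D)\to 0$ and can be handled separately.
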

\bigskip

In the case we do not have a regularity as in \eqref{eqn:regularity_bracket} we need to apply a randomization
of the time-nets. 
For $r\in [0,1)$ and $\theta\in (0,1]$
we introduce the time-nets
\[ \tau_n^{\theta,r} =\{ t_{i,n}^{\theta,r} \}_{i=0}^{n+1}
   := \left\{0, r \, t_{1,n}^\theta, t_{1,n}^\theta + r\, (t_{2,n}^\theta-t_{1,n}^\theta),\ldots,t_{n-1,n}^\theta + r\, (T-t_{n-1,n}^\theta),T\right \}. \]
Note that \eqref{eqn:upper_bound_adapted_nets} implies that 
\begin{equation}\label{eqn:size_random_nets}
 \| \tau_n^{\theta,r}\|_1 \leqslant  \| \tau_n^{\theta}\|_1 \leqslant \frac{T}{\theta n}.
\end{equation}
\smallskip

\begin{theo}
\label{statement:pointwise_convergence_sq_random}
For a c\`adl\`ag martingale $L=(L_t)_{t\in [0,T)}\subseteq L_2$ with $L_0\equiv 0$,
$\theta \in (0,1]$, and $b\in (0,T)$ one has that
\[ \lim_{n\to \infty}
    \left \| \frac{2\theta n}{T} \int_{[0,1)} \intsq{L}{\tau_n^{\theta,r}}_b \od r - [ \cI^{\frac{1-\theta}{2}}     
             L ]_{b-} \right \|_{L_1} = 0.\]
\end{theo}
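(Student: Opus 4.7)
My plan is to start from the It\^o identity
\[ |L_u - L_s|^2 = 2\int_{(s, u]} (L_{v-} - L_s) \od L_v + [L]_u - [L]_s \qquad (s \leqslant u), \]
substitute $s = \phi_r(u)$ where $\phi_r(u)$ denotes the predecessor of $u$ in $\tau_n^{\theta,r}$, and integrate over $u \in [0, b]$ and $r \in [0, 1)$. After multiplying by $\tfrac{2\theta n}{T}$, the left-hand side of the claim decomposes as $A_n + 2 M_n$, where $A_n$ is a pathwise Stieltjes integral against $\od [L]$ and $M_n$ a stochastic integral. It then suffices to prove $A_n \to [\cI^{(1-\theta)/2} L]_{b-}$ in $L_1$ and $M_n \to 0$ in $L_2$.

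\textbf{Drift $A_n$.} Classical Fubini gives
\[ A_n = \int_{(0,b]} \psi_n(v) \od [L]_v, \qquad \psi_n(v) := \tfrac{2\theta n}{T} \int_0^1 \bigl( \pi_r^+(v) \wedge b - v \bigr) \od r, \]
where $\pi_r^+(v)$ is the successor of $v$ in $\tau_n^{\theta,r}$. The estimate $\pi_r^+(v) \wedge b - v \leqslant \|\tau_n^{\theta,r}\|_1 \leqslant T/(\theta n)$ from \eqref{eqn:size_random_nets} immediately yields the uniform bound $\psi_n \leqslant 2$. Writing $s_j := t_{j,n}^\theta$ and fixing $v \in (s_{i-1}, s_i]$ with $n$ so large that $s_{i+1} \leqslant b$ (making the $\wedge b$ inactive), a split of the $\od r$-integral at $r^*(v) := (v-s_{i-1})/(s_i-s_{i-1})$ using $\pi_r^+(v) = s_i + r(s_{i+1}-s_i)$ for $r < r^*(v)$ and $\pi_r^+(v) = s_{i-1} + r(s_i - s_{i-1})$ for $r \geqslant r^*(v)$ yields the closed form
\[ \psi_n(v) = \tfrac{\theta n}{T} \bigl[ r^*(v)^2 (s_{i+1}-s_i) + (1 - r^*(v)^2)(s_i - s_{i-1}) \bigr]. \]
The uniform quantization $(T-s_{i-1})^\theta - (T-s_i)^\theta = T^\theta/n$ together with the mean value theorem gives $s_i - s_{i-1} \sim T^\theta (T-v)^{1-\theta}/(\theta n)$ and similarly for $s_{i+1}-s_i$, whence $\psi_n(v) \to ((T-v)/T)^{1-\theta}$ pointwise on $(0, b)$. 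Since $\psi_n(b) = 0$, dominated convergence against $\1_{(0,b]} \od [L]$ (with dominating variable $2[L]_b \in L_1$) produces $A_n \to \int_{(0,b)} ((T-v)/T)^{1-\theta} \od [L]_v = [\cI^{(1-\theta)/2} L]_{b-}$ in $L_1$, the last equality by \eqref{eqn:1:statement:RL_for_martinagles}.

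\textbf{Martingale $M_n$.} Classical Fubini in $u$ combined with stochastic Fubini in $r$ (both justified by uniform boundedness of the integrand on $[0, b]$) gives
\[ M_n = \int_{(0, b]} G_n(v) \od L_v, \qquad G_n(v) := \tfrac{2\theta n}{T} \int_0^1 (L_{v-} - L_{\phi_r(v)})(\pi_r^+(v) \wedge b - v) \od r. \]
Using again $\pi_r^+(v) \wedge b - v \leqslant T/(\theta n)$, It\^o's isometry combined with Jensen's inequality for the average $\int_0^1 \cdot \od r$ yields
\[ \E M_n^2 \leqslant 4 \int_0^1 \E \int_{(0, b]} (L_{v-} - L_{\phi_r(v)})^2 \od [L]_v \od r. \]
For each $v \in (0, b]$ the c\`adl\`ag property and $\|\tau_n^{\theta,r}\|_1 \to 0$ uniformly in $r$ force $L_{\phi_r(v)} \to L_{v-}$, while Doob's maximal inequality on $[0, b]$ supplies an $L_2$-dominating function; dominated convergence then gives $\E M_n^2 \to 0$, hence $M_n \to 0$ in $L_1$.

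\textbf{Main obstacle.} The delicate issue is the boundary behaviour near $v = b$. The truncation $\pi_r^+(v) \wedge b$ forces $\psi_n(b) = 0$ and is precisely what produces the left limit $[\cI^{(1-\theta)/2} L]_{b-}$ rather than $[\cI^{(1-\theta)/2} L]_{b}$: any atom of $[L]$ at $b$, coming from a jump of $L$ at $b$, is invisible to the Lebesgue integral in $\intsq{L}{\tau_n^{\theta,r}}_b$. Carefully establishing the uniform bound and pointwise convergence of $\psi_n$ on $(0, b]$ together with the stochastic Fubini exchange in the definition of $M_n$ constitute the main technical work.
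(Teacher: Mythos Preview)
Your overall strategy—apply It\^o's identity, split into a drift term $A_n$ against $\od[L]$ and a martingale remainder $M_n$—is exactly the route the paper takes (via \cref{statement:exact_formula_approximation}, \cref{statement:elimination_first_term}, and \cref{statement:convergence_vs_nets}). Your analysis of $A_n$ is correct and in fact slightly more explicit than the paper's sandwich estimate for $\psi_n$.

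There is, however, a genuine integrability gap in your treatment of $M_n$. You invoke It\^o's isometry and then dominated convergence with the bound $(L_{v-}-L_{\phi_r(v)})^2\leqslant 4(L_b^*)^2$; for this to give $\E M_n^2\to 0$ the dominating function must be integrable against $\od r\otimes\od\p\otimes\od[L]_v$, i.e.\ you need $\E\bigl[(L_b^*)^2[L]_b\bigr]<\infty$. Doob's inequality only gives $L_b^*\in L_2$, which is not sufficient: for a single-jump martingale with jump $X\in L_2\setminus L_4$ one has $(L_b^*)^2[L]_b=X^4\notin L_1$. Under the sole hypothesis $L\subseteq L_2$ the application of It\^o's isometry is therefore not justified.

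The paper avoids this by never leaving $L_1$: it pulls the $r$-integral outside via $\bigl\|\int_0^1\cdot\,\od r\bigr\|_{L_1}\leqslant\int_0^1\|\cdot\|_{L_1}\od r$ and, for each fixed $r$, bounds the stochastic integral in $L_1$ with the Burkholder--Davis--Gundy inequality. This produces the dominating function $2L_b^*\,[L]_b^{1/2}$, which \emph{is} in $L_1$ by Cauchy--Schwarz since both factors lie in $L_2$. Replacing your It\^o-isometry step by BDG in $L_1$ repairs the argument.
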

\bigskip
In the remaining part of this section we prove \cref{statement:pointwise_convergence_sq} and \cref{statement:pointwise_convergence_sq_random}.
For a time-net $\tau=\{t_i\}_{i=0}^n\in \cT$ and $u\in (0,T]$ we let
\[ \overline{u}_\tau := t_i \sptext{1}{if}{1} 
   u \in (t_{i-1},t_i].\]

\begin{lemm}\label{statement:convergence_vs_nets}
For $\theta \in (0,1]$ and $b\in (0,T)$ one has 
\begin{align}
\sup_{r\in [0,1)} \sup_{n\in \bN}\sup_{u\in (0,b]} \,\, \frac{2\theta n}{T} ((\overline u_{\tau_n^{\theta,r}}\wedge b) -u ) 
  & \leqslant 2, \label{eqn:1:statement:convergence_vs_nets}\\
\lim_{n\to \infty} \frac{2\theta n}{T}\int_{[0,1)} \left ((\overline u_{\tau_n^{\theta,r}}\wedge b) -u \right ) \od r 
                       & = \left ( \frac{T-u}{T} \right )^{1-\theta} \sptext{5.7}{for}{.7} u\in (0,b), \label{eqn:2:statement:convergence_vs_nets}\\
\lim_{n\to \infty} \frac{2\theta n}{T} \int_0^b \left ((\overline{u}_{\tau_n^\theta}\wedge b) -u\right ) D(u) \od u 
  & = \int_0^b \left ( \frac{T-u}{T} \right )^{1-\theta} D(u) \od u  \sptext{1}{for}{.7} D\in L_1([0,b]).
\label{eqn:3:statement:convergence_vs_nets}
\end{align}
\end{lemm}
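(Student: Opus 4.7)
The three claims are deterministic statements about the geometry of $\tau_n^\theta$ and $\tau_n^{\theta,r}$. For $u\in (t_{i-1,n}^\theta, t_{i,n}^\theta]$ I let $a_i := t_{i,n}^\theta - t_{i-1,n}^\theta$ and denote by $m_i^{(r)} := (1-r) t_{i-1,n}^\theta + r t_{i,n}^\theta$ the intermediate point inserted in $\tau_n^{\theta,r}$. A change of variables in the quantization property yields $a_i = \frac{T}{\theta}\int_{(i-1)/n}^{i/n}(1-x)^{1/\theta - 1}\,\od x$, and since $1/\theta - 1 \geqslant 0$ for $\theta \in (0,1]$ the integrand is non-increasing, whence $a_{i+1} \leqslant a_i$ and $a_i \leqslant (T/(\theta n))((T-t_{i-1,n}^\theta)/T)^{1-\theta}$. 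For \eqref{eqn:1:statement:convergence_vs_nets} I would case-split on $u \leqslant m_i^{(r)}$ versus $u > m_i^{(r)}$: in the first case $\overline{u}_{\tau_n^{\theta,r}} = m_i^{(r)}$ and the gap is at most $r a_i$; in the second, $\overline{u}_{\tau_n^{\theta,r}} = m_{i+1}^{(r)}$ and the gap is at most $(1-r) a_i + r a_{i+1} \leqslant a_i$ by the monotonicity. Either way the gap is bounded by $a_i$, so multiplication by $\frac{2\theta n}{T}$ gives $2((T-t_{i-1,n}^\theta)/T)^{1-\theta} \leqslant 2$; the $\wedge b$ truncation only decreases the gap, and the boundary index $i=n$ is handled with the convention $a_{n+1}:=0$.

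For \eqref{eqn:2:statement:convergence_vs_nets} I fix $u \in (0,b)$. By \eqref{eqn:1:statement:convergence_vs_nets} the gap tends to $0$ uniformly in $r$, so for $n$ large the $\wedge b$ truncation may be dropped. Letting $i=i(n)$ be the unique index with $u \in (t_{i-1,n}^\theta, t_{i,n}^\theta)$ and setting $r^* := (u-t_{i-1,n}^\theta)/a_i \in (0,1)$, I would split $\int_0^1 \od r$ at $r^*$ and integrate the two affine pieces in $r$ explicitly to obtain $\int_0^1(\overline{u}_{\tau_n^{\theta,r}} - u)\,\od r = \tfrac{a_i}{2} + \tfrac{(u - t_{i-1,n}^\theta)^2(a_{i+1} - a_i)}{2a_i^2}$. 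The main term $\frac{2\theta n}{T} \cdot \tfrac{a_i}{2} = n\int_{(i-1)/n}^{i/n}(1-x)^{1/\theta - 1}\,\od x$ converges to $((T-u)/T)^{1-\theta}$ as $i(n)/n \to 1 - ((T-u)/T)^\theta$, by continuity of the integrand. The correction is absolutely bounded by $\frac{\theta n}{T}|a_{i+1} - a_i|$, which tends to $0$ since $\frac{\theta n}{T} a_i$ stays bounded and $a_{i+1}/a_i \to 1$ by the same continuity argument.

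For \eqref{eqn:3:statement:convergence_vs_nets} put $g_n(u) := \frac{2\theta n}{T}(\overline{u}_{\tau_n^\theta} \wedge b - u)$ and $g(u) := ((T-u)/T)^{1-\theta}$. By \eqref{eqn:1:statement:convergence_vs_nets}, $\|g_n\|_\infty \leqslant 2$ and $\|g\|_\infty \leqslant 1$. My strategy is to reduce to continuous $D$: pick $D_\varepsilon \in C([0,b])$ with $\|D - D_\varepsilon\|_{L_1} \leqslant \varepsilon$, so that the contributions from $D - D_\varepsilon$ to both sides are $O(\varepsilon)$. For continuous $D_\varepsilon$ I would partition $\int_0^b = \sum_i \int_{I_i}$ with $I_i := (t_{i-1,n}^\theta, t_{i,n}^\theta]\cap (0,b]$, use uniform continuity to replace $D_\varepsilon(u)$ by $D_\varepsilon(t_{i,n}^\theta)$ up to $o(1)$, and apply the elementary identity $\int_{t_{i-1,n}^\theta}^{t_{i,n}^\theta}(t_{i,n}^\theta - u)\,\od u = a_i^2/2$ together with the squeeze $((T-t_{i,n}^\theta)/T)^{1-\theta} \leqslant \tfrac{\theta n}{T} a_i \leqslant ((T - t_{i-1,n}^\theta)/T)^{1-\theta}$. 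The resulting expression becomes an ordinary Riemann sum for $\int_0^b g(u) D_\varepsilon(u)\,\od u$, converging by continuity; the single interval containing $b$ contributes $O(a_{i^*}) \to 0$.

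The main obstacle is \eqref{eqn:3:statement:convergence_vs_nets}, because $g_n$ does \emph{not} converge pointwise to $g$: on each $I_i$ it drops linearly from roughly $2g$ to $0$. A direct dominated-convergence argument therefore fails, and the rescue is to couple the uniform sup-bound from \eqref{eqn:1:statement:convergence_vs_nets} with a Riemann-sum analysis applied to a dense class of continuous test weights.
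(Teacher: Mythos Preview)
Your proof is correct and follows essentially the same strategy as the paper. For \eqref{eqn:1:statement:convergence_vs_nets} the paper simply invokes the mesh bound $\|\tau_n^{\theta,r}\|_1\leqslant T/(\theta n)$ (which your case split reproves); for \eqref{eqn:2:statement:convergence_vs_nets} the paper sandwiches the $r$-integral between $\tfrac12 a_{i+1}$ and $\tfrac12 a_i$ rather than computing your exact value $\tfrac12 a_i+\tfrac12(r^*)^2(a_{i+1}-a_i)$, and both then use the mean-value theorem; for \eqref{eqn:3:statement:convergence_vs_nets} the paper phrases your Riemann-sum argument as weak convergence of the measures $g_n(u)\,\od u$ to $g(u)\,\od u$ on $[0,b]$, and likewise reduces $D\in L_1$ to continuous test functions via the uniform bound \eqref{eqn:1:statement:convergence_vs_nets}. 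The differences are purely presentational.
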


\begin{proof}
\eqref{eqn:1:statement:convergence_vs_nets} follows from
$             (\overline u_{\tau_n^{\theta,r}}\wedge b) -u  
   \leqslant  \| \tau_n^{\theta,r}\|_1 
   \leqslant  \frac{T}{\theta n}$.
\medskip

\eqref{eqn:2:statement:convergence_vs_nets} 
Because $u<b$ we only need to prove 
$\lim_{n\to \infty} \frac{2\theta n}{T}\int_{[0,1)} (\overline u_{\tau_n^{\theta,r}}-u ) \od r 
 = \left ( \frac{T-u}{T} \right )^{1-\theta}$.
Let $i(u,n)\in \{1,\ldots,n\}$ such that 
$u\in (t_{i(u,n)-1,n}^\theta,t_{i(u,n),n}^\theta]$ and assume that $i(u,n)<n$ which holds for $n\ge n(u,\theta)\geqslant 1$. Then one gets
that
\[           \frac{1}{2} \left (t_{i(u,n)+1,n}^\theta-t_{i(u,n),n}^\theta \right )
   \leqslant \int_{[0,1)} (\overline u_{\tau_n^{\theta,r}}-u ) \od r 
   \leqslant \frac{1}{2} \left (t_{i(u,n),n}^\theta-t_{i(u,n)-1,n}^\theta \right ).
\]
Finally, by the mean value theorem 
\[   \frac{\theta}{T} \lim_{n\to \infty} n \left (t_{i(u,n)+1,n}^\theta-t_{i(u,n),n}^\theta \right )
   = \frac{\theta}{T} \lim_{n\to \infty} n \left (t_{i(u,n),n}^\theta-t_{i(u,n)-1,n}^\theta \right )
   = \left ( 1 - \frac{u}{T} \right )^{1-\theta}. \]
\eqref{eqn:3:statement:convergence_vs_nets}
We define the measure $\mu_{n,\theta}$ on $\cB([0,T))$ by  
\[    \mu_{n,\theta}(\od u) 
   := \left [ \frac{2 \theta n}{T} \sum_{i=1}^n \1_{   (t_{i-1,n}^\theta,t_{i,n}^\theta]}(u) (\ov{u}_{\tau_n^\theta} - u)\right ] \od u \]
and observe that $\mu_{n,\theta}(\od u)$ converges weakly to $\mu_\theta(\od u):=\left (\frac{T-u}{T}\right )^{1-\theta}\od u$ on each interval 
$[0,b]\subset [0,T)$ as $n\to \infty$ (one has $\lim_n \mu_{n,\theta}([0,a]) = \mu_\theta([0,a])$ for $a\in [0,T)$ which is a consequence of 
$\lim_n \mu_{n,\theta}([0,a]) = \lim_n \frac{\theta n}{T} \sum_{i\geqslant 1: t_{i,n}^\theta \leqslant a} (t_{i,n}^\theta - t_{i-1,n}^\theta)^2$
and $t_{i,n}^\theta - t_{i-1,n}^\theta = \frac{T^\theta}{n\theta} (T-\xi_{n,i}^\theta)^{ 1-\theta}$ for some
$\xi_{n,i}^\theta\in [t_{i-1,n}^\theta,t_{i,n}^\theta]$).
For $a\in (0,b)$ and $F\in C([0,a])$ this implies that 
\begin{equation}\label{eqn:weak_convergence}
   \lim_{n\to \infty} \frac{2\theta n}{T} \int_0^a \left ((\overline{u}_{\tau_n^\theta}\wedge b) -u\right ) F(u) \od u 
   = \lim_{n\to \infty} \frac{2\theta n}{T} \int_0^a \left ( \overline{u}_{\tau_n^\theta} -u\right ) F(u) \od u 
   = \int_0^a \left ( \frac{T-u}{T} \right )^{1-\theta} F(u) \od u.
\end{equation}
For $D\in L_1([0,b])$ and $\varepsilon>0$ there is an $a\in (0,b)$ (to be chosen first) and 
an $F\in C([0,a])$ with
\begin{equation}\label{eqn:approximation_L1}
\int_a^b |D(u)|\od u < \varepsilon  
\sptext{1}{and}{1}  
\int_0^a |D(u)-F(u)|\od u < \varepsilon.
\end{equation}
Now using 
\eqref{eqn:1:statement:convergence_vs_nets},
$\left ( \frac{T-u}{T} \right )^{1-\theta} \leqslant 1$,
\eqref{eqn:approximation_L1}, and \eqref{eqn:weak_convergence} we obtain
\eqref{eqn:3:statement:convergence_vs_nets}.
\end{proof}
\medskip

\begin{lemm}
\label{statement:exact_formula_approximation}
For a c\`adl\`ag martingale $L=(L_t)_{t\in [0,T)}\subseteq L_2$ and
$0 \leqslant a < b < T$ one has that
\[
    \int_a^b (L_u-L_a)^2 \od u - \int_{(a,b]} (b-v) \od [L]_v 
    =  2 \int_{(a,b]} (b-v) (L_{v-} - L_a) \od L_v \mbox{ a.s.} \]
\end{lemm}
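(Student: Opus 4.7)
The plan is to derive this from Itô's formula applied to $u \mapsto (L_u - L_a)^2$ followed by an application of the (classical and stochastic) Fubini theorem. The key observation is that the identity has the flavor of a formula obtained by integrating a pointwise Itô expansion in the outer time parameter.

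First, I would apply Itô's formula for semimartingales to $(L_u - L_a)^2$ on $(a,b]$. For a càdlàg $L_2$-martingale $L$ this yields
\[
(L_u - L_a)^2 = 2 \int_{(a,u]} (L_{v-} - L_a) \, \od L_v + [L]_u - [L]_a \quad \text{a.s.}
\]
for every $u \in [a,b]$. Next, I would integrate this identity in $u$ over $[a,b]$ with respect to Lebesgue measure. On the deterministic side, an elementary Fubini argument gives
\[
\int_a^b ([L]_u - [L]_a) \, \od u = \int_a^b \int_{(a,u]} \od [L]_v \, \od u = \int_{(a,b]} (b-v) \, \od [L]_v,
\]
which accounts for the $\int_{(a,b]} (b-v) \, \od [L]_v$ term on the LHS.

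For the stochastic integral term, the goal is to commute $\int_a^b \od u$ with $\int_{(a,u]} \cdots \od L_v$, which will produce precisely the factor $(b-v)$:
\[
\int_a^b \int_{(a,u]} (L_{v-} - L_a) \, \od L_v \, \od u = \int_{(a,b]} (b-v)(L_{v-} - L_a) \, \od L_v.
\]
This is the step that needs care and is the main technical obstacle; I expect to invoke the stochastic Fubini theorem (in the form for semimartingale integrals against a finite deterministic measure on the outer variable). The hypotheses are easy to check here: since $b < T$ and $L$ is a càdlàg $L_2$-martingale, Doob's inequality gives $\| L^*_b \|_{L_2} < \infty$, so the integrand $(u,v,\omega) \mapsto \mathbf{1}_{(a,u]}(v)(L_{v-}(\omega) - L_a(\omega))$ on $[a,b] \times (a,b] \times \Omega$ is jointly measurable, predictable in $v$ for each $u$, and square-integrable against $\lambda \otimes \od [L] \otimes \p$; this is the standard setup allowing the exchange.

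Combining the three identities yields the claim. No further estimation is needed: the identity is an equality of random variables obtained by rearranging three applications of Fubini's theorem (one deterministic, one stochastic) on top of the Itô expansion. The only real issue to flag is that the stochastic Fubini application requires $b < T$ so that $L$ remains in $L_2$ uniformly on $[a,b]$, which is ensured by the hypothesis of the lemma.
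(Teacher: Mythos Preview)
Your proposal is correct and follows essentially the same route as the paper: apply It\^o's formula to $(L_u-L_a)^2$, integrate in $u$ over $(a,b]$, and swap the order of integration in both resulting double integrals. The only minor difference is in the justification of the stochastic swap: you invoke the stochastic Fubini theorem directly (checking the $L_2$-hypotheses via Doob's inequality), whereas the paper verifies the identity by integration by parts applied to $(b-u)\int_{(a,u]}(L_{v-}-L_a)\,\od L_v$ between $u=a$ and $u=b$; both are valid and equally short.
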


\begin{proof}
We fix $a \in [0,T)$. Then we get 
\[ (L_u-L_a)^2 = [L]_u - [L]_a + 2 \int_{(a,u]} (L_{v-}-L_a) \od L_v 
   \sptext{1}{for}{1}
   u\in [a,T) 
   \mbox{ a.s.} \]
where all terms are c\`adl\`ag in $u$. Integration over $(a,b]$ yields, a.s.,
\begin{align*}
    \int_{(a,b]} (L_u-L_a)^2 \od u 
& = \int_{(a,b]} \int_{(a,u]} \od [L]_v \od u 
    + 2 \int_{(a,b]}  \int_{(a,u]} (L_{v-}-L_a) \od L_v \od u \\
& = \int_{(a,b]} (b-v) \od [L]_v  
    + 2 \int_{(a,b]}  (b-v)(L_{v-}-L_a) \od L_v 
\end{align*}
where the stochastic Fubini argument for the last term can be verified  using
integration by parts of $(b-u)\int_{(a,u]}(L_{v-}-L_a) \od L_v$ in the bounds $u=a$ and $u=b$. 
\end{proof}

\begin{lemm}
\label{statement:elimination_first_term}
For a c\`adl\`ag martingale $L=(L_t)_{t\in [0,T)}\subseteq L_2$ with $L_0\equiv 0$,
a probability measure $\rho$ on $\cB([0,1))$, $\theta \in (0,1]$, and $b\in (0,T)$ one has that
\begin{multline*}
 \limsup_{n\to \infty}
    \left \| \frac{2\theta n}{T} \int_{[0,1)} \intsq{L}{\tau_n^{\theta,r}}_b \rho(\od r) - [ \cI^{\frac{1-\theta}{2}}     
             L ]_{b-} \right \|_{L_1} \\ \leqslant 
 \limsup_{n\to \infty} \left \|  \int_{(0,b)} \left [ \frac{2\theta n}{T}  \int_{[0,1)} \left ((\overline{u}_{\tau_n^{\theta,r}}\wedge b) - u 
              \right )  \rho(\od r)  - 
              \left ( \frac{T-u}{T} \right )^{1-\theta} \right ] \od [L]_u \right \|_{L_1}.
\end{multline*}
\end{lemm}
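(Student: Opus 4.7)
My strategy is to decompose $\intsq{L}{\tau_n^{\theta,r}}_b$ into a Lebesgue--Stieltjes integral against $\od[L]$---which will produce exactly the RHS of the lemma---plus a genuine martingale residual that will be shown to vanish in $L_1$. I fix $\tau=\tau_n^{\theta,r}=\{t_i\}_{i=0}^{n+1}$, let $j$ be the index with $b\in(t_{j-1},t_j]$, and apply \cref{statement:exact_formula_approximation} on each interval $(t_{i-1},t_i]$ for $i\leqslant j-1$ and on the terminal piece $(t_{j-1},b]$. Setting $\underline{v}_\tau:=t_{i-1}$ for $v\in(t_{i-1},t_i]$, so that $\overline{v}_\tau$ and $\underline{v}_\tau$ are the right and left endpoints of the interval of $\tau$ containing $v$, summation of the $j$ resulting identities yields
\[
\intsq{L}{\tau}_b \,=\, \int_{(0,b]}\!\bigl((\overline{u}_\tau\wedge b)-u\bigr)\,\od[L]_u \,+\, 2\int_{(0,b]}\!\bigl((\overline{v}_\tau\wedge b)-v\bigr)\bigl(L_{v-}-L_{\underline{v}_\tau}\bigr)\,\od L_v,
\]
after noting that $\overline{v}_\tau\wedge b=\overline{v}_\tau$ on $(0,t_{j-1}]$ and $\overline{v}_\tau\wedge b=b$ on $(t_{j-1},b]$.

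Equation \eqref{eqn:squarefunction_RL} with $\alpha=(1-\theta)/2$ and $a=0$ (or equivalently the quadratic-variation identity for the martingale $\cI^{\frac{1-\theta}{2}}L$) gives $[\cI^{\frac{1-\theta}{2}}L]_{b-}=\int_{(0,b)}((T-u)/T)^{1-\theta}\,\od[L]_u$. Multiplying the above decomposition by $\frac{2\theta n}{T}$, integrating against $\rho(\od r)$, subtracting $[\cI^{\frac{1-\theta}{2}}L]_{b-}$, using that the $\od[L]$-integrand vanishes at $u=b$, and applying the triangle inequality, I arrive at
\[
\Bigl\|\tfrac{2\theta n}{T}\!\int_{[0,1)}\!\intsq{L}{\tau_n^{\theta,r}}_b\,\rho(\od r) - [\cI^{\frac{1-\theta}{2}}L]_{b-}\Bigr\|_{L_1} \,\leqslant\, R_n + M_n,
\]
where $R_n$ is exactly the expression whose $\limsup$ is the RHS of the lemma, and $M_n:=\bigl\|\tfrac{4\theta n}{T}\int_{[0,1)}\!\int_{(0,b]}((\overline{v}_{\tau_n^{\theta,r}}\wedge b)-v)(L_{v-}-L_{\underline{v}_{\tau_n^{\theta,r}}})\,\od L_v\,\rho(\od r)\bigr\|_{L_1}$. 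Taking $\limsup$ reduces the lemma to proving $\lim_n M_n = 0$.

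For the last step I apply Jensen to pull $\int \rho(\od r)$ outside the $L_1$-norm, then Cauchy--Schwarz to pass to second moments, and finally It\^o's isometry to obtain
\[
M_n \leqslant \int_{[0,1)}\!\biggl(\E\!\int_{(0,b]}\Bigl(\tfrac{4\theta n}{T}\bigl((\overline{v}_{\tau_n^{\theta,r}}\wedge b)-v\bigr)\Bigr)^{\!2}\bigl(L_{v-}-L_{\underline{v}_{\tau_n^{\theta,r}}}\bigr)^{\!2}\od[L]_v\biggr)^{\!1/2}\!\rho(\od r).
\]
By \eqref{eqn:1:statement:convergence_vs_nets} the first squared factor is at most $16$ uniformly in $n$, $r$ and $v$; by \eqref{eqn:size_random_nets} one has $\|\tau_n^{\theta,r}\|_1\leqslant T/(\theta n)$ uniformly in $r$, so the standard $L_2$-convergence of left-point Riemann sums of a c\`adl\`ag $L_2$-martingale against itself forces $\E\int_{(0,b]}(L_{v-}-L_{\underline{v}_{\tau_n^{\theta,r}}})^{2}\od[L]_v\to 0$ uniformly in $r\in[0,1)$, whence $M_n\to 0$. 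The main obstacle is precisely this $L_1$-control of the martingale residual: the pointwise vanishing $(L_{v-}-L_{\underline{v}_\tau})^{2}\to 0$ from the c\`adl\`ag property is immediate, but upgrading it to $L_1$-convergence of the stochastic integral under only an $L_2$-assumption on $L$ cannot be done by bare dominated convergence; rather, the uniform mesh bound \eqref{eqn:size_random_nets} combined with the classical $L_2$-Riemann-sum convergence theorem is what does the job and renders the averaging over $r\in[0,1)$ harmless.
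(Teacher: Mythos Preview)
Your overall strategy---decompose $\intsq{L}{\tau}_b$ via \cref{statement:exact_formula_approximation}, identify $[\cI^{(1-\theta)/2}L]_{b-}$ through \eqref{eqn:1:statement:RL_for_martinagles}, apply the triangle inequality, and reduce the lemma to $M_n\to 0$---is exactly the paper's route. The problem lies in your treatment of $M_n$.

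Passing to second moments via Cauchy--Schwarz and It\^o's isometry forces you to control
\[
\E\int_{(0,b]}\bigl(L_{v-}-L_{\underline{v}_\tau}\bigr)^{2}\od[L]_v,
\]
and you invoke a ``standard $L_2$-Riemann-sum convergence theorem'' to send it to $0$. Under the sole hypothesis $L\subseteq L_2$ no such theorem exists; the quantity can be $+\infty$. Take a Brownian motion $W$, fix $0<\varepsilon<b$, pick $\eta=g(W_\varepsilon)$ with $\eta\in L_2\setminus L_4$ (e.g.\ $g(x)=|x|^{-1/3}\1_{(0,1]}(|x|)$), and set $L_t:=\eta\,(W_t-W_\varepsilon)\1_{\{t\geqslant\varepsilon\}}$. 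Then $L$ is a continuous $L_2$-martingale with $L_0\equiv 0$ and $\od[L]_v=\eta^2\1_{\{v>\varepsilon\}}\od v$; for any partition interval $(t_{i-1},t_i]\subset(\varepsilon,b)$ independence of $\eta$ from $W_\cdot-W_\varepsilon$ gives
\[
\E\int_{t_{i-1}}^{t_i}\bigl(L_v-L_{t_{i-1}}\bigr)^{2}\od[L]_v
=\E[\eta^4]\cdot\tfrac12(t_i-t_{i-1})^2=\infty.
\]
So your upper bound on $M_n$ is vacuous.

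The paper avoids this by applying the Burkholder--Davis--Gundy inequality instead of Cauchy--Schwarz: the $L_1$-norm of the stochastic integral is bounded by a constant times
\[
\E\Bigl[\Bigl(\int_{(0,b]}\bigl((\overline v_\tau\wedge b)-v\bigr)^{2}\bigl(L_{v-}-L_{\underline v_\tau}\bigr)^{2}\od[L]_v\Bigr)^{1/2}\Bigr].
\]
The square root is decisive. After extracting one mesh factor $\leqslant T/(\theta n)$ (which cancels the outer $2\theta n/T$), the remaining integrand is dominated on $[0,1)\times\Omega$ by $2L_b^{*}[L]_b^{1/2}$, and this \emph{is} in $L_1$ because $\E[L_b^{*}[L]_b^{1/2}]\leqslant\|L_b^{*}\|_{L_2}(\E[L]_b)^{1/2}<\infty$. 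Dominated convergence in $(r,\omega)$, combined with a second dominated-convergence argument on $(0,b]$ against $\od[L]_v(\omega)$ for each fixed $(r,\omega)$, then gives $M_n\to 0$. In short: replace your Cauchy--Schwarz/It\^o-isometry step by BDG and run the dominated-convergence argument at the $L_1$ level.
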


\begin{proof}
We start by
\begin{align*}
&            \left \| \frac{2\theta n}{T} \int_{[0,1)} \intsq{L}{\tau_n^{\theta,r}}_b \rho(\od r) - [ \cI^{\frac{1-\theta}{2}}     
             L ]_{b-} \right \|_{L_1} \\
& \leqslant  \left \| \frac{2\theta n}{T} \int_{[0,1)} 
              \left [  \intsq{L}{\tau_n^{\theta,r}}_b  -
              \int_{(0,b)}  \left ((\overline{u}_{\tau_n^{\theta,r}}\wedge b) - u \right ) \od [L]_u  \right ] \rho(\od r)
             \right \|_{L_1} \\ 
& +          \left \| \frac{2\theta n}{T}  \int_{(0,b)} \int_{[0,1)} \left ((\overline{u}_{\tau_n^{\theta,r}}\wedge b) - u \right )  \rho(\od r)      
             \od [L]_u - [ \cI^{\frac{1-\theta}{2}} L]_{b-} \right \|_{L_1} \\
& =  \left \| \frac{2\theta n}{T} \int_{[0,1)} 
              \left [  \intsq{L}{\tau_n^{\theta,r}}_b  -
              \int_{(0,b]}  \left ((\overline{u}_{\tau_n^{\theta,r}}\wedge b) - u \right ) \od [L]_u  \right ] \rho(\od r)
              \right \|_{L_1} \\ 
& +           \left \|  \int_{(0,b)} \left [ \frac{2\theta n}{T}  \int_{[0,1)} \left ((\overline{u}_{\tau_n^{\theta,r}}\wedge b) - u 
              \right )  \rho(\od r)  - 
              \left ( \frac{T-u}{T} \right )^{1-\theta} \right ] \od [L]_u \right \|_{L_1},
\end{align*}
where we used \eqref{eqn:1:statement:RL_for_martinagles}. To continue the proof  we let
\[ s_{i,n}^{\theta,r} := t_{i,n}^{\theta,r}\wedge b 
   \sptext{1}{and}{1}
   J_{i,n}^{\theta,r} := (s_{i-1,n}^{\theta,r},s_{i,n}^{\theta,r}]    \sptext{1}{so that}{1}
   \bigcup_{i=1}^{n+1}  J_{i,n}^{\theta,r} = (0,b]. \]
We fix $r\in [0,1)$. By \cref{statement:exact_formula_approximation}, the Burkholder-Davis-Gundy inequalities (here with the
constant $\beta_1>0$), and \eqref{eqn:size_random_nets} we obtain that 
\begin{align*}
& \left \|  \intsq{L}{\tau_n^{\theta,r}}_b  -
               \int_{(0,b]} \left ((\overline{u}_{\tau_n^{\theta,r}}\wedge b) - u\right ) \od [L]_u  
             \right \|_{L_1}\\
& = \left \| \sum_{i=1}^{n+1} \left [ \int_{J_{i,n}^{\theta,r}}
            \left ( L_u - L_{s_{i-1,n}^{\theta,r}} \right )^2 \od u - 
  \int_{J_{i,n}^{\theta,r}} \left ( s_{i,n}^{\theta,r} - u\right ) \od [L]_u  
  \right ]
             \right \|_{L_1} \\
& = 2 \left \| \sum_{i=1}^{n+1} 
  \left [ 
  \int_{J_{i,n}^{\theta,r}}
  \left ( s_{i,n}^{\theta,r}  - v                       \right ) 
  \left (  L_{v-} -  L_{s_{i-1,n}^{\theta,r}} \right )
  \od L_v 
  \right ]
             \right \|_{L_1} \\          
& \leqslant 2 \beta_1  \left \| \left (  \sum_{i=1}^{n+1} 
    \int_{J_{i,n}^{\theta,r}}
  \left ( s_{i,n}^{\theta,r}  - v                       \right )^2 
  \left ( L_{v-} -  L_{s_{i-1,n}^{\theta,r}} \right )^2
  \od [L]_v 
  \right )^\frac{1}{2}
             \right \|_{L_1} \\    
& \leqslant  \frac{2 \beta_1 T}{\theta n}  \left \| \left (  \sum_{i=1}^{n+1} 
    \int_{J_{i,n}^{\theta,r}}
  \left (  L_{v-} -  L_{s_{i-1,n}^{\theta,r}} \right )^2
  \od [L]_v 
  \right )^\frac{1}{2}
             \right \|_{L_1}.
\end{align*}
To prove our lemma it is sufficient to show that
\[ \lim_{n\to \infty}  \int_{[0,1)\times \Omega} 
 \left (  \sum_{i=1}^{n+1} 
    \int_{J_{i,n}^{\theta,r}}
  \left (  L_{v-} -  L_{s_{i-1,n}^{\theta,r}} \right )^2
  \od [L]_v 
  \right )^\frac{1}{2}
              (\rho\otimes \p)(\od (r,\omega)) = 0.\]
This is done by dominated convergence: On the one side we have that 
\[  \left (  \sum_{i=1}^{n+1} 
    \int_{J_{i,n}^{\theta,r}}
  \left (  L_{v-} -  L_{s_{i-1,n}^{\theta,r}} \right )^2
  \od [L]_v 
  \right )^\frac{1}{2}
   \leqslant 2 L_b^*  [L]_b^\frac{1}{2}  \in L_1 \]
where we use $\E L_b^*  [L]_b^\frac{1}{2} \leqslant \|L_b^*\|_{L_2} (\E [L]_b)^\frac{1}{2}<\infty$.
So it suffices to show, for fixed $(r,\omega) \in [0,1)\times \Omega$, that 
\[ \lim_{n\to \infty} \left ( \sum_{i=1}^{n+1} 
    \int_{J_{i,n}^{\theta,r}}
  \left (  L_{v-}(\omega) -  L_{s_{i-1,n}^{\theta,r}}(\omega) \right )^2
  \od [L]_v(\omega) \right )  = 0.\]
This follows from
$\lim_{v\downarrow t} (L_{v-}(\omega) -  L_t(\omega)) = 0$ for $t\in [0,T)$
because of the c\`adl\`ag paths of $L$ 
and dominated convergence on $(0,b]$ with respect to the measure 
$\od [L]_v(\omega)$ for a fixed $\omega\in \Omega$ (note that
$\sup_{v\in (0,b]} \left | L_{v-}(\omega) -  L_{s_{i-1,n}^{\theta,r}}(\omega) \right | \leqslant 2 L_b^*(\omega)
<\infty$).
\end{proof}

\begin{proof}[Proof of \cref{statement:pointwise_convergence_sq_random}]
We use $\rho(\od r)=\od r$,  \cref{statement:elimination_first_term}, and \cref{statement:convergence_vs_nets}. 
\end{proof}

\begin{proof}[Proof of \cref{statement:pointwise_convergence_sq}]
For $\rho=\delta_0$ we get
\begin{align}
& \limsup_{n\to \infty} \left \|  \int_{(0,b)} \left [ \frac{2\theta n}{T}  \int_{[0,1)} \left ((\overline{u}_{\tau_n^{\theta,r}}\wedge b) - u 
              \right )  \rho(\od r)  - 
              \left ( \frac{T-u}{T} \right )^{1-\theta} \right ] \od [L]_u \right \|_{L_1} \notag \\
& = \limsup_{n\to \infty} \E \left | \int_{(0,b)} \left [ \frac{2\theta n}{T}  \left ((\overline{u}_{\tau_n^{\theta}}\wedge b) - u 
              \right )   - 
              \left ( \frac{T-u}{T} \right )^{1-\theta} \right ] D(u,\cdot)  \od u \right |. \label{eqn:remaining_term_D}
\end{align}
Because of inequality \eqref{eqn:1:statement:convergence_vs_nets} we have that
\[ \left | \int_{(0,b)} \left [ \frac{2\theta n}{T}  \left ((\overline{u}_{\tau_n^{\theta}}\wedge b) - u 
              \right )   - 
              \left ( \frac{T-u}{T} \right )^{1-\theta} \right ] D(u,\cdot)  \od u \right |
\leqslant 2 \int_{(0,b)} D(u,\cdot)  \od u 
    =     2 [L]_b\in L_1 \]
and  by \eqref{eqn:3:statement:convergence_vs_nets} for all $\omega\in \Omega$ that 
\[ \lim_{n\to \infty} \left | \int_{(0,b)} \left [ \frac{2\theta n}{T}  \left ((\overline{u}_{\tau_n^{\theta}}\wedge b) - u 
              \right )   - 
              \left ( \frac{T-u}{T} \right )^{1-\theta} \right ] D(u,\omega)  \od u \right | = 0. \]
Consequently, by dominated convergence the term  \eqref{eqn:remaining_term_D} vanishes.
\end{proof}


\section[Riemann-Liouville op's \& approximation]{Riemann-Liouville type operators and approximation}
\label{sec:random_measures}

Various $L_p$-approximation problems in stochastic integration theory can be translated by the 
Burkholder-Davis-Gundy inequalities into problems about quadratic variation processes. 
In the special case of $L_2$-approximations this is particularly useful as there is a chance to turn 
the approximation problem into -in a sense- more deterministic problem by Fubini's theorem 
when the interchange of the integration in time and in $\omega$ is possible. When $p\not=2$ this does 
not work (at least) in this straight way, see for example \cite{Geiss:Toivola:15}. However, passing from 
global $L_2$-estimates to weighted local $L_2$-estimates, i.e. weighted bounded mean oscillation estimates, 
and exploiting a weighted John-Nirenberg type theorem, gives a natural approach to $L_p$- and exponential 
estimates.
\smallskip

The plan of this section is as follows:
\begin{enumerate}[(A)]
\item \cref{statement:upper_bound_randommeasure} and \cref{statement:lower_bounds_randommeasure}
      are the key to exploit these local $L_2$-estimates in our article later.
      It turned out that one can naturally formulate these theorems in the general setting of 
      random measures $(\Pi,\Upsilon)$.  For this one needs a replacement \eqref{eqn:ass:random_measures} of 
      {\em orthogonality}, which will be
      relation \eqref{eqn:ass:random_measures}. 
\item By \cref{ass:random_measures_simplified} we specialize the setting given in \cref{ass:random_measures} so that
      the measure $\Pi$ will describe the quadratic variation of the driving process of the stochastic integral to be approximated 
      and $\Upsilon$ will describe some kind of {\em curvature} of the stochastic integral.
      As results we obtain \cref{statemtent:equivalence_simplified_setting_new} and \cref{equivalence_martingale}.
\item In \cref{statement:intsq_vs_Binfty2} we bring the upper bound of \cref{statemtent:equivalence_simplified_setting_new} into a
      functional analytic form applied in \cref{sec:approximation_random_measure}.
\end{enumerate}
\medskip

So let us start by introducing the random measures and the quasi-orthogonality where we use extended 
conditional expectations for non-negative random variables.

\begin{assumption}
	\label{ass:random_measures}
	We assume random measures 
	\[  \Pi,\Upsilon\colon \Omega \times \cB((0,T))\to [0,\infty], \]
        a progressively measurable process $(\varphi_t)_{t\in [0,T)}$, and a constant $\kappa\geqslant 1$, such that 
	\[ \Pi(\omega,(0,b]) + \Upsilon(\omega, (0,b])+\sup_{t\in [0,b]}|\varphi_t(\omega)| <\infty \]
    for  $(\omega,b) \in \Omega\times (0,T)$
    and such that, for  $0\leqslant s \leqslant a < b < T$,
	\begin{align}\label{eqn:ass:random_measures}
		\ce{\F_a}{\int_{(a,b]} \left|\varphi_u - \varphi_s\right|^2 \Pi(\cdot,\od u)}  
		& \sim_\kappa \ce{\F_a}{ \left|\varphi_a-\varphi_s\right|^2 \Pi (\cdot,(a,b]) 
			+ \int_{(a,b]} (b-u) \Upsilon(\cdot,\od u)} \mbox{ a.s.}
		\end{align}
When \eqref{eqn:ass:random_measures} holds with $\preceq_\kappa$, then we denote the inequality by $\eqref{eqn:ass:random_measures}^\leqslant$,
		in case of $\succeq_\kappa$, by $\eqref{eqn:ass:random_measures}^\geqslant$.
\end{assumption}
\smallskip

To simplify the notation in some situations we extend $\Pi$ and $\Upsilon$ to
$\Pi,\Upsilon\colon \Omega \times \cB((0,T])\to [0,\infty]$
by $\Pi(\omega,\{T\})= \Upsilon(\omega,\{T\})=0$ for all $\omega\in \Omega$.
\smallskip

\begin{defi}
\label{defi:R_process}
For a random measure $\Pi\colon \Omega \times \cB((0,T))\to [0,\infty]$ and
a progressively measurable process $(\varphi_t)_{t\in [0,T)}$ such that 
$\Pi(\omega,(0,b])+\sup_{t\in [0,b]}|\varphi_t(\omega)| <\infty$ for  $(\omega,b) \in \Omega\times (0,T)$
we define for $\tau=\{t_i\}_{i=0}^n \in \mathcal{T}$ the non-negative, non-decreasing, and c\`adl\`ag 
process $\intsqw{\varphi}{\pi}{\tau}=(\intsqw{\varphi}{\pi}{\tau}_a)_{a\in [0,T)}$ by 
	\[ \intsqw{\varphi}{\pi}{\tau}_a  := \int_{(0,a]} \left| \varphi_u - \sum_{i=1}^n \varphi_{t_{i-1}} \1_{(t_{i-1},t_i]}(u) 
	\right|^2 \Pi(\cdot, \od u)\in [0,\infty)
	\]
and let $\intsqw{\varphi}{\pi}{\tau}_T := \lim_{a\uparrow T} \intsqw{\varphi}{\pi}{\tau}_a\in [0,\infty]$.
\end{defi}
\medskip

The next two statements, Theorems \ref{statement:upper_bound_randommeasure} and \ref{statement:lower_bounds_randommeasure},
develop further ideas from \cite[Lemma 3.8]{Geiss:Hujo:07} and \cite[Lemma 5.6]{Geiss:Toivola:15} to a general conditional 
setting using random measures we exploit in the sequel. 
For $\tau=\{t_i\}_{i=0}^n\in \cT$ and  $a\in [t_{k-1},t_k)$ we let 
\begin{equation}\label{eqn:under-overline-a}
   \underline{a}(\tau) := t_{k-1} 
   \sptext{1}{and}{1}
   \overline{a}(\tau) := t_k.
\end{equation}

\begin{theo}[Upper bound]
\label{statement:upper_bound_randommeasure}
Suppose \cref{ass:random_measures} with $\eqref{eqn:ass:random_measures}^\leqslant$. If $(\theta,a)\in (0,1]\times [0,T)$,
$\tau\in \cT$, and $(\un{a},\ov{a}]:=(\un{a}(\tau),\ov{a}(\tau)]$,
then
\[ \frac{\ce{\F_a}{\intsqw{\varphi}{\pi}{\tau}_T-\intsqw{\varphi}{\pi}{\tau}_a}}{\| \tau\|_\theta} 
   \leqslant \kappa  
   \begin{cases}
            \ce{\F_a}{\int_{(a,T)} (T-u)^{1-\theta} \Upsilon(\cdot,\od u) + \frac{(T-\un{a})^{1-\theta}}{\ov{a}-\un{a}}
            |\varphi_a - \varphi_{\un{a}}|^2 \Pi(\cdot,(a,\ov{a}])}   \\
            \ce{\F_a}{\int_{(a,T)} (T-u)^{1-\theta} \Upsilon(\cdot,\od u)} \sptext{2}{if}{1} a \in \tau 
\end{cases} \hspace*{-.7em}\mbox{a.s.} \]
\end{theo}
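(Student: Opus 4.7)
Let $(\un{a},\ov{a}]=(t_{k-1},t_k]$ be the cell of $\tau$ containing $a$. Since the interpolant used in the definition of $\intsqw{\varphi}{\pi}{\tau}$ equals $\varphi_{t_{i-1}}$ on $(t_{i-1},t_i]$,
\[
\intsqw{\varphi}{\pi}{\tau}_T-\intsqw{\varphi}{\pi}{\tau}_a
=\int_{(a,\ov{a}]}|\varphi_u-\varphi_{\un{a}}|^2\Pi(\cdot,\od u)
+\sum_{i=k+1}^n\int_{(t_{i-1},t_i]}|\varphi_u-\varphi_{t_{i-1}}|^2\Pi(\cdot,\od u).
\]
I would now apply $\eqref{eqn:ass:random_measures}^\leqslant$ to each piece: on $(a,\ov{a}]$ at level $\F_a$ with $s=\un{a}$, $b=\ov{a}$, producing both a $\Pi$- and an $\Upsilon$-term on the right; on each $(t_{i-1},t_i]$ with $i\geqslant k+1$ at level $\F_{t_{i-1}}$ with the choices $s=t_{i-1}$, $b=t_i$, so that $|\varphi_{t_{i-1}}-\varphi_s|^2=0$ and the $\Pi$-term disappears, then pull the bound under $\ce{\F_a}{\cdot}$ via the tower property $\F_a\subseteq\F_{t_{i-1}}$. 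At $i=n$ the value $b=T$ sits on the boundary of $\eqref{eqn:ass:random_measures}^\leqslant$, so one first applies the inequality with $b=T-\varepsilon$ and lets $\varepsilon\downarrow 0$ using monotone convergence, recalling the extension $\Pi(\cdot,\{T\})=\Upsilon(\cdot,\{T\})=0$. Summing the cell-wise estimates keeps the prefactor at $\kappa$ and gives
\[
\ce{\F_a}{\intsqw{\varphi}{\pi}{\tau}_T-\intsqw{\varphi}{\pi}{\tau}_a}
\leqslant\kappa\,\ce{\F_a}{|\varphi_a-\varphi_{\un{a}}|^2\Pi(\cdot,(a,\ov{a}])+A_\tau}
\]
with $A_\tau:=\int_{(a,\ov{a}]}(\ov{a}-u)\Upsilon(\cdot,\od u)+\sum_{i=k+1}^n\int_{(t_{i-1},t_i]}(t_i-u)\Upsilon(\cdot,\od u)$.

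Next I would convert the mesh-length factors into $\|\tau\|_\theta$-scaled quantities. By \eqref{eq:adapted_net_monotnicity}, for every $u\in(t_{i-1},t_i]\cap[0,T)$ one has $t_i-u\leqslant\|\tau\|_\theta(T-u)^{1-\theta}$, which yields $A_\tau\leqslant\|\tau\|_\theta\int_{(a,T)}(T-u)^{1-\theta}\Upsilon(\cdot,\od u)$. For the $\Pi$-term, the definition of $\|\tau\|_\theta$ gives directly $\ov{a}-\un{a}\leqslant\|\tau\|_\theta(T-\un{a})^{1-\theta}$, hence $1/\|\tau\|_\theta\leqslant(T-\un{a})^{1-\theta}/(\ov{a}-\un{a})$. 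Dividing the combined estimate by $\|\tau\|_\theta$ therefore delivers the first line of the theorem. If additionally $a\in\tau$, then necessarily $a=\un{a}$, so $|\varphi_a-\varphi_{\un{a}}|^2\equiv 0$ and the $\Pi$-contribution vanishes, giving the second line.

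\textbf{Main obstacle.} There is no conceptual hurdle; the difficulty is purely one of careful bookkeeping. Three points require attention: (i) the overall constant must remain $\kappa$ rather than scale with the number of cells — this is automatic because the per-cell bounds are summed only on the right-hand side; (ii) the boundary case $b=T$ in the final cell must be handled through a monotone-convergence passage combined with the boundary extension of the random measures; and (iii) one must invoke \eqref{eq:adapted_net_monotnicity} in the pointwise form $t_i-u\leqslant\|\tau\|_\theta(T-u)^{1-\theta}$, so that the weight $(T-u)^{1-\theta}$ is absorbed \emph{inside} the $\Upsilon$-integral over $(a,T)$ rather than merely estimated by its value at the left endpoint of each cell.
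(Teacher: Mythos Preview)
Your proposal is correct and follows essentially the same route as the paper's proof: decompose cell by cell, apply $\eqref{eqn:ass:random_measures}^\leqslant$ at level $\F_a$ on the first cell and (via the tower property) at level $\F_{t_{i-1}}$ on the later cells, then convert the resulting mesh-length factors using \eqref{eq:adapted_net_monotnicity} and the definition of $\|\tau\|_\theta$. The paper writes this more tersely (introducing $s_i:=t_i\vee a$ and the convention $\varphi_T:=0$, and leaving both the tower step and the $b\uparrow T$ passage implicit), but your more explicit treatment of these points is equally valid.
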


\begin{theo}[Lower bounds]
\label{statement:lower_bounds_randommeasure}
Suppose \cref{ass:random_measures} with $\eqref{eqn:ass:random_measures}^\geqslant$ and $(\theta,a)\in (0,1]\times [0,T)$. 
\begin{enumerate}[{\rm (1)}]
\item \label{item:1:statement:lower_bounds_randommeasure}
      If $\tau\in \cT$, $(\un{a},\ov{a}]:=(\un{a}(\tau),\ov{a}(\tau)]$, and 
      $\| \tau\|_\theta = \frac{\ov{a}-\un{a}}{(T-\underline{a})^{1-\theta}}$, then
\[ \frac{\ce{\F_a}{\intsqw{\varphi}{\pi}{\tau}_{\ov a} - \intsqw{\varphi}{\pi}{\tau}_a}}{\|\tau \|_\theta}
   \geqslant \frac{1}{\kappa}  \ce{\F_a}{\frac{(T-\un{a})^{1-\theta}}{\ov{a}-\un{a}}
   |\varphi_a - \varphi_{\un{a} }|^2 \Pi(\cdot,(a,\ov{a}])} \mbox{ a.s.}
\]
\item \label{item:2:statement:lower_bounds_randommeasure}
There exist $\tau_n\in\cT$, $n\in\bN$,  with $a\in \tau_n$ and $\lim_n \|\tau_n\|_\theta = 0$ such that
\[ \liminf_n
\frac{\ce{\F_a}{\intsqw{\varphi}{\pi}{\tau_n}_T - \intsqw{\varphi}{\pi}{\tau_n}_a}}{\|\tau_n \|_\theta}
\geqslant \frac{1}{\kappa 2^{\frac{1}{\theta}+2}} \ce{\F_a}{\int_{(a,T)} (T-u)^{1-\theta} \Upsilon(\cdot,\od u)} \mbox{ a.s.}
\]
\end{enumerate} 
\end{theo}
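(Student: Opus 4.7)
Both parts rest on the one-sided version $\eqref{eqn:ass:random_measures}^\geqslant$ of \cref{ass:random_measures}.

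\textbf{Plan for (1).} I would apply $\eqref{eqn:ass:random_measures}^\geqslant$ on the interval $(\un a,\ov a]$ with $s=\un a$. By \cref{defi:R_process} the left-hand side of the resulting inequality equals $\ce{\F_a}{\intsqw{\varphi}{\pi}{\tau}_{\ov a}-\intsqw{\varphi}{\pi}{\tau}_a}$, and the right-hand side reads
\[
\tfrac{1}{\kappa}\ce{\F_a}{|\varphi_a-\varphi_{\un a}|^2\Pi(\cdot,(a,\ov a]) + \int_{(a,\ov a]}(\ov a-u)\Upsilon(\cdot,\od u)}.
\]
Dropping the non-negative $\Upsilon$-summand and dividing by the assumed value $\|\tau\|_\theta=(\ov a-\un a)/(T-\un a)^{1-\theta}$ yields the claim.

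\textbf{Plan for (2).} I would take $\tau_n$ to be close to the $\theta$-adapted sequence $\tau_n^\theta$ of \cref{subsec:time-net}, inserting $a$ as a grid point; then $\|\tau_n\|_\theta\to 0$ and each sub-interval $(t_{i-1},t_i]\subset[a,T)$ has width comparable to $\|\tau_n\|_\theta(T-t_{i-1})^{1-\theta}$ up to a uniform factor (from the uniform quantization identity for $\tau_n^\theta$). Applying $\eqref{eqn:ass:random_measures}^\geqslant$ on each sub-interval with $s=t_{i-1}$ (so $|\varphi_{t_{i-1}}-\varphi_{t_{i-1}}|^2=0$), taking $\F_a$-conditional expectation, and summing gives
\[
\frac{\ce{\F_a}{\intsqw{\varphi}{\pi}{\tau_n}_T-\intsqw{\varphi}{\pi}{\tau_n}_a}}{\|\tau_n\|_\theta}
\geqslant \frac{1}{\kappa\|\tau_n\|_\theta}\ce{\F_a}{\int_{(a,T)}(\overline u_{\tau_n}-u)\Upsilon(\cdot,\od u)},
\]
where $\overline u_{\tau_n}$ denotes the right endpoint of the $\tau_n$-interval containing $u$. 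The remaining task reduces to the a.s. geometric comparison
\[
\liminf_n \frac{1}{\|\tau_n\|_\theta}\int_{(a,T)}(\overline u_{\tau_n}-u)\Upsilon(\cdot,\od u) \geqslant \frac{1}{2^{1/\theta+2}}\int_{(a,T)}(T-u)^{1-\theta}\Upsilon(\cdot,\od u).
\]
On the leading half $(t_{i-1},(t_{i-1}+t_i)/2]$ of each interval one has $t_i-u\geqslant (t_i-t_{i-1})/2$, together with $T-u\leqslant T-t_{i-1}$, which combined with the uniform quantization produces a pointwise bound $t_i-u\geqslant c_\theta\|\tau_n\|_\theta(T-u)^{1-\theta}$ on this half. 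To upgrade this leading-half bound to an estimate valid on all of $(a,T)$ regardless of how $\Upsilon(\omega,\cdot)$ partitions its mass between leading and trailing halves, I would interlace $\tau_n$ with a half-shifted companion $\wt\tau_n$ obtained by inserting the midpoints of $\tau_n$'s intervals, observe that every $u$ lies in the leading half of at least one of $\tau_n,\wt\tau_n$, and then argue on the union (or on whichever of the two sequences provides the larger contribution). The explicit constant splits as a factor $4$ coming from the midpoint cut together with the pair-to-single passage, and a factor $2^{1/\theta}$ absorbing the worst ratio $(T-t_{i-1})/(T-t_i)$ at the terminal interval of the $\theta$-quantization.

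\textbf{Main obstacle.} The hardest step is this a.s. geometric comparison with a \emph{single} deterministic sequence $(\tau_n)$: the pointwise lower bound on $t_i-u$ fails on the trailing half of each interval, while $\Upsilon(\omega,\cdot)$ is an arbitrary random measure that may concentrate on these trailing halves and cannot be anticipated by a deterministic grid. Implementing the interlaced-net idea while keeping one deterministic sequence, and carefully tracking the constant through the midpoint cut and the terminal-interval distortion, is what produces the explicit factor $2^{1/\theta+2}$ in the statement.
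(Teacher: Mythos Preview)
Your argument for part (1) is correct and identical to the paper's.

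For part (2) your overall strategy---build the $\theta$-adapted net on $[a,T]$, pair it with a half-shifted companion so that every $u$ is in a ``good'' position for at least one of the two nets, and then at each $n$ select whichever net gives the larger quotient---is exactly what the paper does. The difference lies in the choice of companion net, and this is where your proposal has a genuine gap.

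You take $\wt\tau_n$ to consist of the \emph{arithmetic} midpoints of the $\tau_n^\theta$-intervals and assert that every $u$ lies in the leading half of at least one of the two nets. This fails for $\theta<1$: the intervals of $\tau_n^\theta$ are strictly decreasing, so if $m_i=(t_{i-1}+t_i)/2$ then the arithmetic midpoint of the $\wt\tau_n$-interval $(m_i,m_{i+1}]$ is $(t_{i-1}+2t_i+t_{i+1})/4<t_i$, and any $u$ between this value and $t_i$ sits in the trailing half of \emph{both} nets. (Concretely, for $\theta=1/2$, $T=1$, $n=2$ one has $t_1=3/4$, $m_1=3/8$, $m_2=7/8$, and $u=0.7$ is in the trailing half of $(0,3/4]$ and of $(3/8,7/8]$.) Your constant bookkeeping, which invokes only the terminal ratio $(T-t_{n-2})/(T-t_{n-1})=2^{1/\theta}$, does not survive this.

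The paper repairs this by using \emph{$\theta$-quantized} midpoints
\[
r_{i,n}^{\theta,a}:=a+(T-a)\bigl[1-(1-\tfrac{2i-1}{2n})^{1/\theta}\bigr],
\]
i.e.\ the points that bisect each interval in the measure $(T-u)^{\theta-1}\od u$ rather than Lebesgue measure. With this choice one obtains the uniform pointwise bounds
\[
\frac{u_{i,n}^{\theta,a}-u}{(T-u)^{1-\theta}}\geqslant\frac{(T-a)^\theta}{\theta\,2^{1/\theta+1}n}
\quad\text{on }(u_{i-1,n}^{\theta,a},r_{i,n}^{\theta,a}],
\qquad
\frac{r_{i+1,n}^{\theta,a}-u}{(T-u)^{1-\theta}}\geqslant\frac{(T-a)^\theta}{\theta\,2^{1/\theta+1}n}
\quad\text{on }(r_{i,n}^{\theta,a},u_{i,n}^{\theta,a}],
\]
which are exactly what is needed to cover all of $(a,r_{n,n}^{\theta,a}]$; the factor $2^{1/\theta+1}$ here, together with the final ``pick the larger of the two quotients'' step, produces the constant $2^{1/\theta+2}$. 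So your plan is right, but the companion net must be shifted in the $\theta$-scale, not the linear one.
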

\medskip

\begin{proof}[Proof of \cref{statement:upper_bound_randommeasure}]
To simplify the notation we set $\varphi_T:=0$. It is obvious that we only need to show the first inequality.
For $\tau = \{t_i\}_{i=0}^n\in \mathcal{T}$, $(t_{k-1},t_k]=(\un{a}(\tau),\ov{a}(\tau)]$, and 
$s_i := t_i \vee a$ one has, a.s.,
\begin{align*}
	&\ce{\F_a}{\int_{(a, T]}\left|\varphi_{u} - \sum_{i= 1}^n \varphi_{t_{i-1}}\1_{(t_{i-1}, t_{i}]}(u) \right|^2 \Pi(\cdot,\od u)}  \\
	&= \ce{\F_a}{\int_{ (a, t_k]}\left|\varphi_u- \varphi_{t_{k-1}}\right|^2 \Pi(\cdot,\od u) + \sum_{i=k+1}^n\int_{(t_{i-1}, t_{i}]} \left|\varphi_{u} -
		\varphi_{t_{i-1}}\right|^2 \Pi(\cdot,\od u)}\\
    &\leqslant \kappa \ce{\F_a}{\left|\varphi_{a}- \varphi_{t_{k-1}}\right|^2 \Pi(\cdot,(a,t_k]) + \sum_{i=k}^n\int_{(s_{i-1}, s_{i}]} (s_i-u) \Upsilon (\cdot,\od u)} \\
&\leqslant \kappa \E^{\F_a} \Bigg [ \frac{t_k-t_{k-1}}{(T-t_{k-1})^{1-\theta}}\left|\varphi_{a}- \varphi_{t_{k-1}}\right|^2 \frac{(T-t_{k-1})^{1-\theta}}{t_k-t_{k-1}}\Pi(\cdot,(a,t_k])\\ & \hspace*{15em}+ \sum_{i=k}^n\int_{(s_{i-1}, s_{i}]} 
    \frac{s_i-u}{(T-u)^{1-\theta}} (T-u)^{1-\theta} \Upsilon (\cdot,\od u)\Bigg ] \\
&\leqslant \kappa \|\tau\|_\theta \ce{\F_a}{\left|\varphi_{a}- \varphi_{t_{k-1}}\right|^2 \frac{(T-t_{k-1})^{1-\theta}}{t_k-t_{k-1}}\Pi(\cdot,(a,t_k]) + \int_{(a,T)} 
  (T-u)^{1-\theta} \Upsilon (\cdot,\od u)}
\end{align*}
where we use \eqref{eq:adapted_net_monotnicity}.
\end{proof}

\begin{proof}[Proof of \cref{statement:lower_bounds_randommeasure}]
\eqref{item:1:statement:lower_bounds_randommeasure}
Beginning the proof as for \cref{statement:upper_bound_randommeasure}
with $(t_{k-1},t_k]=(\un{a}(\tau),\ov{a}(\tau)]$, we get, a.s.,
\begin{align*}
           \ce{\F_a}{\int_{(a, t_k]}\left|\varphi_{u} - \sum_{i= 1}^n \varphi_{t_{i-1}}\1_{(t_{i-1}, t_{i}]}(u) \right|^2 \Pi(\cdot,\od u)} 
& =     \ce{\F_a}{\int_{ (a, t_k]}\left|\varphi_u- \varphi_{t_{k-1}}\right|^2 \Pi(\cdot,\od u)}\\
&\geqslant \frac{1}{\kappa}  \ce{\F_a}{\left|\varphi_{a}- \varphi_{t_{k-1}}\right|^2 \Pi(\cdot,(a, t_k])}.
\end{align*}
Dividing by
$\| \tau\|_\theta = \frac{t_k-t_{k-1}}{(T-t_{k-1})^{1-\theta}}$ we obtain the desired statement.
\smallskip

\eqref{item:2:statement:lower_bounds_randommeasure}
We partition the interval $[a,T]$ with
\begin{align*}	
 u_{i,n}^{\theta,a} & := a + (T-a) \left[ 1 - \( 1 - \tfrac{i}{n} \)^\frac{1}{\theta} \right],  \quad i=0, \ldots,n, \\
 r_{i,n}^{\theta,a} & := a + (T-a) \left[ 1 - \( 1 - \tfrac{2i-1}{2n} \)^\frac{1}{\theta} \right], \quad i=1, \ldots,n,
\end{align*} 
and add  $r_{0,n}^{\theta,a}: = a$ and $r_{n+1,n}^{\theta,a}: =T$. Choosing for both nets the remaining time-knots on $[0,a]$ fine enough,
we obtain nets  $\tau_n^{\theta}(a)$ and $\wt \tau_n^{\theta}(a)$ satisfying
\[ \| \tau_n^{\theta}(a)\|_\theta = \sup_{i=1,\ldots,n} \frac{u_{i,n}^{\theta,a} - u_{i-1,n}^{\theta,a}}{(T-u_{i-1,n}^{\theta,a})^{1-\theta}}
   \sptext{1}{and}{1}
    \|\wt \tau_n^{\theta}(a)\|_\theta 
	= \sup_{i=0,1,\ldots,n} \frac{r_{i+1,n}^{\theta,a} - r_{i,n}^{\theta,a}}{(T-r_{i,n}^{\theta,a})^{1-\theta}}.\]
By a computation, we have for $i=1, \ldots, n$ and $u\in (u_{i-1,n}^{\theta,a}, r_{i,n}^{\theta,a}]$ that
	\begin{align}\label{eq:thm-time-net-ineq1}
	\frac{(T-a)^\theta}{\theta 2^{\frac{1}{\theta}+1} n} 
	\leqslant \frac{u_{i,n}^{\theta,a} - r_{i,n}^{\theta,a}}{(T-r_{i,n}^{\theta,a})^{1-\theta}}
	\leqslant \frac{u_{i,n}^{\theta,a} - u}{(T-u)^{1-\theta}}
	\leqslant \frac{u_{i,n}^{\theta,a} - u_{i-1,n}^{\theta,a}}{(T-u_{i-1,n}^{\theta,a})^{1-\theta}} 
	\leqslant \frac{(T-a)^\theta}{\theta n},
	\end{align}
and for $i= 1,\ldots,n-1$ and $u\in (r_{i,n}^{\theta,a}, u_{i,n}^{\theta,a}]$ that
	\begin{align}\label{eq:thm-time-net-ineq2}
	\frac{(T-a)^\theta}{\theta 2^{\frac{1}{\theta}+1} n} 
	\leqslant \frac{r_{i+1,n}^{\theta,a} - u_{i,n}^{\theta,a}}{(T-u_{i,n}^{\theta,a})^{1-\theta}}
	\leqslant \frac{r_{i+1,n}^{\theta,a} - u}{(T-u)^{1-\theta}}
	\leqslant \frac{r_{i+1,n}^{\theta,a} - r_{i,n}^{\theta,a}}{(T-r_{i,n}^{\theta,a})^{1-\theta}} 
	\leqslant \frac{(T-a)^\theta}{\theta n},
	\end{align}
where the last inequality holds for $i\in \{0,n\}$ as well.
By the above relations we obtain, a.s.,
\begin{align*}
& \ce{\F_a}{\int_{(a,r_{n,n}^{\theta,a}]}  (T-u)^{1-\theta} \Upsilon(\cdot,\od u)} \\
& =
	\sum_{i=1}^n \ce{\F_a}{\int_{(u_{i-1,n}^{\theta,a},\, r_{i,n}^{\theta,a}]} (T-u)^{1-\theta}
		\Upsilon(\cdot,\od u)} 
 + \sum_{i=1}^{n-1} \ce{\F_a}{\int_{(r_{i,n}^{\theta,a},\, u_{i,n}^{\theta,a}]} (T-u)^{1-\theta}
		\Upsilon(\cdot,\od u)}\\
& \leqslant \frac{\theta 2^{\frac{1}{\theta}+1} n}{(T-a)^{\theta}} \Bigg [
	\sum_{i=1}^n \ce{\F_a}{\int_{(u_{i-1,n}^{\theta,a},\, r_{i,n}^{\theta,a}]} (u_{i,n}^{\theta,a} -u)
		\Upsilon(\cdot,\od u)} 
+ \sum_{i=1}^{n-1} \ce{\F_a}{\int_{(r_{i,n}^{\theta,a},\, u_{i,n}^{\theta,a}]} (r_{i+1,n}^{\theta,a}-u) \Upsilon(\cdot,\od u)} \Bigg ] \\
&\leqslant (\kappa 2^{\frac{1}{\theta}+1})  \ce{\F_a}{
             \frac{\intsqw{\varphi}{\pi}{    \tau_n^{\theta}(a)}_T -  \intsqw{\varphi}{\pi}{\tau_n^{\theta}(a)}_a}
             {\|\tau_n^{\theta}(a)\|_\theta} 
           + \frac{\intsqw{\varphi}{\pi}{\wt \tau_n^{\theta}(a)}_T -  \intsqw{\varphi}{\pi}{\wt\tau_n^{\theta}(a)}_a}
                  {\|\wt \tau_n^{\theta}(a)\|_\theta}},
\end{align*}
where for the last inequality we first use  \eqref{eqn:ass:random_measures}, that gives the factor $\kappa$, and then
\eqref{eq:thm-time-net-ineq1} and \eqref{eq:thm-time-net-ineq2} that give 
$\|\tau_n^{\theta}(a)\|_\theta \leqslant (T-a)^\theta/(\theta n)$ and 
$\|\wt \tau_n^{\theta}(a) \|_\theta \leqslant (T-a)^\theta/(\theta n)$.
For each $n$ we choose the time-net that gives the larger quotient and obtain the desired nets.
To obtain the final statement we observe that $r_{n,n}^{\theta,a}\uparrow T$.
\end{proof}
\smallskip

Now we specialize Assumption \ref{ass:random_measures} to the settings that will be used in Sections \ref{sec:application_brownian_case} and \ref{sec:application_levy_case}:

\begin{assumption}
\label{ass:random_measures_simplified}
We assume that there are
\begin{enumerate}
\item \label{item:1:simplified_random_measures}
      a positive, c\`adl\`ag, and adapted process $(\sigma_t)_{t\in [0,T]}$ such that $\sigma_T^*\in L_2$
      and such that there is a $c_\sigma\geqslant 1$ with
      \[ \ce{\F_a}{\frac{1}{b-a}\int_a^b \sigma_u^2 \od u} \sim_{c_\sigma} \sigma_a^2 \mbox{ a.s.}
         \sptext{1}{for all}{1} 0 \leqslant a < b \leqslant T, \] 
\item \label{item:2:simplified_random_measures}
      a c\`adl\`ag square integrable martingale $M=(M_t)_{t\in [0,T)}$ with $M_0\equiv 0$,
\item let $\Pi(\omega,\od u) := \sigma^2_u(\omega) \od u$  and $\Upsilon(\omega,\od u):= \od \langle  M \rangle_u (\omega)$
      for $u\in [0,T)$, where  $\langle  M \rangle$ is the conditional square-function (see \cref{subsec:stochastic_basis}), 
\item \label{item:3:simplified_random_measures}
      a $\varphi\in \CL([0,T))$, 
\item assume that \cref{eqn:ass:random_measures} is satisfied, and 
\item let $\intsqw{\varphi}{\sigma}{\tau} := \intsqw{\varphi}{\pi}{\tau}$.
\end{enumerate}
\end{assumption}

\begin{rema}
\cref{ass:random_measures_simplified}, the equality
\[ \ce{\F_a}{\int_{(a,b]} |M_u-M_a|^2 \od u}
   = \ce{\F_a}{\int_{(a,b]}(b-u) \od \langle M \rangle_u} \mbox{ a.s.} \]
for $0\leqslant a < b <T$ yields, for $0\leqslant s \leqslant a < b <T$ and with $\kappa' := \kappa c_\sigma$, to
\begin{align}\label{eq:ass:random_measures_simplified}
\ce{\F_a}{\int_a^b \left|\varphi_u - \varphi_s\right|^2 \sigma_u^2 \od u}
& \sim_{\kappa'} (b-a)\left|\varphi_a-\varphi_s\right|^2  \sigma_a^2 
+  \ce{\F_a}{\int_a^b |M_u-M_a|^2 \od u} \mbox{ a.s.}
\end{align}
\end{rema}
\bigskip

Now we transfer \cref{statement:upper_bound_randommeasure} and \cref{statement:lower_bounds_randommeasure}
into the setting of \cref{ass:random_measures_simplified}, where we remind the reader on the notation 
$\underline{a}(\tau)$ and $\overline{a}(\tau)$ introduced in \cref{eqn:under-overline-a}.
\bigskip

\begin{theo}
\label{statemtent:equivalence_simplified_setting_new}
Suppose $(\theta,a)\in (0,1]\times [0,T)$ and that \cref{ass:random_measures_simplified} holds, and 
define
$c_\eqref{eq:item:1:statemtent:equivalence_simplified_setting_new} := \kappa (4 T^{1-\theta} \vee c_\sigma)$,
$c_\eqref{eq:2:item:2:statemtent:equivalence_simplified_setting_new} := \kappa c_\sigma$, and
$c_\eqref{eq:1:item:2:statemtent:equivalence_simplified_setting_new} := T^{\theta-1}16 \kappa 2^{\frac{1}{\theta}}$.
Then one has for all $\tau\in \cT$, a.s., 
      \begin{equation}\label{eq:item:1:statemtent:equivalence_simplified_setting_new}
       \frac{\ce{\F_a}{\intsqw{\varphi}{\sigma}{\tau}_T - \intsqw{\varphi}{\sigma}{\tau}_a}}{\| \tau\|_\theta}
         \leqslant  c_\eqref{eq:item:1:statemtent:equivalence_simplified_setting_new}  
         \left ( \ce{\F_a}{\sup_{t\in [a,T)} \left |\cI_t^{\frac{1-\theta}{2}} M - \cI_a^{\frac{1-\theta}{2}} M \right |^2}
         + \frac{T-a}{(T-\underline{a}(\tau))^\theta} |\varphi_a -\varphi_{\underline{a}(\tau)}|^2 \sigma_a^2 \right ).
      \end{equation}
Moreover, there are $(\tau_n)_{n\in \bN} \subset \cT$ with $a\in \tau_n$ and $\lim_n \| \tau_n\|_\theta =0$
and for all $s\in [0,a]$ there is a net $\tau \in \cT$ with $s=\underline{a}(\tau)$, such that, for all $\sigma$-algebras $\cG\subseteq \F_a$,
one has
      \begin{align}
          \frac{\ce{\F_a}{\intsqw{\varphi}{\sigma}{\tau}_T - \intsqw{\varphi}{\sigma}{\tau}_a}}{\| \tau\|_\theta}
         &\geqslant \frac{1}{c_\eqref{eq:2:item:2:statemtent:equivalence_simplified_setting_new}} \,\,
          \frac{T-a}{(T-\underline{a}(\tau))^\theta} |\varphi_a -\varphi_{\underline{a}(\tau)}|^2 \sigma_a^2 \mbox{ a.s.},
           \label{eq:2:item:2:statemtent:equivalence_simplified_setting_new} \\
         \liminf_n \frac{\ce{\cG}{\intsqw{\varphi}{\sigma}{\tau_n}_T - \intsqw{\varphi}{\sigma}{\tau_n}_a}}{\| \tau\|_\theta}
         & \geqslant \frac{1}{c_\eqref{eq:1:item:2:statemtent:equivalence_simplified_setting_new} }\,\,
           \ce{\cG}{\sup_{t\in [a,T)} \left |\cI_t^{\frac{1-\theta}{2}} M - \cI_a^{\frac{1-\theta}{2}} M \right |^2} \mbox{ a.s.}
           \label{eq:1:item:2:statemtent:equivalence_simplified_setting_new}
       \end{align}
\end{theo}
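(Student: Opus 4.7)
The strategy is to apply \cref{statement:upper_bound_randommeasure} and \cref{statement:lower_bounds_randommeasure} with the specific choice $\Pi(\omega,\od u) = \sigma_u^2(\omega)\od u$ and $\Upsilon(\omega,\od u) = \od\langle M\rangle_u(\omega)$ prescribed by \cref{ass:random_measures_simplified}, and then to convert the resulting $\od\langle M\rangle$-integrals into expressions in $\cI^{(1-\theta)/2}M$ via the square-function identity \eqref{eqn:squarefunction_RL} of \cref{statement:RL_for_martinagles-II}. Setting $\alpha := (1-\theta)/2$, the key form is
\[
\ce{\F_a}{\bigl|\cI_t^{\alpha}M - \cI_a^{\alpha}M\bigr|^2} = T^{\theta-1}\ce{\F_a}{\int_{(a,t]}(T-u)^{1-\theta}\od \langle M \rangle_u},
\]
where $\od[M]$ in \eqref{eqn:squarefunction_RL} may be replaced by $\od\langle M\rangle$ inside $\ce{\F_a}{\cdot}$ since the integrand is deterministic.

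For the upper bound \eqref{eq:item:1:statemtent:equivalence_simplified_setting_new} I start from \cref{statement:upper_bound_randommeasure}. The $\Upsilon$-term $\ce{\F_a}{\int_{(a,T)}(T-u)^{1-\theta}\od\langle M\rangle_u}$ is bounded by a constant times $\ce{\F_a}{\sup_{t\in[a,T)}|\cI_t^\alpha M - \cI_a^\alpha M|^2}$ by means of the identity above, monotone convergence as $t\uparrow T$, and the pointwise domination of $|\cI_t^\alpha M - \cI_a^\alpha M|^2$ by the supremum. The $\Pi$-boundary term $\tfrac{(T-\un a)^{1-\theta}}{\ov a - \un a}|\varphi_a-\varphi_{\un a}|^2\ce{\F_a}{\int_a^{\ov a}\sigma_u^2\od u}$ is handled by item (1) of \cref{ass:random_measures_simplified}, which converts the conditional $\sigma$-integral into $(\ov a-a)\sigma_a^2$ up to the factor $c_\sigma$, combined with the elementary bound
\[
\frac{(T-\un a)(\ov a-a)}{\ov a-\un a}\leqslant 2(T-a),
\]
which follows from $T-\un a=(T-a)+(a-\un a)$ together with $\ov a-a\leqslant \ov a-\un a$ and $(a-\un a)(\ov a-a)/(\ov a-\un a)\leqslant \ov a-a\leqslant T-a$.

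For the first lower bound \eqref{eq:2:item:2:statemtent:equivalence_simplified_setting_new}, given $s\in[0,a]$, I construct $\tau\in\cT$ whose gap containing $a$ is $(s,T]$ and whose mesh is attained precisely on that gap, i.e.\ $\|\tau\|_\theta=(T-s)^\theta$; the degenerate cases $s=a$ and $s=0$ are trivial, and for $0<s<a$ the net is obtained by subdividing $[0,s]$ with a uniform grid whose gaps are $\leqslant T-s$, since on $[0,s]$ the weight $(T-u)^{\theta-1}$ satisfies $(T-u)^{\theta-1}\leqslant (T-s)^{\theta-1}$ and hence each ratio $(t_i-t_{i-1})/(T-t_{i-1})^{1-\theta}$ is forced below $(T-s)^\theta$. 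With this $\tau$, \cref{statement:lower_bounds_randommeasure}\eqref{item:1:statement:lower_bounds_randommeasure} yields a lower bound proportional to $|\varphi_a-\varphi_s|^2\ce{\F_a}{\int_a^T \sigma_u^2\od u}/(T-s)^\theta$, and the lower side of item (1) of \cref{ass:random_measures_simplified} replaces the remaining $\sigma$-integral by $(T-a)\sigma_a^2/c_\sigma$.

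For the second lower bound \eqref{eq:1:item:2:statemtent:equivalence_simplified_setting_new}, I invoke \cref{statement:lower_bounds_randommeasure}\eqref{item:2:statement:lower_bounds_randommeasure} to obtain $(\tau_n)\subset\cT$ with $a\in\tau_n$, $\|\tau_n\|_\theta\to 0$, and the stated $\liminf$ estimate in terms of $\ce{\F_a}{\int_{(a,T)}(T-u)^{1-\theta}\od\langle M\rangle_u}$. Applying Doob's maximal inequality to the c\`adl\`ag $L_2$-martingale $(\cI_t^\alpha M - \cI_a^\alpha M)_{t\in[a,T)}$ on windows $[a,b]$ with $b\uparrow T$, together with the identity above and monotone convergence, produces the reverse estimate
\[
\ce{\F_a}{\int_{(a,T)}(T-u)^{1-\theta}\od \langle M \rangle_u}
\geqslant \tfrac{T^{1-\theta}}{4}\ce{\F_a}{\sup_{t\in[a,T)}\bigl|\cI_t^\alpha M - \cI_a^\alpha M\bigr|^2}.
\]
Passing from $\F_a$ to $\cG\subseteq\F_a$ is done by applying $\ce{\cG}{\cdot}$ to both sides (tower property, and $\|\tau_n\|_\theta$ is deterministic) and invoking conditional Fatou to pull $\liminf$ outside the conditional expectation. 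The main obstacle is the mesh construction in the first lower bound, where the net must realise its mesh exactly on the $(s,T]$-interval; the rest is careful bookkeeping of the constants $T^{1-\theta}$, $c_\sigma$, $\kappa$, and the $2^{1/\theta}$ factor inherited from \cref{statement:lower_bounds_randommeasure}\eqref{item:2:statement:lower_bounds_randommeasure}.
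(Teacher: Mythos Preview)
Your approach is correct and matches the paper's: apply Theorems \ref{statement:upper_bound_randommeasure} and \ref{statement:lower_bounds_randommeasure}, convert the $\Upsilon$-integral via \eqref{eqn:squarefunction_RL} together with Doob's maximal inequality, handle the $\Pi$-term through \cref{ass:random_measures_simplified}\eqref{item:1:simplified_random_measures}, and descend to $\cG\subseteq\F_a$ by conditional Fatou. The one place you differ is the $\Pi$-boundary estimate: your inequality $\tfrac{(T-\un a)(\ov a-a)}{\ov a-\un a}\leqslant 2(T-a)$ is valid but suboptimal---the paper simply notes that $\tfrac{\ov a-a}{\ov a-\un a}$ is nondecreasing in $\ov a$ and hence bounded by its value at $\ov a=T$, namely $\tfrac{T-a}{T-\un a}$, giving $\tfrac{(T-\un a)(\ov a-a)}{\ov a-\un a}\leqslant T-a$ and thereby the stated constant $c_{\eqref{eq:item:1:statemtent:equivalence_simplified_setting_new}}=\kappa(4T^{1-\theta}\vee c_\sigma)$ without the extra factor of~$2$.
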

\medskip
\begin{proof}
For the following we let  $(\un{a},\ov{a}]:=(\un{a}(\tau),\ov{a}(\tau)]$ for $\tau\in\cT$. \smallskip

Inequality \eqref{eq:item:1:statemtent:equivalence_simplified_setting_new}:
For $0\leqslant \un{a} \leqslant a < \ov{a} \leqslant T$, \cref{ass:random_measures_simplified} implies that 
\[ \ce{\F_a}{\frac{(T-\un{a})^{1-\theta}}{\ov{a}-\un{a}}|\varphi_a - \varphi_{\un{a}}|^2 \Pi(\cdot,(a,\ov{a}])}
   \sim_{c_\sigma} |\varphi_a - \varphi_{\un{a}}|^2 \frac{(T-\un{a})^{1-\theta}}{\ov{a}-\un{a}} \sigma_a^2
   (\ov{a}-a) \mbox{ a.s.} \]
Maximizing the right-hand side over $\ov{a}\in (a,T]$ yields 
$\frac{T-a}{(T-\un{a})^\theta} |\varphi_a - \varphi_{\un{a}}|^2 \sigma_a^2
   \mbox{ a.s.}$
Moreover, we have
\begin{align*}
     \ce{\F_a}{\int_{(a,T)} \left ( \frac{T-u}{T} \right )^{1-\theta} \Upsilon(\cdot,\od u)}
& =  \ce{\F_a}{\int_{(a,T)} \left ( \frac{T-u}{T} \right )^{1-\theta} \od \langle M \rangle_u} \\
& =  \ce{\F_a}{\int_{(a,T)} \left ( \frac{T-u}{T} \right )^{1-\theta} \od [M]_u} \\
&   \sim_4  
   \ce{\F_a}{\sup_{t\in [a,T)} \left | \cI_t^{\frac{1-\theta}{2}}M - \cI_a^\frac{1-\theta}{2} M\right |^2} 
\mbox{ a.s.}
\end{align*}
by Doob's maximal inequality (and \cref{statement:RL_for_martinagles-II}\eqref{eqn:1:statement:RL_for_martinagles}
for $\theta \in (0,1)$).  \cref{statement:upper_bound_randommeasure} implies, a.s.,
\begin{align*}
 \frac{\ce{\F_a}{\intsqw{\varphi}{\sigma}{\tau}_T - \intsqw{\varphi}{\sigma}{\tau}_a}}{\| \tau\|_\theta}
&\leqslant  \kappa  
            \ce{\F_a}{\int_{(a,T)} (T-u)^{1-\theta} \Upsilon(\cdot,\od u) + \frac{(T-\un{a})^{1-\theta}}{\ov{a}-\un{a}}
            |\varphi_a - \varphi_{\un{a}}|^2 \Pi(\cdot,(a,\ov{a}])} \\
&\leqslant  \kappa \left ( 4 T^{1-\theta}
         \ce{\F_a}{\sup_{t\in [a,T)} \left |\cI_t^{\frac{1-\theta}{2}} M - \cI_a^{\frac{1-\theta}{2}} M \right |^2}
         +  c_\sigma \frac{T-a}{(T-\underline{a})^\theta} |\varphi_a -\varphi_{\underline{a}}|^2 \sigma_a^2 \right ).
\end{align*}
Inequality \eqref{eq:2:item:2:statemtent:equivalence_simplified_setting_new}:
We choose a net $\tau\in \cT$ such that $(s,T]=(\un{a},\ov{a}]$ and 
      $\| \tau\|_\theta = \frac{\ov{a}-\un{a}}{(T-\underline{a})^{1-\theta}}$, so that, by
\cref{statement:lower_bounds_randommeasure}\eqref{item:1:statement:lower_bounds_randommeasure},
\[ \frac{\ce{\F_a}{\intsqw{\varphi}{\sigma}{\tau}_{\ov a} - \intsqw{\varphi}{\sigma}{\tau}_a}}{\|\tau \|_\theta}
   \geqslant \frac{1}{\kappa c_\sigma}  \frac{(T-\un{a})^{1-\theta}}{\ov{a}-\un{a}}
   |\varphi_a - \varphi_{\un{a} }|^2 (\ov{a}-a) \sigma_a^2 
   =  \frac{1}{\kappa c_\sigma}  \frac{T-a}{(T-\underline{a})^\theta}|\varphi_a - \varphi_{\un{a} }|^2 \sigma_a^2 
   \mbox{ a.s.}
\]
Inequality \eqref{eq:1:item:2:statemtent:equivalence_simplified_setting_new} follows
for $\cG=\F_a$ from  \cref{statement:lower_bounds_randommeasure} 
where we also use the previous computations on the constant.
In the case of $\cG \subsetneqq \F_a$ we get by Fatou's lemma that, a.s.,
\begin{align*}
  \ce{\cG}{\sup_{t\in [a,T)} \left |\cI_t^{\frac{1-\theta}{2}} M - \cI_a^{\frac{1-\theta}{2}} M \right |^2} 
& \leqslant c_\eqref{eq:1:item:2:statemtent:equivalence_simplified_setting_new} \E^\cG \left [ \liminf_n \ce{\F_a}{
            \frac{\intsqw{\varphi}{\sigma}{\tau_n}_T - \intsqw{\varphi}{\sigma}{\tau_n}_a}{\| \tau_n\|_\theta}} \right ] \\
& \leqslant c_\eqref{eq:1:item:2:statemtent:equivalence_simplified_setting_new} \liminf_n  \E^\cG \left [\ce{\F_a}{
            \frac{\intsqw{\varphi}{\sigma}{\tau_n}_T - \intsqw{\varphi}{\sigma}{\tau_n}_a}{\| \tau_n\|_\theta}} \right ] \\
& = c_\eqref{eq:1:item:2:statemtent:equivalence_simplified_setting_new} \liminf_n  \E^\cG \left [
            \frac{\intsqw{\varphi}{\sigma}{\tau_n}_T - \intsqw{\varphi}{\sigma}{\tau_n}_a}{\| \tau_n\|_\theta} \right ].
    \qedhere
\end{align*}
\end{proof}

For $a=0$, $\cG:= \{ \emptyset,\Omega\}$, and $c_\eqref{eq:item:1:statemtent:equivalence_simplified_setting_new_global}:= 16 \kappa 2^{\frac{1}{\theta}}$
we may deduce from \cref{eq:item:1:statemtent:equivalence_simplified_setting_new}, where only the first term is needed, 
and \cref{eq:1:item:2:statemtent:equivalence_simplified_setting_new} that 
\begin{equation}\label{eq:item:1:statemtent:equivalence_simplified_setting_new_global}
\sup_{\tau \in \cT}
\frac{\|\intsqw{\varphi}{\sigma}{\tau}_T \|_{L_2}}{\| \tau\|_\theta}
         \sim_{c_\eqref{eq:item:1:statemtent:equivalence_simplified_setting_new_global}} 
         T^{1-\theta} \left \|\sup_{t\in [0,T)} \left |\cI_t^{\frac{1-\theta}{2}} M  \right | \right \|_{L_2}^2.
\end{equation}

First we deduce from \cref{statemtent:equivalence_simplified_setting_new} a complete characterization of
$\|\intsqw {\varphi}{\sigma}{\tau}\|_{\BMO_1^{\Phi^2}([0,T))} \leqslant c^2 \|\tau\|_\theta$:

\begin{coro}\label{equivalence_martingale} 
Assume that \cref{ass:random_measures_simplified} is satisfied.
Then for $\theta \in (0,1]$ and $\Phi \in \CL^+([0,T))$
the following assertions are equivalent:
\begin{enumerate}[{\rm (1)}]
\item \label{item:2:equivalence_martingale} 
       One has $\cI^{\frac{1-\theta}{2}} M \in \bmo_2^{\Phi}([0,T))$ and there 
       is a $c_{\eqref{eq:item:2:equivalence_martingale}} >0$ such that one has
 \begin{align}\label{eq:item:2:equivalence_martingale}
     |\varphi_a -\varphi_s| \sigma_a \leqslant c_{\eqref{eq:item:2:equivalence_martingale}}
                 \frac{(T -s)^\frac{\theta}{2}}
                      {(T -a)^\frac{1}{2}} \Phi_a
        \sptext{1}{for}{1}
        0\leqslant   s < a < T \mbox{ a.s.} 
         \end{align}

\item \label{item:1:equivalence_martingale} 
      There is a constant $c_{\eqref{eq:item:1:equivalence_martingale} } >0$ such that, 
      for all time-nets $\tau \in \mathcal T$,
	  \begin{align}\label{eq:item:1:equivalence_martingale}
	\|\intsqw {\varphi}{\sigma}{\tau}\|_{\BMO_1^{\Phi^2}([0,T))} \leqslant c^2_{\eqref{eq:item:1:equivalence_martingale}} \|\tau\|_\theta.
	\end{align}
\end{enumerate}
\medskip
If $\Phi=(\sigma_t \Psi_t)_{t\in [0,T)}$, where $\Psi\in \CL^+([0,T))$ is non-decreasing, then \eqref{eq:item:2:equivalence_martingale}
is equivalent to the existence of $c_{\eqref{eq:equivalence_local_part_theta(0,1)}}, c_{\eqref{eq:equivalence_local_part_theta1}}>0$ such that
\begin{align}
           |\varphi_a -\varphi_0| 
&\leqslant c_{\eqref{eq:equivalence_local_part_theta(0,1)}}(T-a)^{\frac{\theta-1}{2}} \Psi_a \sptext{4}{for}{1}  0 \leqslant a < T \mbox{ a.s.}
            \sptext{3}{if}{1} \theta\in (0,1), \label{eq:equivalence_local_part_theta(0,1)} \\
           |\varphi_a -\varphi_s| 
&\leqslant c_{\eqref{eq:equivalence_local_part_theta1}}
       		\left(1+ \ln \frac{T -s}
                      {T -a}\right)
                         \Psi_a
        \sptext{1.2}{for}{1}
        0 \leqslant  s < a < T \mbox{ a.s.}  \sptext{1.1}{if}{1}  \theta = 1. \label{eq:equivalence_local_part_theta1}
\end{align}
\end{coro}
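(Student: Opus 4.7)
The plan is to derive both implications of the first equivalence as direct consequences of Theorem~\ref{statemtent:equivalence_simplified_setting_new}, and then to reduce the refined statement for $\Phi=\sigma\Psi$ to a purely deterministic inequality handled by a dyadic telescoping argument. A useful preliminary observation is that $\intsqw{\varphi}{\sigma}{\tau}$ is pathwise absolutely continuous in time, hence continuous, so $\|\intsqw{\varphi}{\sigma}{\tau}\|_{\BMO_1^{\Phi^2}([0,T))}=\|\intsqw{\varphi}{\sigma}{\tau}\|_{\bmo_1^{\Phi^2}([0,T))}$; working with the $\bmo$-formulation at deterministic times suffices, the passage to stopping times being the standard reduction recalled in the appendix.

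For the direction (1)$\Rightarrow$(2) I plug the upper bound \eqref{eq:item:1:statemtent:equivalence_simplified_setting_new} into this reduction at an arbitrary $a\in[0,T)$ and $\tau\in\cT$. The first conditional expectation on the right-hand side of \eqref{eq:item:1:statemtent:equivalence_simplified_setting_new} is controlled by Doob's conditional $L_2$-maximal inequality applied to the martingale $\cI^{\frac{1-\theta}{2}}M$ on $[a,T']$, followed by $T'\uparrow T$, giving $\ce{\F_a}{\sup_{t\in[a,T)}|\cI_t^{(1-\theta)/2}M-\cI_a^{(1-\theta)/2}M|^2}\leqslant 4\|\cI^{\frac{1-\theta}{2}}M\|_{\bmo_2^\Phi}^2\Phi_a^2$. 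The second term factorizes with $s:=\underline a(\tau)$ into $\frac{T-a}{(T-s)^\theta}|\varphi_a-\varphi_s|^2\sigma_a^2$, which is precisely the squared form bounded by $c_{\eqref{eq:item:2:equivalence_martingale}}^2\Phi_a^2$ via the hypothesis \eqref{eq:item:2:equivalence_martingale}. Adding the two contributions yields the required $\bmo_1^{\Phi^2}$-estimate of order $\|\tau\|_\theta$.

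For the converse (2)$\Rightarrow$(1) I use the two lower bounds of Theorem~\ref{statemtent:equivalence_simplified_setting_new} separately. Inequality \eqref{eq:2:item:2:statemtent:equivalence_simplified_setting_new} allows, for any fixed $s\in[0,a]$, the choice of a net $\tau$ with $\underline a(\tau)=s$; combining this with the hypothesis \eqref{eq:item:1:equivalence_martingale} and extracting square roots yields \eqref{eq:item:2:equivalence_martingale}. For the $\bmo$-control of $\cI^{\frac{1-\theta}{2}}M$ I invoke \eqref{eq:1:item:2:statemtent:equivalence_simplified_setting_new} with $\cG=\F_a$ and the sequence of nets $\tau_n$ containing $a$: the uniform-in-$n$ bound \eqref{eq:item:1:equivalence_martingale} controls the left-hand side by $c^2_{\eqref{eq:item:1:equivalence_martingale}}\Phi_a^2$, from which the pointwise-in-$t$ inequality defining $\bmo_2^\Phi$-membership follows at once.

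For the refined equivalence when $\Phi_t=\sigma_t\Psi_t$ with $\Psi$ non-decreasing, cancelling $\sigma_a>0$ reduces the claim to the purely deterministic question of when $|\varphi_a-\varphi_s|\leqslant c(T-s)^{\theta/2}(T-a)^{-1/2}\Psi_a$ for all $0\leqslant s<a<T$ is equivalent to \eqref{eq:equivalence_local_part_theta(0,1)} respectively \eqref{eq:equivalence_local_part_theta1}. One direction is routine: from \eqref{eq:equivalence_local_part_theta(0,1)}, the triangle inequality applied to $\varphi_a-\varphi_s=(\varphi_a-\varphi_0)-(\varphi_s-\varphi_0)$, the monotonicity of $\Psi$, and the estimate $(T-s)^{(\theta-1)/2}\leqslant(T-a)^{(\theta-1)/2}$ for $\theta\in(0,1)$ give the target bound, while for $\theta=1$ the elementary inequality $1+\ln x\leqslant 2\sqrt{x}$ on $[1,\infty)$ closes the analogous reduction from \eqref{eq:equivalence_local_part_theta1}. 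I expect the reverse direction to be the main obstacle, and I would settle it by a dyadic telescoping: set $a_0:=a$ and $a_{k+1}$ by $T-a_{k+1}=2(T-a_k)$, stop at the first index $N$ for which $a_N\leqslant s$ (or $a_N\leqslant 0$ when $s=0$), and observe that each increment satisfies $|\varphi_{a_k}-\varphi_{a_{k+1}}|\leqslant c\sqrt{2}\,(T-a_k)^{(\theta-1)/2}\Psi_a$. Summing the geometric series in $k$ delivers the factor $(T-a)^{(\theta-1)/2}\Psi_a$ when $\theta\in(0,1)$ (as $(\theta-1)/2<0$), while for $\theta=1$ each summand contributes a uniform multiple of $\Psi_a$ and the number of terms $N\sim\log_2((T-s)/(T-a))$ produces the desired logarithmic factor, yielding \eqref{eq:equivalence_local_part_theta(0,1)} and \eqref{eq:equivalence_local_part_theta1} respectively.
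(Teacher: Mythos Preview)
Your proposal is correct and follows essentially the same route as the paper. The main equivalence is derived from the two-sided estimates of \cref{statemtent:equivalence_simplified_setting_new} together with Doob's maximal inequality and the deterministic-time reduction for $\bmo$ (the paper packages the latter as \cref{statement:bmo_determinstic}); the refined equivalence for $\Phi=\sigma\Psi$ is exactly the content of the paper's \cref{lemm:upper-bound-intergrand-equivalence}, whose proof is the same dyadic telescoping you outline (the paper parametrizes via the fixed grid $t_n=T-T/2^n$ rather than by iterating $T-a_{k+1}=2(T-a_k)$ from $a$, but this is only a cosmetic difference).
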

\smallskip

\begin{proof}  
The equivalence between 
\eqref{item:2:equivalence_martingale} and \eqref{item:1:equivalence_martingale} follows directly 
from \cref{statemtent:equivalence_simplified_setting_new}
and \cref{statement:bmo_determinstic}. The equivalence between 
\eqref{eq:item:2:equivalence_martingale} and
\eqref{eq:equivalence_local_part_theta(0,1)}-\eqref{eq:equivalence_local_part_theta1} follows from 
\cref{lemm:upper-bound-intergrand-equivalence} below.
\end{proof}

Finally, we derive a functional analytic variant of 
\eqref{eq:item:1:statemtent:equivalence_simplified_setting_new} from \cref{statemtent:equivalence_simplified_setting_new}: 
\smallskip

\begin{coro}
\label{statement:intsq_vs_Binfty2}
For $\theta \in (0,1)$, $\alpha:= \frac{1-\theta}{2}$,
and a c\`adl\`ag martingale $\varphi=(\varphi_t)_{t\in [0,T)}$ 
with $\| \varphi \|_{\B_{\infty,2}^\alpha}<\infty$ it holds
\begin{equation}\label{eqn:statement:intsq_vs_Binfty2}
	\left \| \intsq{\varphi}{\tau} \right \|_{\BMO_1([0,T))} 
	\leqslant c_\eqref{eqn:statement:intsq_vs_Binfty2}
	\| \tau\|_\theta \,\, \| \varphi -  \varphi_0 \|_{\B_{\infty,2}^\alpha}^2
\end{equation}
where $c_\eqref{eqn:statement:intsq_vs_Binfty2}>0$ depends at most on $(\theta,T)$.
\end{coro}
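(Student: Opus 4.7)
The plan is to reduce the inequality to \cref{statemtent:equivalence_simplified_setting_new} applied with $\sigma\equiv 1$ and $M:=\varphi-\varphi_0$, and then to bound the two resulting terms via \cref{statement:properties_Besov-spaces}. To set up the framework I would first verify \cref{ass:random_measures_simplified} with $\kappa=c_\sigma=1$: for $0\leqslant s\leqslant a\leqslant u<T$ martingale orthogonality yields $\ce{\F_a}{|\varphi_u-\varphi_s|^2}=\ce{\F_a}{|\varphi_u-\varphi_a|^2}+|\varphi_a-\varphi_s|^2$ a.s., and integrating in $u\in[a,b]$ gives \eqref{eq:ass:random_measures_simplified} with equality. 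The c\`adl\`ag, locally $L_2$-integrable structure of $M$ on $[0,T)$ is guaranteed by \cref{statement:properties_Besov-spaces}\eqref{item:3:statement:properties_Besov-spaces}, and the square function $\intsq{\varphi}{\tau}$ coincides with $\intsqw{\varphi}{\sigma}{\tau}$ from \cref{defi:R_process}.

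Applying \eqref{eq:item:1:statemtent:equivalence_simplified_setting_new} for deterministic $a\in[0,T)$ then yields, a.s.,
\[
\ce{\F_a}{\intsq{\varphi}{\tau}_T-\intsq{\varphi}{\tau}_a}
\leqslant c_\eqref{eq:item:1:statemtent:equivalence_simplified_setting_new}\|\tau\|_\theta\bigl(A_a+B_a\bigr),
\]
with $A_a:=\ce{\F_a}{\sup_{t\in[a,T)}|\cI_t^\alpha M-\cI_a^\alpha M|^2}$ and $B_a:=(T-a)(T-\underline{a}(\tau))^{-\theta}|\varphi_a-\varphi_{\underline{a}(\tau)}|^2$, where $\underline{a}(\tau)$ is as in \eqref{eqn:under-overline-a}.

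I would bound $A_a$ using \cref{statement:properties_Besov-spaces}\eqref{item:2:statement:properties_Besov-spaces}, which gives $\ce{\F_a}{|\cI_t^\alpha M-\cI_a^\alpha M|^2}\leqslant 18\alpha T^{-2\alpha}\|M\|_{\B_{\infty,2}^\alpha}^2$ for every deterministic $t\in[a,T)$, combined with the conditional $L_2$-Doob maximal inequality applied on $[a,T_n]$ with $T_n\uparrow T$ to the c\`adl\`ag $L_2$-closable martingale $(\cI_t^\alpha M-\cI_a^\alpha M)_{t\in[a,T)}$; monotone convergence then produces $A_a\leqslant 72\alpha T^{-2\alpha}\|M\|_{\B_{\infty,2}^\alpha}^2$ a.s. For $B_a$ I would combine \cref{statement:properties_Besov-spaces}\eqref{item:3:statement:properties_Besov-spaces} with \eqref{eqn:relations_between_Bpqalpha} to obtain $|\varphi_a-\varphi_{\underline{a}(\tau)}|\leqslant 2\sqrt{2\alpha}(T-a)^{-\alpha}\|M\|_{\B_{\infty,2}^\alpha}$ a.s.; inserting this and using $2\alpha=1-\theta$ together with $\underline{a}(\tau)\leqslant a$ yields
\[
B_a\leqslant \frac{8\alpha(T-a)^\theta}{(T-\underline{a}(\tau))^\theta}\|M\|_{\B_{\infty,2}^\alpha}^2\leqslant 8\alpha\|M\|_{\B_{\infty,2}^\alpha}^2 \mbox{ a.s.}
\]

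Combining the two estimates produces $\ce{\F_a}{\intsq{\varphi}{\tau}_T-\intsq{\varphi}{\tau}_a}\leqslant c\|\tau\|_\theta\|\varphi-\varphi_0\|_{\B_{\infty,2}^\alpha}^2$ for every deterministic $a\in[0,T)$, with $c=c(\theta,T)$. Since $\intsq{\varphi}{\tau}$ is a non-decreasing, continuous and adapted process, I would finally invoke \cref{statement:bmo_determinstic} from the appendix (relations between the $\BMO$- and $\bmo$-spaces, referenced in the proof of \cref{statement:properties_Besov-spaces}\eqref{item:2:statement:properties_Besov-spaces}) to upgrade this deterministic-time inequality to the stopping-time bound that defines $\|\cdot\|_{\BMO_1([0,T))}$, yielding \eqref{eqn:statement:intsq_vs_Binfty2}. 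The only mildly delicate step is the Doob argument for $A_a$ in the presence of the singularity of $\cI^\alpha M$ at $T$, handled by the truncation-and-monotone-convergence scheme above; every other step is a quantitative substitution into an already established estimate.
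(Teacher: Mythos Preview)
Your proposal is correct and follows essentially the same route as the paper: apply \cref{statemtent:equivalence_simplified_setting_new} with $\sigma\equiv 1$, $c_\sigma=\kappa=1$, $M=\varphi-\varphi_0$, bound the $\cI^\alpha M$-term via Doob's maximal inequality and \cref{statement:properties_Besov-spaces}\eqref{item:2:statement:properties_Besov-spaces}, and bound the increment term via \cref{statement:properties_Besov-spaces}\eqref{item:3:statement:properties_Besov-spaces} together with \eqref{eqn:relations_between_Bpqalpha}. The only cosmetic difference is that the paper phrases the bound on $A_a$ through the $\bmo_2$-norm of $\cI^\alpha M$ (invoking \cref{statement:relation_BMO_bmo}\eqref{item:2:relation-bmo}) rather than your direct Doob-plus-truncation argument, and takes the supremum over $s$ in the $B$-term rather than working with $s=\underline{a}(\tau)$; the content is identical.
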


\begin{proof}
We apply \cref{statemtent:equivalence_simplified_setting_new} with $\sigma \equiv 1$, $c_\sigma=1$,
$M_t=\varphi_t-\varphi_0$, and get from inequality \eqref{eq:item:1:statemtent:equivalence_simplified_setting_new} that
\begin{align}
	\left \| \intsq{\varphi}{\tau} \right \|_{\BMO_1([0,T))} 
	& = \left \| \intsq{\varphi}{\tau} \right \|_{\bmo_1([0,T))}  \notag \\
	&\le c_{\eqref{eq:item:1:statemtent:equivalence_simplified_setting_new}} \| \tau\|_\theta
	\left [ 4 \| \cI^{\frac{1-\theta}{2}} \varphi -  \varphi_0 \|_{\bmo_2([0,T))}^2
	+ \sup_{0\leqslant s \leqslant a < T} \frac{T-a}{(T-s)^\theta} \| \varphi_a -  \varphi_s \|_{L_\infty}^2  \right ]
        \label{eqn:proof:statement:intsq_vs_Binfty2}
\end{align}
where we used Doob's maximal inequality for the first term. For the second term we continue with
\begin{align*}
\sup_{0\leqslant s \leqslant a < T} \sqrt{\frac{T-a}{(T-s)^\theta}} \| \varphi_a -  \varphi_s \|_{L_\infty}
& \leqslant \sup_{0\leqslant s \leqslant a < T} \sqrt{\frac{T-a}{(T-s)^\theta}}  
\left ( \| \varphi_a -  \varphi_0 \|_{L_\infty} + \| \varphi_0 -  \varphi_s \|_{L_\infty} \right ) \\
& \leqslant 2 \sup_{0\leqslant s \leqslant a < T} \sqrt{\frac{T-a}{(T-s)^\theta}}  
\| \varphi_a -  \varphi_0 \|_{L_\infty} \\
&  =        2  \sup_{0\leqslant a < T} (T-a)^\frac{1-\theta}{2} \| \varphi_a -  \varphi_0 \|_{L_\infty} \\
&  =        2 \| \varphi -  \varphi_0 \|_{\B_{\infty,\infty}^{\alpha}} \\
& \leqslant 2 \sqrt{2 \alpha}  \| \varphi -  \varphi_0 \|_{\B_{\infty,2}^{\alpha}},
\end{align*}
where we used inequality \eqref{eqn:relations_between_Bpqalpha}. If we use  
\cref{statement:relation_BMO_bmo}\eqref{item:2:relation-bmo} and
\cref{statement:properties_Besov-spaces}\eqref{item:2:statement:properties_Besov-spaces} 
for the first term in \cref{eqn:proof:statement:intsq_vs_Binfty2}, then we may conclude
\begin{align*}
\left \| \intsq{\varphi}{\tau} \right \|_{\BMO_1([0,T))} 
& \leqslant 8 \alpha c_{\eqref{eq:item:1:statemtent:equivalence_simplified_setting_new}}
\left [ \frac{9}{T^{2\alpha}} + 1 \right ] \|\tau\|_\theta 
\| \varphi -  \varphi_0 \|_{\B_{\infty,2}^\alpha}^2. \qedhere
\end{align*}
\end{proof}


\section[Oscillation]{Oscillation of stochastic processes and lower bounds}
\label{sec:oscillation_general}

In this section we consider lower bounds for the oscillation of stochastic processes
and use them in  \cref{sec:application_brownian_case} (Case (C1)) and \cref{sec:application_levy_case}.
As such, the approach is intended for stochastic processes 
$(\varphi_t)_{t\in [0,T)}\subseteq L_\infty$ with a blow-up 
of $\|\varphi_t \|_{L_\infty}$ if $t\uparrow T$. This is a typical case for the gradient processes we consider.
The quantities, we are interested in, concern the degree of the oscillation of the process measured in $L_\infty$,  here denoted 
by $\losc_t(\varphi)$ and  $\uosc_t(\varphi)$. In order to get lower bounds for these oscillatory quantities, 
we use the concept of {\it maximal oscillation}. The above mentioned concepts are introduced
in \cref{definition:oscillation} below. The maximal oscillation is verified in 
\cref{exam:oscillation_Markov_process} and \cref{exam:oscillation_Levy_process} below. 
The application to $\intsq{\varphi}{\tau}$  is given in \cref{thm:general_lower_bound}.
\cref{exam:oscillation_Markov_process}  and \cref{thm:general_lower_bound}
will be used in \cref{sec:application_brownian_case}, and
\cref{exam:oscillation_Levy_process}  and \cref{thm:general_lower_bound}
will be used in the L\'evy case in \cref{sec:application_levy_case}. 
Let us start to introduce our concept:
\smallskip

\begin{defi}
\label{definition:oscillation}
If $\varphi=(\varphi_t)_{t\in [0,T)}$ is a stochastic process and $t\in (0,T)$, then we let
\[ \losc_t(\varphi) := \inf_{s \in [0,t)} \| \varphi_t  -\varphi_s \|_{L_\infty} \in [0,\infty]
   \sptext{1}{and}{1}
   \uosc_t(\varphi) := \inf_{s\in [0,t)} \sup_{u\in [s,t]}  \| \varphi_t - \varphi_u \|_{L_\infty} \in [0,\infty]. \]
The process is called of {\it maximal oscillation} with constant $c\geqslant 1$ if for all $t\in (0,T)$ one has
\[ \losc_t(\varphi) \geqslant \frac{1}{c} \| \varphi_t - \varphi_0 \|_{L_\infty}.\]
If both sides equal infinity, then we use $c=1$ (however, this case is not of relevance for us).
\end{defi}

\begin{lemm}
\label{statement:relations_oscillation}
For a stochastic process $\varphi=(\varphi_t)_{t\in [0,T)}$ the following holds:
\begin{enumerate}[{\rm (1)}]
\item \label{item:1:statement:relations_oscillation}
      One has $\losc_t(\varphi) \leqslant \uosc_t(\varphi)$ for $t\in (0,T)$.
\item \label{item:2:statement:relations_oscillation}
      One has $\uosc_t(\varphi) \leqslant 2 \losc_t(\varphi)$ for $t\in (0,T)$ if $\varphi$ is a martingale.
\item \label{item:3:statement:relations_oscillation}
      If $\varphi_a \equiv \1_{\Q\cap [0,T)}(a)$ for $a\in [0,T)$, then
      $0=\losc_t(\varphi) <\uosc_t(\varphi)=1$ for all $t\in (0,T)$.
\end{enumerate}
\end{lemm}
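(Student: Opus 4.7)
The three items are essentially observations, so I would prove them sequentially with minimal machinery. For \eqref{item:1:statement:relations_oscillation}, I would just unwind the definitions: for each fixed $s\in[0,t)$ one has $s\in[s,t]$, hence
\[
\sup_{u\in[s,t]}\|\varphi_t-\varphi_u\|_{L_\infty}\;\geqslant\;\|\varphi_t-\varphi_s\|_{L_\infty},
\]
and taking $\inf_{s\in[0,t)}$ on both sides yields $\uosc_t(\varphi)\geqslant\losc_t(\varphi)$.

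For \eqref{item:2:statement:relations_oscillation}, the key point is that for a martingale and any $0\leqslant s\leqslant u\leqslant t<T$, the variable $\varphi_s$ is $\F_u$-measurable and $\varphi_u=\ce{\F_u}{\varphi_t}$, so that
\[
\varphi_u-\varphi_s \;=\; \ce{\F_u}{\varphi_t-\varphi_s},
\]
from which the contractivity of conditional expectation on $L_\infty$ gives $\|\varphi_u-\varphi_s\|_{L_\infty}\leqslant\|\varphi_t-\varphi_s\|_{L_\infty}$. Then the triangle inequality yields, for every $u\in[s,t]$,
\[
\|\varphi_t-\varphi_u\|_{L_\infty}\;\leqslant\;\|\varphi_t-\varphi_s\|_{L_\infty}+\|\varphi_u-\varphi_s\|_{L_\infty}\;\leqslant\;2\|\varphi_t-\varphi_s\|_{L_\infty},
\]
and taking $\sup_{u\in[s,t]}$ followed by $\inf_{s\in[0,t)}$ gives $\uosc_t(\varphi)\leqslant 2\losc_t(\varphi)$.

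For \eqref{item:3:statement:relations_oscillation}, the process is deterministic, so $L_\infty$-norms are absolute values. To see $\losc_t(\varphi)=0$, I would pick, for any $t\in(0,T)$, a sequence $s_n\uparrow t$ of the same rationality class as $t$ (rationals are dense, and so are irrationals), which exists and satisfies $\varphi_{s_n}=\varphi_t$, hence $|\varphi_t-\varphi_{s_n}|=0$. To see $\uosc_t(\varphi)=1$, I would observe that for every $s\in[0,t)$ the interval $[s,t]$ contains points of the \emph{opposite} rationality class to $t$, so that $\sup_{u\in[s,t]}|\varphi_t-\varphi_u|=1$; taking $\inf_s$ preserves this value.

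The main (mild) obstacle is only \eqref{item:2:statement:relations_oscillation}, where one has to remember that $\varphi_s$ is $\F_u$-measurable (since $s\leqslant u$ and $s$ is deterministic) in order to pull it inside the conditional expectation; no other step is nontrivial.
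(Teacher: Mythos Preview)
Your proposal is correct and follows essentially the same approach as the paper's proof: item \eqref{item:1:statement:relations_oscillation} is immediate from the definitions, item \eqref{item:2:statement:relations_oscillation} is the triangle inequality combined with $\|\varphi_u-\varphi_s\|_{L_\infty}\leqslant\|\varphi_t-\varphi_s\|_{L_\infty}$ from the martingale property, and item \eqref{item:3:statement:relations_oscillation} is declared obvious in the paper while you spell it out. Your write-up is in fact more explicit than the paper's, which leaves the $L_\infty$-contractivity of conditional expectation implicit in \eqref{item:2:statement:relations_oscillation}.
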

\medskip

\begin{proof}
\eqref{item:1:statement:relations_oscillation} follows from the definition. 
\eqref{item:2:statement:relations_oscillation} If $\varphi$ is a martingale and 
$0 \leqslant s < t<T$, then we have
\[           \sup_{u\in [s,t]}  \| \varphi_t - \varphi_u \|_{L_\infty}
   \leqslant \| \varphi_t - \varphi_s \|_{L_\infty} + \sup_{u\in [s,t]} \| \varphi_u - \varphi_s \|_{L_\infty}
   \leqslant 2 \| \varphi_t - \varphi_s \|_{L_\infty}. 
\]
Taking the infimum on both sides over $s\in [0,t)$ yields the assertion.
Item \eqref{item:3:statement:relations_oscillation} is obvious.
\end{proof}

\begin{rema}
In the sequel we do not need the following two statements,
so that we state them without proof:
\begin{enumerate}
\item It is possible to construct examples such that for a given $c\in [1,\infty)$ the constant $c$ is optimal in the definition of
      maximal oscillation.
\item Again by examples one can see that the constant $2$ in \cref{statement:relations_oscillation}\eqref{item:2:statement:relations_oscillation}
      is optimal.
\end{enumerate}
\end{rema}
 
To verify a  maximal oscillation we make use of the following observation:

\begin{lemm}
\label{statement:basic_lemma_oscillation}
Assume two random variables $A,B:\Omega\to \R$ on $(\Omega,\F,\p)$.
Assume a probability measure 
$\Q$, that is absolutely continuous with respect to $\p$ {\rm (}$\Q \ll\p${\rm )}, such that
$\E^{\Q} |B|<\infty$ and $\E^{\Q} B =0$. Then 
\[ \| B - A \|_{L_\infty(\p)}\geqslant \inf_{a\in \R} \| B - a \|_{L_\infty(\p)}
   \sptext{1}{implies}{1}
   \| B - A \|_{L_\infty(\p)} \geqslant \frac{1}{2} \| B \|_{L_\infty(\p)}.\]
\end{lemm}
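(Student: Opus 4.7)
The plan is to identify $\inf_{a\in\R}\|B-a\|_{L_\infty(\p)}$ with the half-width of the essential range of $B$ under $\p$, and then to exploit $\E^\Q B = 0$ together with $\Q \ll \p$ to show that $\|B\|_{L_\infty(\p)}$ is bounded by twice that half-width. Set $M := \esssup_\p B$ and $m := -\esssup_\p(-B)$, i.e.\ the essential supremum and infimum of $B$ under $\p$. We may assume $M < \infty$ and $m > -\infty$; otherwise $\inf_{a\in\R}\|B-a\|_{L_\infty(\p)} = \infty$ and the hypothesis forces $\|B-A\|_{L_\infty(\p)} = \infty$, so the conclusion is trivial.

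First, I would verify the elementary identity
\[ \inf_{a\in\R}\|B-a\|_{L_\infty(\p)} = \frac{M-m}{2}. \]
For any $a \in \R$ one has $\|B-a\|_{L_\infty(\p)} = \max(M-a,\,a-m)$, and this is minimised at $a_* := (M+m)/2$ with minimal value $(M-m)/2$.

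Next I would use $\Q \ll \p$ to transfer the $\p$-a.s.\ bounds $m \leqslant B \leqslant M$ to $\Q$-a.s.\ bounds, and then integrate against $\Q$: from $\E^\Q B = 0$ one obtains $m \leqslant 0 \leqslant M$. In particular both $M$ and $-m$ are non-negative, so
\[ \|B\|_{L_\infty(\p)} = \max(M,\,-m) \leqslant M + (-m) = M - m = 2\inf_{a\in\R}\|B-a\|_{L_\infty(\p)}. \]
Combining this with the standing hypothesis $\|B-A\|_{L_\infty(\p)} \geqslant \inf_{a\in\R}\|B-a\|_{L_\infty(\p)}$ yields $\|B-A\|_{L_\infty(\p)} \geqslant \tfrac{1}{2}\|B\|_{L_\infty(\p)}$, as claimed.

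There is essentially no obstacle here: the only non-trivial point is that the mean-zero condition under $\Q$ forces the essential supremum and essential infimum of $B$ under $\p$ to straddle zero, and this uses that absolute continuity $\Q \ll \p$ preserves $\p$-a.s.\ inequalities. Everything else is the elementary bound $\max(M,-m) \leqslant M - m$ when $M,-m \geqslant 0$.
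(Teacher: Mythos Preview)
Your proof is correct and takes a somewhat different, more geometric route than the paper's. The paper does not compute the infimum explicitly; instead it picks, for each $\varepsilon>0$, a near-minimiser $a_\varepsilon$ with $\|B-a_\varepsilon\|_{L_\infty(\p)}\leqslant\|B-A\|_{L_\infty(\p)}+\varepsilon$, then uses the reverse triangle inequality to get $\|B-A\|_{L_\infty(\p)}\geqslant\|B\|_{L_\infty(\p)}-|a_\varepsilon|-\varepsilon$ and the chain $\|B-a_\varepsilon\|_{L_\infty(\p)}\geqslant\E^\Q|B-a_\varepsilon|\geqslant|a_\varepsilon|$ (via $\Q\ll\p$ and $\E^\Q B=0$) to get $\|B-A\|_{L_\infty(\p)}\geqslant|a_\varepsilon|-\varepsilon$; adding these and letting $\varepsilon\downarrow0$ gives the factor $\tfrac12$. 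Your argument identifies the infimum exactly as $(M-m)/2$ and observes that $\E^\Q B=0$ forces $m\leqslant0\leqslant M$, whence $\|B\|_{L_\infty(\p)}=\max(M,-m)\leqslant M-m$. Your version is more transparent about where the constant $\tfrac12$ comes from and avoids the $\varepsilon$-limit; the paper's version has the mild advantage of never needing $M$ and $m$ to be finite separately (it works directly with $\|B-A\|_{L_\infty(\p)}<\infty$), but you handled that case correctly anyway.
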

\begin{proof}
We may assume that $ \| B - A \|_{L_\infty(\p)}<\infty$, otherwise there is nothing to prove.
Because of our assumption, for all $\varepsilon >0$ there is an $a_\varepsilon\in \R$ such that we have 
\[ \| B - A \|_{L_\infty(\p)} \geqslant \| B \|_{L_\infty(\p)} - |a_\varepsilon| - \varepsilon
   \sptext{.7}{and}{.7}
   \| B - A \|_{L_\infty(\p)} \geqslant \E^\Q | B - a_{\varepsilon} | - \varepsilon
                      \geqslant | \E^\Q B - a_{\varepsilon} | - \varepsilon
                           =    |a_{\varepsilon} | - \varepsilon. \]
The combination of the inequalities implies
\[ \| B - A \|_{L_\infty(\p)} \geqslant \| B \|_{L_\infty(\p)} - |a_\varepsilon| - \varepsilon
                      \geqslant \| B \|_{L_\infty(\p)} - \| B - A \|_{L_\infty(\p)}  - 2\varepsilon \]
so that 
$2 \| B - A \|_{L_\infty(\p)} \geqslant \| B \|_{L_\infty(\p)} - 2\varepsilon$.
By $\varepsilon \downarrow 0$ we get our statement. 
\end{proof}
\bigskip

Now we consider two examples relevant for us:

\begin{exam}[Markov type processes, \cref{sec:application_brownian_case}]
\label{exam:oscillation_Markov_process}
Let $(Y_t)_{t\in [0,T]}$ be a process with values in $\defg$, where
$\defg =\R$ or $\defg = (0,\infty)$, and $Y_0\equiv y_0 \in \defg$.
Assume transition densities
$\Gamma_Y: \{ (s,t): 0 \leqslant s < t \leqslant T \} \times \defg \times \defg \to (0,\infty)$,
jointly continuous in the space variables for fixed $0 \leqslant s < t \leqslant T$, such that
\begin{equation}\label{eqn:derived_from_Markov_SDE}
 \p(Y_t\in B|Y_s) = \int_B \Gamma_Y(s,t;Y_s,y)\od y \mbox{ a.s.} 
\end{equation}
for $B\in \cB(\defg)$ and $0\leqslant s < t \leqslant T$. 
Then, for $0<s<t \leqslant T$ and continuous $H,\tilde H:\defg \to \R$, one has
\[ \| H(Y_t) - \tilde H(Y_s)  \|_{L_\infty} \geqslant \| H(Y_t) - \tilde H(y_0) \|_{L_\infty}.\]
This follows from the fact that the density $D_{s,t}:\defg\times \defg \to [0,\infty)$ of 
$law(Y_s,Y_t)$ with respect to the Lebesgue measure $\lambda\otimes \lambda|_{\defg\times\defg}$
is the positive and continuous function
\[ D_{s,t}(y_1,y_2):=\Gamma_Y(0,s;y_0,y_1)\Gamma_Y(s,t;y_1,y_2). \]
Consequently, if there is a probability measure $\Q \ll\p$ and
if for all $t\in [0,T)$ one has that $H(t,\cdot):\defg\to \R$ is continuous, $\E^\Q |H(t,Y_t)|<\infty$, and
$\E^\Q (H(t,Y_t)-H(0,y_0))=0$, then $(H(t,Y_t)-H(0,y_0))_{t\in [0,T)}$ is of maximal 
oscillation with constant $2$ according to \cref{statement:basic_lemma_oscillation}.
\end{exam}
\medskip

\begin{exam}[L\'evy processes, \cref{sec:application_levy_case}]
\label{exam:oscillation_Levy_process}
Let $(X_t)_{t\in [0, T]}$, $X_t:\Omega \to \R$, be a L\'evy process. 
By \cite[Theorem 61.2]{Sa13} there are $\ell\in \R$ and a closed  
$Q\subseteq \R$ such that 
\[ 0\in Q,  \quad Q+Q=Q, \sptext{1}{and}{1} \supp(X_t)= Q+\ell t \sptext{.5}{for}{.5} t\in (0,T]. \]
The set $Q$ is unique. Indeed, assume representations $(\ell,Q)$ and $(\ell',Q')$ so that 
$t_n\ell + Q = t_n \ell' + Q'$ for $T\geqslant t_n \downarrow 0$. By the closeness of $Q$ and 
$Q'$ one deduces $Q=Q'$. If $Q=\R$, then any $\ell\in \R$ can be chosen and w.l.o.g. we agree about 
$\ell:=0$. Assuming $Q\subsetneqq \R$ and $\ell\not = \ell'$ leads to a contradiction: by induction
$t\ell + Q= t\ell' + Q$, $t\in (0,T]$, implies $k (\ell-\ell') (0,T]\subseteq  Q$ and
$k (\ell'-\ell) (0,T] \subseteq Q$ for all $k\in \bN$, which would yield to $Q=\R$
(we have always $0\in Q$). Now define
\begin{equation}\label{eqn:definition_Y_Levy}
Y_t := (X_t -\ell t) \1_{\{ X_t \in \supp (X_t) \}}
\end{equation}
so that $Y_t(\Omega) \subseteq Q$ and $\supp(Y_t)=Q$ for all $t\in (0,T]$. 
Let $0<s<t\leqslant T$ and $H,\tilde H:Q\to \R$ be continuous on $Q$ . Then
\[ \| H(Y_t) - \tilde H(Y_s)  \|_{L_\infty} \geqslant \| H(Y_t) - \tilde H(0) \|_{L_\infty}.\]
This can be seen from
\begin{multline*}
\| H(Y_t) - \tilde H(Y_s)  \|_{L_\infty} = \| H(Y_s + (Y_t-Y_s)) - \tilde H(Y_s)  \|_{L_\infty}
                                     = \sup_{y,y'\in Q} |H(y'+y) - \tilde H(y')| \\
                             \geqslant \sup_{y\in Q} |H(y) - \tilde H(0)|
                                     = \| H(Y_t) - \tilde H(0) \|_{L_\infty}.
\end{multline*}
Consequently, if there is a probability measure $\Q\ll\p$ and if for all $t\in [0,T)$ one has that 
$H(t,\cdot):Q\to \R$ is continuous, $\E^\Q |H(t,Y_t)|<\infty$, and
$\E^\Q (H(t,Y_t)-H(0,0))=0$, then $(H(t,Y_t)-H(0,0))_{t\in [0,T)}$ is of maximal oscillation with constant $2$
according to \cref{statement:basic_lemma_oscillation}.
\end{exam}
\bigskip

Now we connect the notion of oscillation to the behaviour of
$\intsq{\varphi}{\tau}$, where we recall that we use extended conditional expectations for non-negative random variables.

\begin{theo}\label{thm:general_lower_bound}
Assume $\theta \in (0,1]$, $c_\eqref{eq:assum:thm:general_lower_bound}>0$, and an adapted c\`adl\`ag process $(\varphi_t)_{t\in [0,T)}$ such that
\begin{equation}\label{eq:assum:thm:general_lower_bound}
            \frac{1}{c_\eqref{eq:assum:thm:general_lower_bound}} |\varphi_a - Z|^2
   \leqslant \ce{\F_a}{\frac{1}{b-a}\int_a^b |\varphi_u - Z |^2 \od u} \mbox{ a.s.}
\end{equation}
for all $0 \leqslant a < b <T$ and all $\F_a$-measurable $Z:\Omega\to \R$.
Consider the following assertions:
\begin{enumerate}[{\rm(1)}]
\item \label{item:1:thm-general-lower}
      $\inf_{t\in (0,T)} (T-t)^{\frac{1-\theta}{2}}\losc_t(\varphi) >0$.
\item \label{item:2:thm-general-lower}
      There is  a $c_{\cref{thm-general-lower-2}} >0$ such that for all 
      $\tau=\{t_i\}_{i=0}^n \in \mathcal T$ with $\|\tau\|_\theta = \frac{t_k-t_{k-1}}{(T-t_{k-1})^{1-\theta}}$
      one has 
      \begin{align}\label{thm-general-lower-2}
      \inf_{\vartheta_{i-1}\in \cL_0(\F_{t_{i-1}})} \sup_{a\in [t_{k-1},t_k)}
      \left \|\ce{\F_a}{\int_a^T \left| \varphi_u - \sum_{i=1}^n \vartheta_{i-1} \1_{(t_{i-1},t_i]}(u)  \right|^2 \od u} 
      \right \|_{L_\infty}
	  \geqslant c_{\cref{thm-general-lower-2}}^2  \| \tau \|_\theta.
	  \end{align}
\item \label{item:3:thm-general-lower}
	There is  a constant $c_{\cref{thm-general-lower-3}} >0$ such that  for all time-nets $\tau \in \mathcal T$ one has 
	\begin{align}\label{thm-general-lower-3}
	\| [\varphi;\tau]\|_{\BMO_1([0,T))} \geqslant c_{\cref{thm-general-lower-3}}^2  \| \tau \|_\theta.
	\end{align}
\item \label{item:4:thm-general-lower}
      $\inf_{t\in (0,T)} (T-t)^{\frac{1-\theta}{2}}\uosc_t(\varphi) >0$.
\end{enumerate}
\medskip
Then we have $\eqref{item:1:thm-general-lower} \Rightarrow \eqref{item:2:thm-general-lower} \Rightarrow \eqref{item:3:thm-general-lower}$.
If $\|\intsq{\varphi}{\tau}\|_{\BMO_1([0,T))}<\infty$ for all $\tau\in \cT$ and $ \| [\varphi;\tau]\|_{\BMO_1([0,T))}\to 0$ for
$\|\tau\|_1\to0$, then \eqref{item:3:thm-general-lower} $\Rightarrow$ \eqref{item:4:thm-general-lower}.
\end{theo}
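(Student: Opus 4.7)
The plan is to establish the chain \eqref{item:1:thm-general-lower} $\Rightarrow$ \eqref{item:2:thm-general-lower} $\Rightarrow$ \eqref{item:3:thm-general-lower} directly, and to handle the last implication \eqref{item:3:thm-general-lower} $\Rightarrow$ \eqref{item:4:thm-general-lower} by contraposition, exploiting both hypotheses on $\|[\varphi;\tau]\|_{\BMO_1([0,T))}$. For \eqref{item:1:thm-general-lower} $\Rightarrow$ \eqref{item:2:thm-general-lower}, I would fix $\tau = \{t_i\}_{i=0}^n$ whose mesh $\|\tau\|_\theta$ is realized on the $k$-th interval, let $\vartheta$ be arbitrary, and note that for any $a \in [t_{k-1}, t_k)$ the variable $\vartheta_{k-1}$ is $\F_a$-measurable. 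Dropping the part of the integral outside $(a, t_k]$ (on which the step function equals $\vartheta_{k-1}$) and then applying \eqref{eq:assum:thm:general_lower_bound} with $Z := \vartheta_{k-1}$ yields
\[ \ce{\F_a}{\int_a^T \Big|\varphi_u - \sum_{i=1}^n \vartheta_{i-1}\1_{(t_{i-1},t_i]}(u)\Big|^2 \od u} \geqslant \frac{t_k - a}{c_\eqref{eq:assum:thm:general_lower_bound}}\, |\varphi_a - \vartheta_{k-1}|^2 \mbox{ a.s.} \]
Passing to $L_\infty$-norms, everything reduces to bounding $\sup_{a\in[t_{k-1},t_k)} (t_k-a)\|\varphi_a - \vartheta_{k-1}\|_{L_\infty}^2$ from below uniformly in $\vartheta$. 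The key trick is to test both $a = t_{k-1}$ and $a' := (t_{k-1}+t_k)/2$ and combine them by the triangle inequality $\|\varphi_{t_{k-1}} - \vartheta_{k-1}\|_{L_\infty} + \|\varphi_{a'} - \vartheta_{k-1}\|_{L_\infty} \geqslant \|\varphi_{a'} - \varphi_{t_{k-1}}\|_{L_\infty} \geqslant \losc_{a'}(\varphi)$, so that at least one of the two $L_\infty$-norms on the left is $\geqslant \losc_{a'}(\varphi)/2$; since $a'\geqslant t_{k-1}$, hypothesis \eqref{item:1:thm-general-lower} gives $\losc_{a'}(\varphi)^2 \geqslant C^2(T-t_{k-1})^{\theta-1}$, which multiplied by $t_k - a \geqslant (t_k-t_{k-1})/2$ reproduces $\|\tau\|_\theta$ on the right.

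For \eqref{item:2:thm-general-lower} $\Rightarrow$ \eqref{item:3:thm-general-lower}, the admissible choice $\vartheta_{i-1} := \varphi_{t_{i-1}}$ turns the integral in \eqref{thm-general-lower-2} into $[\varphi;\tau]_T - [\varphi;\tau]_a$. Since $a$ is a deterministic stopping time and $[\varphi;\tau]$ has a.s.\ continuous paths (so $[\varphi;\tau]_{a-} = [\varphi;\tau]_a$), the definition of $\|[\varphi;\tau]\|_{\BMO_1([0,T))}$ combined with monotone convergence as $t\uparrow T$ gives $\|\ce{\F_a}{[\varphi;\tau]_T - [\varphi;\tau]_a}\|_{L_\infty} \leqslant \|[\varphi;\tau]\|_{\BMO_1([0,T))}$; taking the infimum over $\vartheta$ delivers \eqref{item:3:thm-general-lower} with the same constant as in \eqref{item:2:thm-general-lower}.

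For \eqref{item:3:thm-general-lower} $\Rightarrow$ \eqref{item:4:thm-general-lower} I would argue by contradiction. If \eqref{item:4:thm-general-lower} fails there exist sequences $t_n \in (0,T)$ and $s_n \in [0, t_n)$ such that $\delta_n := \sup_{u \in [s_n, t_n]} \|\varphi_{t_n} - \varphi_u\|_{L_\infty}$ satisfies $\delta_n (T-t_n)^{(1-\theta)/2} \to 0$. I would then test \eqref{item:3:thm-general-lower} on $\tau_n := \sigma_n \cup \{s_n, t_n\}$, where $\sigma_n$ is a uniform partition of mesh $\delta'_n$ chosen so small that (i) $\|\tau_n\|_\theta$ is attained on the block $(s_n, t_n]$, hence equals $(t_n-s_n)/(T-s_n)^{1-\theta}$, and (ii) $\|[\varphi;\sigma_n]\|_{\BMO_1([0,T))}$ is as small as we please, as allowed by the standing hypothesis $\|[\varphi;\tau]\|_{\BMO_1([0,T))} \to 0$ as $\|\tau\|_1 \to 0$. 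On $(s_n, t_n]$ the step function of $\tau_n$ equals $\varphi_{s_n}$, and two uses of the triangle inequality give $|\varphi_u - \varphi_{s_n}| \leqslant 2\delta_n$, so the contribution of this block to $[\varphi;\tau_n]_t - [\varphi;\tau_n]_\rho$ is bounded deterministically by $4\delta_n^2 (t_n - s_n)$ for every stopping time $\rho$ and every $t$; combining with the BMO-smallness for $\sigma_n$ should force $\|[\varphi;\tau_n]\|_{\BMO_1([0,T))}/\|\tau_n\|_\theta \to 0$, contradicting \eqref{item:3:thm-general-lower}. The main obstacle is exactly this last step: the BMO-norm is a supremum over all stopping times, so the pointwise control of $\varphi$ on $[s_n, t_n]$ does not by itself bound it, and one has to carefully track how adjoining $\{s_n, t_n\}$ perturbs $[\varphi;\sigma_n]$ outside of $[s_n, t_n]$.
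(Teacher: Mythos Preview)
Your arguments for \eqref{item:1:thm-general-lower} $\Rightarrow$ \eqref{item:2:thm-general-lower} and \eqref{item:2:thm-general-lower} $\Rightarrow$ \eqref{item:3:thm-general-lower} are correct and coincide with the paper's proof (midpoint trick, triangle inequality to eliminate $\vartheta_{k-1}$, then specialise $\vartheta_{i-1}=\varphi_{t_{i-1}}$).

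For \eqref{item:3:thm-general-lower} $\Rightarrow$ \eqref{item:4:thm-general-lower} your construction is internally inconsistent. You set $\tau_n:=\sigma_n\cup\{s_n,t_n\}$ with $\sigma_n$ a fine uniform partition, then assert both that $(s_n,t_n]$ is a \emph{block} of $\tau_n$ (so the step function equals $\varphi_{s_n}$ there) and that $\|[\varphi;\sigma_n]\|_{\BMO_1}$ is small via $\|\sigma_n\|_1\to0$. These are incompatible: if $\sigma_n$ is fine it has many points inside $(s_n,t_n)$, so $(s_n,t_n]$ is not a block of $\tau_n$; if you excise those points, the resulting net has $\|\cdot\|_1\geqslant t_n-s_n$ and the hypothesis on vanishing BMO-norm no longer applies to it. The ``obstacle'' you flag (perturbation outside $[s_n,t_n]$) is a symptom of this, not the core issue.

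The paper resolves this with a \emph{two-net} device and a direct argument. Fix $0\leqslant s<t<T$ and take $\tau$ with $(s,t]$ as a block and $\|\tau\|_\theta=(t-s)/(T-s)^{1-\theta}$. Split the conditional integral into the $(s,t]$-part (bounded by $(t-s)\sup_{u\in(s,t]}\|\varphi_u-\varphi_s\|_{L_\infty}^2$) and its complement. For the complement introduce a \emph{second} net $\tilde\tau$ that coincides with $\tau$ outside $(s,t)$ but is refined arbitrarily inside; since the step functions of $\tau$ and $\tilde\tau$ agree on $(0,T)\setminus(s,t]$, the complementary piece is dominated by $\|[\varphi;\tilde\tau]\|_{\BMO_1}$, and one sends $\|\tilde\tau\|_1\to0$ while $\tau$ (hence $\|\tau\|_\theta$) stays fixed. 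This yields $\sup_{u\in(s,t]}\|\varphi_u-\varphi_s\|_{L_\infty}^2\geqslant c_{\eqref{thm-general-lower-3}}^2(T-s)^{\theta-1}$; restricting to $s\in[(2t-T)^+,t)$ converts $(T-s)$ into $(T-t)$, and a final triangle inequality passes from $\sup_{u}\|\varphi_u-\varphi_s\|_{L_\infty}$ to $\uosc_t(\varphi)$. Your contradiction scheme can be repaired along these lines (choose $s_n$ with $T-s_n\leqslant 2(T-t_n)$ and use $\tilde\tau_n$ as the fine net), but the decisive step you were missing is that the ``outside'' contribution is controlled by the \emph{full} $[\varphi;\tilde\tau]$ for a globally fine refinement, not by an auxiliary net to which points have been adjoined.
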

\smallskip

\begin{proof}
\eqref{item:1:thm-general-lower} $\Rightarrow$ \eqref{item:2:thm-general-lower}
If $\delta:=\inf_{t\in (0,T)} (T-t)^{\frac{1-\theta}{2}}\losc_t(\varphi)>0$ 
and $a\in [t_{k-1},t_k)$, then
for any $\vartheta_{i-1}\in \cL_0(\F_{t_{i-1}})$ one has, a.s., 
\begin{align*}
            \ce{\F_a}{\int_a^T \left| \varphi_u - \sum_{i=1}^n \vartheta_{i-1} \1_{(t_{i-1},t_i]}(u)  \right|^2 \od u} 
&\geqslant  \ce{\F_a}{\int_a^{t_k} \left| \varphi_u - \sum_{i=1}^n \vartheta_{i-1} \1_{(t_{i-1},t_i]}(u)  \right|^2 \od u}\\
& =         \ce{\F_a}{\int_a^{t_k} \left| \varphi_u - \vartheta_{k-1}  \right|^2 \od u}\\
&\geqslant \frac{1}{c_\eqref{eq:assum:thm:general_lower_bound}} (t_k - a)|\varphi_a - \vartheta_{k-1}|^2.
		\end{align*}
We apply this inequality to $a=t_{k-1}$ and $a=a_0:=\frac{1}{2}(t_{k-1}+t_k)$, observe that 
\begin{align*}
 \frac{1}{2} \left [ (t_k - t_{k-1}) \|\varphi_{t_{k-1}} - \vartheta_{k-1}\|_{L_\infty}^2 
                                        + (t_k - a_0          ) \|\varphi_{a_0} - \vartheta_{k-1}\|_{L_\infty}^2 \right ] 
&\geqslant  \frac{t_k-a_0}{4}\|\varphi_{a_0} - \varphi_{t_{k-1}}\|_{L_\infty}^2 \\
&\geqslant  \frac{t_k-t_{k-1}}{8} \losc^2_{a_0}(\varphi),
\end{align*}
and deduce
\begin{align*}
 \sup_{a\in [t_{k-1},t_k)}
 \left \| \ce{\F_a}{\int_a^T \left| \varphi_u - \sum_{i=1}^n \vartheta_{i-1} \1_{(t_{i-1},t_i]}(u)  \right|^2 \od u} \right \|_{L_\infty}^2 
&\geqslant \frac{1}{c_\eqref{eq:assum:thm:general_lower_bound}}\frac{\delta^2}{8} \frac{t_k-t_{k-1}}{(T-t_{k-1})^{1-\theta}} 
 =  \frac{1}{c_\eqref{eq:assum:thm:general_lower_bound}} \frac{\delta^2}{8} \|\tau\|_\theta.
\end{align*}

\eqref{item:2:thm-general-lower} $\Rightarrow$ \eqref{item:3:thm-general-lower} with 
$c_{\cref{thm-general-lower-3}}:=c_{\cref{thm-general-lower-2}}$ is obvious
because we can choose $\vartheta_{i-1}:=\varphi_{t_{i-1}}$.
\medskip

\eqref{item:3:thm-general-lower} $\Rightarrow$ \eqref{item:4:thm-general-lower}
For $a\in [0,T)$ and $0\leqslant s<t<T$, a time-net $\tau=\{t_i\}_{i=0}^n$ such that $s=t_{k-1}<t_k=t$
and  
\begin{equation}\label{eqn:choice_theta_net} 
\frac{t - s}{(T-s)^{1-\theta}} = \|\tau\|_\theta
\end{equation}
we get 
\begin{align*}
    \left \| \ce{\F_a}{\int_{(a,T) \cap (s, t]} \left| \varphi_u - \sum_{i=1}^n \varphi_{t_{i-1}} 
			\1_{(t_{i-1},t_{i}]}(u)  \right|^2 \od u} \right \|_{L_\infty} 
& = \left \| \ce{\F_a}{\int_{(a,T) \cap (s, t]}|\varphi_u - \varphi_{s}|^2\od u} \right \|_{L_\infty} \\
& \leqslant (t- s)\sup_{u \in (s, t]}\|\varphi_u - \varphi_{s}\|_{L_\infty}^2
\end{align*}
and
\begin{align*}
    c_{\cref{thm-general-lower-3}} \sqrt{\frac{t - s}{(T-s)^{1-\theta}}}
  = c_{\cref{thm-general-lower-3}} \|\tau\|_\theta^\frac{1}{2} 
&\leqslant \sup_{a \in [0,T)}
     \left \| \ce{\F_a}{\int_a^T \left| \varphi_u - \sum_{i=1}^n \varphi_{t_{i-1}} \1_{(t_{i-1},t_{i}]}(u)  
     \right|^2 \od u} \right \|_{L_\infty}^\frac{1}{2}  \\
& \leqslant  \sup_{a \in [0,T)} \Bigg [ \left \| \ce{\F_a}{\int_{(a,T) \cap  (s,t]} \left| \varphi_u - \sum_{i=1}^n \varphi_{t_{i-1}} \1_{(t_{i-1},t_{i}]}(u)  \right|^2 \od u}    
		\right \|_{L_\infty}^\frac{1}{2} \notag \\
		& \quad + \left \| \ce{\F_a}{\int_{(a,T) \backslash (s,t]} \left| \varphi_u - \sum_{i=1}^n \varphi_{t_{i-1}} \1_{(t_{i-1},t_{i}]}(u)  \right|^2 \od u}    
		\right \|_{L_\infty}^\frac{1}{2} \Bigg ] \\
& \leqslant \sqrt{t- s}\sup_{u \in (s, t]}\|\varphi_u - \varphi_{s}\|_{L_\infty} \\
& \quad + \sup_{a \in [0,T)}  \left \| \ce{\F_a}{\int_{(a,T) \backslash (s,t]} \left| \varphi_u - \sum_{i=1}^n \varphi_{t_{i-1}} \1_{(t_{i-1},t_{i}]}(u)  \right|^2 \od u}    
		\right \|_{L_\infty}^\frac{1}{2}.
\end{align*}
Assume a time-net $\tilde{\tau}$ that coincides with $\tau$ outside the interval $(s,t)$. Then
\[
    c_{\cref{thm-general-lower-3}} \sqrt{\frac{t - s}{(T-s)^{1-\theta}}}
\leqslant \sqrt{t- s}\sup_{u \in (s, t]}\|\varphi_u - \varphi_{s}\|_{L_\infty}  + \| [\varphi;\tilde{\tau}]\|_{\BMO_1([0,T))}^\frac{1}{2}. \]
Choosing a sequence $(\tau_n,\tilde \tau_n)$ of $(\tau,\tilde \tau)$ with \eqref{eqn:choice_theta_net} as above,
such that $\|\tilde{\tau}_n\|_{1}\to 0$, we conclude with
\[
    c_{\cref{thm-general-lower-3}} \sqrt{\frac{t - s}{(T-s)^{1-\theta}}}
\leqslant \sqrt{t- s}\sup_{u \in (s, t]}\|\varphi_u - \varphi_{s}\|_{L_\infty} \]
and hence
\[ \sup_{u \in (s, t]}\|\varphi_u - \varphi_{s}\|_{L_\infty}^2
\geqslant   c_{\cref{thm-general-lower-3}}^2 (T-s)^{\theta-1}. \]
For $s\in \left [ (2t -T)^+,t\right )$ this gives
$\sup_{u \in (s, t]}\|\varphi_u - \varphi_{s}\|_{L_\infty}^2 \geqslant   c_{\cref{thm-general-lower-3}}^2 2^{\theta-1} (T-t)^{\theta-1}$
and therefore
\[
c_{\cref{thm-general-lower-3}} 2^{\frac{\theta-1}{2}} (T-t)^{\frac{\theta-1}{2}}
\leqslant \| \varphi_t -\varphi_s \|_{L_\infty} + \sup_{u \in (s, t]}\|\varphi_t - \varphi_u\|_{L_\infty}
\leqslant 2  \sup_{u \in [s, t]}\|\varphi_t - \varphi_u\|_{L_\infty}.\]
This implies
$\uosc_t(\varphi) \geqslant   c_{\cref{thm-general-lower-3}} 2^{\frac{\theta-3}{2}} (T-t)^{\frac{\theta-1}{2}}$.
\end{proof}


\section[Gradient estimates: Brownian setting]{Brownian setting: Gradient estimates and approximation}
\label{sec:application_brownian_case}

We suppose additionally that $\F=\F_T$
and that $(\F_t)_{t\in [0,T]}$ is the augmentation of the natural filtration of a standard one-dimensional 
Brownian motion $W=(W_t)_{t\in [0,T]}$ with continuous paths and starting in zero for all $\omega\in \Omega$.
We recall the setting from \cite{GG04} and start with the stochastic differential equation (SDE)
\begin{equation}\label{eqn:sde:X}
\od X_t = \hat{\sigma}(X_t) \od W_t + \hat{b}(X_t) \od t 
\sptext{1}{with}{1} 
X_0\equiv x_0\in \R
\end{equation}
where
$0< \varepsilon_0 \leqslant \hat{\sigma} \in C_b^\infty(\R)$ for some constant $\ep_0$ and 
$\hat{b} \in C_b^\infty(\R)$
and where all paths of $X$ are assumed to be continuous.
From this equation we derive the SDE
\[ \od Y_t = \sigma(Y_t) \od W_t  
\sptext{1}{with}{1} 
Y_0\equiv y_0\in \defg \]
where two settings are used simultaneously:
\medskip
	
Case (C1): $Y:=X$ with $\sigma \equiv \hat{\sigma}$, $\hat{b}\equiv 0$, and $\defg := \R$.

Case (C2): $Y := \e^X$ with 
           $\sigma(y):= y \hat{\sigma}(\ln y)$,
           $\hat{b}(x) := -\frac{1}{2}\hat{\sigma}^2(x)$, and $\defg := (0, \infty)$.
\medskip

In both cases, we let $C_Y$ be the set of all Borel functions $g \colon \defg \to \R$ such that
\begin{equation}\label{eqn:CY}
\sup_{x\in \R} \e^{-m|x|} \int_\R |g(\alpha(x+ t y))|^2 \e^{-y^2} \od y < \infty 
	\sptext{1}{for all}{1} t>0
\end{equation}
	for some $m>0$, where $ \alpha(x)= x$ in the case (C1) and $ \alpha(x)= \e^x$ in the case (C2).
Under (C1) any polynomially bounded $g$ belongs to $C_Y$, under (C2) a bound
$|g(y)|\leqslant  A + B|y|$ gives $g\in C_Y$.
Let us denote by $(Y_s^{t,y})_{s\in [t,T]}$ the diffusion $Y$ started at time $t\in [0,T]$ in $y\in \defg$
and let us define, for $g \in C_Y$,
\[ G(t,y):= \E g(Y_T^{t,y}) 
   \sptext{1}{for}{1}
   (t,y)\in [0,T]\times \defg. \]

\begin{rema}
\label{statement:facts_from_GG04}
We collect some facts we shall use  and that hold in both cases, (C1) and (C2):
\begin{enumerate}[{\rm (A)}]
\item \label{item:1a:statement:facts_from_GG04} 
      $\| \sigma' \|_{B_b(\defg)} + \| \sigma \sigma{''} \|_{B_b(\defg)} < \infty$.
\item \label{item:1b:statement:facts_from_GG04} 
      In the case (C2) we have $\sigma(y)\sim_c y$ for $y\in\defg$ and some $c\geqslant 1$.
\item \label{item:2:statement:facts_from_GG04} 
      One has $G\in C^\infty([0,T)\times \defg)$ and 
      $\frac{\partial G}{\partial t} + \frac{\sigma^2}{2} \frac{\partial^2 G}{\partial y^2} = 0$
      on $[0,T)\times \defg$.
\item \label{item:3:statement:facts_from_GG04} 
      $\E \left [ |g(Y_T)|^2 +  \sup_{t\in [0,b]} \left | \left (\sigma \frac{\partial G}{\partial y} \right )(t,Y_t) \right |^2 \right ]< \infty$ for all
      $b\in [0,T)$.  
\item \label{item:4:statement:facts_from_GG04}
      The process $\left ( \left (\sigma^2 \frac{\partial^2 G}{\partial y^2}\right ) (t,Y_t) \right )_{t\in [0,T)}$ is an $L_2$-martingale.
\item \label{item:5:statement:facts_from_GG04}
      The process $X$ has a transition density $\Gamma_X$ in the sense of \cref{statement:friedman}.
\end{enumerate}
Items \eqref{item:1a:statement:facts_from_GG04} and \eqref{item:1b:statement:facts_from_GG04}
are obvious,
\eqref{item:2:statement:facts_from_GG04} is contained in \cite[Preliminaries]{GG04},
\eqref{item:3:statement:facts_from_GG04} follows from the definition of $C_Y$, \cref{statement:friedman}, and
\cite[Lemma 5.2]{GG04}, and \eqref{item:4:statement:facts_from_GG04} is \cite[Lemma 5.3]{GG04}.
\end{rema}
\medskip

This yields to the following setting:

\begin{setting} \label{set:application-BM}
In the notation of \cref{ass:random_measures_simplified} we set
\begin{enumerate}
\item \hspace*{.5em}$\sigma := (\sigma(Y_t))_{t\in [0,T]}$,
\item $M := \left (\int_0^t \left (\sigma^2 \frac{\partial^2 G}{\partial y^2}\right )(u,Y_u) \od W_u\right )_{t\in [0,T)}$
      (with $M_0\equiv 0$ and the continuity of all paths),
\item \hspace*{.2em} $\varphi := \left ( \frac{\partial G}{\partial y}(t,Y_t)\right )_{t\in [0, T)}$.
\end{enumerate}
\end{setting}    
\cref{statement:regularity_sigma_varphi}   and \cite[Corollary 3.3]{Ge02}
imply that \cref{ass:random_measures_simplified} is fulfilled.
To shorten the notation at some places we use
\[ Z_t := \sigma_t \varphi_t, \quad
   \varphi(t,y) := \frac{\partial G}{\partial y}(t,y), 
   \sptext{1}{and}{1}
   H_t  := \sigma_t^2 \frac{\partial^2 G}{\partial y^2}(t,Y_t)
   \sptext{1}{for}{1} (t,y) \in [0, T)\times \defg. \]
For $\alpha>0$ and $0\leqslant a \leqslant t <T$ we get
by \eqref{eqn:1:statement:RL_for_martinagles} and \eqref{eqn:squarefunction_RL}, a.s.,
\begin{equation}\label{eqn:cIalphaM_vs_Malpha}
\cI_t^\alpha M = \int_{(0,t]} \left ( \frac{T-u}{T} \right )^\alpha H_u \od W_u 
\sptext{1}{and}{1}
  \ce{\F_a}{\left | \cI_t^\alpha M - \cI_a^\alpha M \right |^2}
= \ce{\F_a}{\int_a^t \left ( \frac{T-u}{T} \right )^{2\alpha} H_u^2 \od u}.
\end{equation}
Moreover, as $M$ and $Z$ are continuous, by  \cref{statemant:RL_general}\eqref{item:5:statemant:RL_general}
$(\cI^\alpha_t M)_{t\in [0,T)}$ and $(\cI^\alpha_t Z)_{t\in [0,T)}$ are continuous as well and the $\bmo_p^\Phi([0,T))$ and
$\BMO_p^\Phi([0,T))$ are interchangeable when applied to these processes.

Denote by $E(g;\tau) = (E_t(g;\tau))_{t\in [0, T]}$ the error process resulting from the difference between 
the stochastic integral and its Riemann approximation associated with the time-net $\tau= \{t_i\}_{i=0}^n \in \mathcal T$, i.e.
\begin{align*}
E_t(g;\tau) := \int_{(0,t]} \varphi_s \od Y_s - \sum_{i=1}^n \varphi_{t_{i-1}}(Y_{t_i \wedge t} - Y_{t_{i-1} \wedge t})
               \sptext{1}{for}{.5} t \in [0, T].
\end{align*}
For any $0 \leqslant a \leqslant t \leqslant T$, we apply the conditional  It\^o's isometry to obtain that, a.s.,
\begin{align}\label{eq:err_vs_square_function}
    \ce{\F_a}{|E_t(g; \tau) - E_a(g; \tau)|^2} 
& = \ce{\F_a}{\int_a^t \left| \varphi_{u} - \sum_{i=1}^n \varphi_{t_{i-1}} \1_{(t_{i-1}, t_i]}(u)\right|^2 \sigma_u^2 \od u}
  = \ce{\F_a}{\intsqw{\varphi}{\sigma}{\tau}_t-\intsqw{\varphi}{\sigma}{\tau}_a}.
\end{align}
Using \cref{statement:bmo_determinstic} this implies, for $\Phi\in \CL^+([0,T))$, that

\begin{align}\label{eqn:equivalence_E_R}
\left \| \left ( E_t(g;\tau) \right )_{t\in [0,T)} \right \|_{\bmo_2^\Phi([0,T))}^2 
&= \left \| \left ( \intsqw{\varphi}{\sigma}{\tau} \right )_{t\in [0,T)} \right \|_{\bmo_1^{\Phi^2}([0,T))},
\end{align}
where $\intsqw{\varphi}{\sigma}{\tau}$ is given in \cref{ass:random_measures_simplified}. 
Moreover, $\bmo_2^\Phi([0,T))$ and $\bmo_1^{\Phi^2}([0,T))$ above can be replaced by 
$\BMO_2^\Phi([0,T))$ and $\BMO_1^{\Phi^2}([0,T))$, respectively, due to the path continuity of $E(g;\tau)$ and $\intsqw{\varphi}{\sigma}{\tau}$.
To be in accordance with the previous sections we use in  
\eqref{eqn:equivalence_E_R} the time interval $[0,T)$ instead of $[0,T]$.
\smallskip

\subsection{The results}
\label{subsec:results:BM}
In this section we formulate the results, they 
are verified in Section \ref{subsec:results:BM:proof}. 
The first result shows that all gradient processes   $(\varphi(t,Y_t))_{t\in [0,T)}$ have  
a large oscillation:

\begin{theo}
\label{statement:oscillation_brownian_case}
For $g\in C_Y$ the process $(\varphi_t)_{t\in [0,T)} = (\frac{\partial G}{\partial y}(t,Y_t))_{t\in [0,T)}$ 
is of maximal oscillation with constant $2$ in the sense of \cref{definition:oscillation}.
\end{theo}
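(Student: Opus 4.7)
The plan is to apply \cref{exam:oscillation_Markov_process} with the choice $H(t,y):=\varphi(t,y)=\partial_y G(t,y)$, which is continuous in $y$ for each fixed $t\in[0,T)$ by \cref{statement:facts_from_GG04}\eqref{item:2:statement:facts_from_GG04}. Granted the example's hypotheses, the combination of the example with \cref{statement:basic_lemma_oscillation} applied to $B:=\varphi_t-\varphi_0$ and $A:=\varphi_s-\varphi_0$ delivers $\|\varphi_t-\varphi_s\|_{L_\infty}\geqslant\tfrac{1}{2}\|\varphi_t-\varphi_0\|_{L_\infty}$ for every $0\leqslant s<t<T$, i.e.\ maximal oscillation with constant $2$. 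Hence the remaining work is to verify two hypotheses of the example: (a) $Y$ admits a strictly positive, jointly spatially continuous transition density $\Gamma_Y$; and (b) there exists $\Q\ll\p$ with $\E^\Q|\varphi(t,Y_t)|<\infty$ and $\E^\Q\varphi(t,Y_t)=\varphi(0,y_0)$ for every $t\in[0,T)$.

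For (a), \cref{statement:facts_from_GG04}\eqref{item:5:statement:facts_from_GG04} provides a transition density $\Gamma_X$ for $X$ via Friedman-type PDE estimates; uniform ellipticity $\hat\sigma\geqslant\varepsilon_0>0$ together with smoothness of $\hat\sigma,\hat b\in C_b^\infty(\R)$ yields that $\Gamma_X$ is $C^\infty$ in the spatial variables and strictly positive. In case (C1), $\Gamma_Y=\Gamma_X$ directly; in case (C2), $Y=\e^X$ and $\Gamma_Y$ is obtained from $\Gamma_X$ by the $C^\infty$-diffeomorphism $x\mapsto\e^x$ of $\R$ onto $(0,\infty)$, preserving positivity and joint continuity.

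For (b) the plan is a Girsanov change of measure. Differentiating the backward PDE $\partial_tG+\tfrac{\sigma^2}{2}\partial_{yy}^2G=0$ in $y$ and applying It\^o to $\varphi_t=\partial_yG(t,Y_t)$ (valid since $G\in C^\infty([0,T)\times\defg)$) yields
\[ \od\varphi_t=\sigma(Y_t)\,\partial_{yy}^2G(t,Y_t)\bigl(\od W_t-\sigma'(Y_t)\,\od t\bigr). \]
Since $\sigma'\in B_b(\defg)$ by \cref{statement:facts_from_GG04}\eqref{item:1a:statement:facts_from_GG04}, the martingale $\int_0^\cdot\sigma'(Y_s)\,\od W_s$ has bounded quadratic variation and hence sub-Gaussian tails, so Novikov's condition holds and
\[ \frac{\od\Q}{\od\p}:=\exp\!\Bigl(\int_0^T\sigma'(Y_s)\,\od W_s-\tfrac{1}{2}\int_0^T\sigma'(Y_s)^2\,\od s\Bigr) \]
defines an equivalent probability $\Q\sim\p$ whose Radon--Nikodym derivative lies in every $L_p(\p)$. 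Under $\Q$, $\tilde W_t:=W_t-\int_0^t\sigma'(Y_s)\,\od s$ is a Brownian motion and $\varphi_t=\varphi_0+\int_0^t\sigma(Y_s)\partial_{yy}^2G(s,Y_s)\,\od\tilde W_s$ is a $\Q$-local martingale. I then upgrade to a true $\Q$-martingale on each $[0,t_0]$ with $t_0<T$ by combining the $L_2(\p)$-integrability of the $\p$-driver $N_t=\int_0^t\sigma^2\partial_{yy}^2G(s,Y_s)\,\od W_s$ from \cref{statement:facts_from_GG04}\eqref{item:4:statement:facts_from_GG04} with H\"older's inequality and the $L_p(\p)$-moments of $\od\Q/\od\p$ to estimate $\E^\Q\int_0^{t_0}\sigma^2(\partial_{yy}^2G)^2\,\od s$; in case (C1) the lower bound $\sigma\geqslant\varepsilon_0$ suffices, while in case (C2) one additionally invokes $\sigma(y)\geqslant\varepsilon_0 y$ together with finite negative $\p$-moments of $Y=\e^X$ coming from the sub-Gaussian tails of $X$. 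This yields $\E^\Q\varphi_t=\varphi_0=\varphi(0,y_0)$ and $\E^\Q|\varphi_t|<\infty$ for every $t\in[0,T)$.

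The main obstacle I expect is the promotion from $\Q$-local to true $\Q$-martingale in case (C2): there $\sigma$ has only linear growth and the lower bound $\sigma(y)\geqslant\varepsilon_0y$ degenerates at $y=0$, so the integrability estimate requires controlling negative moments of $Y$ under $\Q$; this is handled by transferring $\p$-moment bounds on $\e^{-pX_t}$ (available since $X$ has sub-Gaussian tails under $\p$) to $\Q$ via H\"older, exploiting that $\od\Q/\od\p\in\bigcap_{p\geqslant1}L_p(\p)$.
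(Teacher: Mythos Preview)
Your proposal follows the same route as the paper: verify the hypotheses of \cref{exam:oscillation_Markov_process} by (a) using the Friedman transition density for $X$ (and its transform for $Y$ in case (C2)), and (b) constructing via Girsanov an equivalent measure $\Q$ under which $(\varphi_t)$ is a martingale. The paper packages step (b) as \cref{lemma:varphi_martingale}.

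There is one technical point where your execution needs adjustment. You propose to show $\E^\Q\int_0^{t_0}\sigma^2(\partial_{yy}^2G)^2\,\od s<\infty$ via H\"older, using that $\int_0^{t_0}H^2\,\od s\in L_1(\p)$ with $H=\sigma^2\partial_{yy}^2G$ and that $\od\Q/\od\p\in\bigcap_p L_p(\p)$. But H\"older then demands $\int_0^{t_0}H^2\,\od s\in L_{p'}(\p)$ for some $p'>1$, which \cref{statement:facts_from_GG04}\eqref{item:4:statement:facts_from_GG04} does not provide. The paper circumvents this by first invoking the Burkholder--Davis--Gundy inequalities, so that only $\E^{\Q}\bigl(\int_0^{t_0}\sigma^2(\partial_{yy}^2G)^2\,\od s\bigr)^{1/2}<\infty$ is needed; the square root then allows one to choose H\"older exponents $p,p',q,q'$ with $p'q'=2$, landing exactly on the available bound $\E^\p\int_0^{t_0}H^2\,\od s<\infty$ (the extra factor $\sup_u\sigma_u^{-1}$ is handled by a second H\"older step, using the negative moments of $Y$ you already identified for case (C2)). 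With this modification your argument goes through.
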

\bigskip

Now we discuss cases in which we get equivalences by choosing the weight $\Phi$ accordingly.
For $\theta=1$ we obtain a characterization in terms of Lipschitz functions that extends \cite[Theorem 8]{Geiss:05}:
\smallskip

\begin{theo}
\label{thm:pde_for_theta_equals_one}
For $g\in C_Y$ and $\Phi = \sigma$ the following assertions are equivalent:
\begin{enumerate}[\rm (1)]
\item \label{item:1:thm:pde_for_theta_equals_one}
      There exists a Lipschitz function $\tilde g \colon \defg \to \R$ such that $g=\tilde g$ a.e. on $\defg$
      with respect to the Lebesgue measure.
\item \label{item:2:thm:pde_for_theta_equals_one}
      There is a constant $c>0$ such that $\|E(g;\tau) \|_{\BMO_2^{\Phi}([0,T))} \leqslant c \sqrt{\| \tau\|_1}$ 
      for all $\tau \in \mathcal{T}$.
\end{enumerate}
\end{theo}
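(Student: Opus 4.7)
The plan is to reduce the equivalence to the abstract characterization \cref{equivalence_martingale} combined with \eqref{eqn:equivalence_E_R}, and then identify the resulting martingale-level conditions with the Lipschitz regularity of $g$ via PDE regularity. First, I would use \eqref{eqn:equivalence_E_R} together with the path-continuity of $E(g;\tau)$ and $\intsqw{\varphi}{\sigma}{\tau}$ to see that (2) is equivalent to $\|\intsqw{\varphi}{\sigma}{\tau}\|_{\BMO_1^{\sigma^2}([0,T))} \leqslant c^2 \|\tau\|_1$ uniformly in $\tau \in \cT$. Applying \cref{equivalence_martingale} with $\theta = 1$ and $\Phi = \sigma \cdot 1$ (so $\Psi \equiv 1$, trivially non-decreasing), together with \eqref{eqn:cIalphaM_vs_Malpha} specialized to $\alpha = 0$ (so that $\cI^0 M = M$), will identify (2) with the conjunction of \textbf{(a)} $M \in \bmo_2^\sigma([0,T))$, i.e.\ $\ce{\F_\rho}{\int_\rho^T H_u^2\, du} \leqslant c^2 \sigma_\rho^2$ a.s.\ for every stopping time $\rho\colon\Omega\to[0,T)$, and \textbf{(b)} $|\varphi_a - \varphi_s| \leqslant c(1 + \ln\tfrac{T-s}{T-a})$ a.s.\ for $0 \leqslant s < a < T$.

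For the direction (1) $\Rightarrow$ (a)+(b), I would assume $\tilde g$ is Lipschitz with constant $L$ and differentiate $G(t,y) = \E \tilde g(Y_T^{t,y})$ under the integral sign using the smooth positive transition density provided by \cref{statement:facts_from_GG04}\eqref{item:5:statement:facts_from_GG04}. This yields the uniform bound $|\varphi(t,y)| = |\partial_y G(t,y)| \leqslant L$ on $[0,T) \times \defg$, hence $|\varphi_a - \varphi_s| \leqslant 2L$ a.s., a bound strictly stronger than (b). For (a), the continuous $L_2$-martingale $Z_t := \sigma(Y_t)\varphi(t, Y_t)$ from the representation $g(Y_T) - \E g(Y_T) = \int_0^T Z_u \, \od W_u$ then satisfies $|Z_t| \leqslant L \sigma(Y_t)$, and since $\od Z_t = H_t\, \od W_t$, the conditional It\^o isometry gives
\[ \ce{\F_\rho}{\int_\rho^T H_u^2\, \od u} = \ce{\F_\rho}{|Z_T - Z_\rho|^2} \leqslant 4L^2\, \ce{\F_\rho}{\sup_{\rho \leqslant t \leqslant T} \sigma(Y_t)^2}, \]
which is bounded by $C L^2 \sigma(Y_\rho)^2$ once one knows $(\sigma(Y_t))_{t \in [0,T]} \in \cSM_2$; this is trivial in case (C1) as $\sigma$ is bounded below and above by \cref{statement:facts_from_GG04}\eqref{item:1a:statement:facts_from_GG04}, and in case (C2) it follows from $\sigma(y) \sim y$ (\cref{statement:facts_from_GG04}\eqref{item:1b:statement:facts_from_GG04}) combined with standard exponential moment bounds for $Y = \e^X$.

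For the converse (a)+(b) $\Rightarrow$ (1), I would exploit that (a) makes $Z = \sigma \varphi$ an $L_2$-closable, weighted BMO-type martingale with limit $Z_{T-}$ satisfying $\E|Z_{T-}|^2 \leqslant C\sigma_0^2$. A John--Nirenberg-type refinement adapted to weighted $\bmo_2^\sigma$ (in the spirit of \cite[Theorem 11]{Geiss:05}) should upgrade (a) to the pointwise bound $|Z_{T-}| \leqslant C\sigma(Y_T)$ a.s., which in turn yields $|\varphi(T-, Y_T)| \leqslant C$ a.s. Using the a.e.\ positivity of the Lebesgue density of $Y_T$ on $\defg$ (\cref{statement:facts_from_GG04}\eqref{item:5:statement:facts_from_GG04}) and the continuity of $\partial_y G(t, \cdot)$, this transfers to the pointwise bound $|\partial_y G(t, y)| \leqslant C$ uniformly on $[0,T) \times \defg$; passing $t \uparrow T$ produces a Lipschitz representative $\tilde g$ of $g$ with Lipschitz constant $C$.

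The main obstacle will be the converse direction (a)+(b) $\Rightarrow$ (1): one must convert a weighted stochastic $\bmo_2^\sigma$-estimate together with only a \emph{logarithmic} control (b) on $\varphi$ into a genuine pointwise $L_\infty$-bound on $\partial_y G$. The degeneracy $\sigma(y) \downarrow 0$ as $y \downarrow 0$ in case (C2), and the transfer from a stochastic bound on $Z_{T-}/\sigma(Y_T)$ to a pointwise bound on $\partial_y G$ across the whole state space $\defg$ via the density of $Y_T$, are the delicate steps; an alternative route, which I would pursue in parallel, is to argue contrapositively and use the lower bounds of \cref{thm:general_lower_bound} together with \cref{statement:oscillation_brownian_case} to rule out sub-Lipschitz behaviour of $g$.
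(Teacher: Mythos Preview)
Your reduction via \eqref{eqn:equivalence_E_R} and \cref{equivalence_martingale} with $\theta=1$, $\Psi\equiv 1$ to the pair \textbf{(a)} $M\in\bmo_2^\sigma([0,T))$ and \textbf{(b)} the logarithmic bound on $|\varphi_a-\varphi_s|$ is exactly the framework the paper uses. The gaps are in how you verify both directions.

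In the direction (1)$\Rightarrow$\textbf{(a)} your claim that $Z$ is a martingale with $\od Z_t=H_t\,\od W_t$ is false unless $\sigma$ is constant. By \cref{lemma:ito_brownian_case_new} with $\alpha=0$,
\[
Z_t = Z_0 + \int_0^t H_u\,\od W_u + \int_0^t \sigma'(Y_u)Z_u\,\od W_u + \tfrac12\int_0^t(\sigma\sigma'')(Y_u)Z_u\,\od u,
\]
so $Z$ carries an extra diffusion term and a drift, and $\ce{\F_\rho}{\int_\rho^T H_u^2\,\od u}\neq\ce{\F_\rho}{|Z_T-Z_\rho|^2}$. The paper isolates $\int H_u\,\od W_u$ from this identity and bounds the two remaining terms using $|Z_u|\leqslant c|g|_1\sigma_u$ (\cref{lemma:estimates_partialG}), the boundedness of $\sigma'$ and $\sigma\sigma''$ (\cref{statement:facts_from_GG04}\eqref{item:1a:statement:facts_from_GG04}), and $\sigma\in\cSM_2([0,T])$; this is \eqref{eqn:upper_bound_for_H_Lip}. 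Your argument is easily repaired along these lines, but as written it is incorrect.

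The converse is where the real gap lies. A John--Nirenberg argument upgrades $\bmo_r^\Phi$ to $\bmo_p^\Phi$ or yields exponential tails, but it cannot manufacture the \emph{pointwise} bound $|Z_{T-}|\leqslant C\sigma(Y_T)$ a.s.\ that you need. The paper does not use \textbf{(a)} here at all; it works directly with \textbf{(b)} in the form \eqref{eq:item:2:equivalence_martingale}, i.e.\ $\frac{T-a}{T-s}|\varphi_a-\varphi_s|^2\leqslant c^2$. For $a\in(T/2,T)$ one takes $s=2a-T$, so $(T-a)/(T-s)=1/2$ and hence $|\varphi(a,y_a)-\varphi(s,y_s)|\leqslant\sqrt2\,c$ for \emph{all} $y_a,y_s\in\defg$ (using $\supp(Y_s,Y_a)=\defg\times\defg$ from the positive continuous transition density and the continuity of $\varphi(t,\cdot)$). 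The decisive ingredient is \cref{lemma:varphi_martingale}: under an equivalent measure $\hat\p$ the process $(\varphi(t,Y_t))_{t\in[0,T)}$ is a martingale, so $\E^{\hat\p}\varphi(s,Y_s)=\varphi(0,y_0)$ and the intermediate value theorem yields $y_s\in\defg$ with $\varphi(s,y_s)=\varphi(0,y_0)$. This gives the uniform bound $|\partial_yG(a,y)|\leqslant|\varphi(0,y_0)|+\sqrt2\,c$, from which the Lipschitz representative is extracted. Your alternative via \cref{thm:general_lower_bound} and \cref{statement:oscillation_brownian_case} does not substitute for this: those results produce lower bounds on oscillation, not the forward implication from \textbf{(b)} to uniform boundedness of $\partial_yG$.
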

\bigskip

Before we investigate \cref{thm:pde_for_theta_equals_one} for H\"older functions 
we show that the process $(\cI^\alpha_t Z)_{t\in [0,T)}$ shares the limit behaviour of a martingale.
To do so and for later purpose we use the following a-priori condition:
given $\Phi \in \CL^+([0,T))$, there is  $c_\eqref{eq:a_priori_asumption_phi_new}>0$
such that 
\begin{equation}\label{eq:a_priori_asumption_phi_new}\tag{$C_\Phi$}
            (T-t)^{\frac{1}{2}} |Z_t| 
 \leqslant c_\eqref{eq:a_priori_asumption_phi_new}  \Phi_t \mbox{ a.s.}
  \sptext{1}{for}{.75} t\in [0,T).
\end{equation}
In the cases, we are interested in, this condition will be satisfied and corresponds to a known
a-priori condition from the theory of parabolic PDEs.
We will also use the condition $\Phi \in \cSM_2([0,T))$ with $\sup_{t\in [0,T)} \Phi_t \in L_q$ for some 
$q\in [2,\infty)$ and want to note that $\sup_{t\in [0,T)} \Phi_t \in L_q$ is redundant if $q=2$ 
as this follows from $\Phi \in \cSM_2([0,T))$.

\medskip

\begin{theo}
\label{statement:Z_like_a_martingale}
Let 
$(\alpha,q)\in (0,\infty) \times [2,\infty)$,
$\Phi \in \cSM_2([0,T))$ with $\sup_{t\in [0,T)} \Phi_t \in L_q$, and 
$g\in C_Y$ such that \eqref{eq:a_priori_asumption_phi_new} is satisfied. If
$\cI^\alpha  Z - Z_0\in \BMO^\Phi_2([0,T))$,
then 
\[ \cI_T^\alpha Z := \lim_{t \uparrow T} \cI_t^\alpha Z
   \sptext{1}{exists in}{1} L_q \quad \mbox{and a.s.}\]
\end{theo}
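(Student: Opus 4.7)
The plan is to write $L_t := \cI_t^\alpha Z - Z_0$ as the sum of a continuous local martingale and an absolutely continuous finite-variation process, then to control each piece separately: the drift via \eqref{eq:a_priori_asumption_phi_new}, and the martingale part via the BMO hypothesis combined with weighted John–Nirenberg estimates from the appendix.

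First, by \cref{statement:facts_from_GG04}\eqref{item:2:statement:facts_from_GG04} and It\^o's formula applied to $\psi(t,y):=\sigma(y)\partial_y G(t,y)$ at $(t,Y_t)$, the cancellations produced by the backward PDE yield the semimartingale decomposition $\od Z_t = \od N_t + b_t\,\od t$ with $N$ a continuous local martingale and $b_u := \tfrac{1}{2}(\sigma^2 \sigma'')(Y_u)\,\varphi(u,Y_u)$. Integration by parts applied to $\bigl(\tfrac{T-t}{T}\bigr)^\alpha Z_t$ (the same computation that produces \eqref{eqn:1:statement:RL_for_martinagles} in the martingale case) then gives, for each $t\in[0,T)$,
\[
L_t = \int_{(0,t]} \Bigl(\tfrac{T-u}{T}\Bigr)^\alpha \od N_u + \int_0^t \Bigl(\tfrac{T-u}{T}\Bigr)^\alpha b_u\,\od u =: N^\alpha_t + V^\alpha_t,
\]
with $N^\alpha$ a continuous local martingale and $V^\alpha$ absolutely continuous.

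For the drift piece $V^\alpha$: in case (C1), $\sigma=\hat{\sigma}\in C_b^\infty$ with $\hat\sigma\geqslant\ep_0$; in case (C2), a direct computation from $\sigma(y)=y\hat\sigma(\ln y)$ gives $\sigma(y)^2\sigma''(y) = y\hat\sigma^2(\hat\sigma'+\hat\sigma'')(\ln y)$. In both cases $\bigl|(\sigma^2\sigma''/\sigma)(Y_u)\bigr|\leqslant C$ uniformly, hence $|b_u|\leqslant C|Z_u|$, and \eqref{eq:a_priori_asumption_phi_new} gives $|b_u|\leqslant C c_\eqref{eq:a_priori_asumption_phi_new} (T-u)^{-1/2}\Phi_u$. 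Since $\alpha>0$, the deterministic integral $\int_0^T (T-u)^{\alpha-1/2}\,\od u$ is finite, so
\[
\sup_{t\in[0,T)}|V^\alpha_t| \leqslant C' \sup_{u\in[0,T)}\Phi_u.
\]
The assumption $\sup_u\Phi_u\in L_q$ then places this supremum in $L_q$, and dominated convergence yields convergence of $V^\alpha_t$ to $V^\alpha_T := \int_0^T ((T-u)/T)^\alpha b_u\,\od u$ both almost surely and in $L_q$.

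For the martingale part $N^\alpha = L - V^\alpha$, a weighted John–Nirenberg theorem for $\BMO_p^\Phi([0,T))$ (see \cref{sec:general_properties_BMO}, cf.\ \cite{Geiss:05}), combined with $\Phi\in\cSM_2([0,T))$ and $\sup_t\Phi_t\in L_q$, yields $\bigl\|\sup_{t\in[0,T)}|L_t|\bigr\|_{L_q}<\infty$; together with the previous step this gives $\sup_{t\in[0,T)}|N^\alpha_t|\in L_q$. Since $N^\alpha$ is a continuous local martingale dominated by an $L_q$-variable with $q\geqslant 2$, it is a uniformly integrable, $L_q$-bounded martingale, and classical martingale convergence provides $N^\alpha_T\in L_q$ with $N^\alpha_t\to N^\alpha_T$ both a.s.\ and in $L_q$. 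Summing the two limits gives $\cI_t^\alpha Z = L_t + Z_0 = N^\alpha_t+V^\alpha_t+Z_0 \to N^\alpha_T+V^\alpha_T+Z_0$ a.s.\ and in $L_q$. The main obstacle is the unified verification of the drift bound $|b_u|\leqslant C|Z_u|$ across both cases (C1) and (C2), together with the stochastic integration by parts producing $L=N^\alpha+V^\alpha$; everything else is a routine combination of the weighted John–Nirenberg estimate with standard martingale convergence.
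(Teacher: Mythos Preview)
Your proof is correct and relies on the same ingredients as the paper: the It\^o decomposition of $\cI^\alpha Z$ (the paper's \cref{lemma:ito_brownian_case_new}), the a~priori bound \eqref{eq:a_priori_asumption_phi_new} to control the drift, and the weighted John--Nirenberg estimate of \cref{statement:Lp-BMO_new} to pass from $\BMO_2^\Phi$ to $L_q$. The paper routes the argument through the auxiliary martingale $\cI^\alpha M$ via \cref{statment:IZ_vs_HdW_new}, while you group both stochastic-integral pieces into a single continuous local martingale $N^\alpha$ and argue its convergence directly --- a slightly more direct packaging of the same proof.
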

\medskip

For $\theta\in (0,1)$ we obtain in a first step as counterpart to \cref{thm:pde_for_theta_equals_one} 
a general equivalence in terms of the Riemann--Liouville type integral (introduced in \cref{sec:RL-operators}) 
of the gradient process:
\medskip

\begin{theo}
\label{statement:brownian_case_theta_new}
Let $\theta\in (0,1)$, $\Phi=(\sigma_t \Psi_t)_{t\in [0,T)} \in \cSM_2([0,T))$ where 
$\Psi\in \CL^+([0,T))$ is path-wise non-decreasing,
and assume the a-priori estimate \eqref{eq:a_priori_asumption_phi_new}.
Then the following is equivalent:
\smallskip

\begin{enumerate}[\rm (1)]
\item \label{item:1:statement:brownian_case_theta_new}
      One has  $\cI^\frac{1-\theta}{2} Z - Z_0\in \BMO^\Phi_2([0,T))$ and there is a constant $c>0$ such that
      \begin{equation}\label{eq:item:1:statement:brownian_case_theta_new}
                  (T-t)^{\frac{1-\theta}{2}} |\varphi_t-\varphi_0|
         \leqslant c \Psi_t \quad {\mbox{a.s.}}  \sptext{1}{for}{.75} t\in [0,T).
      \end{equation}
\item \label{item:2:statement:brownian_case_theta_new}
      There is a constant $c>0$ such that 
      $\|E(g;\tau)\|_{\BMO_2^{\Phi}([0,T))}  
	 \leqslant c \sqrt{\| \tau\|_\theta}$ for all $\tau \in \mathcal{T}$.
\end{enumerate}
\end{theo}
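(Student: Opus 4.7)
The plan is to translate condition (2) into the framework of \cref{equivalence_martingale} via the isometry \eqref{eqn:equivalence_E_R}, and then to bridge from the martingale $M$ to the semi-martingale $Z$ by means of It\^o's formula. First, since $E(g;\tau)$ and $\intsqw{\varphi}{\sigma}{\tau}$ have continuous paths, $\bmo$ and $\BMO$ coincide on them, so \eqref{eqn:equivalence_E_R} translates (2) into $\|\intsqw{\varphi}{\sigma}{\tau}\|_{\BMO_1^{\Phi^2}([0,T))} \leqslant c^2 \|\tau\|_\theta$ for all $\tau\in\cT$. Because $\Phi = \sigma\Psi$ with $\Psi$ non-decreasing, \cref{equivalence_martingale}, together with its simplification \eqref{eq:equivalence_local_part_theta(0,1)} applicable since $\theta<1$, identifies this with the conjunction of $\cI^\alpha M \in \BMO_2^\Phi([0,T))$ (using continuity of $\cI^\alpha M$ to interchange $\bmo$ and $\BMO$) and the pointwise bound \eqref{eq:item:1:statement:brownian_case_theta_new}, where $\alpha:=(1-\theta)/2$. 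As this pointwise bound is precisely the second half of (1), the remaining task is to show that under it $\cI^\alpha M \in \BMO_2^\Phi$ is equivalent to $\cI^\alpha Z - Z_0 \in \BMO_2^\Phi$.

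Applying It\^o's formula to $Z_t = \sigma(Y_t)\,\varphi(t,Y_t)$ and exploiting the PDE of \cref{statement:facts_from_GG04}\eqref{item:2:statement:facts_from_GG04} (differentiated once in $y$ to compute the drift of $\varphi$), I obtain the decomposition $Z - Z_0 = M + N + A$, where $N$ is the square-integrable martingale $N_t := \int_0^t \varphi_u\,\sigma(Y_u)\sigma'(Y_u)\,\od W_u$ and $A$ is the absolutely continuous process $A_t := \tfrac{1}{2}\int_0^t \sigma^2(Y_u)\sigma''(Y_u)\,\varphi_u\,\od u$. Linearity of $\cI^\alpha$ and the partial-integration identity \eqref{eqn:1:statement:RL_for_martinagles}, which extends verbatim from martingales to semi-martingales started at zero, give $\cI^\alpha Z - Z_0 = \cI^\alpha M + \cI^\alpha N + \cI^\alpha A$, with the explicit representation $\cI^\alpha_t N - \cI^\alpha_\rho N = \int_\rho^t \left(\tfrac{T-u}{T}\right)^\alpha \varphi_u\,\sigma(Y_u)\sigma'(Y_u)\,\od W_u$.

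The heart of the argument is then to verify $\cI^\alpha N,\cI^\alpha A \in \BMO_2^\Phi([0,T))$. Substituting $\varphi\sigma = Z$ (valid since $\sigma>0$), invoking the boundedness of $\sigma'$ and $\sigma\sigma''$ from \cref{statement:facts_from_GG04}\eqref{item:1a:statement:facts_from_GG04}, and inserting the a-priori bound \eqref{eq:a_priori_asumption_phi_new} in the form $Z_u^2 \leqslant c_{\eqref{eq:a_priori_asumption_phi_new}}^2 \Phi_u^2/(T-u)$, a conditional It\^o isometry yields
\[ \ce{\F_\rho}{|\cI^\alpha_t N - \cI^\alpha_\rho N|^2} \leqslant C \ce{\F_\rho}{\int_\rho^T (T-u)^{2\alpha-1} \Phi_u^2 \od u} \leqslant C' \ce{\F_\rho}{\sup_{u\in[\rho,T)} \Phi_u^2} \leqslant C'' \Phi_\rho^2 \mbox{ a.s.,} \]
where the last step uses $\Phi\in\cSM_2([0,T))$; an analogous pathwise computation handles $\cI^\alpha A$ via the kernel $(T-u)^{\alpha-1/2}$. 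Both implications of $(1)\Leftrightarrow(2)$ follow at once. The main obstacle is this weight bookkeeping: the integrand of $N$ carries the unbounded factor $\varphi$, which must be rewritten as the controllable ratio $Z/\sigma$ before \eqref{eq:a_priori_asumption_phi_new} can be invoked, and the resulting kernel $(T-u)^{2\alpha-1}$ is integrable on $[0,T)$ precisely because $\alpha>0$, i.e.\ because $\theta<1$, which is what separates this statement from the Lipschitz endpoint $\theta=1$ of \cref{thm:pde_for_theta_equals_one}.
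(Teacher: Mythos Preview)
Your proof is correct and follows the same architecture as the paper: reduce (2) to the $\cI^\alpha M$-condition plus the pointwise bound via \cref{equivalence_martingale} and \eqref{eq:equivalence_local_part_theta(0,1)}, then show $\cI^\alpha M\in\BMO_2^\Phi \Leftrightarrow \cI^\alpha Z-Z_0\in\BMO_2^\Phi$. The only difference is organizational: the paper cites \cref{statment:IZ_vs_HdW_new}\eqref{it:a} for this last equivalence (whose proof applies It\^o directly to $(T-t)^\alpha Z_t$ via \cref{lemma:ito_brownian_case_new} and arrives at the same representation \eqref{eqn:representation_Z} you obtain), whereas you reprove it inline by first decomposing $Z-Z_0=M+N+A$ and then applying $\cI^\alpha$ termwise --- a cosmetic reordering that lands on the identical estimate. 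One small remark: the equivalence $\cI^\alpha M\in\BMO_2^\Phi \Leftrightarrow \cI^\alpha Z-Z_0\in\BMO_2^\Phi$ does not actually require the pointwise bound \eqref{eq:item:1:statement:brownian_case_theta_new} (your ``under it'' is unnecessary), only the standing hypotheses $\Phi\in\cSM_2$ and \eqref{eq:a_priori_asumption_phi_new}, as your own computation shows.
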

\bigskip 

The next result takes us finally to the H\"older functions that form a natural class of functions 
that can be used in \cref{statement:brownian_case_theta_new}. 
\medskip

\begin{theo}
\label{statement:Hoelder_new_new}
For $(\theta,p) \in [0,1]\times (0,\infty)$ and $g\in \hoel{\theta}$ one has $g|_\defg\in C_Y$ and it holds:
\begin{enumerate}[\rm(1)]
\item \label{item:1:statement:Hoelder_new_new} 
      $\sigma(\theta):=(\sigma_t^\theta)_{t\in [0,T)} \in \cSM_p([0,T))$.
\item \label{item:2:statement:Hoelder_new_new}
      There is a constant $c>0$ such that $(T-t)^{\frac{1-\theta}{2}} |\varphi_t| \leqslant c \sigma_t^{\theta-1}$ a.s. for all $t \in [0,T)$.
\item \label{item:3:statement:Hoelder_new_new}
      If $\theta\in (0,1)$ and $g\in \hoelO{\theta,2}$, then we have the following:
      \begin{enumerate}[{\rm (a)}]
      \item \label{item:3a:statement:Hoelder_new_new}
            $\cI^\frac{1-\theta}{2} Z -Z_0\in \BMO_p^{\sigma(\theta)}([0,T))$.
      \item \label{item:3b:statement:Hoelder_new_new}
            There is a constant $c>0$ such that for all $\tau\in \cT$ one has 
            \[ \| E(g;\tau)\|_{\BMO_2^{\Phi(\tau,\theta)}([0,T))} \leqslant c \sqrt{\| \tau\|_\theta}
               \sptext{1}{with}{1} \Phi(\tau,\theta):=(\sigma_a^\theta+\sigma_{\underline{a}(\tau)}^{\theta-1}\sigma_a)_{a\in [0,T)}\in \CL^+([0,T))\]
            where for $a\in [t_{k-1},t_k)$ we let $\underline{a}(\tau) := t_{k-1}$. 
      \end{enumerate}
\end{enumerate}
\end{theo}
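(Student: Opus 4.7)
My plan is to handle the three parts in order and to obtain (3)(b) from (2), (3)(a), and the upper bound of \cref{sec:random_measures}.

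For part (1) I split into the two cases of \cref{set:application-BM}. In case (C1), $\sigma_t = \hat\sigma(X_t)$ is uniformly bounded above by $\|\hat\sigma\|_{B_b(\R)}$ and below by $\varepsilon_0$, so $\sigma^\theta$ is bounded above and below and trivially in $\cSM_p$. In case (C2), \cref{statement:facts_from_GG04}\eqref{item:1b:statement:facts_from_GG04} gives $\sigma(y) \sim_c y$, hence $\sigma_t^\theta \sim Y_t^\theta$; the $\cSM_p$ property then reduces to a uniform bound $\E^{\F_\rho}\sup_{t\geqslant\rho} e^{\theta p(X_t-X_\rho)} \leqslant c$ a.s., which is standard for a diffusion with bounded smooth coefficients (BDG and Gronwall applied to $e^{\theta p X}$).

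Part (2) is a parabolic gradient estimate. Writing
\begin{equation*}
\partial_y G(t,y) = \int \bigl(g(y')-g(y)\bigr)\,\partial_y \Gamma_Y(t,T;y,y')\,\od y'
\end{equation*}
and using Hölder continuity of $g$ together with derivative bounds on $\Gamma_X$ supplied by \cref{statement:facts_from_GG04}\eqref{item:5:statement:facts_from_GG04} yields the estimate in case (C1); case (C2) reduces to (C1) via $Y=e^X$, where the change of variables produces the $\sigma(y)^{\theta-1}$ factor.

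Part (3)(a) is the crux. Itô's formula applied to $Z_s = \sigma(Y_s)\partial_y G(s,Y_s)$, together with the backward PDE for $G$ from \cref{statement:facts_from_GG04}\eqref{item:2:statement:facts_from_GG04}, gives a semimartingale decomposition $\od Z_s = \tilde H_s\,\od W_s + \tilde b_s\,\od s$ with $\tilde H = H + \sigma\sigma'\varphi$ and $\tilde b = \tfrac12 \sigma^2\sigma''\varphi$. Combining this with a stochastic Fubini argument analogous to \eqref{eqn:1:statement:RL_for_martinagles} yields the key identity
\begin{equation*}
\cI_t^\alpha Z - \cI_\rho^\alpha Z
= \int_\rho^t \Bigl(\tfrac{T-s}{T}\Bigr)^\alpha \tilde H_s\,\od W_s
+ \int_\rho^t \Bigl(\tfrac{T-s}{T}\Bigr)^\alpha \tilde b_s\,\od s
\end{equation*}
for $\alpha=(1-\theta)/2$ and any stopping time $\rho\leqslant t$. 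Squaring, conditioning on $\F_\rho$, and bounding $|\tilde H|^2 \leqslant 2 H^2 + 2|\sigma\sigma'\varphi|^2$: the lower-order contributions ($|\sigma\sigma'\varphi|$ and $|\tilde b|$) are controlled via (2) and \cref{statement:facts_from_GG04}\eqref{item:1a:statement:facts_from_GG04},\eqref{item:1b:statement:facts_from_GG04}, which together with $\sigma^\theta \in \cSM_p$ (from (1)) give $\sigma_\rho^{2\theta}$-type bounds. The main contribution, the martingale $\cI^\alpha M$, I control by a weighted, conditional variant of \cref{statement:properties_Besov-spaces}\eqref{item:2:statement:properties_Besov-spaces}: the $\hoelO{\theta,2}$-regularity of $g$ produces a pointwise bound on $\|H_t\|_{L_\infty}$ analogous to (2) that makes the relevant $\B_{\infty,2}^\alpha$-type integral finite, and the weight $\sigma^\theta$ comes out by combining this estimate with $\sigma^\theta \in \cSM_p$. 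Finally, (3)(b) follows by feeding (3)(a) and (2) into \eqref{eq:item:1:statemtent:equivalence_simplified_setting_new} and using \eqref{eq:err_vs_square_function}, with the two terms on the right of \eqref{eq:item:1:statemtent:equivalence_simplified_setting_new} producing respectively the $\sigma_a^\theta$ and $\sigma_{\underline{a}(\tau)}^{\theta-1}\sigma_a$ summands in $\Phi(\tau,\theta)$.

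The main obstacle will be the weighted martingale estimate on $\cI^\alpha M - \cI^\alpha_a M$ in the $\BMO_p^{\sigma(\theta)}$ norm: \cref{statement:properties_Besov-spaces}\eqref{item:2:statement:properties_Besov-spaces} is stated only for the unweighted $\BMO_2$, and adapting the argument to the weight $\sigma^\theta$ requires extracting from the $\hoelO{\theta,2}$-regularity of $g$ a pointwise Hölder-type bound on $H$ that is compatible with the $\cSM_p$ structure of $\sigma^\theta$.
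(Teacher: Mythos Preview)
Your treatment of (1), (2), and the deduction of (3)(b) from (3)(a) and (2) via \eqref{eq:item:1:statemtent:equivalence_simplified_setting_new} is essentially the paper's argument. The gap is in (3)(a), and it is exactly the obstacle you flag at the end.

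The problem is that no pointwise bound of the form you describe exists in case (C2). For the Lipschitz endpoint, the second-derivative analogue of \cref{lemma:estimates_partialG} gives only $|H_u|\leqslant c\,|g|_1\,(T-u)^{-1/2}\sigma_u$, so $\|H_u\|_{L_\infty}=\infty$ in general; for the bounded endpoint one gets $|H_u|\leqslant c\,\|g\|_{B_b}(T-u)^{-1}$ with no $\sigma$-factor. Thus the map $g\mapsto \sigma^{-\theta}H(u,\cdot)$ is unbounded on \emph{both} endpoints, and no interpolation can produce $|H_u|\leqslant h(u)\,\sigma_u^\theta$ with $\int_0^T (T-u)^{1-\theta}h(u)^2\,\od u<\infty$. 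Even if you had an unweighted bound $\int_0^T(T-u)^{1-\theta}\|H_u\|_{L_\infty}^2\,\od u<\infty$, the step ``the weight $\sigma^\theta$ comes out by combining this with $\sigma^\theta\in\cSM_p$'' fails: $\cSM_p$ gives conditional \emph{upper} bounds on $\sigma$, but $\sigma_a^\theta$ can be arbitrarily close to $0$ in (C2), so an unweighted BMO bound cannot imply a $\sigma(\theta)$-weighted one.

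The paper avoids this by interpolating at the level of the conditional $L_2$-integral itself. For $g$ bounded one has $\int_A\!\int_a^T(T-u)H_u^2\,\od u\,\od\p_A\leqslant c_0^2\|g\|_{B_b}^2$ (via \eqref{eqn:ass:random_measures} and \cref{lemma:estimates_partialG} at $\theta=0$), and for $g$ Lipschitz $\int_A\!\int_a^T H_u^2\,\od u\,\od\p_A\leqslant c_1^2|g|_1^2\int_A\sigma_a^2\,\od\p_A$ (this is \eqref{eqn:upper_bound_for_H_Lip}, where the weight $\sigma_a^2$ first appears). Stein--Weiss interpolation of the time-weight $(T-u)^w$ then gives, for $g\in\hoelO{\theta,2}$, a bound by $(\int_A\sigma_a^2\,\od\p_A)^\theta$. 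A final level-set decomposition of $A$ according to the value of $\sigma_a$ converts this to $\int_A\sigma_a^{2\theta}\,\od\p_A$, which is the conditional bound $\ce{\F_a}{\int_a^T(T-u)^{1-\theta}H_u^2\,\od u}\leqslant c\,\sigma_a^{2\theta}$ needed for $\BMO_2^{\sigma(\theta)}$; the extension to $\BMO_p^{\sigma(\theta)}$ then uses $\sigma(\theta)\in\cSM_p$ and \cref{statement:Lp-BMO_new}.
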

\medskip

\begin{rema}
\label{rema:statement:Hoelder_new_new}
In the case (C1) the weight $\Phi(\tau,\theta)$ in \cref{statement:Hoelder_new_new} behaves like a constant. For the case (C2),
relevant for option pricing, we have the following:
\begin{enumerate}[{\rm (1)}]
\item For $a\in [0,T)\cap \tau$ it holds
\begin{align*} 
  \sup_{t\in [a,T)} \Phi_t(\tau,\theta) 
& \leqslant  2 \sigma_a^\theta   \sup_{a \leqslant u \leqslant t < T} 
      \left \{ \left (\frac{\sigma_u}{\sigma_a}\right )^{\theta-1} \frac{\sigma_t}{\sigma_a} \right \} 
  \leqslant c \sigma_a^\theta \e^{2 \sup_{t\in [a,T]} \left | \int_{[a,t]} \widehat{\sigma}(X_u) \od W_u\right |} 
  =: c \sigma_a^\theta R_a,
\end{align*}
where $c>0$ can be chosen to depend on $\widehat{\sigma}$, $\widehat{b}$, and $T$ only. Moreover, 
using (the arguments of) \cite[Lemma 6.2(ii)]{GY19} for the exponent in $R_a$, there is an $A=A(\widehat{\sigma},T)>0$ such that
\begin{equation}\label{eqn:tail_RAa}
   \p_{\F_a}(R_a > \lambda) \leqslant A \e^{-(\frac{\ln \lambda}{A})^2} \mbox{ a.s.}
   \sptext{1}{for}{1} \lambda \geqslant 1.
\end{equation}

\item 
For $\Phi=(\sigma_t \Psi_t)_{t\in [0,T)}$ with 
      $\Psi_t:= \sup_{s\in [0,t]} (\sigma_s^{\theta-1})$ one has 
      $2 \Phi_t \geqslant \Phi_t(\tau,\theta) 
                \geqslant \sigma_t^\theta$ and  $\Phi \in \cSM_q([0,T))$.
Therefore \cref {statement:Hoelder_new_new} is applicable to \cref{statement:brownian_case_theta_new}.
The fact $\Phi \in \cSM_q([0,T))$ follows from 
\cref{statement:regularity_sigma_varphi}\eqref{item:1:statement:regularity_sigma_varphi}
\end{enumerate}
\end{rema}

\subsection{Preparations to prove the results of Section \ref{subsec:results:BM}} 
\label{subsec:results:BM:proof}
We collect some lemmas we need.
\smallskip

\begin{lemm}
\label{statement:abstract:pde_new}
Assume  that $\theta\in (0,1]$, $g\in C_Y$, $\tau\in \cT$, and $\Phi \in \CL^+([0, T))$ such that, for $a\in [0,T)$,
\[  \ce{\F_a}{\sup_{t\in [a,T)} \left |\cI_t^{\frac{1-\theta}{2}} M- \cI_a^{\frac{1-\theta}{2}} M \right |^2}
    + \frac{T-a}{(T-\underline{a}(\tau))^\theta} \left | \varphi_a-\varphi_{\underline{a}(\tau)} \right |^2\sigma^2_a
      \leqslant \Phi_a^2 \quad \mbox{a.s.}
\]
Then $\|E(g;\tau)\|_{\BMO_2^{\Phi}([0,T))}\leqslant  \sqrt{c_\eqref{eq:item:1:statemtent:equivalence_simplified_setting_new}}  \sqrt{\| \tau \|_\theta}$, 
where $c_\eqref{eq:item:1:statemtent:equivalence_simplified_setting_new}>0$ is taken
from inequality \eqref{eq:item:1:statemtent:equivalence_simplified_setting_new}.
\end{lemm}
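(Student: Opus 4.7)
The plan is to chain two inputs already supplied in the excerpt: identity \eqref{eqn:equivalence_E_R} that converts the target $\BMO_2^\Phi$-bound on the error process into a $\BMO_1^{\Phi^2}$-bound on the square function $\intsqw{\varphi}{\sigma}{\tau}$, and the upper bound \eqref{eq:item:1:statemtent:equivalence_simplified_setting_new} of \cref{statemtent:equivalence_simplified_setting_new}, which controls exactly the conditional expectation in question by the bracket that the lemma's hypothesis dominates by $\Phi_a^2$.

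More precisely, I would first note that in the present setting both $E(g;\tau)$ and $\intsqw{\varphi}{\sigma}{\tau}$ have continuous paths---the former as the difference between a continuous stochastic integral against $Y$ and a continuous Riemann sum, the latter by absolute continuity of a Lebesgue integral in its upper bound. Hence $\bmo$ and $\BMO$ coincide for each, and \eqref{eqn:equivalence_E_R} gives
\[
\|E(g;\tau)\|_{\BMO_2^\Phi([0,T))}^2 \,=\, \|\intsqw{\varphi}{\sigma}{\tau}\|_{\BMO_1^{\Phi^2}([0,T))}.
\]

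The next step is to invoke \cref{statement:bmo_determinstic} to reduce the $\BMO_1^{\Phi^2}$-norm of the non-decreasing, continuous process $\intsqw{\varphi}{\sigma}{\tau}$ to conditional estimates at deterministic $a\in[0,T)$. Since $t\mapsto\intsqw{\varphi}{\sigma}{\tau}_t$ is non-decreasing, we may replace the supremum over $t\in[a,T)$ by the limit at $T$, so it suffices to show, for all $a\in[0,T)$,
\[
\ce{\F_a}{\intsqw{\varphi}{\sigma}{\tau}_T - \intsqw{\varphi}{\sigma}{\tau}_a} \,\leqslant\, c_{\eqref{eq:item:1:statemtent:equivalence_simplified_setting_new}}\,\|\tau\|_\theta\,\Phi_a^2 \quad \mbox{a.s.}
\]
But this is precisely \eqref{eq:item:1:statemtent:equivalence_simplified_setting_new}, applicable because \cref{set:application-BM} fulfils \cref{ass:random_measures_simplified}, once the bracket on the right-hand side of \eqref{eq:item:1:statemtent:equivalence_simplified_setting_new} is dominated by $\Phi_a^2$ using the standing hypothesis. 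Taking square roots then yields the claimed constant $\sqrt{c_{\eqref{eq:item:1:statemtent:equivalence_simplified_setting_new}}}$.

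I do not foresee any real technical obstacle: both ingredients are already in place, and the proof is a transparent packaging. The one step deserving explicit care is the reduction from the stopping-time definition of $\BMO$ to deterministic times via \cref{statement:bmo_determinstic}, which relies crucially on the continuous, non-decreasing character of $\intsqw{\varphi}{\sigma}{\tau}$; this is the point I would verify in detail rather than cite in passing.
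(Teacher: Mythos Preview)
Your proposal is correct and follows exactly the route the paper takes: the paper's proof is the single sentence ``The statement follows directly from \eqref{eqn:equivalence_E_R} and inequality \eqref{eq:item:1:statemtent:equivalence_simplified_setting_new},'' and you have unpacked precisely those two ingredients, together with the continuity observation (already noted just after \eqref{eqn:equivalence_E_R}) that lets one pass between $\bmo$ and $\BMO$.
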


\begin{proof}
The statement follows directly from \cref{eqn:equivalence_E_R} and
inequality \eqref{eq:item:1:statemtent:equivalence_simplified_setting_new}.
\end{proof}

\begin{lemm}
\label{statement:regularity_sigma_varphi}
The following assertions hold true:
\begin{enumerate}[{\rm (1)}]
\item \label{item:1:statement:regularity_sigma_varphi}
      In the case (C2) one has 
      $(Y_t^{\beta_0} (Y^{\beta_1})^*_t)_{t\in [0,T]}\in \cSM_p([0,T])$ for $p\in (0,\infty)$ and $\beta_0,\beta_1\in\R$.
\item \label{item:2:statement:regularity_sigma_varphi}
      There is a constant $c_{\eqref{eqn:statement:reverse_hoelder_for_sigma}}>0$ such that, 
      for all $0\leqslant a < b \leqslant T$,
      \begin{equation}\label{eqn:statement:reverse_hoelder_for_sigma}
      \ce{\F_a}{\frac{1}{b-a}\int_a^b \sigma_u^2 \od u} \sim_{c_\eqref{eqn:statement:reverse_hoelder_for_sigma}^2} \sigma_a^2
      \quad\mbox{ a.s.}
      \end{equation}	
\item \label{item:3:statement:regularity_sigma_varphi}
      For $g\in C_Y$ one has $\E \sup_{u\in [a,T]} |\varphi_a \sigma_u|^2 <\infty$ 
      for $a\in [0,T)$.
\end{enumerate}
\end{lemm}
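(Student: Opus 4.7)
The plan is to reduce all three parts to a single analytic ingredient, a uniform-in-$\rho$ conditional exponential moment
\begin{equation*}
  \esssup_{\omega\in\Omega}\ce{\F_\rho}{\e^{\lambda\sup_{t\in[\rho,T]}|X_t-X_\rho|}}\leqslant c(\lambda,T,\hat\sigma,\hat b)<\infty
\end{equation*}
for every stopping time $\rho\colon\Omega\to[0,T]$ and every $\lambda>0$. Writing $X_t-X_\rho=M_t^\rho+A_t^\rho$ with $M_t^\rho=\int_\rho^t\hat\sigma(X_u)\od W_u$ and $A_t^\rho=\int_\rho^t\hat b(X_u)\od u$, the drift is controlled by $\|\hat b\|_\infty T$ deterministically, so it is enough to treat $M^\rho$. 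Absorbing the bounded quadratic variation $[M^\rho]_T\leqslant\|\hat\sigma\|_\infty^2 T$ into a multiplicative constant and applying Doob's $L_2$-inequality conditionally on $\F_\rho$ to the true martingale $\e^{\lambda M_t^\rho-\frac{\lambda^2}{2}[M^\rho]_t}$ delivers the claim. This is the only analytical step behind the whole lemma; the rest is bookkeeping.

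For \eqref{item:1:statement:regularity_sigma_varphi} I would work in case (C2), where $Y_t=\e^{X_t}$, and split the running maximum at $\rho$: for $t\in[\rho,T]$,
\begin{equation*}
  (Y^{\beta_1})^*_t\leqslant(Y^{\beta_1})^*_\rho+\sup_{s\in[\rho,t]}Y_s^{\beta_1}\leqslant 2(Y^{\beta_1})^*_\rho\,\e^{|\beta_1|\sup_{s\in[\rho,T]}|X_s-X_\rho|},
\end{equation*}
using $Y_s^{\beta_1}=Y_\rho^{\beta_1}\e^{\beta_1(X_s-X_\rho)}$ and $(Y^{\beta_1})^*_\rho\geqslant Y_\rho^{\beta_1}$. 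Combined with $Y_t^{\beta_0}=Y_\rho^{\beta_0}\e^{\beta_0(X_t-X_\rho)}$ this yields
\begin{equation*}
  \sup_{t\in[\rho,T]}Y_t^{p\beta_0}((Y^{\beta_1})^*_t)^p\leqslant 2^p Y_\rho^{p\beta_0}((Y^{\beta_1})^*_\rho)^p\,\e^{(|p\beta_0|+|p\beta_1|)\sup_{t\in[\rho,T]}|X_t-X_\rho|},
\end{equation*}
and taking $\ce{\F_\rho}{\cdot}$ together with the first paragraph closes the estimate.

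For \eqref{item:2:statement:regularity_sigma_varphi}: in case (C1) both sides lie in the interval $[\varepsilon_0^2,\|\hat\sigma\|_\infty^2]$ and the equivalence is immediate; in case (C2), \cref{statement:facts_from_GG04}\eqref{item:1b:statement:facts_from_GG04} gives $\sigma_u^2\sim Y_u^2=Y_a^2\e^{2(X_u-X_a)}$, so
\begin{equation*}
  \ce{\F_a}{\tfrac{1}{b-a}\int_a^b\sigma_u^2\od u}\sim\sigma_a^2\cdot\tfrac{1}{b-a}\int_a^b\ce{\F_a}{\e^{2(X_u-X_a)}}\od u,
\end{equation*}
and the inner conditional expectation is bounded above by the first paragraph and below, uniformly in $0\leqslant a<u\leqslant T$, by conditional Jensen together with $|\ce{\F_a}{X_u-X_a}|\leqslant\|\hat b\|_\infty T$. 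For \eqref{item:3:statement:regularity_sigma_varphi} I would pull the $\F_a$-measurable $\varphi_a^2$ outside to write
\begin{equation*}
  \E\sup_{u\in[a,T]}|\varphi_a\sigma_u|^2=\E\big[\varphi_a^2\,\ce{\F_a}{\sup_{u\in[a,T]}\sigma_u^2}\big],
\end{equation*}
and bound the inner conditional expectation by $\|\hat\sigma\|_\infty^2$ in case (C1), where $\varphi_a\in L_2$ follows from $\sigma_a\geqslant\varepsilon_0$ and $\sigma_a\varphi_a\in L_2$ by \cref{statement:facts_from_GG04}\eqref{item:3:statement:facts_from_GG04}, or by $c\,\sigma_a^2$ in case (C2) via the first paragraph applied to $\sup_{u\in[a,T]}\e^{2(X_u-X_a)}$; in both situations the total expression reduces to $c\,\E(\sigma_a\varphi_a)^2<\infty$ by \cref{statement:facts_from_GG04}\eqref{item:3:statement:facts_from_GG04}.
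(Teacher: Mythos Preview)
Your proof is correct and rests on the same analytic ingredient as the paper's, namely the uniform conditional bound $\ce{\F_a}{\sup_{t\in[a,T]}\e^{\lambda\int_a^t\hat\sigma(X_s)\od W_s}}\leqslant c$ combined with the boundedness of $\hat b$. The differences are organisational: for \eqref{item:1:statement:regularity_sigma_varphi} the paper first shows $(Y_t^\beta)\in\cSM_p$ for a single exponent via \cref{statement:smp_determinstic} and then invokes the closure properties \cref{statement:SM-properties}\eqref{item:3:statement:SM-properties}--\eqref{item:4:statement:SM-properties}, whereas you handle the product and the running maximum in one direct estimate; for the lower bound in \eqref{item:2:statement:regularity_sigma_varphi} the paper exploits that $Y$ is a martingale (so $\ce{\F_a}{Y_u^2}\geqslant Y_a^2$ by Jensen), while you apply Jensen to $\e^{2(X_u-X_a)}$ and bound the conditional drift---both yield the same conclusion.
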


\begin{proof}
\eqref{item:1:statement:regularity_sigma_varphi}
Because $\hat\sigma\in B_b(\R)$, for all $\alpha\in \R$ there is a constant
$c_\eqref{eq:item:1:statement:regularity_sigma_varphi}=c_\eqref{eq:item:1:statement:regularity_sigma_varphi}(\alpha,T,\hat{\sigma})>0$
such that
\begin{equation}\label{eq:item:1:statement:regularity_sigma_varphi}
    \ce{\F_a}{\sup_{t\in [a,T]} \e^{\alpha \int_{(a,t]} \hat{\sigma}(X_s) \od W_s}} 
\le c_\eqref{eq:item:1:statement:regularity_sigma_varphi} \mbox{ a.s.}
\end{equation}
for $a\in [0,T]$.
Because $\hat{b}$ is bounded this implies that $(Y_t^\beta)_{t\in [0,T]}\in \cSM_p([0,T])$ for all $p\in (0,\infty)$
and $\beta\in\R$ by \cref{statement:smp_determinstic}.
Therefore we may conclude by items \eqref{item:3:statement:SM-properties} and \eqref{item:4:statement:SM-properties} of
\cref{statement:SM-properties}.
\smallskip

\eqref{item:2:statement:regularity_sigma_varphi}
We only need to check the case (C2) where we
replace $\sigma$ by $Y$ due to \eqref{item:1b:statement:facts_from_GG04}.
As $Y$ is a martingale we get $\ce{\F_a}{\int_a^b Y_u^2 \od u} \geqslant (b-a) Y_a^2$ a.s.,
otherwise $\ce{\F_a}{\int_a^b Y_u^2 \od u} \leqslant \| Y\|^2_{\cSM_2([0,T])} (b-a) Y_a^2$ a.s.
\smallskip

\eqref{item:3:statement:regularity_sigma_varphi}
Because of \eqref{item:3:statement:facts_from_GG04} we only need to check  
 (C2), use again \eqref{item:1b:statement:facts_from_GG04} to replace $\sigma$ by $Y$, and obtain 
\[
             \E \sup_{u\in [a,T]} |\varphi_a Y_u|^2 
 = \E  \left [ |\varphi_a|^2 \ce{\F_a}{ \sup_{u\in [a,T]} Y_u^2} \right ]
\leqslant \|Y\|_{\cSM_2([0,T]}^2 \E |\varphi_a Y_a|^2 < \infty. \qedhere\]
\end{proof}
\bigskip

\begin{lemm} \label{lemma:ito_brownian_case_new}
For $\alpha \geqslant 0$ and $t\in [0,T)$  one has, a.s.,
	\begin{align*}
	(T-t)^\alpha Z_t  = &  T^\alpha Z_0
	+ \int_{(0,t]} (T-u)^\alpha H_u \od W_u 
	+ \int_{(0,t]} (T-u)^\alpha \sigma'(Y_u) Z_u \od W_u  \\
	& - \alpha \int_{(0,t]} (T-u)^{\alpha-1} Z_u \od u 
	+ \frac{1}{2} \int_{(0,t]} (T-u)^\alpha (\sigma \sigma{''})(Y_u) Z_u \od u.
	\end{align*}
\end{lemm}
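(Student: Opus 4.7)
The strategy is a direct It\^o computation. The idea is to first derive an SDE for $Z_t = \sigma(Y_t)\frac{\partial G}{\partial y}(t,Y_t)$ by applying It\^o's formula to $F(t,y) := \sigma(y)\frac{\partial G}{\partial y}(t,y)$, and then apply integration by parts against the deterministic factor $(T-t)^\alpha$.

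First I would verify the required smoothness: by \cref{statement:facts_from_GG04}\eqref{item:1a:statement:facts_from_GG04} and \eqref{item:2:statement:facts_from_GG04}, $F \in C^{1,2}([0,T)\times\defg)$, so It\^o's formula is applicable on every $[0,t]$ with $t<T$. With $\od Y_u=\sigma(Y_u)\od W_u$ this yields
\[
\od Z_u = \left[\frac{\partial F}{\partial t} + \frac{\sigma^2}{2}\frac{\partial^2 F}{\partial y^2}\right](u,Y_u)\od u + \frac{\partial F}{\partial y}(u,Y_u)\,\sigma(Y_u)\od W_u.
\]
Computing the partial derivatives gives $F_y = \sigma' G_y + \sigma G_{yy}$, so the diffusion coefficient is $(\sigma' G_y)\sigma + \sigma^2 G_{yy} = \sigma'(Y_u)Z_u + H_u$, which is exactly the integrand in the two $\od W_u$ terms of the claim.

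For the drift I would use the PDE $G_t + \tfrac{\sigma^2}{2}G_{yy}=0$ from \cref{statement:facts_from_GG04}\eqref{item:2:statement:facts_from_GG04} and differentiate it in $y$ to obtain $G_{ty} = -\sigma\sigma' G_{yy} - \tfrac{\sigma^2}{2}G_{yyy}$. Since $F_t = \sigma G_{ty}$ and $F_{yy}= \sigma'' G_y + 2\sigma' G_{yy} + \sigma G_{yyy}$, a short cancellation gives
\[
F_t(u,Y_u) + \frac{\sigma(Y_u)^2}{2}F_{yy}(u,Y_u) = \frac{\sigma^2 \sigma''}{2}(Y_u)G_y(u,Y_u) = \frac{(\sigma\sigma'')(Y_u)}{2}Z_u,
\]
using $Z = \sigma G_y$ in the last step. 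Thus
\[
\od Z_u = \frac{(\sigma\sigma'')(Y_u)}{2}Z_u\od u + \bigl(\sigma'(Y_u)Z_u + H_u\bigr)\od W_u.
\]

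Finally, I would apply It\^o's product rule (or equivalently integration by parts, as $(T-u)^\alpha$ is a smooth function of finite variation) to the product $(T-u)^\alpha Z_u$ on $[0,t]$:
\[
\od\bigl[(T-u)^\alpha Z_u\bigr] = -\alpha(T-u)^{\alpha-1}Z_u\od u + (T-u)^\alpha\od Z_u,
\]
and insert the SDE for $\od Z_u$ just derived. Integrating from $0$ to $t$ and rearranging reproduces the asserted identity term-by-term.

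The only point requiring a little care is the well-definedness of the two stochastic integrals on $[0,t]$ for any $t<T$: using $\|\sigma'\|_{B_b(\defg)},\|\sigma\sigma''\|_{B_b(\defg)}<\infty$ from \cref{statement:facts_from_GG04}\eqref{item:1a:statement:facts_from_GG04}, the second $\od W$-integrand is $\sigma'(Y_u)Z_u$ which is bounded by a constant times $|Z_u|$, and \cref{statement:facts_from_GG04}\eqref{item:3:statement:facts_from_GG04} gives $\E\sup_{u\in[0,t]}|Z_u|^2<\infty$; the first $\od W$-integrand $(T-u)^\alpha H_u$ is handled by \cref{statement:facts_from_GG04}\eqref{item:4:statement:facts_from_GG04} together with \eqref{eqn:cIalphaM_vs_Malpha}. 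Thus both integrals are genuine $L_2$-martingales on $[0,t]$ and there is no localisation overhead --- this is the only conceptual (rather than routine) checkpoint in the proof.
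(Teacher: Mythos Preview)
Your proof is correct and follows essentially the same approach as the paper: the paper applies It\^o's formula directly to the function $(t,y)\mapsto (T-t)^\alpha(\sigma\frac{\partial G}{\partial y})(t,y)$ and invokes the PDE from \cref{statement:facts_from_GG04}\eqref{item:2:statement:facts_from_GG04}, which amounts to the same computation you carry out in two steps (first the SDE for $Z$, then the product rule with $(T-u)^\alpha$). Your version simply spells out the drift/diffusion calculation and the integrability checks in more detail than the paper does.
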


\begin{proof}
The assertion follows by It\^o's formula applied to the function 
$(t,y)\mapsto (T-t)^\alpha \left ( \sigma\frac{\partial G}{\partial y}\right )(t,y)$
with $Y_t$ inserted into the $y$-component, where we use the PDE from
\eqref{item:2:statement:facts_from_GG04}.
\end{proof}
\bigskip

\begin{lemm}
\label{lemma:estimates_partialG}
There exists a constant $c_{\eqref{eq:lemma:estimates_partialG}}=c_{\eqref{eq:lemma:estimates_partialG}}(c_\eqref{eq:statement:friedman},T)>0$
such that for $\theta \in [0,1]$ and $g\in \hoel \theta$ one has
\begin{equation}\label{eq:lemma:estimates_partialG}
         \left | \frac{\partial G}{\partial y}(u,y) \right | 
\leqslant c_{\eqref{eq:lemma:estimates_partialG}} \, |g|_\theta \, \sigma(y)^{\theta-1} (T-u)^{\frac{\theta-1}{2}}
          \sptext{1}{for}{.5} (u,y)\in [0,T)\times \defg.
\end{equation}
\end{lemm}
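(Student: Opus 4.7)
The plan is to start from the representation $G(u,y) = \int g(z)\,\Gamma_Y(u,T;y,z)\,dz$ via the transition density of $Y$ and differentiate in $y$. Since $\int \Gamma_Y(u,T;y,z)\,dz = 1$, the vanishing of $\int \partial_y \Gamma_Y(u,T;y,z)\,dz$ allows me to rewrite
\[ \frac{\partial G}{\partial y}(u,y) = \int_\defg (g(z)-g(y))\,\partial_y \Gamma_Y(u,T;y,z)\,dz, \]
and then use $|g(z)-g(y)| \leqslant |g|_\theta |z-y|^\theta$ to reduce the claim to a Gaussian-tail estimate on $\partial_y \Gamma_Y$, which I would read off from the Friedman-type bound behind \cref{statement:friedman} (with constant $c_\eqref{eq:statement:friedman}$).

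In Case (C1), $Y = X$ and $\sigma \equiv \hat\sigma$ is bounded above and below, so $\sigma(y)^{\theta-1} \sim_c 1$ and only the time factor $(T-u)^{(\theta-1)/2}$ must be extracted. Using the Gaussian bound $|\partial_y \Gamma_X(u,T;y,z)| \leqslant C(T-u)^{-1}\exp(-c(z-y)^2/(T-u))$ and the substitution $w = (z-y)/\sqrt{T-u}$, the resulting integral becomes $(T-u)^{(\theta-1)/2}\int_\R |w|^\theta e^{-cw^2}\,dw$, which is the claimed bound.

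In Case (C2), I would exploit $Y = e^X$ via $\Gamma_Y(u,T;y,z) = z^{-1}\Gamma_X(u,T;\ln y,\ln z)$, so that $\partial_y \Gamma_Y(u,T;y,z) = (yz)^{-1}\partial_1 \Gamma_X(u,T;\ln y,\ln z)$. After $x = \ln z$ and $w = (x-\ln y)/\sqrt{T-u}$, the target integral takes the form
\[ \frac{1}{y}\int_\R |y\,e^{w\sqrt{T-u}} - y|^\theta\,\sqrt{T-u}\,|\partial_1\Gamma_X|(\ldots)\,dw
   \;\leqslant\; C\,y^{\theta-1}(T-u)^{(\theta-1)/2}\int_\R |e^{w\sqrt{T-u}}-1|^\theta e^{-cw^2}\,dw. \]
Splitting $\{|w|\sqrt{T-u}\leqslant 1\}$, where $|e^{w\sqrt{T-u}}-1|\lesssim |w|\sqrt{T-u}$, from $\{|w|\sqrt{T-u}>1\}$, where the Gaussian kernel absorbs the exponential since $\sqrt{T-u}\leqslant \sqrt T$, bounds the last integral by $C_T (T-u)^{\theta/2}$. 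Combined with $\sigma(y)\sim y$ from \cref{statement:facts_from_GG04}(B) one gets $y^{\theta-1}\sim \sigma(y)^{\theta-1}$, and the stated estimate follows.

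The main obstacle is precisely the Case (C2) tail analysis: the factor $|e^z-e^{\ln y}|^\theta$ grows exponentially in $z-\ln y$, and one must carefully exploit the sub-Gaussian decay of $\partial_1\Gamma_X$ of bandwidth $\sqrt{T-u}\leqslant \sqrt T$ to absorb it. This is the only place where the finite time horizon $T<\infty$ enters and is what forces $c_\eqref{eq:lemma:estimates_partialG}$ to depend on $T$ (through the Friedman constant $c_\eqref{eq:statement:friedman}$); everything else is a routine Gaussian-moment computation.
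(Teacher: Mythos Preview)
Your approach is essentially the paper's: both differentiate the convolution representation, subtract $g(y)$ using $\int\partial_y\Gamma=0$, and then feed in the Friedman Gaussian bound on $\partial_x\Gamma_X$. The paper packages Case~(C2) slightly differently, passing through $F(u,x):=G(u,\e^x)$ so that $y\,\partial_y G=\partial_x F$, and it controls the exponential growth via the single inequality $|\e^{\xi}-1|^\theta\leqslant |\xi|^\theta \e^{\theta|\xi|}$ rather than your small/large $|w|$ split; both devices do the same job.

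One slip to fix: in your displayed inequality the prefactor should be $(T-u)^{-1/2}$, not $(T-u)^{(\theta-1)/2}$. After the substitution $w=(x-\ln y)/\sqrt{T-u}$ the Friedman bound contributes $\sqrt{T-u}\cdot c\,(T-u)^{-1/2}\gamma_{c(T-u)}(w\sqrt{T-u})=C\,(T-u)^{-1/2}\e^{-w^2/(2c)}$, and it is only \emph{after} your estimate $\int_\R|\e^{w\sqrt{T-u}}-1|^\theta\e^{-cw^2}\,dw\leqslant C_T(T-u)^{\theta/2}$ that the correct power $(T-u)^{(\theta-1)/2}$ emerges. With that correction the argument goes through.
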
 
\begin{proof} 
Set $f:=g$ and $F:=G$ in case (C1) and $f(x) := g(\e^x)$ and $F(u, x) := G(u,\e^x)$ for $(u, x) \in [0, T) \times \R$
in case (C2). Let us fix $u\in [0,T)$. In both cases, (C1) and (C2), we have
\[   \frac{\partial F}{\partial x}(u,x) 
   = \int_\R \frac{\partial \Gamma_X}{\partial x}(T-u,x,\xi) f(\xi) \od\xi 
   = \int_\R \frac{\partial \Gamma_X}{\partial x}(T-u,x,\xi) (f(\xi)-f(x)) \od\xi\]
where we use \eqref{item:5:statement:facts_from_GG04} with the transition density $\Gamma_X$ from \cref{statement:friedman}. 
For $t>0$ denote $\gamma_t(x): = \frac{1}{\sqrt{2\pi t}} \e^{-\frac{x^2}{2t}}$. 
In the case (C1) we derive for $y=x$ that
\begin{align*}
    \left | \frac{\partial G}{\partial y}(u,y) \right |
  = \left | \frac{\partial F}{\partial x}(u,x) \right | 
&\leqslant |g|_\theta \int_\R \left | \frac{\partial \Gamma_X}{\partial x}(T-u,x,\xi)\right|  |\xi- x|^\theta \od\xi \\
&\leqslant |g|_\theta  \int_\R c_\eqref{eq:statement:friedman}  (T-u)^{-\frac{1}{2}} 
           \gamma_{c_\eqref{eq:statement:friedman} (T-u)}(x-\xi) |\xi-x|^\theta \od\xi \\
& =  |g|_\theta  (T-u)^{\frac{\theta-1}{2}}  
     \int_\R c_\eqref{eq:statement:friedman} \gamma_{c_\eqref{eq:statement:friedman} }(\eta) |\eta|^\theta \od\eta \\
&\leqslant |g|_\theta  (T-u)^{\frac{\theta-1}{2}}  
     \int_\R c_\eqref{eq:statement:friedman} \gamma_{c_\eqref{eq:statement:friedman} }(\eta) (1+|\eta|) \od\eta
\end{align*}
where we use 
$                              \int_\R \frac{\partial \Gamma_X}{\partial x}(T-u,x,\xi) \od\xi 
 = \frac{\partial}{\partial x} \int_\R \Gamma_X(T-u,x,\xi) \od\xi
= 0$.
For $y=\e^x$ we get for (C2) that  
\equa 
\left | y \frac{\partial   G}{\partial y  }(u,y)\right | =  
	 \left | \frac{\partial F}{\partial x}(u,x) \right |
	&\leqslant& |g|_\theta \int_\R \left | \frac{\partial \Gamma_X}{\partial x}(T-u,x,\xi)\right|  |\e^\xi- \e^x|^\theta \od\xi \\
	& =& |g|_\theta  \e^{x \theta} \int_\R \left | \frac{\partial \Gamma_X}{\partial x}(T-u,x,\xi)\right|  |\e^{\xi-x}- 1|^\theta \od\xi \\
	&\leqslant& |g|_\theta  \e^{x \theta} \int_\R c_\eqref{eq:statement:friedman}  (T-u)^{-\frac{1}{2}} \gamma_{c_\eqref{eq:statement:friedman} (T-u)}(x-\xi) |\e^{\xi-x}- 1|^\theta \od\xi.
	\tion
	We conclude by 
	\begin{align*}
     \int_\R \gamma_{c_\eqref{eq:statement:friedman} (T-u)}(x-\xi) |\e^{\xi-x}- 1|^\theta \od\xi 
    & \leqslant \int_\R \gamma_{c_\eqref{eq:statement:friedman} (T-u)}(\xi) |\xi|^\theta \e^{\theta|\xi|} \od\xi \\
    & \leqslant (T-u)^\frac{\theta}{2} \int_\R \gamma_{c_\eqref{eq:statement:friedman} }(\eta) |\eta|^\theta \e^{\theta\sqrt{T} |\eta|} \od\eta \\
    &\leqslant (T-u)^\frac{\theta}{2} \int_\R \gamma_{c_\eqref{eq:statement:friedman} }(\eta) (1+|\eta|) \e^{\sqrt{T} |\eta|} \od\eta
< \infty. \qedhere
    \end{align*}
\end{proof}
\bigskip

\begin{prop}
\label{statment:IZ_vs_HdW_new}
For $\alpha>0$ and $\Phi\in \cSM_2([0,T))$, such that
\eqref{eq:a_priori_asumption_phi_new} holds, one has:
\begin{enumerate}[{\rm (1)}]

\item \label{it:a}
      $(\cI_t^\alpha Z - Z_0)_{t\in [0,T)} \in \BMO_2^\Phi([0,T))$ if and only if
      $\cI^\alpha  M \in \BMO_2^\Phi([0, T))$.

\item \label{it:b}
     If $q\in [2,\infty)$ and $\sup_{t\in [0,T)} \Phi_t \in L_q$, then 
     as $t\uparrow T$ one has that $\cI^\alpha Z$ converges (is bounded) in $L_q$ 
     if and only if $\cI^\alpha M$ does (is).

\item \label{it:c}
      As $t\uparrow T$ one has that $\cI^\alpha Z$ converges a.s. if and only if $\cI^\alpha M$ does. 
      \footnote{For $\cI^\alpha M$ the $L_q$-boundedness and the convergence in $L_q$ is equivalent 
                because of the martingale property.}

\end{enumerate}
\end{prop}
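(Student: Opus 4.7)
The plan is to reduce everything to a single additive decomposition coming from Itô's formula, and then control the ``extra'' terms by means of the a-priori condition \eqref{eq:a_priori_asumption_phi_new} and the $\cSM_2$ regularity of $\Phi$. Concretely, combining \cref{lemma:ito_brownian_case_new} with the splitting \eqref{eqn:RL-split}, the $\alpha \int_{(0,t]}(T-u)^{\alpha-1}Z_u\,du$ term cancels, so that after dividing by $T^\alpha$ we obtain the a.s. identity
\begin{equation*}
   \cI_t^\alpha Z - Z_0
   \;=\; \cI_t^\alpha M \;+\; N_t^\alpha \;+\; A_t^\alpha, \qquad t\in [0,T),
\end{equation*}
where, using $\cI_t^\alpha M = T^{-\alpha}\int_{(0,t]}(T-u)^\alpha H_u\,dW_u$ from \eqref{eqn:cIalphaM_vs_Malpha},
\begin{equation*}
   N_t^\alpha := \frac{1}{T^\alpha}\int_{(0,t]}(T-u)^\alpha \sigma'(Y_u) Z_u\,dW_u,\qquad
   A_t^\alpha := \frac{1}{2T^\alpha}\int_{(0,t]}(T-u)^\alpha (\sigma\sigma'')(Y_u) Z_u\,du.
\end{equation*}
Since $\sigma',\sigma\sigma''\in B_b(\defg)$ by \cref{statement:facts_from_GG04}\eqref{item:1a:statement:facts_from_GG04}, and since \eqref{eq:a_priori_asumption_phi_new} gives $|Z_u|\le c_\eqref{eq:a_priori_asumption_phi_new}(T-u)^{-1/2}\Phi_u$, both $N^\alpha$ and $A^\alpha$ are governed by the Volterra-type weight $(T-u)^{\alpha-1/2}\Phi_u$, and the whole proposition will follow by showing that $N^\alpha$ and $A^\alpha$ are ``well-behaved'' independently of $M$ under the hypotheses on $\Phi$.

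For (\ref{it:a}), it suffices to verify $N^\alpha, A^\alpha\in \BMO_2^\Phi([0,T))$. For $N^\alpha$ I use Itô's isometry: for $\rho\in\cS_t$,
\begin{equation*}
   \ce{\F_\rho}{|N_t^\alpha - N_\rho^\alpha|^2}
   \;\le\; \tfrac{\|\sigma'\|_{B_b}^2\, c_\eqref{eq:a_priori_asumption_phi_new}^2}{T^{2\alpha}}\,
   \ce{\F_\rho}{\textstyle\int_\rho^t (T-u)^{2\alpha-1}\Phi_u^2\,du},
\end{equation*}
which by $\Phi\in\cSM_2([0,T))$ is bounded by $C\,\Phi_\rho^2\,(T-\rho)^{2\alpha}\le C'\Phi_\rho^2$. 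For $A^\alpha$ I apply Cauchy--Schwarz: $|A_t^\alpha - A_\rho^\alpha|^2 \le C(T-\rho)\int_\rho^t(T-u)^{2\alpha-1}\Phi_u^2\,du$, and the same $\cSM_2$ argument produces the bound $C''\Phi_\rho^2$. Path continuity of $N^\alpha$ and $A^\alpha$ makes $\bmo_2^\Phi$ and $\BMO_2^\Phi$ interchangeable, so the displayed identity yields both implications in (\ref{it:a}).

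For (\ref{it:b}) and (\ref{it:c}), I show that $N^\alpha$ and $A^\alpha$ each admit an a.s. and $L_q$ limit at $T$. The sharp bracket of $N^\alpha$ satisfies
\begin{equation*}
   \langle N^\alpha\rangle_T
   \;\le\; \tfrac{\|\sigma'\|_{B_b}^2\, c_\eqref{eq:a_priori_asumption_phi_new}^2}{T^{2\alpha}}
           \int_0^T (T-u)^{2\alpha-1}\,du\cdot \Bigl(\sup_{t\in[0,T)}\Phi_t\Bigr)^2
   \;\le\; C\Bigl(\sup_{t\in[0,T)}\Phi_t\Bigr)^2\in L_{q/2},
\end{equation*}
so $N^\alpha$ is closable a.s. (the bracket is a.s. finite) and, by Burkholder--Davis--Gundy, also in $L_q$. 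For $A^\alpha$, the integrand is pathwise dominated by $C(T-u)^{\alpha-1/2}\sup_{s\in[0,T)}\Phi_s$, hence $A^\alpha_T$ exists as an absolutely convergent integral a.s. and, since $\sup_s\Phi_s\in L_q$, also in $L_q$. Combining these facts with the decomposition, $\cI_t^\alpha Z$ converges (is bounded) in the relevant mode as $t\uparrow T$ if and only if $\cI_t^\alpha M$ does. The main technical obstacle is ensuring the $\cSM_2$ and $L_q$ hypotheses on $\Phi$ dominate the singular factor $(T-u)^{\alpha-1/2}$ uniformly; since $\int_0^T(T-u)^{2\alpha-1}\,du=T^{2\alpha}/(2\alpha)<\infty$ for every $\alpha>0$, this integrability is automatic and the constants depend only on $\alpha,T,\hat\sigma,c_\eqref{eq:a_priori_asumption_phi_new}$, and $\|\Phi\|_{\cSM_2}$.
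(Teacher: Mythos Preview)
Your proof is correct and follows essentially the same route as the paper: both derive the identity $\cI_t^\alpha Z - Z_0 = \cI_t^\alpha M + N_t^\alpha + A_t^\alpha$ by combining \cref{lemma:ito_brownian_case_new} with \eqref{eqn:RL-split}, and then control $N^\alpha$ and $A^\alpha$ via the pointwise bound $|Z_u|\le c_\eqref{eq:a_priori_asumption_phi_new}(T-u)^{-1/2}\Phi_u$ together with $\Phi\in\cSM_2([0,T))$. The only notable difference is in items \eqref{it:b}--\eqref{it:c}: the paper reuses the $\BMO_2^\Phi$ bound on $N^\alpha+A^\alpha$ established in part \eqref{it:a} and invokes \eqref{eqn:2:statement:Lp-BMO_new} of \cref{statement:Lp-BMO_new} to get $L_q$ and a.s.\ convergence, whereas you bound $\langle N^\alpha\rangle_T$ directly and apply Burkholder--Davis--Gundy; both are valid and yield the same conclusion.
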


\begin{proof}
\eqref{it:a} For $t\in [0,T)$ the relation 
\[   \alpha\int_0^T (T-u)^{\alpha-1} Z_{u\wedge t} \od u 
   = \alpha \int_0^t (T-u)^{\alpha - 1} Z_u \od u  + (T-t)^\alpha Z_t \]
and \cref{lemma:ito_brownian_case_new} imply that
\begin{align*}
     \alpha \int_{(0,T]} (T-u)^{\alpha-1} Z_{u\wedge t} \od u   
= &  T^\alpha Z_0
	+ \int_{(0,t]} (T-u)^\alpha H_u \od W_u 
	+ \int_{(0,t]} (T-u)^\alpha \sigma'(Y_u) Z_u \od W_u  \\
	& 
	+ \frac{1}{2} \int_{(0,t]} (T-u)^\alpha (\sigma \sigma{''})(Y_u) Z_u \od u \mbox{ a.s.}
	\end{align*}
Denote $b_u(\omega): = \frac{1}{2}(\sigma \sigma'')(Y_u(\omega))$ and 
$B: = \frac{1}{2}\|\sigma \sigma''\|_{B_b(\defg)}<\infty$. Dividing both sides of the equality above by $T^\alpha$ 
and using \eqref{eqn:cIalphaM_vs_Malpha}, gives 
\begin{equation}\label{eqn:representation_Z}
     (\cI_t^\alpha Z -  Z_0) - \cI_t^\alpha M 
   = \int_{(0,t]}  \left ( \frac{T-u}{T}\right )^\alpha Z_u ( \sigma'(Y_u) \od W_u + b_u \od u ) \quad \mbox{a.s.}
\end{equation}
Next we observe that, for $0\leqslant a <t < T$, \mbox{a.s.,}
\begin{align*}
&   \left ( \ce{\F_a}{\left | \int_{(a,t]}  \left ( \frac{T-u}{T}\right )^\alpha Z_u \sigma'(Y_u) \od W_u \right |^2} 
     \right )^\frac{1}{2} + 
     \left ( \ce{\F_a}{\left | \int_{(a,t]}  \left ( \frac{T-u}{T}\right )^\alpha |Z_u b_u| \od u 
          \right |^2} \right )^\frac{1}{2} \\
& \leqslant (\|\sigma'\|_{B_b(\defg)} + B \sqrt{T}) 
     \left ( \ce{\F_a}{\int_{(a,t]}  \left ( \frac{T-u}{T}\right )^{2\alpha} |Z_u|^2 \od u} 
     \right )^\frac{1}{2} \\
& \leqslant c_\eqref{eq:a_priori_asumption_phi_new}
     (\|\sigma'\|_{B_b(\defg)} + B \sqrt{T}) 
     \left ( \ce{\F_a}{\sup_{u\in [a,T)} \Phi_u^2 \int_{(a,t]}  \left ( \frac{T-u}{T}\right )^{2\alpha} (T-u)^{-1}\od u} 
     \right )^\frac{1}{2} \\
& \leqslant \frac{c_\eqref{eq:a_priori_asumption_phi_new}
     (\|\sigma'\|_{B_b(\defg)} + B \sqrt{T}) }{\sqrt{2\alpha}}  
     \| \Phi\|_{\cSM_2([0, T))}   \left ( \frac{T-a}{T}\right )^\alpha  \Phi_a.
\end{align*}
Because of \eqref{eqn:representation_Z} item \eqref{it:a} follows.

\eqref{it:b} We further conclude that the martingale
$(\int_{(0,t]}  \left ( \frac{T-u}{T}\right )^\alpha Z_u \sigma'(Y_u) \od W_u)_{t\in [0,T)}$ converges in $L_q$ and a.s. because of
$\Phi\in \cSM_2([0,T))$, $\sup_{t\in [0,T)} \Phi_t \in L_q$,
and \eqref{eqn:2:statement:Lp-BMO_new} of \cref{statement:Lp-BMO_new}. Again by
\eqref{eqn:2:statement:Lp-BMO_new}, the non-negative and non-decreasing process
$\left (\int_0^t  \left ( \frac{T-u}{T}\right )^\alpha |Z_u b_u| \od u \right )_{t\in [0,T)}$ 
converges in $L_q$ and a.s. For this reason
$\left (\int_0^t  \left ( \frac{T-u}{T}\right )^\alpha Z_u b_u \od u \right )_{t\in [0,T)}$
converges in $L_q$ and a.s. as well. 
So, again using \eqref{eqn:representation_Z}, item \eqref{it:b} follows.
\smallskip

\eqref{it:c}
This part follows from the proof of \eqref{it:b} for $q=2$ as $\Phi\in \cSM_2([0,T))$ gives $\sup_{t\in [0,T)} \Phi_t \in L_2$.
\end{proof}
\smallskip

\begin{lemm}
\label{lemma:varphi_martingale}
Let $\od \hat{\p}:= L \od \p$  with
$L:=\exp \left (\int_{(0,T]} \sigma'(Y_t) \od W_t - \frac{1}{2} \int_{(0,T]} |\sigma'(Y_t)|^2 \od t\right )$
and $g \in C_Y$. Then the process $(\varphi(t,Y_t))_{t\in [0,T)}
=(\frac{\partial G}{\partial y}(t,Y_t))_{t\in [0,T)}$ is a $\hat{\p}$-martingale.
\end{lemm}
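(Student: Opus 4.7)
The plan is to produce an explicit It\^o decomposition of $\varphi(t,Y_t)=\partial_y G(t,Y_t)$ under $\p$, absorb the resulting finite-variation part via Girsanov, and then verify enough integrability to upgrade the local martingale to a true martingale. Since $G\in C^\infty([0,T)\times\defg)$ solves the backward PDE $\partial_t G+\tfrac{\sigma^2}{2}\partial_{yy}G=0$ by \cref{statement:facts_from_GG04}\eqref{item:2:statement:facts_from_GG04}, differentiating the PDE in $y$ yields $\partial_t\partial_y G=-\sigma\sigma'\partial_{yy}G-\tfrac{\sigma^2}{2}\partial_{yyy}G$. Feeding this into It\^o's formula applied to $\partial_y G(t,Y_t)$ (with $dY=\sigma(Y)dW$) the two $\partial_{yyy}G$ contributions cancel and one obtains
\begin{align*}
   d\varphi(t,Y_t)=\sigma(Y_t)\,\partial_{yy}G(t,Y_t)\bigl[dW_t-\sigma'(Y_t)\,dt\bigr].
\end{align*}

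Next, by \cref{statement:facts_from_GG04}\eqref{item:1a:statement:facts_from_GG04} we have $\sigma'\in B_b(\defg)$, so Novikov's condition is trivially satisfied, $(L_t):=(\E[L\mid\F_t])_{t\in[0,T]}$ is a genuine $\p$-martingale with $L_t\in L_r(\p)$ for every $r<\infty$, and Girsanov's theorem shows that $\hat W_t:=W_t-\int_0^t\sigma'(Y_u)\,du$ is a $\hat\p$-Brownian motion. Substituting into the decomposition above gives
\begin{align*}
   \varphi(t,Y_t)=\varphi(0,Y_0)+\int_{(0,t]}\sigma(Y_u)\,\partial_{yy}G(u,Y_u)\,d\hat W_u \quad \mbox{a.s.}
\end{align*}
for $t\in[0,T)$, so $\varphi(\cdot,Y_\cdot)$ is automatically a $\hat\p$-local martingale.

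To promote this to a true $\hat\p$-martingale on $[0,T)$, it suffices to show $\E^{\hat\p}\sup_{s\in[0,t]}|\varphi(s,Y_s)|<\infty$ for each $t<T$. Writing $\varphi=Z/\sigma(Y)$ and applying Cauchy--Schwarz,
\begin{align*}
   \E^{\hat\p}\sup_{s\le t}|\varphi(s,Y_s)|=\E\Bigl[L\sup_{s\le t}|Z_s|\,\sigma(Y_s)^{-1}\Bigr]\le\bigl(\E\bigl[L^2\sup_{s\le t}\sigma(Y_s)^{-2}\bigr]\bigr)^{1/2}\bigl(\E\sup_{s\le t}Z_s^2\bigr)^{1/2},
\end{align*}
where the second factor is finite by \cref{statement:facts_from_GG04}\eqref{item:3:statement:facts_from_GG04}. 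In case (C1) we have $\sigma\ge\varepsilon_0$, so the first factor reduces to $\|L\|_{L_2}/\varepsilon_0$. In case (C2) we use $\sigma(y)\ge\varepsilon_0 y$ together with \cref{statement:regularity_sigma_varphi}\eqref{item:1:statement:regularity_sigma_varphi}, which guarantees $\sup_{[0,T]}Y^{-1}\in L_r(\p)$ for every $r<\infty$, and a further Cauchy--Schwarz split against the finite moments of $L$ closes the estimate. With this dominating function available, the standard localization at $\tau_n:=\inf\{s:|\varphi(s,Y_s)|\ge n\}\wedge(T-1/n)$ and dominated convergence under the conditional expectation upgrade the local martingale property to the identity $\E^{\hat\p}[\varphi(t,Y_t)\mid\F_s]=\varphi(s,Y_s)$ for all $0\le s\le t<T$. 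The main obstacle is precisely this last step in case (C2), where no direct $L_r(\hat\p)$-bound on $\varphi$ is available and one must carefully balance the integrability of the Girsanov density $L$ against the blow-up of $1/\sigma(Y)$ near the origin via the $\cSM_p$-regularity of $Y^{-1}$.
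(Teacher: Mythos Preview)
Your proof is correct and follows the same overall architecture as the paper: differentiate the backward PDE in $y$, apply It\^o's formula to obtain the representation $d\varphi(t,Y_t)=\sigma(Y_t)\partial_{yy}G(t,Y_t)\,d\hat W_t$, invoke Girsanov via boundedness of $\sigma'$, and then establish enough integrability to pass from local to true martingale. The only substantive difference lies in this last step. The paper controls the square root of the quadratic variation $\E^{\hat\p}\bigl(\int_0^t|\sigma\partial_{yy}G(u,Y_u)|^2\,du\bigr)^{1/2}$ by writing $\sigma\partial_{yy}G=\sigma^{-1}\cdot(\sigma^2\partial_{yy}G)$, applying a three-way H\"older split with exponents $p,\,p'q,\,p'q'=2$, and invoking the $L_2$-martingale property of $H_u=\sigma_u^2\partial_{yy}G(u,Y_u)$ from \cref{statement:facts_from_GG04}\eqref{item:4:statement:facts_from_GG04}; BDG then yields the martingale property. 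You instead bound the running supremum of the process itself via $\varphi=Z/\sigma$, a Cauchy--Schwarz split, and the bound $\E\sup_{s\le t}Z_s^2<\infty$ from \cref{statement:facts_from_GG04}\eqref{item:3:statement:facts_from_GG04}. Your route is marginally more direct, as it avoids BDG and the curvature martingale $H$, but both arguments ultimately hinge on the same $L_r$-integrability of the Girsanov density and of $\sup\sigma^{-1}$ supplied by \cref{statement:regularity_sigma_varphi}\eqref{item:1:statement:regularity_sigma_varphi} in case~(C2). One cosmetic point: in your displayed chain the first ``$=$'' is literally correct only if $\sup_{s\le t}|Z_s|\,\sigma(Y_s)^{-1}$ is read as $\sup_{s\le t}\bigl(|Z_s|\,\sigma(Y_s)^{-1}\bigr)$; the subsequent Cauchy--Schwarz then implicitly uses $\sup|Z_s\sigma_s^{-1}|\le(\sup|Z_s|)(\sup\sigma_s^{-1})$, which is fine but worth making explicit.
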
              

\begin{proof}
Applying the PDE from \eqref{item:2:statement:facts_from_GG04}
we get that
\[ \frac{\partial \varphi}{\partial t}(t,y) + (\sigma \sigma')(y) \frac{\partial \varphi}{\partial y}(t,y) +
   \frac{\sigma(y)^2}{2} \frac{\partial^2 \varphi}{\partial y^2}(t,y) =
   \frac{\partial}{\partial y} \left [ \frac{\partial G}{\partial t}(t,y) + \frac{\sigma(y)^2}{2} \frac{\partial^2 G}{\partial y^2}(t,y)
                              \right ]
  = 0 \]
on $[0,T)\times \defg$. By It\^o's formula this implies that
\[ \varphi(t,Y_t) = \varphi(0,y_0) + 
                    \int_{(0,t]} \left( \sigma \frac{\partial \varphi}{\partial y}\right) (u,Y_u) \left [ \od W_u - \sigma'(Y_u) \od u \right ] \mbox{ a.s.}\]
for $t\in [0,T)$. Because of \eqref{item:1a:statement:facts_from_GG04} and Girsanov's theorem
we obtain a $\hat{\p}$ standard Brownian motion 
$\hat{W}_t := W_t - \int_{(0,t]} \sigma'(Y_u) \od u$,
$t\in [0,T]$. Moreover, for $ t\in [0,T)$, 
$p,q\in (1,\infty)$, $1=\frac{1}{p}+\frac{1}{p'} = \frac{1}{q}+\frac{1}{q'}$, and with $p'q'=2$ we have that 
\begin{align*}
&\E^{\hat{\p}}  \left ( \int_0^t \left | \left(\sigma\frac{\partial \varphi}{\partial y}\right )(u,Y_u)  \right |^2 \od u \right )^\frac{1}{2} \\
&\leqslant (\E^\p L^p )^\frac{1}{p} 
           \left ( \E^\p \left ( \int_0^t \left | \left( \sigma\frac{\partial \varphi}{\partial y}\right)(u,Y_u)  \right |^2 \od u 
                         \right )^\frac{p'}{2} \right )^\frac{1}{p'} \\
&\leqslant (\E^\p L^p )^\frac{1}{p} 
           \left ( \E^\p\left ( \sup_{u\in [0,T]} \sigma_u^{-p'}\right )
           \left ( \int_0^t \left | \left( \sigma^2\frac{\partial \varphi}{\partial y}\right)(u,Y_u)  \right |^2 \od u \right )^\frac{p'}{2} 
           \right )^\frac{1}{p'} \\
&\leqslant (\E^\p L^p )^\frac{1}{p} 
           \left ( \E^\p\left ( \sup_{u\in [0,T]} \sigma_u^{-p'}\right )^q \right )^\frac{1}{p'q}
           \left ( \E \left ( \int_0^t \left | \left( \sigma^2\frac{\partial \varphi}{\partial y}\right)(u,Y_u)  \right |^2 \od u \right )^\frac{p'q'}{2} 
           \right )^\frac{1}{p'q'} \\
& = (\E^\p L^p )^\frac{1}{p} 
           \left ( \E^\p\left ( \sup_{u\in [0,T]} \sigma_u^{-p'}\right )^q \right )^\frac{1}{p'q}
           \left ( \E \int_0^t \left | \left( \sigma^2\frac{\partial \varphi}{\partial y}\right)(u,Y_u)  \right |^2 \od u  
           \right )^\frac{1}{2}.
\end{align*}
The last term is finite because of \eqref{item:4:statement:facts_from_GG04}, 
the first term is finite as $\sigma'$ is bounded, the second term is finite in the case $(C1)$, but also 
finite in the case $(C2)$ as then $\sigma_u\sim \e^{X_u}$ and $\widehat{\sigma}$ and $\widehat {b}$ are bounded. 
As by the Burkholder-Davis-Gundy inequalities applied to continuous local martingales we also have
\[ \E^{\hat{\p}}  \left | \int_{(0,t]} \left | \left(\sigma\frac{\partial \varphi}{\partial y}\right)(u,Y_u)  \right |^2 \od u \right |^\frac{1}{2}
   \sim_c \E^{\hat{\p}} \sup_{s\in [0,t]} \left | \int_{(0,s]} \left( \sigma\frac{\partial \varphi}{\partial y}\right)  (u,Y_u)  \od\hat{W}_u \right |
  \]
for some absolute constant $c\geqslant 1$ and $t\in [0,T)$, we get that 
$(\varphi(t,Y_t))_{t\in [0,T)}$ is a $\hat{\p}$-martingale.
\end{proof}

\subsection{Proof of \cref{statement:oscillation_brownian_case}}
According to \cref{lemma:varphi_martingale} there is an equivalent probability measure $\hat{\p}\sim\p$ such that 
$(\varphi(t,Y_t))_{t\in [0,T)}$ is a $\hat{\p}$-martingale.
The transition density of $Y$ under $\p$ computes as
\begin{equation}\label{eq:transition_density_Y}
\Gamma_Y(s,t;y_1,y_2) = \frac{1}{y_2} \Gamma_X(s,t;\ln(y_1),\ln(y_2))
\end{equation}
in the case (C2), otherwise $\Gamma_Y=\Gamma_X$, where $\Gamma_X$ is taken from
\cref{statement:friedman} in both cases.
We conclude by \cref{exam:oscillation_Markov_process},
where relation \eqref{eqn:derived_from_Markov_SDE} follows from \cref{statement:friedman},
the uniqueness in law of the SDE \eqref{eqn:sde:X}, and the theory of 
Markov processes.
\qed
        
\subsection{Proof of \cref{thm:pde_for_theta_equals_one}}
\eqref{item:1:thm:pde_for_theta_equals_one} $\Rightarrow$ \eqref{item:2:thm:pde_for_theta_equals_one}
We may assume that $g \colon \defg \to \R$ is Lipschitz. By  \cref{lemma:estimates_partialG}
we have
\[           \left | \frac{\partial G}{\partial y}(u,y) \right | 
\leqslant c_{\eqref{eq:lemma:estimates_partialG}} |g|_1
\sptext{1}{and}{1}
\left | Z_u \right | 
\leqslant c_{\eqref{eq:lemma:estimates_partialG}} |g|_1 \sigma_u
\sptext{1}{for}{1} (u,y) \in [0,T) \times \defg
\]
Let $0 \leqslant a < t <T$. From \cref{lemma:ito_brownian_case_new} with $\alpha=0$ we get that
\[ Z_t  =   Z_a + \int_{(a,t]}  H_u \od W_u  + \int_{(a,t]}  \sigma'(Y_u) Z_u \od W_u  
+ \frac{1}{2} \int_{(a,t]} (\sigma \sigma{''})(Y_u) Z_u \od u \quad \mbox{a.s.} \]
Then one has, a.s.,
\begin{align*}
&    \sqrt{\ce{\F_a}{\int_a^t  H_u^2 \od u}}\\
&   \leqslant  \sqrt{\ce{\F_a}{|Z_t-Z_a|^2}} 
+ \|\sigma'\|_{B_b(\defg)} \sqrt{\ce{\F_a}{\int_a^t  Z_u^2 \od u}}
+ \frac{1}{2} \| \sigma \sigma{''}\|_{B_b(\defg)} \sqrt{\ce{\F_a}{\left | \int_a^t  |Z_u| \od u\right |^2}} \\
&   \leqslant  \sqrt{\ce{\F_a}{|Z_t-Z_a|^2}} 
+ \left [ \|\sigma'\|_{B_b(\defg)} + \frac{\sqrt{T}}{2}  \| \sigma \sigma{''}\|_{B_b(\defg)} \right ]
\sqrt{\ce{\F_a}{\int_a^t  Z_u^2 \od u}} \\
&   \leqslant  c_{\eqref{eq:lemma:estimates_partialG}} |g|_1  \left [ \sqrt{\ce{\F_a}{\sigma_t^2}} + \sigma_a \right ] 
+  c_{\eqref{eq:lemma:estimates_partialG}}  |g|_1 \left [ \|\sigma'\|_{B_b(\defg)} + \frac{\sqrt{T}}{2}  \| \sigma \sigma{''}\|_{B_b(\defg)} \right ] \sqrt{T}
\sqrt{\ce{\F_a}{\sup_{u\in (a,T]} \sigma_u^2}} \\
&   \leqslant  c_{\eqref{eq:lemma:estimates_partialG}} |g|_1
\left [ 2+  \sqrt{T} \|\sigma'\|_{B_b(\defg)} + \frac{T}{2}  \| \sigma \sigma{''}\|_{B_b(\defg)} \right ]
\sqrt{\ce{\F_a}{\sup_{u\in [a,T]} \sigma_u^2}} \\
&   \leqslant  c_{\eqref{eq:lemma:estimates_partialG}} |g|_1
\left [ 2+  \sqrt{T} \|\sigma'\|_{B_b(\defg)} + \frac{T}{2}  \| \sigma \sigma{''}\|_{B_b(\defg)} \right ]
\|\sigma\|_{\cSM_2([0, T])} \sigma_a
\end{align*}
and hence
\begin{equation}\label{eqn:upper_bound_for_H_Lip}
          \sqrt{\ce{\F_a}{|M_t-M_a|^2}}
     =    \sqrt{\ce{\F_a}{\int_{(a,t]}  H_u^2 \od u}}
\leqslant c_{\eqref{eqn:upper_bound_for_H_Lip}} |g|_1 \|\sigma \|_{\cSM_2([0, T])}
          \sigma_a \mbox{ a.s.,}
\end{equation}
for some $c_{\eqref{eqn:upper_bound_for_H_Lip}}>0$. Applying 
\cref{lemma:estimates_partialG} for $\theta=1$ and 
\eqref{eqn:upper_bound_for_H_Lip} (together with Doob's maximal inequality)
to \cref{statement:abstract:pde_new}
for $\Phi_a=c\sigma_a$ for some appropriate $c>0$ and $\theta=1$
(note that $(T-a)/(T-\underline{a}(\tau)) \leqslant 1$),
we derive \eqref{item:2:thm:pde_for_theta_equals_one}.
\medskip

\eqref{item:2:thm:pde_for_theta_equals_one} $\Rightarrow$ \eqref{item:1:thm:pde_for_theta_equals_one} 
Given $a\in (0,T)$, exploiting \eqref{eq:2:item:2:statemtent:equivalence_simplified_setting_new}
of Theorem \ref{statemtent:equivalence_simplified_setting_new} and
\eqref{eq:err_vs_square_function} give 
\begin{equation}
\label{eqn:local_boundedness_theta_one}
\sup_{s\in [0,a]}
\frac{T-a}{T-s} |\varphi_a-\varphi_s |^2 \leqslant c_{\eqref{eqn:local_boundedness_theta_one}}^2 \mbox{ a.s.}
\end{equation}
For $a\in \left (\frac{T}{2},T\right )$ we choose $s\in (0,a)$ such that 
$\frac{T-a}{T-s} = \frac{1}{2}$. Therefore we may continue to 
\[           \left | \frac{\partial G}{\partial y}(a,y_a) \right | 
\leqslant \left | \frac{\partial G}{\partial y}(s,y_s) \right |
+ \sqrt{2}  c_{\eqref{eqn:local_boundedness_theta_one}}
\sptext{1}{for all}{1} y_a,y_s\in \defg\]
where we use the positivity and continuity of the transition density 
$\Gamma_Y$ (for (C2) see \eqref{eq:transition_density_Y}) to get
$\supp((Y_s,Y_a))=\defg\times\defg$ and the 
continuity of $\frac{\partial G}{\partial y}(t,\cdot):\defg\to\R$ for $t\in [0,T)$.
Applying \cref{lemma:varphi_martingale}, we have $\E^{\hat{\p}} \varphi(s,Y_s) = \varphi(0,Y_0)$ for $s\in [0,T)$.
Therefore, for each $s\in [0,T)$ there are $\omega_s^0,\omega_s^1\in \Omega$ such that for
$y_s^i:= Y_s(\omega_s^i)\in \defg$ we have 
$\varphi(s,y_s^0) \leqslant \varphi(0,Y_0) \leqslant \varphi(s,y_s^1)$.
Because $y\to \frac{\partial G}{\partial y}(s,y)$ is continuous on $\defg$ we find a $y_s\in \defg$ such that 
$\varphi(s,y_s) = \varphi(0,y_0)$. Therefore,
\begin{equation}\label{eq:Lipschitz_constant_gradient}
             \left | \frac{\partial G}{\partial y}(a,y) \right | 
   \leqslant \left | \frac{\partial G}{\partial y}(0,y_0) \right | + 
             \sqrt{2}  c_{\eqref{eqn:local_boundedness_theta_one}} =: c_\eqref{eq:Lipschitz_constant_gradient}
   \sptext{1}{for all}{1} (a,y)\in (0,T)\times \defg.
\end{equation}
Let $\Omega_g\in\F$ be of measure one such that for all $\omega \in \Omega_g$ one has
\[ \lim_{t\uparrow T} G(t,Y_t(\omega)) = g(Y_T(\omega)). \]
Let $I_g := Y_T(\Omega_g) \subseteq \defg$. Then $g$ is Lipschitz  
on $I_g$ with Lipschitz constant $c_\eqref{eq:Lipschitz_constant_gradient}$, 
and since $I_g$ is dense in $\defg$, the function $g|_{I_g}$ can be extended to 
$\tilde g \colon \defg\to \R$ as a Lipschitz function on $\defg$. Moreover, 
$\p(\{ \omega \in \Omega : g(Y_T(\omega)) =  \tilde g(Y_T(\omega))\})
\geqslant \p(\Omega_g) = 1$. \qed
\smallskip

\subsection{Proof of \cref{statement:Z_like_a_martingale}}
From $\sup_{t\in [0,T)} \Phi_t \in L_q$, $\cI^\alpha Z-Z_0\in \BMO_2^\Phi([0,T))$, and 
\eqref{eqn:2:statement:Lp-BMO_new} of \cref{statement:Lp-BMO_new}
we deduce that $\sup_{t\in [0,T)}|\cI_t^\alpha Z|\in L_q$.
By \cref{statment:IZ_vs_HdW_new}\eqref{it:b} we conclude that $\sup_{t\in [0,T)} \left \| \cI^\alpha_t M \right \|_{L_q} < \infty$
and obtain from the martingale property the $L_q$- and a.s. convergence of $\cI^\alpha M$.
Now we can use \eqref{it:b} and \eqref{it:c} to obtain the $L_q$- and a.s. convergence of $(\cI^\alpha Z_t)_{t\in [0,T)}$.
\qed
\smallskip

\subsection{Proof of \cref{statement:brownian_case_theta_new}}
For $\alpha:=\frac{1-\theta}{2}$ \cref{statment:IZ_vs_HdW_new}\eqref{it:a} implies that 
\[ \cI^\alpha Z - Z_0\in \BMO_2^\Phi([0,T))
   \Leftrightarrow 
   \cI^\alpha M \in \BMO_2^\Phi([0,T)). \]
Now \eqref{item:1:statement:brownian_case_theta_new} $\Leftrightarrow$ \eqref{item:2:statement:brownian_case_theta_new}
follows from \cref{equivalence_martingale} with 
\eqref{eq:equivalence_local_part_theta(0,1)}.
\qed
\smallskip

\subsection{Proof of \cref{statement:Hoelder_new_new}}
\eqref{item:1:statement:Hoelder_new_new} 
We only need to check the case (C2) and in this case we have $\sigma(y)\sim y$ so that we can use
\cref{statement:regularity_sigma_varphi}\eqref{item:1:statement:regularity_sigma_varphi}.
\eqref{item:2:statement:Hoelder_new_new} follows directly from \cref{lemma:estimates_partialG}.
\medskip

\eqref{item:3a:statement:Hoelder_new_new} 
We fix $a\in [0,T)$, a set $A\in \F_a$ of positive measure.
First we observe that by \eqref{eqn:ass:random_measures} (applied to $s=a$ and with $b\uparrow T$), 
\cref{lemma:estimates_partialG} for $\theta=0$, and 
\cref{statement:regularity_sigma_varphi}, 
\begin{align*}
 \sqrt{\int_A \int_a^T (T-u) H_u^2 \od u \od \p} 
& \sim_{\sqrt{\kappa}} 
  \sqrt{\int_A \int_a^T \left | \varphi_u - \varphi_a \right |^2 
              \sigma_u^2 \od u \od \p}\\
& \leqslant 
  \sqrt{\int_A \int_a^T Z_u^2 \od u \od \p} +
  \sqrt{\int_A \varphi_a^2 \int_a^T \sigma_u^2 \od u \od \p}\\
&\leqslant \sqrt{\int_A g(Y_T)^2 \od \p} + 
           \sqrt{\int_A \left [c_\eqref{eq:lemma:estimates_partialG}^2 |g|_0^2  \sigma_a^{-2} (T-a)^{-1}      \right ] \!\!\!
                        \left [c_\eqref{eqn:statement:reverse_hoelder_for_sigma}^2 (T-a) \sigma_a^2 \right ] \od \p} \\
&\leqslant c_0 \| g \|_{B_b(\defg)} \sqrt{\p(A)}.
\end{align*}
On the other hand \eqref{eqn:upper_bound_for_H_Lip} gives 
\begin{equation*}
          \sqrt{\int_A \int_a^T H_u^2 \od u \od \p}
\leqslant c_{\eqref{eqn:upper_bound_for_H_Lip}} |g|_1 \| \sigma \|_{\cSM_2([0,T])}
           \sqrt{\int_A  \sigma_a^2 \od \p}. 
\end{equation*}
For the linear map $T: g\mapsto \left ( H_u \right )_{u\in [a,T)}$
we get
\begin{align}
           \left \| T : C_b^0(\R) \to L_2([a,T)\times A, ((T-\cdot) \lambda\otimes \p_A))    \right \| 
& \leqslant c_0, \label{eqn:interpol:0} \\
           \left \| T : \hoell{1}{0} \to L_2([a,T)\times A, \lambda\otimes \p_A)  \right \| 
& \leqslant c_1 \sqrt{\int_A  \sigma_a^2 \od \p_A}, \label{eqn:interpol:1}
 \end{align}
where $\p_A$ is the normalized restriction of $\p$ to $A$.
Applying the Stein-Weiss interpolation theorem \cite[Theorem 5.4.1]{Bergh:Loefstroem:76} to 
\eqref{eqn:interpol:0} and \eqref{eqn:interpol:1} yields
\begin{align}\label{eqn:interpol:theta}
    \left \| T : (C_b^0(\R), \hoell{1}{0})_{\theta,2}  \to L_2([a,T)\times A, ((T-\cdot)^{1-\theta} \lambda\otimes \p_A))
    \right \| & \leqslant c_{\eqref{eqn:interpol:theta}} \left (\int_A  \sigma_a^2 \od \p_A \right )^\frac{\theta}{2},
 \end{align}
with $c_{\eqref{eqn:interpol:theta}}:=  C c_0^{1-\theta} c_1^\theta$. In other words, we did prove
\begin{equation*}
\left ( \int_A \int_a^T (T-u)^{1-\theta} H_u^2 \od u \od \p_A\right )^\frac{1}{2}
\leqslant c_{\eqref{eqn:interpol:theta}} \left (\int_A  \sigma_a^2 \od \p_A \right )^\frac{\theta}{2}
\|g\|_{\hoelO{\theta,2}}.
\end{equation*}
For $\delta \in (0,1)$ and $l\in \Z$ define $A_l := \{ \delta^{l+1} < \sigma_a^2 \leqslant \delta^l \}$. 
Then
\begin{align*}
    \int_A \int_a^T (T-u)^{1-\theta} H_u^2 \od u \od \p_A
& = \sum_{\p(A\cap A_l)>0} \left ( \int_{A\cap A_l} \int_a^T (T-u)^{1-\theta} H_u^2 \od u \od \p_{A\cap A_l}\right ) \p_A(A\cap A_l)\\
& \leqslant c_{\eqref{eqn:interpol:theta}}^2 \sum_{\p(A\cap A_l)>0}  \left (\int_{A\cap A_l}  \sigma_a^2 \od \p_{A\cap A_l} \right )^\theta \p_A(A\cap A_l)
            \,\,\|g\|_{\hoelO{\theta,2}}^2 \\
& \leqslant c_{\eqref{eqn:interpol:theta}}^2 \sum_{\p(A\cap A_l)>0}  \delta^{l\theta}  \p_A(A\cap A_l) 
            \,\,\|g\|_{\hoelO{\theta,2}}^2 \\
& \leqslant c_{\eqref{eqn:interpol:theta}}^2 \delta^{-\theta} \int_A \sigma_a^{2\theta} \od \p_A
            \,\,\|g\|_{\hoelO{\theta,2}}^2.
\end{align*}
As $\delta \in (0,1)$ was arbitrary, we derive
\begin{equation*}
          \int_A \int_a^T (T-u)^{1-\theta} H_u^2 \od u \od \p_A
\leqslant c_{\eqref{eqn:interpol:theta}}^2 \int_A  \sigma_a^{2\theta} \od \p_A \,\,\|g\|_{\hoelO{\theta,2}}^2
\end{equation*}
and, for $0 \leqslant a \leqslant t < T$,
\begin{equation*}
          T^{1-\theta} \ce{\F_a}{\left | \cI^{\frac{1-\theta}{2}}_t M - \cI^{\frac{1-\theta}{2}}_a M\right |^2}
\leqslant \ce{\F_a}{\int_a^T (T-u)^{1-\theta} H_u^2 \od u }
\leqslant c_{\eqref{eqn:interpol:theta}}^2 \sigma_a^{2\theta} \|g\|_{\hoelO{\theta,2}}^2 \mbox{ a.s.}
\end{equation*}
by \eqref{eqn:cIalphaM_vs_Malpha}.
We use \cref{statment:IZ_vs_HdW_new}\eqref{it:a}
to conclude $\left ( \cI_t^\frac{1-\theta}{2} Z -Z_0\right )_{t\in [0,T)}\in \BMO_2^{\sigma(\theta)}([0,T))$
and finish by $\sigma(\theta)\in \cSM_p([0,T))$ and \cref{statement:Lp-BMO_new}\eqref{item:1:statement:Lp-BMO}
for $p\in (2,\infty)$, whereas $p\in (0,2]$ is obvious.

\eqref{item:3b:statement:Hoelder_new_new} 
The previous part combined with Doob's maximal inequality implies
\[ \ce{\F_a}{\sup_{t\in [a,T)} \left | \cI^{\frac{1-\theta}{2}}_t M - \cI^{\frac{1-\theta}{2}}_a M\right |^2}
\leqslant 4 T^{\theta-1} 
c_{\eqref{eqn:interpol:theta}}^2 \sigma_a^{2\theta} \|g\|_{\hoelO{\theta,2}}^2 \mbox{ a.s.}\]
Moreover, 
\begin{align*}
          \frac{T-a}{(T-\underline{a}(\tau))^\theta} \left | \varphi_a-\varphi_{\underline{a}(\tau)} \right |^2\sigma^2_a 
&\leqslant 
         2 (T-a)^{1-\theta}  (\varphi_a^2 +\varphi_{\underline{a}(\tau)}^2) \sigma^2_a \\
&\leqslant 
         2 (T-a)^{1-\theta} c^2 \left ( \sigma_a^{2(\theta-1)} (T-a)^{\theta-1}
                               +        \sigma_{\underline{a}(\tau)}^{2(\theta-1)} (T-\underline{a}(\tau))^{\theta-1} \right ) \sigma_a^2 \\
&\leqslant 
         2 c^2 \left ( \sigma_a^{2(\theta-1)} + \sigma_{\underline{a}(\tau)}^{2(\theta-1)} \right ) \sigma_a^2.
\end{align*}
Now we finish with \cref{statement:abstract:pde_new}.
\qed

\subsection{Proof of \cref{statement:intro:tail_hoeleta2}}
\label{proof:statement:intro:tail_hoeleta2}
By \cref{thm:pde_for_theta_equals_one} and \cref{statement:Hoelder_new_new}\eqref{item:3b:statement:Hoelder_new_new}
we get constants $c_\theta,c_1>0$ such that 
\[  \| E(g^{(1)};\tau)\|_{\BMO_2^{\sigma}([0,T))} \leqslant c_1 \sqrt{\| \tau\|_1}
    \sptext{1}{and}{1}
\| E(g^{(\theta)};\tau)\|_{\BMO_2^{\Phi(\tau,\theta)}([0,T))} \leqslant c_\theta \sqrt{\| \tau\|_\theta}
       \]
for all $\tau\in \cT$. Using $\|\tau\|_1 \leqslant T^{1-\theta}\|\tau\|_\theta$, $\overline{\Phi} := \sigma+\Phi(\tau,\theta)$, 
and $\overline{c}:= c_\theta \vee (T^{\frac{1-\theta}{2}} c_1)$ we derive
\[  \| E(g;\tau)\|_{\BMO_2^{\overline{\Phi}}([0,T))} \leqslant \overline{c} \sqrt{\| \tau\|_\theta}.\]
Fix $a\in [0,T)\cap\tau$ and $B\in \F_a$ of positive measure. Then
\[ \left \| \left ( \frac{E_t(g;\tau)-E_a(g;\tau)}{\overline{d}(\sigma_a \vee \sigma_a^\theta)} \1_B \right )_{t\in [a,T)}
    \right \|_{\BMO_2^{\left ( \frac{\overline{\Phi}_t \1_B}{\overline{d}(\sigma_a \vee \sigma_a^\theta)} \right )_{t\in [a,T)}} ([a,T))}
   \leqslant   \overline{c} \sqrt{\| \tau\|_\theta}. \]
As in \cref{rema:statement:Hoelder_new_new} with the same notation one checks  
$\sup_{t\in [a,T)} \overline{\Phi}_t\leqslant \overline{d} (\sigma_a \vee \sigma_a^\theta) R_a$ a.s.
for some $\overline{d}>0$. Finally from \eqref{eqn:1:statement:Lp-BMO_new} of \cref{statement:Lp-BMO_new}
we deduce for $\mu >0$ and $\nu \ge 1$ that
\begin{align*}
           \p_B\left (\sup_{t\in [a,T)} \left |\frac{E_t(g;\tau)-E_a(g;\tau)}{\overline{d}(\sigma_a \vee \sigma_a^\theta)}\right | 
            > b  (\overline{c} \sqrt{\|\tau\|_\theta}) \mu \nu \right ) 
&\leqslant \e^{1-\mu} + \beta \p_B\left (\sup_{t\in [a,T)}\frac{\overline{\Phi}_t}{\overline{d}(\sigma_a \vee \sigma_a^\theta)}
           >  \nu \right ) \\
& \leqslant \e^{1-\mu} + \beta \p_B\left (R_a >  \nu \right ) \\
& \leqslant \e^{1-\mu} + \beta A \e^{-\left ( \frac {\ln \nu}{A} \right )^2},
\end{align*}
where $b,\beta>0$ are absolute constants, the last inequality follows from 
\cref{eqn:tail_RAa}, and $\p_B$ is the normalized restriction of $\p$ to $B$.
Now we choose $\mu=\nu=\sqrt{\lambda}$ and estimate the right-hand side by
$\p(B) \leqslant \overline{A}  \e^{-\left ( \frac {\ln \lambda}{\overline{A}} \right )^2}$ for
$\lambda \geqslant \lambda_0 \geqslant 1$.
\qed

\begin{rema}\label{statement:smoothness_powered_option}
Given $K>0$, we decompose $g_\gamma(y):= ((y-K)^+)^\gamma$ into
\[ g_\gamma(y) =  [g_\gamma(y)\wedge 1] + [g_\gamma(y)- (g_\gamma(y)\wedge 1)] 
=: g_\gamma^{(b)}(y) + g_\gamma^{(1)}(y) ,\]
so that $g_\gamma^{(b)} \in \hoelO {\gamma,\infty} \cap C_b^0(\R)$ and $g_\gamma^{(1)}\in \hoell{1}{0}$. 
Therefore, $g_\gamma\in \hoelO{\theta,1} + \hoell{1}{0}$ by
\eqref{eqn:inclusion_Hoelder_spaces}.
\end{rema}


\section[Interpolation]{A general interpolation result}
\label{sec:interpolation}

The following interpolation result is designed to prove \cref{statement:upper_bounds_Drho}, but of independent interest.
For this section we assume
\begin{enumerate}
\item $\kappa_0,\kappa_1\in [0,1]$ and $0\leqslant \gamma_0 < \gamma_1 < \infty$, 
\item a finite measure space $(R,\cR,\pi)$ with $\pi(R)>0$,
\item an interpolation couple of Banach spaces $(E_0,E_1)$ and a Banach space $E$, 
\item measurable maps $A_0,A_1:R\to [0,\infty]$ with 
      \[ \int_R (A_i \wedge w) \od \pi \leqslant c_i w^{1-\kappa_i} 
         \sptext{1}{for}{1}
         w \geqslant T^{-\gamma_i},\]
\item for $(t,r)\in [0,T)\times R$ linear operators $T_{t,\pi},T_{t,r} : E_0+E_1 \to E$ such that
      \begin{enumerate}
      \item $\| T_{t,r} x \|_E \leqslant  d_i \min \{ A_i(r), (T-t)^{-\gamma_i} \} \| x\|_{E_i}$ for $x\in E_i$ and $r\in R$, 
      \item if $\|T_{t,\cdot}x \|_E \leqslant P(\cdot)$ on $R$, where $P:R\to [0,\infty)$ is measurable and $x\in E_0+E_1$, then
            $\|T_{t,\pi}x \|_E \leqslant \int_R P(r) \pi(\od r)$,
      \end{enumerate}
\item \label{item:6:general_interpolation}
      $\|T_{s,\pi}x \|_E \leqslant  \|T_{t,\pi}x \|_E$ for all $0\leqslant s < t< T$ and $x\in E_0+E_1$,
\end{enumerate}
where $c_0,c_1,d_0,d_1>0$ are constants. 
Note that the map $[0,T)\ni t \mapsto  \| T_{t,\pi}x\|_E$ is measurable by assumption \eqref{item:6:general_interpolation}.
Under the above assumptions the following statement holds:
\bigskip

\begin{theo}\label{statement:abstract_interpolation_new}
Assume that $(1-\kappa_0)\gamma_0 \not = (1-\kappa_1)\gamma_1$. Then 
for all $(\delta,q)\in (0,1)\times [1,\infty]$ there is a 
$c_\eqref{eqn:statement:abstract_interpolation} =
 c_\eqref{eqn:statement:abstract_interpolation}(\kappa_0,\kappa_1,\gamma_0,\gamma_1,\delta,q)>0$
such that, for  $\alpha:= (1-\delta)(1-\kappa_0)\gamma_0 + \delta (1-\kappa_1)\gamma_1$,
\begin{equation}\label{eqn:statement:abstract_interpolation}
 \big \| t\mapsto (T-t)^\alpha \| T_{t,\pi}  x \|_E \big \|_{L_q([0,T),\frac{\od t}{T-t})}
  \leqslant c_\eqref{eqn:statement:abstract_interpolation} (c_0 d_0)^{1-\delta} (c_1 d_1)^{\delta} \| x\|_{(E_0,E_1)_{\delta,q}} 
  \sptext{1}{for}{1}
  x\in (E_0,E_1)_{\delta,q}.
\end{equation}
\end{theo}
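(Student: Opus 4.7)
I would prove the estimate in three short steps. The first step extracts the two endpoint estimates by integrating the pointwise bound over $R$. The second step applies the $K$-functional definition of the couple $(E_0,E_1)$ pointwise in $t$, producing a bound involving $K(v(t),x;E_0,E_1)$ for an appropriate reparametrisation $v(t)$. The third step performs a change of variables $v = v(t)$ to identify the resulting $L_q$-norm with the interpolation norm of $x$.

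\textbf{Step 1 (endpoint estimates).} Setting $\beta_i := (1-\kappa_i)\gamma_i$ and fixing $x \in E_i$, assumption (a) gives $\|T_{t,r}x\|_E \leqslant d_i\min\{A_i(r),(T-t)^{-\gamma_i}\}\|x\|_{E_i}$. Since $w := (T-t)^{-\gamma_i} \geqslant T^{-\gamma_i}$, the hypothesis on $A_i$ combined with assumption (b) (applied to $P(r) := d_i \min\{A_i(r),w\}\|x\|_{E_i}$) yields
\[
\|T_{t,\pi}x\|_E \leqslant c_i d_i (T-t)^{-\beta_i}\|x\|_{E_i}, \qquad i=0,1.
\]

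\textbf{Step 2 (pointwise $K$-estimate).} Assume without loss of generality $\beta_0 < \beta_1$. For any decomposition $x = x_0 + x_1$ with $x_i \in E_i$, linearity of $T_{t,\pi}$ and Step 1 give
\[
\|T_{t,\pi}x\|_E \leqslant c_0 d_0 (T-t)^{-\beta_0}\bigl[\|x_0\|_{E_0} + v(t)\|x_1\|_{E_1}\bigr]
\]
where $v(t) := \tfrac{c_1 d_1}{c_0 d_0}(T-t)^{-(\beta_1-\beta_0)}$. Taking the infimum over decompositions, using $\alpha - \beta_0 = \delta(\beta_1 - \beta_0)$, and rewriting $(T-t)^{\alpha-\beta_0} = v(t)^{-\delta}(c_1 d_1/(c_0d_0))^\delta$, one obtains the key pointwise bound
\[
(T-t)^\alpha \|T_{t,\pi}x\|_E \leqslant (c_0 d_0)^{1-\delta}(c_1 d_1)^\delta\, v(t)^{-\delta} K(v(t),x;E_0,E_1).
\]

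\textbf{Step 3 (change of variables).} The substitution $v = v(t)$ satisfies $\tfrac{\od t}{T-t} = \tfrac{1}{\beta_1-\beta_0}\tfrac{\od v}{v}$, and $v$ ranges over $[v_0,\infty)$ with $v_0 := \tfrac{c_1 d_1}{c_0 d_0} T^{-(\beta_1-\beta_0)}$. For $q < \infty$ this gives
\[
\int_0^T \bigl[(T-t)^\alpha \|T_{t,\pi}x\|_E\bigr]^q \frac{\od t}{T-t} \leqslant \bigl[(c_0 d_0)^{1-\delta}(c_1 d_1)^\delta\bigr]^q \frac{1}{\beta_1-\beta_0}\int_{v_0}^\infty \bigl[v^{-\delta}K(v,x;E_0,E_1)\bigr]^q \frac{\od v}{v},
\]
and the latter is bounded by $(\beta_1-\beta_0)^{-1}\|x\|_{(E_0,E_1)_{\delta,q}}^q$ by definition of the interpolation norm. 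The case $q=\infty$ is immediate by taking suprema in the pointwise bound and passing from $v\in[v_0,\infty)$ to $v\in(0,\infty)$. Measurability of $t\mapsto \|T_{t,\pi}x\|_E$ needed for the $L_q$-norm is provided by the monotonicity assumption~(6).

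\textbf{Main obstacle.} The only delicate point is the non-degeneracy hypothesis $(1-\kappa_0)\gamma_0 \neq (1-\kappa_1)\gamma_1$, i.e.\ $\beta_0\neq\beta_1$: if this failed, the reparametrisation $v(t)$ would be constant, the change of variables in Step~3 would collapse, and the $K$-functional trick would produce no smoothing in $t$. Aside from this, the argument is a straightforward application of the $K$-method once the endpoint estimates are in place; the constant $c_{\eqref{eqn:statement:abstract_interpolation}}$ can be taken to be $|\beta_1-\beta_0|^{-1/q}$.
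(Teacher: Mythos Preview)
Your proof is correct and takes a genuinely different, more elementary route than the paper. The paper first discretises at the dyadic times $t_k=T(1-2^{-k})$, records the two endpoint bounds as a mapping $x\mapsto (T_{t_k,\pi}x)_k$ from $E_i$ into the weighted sequence space $\ell_\infty^{-\beta_i}(E)$, and then invokes the black-box identity $(\ell_\infty^{-\beta_0}(E),\ell_\infty^{-\beta_1}(E))_{\delta,q}=\ell_q^{-\alpha}(E)$ together with the exact interpolation property; the monotonicity assumption (6) is then used a second time to compare the discrete $\ell_q^{-\alpha}$-norm with the continuous $L_q([0,T),\tfrac{\od t}{T-t})$-norm. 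You bypass all of this by applying the $K$-functional definition pointwise in $t$ and performing the explicit change of variables $v=v(t)$, so that assumption (6) enters only to guarantee measurability. Your approach avoids the sequence-space interpolation result entirely and, as you note, yields the explicit constant $c_{\eqref{eqn:statement:abstract_interpolation}}=|\beta_1-\beta_0|^{-1/q}$ (with $c_{\eqref{eqn:statement:abstract_interpolation}}=1$ for $q=\infty$), which is sharper information than the paper's argument provides. The paper's route has the virtue of being modular---it isolates the interpolation step as a known result---but yours is self-contained and arguably cleaner.
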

\smallskip

\begin{proof}
For $t\in [0,T)$ we observe that 
\[  \| T_{t,\pi}x \|_E
    \leqslant d_i \int_R \min \{ A_i(r), (T-t)^{-\gamma_i} \} \pi(\od r) \| x\|_{E_i} 
    \leqslant d_i c_i (T-t)^{(\kappa_i-1)\gamma_i}  \| x\|_{E_i}.
\]
For $t_k := T(1-\frac{1}{2^k})$, $k\in \bN_0$, and $\alpha_i := (1-\kappa_i)\gamma_i$ this gives 
\begin{equation}\label{eqn:interpolation_start}
\|  (T_{t_k,\pi}x)_{k\in \bN_0} \|_{\ell_\infty^{-\alpha_i}(E)}  \leqslant c_i d_i T^{-\alpha_i} \| x\|_{E_i}.
\end{equation}
Using real interpolation, \eqref{eq:interpol_ell_q^s_constant}, and \eqref{eq:functor_with_constant_1_real_interpolation}, we derive
\[ \|  (T_{t_k,\pi}x)_{k\in \bN_0} \|_{\ell_q^{-\alpha}(E)}  
   \leqslant c_{\alpha_0,\alpha_1,\delta,q} \, (c_0 d_0)^{1-\delta} (c_1 d_1)^{\delta} \, T^{-\alpha} \| x\|_{(E_0,E_1)_{\theta,q}}. \]
The assertion follows, because assumption \eqref{item:6:general_interpolation} implies that 
\[ \big \| t\mapsto (T-t)^{\alpha} \| T_{t,\pi} x \|_E \big \|_{L_q([0,T),\frac{\od t}{T-t})}
   \sim_c  T^{\alpha} \|  (T_{t_k,\pi} x)_{k\in \bN_0} \|_{\ell_q^{-\alpha}(E)}, \]
where $c\geqslant 1$ depends at most on $(\alpha,q)$.
\end{proof}
\bigskip


\section[Gradient estimates: L\'evy setting]{L\'evy setting: Directional gradient estimates and applications}
\label{sec:application_levy_case}

\subsection{Setting}
\label{sec:setting_levy_case}
Let $X=(X_t)_{t \in [0,T]}$ be a  L\'evy process defined on a complete probability space $(\Omega, \F, \p)$, i.e. $X_0\equiv 0$, 
$X$ has stationary and independent increments, and c\`adl\`ag trajectories. Assume that 
$\bF=(\F_t)_{t \in [0,T]}$ is the augmented natural filtration of $X$ and $\F=\F_T$. The Poisson random measure $N$ associated to $X$ is defined 
by $N(E) := \#\{t\in (0, T] : (t,\Delta X_t) \in E\}\in \bN_0\cup\{\infty\}$
for $E\in \cB((0,T]\times \R)$.
For $B\in \cB(\R)$ with $(-\varepsilon,\varepsilon)\cap B =\emptyset$ for some $\varepsilon>0$ we define 
$\nu(B) := \frac{1}{T}\E N((0, T] \times B)$. 
By $\varepsilon\downarrow 0$ this measure has a unique extension to a $\sigma$-finite Borel measure $\nu$ on $\cB(\R)$ such that $\nu(\{0\})=0$
and $\int_\R (x^2 \wedge 1) \nu (\od x) < \infty$. The measure $\nu$ is the L\'evy measure associated to 
$X=(X_t)_{t \in [0,T]}$.
Let $\sigma \geqslant 0$ 
be the coefficient of the standard Brownian motion $W$ in the L\'evy--It\^o decomposition of $X$ (see, e.g., \cite[Theorem 19.2]{Sa13}). 
We define the $\sigma$-finite measure $\mu$ on $\mathcal B(\R)$ by 
\begin{align} \label{eqn:mu}
\mu(\od x) :=\sigma^2 \delta_{0}(\od x)+x^2\nu(\od x).
\end{align}
For this section we assume the following setting:
\begin{enumerate}[(1)]
\item \label{it:1:L}
      $X=(X_t)_{t\in [0,T]}$ is a L\'evy process with $\mu(\R) \in (0,\infty]$.
\item We use the description $\supp(X_t)=Q+\ell t$, 
      $t\in (0,T]$, from \cref{exam:oscillation_Levy_process} and
      \[ Y_t := (X_t -\ell t) \1_{\{ X_t \in \supp (X_t) \}}
         \sptext{1}{for}{.5} t\in [0,T]. \]
     As outlined in \cref{exam:oscillation_Levy_process},
     the set $Q$ is unique, and $\ell$ is unique as well if  
     $Q\not = \R$. In the case $Q=\R$ we let $\ell:=0$ so that 
     $Y_t=X_t$.
\item \label{it:3:L}
      $\rho$ is a probability measure on $\cB(\R)$.
\end{enumerate}


\bigskip
\subsection{Definition of $D_\rho F(t,x)$}
\label{subsec:Drho}

To define the operator $D_\rho$ and its range of definition we first recall a class of functions that are of local bounded variation:
\smallskip

\begin{defi}
\label{definition:bvloc}
A Borel function $f:\R\to\R$ belongs to $\bvlocal$ provided that $f$ is right-continuous and
there are Borel measures $\mu_f^+$ and $\mu_f^-$ on
$\cB(\R)$, finite on each compact interval, and disjoint $S^+,S^-\in \cB(\R)$ with $S^+\cup S^-=\R$ and $\mu_f^+(S^-)=\mu_f^-(S^+)=0$, such that
\[ f(b) -f(a) = \mu_f^+((a,b]) - \mu_f^-((a,b])
   \sptext{1.2}{for all}{.7}
   -\infty < a < b < \infty.\]
Furthermore, we let $|f'|:= \mu_f^+ + \mu_f^-$ and, for a Borel function
$g:\R\to \R$ with $\int_\R |g(x)| |f'|(\od x)<\infty$,
\[ \int_\R g(x) f'(\od x):=\int_\R g(x) \mu_f^+(\od x)-\int_\R g(x) \mu_f^-(\od x). \]
\end{defi}   
The pair of measures $(\mu^+_f,\mu_f^-)$ is unique and we will identify 
\begin{equation}\label{eqn:f'_measure}
f' = (\mu^+_f,\mu^-_f). 
\end{equation}
The space $\bvlocal$ consists of functions that are of bounded variation of on each compact interval
(cf. \cite[Chapter 8]{Rudin:real_and_complex:70}).
Now we can define the operator $D_\rho$ and its range of definition:

\begin{defi}
\label{statement:definition_Gamma}
\begin{enumerate}[(1)]
\item A Borel function $f\colon \R \to \R$ belongs to $\defX$ if  
      $\E|f(x+X_{s})|<\infty$ for all $(s,x)\in [0,T]\times \R$. For $f\in \defX$ we define
      $F \colon [0,T]\times \R\to\R$ by
      \begin{equation}\label{eqn:def_F}
      F(t,x) := \E f(x+X_{T-t}).
      \end{equation}

\item \label{item:2:statement:definition_Gamma}
      Given $f\in \defX$ such that 
      \[ \int_{\RO} \frac{|F(t,x+z) - F(t,x)|}{|z|} \rho(\od z)<\infty 
         \sptext{1}{for all}{1} (t,x)\in [0,T) \times \R, \]
      we define  $D_\rho F:[0,T)\times \R\to \R$ by
      \[ D_{\rho} F(t,x)  := \int_{\RO} \frac{F(t,x+z) - F(t,x)}{z} \rho(\od z). \]
\item  For $t\in [0,T)$ we define $\Gamma_{t,\rho}^0:\dom(\Gamma_{\rho}^0)\to \R$ by
\begin{align*}
\dom(\Gamma_{\rho}^0) & :=  \Bigg \{ f \in \defX : 
               \forall s\in [0,T) \,\, \forall \, 0\leqslant \delta \leqslant s <T \,\,
                              \forall x\in \R \mbox{ one has:} \\
                      &  \hspace*{8 em}  \E \int_{\RO} \left | \frac{F(s,x+X_\delta + z) 
                          - F(s,x+ X_\delta)}{z} \right |  \rho(\od z) < \infty \Bigg \}, \\
\langle f,\Gamma_{t,\rho}^0 \rangle &:= D_\rho F(t,0).
\end{align*}
\item For $t\in [0,T)$ we define the Borel function $\gamma_{t,\rho}:\R \to [0,\infty]$
      and $\Gamma_{t,\rho}^1:\dom(\Gamma_{\rho}^1)\to \R$ by
\begin{align*}
\gamma_{t,\rho}(v) & := \int_{\R\setminus \{ 0\}}  \frac{\p (X_{T-t} \in J(v;z))}{|z|} \rho(\od z)
                                \sptext{1}{with}{1} J(v;z):=  v + [-z^+,z^-), \\
   \dom(\Gamma_{\rho}^1) & := \Bigg \{ f\in \defX\cap \bvlocal \mbox{ and } \forall \, 0\leqslant \delta \leqslant s <T  \,\,
                               \forall x\in \R: \\
                         &  \hspace*{17em}  \E \int_\R \gamma_{s,\rho}(v-x-X_\delta) |f'|(\od v) < \infty \Bigg \},\\    
\langle f, \Gamma^1_{t,\rho} \rangle & = \langle f', \gamma_{t,\rho} \rangle := \int_\R \gamma_{t,\rho}(v) \, f'(\od v).
\end{align*} 
\end{enumerate}
\end{defi}
We shall use the following properties. As they are obvious or standard we omit their proofs:

\begin{lemm}
\label{statement:easy_properties_Gamma}
For $f\in \dom(\Gamma_\rho^0)$ the following holds:
\begin{enumerate}[{\rm (1)}] 
\item $D_\rho F(t,x) = \langle f(x+\cdot),\Gamma_{t,\rho}^0 \rangle$.
\item \label{item:2:statement:easy_properties_Gamma} 
      One has that $t\mapsto d(t):=\|D_\rho F(t,\cdot)\|_{B_b(\R)} \in [0,\infty]$ is non-decreasing. 
\item The process $(D_\rho F(t,X_t))_{t\in [0,T)}$ is a martingale.   
\item \label{item:4:statement:easy_properties_Gamma} 
      There exists a c\`adl\`ag modification $\varphi=(\varphi_t)_{t\in [0,T)}$ of $(D_\rho F(t,X_t))_{t\in [0,T)}$
      such that 
      \[ |\varphi_t| \leqslant d(t+) \sptext{1}{on}{1} [0,T) \times \Omega. \]  
\end{enumerate}
\end{lemm}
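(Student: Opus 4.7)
The plan is to treat the four items in turn, exploiting only the stationary independent increments of $X$ (so that the associated two-parameter semigroup $F(s,x)=\ce{}{F(t,x+X_{t-s}')}$ is available), Fubini, and the domain conditions baked into \cref{statement:definition_Gamma}. For (1) I would set $h(y):=f(x+y)$ and note that $H(t,y):=\E h(y+X_{T-t})=F(t,x+y)$ by the definition of $F$. The defining condition of $\dom(\Gamma^0_\rho)$ for the function $f$, applied with $(s,\delta)$ and the shifted argument $x$, transfers verbatim to $h$, so $h\in\dom(\Gamma^0_\rho)$ and
\[
 \langle h,\Gamma^0_{t,\rho}\rangle=\int_{\RO}\frac{H(t,z)-H(t,0)}{z}\rho(\od z)=\int_{\RO}\frac{F(t,x+z)-F(t,x)}{z}\rho(\od z)=D_\rho F(t,x).
\]

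For (2) and (3) the common ingredient is the semigroup identity $F(s,x)=\E F(t,x+X'_{t-s})$ for $0\leqslant s\leqslant t<T$, where $X'_{t-s}$ denotes an independent copy of $X_{t-s}$. Inserting this inside the definition of $D_\rho F(s,x)$ and exchanging the order of integration (permitted by the $\delta=t-s$ version of the hypothesis in $\dom(\Gamma^0_\rho)$) yields
\[
 D_\rho F(s,x)=\E\,D_\rho F(t,x+X'_{t-s}),
\]
from which (2) follows by taking absolute values and then the supremum in $x$. For (3), applying the independent increments of $X$ together with the tower property in the form $\ce{\F_s}{F(t,X_t+z)}=F(s,X_s+z)$ (obtained by writing $X_T=X_s+(X_T-X_s)$ and using $X_T-X_s\stackrel{d}{=}X_{T-s}$ independent of $\F_s$), and once again exchanging $\rho(\od z)$ and $\ce{\F_s}{\cdot}$ via the same domain hypothesis, gives
\[
 \ce{\F_s}{D_\rho F(t,X_t)}=\int_{\RO}\frac{F(s,X_s+z)-F(s,X_s)}{z}\rho(\od z)=D_\rho F(s,X_s).
\]
The integrability needed to turn this computation into an honest martingale identity is again provided by the domain condition.

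For (4) the martingale from (3) admits a c\`adl\`ag modification $\varphi$ by the usual conditions imposed on $\bF$ in \cref{subsec:stochastic_basis}. The pointwise bound $|D_\rho F(t,X_t)|\leqslant d(t)$ (from item (2)) combined with the right-continuity of $\varphi$ and the monotonicity of $d$ yields $|\varphi_t|\leqslant d(t+)$ outside a $\p$-null set depending on $t$; taking the union over a countable dense set of times and a standard right-continuity argument then promotes this to the everywhere inequality on $[0,T)\times\Omega$ stated in the lemma. The main obstacle is really only bookkeeping: one has to pick the instance of the hypothesis in $\dom(\Gamma^0_\rho)$ with the right $(s,\delta,x)$ at each use of Fubini, and one has to be careful in (4) to secure the inequality everywhere on $[0,T)\times\Omega$ rather than merely a.s. for each $t$; both are routine once the domain is chosen as in \cref{statement:definition_Gamma}.
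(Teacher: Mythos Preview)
The paper explicitly omits the proof of this lemma, stating ``As they are obvious or standard we omit their proofs.'' Your argument is correct and is precisely the standard one the authors have in mind: item (1) is the shift-invariance built into the definition, items (2)--(3) follow from the semigroup relation $F(s,x)=\E F(t,x+X'_{t-s})$ together with Fubini (the integrability being furnished by the very hypothesis defining $\dom(\Gamma^0_\rho)$ with the choice $\delta=t-s$), and item (4) is the usual passage to a c\`adl\`ag modification via the right-continuous filtration.

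One small point worth making explicit in (4): the countable-dense-set argument yields $|\varphi_t|\leqslant d(t+)$ for all $t\in[0,T)$ only on a set $\Omega^*$ of full measure, not on all of $\Omega$. To obtain the inequality literally on $[0,T)\times\Omega$ as stated, you then redefine $\varphi$ to be identically zero (say) on $\Omega\setminus\Omega^*$; since $\Omega^*\in\F_0$ by completeness, the resulting process is still adapted, c\`adl\`ag, and a modification of $(D_\rho F(t,X_t))_{t\in[0,T)}$. This is presumably what you mean by ``routine'', and it is.
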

\medskip

\begin{conv}
If we consider $(D_\rho F(t,X_t))_{t\in [0,T)}$ as stochastic process,
then we always choose a c\`adl\`ag modification from \cref{statement:easy_properties_Gamma}\eqref{item:4:statement:easy_properties_Gamma}.
\end{conv}

In \cref{sec:upper_estimate_gradient_general_levy} we aim for estimates of type
\[    \| D_\rho F(t,\cdot)\|_{B_b(\R)} \le c(t) |f |_E 
   \sptext{1}{for all}{1}
   f \in E \]
where $E$ is a semi-normed space. This is achieved by the following 
\cref{statement:Bb_vs_computation_in_zero} and  \cref{def:Gamma_functional_semi-norm}:
\medskip

\begin{lemm}
\label{statement:Bb_vs_computation_in_zero}
Assume a linear space $E$ of functions $f:\R\to \R$ equipped with a semi-norm $|\cdot|_E$.
Define $E^0 := \{ g : \R\to \R \mbox{ with } g(x):=f(x_0+x) -f(x_0): f\in E, x_0\in \R\}$ and suppose that 
\[ E^0\subseteq E \subseteq \dom(\Gamma_\rho^0)
   \sptext{1}{with}{1}
  | x \mapsto f(x_0+x) -f(x_0) |_E = |f|_E.\]
Then, given $c>0$ and $t\in [0,T)$, one has 
\begin{align*}
   \| D_\rho F(t,\cdot)\|_{B_b(\R)} \le c |f |_E 
   \sptext{.4}{for all}{.4}
   f \in E
&\sptext{1.3}{if and only if}{1.3}
   | D_\rho F(t,0)| \le c |f |_E 
   \sptext{.4}{for all}{.4}
   f \in E^0 \\
&\sptext{1.3}{if and only if}{1.3}
   | D_\rho F(t,0)| \le c |f |_E 
   \sptext{.4}{for all}{.4}
   f \in E.
\end{align*}
\end{lemm}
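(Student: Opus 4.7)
\smallskip

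The plan is to reduce the three conditions to a single translation-invariance identity for $D_\rho$ and then chase the implications. The key observation is that $D_\rho F(t,x)$ only depends on the increments of $F(t,\cdot)$ near $x$: if $f\in E$, $x_0\in\R$, and $g(y):=f(x_0+y)-f(x_0)$, then the function $G(t,y):=\E g(y+X_{T-t})=F(t,x_0+y)-f(x_0)$ satisfies
\[
    D_\rho G(t,0)
  = \int_{\RO}\frac{G(t,z)-G(t,0)}{z}\,\rho(\od z)
  = \int_{\RO}\frac{F(t,x_0+z)-F(t,x_0)}{z}\,\rho(\od z)
  = D_\rho F(t,x_0),
\]
where the constant $f(x_0)$ cancels in the numerator. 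By hypothesis $g\in E^0\subseteq E\subseteq\dom(\Gamma_\rho^0)$, so the right-hand side is legitimately defined and $D_\rho G(t,0)=\langle g,\Gamma_{t,\rho}^0\rangle$ in the sense of \cref{statement:definition_Gamma}. The translation-invariance of the semi-norm, $|g|_E=|f|_E$, is then the crucial ingredient that lets us transfer bounds at $x_0$ back to bounds in terms of $|f|_E$.

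Given this identity, the cyclic chain of implications is immediate. First, (i)$\Rightarrow$(iii) is trivial by specialising $x=0$, and (i)$\Rightarrow$(ii) is trivial since $E^0\subseteq E$. For (iii)$\Rightarrow$(i): fix $f\in E$ and $x_0\in\R$, set $g(y):=f(x_0+y)-f(x_0)\in E^0\subseteq E$, apply the identity to obtain $|D_\rho F(t,x_0)|=|D_\rho G(t,0)|$, and then use (iii) for $g$ together with $|g|_E=|f|_E$ to conclude $|D_\rho F(t,x_0)|\leqslant c|f|_E$; taking the supremum in $x_0$ gives (i). The proof of (ii)$\Rightarrow$(i) is identical, using that $g\in E^0$ so (ii) is directly applicable to $g$.

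There is no real obstacle here; the only point that requires a brief verification is the legitimacy of applying $\Gamma_{t,\rho}^0$ to the translated function $g$, which is handled by the assumption $E^0\subseteq\dom(\Gamma_\rho^0)$ built into the hypotheses. In particular, the inclusion $E^0\subseteq E$ together with the translation invariance of $|\cdot|_E$ is exactly what makes (ii), the ostensibly weakest condition (a pointwise bound at a single point, over the sub-class $E^0$), equivalent to the uniform bound (i).
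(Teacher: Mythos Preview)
Your proof is correct and is exactly the natural argument; the paper itself omits the proof as obvious, so there is nothing to compare against beyond noting that your translation identity $D_\rho G(t,0)=D_\rho F(t,x_0)$ together with the hypothesis $|g|_E=|f|_E$ is precisely the intended mechanism.
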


The proof of \cref{statement:Bb_vs_computation_in_zero} is obvious so that we leave it out as well.

\medskip

\begin{defi}
\label{def:Gamma_functional_semi-norm}
For $t\in [0,T)$ and a linear space $E\subseteq \dom(\Gamma_{\rho}^{0})$ 
equipped with a semi-norm $|\cdot|_E$ we let
$\| \Gamma_{t,\rho}^{0}\|_{E^*} := \inf c$, where the infimum is taken over all $c>0$ such that 
\[ |D_\rho F(t,0)| = |\langle f,\Gamma_{t,\rho}^{0} \rangle|\leqslant c |f|_E
   \sptext{1}{for all}{1}
   f\in E. \]
\end{defi}
\medskip 


\subsection{Interpretation as directional gradient}
\label{subsec:directional_gradient}
In this section we explain that $D_\rho F(t,X_t)$, defined with 
\cref{statement:definition_Gamma}\eqref{item:2:statement:definition_Gamma},
is actually a directional Malliavin derivative. 
This fact is also behind \cref{sec:approximation_random_measure} 
where we consider orthogonal decompositions on the L\'evy-It\^o space into stochastic
integrals with a control of the singularity of the gradients appearing as integrands.
\smallskip
Assume that $f\in \defX$ with $\E|f(X_T)|^2<\infty$ and recall that $F:[0,T]\times \R\to \R$ is given by
$F(t,x):= \E f(x+X_{T-t})$. We  define the vector-valued gradient
\[ \partial F : [0,T) \times \R  \to \cL_0(\R,\cB(\R)) \]
by
\[  (\partial F(t,x))(z):= \frac{\partial F}{\partial x}(t,x) \1_{\{0\}}(z) + \frac{F(t, x + z) - F(t, x)}{z}\1_{\RO}(z) \]
where the first term on the right-hand side is omitted if $\sigma=0$ and is well defined if $\sigma>0$
(see \cref{statement:consistence-upper_bound_sigma>0}). 
\smallskip

Let us fix $t\in (0,T)$ for 
the remainder of \cref{subsec:directional_gradient}. We obviously 
have $F(t,X_t) = \ce{\F_t}{f(X_T)}$ a.s., but the gradient also satisfies for
$\lambda\otimes\mu\otimes\p$ almost all $(s,z,\omega)\in (0,T]\times \R\times \Omega$ the relation 
\[   \1_{(0,t]}(s) (\partial F(t,X_t(\omega)))(z)  
  =  (D_{s,z} F(t,X_t))(\omega)
\]
and both sides are square-integrable with respect to $\lambda\otimes\mu\otimes\p$.
This relation together with the notion of the  Malliavin derivative 
$(D_{s,z} \xi)_{(s,z)\in (0,T]\times \R}$ for a $\xi \in \D_{1,2}$
is recalled in \cref{sec:malliavin_calculus:basics}. 
\bigskip

Assume $\sigma=0$, $D\in L_1(\R,\mu) \cap L_2(\R,\mu)$ with $D\geqslant 0$ and $\int_\R D \od \mu=1$, and
$\od \rho := D\od \mu$. Then we verify in \eqref{eqn:integrated:eija_equation} below
that there is a null-set  $N_t$ such that outside this null-set one has
\[     \frac{1}{t} \int_0^t\int_{\RO} D_{s,z} F(t,X_t)  D(z) \mu(\od z) \od s
     = D_\rho F(t,X_t). \] 
\smallskip
There is also a connection to PDIEs: following \cite[Section 4.1]{Nualart:Schoutens:01} for $\sigma=0$ and $\int_\R |z| \nu(\od z) < \infty$
one has
\[ \frac{\partial F}{\partial t}(t,x) + \int_\R (F(t,x+z)-F(t,x)) \nu(\od z) = 0 \]
under certain regularity assumptions on $F$. This non-local equation constitutes the counterpart to the 
parabolic equation in \cref{statement:facts_from_GG04}\eqref{item:2:statement:facts_from_GG04}. As we do not 
use the above PDIE we leave out the technical details.


\subsection{Duality between  $\Gamma_{t,\rho}^0$ and $\Gamma_{t,\rho}^1$}
We clarify the relation between $\Gamma_{t,\rho}^0$ and $\Gamma_{t,\rho}^1$ and interpret 
the operator $\Gamma_{t,\rho}^\rho$ as a distribution in \eqref{eqn:Gamma_and_distributions}
(and we use the proof of the statement below in \cref{statement:lower_bounds_levy_new} as well).
\smallskip

\begin{theo}[Properties of the functional $\Gamma_{t,\rho}^1$]
\label{statement:properties:Gamma1}
Let $t \in [0, T)$.
\begin{enumerate}[{\rm (1)}]
\item \label{item:1:statement:properties:Gamma1}
      One has $\int_{\R} \gamma_{t,\rho}(v) \od v = \rho(\RO)$.
\item \label{item:2:statement:properties:Gamma1}
      One has $\dom(\Gamma^1_\rho)\subseteq \dom(\Gamma^0_\rho)$ and,
      for $f\in \dom(\Gamma^1_{t,\rho})$ and $x\in\R$, that
      \[ D_{\rho} F(t,x) = \langle f^x, \Gamma^1_{t,\rho} \rangle = \langle f^x  , \Gamma^0_{t,\rho} \rangle
         \sptext{1}{if}{1} f^x(\cdot) := f(\cdot +x). \]
\item \label{item:3:statement:properties:Gamma1}
      If $q,r\in [1,\infty]$ and $s := \min \{q,r\}$, and if $X_{T-t}$ has a density $p_{T-t}\in L_r(\R)$, then
      \[ \| \gamma_{t,\rho}\|_{L_{q}(\R)}
          \leqslant  \left  \|  p_{T-t} \right \|_{L_s(\R)}\int_{\RO} |z|^{\frac{1}{q}-\frac{1}{s}}  \rho(\od z). \]
\end{enumerate}
\end{theo}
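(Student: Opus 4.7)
The plan is to treat the three items essentially independently, leaning on Fubini, the fundamental theorem of calculus for $\bvlocal$-functions, and Young's convolution inequality.

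For \eqref{item:1:statement:properties:Gamma1}, I would apply Fubini to
\[ \int_\R \gamma_{t,\rho}(v)\od v = \int_{\RO} \frac{1}{|z|}\int_\R \p(X_{T-t}\in J(v;z))\od v\,\rho(\od z), \]
and then swap the expectation and the Lebesgue integral over $v$: since $J(v;z)=v+[-z^+,z^-)$ has Lebesgue measure $|z|$, one gets $\int_\R \p(X_{T-t}\in J(v;z))\od v=|z|$, and the $|z|$-factors cancel to yield $\rho(\RO)$.

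For \eqref{item:2:statement:properties:Gamma1}, the key intermediate identity is
\[ \frac{F(t,x+z)-F(t,x)}{z}=\frac{1}{|z|}\int_\R \p\!\left(X_{T-t}\in J(v-x;z)\right)f'(\od v) \qquad (z\neq 0). \]
To prove it, I would first fix $z>0$ and use $f(y+z)-f(y)=\int_{(y,y+z]}f'(\od v)$ for the $\bvlocal$-function $f$, plug in $y=x+X_{T-t}(\omega)$, and swap the expectation with the integration against $f'$ (permitted by the integrability condition built into $\dom(\Gamma^1_\rho)$); the event $\{v\in(x+X_{T-t},x+X_{T-t}+z]\}$ matches exactly $\{X_{T-t}\in J(v-x;z)\}$ since $J(v-x;z)=[v-x-z,v-x)$ when $z>0$. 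For $z<0$ the identical computation goes through, the factor $\frac{1}{z}$ absorbing a sign change to produce $\frac{1}{|z|}$ and $J(v-x;z)=[v-x,v-x-z)$. Integrating the identity against $\rho(\od z)$ and applying Fubini once more converts $D_\rho F(t,x)$ into $\int_\R\gamma_{t,\rho}(v-x)f'(\od v)=\langle f^x,\Gamma^1_{t,\rho}\rangle$. The inclusion $\dom(\Gamma^1_\rho)\subseteq\dom(\Gamma^0_\rho)$ falls out because the integrability hypothesis for $\Gamma^1_\rho$ (integrability of $\gamma_{s,\rho}(v-x-X_\delta)$ against $|f'|$) dominates, via the same Fubini, the integrability required for $\Gamma^0_\rho$.

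For \eqref{item:3:statement:properties:Gamma1}, I would write
\[ \gamma_{t,\rho}(v)=\int_{\RO} h_z(v)\,\rho(\od z),\qquad h_z(v):=\frac{1}{|z|}\p(X_{T-t}\in J(v;z))=(p_{T-t}*g_z)(v), \]
where $g_z(u):=\frac{1}{|z|}\1_{(-z^-,z^+]}(u)$ is a probability density on an interval of length $|z|$ so that $\|g_z\|_{L_p(\R)}=|z|^{\frac{1}{p}-1}$. Minkowski's integral inequality yields $\|\gamma_{t,\rho}\|_{L_q}\leqslant\int_{\RO}\|h_z\|_{L_q}\rho(\od z)$, and Young's convolution inequality with exponents satisfying $1+\frac{1}{q}=\frac{1}{s}+\frac{1}{p}$ gives $\|h_z\|_{L_q}\leqslant\|p_{T-t}\|_{L_s}\|g_z\|_{L_p}=\|p_{T-t}\|_{L_s}|z|^{\frac{1}{q}-\frac{1}{s}}$. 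The choice $s=\min\{q,r\}$ makes $p\in[1,\infty]$ admissible (since $s\leqslant q$ forces $\frac{1}{p}=1+\frac{1}{q}-\frac{1}{s}\in[0,1]$) and ensures $p_{T-t}\in L_s(\R)$ by interpolation between $p_{T-t}\in L_1\cap L_r$.

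The only real subtlety is the Fubini manipulation in \eqref{item:2:statement:properties:Gamma1}: the integrability condition in the definition of $\dom(\Gamma^1_\rho)$ is tailored precisely so that the absolute-value version of the triple integral (over $v$, $z$, and the law of $X_{T-t}$) is finite, which is why the hypothesis there couples $\gamma_{t,\rho}$ with $|f'|$ rather than just with $f'$. Everything else is bookkeeping.
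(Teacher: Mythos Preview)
Your proof is correct and, for items \eqref{item:1:statement:properties:Gamma1} and \eqref{item:2:statement:properties:Gamma1}, matches the paper's argument essentially step for step: Fubini for the first, and for the second the fundamental-theorem identity $f(y+z)-f(y)=\int_{(y,y+z]}f'(\od v)$ combined with the observation that the integrability built into $\dom(\Gamma^1_\rho)$ is precisely what justifies the Fubini swap and dominates the $\dom(\Gamma^0_\rho)$ condition.

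For item \eqref{item:3:statement:properties:Gamma1} you take a slightly different route. The paper does not invoke Young's inequality as a black box; instead it estimates $\|\p(X_{T-t}\in J(\cdot;z))\|_{L_q}$ directly by first applying H\"older's inequality pointwise in $v$ (splitting off a factor $|z|^{1-1/s}$) and then interchanging the $L_s$- and $L_q$-norms via the inequality $\|\,\|f\|_{L_s}\,\|_{L_q}\leqslant\|\,\|f\|_{L_q}\,\|_{L_s}$ for $s\leqslant q$. Your convolution formulation $h_z=p_{T-t}*g_z$ with $g_z=\frac{1}{|z|}\1_{(-z^-,z^+]}$ and Young's inequality is a cleaner packaging of the same mechanism (Young is typically proved by exactly this H\"older-plus-Minkowski combination), and it makes the role of $s=\min\{q,r\}$ and the interpolation $p_{T-t}\in L_1\cap L_r\subseteq L_s$ more transparent. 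Both arguments are equally short and yield the identical bound.
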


\begin{proof}
Recall the notation $J(v; z) = v + [-z^+, z^-)$.
\eqref{item:1:statement:properties:Gamma1}
follows from
\[    \int_{\R} \gamma_{t,\rho}(v) \od v
    = \int_{\R\setminus \{ 0\}}  \int_\Omega \left [ \int_{\R}   \1_{\{X_{T-t}\in J(v;z)\}}\frac{1}{|z|} \od v \right ]  \od \p \rho(\od z) 
    = \int_{\R\setminus \{ 0\}}  \int_\Omega  \od \p \rho(\od z) 
    = \rho(\RO).
\]
 
\eqref{item:2:statement:properties:Gamma1} 
For $f\in \dom(\Gamma_\rho^1)$ and $x\in \R$ we observe that 
\begin{align*} 
     \int_{\R\setminus \{ 0\}} \left | \frac{F(t,x+z)-F(t,x)}{z} \right | \rho (\od z)  
&\leqslant \int_{\R\setminus \{ 0\}} \left [\int_\R \left | \frac{f(x+z+y)-f(x+y)}{z} \right | \p_{X_{T-t}}(\od y)\right ]  \rho (\od z)  \\
&\leqslant \int_{\R\setminus \{ 0\}} \int_\R \frac{1}{|z|} \int_{(x+y-z^-, x+y+z^+]} |f'|(\od v) \p_{X_{T-t}}(\od y)  \rho (\od z) \\
& =   \int_\R \left [ \int_{\R\setminus \{ 0\}} \frac{1}{|z|} \int_\R \1_{(x+y-z^-, x+y+z^+]}(v) \p_{X_{T-t}}(\od y) \rho (\od z) \right ] \\
& \hspace*{23em} |f'|(\od v) \\
& =   \int_\R \left [ \int_{\R\setminus \{ 0\}}  \frac{\p (X_{T-t} \in J(v-x;z))}{|z|}
      \rho (\od z) \right ] |f'|(\od v) \\
& =   \int_\R \gamma_{t,\rho}(v-x) |f'| (\od v)
\end{align*}
which implies $\dom(\Gamma_\rho^1) \subseteq \dom(\Gamma_\rho^0)$ and also enables us to compute,
exactly along the previous computation,
\begin{align*} 
     \int_{\R\setminus \{ 0\}} \frac{F(t,x+z)-F(t,x)}{z} \rho (\od z)  
& =   \int_\R \left [ \int_{\R\setminus \{ 0\}} \frac{\p (X_{T-t} \in J(v;z))}{|z|}
      \rho (\od z) \right ] (f^x)'(\od v).
\end{align*}
\eqref{item:3:statement:properties:Gamma1}
Let $z\not = 0$. Then the assertion follows from 
\begin{align*}
        \left \|  \p (X_{T-t} \in J(\cdot;z)) \right \|_{L_q(\R)}
& = |z| \left \|  \frac{1}{|z|}\int_{J(\cdot;z)} p_{T-t}(y) \od y \right \|_{L_q(\R)} \\
&\leqslant |z|^{1-\frac{1}{s}} \left \|  v \mapsto\left \| \1_{J(v;z)} p_{T-t} \right \|_{L_s(\R)} \right \|_{L_{q}(\R)} \\
&\leqslant |z|^{1-\frac{1}{s}} \left \|  y \mapsto \left \| \1_{J(\cdot;z)}(y) p_{T-t}(y) \right \|_{L_{q}(\R)}
                                                                                \right \|_{L_s(\R)} \\
& = |z|^{1-\frac{1}{s}} \left \| y \mapsto  \left \| \1_{J(\cdot;z)}(y) \right \|_{L_{q}(\R)} p_{T-t}(y) 
                                                                                \right \|_{L_s(\R)} \\
& = |z|^{1-\frac{1}{s}+\frac {1}{q}} \left \|  p_{T-t} \right \|_{L_s(\R)},
\end{align*}
where we use H\"older's inequality for the first inequality and \eqref{eqn:interchange_Lp_Lq} in the second one.
\end{proof}
\bigskip

In \cref{statement:properties:Gamma1} we proved $\int_\R \gamma_{t,\rho}(v)\od v = \rho(\RO)$,  
$\dom(\Gamma_{\rho}^1)\subseteq \dom(\Gamma_{\rho}^0)$, and that 
\[ \langle f', \gamma_{t,\rho} \rangle = \langle f  , \Gamma^0_{t,\rho} \rangle
\sptext{1}{for}{1} f\in \dom(\Gamma_{\rho}^1).\]
If $D(\R)$ is the space of test functions that consists of 
$f\in C^\infty(\R)$ with compact support, then $D(\R)\subseteq \dom(\Gamma_{\rho}^1)$
(for $f\in D(\R)$ we have $f'(\od v)=f'(v)\od v$ and $|f'|(\od v)=|f'(v)| \od v$, where $f'$ on the right-hand sides 
is the classical derivative).
If we consider $\gamma_{t,\rho}\in L_1(\R)$ as distribution 
$\gamma_{t,\rho}\in D'(\R)$ (see \cite[Section 6.11]{Rudin:FA}), then we have the interpretation
\begin{equation}\label{eqn:Gamma_and_distributions}
\partial \gamma_{t,\rho} = -\Gamma_{t,\rho}^0,
 \end{equation}
see \cite[Section 6.12]{Rudin:FA} and $\Gamma^0_{t,\rho}$ can be seen as distributional derivative of a distribution of $L_1$-type.
\smallskip


\subsection{Upper bounds for the gradient process}
\label{sec:upper_estimate_gradient_general_levy} 

Gradient estimates in the L\'evy setting are studied in different ways in the literature.
In \cite[Theorem 1.1 and Remark 2.4]{Chen:Hao:Zhang:20} H\"older regularities are studied, where one looks 
for an improvement of the H\"older regularity caused by the transition group. In a way, this is opposite to 
our question. 
 The result from the literature we contribute to is 
\cite[Theorem 1.3]{SSW12} (see \cref{remark:relation_to_SSW12} below).
Finally, \cite{Laukkarinen:19} investigates when $f(X_T)$ belongs to $\D_{1,2}$ or $(L_2,\D_{1,2})_{\theta,\infty}$ 
in dependence on $f\in \hoelO{\eta,\infty}$ and properties of the underlying L\'evy process $X$. In 
our article we look for $L_\infty$- and $\BMO$-bounds for vector-valued gradient processes generated by an $f(X_T)$ when 
$f\in \hoelO{\eta,q}$,
where we do not need and therefore not consider any Malliavin smoothness of $f(X_T)$ itself. Moreover, 
for a given $f(X_T)$ the fractional smoothness of the gradient process depends on the direction in which the 
gradient process is tested.
\smallskip

As a counterpart to \cref{statement:Hoelder_new_new} proved on the Wiener space
we shall prove \cref{statement:consistence-upper_bound_sigma>0} and \cref{statement:upper_bounds_Drho} in this section
as final results.
To start with, we introduce a variational quantity that is one key 
for us to obtain upper bounds for gradient processes:
\smallskip

\begin{defi}
\label{definition:TVrho}
For $\eta\in [0,1]$ and $s\in [0,T]$ we let
\[ \|X_s\|_{{\rm TV}(\rho,\eta)} := \inf_P \left \{ \int_{\RO} P(z)^{1-\eta} \rho(\od z) \right \}
   \in [0,\infty], \]
where the infimum is taken over all measurable $P:\RO\to [0,\infty)$ such that 
\[ \frac{\|\p_{z+{X_s}} - \p_{X_s}\|_{\rm TV}}{|z|} \leqslant P(z) \sptext{1}{for}{1} z\in \RO. \]
\end{defi}
\bigskip

We use the potentials $P$ to avoid a discussion about the measurability 
of the map $z\mapsto \|\p_{z+{X_s}} - \p_{X_s}\|_{\rm TV}$ (which would not be necessary for us).
We have the following special cases:
\begin{enumerate}
\item $ \|X_s\|_{{\rm TV}(\rho,1)}=\rho(\RO)<\infty$ for $s\in [0,T]$.
\item $ \|X_0\|_{{\rm TV}(\rho,\eta)}=2^{1-\eta}  \int_{\RO} |z|^{\eta-1} \rho(\od z)$ for $\eta \in [0,1]$.
\item $ \|X_s\|_{{\rm TV}(\delta_z,\eta)}=\left ( \frac{\|\p_{z+{X_s}} - \p_{X_s}\|_{\rm TV}}{|z|}\right )^{1-\eta}<\infty$,
      $\eta\in [0,1]$, if $\delta_z$ is the Dirac measure in $z\in \RO$.
\item $\|X_s\|_{{\rm TV}(\delta_0,\eta)}=0$ for $\eta\in [0,1]$.
\end{enumerate}

\begin{rema}
\begin{enumerate}
\item In the following we will exploit the behaviour of
      $\|X_s\|_{{\rm TV}(\rho,\eta)}$ for $s\in (0,T]$. This  enables us to obtain
      the correct blow-up of gradient processes when considering $\beta$-stable-like processes. 
\item The quantities $\|X_s\|_{{\rm TV}(\delta_z,0)}$ are well-studied in the literature in 
      connection to the coupling property of a L\'evy process $X$ (see \cite[Theorem 1.1]{Boetcher:Schilling:Wang:11}
      and \cite[Theorem 3.1]{SSW12}), \cref{statement:X-rho_to_TV} provides new upper bounds for our purpose.
\item At some places we only use '$\|X_s\|_{{\rm TV}(\rho,\eta)}<\infty$ for $s\in (0,T]$',
      which is satisfied in our relevant situations. However, there are examples where this fails: 
      let $(X_t)_{t\in [0,T]}$ be a Poisson process with drift and $\rho$ with $\rho((-1,1)\setminus \{0\})=1$.
      Then $\|X_s\|_{{\rm TV}(\rho,\eta)} = \int_{(-1,1)\setminus \{0\}} \left ( 2/|z| \right )^{1-\eta} \rho(\od z)$
      which might be infinite if $\eta \in [0,1)$.   
      A separate investigation for '$\|X_s\|_{{\rm TV}(\rho,\eta)}<\infty$ for $s\in (0,T]$' is not done here (for example,
      whether this property depends on $s$).
\end{enumerate}
\end{rema}
\smallskip

To start with, we introduce a natural modification of $\| \cdot \|_{\B_{\infty,q}^\alpha}$ from \cref{definition:B_pq^alpha}:

\begin{defi}\label{definition:B_bq^alpha} 
For $(\alpha,q)\in (0,\infty)\times [1,\infty]$ and $G:[0,T)\times \R \to \R$ with
$\|G(s,\cdot)\|_{B_b(\R)}  \leqslant \|G(t,\cdot)\|_{B_b(\R)}\in  [0,\infty]$ for
$0 \leqslant s < t < T$ we define
\begin{align*}
    \| G \|_{\B_{b,q}^\alpha}
&:= \left \|  t\mapsto (T-t)^\alpha \| G(t,\cdot)\|_{B_b(\R)} \right \|_{L_q([0,T),\frac{\od t}{T-t})}.
\end{align*}
\end{defi} 

In the case a martingale is of the form $\varphi_t = G(t,X_t)$ we have that
\[  \| \varphi  \|_{\B_{\infty,q}^\alpha} \leqslant \| G \|_{\B_{b,q}^\alpha}.\]
\medskip

To obtain lower quantitative bounds for our directional gradients 
we also exploit the following concept:

\begin{defi}
\label{definition:consistent_family}
The function $H\colon [0,T)\times  Q\to \R$ is $Y$-\textit{consistent} provided that
      \begin{enumerate}[{\rm (1)}]
      \item \label{item:1:definition:consistent_family}
            $H(t,\cdot)$ is continuous on $Q$ for all $t\in [0, T)$,
      \item \label{item:2:definition:consistent_family} 
            $\E |H(t,y+ Y_{t-s})| < \infty$ for all $0\leqslant s \leqslant t < T$ and $y\in Q$,
      \item \label{item:3:definition:consistent_family} 
            $\E H(t,y+Y_{t-s}) = H(s,y)$ for all $0\leqslant s \leqslant t <T$ and $y\in Q$.
      \end{enumerate}
\end{defi}

As counterpart to \cref{statement:easy_properties_Gamma}\eqref{item:2:statement:easy_properties_Gamma}
we obtain:

\begin{lemm}\label{statement:consistent_increasing_suprema}
If $H:[0,T)\times Q \to \R$ is $Y$-consistent, then
\[ \sup_{y\in Q} |H(s,y)| \leqslant \sup_{y\in Q} |H(t,y)| 
   \sptext{1}{for}{1}
   0 \leqslant s < t < T.\]
\end{lemm}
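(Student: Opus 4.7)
The plan is to express $H(s,y)$ through the averaging relation provided by property \eqref{item:3:definition:consistent_family} of $Y$-consistency and then to show that the random argument $y+Y_{t-s}$ stays in $Q$, after which the bound follows by taking a pointwise supremum inside the expectation. The key structural ingredient is the semigroup property $Q+Q=Q$ recorded in Example~\ref{exam:oscillation_Levy_process}.

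Concretely, I would fix $0\leqslant s<t<T$ and $y\in Q$, and write $H(s,y)=\E H(t,y+Y_{t-s})$ by property \eqref{item:3:definition:consistent_family}; the integrability needed to pass the absolute value inside the expectation, giving $|H(s,y)|\leqslant \E|H(t,y+Y_{t-s})|$, is exactly property \eqref{item:2:definition:consistent_family}. The next step is the decisive one: by the construction of $Y$ in Section~\ref{sec:setting_levy_case} the values of $Y_{t-s}$ lie in $Q$ on all of $\Omega$, since the indicator cutoff in the definition $Y_t=(X_t-\ell t)\1_{\{X_t\in \supp(X_t)\}}$ ensures this pointwise statement (not only up to a null set, and with the convention $0\in Q$ taking care of the complementary event). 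Combined with $Q+Q\subseteq Q$, this yields $y+Y_{t-s}(\omega)\in Q$ for every $\omega\in\Omega$.

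Consequently, $|H(t,y+Y_{t-s}(\omega))|\leqslant \sup_{y'\in Q}|H(t,y')|$ pointwise in $\omega$, and integrating gives $|H(s,y)|\leqslant \sup_{y'\in Q}|H(t,y')|$; taking the supremum over $y\in Q$ on the left-hand side finishes the proof. There is no real obstacle here; the lemma is essentially a reformulation of the fact that averaging $H(t,\cdot)$ against a law supported in $Q$ cannot exceed its sup-norm on $Q$. The only subtle point is to work with $Y$ rather than $X$ directly, since it is only after subtracting the drift $\ell t$ and restricting to $\{X_t\in\supp(X_t)\}$ that one obtains a version of the increment which genuinely takes values in $Q$ for every $\omega$, making the pointwise chain of inequalities above unambiguous and allowing continuity of $H(t,\cdot)$ on $Q$ (property~\eqref{item:1:definition:consistent_family}) to be irrelevant for the argument itself.
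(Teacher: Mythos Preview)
Your proof is correct and essentially identical to the paper's own proof: both use the consistency relation $H(s,y)=\E H(t,y+Y_{t-s})$, the fact that $Y_{t-s}(\Omega)\subseteq Q$ (via the indicator cutoff and $0\in Q$), and $Q+Q\subseteq Q$ to bound $|H(s,y)|$ by $\sup_{y'\in Q}|H(t,y')|$. The only cosmetic difference is that the paper passes through $\sup_{\omega}$ rather than $\E$ in the first step.
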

\begin{proof}
In fact,  for $y\in Q$ we have
\[
   |H(s,y)| 
   \leqslant \sup_{\omega \in \Omega} |H(t,y+Y_{t-s}(\omega))| 
   \leqslant \sup_{y'\in Q}  |H(t,y+y')| 
   \leqslant \sup_{y''\in Q} |H(t,y'' )|. \qedhere \]
\end{proof}

Before we consider upper bounds for the directional gradient that explores the jump part of a L\'evy process,
we start with the directional gradient that explores the Brownian motion part only:

\begin{theo}[{\bf upper bounds for $\partial F/\partial x$}]
\label{statement:consistence-upper_bound_sigma>0}
Let $\sigma >0$. Then $Q=\R$ and the following holds:
\begin{enumerate}[{\rm (1)}]
\item \label{item:1:statement:consistence-upper_bound_sigma>0}
      If  $f \colon \R \to \R$ is a Borel function with $\E |f(X_T)|^q < \infty$ for some $q>1$,
      then  $\E|f(x+X_{T-t})|<\infty$ for $(t,x)\in [0,T]\times \R$. If 
      $F(t,x):= \E f(x+X_{T-t})$ on $[0,T]\times \R$, then $F(t,\cdot)\in C^\infty(\R)$ 
      for $t\in [0,T)$ and we obtain an $X$-consistent function $G:[0,T)\times \R\to\R$ by 
      \[ G(t,x) := \frac{\partial F}{\partial x}(t,x)
         \sptext{1}{with}{1}
         G(t,x)= \frac{1}{\sigma}\E \left [f(x+X_{T-t})\frac{W_{T-t}}{T-t} \right ]. \]
\item \label{item:2:statement:consistence-upper_bound_sigma>0}
      If $f\in \hoel{\eta}$ for some $\eta \in [0,1]$ (and  $\E |f(X_T)|^q < \infty$ for some $q>1$ if $\eta \in (0,1]$), then 
      \begin{equation}\label{eqn:Bb_bound_H_part_1}
                 \left \| G(t,\cdot) \right \|_{B_b(\R)} 
      \leqslant  |f|_\eta \sigma^{\eta-1} (T-t)^{\frac{\eta-1}{2}} \int_\R |x|^{\eta+1} \e^{-\frac{x^2}{2}} \frac{\od x}{\sqrt{2\pi}}.
      \end{equation}
\item \label{item:3:statement:consistence-upper_bound_sigma>0}
      If $(\eta,q) \in (0,1) \times [1,\infty)$ and $X\subseteq L_{\eta+\gamma}$ for some $\gamma>0$, then there is a 
      $c_\eqref{eqn:item:3:statement:consistence-upper_bound_sigma>0}>0$ such that, for 
      $\alpha:= \frac{1-\eta}{2}$,  
      \begin{equation}\label{eqn:item:3:statement:consistence-upper_bound_sigma>0}
                 \left \| G \right \|_{\B_{b,q}^\alpha}
      \leqslant c_\eqref{eqn:item:3:statement:consistence-upper_bound_sigma>0} \| f \|_{\hoel{\eta,q}}
      \sptext{1}{for}{1}
      f\in \hoel{\eta,q}.
      \end{equation}
\end{enumerate}
\end{theo}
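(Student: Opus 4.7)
The strategy is to exploit the smoothing effect of the Brownian component $\sigma W$ in the L\'evy-It\^o decomposition $X = \sigma W + X^{(0)}$, where $X^{(0)}$ is an independent L\'evy process carrying the drift and the jump structure. The key object is the Gaussian density $p^\sigma_s(y) := (\sigma\sqrt{2\pi s})^{-1}\e^{-y^2/(2\sigma^2 s)}$, through which $F(t,\cdot)$ is realised as a random convolution of $f$.

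For Part (1), I would condition on $X^{(0)}_{T-t}$, which reveals $F(t,\cdot)$ as an expected Gaussian convolution and hence $C^\infty$ in its spatial variable. Differentiating the Gaussian kernel in $x$ and reverting the implicit change of variables yields the Bismut-type identity
\[ G(t,x) \,=\, \frac{1}{\sigma}\,\E\!\left[f(x+X_{T-t})\,\frac{W_{T-t}}{T-t}\right]. \]
Local uniform integrability of $f(x+X_{T-t})$ and the justification of differentiation under the expectation are handled by a density-comparison argument: $\p_{X_T}$ has a strictly positive density which dominates $\p_{x+X_{T-t}}$ up to Gaussian factors on compacta in $x$, so H\"older's inequality transfers the hypothesis $\E|f(X_T)|^q < \infty$. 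The $X$-consistency of $G$ reduces to the tower rule applied to $\ce{\F_s}{f(x+X_{T-s})}$.

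For Part (2), the decisive observation is that, since $\E W_{T-t} = 0$ and $f(x + X^{(0)}_{T-t})$ is independent of $W_{T-t}$, one may re-centre the Bismut formula as
\[ G(t,x) \,=\, \frac{1}{\sigma(T-t)}\,\E\!\bigl[\bigl(f(x+X_{T-t}) - f(x+X^{(0)}_{T-t})\bigr)\,W_{T-t}\bigr], \]
and the two arguments inside $f$ differ by exactly $\sigma W_{T-t}$. The H\"older hypothesis then dominates the inner increment by $|f|_\eta \sigma^\eta |W_{T-t}|^\eta$, eliminating any dependence on $X^{(0)}$. The Brownian scaling of $\E|W_{T-t}|^{\eta+1}$ produces the claimed factor $\sigma^{\eta-1}(T-t)^{(\eta-1)/2}$ together with the explicit Gaussian moment.

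For Part (3), I would argue by real interpolation through the $K$-functional. Given any admissible decomposition $f = f_0 + f_1$ with $f_0 \in C_b^0(\R)$ and $f_1 \in \hoell{1}{0}$, Part (2) applied at $\eta = 0$ to $f_0$ and direct differentiation under the expectation for the Lipschitz part $f_1$ (which gives $\|G_1(t,\cdot)\|_{B_b(\R)} \leqslant |f_1|_1$) combine linearly to
\[ \|G(t,\cdot)\|_{B_b(\R)} \,\leqslant\, c_0(T-t)^{-1/2}\|f_0\|_{C_b^0(\R)} + c_1|f_1|_1. \]
Taking the infimum over splittings yields $(T-t)^{1/2}\|G(t,\cdot)\|_{B_b(\R)} \leqslant c\,K((T-t)^{1/2}, f;\,C_b^0(\R),\hoell{1}{0})$. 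Substituting $v = (T-t)^{1/2}$ in the integral defining $\|G\|_{\B_{b,q}^\alpha}$ with $\alpha = (1-\eta)/2$ converts it to $2\int_0^{\sqrt T} v^{-\eta q} K(v,f)^q\,\od v/v$, which is bounded by the $K$-functional integral defining $\|f\|_{\hoel{\eta,q}}^q$. The main obstacle is the rigorous justification of the Bismut formula in Part (1) under the mere $L_q$-hypothesis on $f$: one needs a careful density-comparison argument to permit differentiation under the expectation and to produce a locally uniform $x$-majorant. By contrast, the H\"older bounds of Parts (2) and (3) are essentially algebraic consequences of the representation, with the re-centring trick in Part (2) being the crucial pivot.
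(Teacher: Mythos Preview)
Your strategy for Parts~(1) and~(2) coincides with the paper's: exploit the decomposition $X=\sigma W + J$, derive the Bismut-type representation, and re-centre by subtracting $f(x+J_{T-t})$ to apply the H\"older bound. The paper, however, carries out the integrability justification in Part~(1) differently and more concretely. Rather than a density comparison between $\p_{X_T}$ and $\p_{x+X_{T-t}}$ (which is delicate, since the ratio of such densities is in general not controlled without further assumptions on the jump part), the paper conditions on $J_t$---not $J_{T-t}$---writes $X_T=\sigma W_T + J_t + (J_T-J_t)$, and uses Fubini to find a single value $a_t\in\R$ for which $\E|f(\sigma W_T + a_t + (J_T-J_t))|^q<\infty$. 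This reduces everything to a known lemma for the pure Brownian case applied to the auxiliary function $f^{(t)}(\xi):=\E f(\sigma\xi+a_t+(J_T-J_t))$. The $X$-consistency is then checked directly from the Bismut formula by observing that $\frac{W_T-W_t}{T-t}-\frac{W_T-W_s}{T-s}$ has mean zero and is independent of $X_T-X_s$; your tower-rule sketch does not by itself justify interchanging the derivative with the conditional expectation.

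Your Part~(3) argument is more elegant than the paper's---direct $K$-functional interpolation between $C_b^0(\R)$ and $\hoell{1}{0}$ with the substitution $v=(T-t)^{1/2}$, avoiding reiteration---but it has a genuine gap under the stated hypotheses. To split $G=G_0+G_1$ linearly and bound $\|G_1(t,\cdot)\|_{B_b(\R)}\leqslant |f_1|_1$, you need $F_1(t,x)=\E f_1(x+X_{T-t})$ to be well-defined for every $f_1\in\hoell{1}{0}$ arising in the decomposition; since such $f_1$ grows linearly, this forces $X_{T-t}\in L_1$. The hypothesis only gives $X\subseteq L_{\eta+\gamma}$, and $\eta+\gamma$ may be strictly less than~$1$. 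The paper sidesteps exactly this obstruction by interpolating instead between two \emph{intermediate} H\"older endpoints $\hoelO{\eta_0,\infty}$ and $\hoelO{\eta_1,\infty}$ with $0<\eta_0<\eta<\eta_1<\eta+\gamma$: then $f\in\hoelO{\eta_i,\infty}$ grows like $|y|^{\eta_i}$, and choosing $q_i:=(\eta+\gamma)/\eta_i>1$ secures $\E|f(X_T)|^{q_i}<\infty$ so that Part~(2) applies at both endpoints. The resulting map is then interpolated (via the discretisation argument of \cref{statement:abstract_interpolation_new}) and the reiteration theorem \eqref{eq:reiteration_for_Hoelder} recovers $\hoelO{\eta,q}$.
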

\medskip

From \cref{statement:consistent_increasing_suprema} we get that 
$\left\| G(t,\cdot) \right\|_{B_b(\R)}$ is non-decreasing in $t$ so that 
$\left \| G \right \|_{\B_{b,q}^\alpha}$ is well-defined.
Item \eqref{item:2:statement:consistence-upper_bound_sigma>0} corresponds to \eqref{item:3:statement:consistence-upper_bound_sigma>0}
for $q=\infty$. We separated the cases as \eqref{item:2:statement:consistence-upper_bound_sigma>0} also covers 
$\eta\in \{0,1\}$.
\medskip

\begin{proof}[Proof of \cref{statement:consistence-upper_bound_sigma>0}]
$Q=\R$ follows from \cite[Theorem 24.10]{Sa13}.
\eqref{item:1:statement:consistence-upper_bound_sigma>0}
Let $f\geqslant 0$, $J: =X-\sigma W$, and fix $t\in [0, T)$. 

(a) Since 
$\E|f(\sigma W_T + J_t + (J_T - J_t))|^q = \E|f(X_T)|^q <\infty$,
independence and Fubini's theorem yield to 
$\E|f(\sigma W_T + a_t + (J_T-J_t))|^q <\infty$ for some $a_t \in \R$.
If 
\[ N^{(t)}:= \{\xi\in \R: \E|f(\sigma \xi + a_t + (J_T-J_t))|^q =\infty\}, \]
then $N^{(t)}$ is a Borel set of Lebesgue measure zero. We define
\[ f^{(t)}(\xi):= \1_{\{ \xi \not \in N^{(t)}\}}(\xi) \,\,
                \E f(\sigma \xi+a_t + (J_T-J_t)) \] 
so that $\E |f^{(t)}(W_T)|^q<\infty$. Now we can apply \cite[Lemma A.2]{Ge02} to $f^{(t)}$ and get for 
$(s,\xi)\in [0,T)\times \R$ and $F^{(t)}(s,\xi):= \E f^{(t)}(\xi+W_{T-s})$ that 
\[ F^{(t)}(s,\cdot)\in C^\infty(\R)
   \sptext{1}{and}{1}
     \frac{\partial F^{(t)}}{\partial \xi}(s,\xi)
   = \E \left [ f^{(t)}(\xi + W_{T-s})\frac{W_{T-s}}{T-s} \right ].\]
(b) Because $f \geqslant 0$, $N^{(t)}$ has Lebesgue measure zero, and $T-t>0$, we verify by Fubini's theorem 
(regardless of the finiteness of the integrals) that 
\[ F(t,x)=F^{(t)}\left (t,\frac{x-a_t}{\sigma}\right ) < \infty 
   \sptext{1}{so that}{1}
   F(t,\cdot)\in C^\infty(\R). \]
(c) We choose $\tilde q\in (1,q)$ so that 
$\E \left [ f(x+X_T-X_s)|W_T-W_t| \right ]\leqslant c_{\tilde q,T-t}  
\| f(x+X_T-X_s) \|_{L_{\tilde q}} < \infty$ where the finiteness of the last term is obtained as in (a-b) by starting with 
the function $x\mapsto f(x)^{\tilde q}$. This moment estimate enables us to apply Fubini's theorem in the sequel.
\smallskip

(d)  Using \cite[Lemma A.2]{Ge02} we deduce that
\begin{align*}
    \sigma \frac{\partial F}{\partial x}(t,x)  
  = \frac{\partial F^{(t)}}{\partial \xi}\left (t,\frac{x-a_t}{\sigma}\right )
& = \E \left [ f^{(t)}\left (\frac{x-a_t}{\sigma} + W_{T-t}\right )\frac{W_{T-t}}{T-t} \right ] \\
& = \E \left [ (\wt \E f(x + \sigma  W_{T-t} + \wt{J}_{T-t}))\frac{W_{T-t}}{T-t} \right ]
  = \E \left [ f(x + X_{T-t})\frac{W_{T-t}}{T-t} \right ].
\end{align*}
(e) To check $\E \frac{\partial F}{\partial x}(t,x+X_{t-s})= \frac{\partial F}{\partial x}(s,x)$ for
$s\in [0,t]$ we have to verify
\[  \E \left [ f(x+X_T-X_s)\frac{W_T-W_t}{T-t} \right ]
   =\E \left [ f(x+X_T-X_s)\frac{W_T-W_s}{T-s} \right ]. \]
As $\frac{W_T-W_t}{T-t}-\frac{W_T-W_s}{T-s}$ is of mean zero and independent of $X_T-X_s$, the last equality is true.
To conclude the proof of \eqref{item:1:statement:consistence-upper_bound_sigma>0} we remove the assumption 
$f \geqslant 0$ by considering the positive and negative parts separately.
\smallskip

 \eqref{item:2:statement:consistence-upper_bound_sigma>0}
Now we additionally assume that $f\in \hoel \eta$. Assume that $t\in [0,T)$ and $x=\sigma \xi +a_t$ with $\xi\not\in N^{(t)}$
and $N^{(t)}$ defined as in step (a). Then $\E |f(x+J_{T-t})|= \E |f(\sigma \xi + a_t+J_{T-t})|<\infty$ and
\begin{align*}
           \left | \frac{\partial F}{\partial x}(t,x) \right |
& =        \left | \frac{1}{\sigma}\E \left [ f(x+X_{T-t}) \frac{W_{T-t}}{T-t} \right ] \right | 
  =        \frac{1}{\sigma}\E \left | \left [ \left ( f(x+X_{T-t}) - f(x+J_{T-t}) \right ) \frac{W_{T-t}}{T-t} \right ] \right | \\
&\leqslant \frac{|f|_\eta}{\sigma} \E \left [ \left | \sigma W_{T-t} \right |^\eta \frac{|W_{T-t}|}{T-t} \right ] 
  =        |f|_\eta \sigma^{\eta-1} (T-t)^{\frac{\eta-1}{2}} \E |g|^{\eta+1}, 
\end{align*}
where $g:\Omega\to\R$ has a standard normal distribution.
Because $\lambda(N^{(t)})=0$ and $x\mapsto \frac{\partial F}{\partial x}(t,x)$ continuous, 
the estimate is true for all $x\in\R$.

\eqref{item:3:statement:consistence-upper_bound_sigma>0}
We choose $0<\eta_0<\eta<\eta_1 < \eta + \gamma$ and obtain from 
\eqref{item:2:statement:consistence-upper_bound_sigma>0} the estimates 
\[            \left \| G \right \|_{\B_{b,\infty}^{\frac{1-\eta_i}{2}}}
    \leqslant c_i \| f \|_{\hoel{\eta_i,\infty}}
    \sptext{1}{for}{1}
    f \in \hoel{\eta_i,\infty}\]
for some $c_i>0$ with $i=0,1$, where we choose $q_i := \frac{\eta+\gamma}{\eta_i}>1$. 
If $\delta \in (0,1)$ is chosen such that $\eta = (1-\delta)\eta_0 + \delta \eta_1$, then 
 $\frac{1-\eta}{2} = (1-\delta) \frac{1-\eta_0}{2} + \delta \frac{1-\eta_1}{2}$ as well.
Using the same argument as in the proof of \cref{statement:abstract_interpolation_new}
(starting with inequality \eqref{eqn:interpolation_start}) 
and \eqref{eq:reiteration_for_Hoelder} gives the statement.
\end{proof}

Now we aim for the counterpart(s) to \cref{statement:consistence-upper_bound_sigma>0} as follows:
\begin{enumerate}[(A)]
\item In \cref{statement:TV_to_Gamma} we prove the consistency and an upper bound for the directional 
      gradient in terms of  $\|X_{T-t}\|_{{\rm TV}(\rho,\eta)}$.
\item In \cref{statement:X-rho_to_TV} we provide upper bounds for $\|X_{T-t}\|_{{\rm TV}(\rho,\eta)}$
      in terms of the conditions $\rho \in U(\varepsilon;c)$ and
      $X \in \cU(\beta,\supp(\rho);d)$.
\item In \cref{statement:upper_bounds_Drho} we finally deduce upper bounds for $\| D_\rho F \|_{\B_{b,q}^\alpha}$,
       where the case $q=2$ gives the connection to the Riemann-Liouville operators.
\end{enumerate}

\begin{theo}[{\bf from $\|\cdot\|_{{\rm TV}(\rho,\eta)}$ to $\Gamma_{t,\rho}$}]
\label{statement:TV_to_Gamma}
Assume a L\'evy process $(X_t)_{t\in [0,T]}$, $\eta \in [0,1]$, 
\begin{enumerate}[{\rm (a)}]
\item $(X_t)_{t\in [0,T]}\subseteq L_\eta$,
\item $\|X_s\|_{{\rm TV}(\rho,\eta)}<\infty$ for $s\in (0,T]$, where $\rho$ is a probability measure on
      $(\R,\cB(\R))$.
\end{enumerate}
Then the following holds:
\begin{enumerate}[{\rm (1)}]
\item \label{item:1:statement:TV_to_Gamma}
      One has $\hoel \eta \subseteq \dom(\Gamma_{\rho}^0)$ with
      \begin{equation}\label{eq:item:1:statement:TV_to_Gamma}
      \left \| \Gamma_{t,\rho}^0 \right \|_{(\hoel \eta)^*} 
                    \leqslant \|X_{T-t}\|_{{\rm TV}(\rho,\eta)},
      \end{equation}
     where $\hoel \eta$ is equipped with the semi-norm
     $|f|_{\eta,\infty}:= \| f-f(0)\|_{\hoelO{\eta,\infty}}$ if $\eta\in (0,1)$.
\item \label{item:2:statement:TV_to_Gamma}
      If $f\in \hoel \eta$ and $K(t,y):=F(t,y+\ell t)$ for $(t,y)\in [0,T]\times Q$, and
      \[ H(t,y) := \int_{Q\setminus \{0\}} \frac{K(t,y+z)-K(t,y)}{z}\rho(\od z)
         \sptext{1}{for}{1}
         (t,y)\in [0,T)\times Q, \]
      then $H$ is $Y$-consistent with $Y$ given in \eqref{eqn:definition_Y_Levy}, 
      where for $\eta=\sigma=0$ we additionally assume that
      $y \mapsto f(y+\ell T)$ is continuous as map from $Q$ to $\R$.
\end{enumerate}
\end{theo}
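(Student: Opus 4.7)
For part (1), the strategy is pointwise-in-$z$ real interpolation applied to the linear functional
\[ T_z g := \frac{1}{z}\int_\R g\,\od(\p_{z+X_{T-t}}-\p_{X_{T-t}})\qquad (g\in C_b^0(\R)+\hoell{1}{0}).\]
Since $\Gamma^0_{t,\rho}$ is insensitive to additive constants, I work with $\phi:=f-f(0)$, so that $\langle f,\Gamma^0_{t,\rho}\rangle=\int_\RO T_z\phi\,\rho(\od z)$. Fix any measurable potential $P$ with $|z|P(z)\geqslant V(z):=\|\p_{z+X_{T-t}}-\p_{X_{T-t}}\|_{\mathrm{TV}}$. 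Two endpoint estimates are then immediate: $|T_zg|\leqslant\|g\|_\infty V(z)/|z|\leqslant P(z)\|g\|_\infty$ for $g\in C_b^0(\R)$, and $|T_zg|\leqslant|g|_1$ for $g\in\hoell{1}{0}$ (the latter via the trivial coupling $U=V+z$). The exact functoriality \eqref{eq:functor_with_constant_1_real_interpolation} applied with $E_0=C_b^0(\R)$, $E_1=\hoell{1}{0}$, $F_0=F_1=\R$ then yields $|T_zg|\leqslant P(z)^{1-\eta}\|g\|_{\hoelO{\eta,\infty}}$ for $g\in\hoelO{\eta,\infty}$ and $\eta\in(0,1)$. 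Taking $g=\phi$ (whose norm in $\hoelO{\eta,\infty}$ is precisely $|f|_{\eta,\infty}$), integrating in $z$ against $\rho$, and taking the infimum over admissible $P$ delivers \eqref{eq:item:1:statement:TV_to_Gamma}. The endpoint cases $\eta\in\{0,1\}$ reduce directly to the two endpoint estimates above. The inclusion $\hoel\eta\subseteq\dom(\Gamma^0_\rho)$ is then obtained by applying the same bound $\omega$-wise to the translated function $y\mapsto f(y+x+X_\delta(\omega))-f(x+X_\delta(\omega))$, whose $\hoelO{\eta,\infty}$-semi-norm equals $|f|_{\eta,\infty}$ by the translation invariance cited in \cref{sec:intro:function_spaces}, so that the integrand defining membership in $\dom(\Gamma^0_\rho)$ is dominated $\p$-almost surely by an integrable function.

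For part (2), the $Y$-consistency of $H$ will follow from combining the semigroup identity for $F$ with Fubini. Independent increments give $F(s,x)=\E F(t,x+X_{t-s})$ for $0\leqslant s\leqslant t<T$, and via $X_{t-s}=Y_{t-s}+\ell(t-s)$ a.s.\ this rewrites as $K(s,y)=\E K(t,y+Y_{t-s})$ for $y\in Q$. The estimate from part (1), applied $\omega$-wise, produces
\[ \int_{Q\setminus\{0\}}\frac{|K(t,y+Y_{t-s}(\omega)+z)-K(t,y+Y_{t-s}(\omega))|}{|z|}\rho(\od z)\leqslant|f|_{\eta,\infty}\,\|X_{T-t}\|_{\mathrm{TV}(\rho,\eta)},\]
which simultaneously confirms $\E|H(t,y+Y_{t-s})|<\infty$ and justifies Fubini, giving $\E H(t,y+Y_{t-s})=H(s,y)$. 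Continuity of $H(t,\cdot)$ on $Q$ follows from continuity of $K(t,\cdot)$---smooth via \cref{statement:consistence-upper_bound_sigma>0} when $\sigma>0$, H\"older from $|F(t,x+h)-F(t,x)|\leqslant|f|_\eta|h|^\eta$ when $\eta>0$, and by the added hypothesis when $\eta=\sigma=0$---together with dominated convergence using the same majorant.

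\textbf{Main obstacle.} The critical technical point is arranging the interpolation in part (1) so that no multiplicative prefactor appears (the statement has constant exactly $1$). This works because the sup-norm on $C_b^0(\R)$ used in the paper's definition of $\hoelO{\eta,\infty}$ is precisely the norm appearing in the endpoint estimate $|T_zg|\leqslant P(z)\|g\|_\infty$, so that \eqref{eq:functor_with_constant_1_real_interpolation} produces the interpolated bound with constant $1$. A subordinate difficulty---measurability of $z\mapsto V(z)$ and of the infimum defining $\|X_{T-t}\|_{\mathrm{TV}(\rho,\eta)}$---is neatly side-stepped by working throughout with the potentials $P$ of \cref{definition:TVrho} and only taking the infimum at the very end.
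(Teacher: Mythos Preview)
Your proposal is correct and follows essentially the same route as the paper. The only cosmetic differences are that you carry the potentials $P$ of \cref{definition:TVrho} through the interpolation rather than working with the total-variation norm itself, and that for part (2) you inline the content of the auxiliary \cref{statement:consistence-sigma=0_new} (semigroup identity, Fubini, dominated convergence for continuity) rather than verifying its three hypotheses and invoking it as a lemma.
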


For the proof we need the following lemma:

\begin{lemm}
\label{statement:consistence-sigma=0_new} 
We assume 
\begin{enumerate}[{\rm (1)}]
\item \label{it:1:statement:consistence-sigma=0_new}
      that $k\colon Q \to \R$ is a Borel function with 
      $\E|k(y+Y_s)|<\infty$ for $(s,y)\in [0,T]\times Q$ and that $K \colon [0,T)\times Q\to\R$ with
      $K(t,y) := \E k(y+Y_{T-t})$ satisfies
      \[ \E \int_{Q\setminus \{0\}} \left | \frac{K(t,y+Y_\delta + z) 
                          - K(t,y+ Y_\delta)}{z} \right |  \rho(\od z) < \infty
         \sptext{1}{for}{1} 
         0 \leqslant \delta \leqslant t < T, \]
\item \label{it:2:statement:consistence-sigma=0_new}
      that $y\mapsto K(t,y)$ is continuous on $Q$ for $t\in [0,T)$,
\item \label{it:3:statement:consistence-sigma=0_new}
      that for all $(t,y)\in [0,T) \times Q$ there is an $\varepsilon >0$ such the family  
      of functions $z\mapsto \frac{K(t,y'+z)-K(t,y')}{z}$, indexed by 
      $y'\in Q$ with $|y-y'|<\varepsilon$, is uniformly integrable on $(Q\setminus \{0\},\rho)$.
\end{enumerate}
Then we obtain a $Y$-consistent function by \begin{align*}
	 H(t,y) := \int_{Q \backslash\{0\}} \frac{K(t,y+z)-K(t,y)}{z} \rho(\od z)
    \sptext{1}{for}{1} (t,y)\in [0,T)\times Q.
        \end{align*}
\end{lemm}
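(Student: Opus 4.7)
The plan is to verify in turn the three defining properties of $Y$-consistency from \cref{definition:consistent_family}, after first establishing an auxiliary $Y$-consistency of the function $K$ itself. To this end let $Y'=(Y'_u)_{u\in[0,T]}$ be an independent copy of $Y$ on an enlargement of the probability space. Since $Y_u = X_u-\ell u$ holds $\p$-a.s.\ and $X$ has stationary independent increments, one has the distributional identity
\[
Y_{t-s}+Y'_{T-t} \stackrel{d}{=} Y_{T-s}, \qquad 0\leqslant s \leqslant t \leqslant T,
\]
with $Y_{t-s}$ and $Y'_{T-t}$ independent. Using the integrability hypothesis $\E|k(y+Y_s)|<\infty$ from \eqref{it:1:statement:consistence-sigma=0_new}, Fubini then yields
\[
\E K(t, y + Y_{t-s}) = \E\,k\big(y + Y_{t-s}+Y'_{T-t}\big) = \E\,k(y+Y_{T-s}) = K(s,y) \qquad (y\in Q),
\]
which combined with the continuity \eqref{it:2:statement:consistence-sigma=0_new} shows that $K$ is itself $Y$-consistent.

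First I would verify that $H(t,y)$ is finite on $[0,T)\times Q$ by specializing \eqref{it:1:statement:consistence-sigma=0_new} to $\delta=0$: since $Y_0\equiv 0$ this yields $\int_{Q\setminus\{0\}}|(K(t,y+z)-K(t,y))/z|\,\rho(\od z)<\infty$. Property \eqref{item:2:definition:consistent_family} of the definition then follows from the same hypothesis applied with $\delta=t-s$, using that $y+Y_{t-s}\in Q$ a.s.\ because $Q+Q=Q$:
\[
\E|H(t, y+Y_{t-s})| \leqslant \E\!\int_{Q\setminus\{0\}}\!\left|\frac{K(t,y+Y_{t-s}+z)-K(t,y+Y_{t-s})}{z}\right|\rho(\od z)<\infty.
\]

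For the continuity required in \eqref{item:1:definition:consistent_family}, I would fix $(t,y_0)\in[0,T)\times Q$ and an arbitrary sequence $y_n\to y_0$ in $Q$. By \eqref{it:2:statement:consistence-sigma=0_new} the integrand $(K(t,y_n+z)-K(t,y_n))/z$ converges $\rho$-pointwise in $z$ to the corresponding expression at $y_0$, and by \eqref{it:3:statement:consistence-sigma=0_new} this family is uniformly $\rho$-integrable on $Q\setminus\{0\}$ once $|y_n-y_0|$ is small enough; Vitali's convergence theorem then delivers $H(t,y_n)\to H(t,y_0)$.

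The martingale property \eqref{item:3:definition:consistent_family} is the heart of the argument. For $y\in Q$ and $0\leqslant s\leqslant t<T$, Fubini is justified by \eqref{it:1:statement:consistence-sigma=0_new} with $\delta=t-s$ and gives
\[
\E H(t,y+Y_{t-s})=\int_{Q\setminus\{0\}}\E\!\left[\frac{K(t,y+z+Y_{t-s})-K(t,y+Y_{t-s})}{z}\right]\rho(\od z).
\]
Since both $y$ and $y+z$ lie in $Q$, the already established $Y$-consistency of $K$ collapses each inner expectation to $(K(s,y+z)-K(s,y))/z$, and integration over $z$ recovers $H(s,y)$. The main obstacle will be the careful integrability bookkeeping needed to apply Fubini at each stage, which is exactly what assumption \eqref{it:1:statement:consistence-sigma=0_new} has been tailored to supply; the one substantive non-bookkeeping ingredient is the distributional identity above, a routine consequence of stationary independent increments together with the fact that the indicator modification defining $Y$ changes at most a $\p$-null set.
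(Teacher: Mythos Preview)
Your proof is correct and follows essentially the same route as the paper: both arguments take $\delta=0$ in assumption \eqref{it:1:statement:consistence-sigma=0_new} to see that $H$ is well defined, take $\delta=t-s$ for the integrability of $H(t,y+Y_{t-s})$, apply Fubini together with $\E K(t,y+Y_{t-s})=K(s,y)$ for the mean-value relation, and combine \eqref{it:2:statement:consistence-sigma=0_new} with the uniform integrability in \eqref{it:3:statement:consistence-sigma=0_new} for continuity. The only difference is cosmetic: you spell out, via the distributional identity $Y_{t-s}+Y'_{T-t}\stackrel{d}{=}Y_{T-s}$, the identity $\E K(t,y+Y_{t-s})=K(s,y)$ that the paper simply invokes.
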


\begin{proof}
We check the requirement of \cref{definition:consistent_family}:
\smallskip
(a) Taking $\delta=0$ in assumption \eqref{it:1:statement:consistence-sigma=0_new}, we see
that $H(t,y)$ is well-defined, and for $\delta:= t-s$ we obtain that
\begin{equation}\label{eq:statement:consistence-sigma=0_new}
     \E|H(t,y+Y_{t-s})|
 \leqslant \E \int_{Q \backslash\{0\}} \left | \frac{K(t,y+Y_{t-s}+z)-K(t,y+Y_{t-s})}{z} \right | \rho(\od z) 
 < \infty.
\end{equation}
(b)
Because of \eqref{eq:statement:consistence-sigma=0_new} we can apply Fubini's theorem to get
\begin{align*}
\E H(t,y+ Y_{t-s})
& =  \E  \int_{Q \backslash\{0\}} \frac{K(t,y+Y_{t-s}+z)-K(t,y+Y_{t-s})}{z}  \rho(\od z) \\
& =  \int_{Q \backslash\{0\}} \frac{\E K(t,y+Y_{t-s}+z)-\E K(t,y+Y_{t-s})}{z}  \rho(\od z) \\
& =  \int_{Q \backslash\{0\}} \frac{K(s,y+z)-K(s,y)}{z}  \rho(\od z)\\
& = H(s, y)
\end{align*}
where we use $\E K(t,y+Y_{t-s})=K(s,y)$.
\smallskip

(c)
If we have $y_n,y\in Q$ with $y_n \to y$, then we take $\varepsilon = \varepsilon(t,y)>0$ from assumption 
\eqref{it:3:statement:consistence-sigma=0_new} and obtain 
$\lim_n H(t,y_n) = H(t,y)$ by the uniform integrability 
imposed in \eqref{it:3:statement:consistence-sigma=0_new} and assumption 
\eqref{it:2:statement:consistence-sigma=0_new}.
\end{proof}

\begin{proof}[Proof of \cref{statement:TV_to_Gamma}]
\eqref{item:1:statement:TV_to_Gamma}
First we remark that $(X_t)_{t\in [0,T]}\subseteq L_\eta$ implies that
$\hoel \eta \subseteq \defX$. Moreover, for
fixed $z\in \RO$, $t\in [0,T)$, and $f\in \hoel 1$ we obtain the estimate
\begin{equation}\label{eq:interpol_1}
\left | \frac{F(t,x+z) - F(t,x)}{z} \right |  \leqslant |f|_1
\end{equation}
and, for $f\in B_b(\R)$ and $x'\in \R$,
\begin{align*}
 \left | F(t,x+z) - F(t,x) \right |  
 & =  \left | \int_\R (f(x+y)-f(x')) \p_{z+X_{T-t}}(\od y) -
 \int_\R (f(x+y)-f(x'))     \p_{X_{T-t}}(\od y) \right | \\
 &\leqslant   \int_\R |f(x+y)-f(x')|\,|\p_{z+X_{T-t}} - \p_{X_{T-t}}|(\od y)  \\
 &\leqslant  \| f-f(x')\|_{B_b(\R)}  \| \p_{z+{X_{T-t}}} - \p_{X_{T-t}}\|_{\rm TV}.
 \end{align*}
Therefore, 
\begin{equation}\label{eq:interpol_0}
\left | \frac{F(t,x+z)-F(t,x)}{z}\right |
\leqslant c_\eqref{eq:interpol_0} \frac{ \| \p_{z+{X_{T-t}}} - \p_{X_{T-t}}\|_{\rm TV}}{|z|}
\end{equation}
for 
$c_\eqref{eq:interpol_0} := \|f\|_{C_b^0(\R)}$ if $f\in C_b^0(\R)$ (take $x'=0$) and
$c_\eqref{eq:interpol_0} :=  |f|_0$   if $f\in \hoel 0$   (take the supremum over $x'\in\R$ on the 
                                                                    right-hand side).
Moreover, real interpolation between 
\eqref{eq:interpol_0} for $C_b^0(\R)$ and 
\eqref{eq:interpol_1} for $\hoell 1 0$ 
(for fixed $x$ and $z$) implies that
\begin{equation} \label{eq:interpol_between_01}
   \left | \frac{F(t,x+z) - F(t,x)}{z} \right |  
   \leqslant \|f\|_{\hoelO{\eta,\infty}}
   \left [ \frac{\| \p_{z+{X_{T-t}}} - \p_{X_{T-t}}\|_{\rm TV}}{|z|} \right ]^{1-\eta}
\end{equation}
for $\eta\in (0,1)$ by \eqref{eq:functor_with_constant_1_real_interpolation}.
From \eqref{eq:interpol_0} and \eqref{eq:interpol_1} we deduce 
$\hoel \eta \subseteq \dom(\Gamma_\rho^{0})$ and \eqref{eq:item:1:statement:TV_to_Gamma}
for $\eta\in \{0,1\}$. 
If $\eta \in (0,1)$, then \eqref{eq:interpol_between_01} implies 
$\hoelO {\eta,\infty} \subseteq \dom(\Gamma_\rho^{0})$ and \eqref{eq:item:1:statement:TV_to_Gamma}
with $\hoel \eta$ replaced by $\hoelO {\eta,\infty}$. But if $f\in \hoel \eta$, then we replace 
$f$ by $f_0:=f-f(0)\in \hoelO {\eta,\infty}$ and get
\eqref{eq:interpol_between_01} with constant $\|f-f(0)\|_{\hoelO{\eta,\infty}}$.
This concludes the proof of \eqref{item:1:statement:TV_to_Gamma}.

\medskip

\eqref{item:2:statement:TV_to_Gamma}
We verify the assumptions of \cref{statement:consistence-sigma=0_new}:
\smallskip

\underline{Assumption \eqref{it:1:statement:consistence-sigma=0_new}} follows from
$K(t,y) = F(t,y+\ell t)$ and $f\in\dom(\Gamma_\rho^0)$.
\smallskip

\underline{Assumption \eqref{it:2:statement:consistence-sigma=0_new}}:
Let $k(y):= f(y+\ell T)$ for $y\in \R$.  If $\eta\in (0,1]$ or $\eta=\sigma=0$ we know that $k:Q\to \R$ 
is continuous. Assume $t\in [0,T)$ and $y_n\to y$ with $y_n,y\in Q$. Then 
$(k(y_n+Y_{T-t}))_{n=1}^\infty$ is uniformly integrable. For $\eta=0$ this is obvious,
for $\eta\in (0,1]$ this follows from $(Y_t)_{t\in [0,T]} \subseteq L_\eta$ and $k\in \hoel \eta$. In both cases the continuity of 
$k$ implies that $\lim_{n\to \infty} K(t,y_n) = K(t,y)$. For $\eta=0$ and $\sigma>0$ we use 
\cref{statement:consistence-upper_bound_sigma>0}.
\smallskip

\underline{Assumption \eqref{it:3:statement:consistence-sigma=0_new}}:
Re-writing \eqref{eq:interpol_between_01} for $K$ we get
\begin{equation}\label{eqn:Bb_bound_H_part_2}
\left | \frac{K(t,y+z)-K(t,y)}{z} \right |
   \leqslant c_\eta(f) \left [ \frac{\| \p_{z+{X_{T-t}}} - \p_{X_{T-t}}\|_{\rm TV}}{|z|}\right ]^{1-\eta}
\end{equation}
for $z\in Q\setminus \{ 0 \}$. Therefore $\| X_{T-t} \|_{\rm TV(\rho,\eta)}<\infty$, $t\in [0,T)$, implies that 
the family of maps $Q\setminus \{0\} \ni z \mapsto  \frac{K(t,y+z)-K(t,y)}{z}\in \R$, indexed by $y\in \R$, is uniformly integrable.
\end{proof}

\begin{defi}\hspace{0em}
\label{defi:uppper_bounds_X_rho}
\begin{enumerate}
\item For a probability measure $\rho$ on $(\R,\cB(\R))$, $\varepsilon \geqslant 0$, and $c>0$, we let 
      $\rho\in U(\varepsilon;c)$ if one has
      \[ \rho([-d,d]\setminus \{0\}) \leqslant c d^\varepsilon
         \sptext{1}{for}{1} d \in [0,1]. \]
\item For a L\'evy process $X=(X_t)_{t\in [0,T]}$, $\beta \in (0,\infty]$,  a closed non-empty $A\subseteq \R$, and $c>0$ we let
      $X\in \cU(\beta,A;c)$ if for $z \in A$ and $s\in (0,T]$ one has
      \[ \| \p_{z+X_s} - \p_{X_s} \|_{\rm TV} \leqslant  c |z| s^{-\frac{1}{\beta}}. 
         \]
\end{enumerate}
\end{defi} 
\medskip

\begin{rema}
\label{statement:Uepsc_vs_limsup}
For all  probability measures $\rho$ one has  $\rho\in U(0;1)$. If $\varepsilon>0$, then
the condition '$\rho\in U(\varepsilon;c)$ for some $c>0$' is equivalent to 
\[ \limsup_{d\downarrow 0}\frac{\rho([-d,d]\setminus \{0\})}{d^\varepsilon} < \infty. \]
\end{rema}
\medskip

\begin{theo}[{\bf upper bounds for $\|\cdot\|_{{\rm TV}(\rho,\eta)}$}] 
\label{statement:X-rho_to_TV}
Assume a L\'evy process $X=(X_t)_{t\in [0,T]}$ and $\eta\in [0,1]$.
\begin{enumerate}[{\rm (1)}]
\item \label{item:1:statement:X-rho_to_TV}
      If $(\varepsilon,\beta) \in [0,\infty)\times (0,\infty]$, $c,c'>0$, and  
      \[ \rho \in U(\varepsilon;c) \sptext{1}{and}{1}
         X \in \cU(\beta,\supp(\rho);c'), \]
      then there is a $c_{\eqref{eqn:statement:end_point_estimate_levy_new}}>0$, that depends at most on 
      $(\varepsilon,\beta,\eta,c',T)$,
      such that for $s\in (0,T]$ one has 
      \begin{equation}\label{eqn:statement:end_point_estimate_levy_new}
             \|X_s\|_{{\rm TV}(\rho,\eta)} 
      \leqslant  c_{\eqref{eqn:statement:end_point_estimate_levy_new}} (1\vee c) \,
      \begin{cases}
           s^{-\frac{1-(\varepsilon+\eta)}{\beta}}  &:  \eta \in [0,1-\varepsilon) \\
            \ln \left ( 1+s^{-\frac{1-\eta}{\beta}} \right )                         &:  \eta = 1 - \varepsilon < 1 \\
           1                                     &:  \eta \in (1-\varepsilon,1) \cup \{ 1 \}
      \end{cases}.
      \end{equation}

\item \label{item:2:statement:X-rho_to_TV}
      If $s\in (0,T]$ and $X_s$ has a density $p_s \in C^1(\R)$, then
      \begin{align*}
                  \|X_s\|_{{\rm TV}(\rho,\eta)}    
        \leqslant \int_{\RO} \left (  \min \left \{ \frac{2}{|z|}, 
                  \left \| \frac{\partial p_s}{\partial y} \right \|_{L_1(\R)} \right \} \right )^{1-\eta} \rho(\od z).
      \end{align*}
\item \label{item:3:statement:X-rho_to_TV}
      If $\sigma>0$ and $s\in (0,T]$, 
      then $p_s \in C^1(\R)$ with
      $\left \| \frac{\partial p_s}{\partial y} \right \|_{L_1(\R)} 
      \leqslant \sqrt{\frac{2}{\pi \sigma^2}} s^{-\frac{1}{2}}$
      and $X \in \cU\left (2,\R;\sqrt{\frac{2}{\pi \sigma^2}}\right )$.
\end{enumerate}
\end{theo}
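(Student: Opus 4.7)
The plan is to verify the three items by combining the trivial total variation bound with a density-based estimate and, for Part (1), a small-ball decomposition of $\rho$ at a crossover scale determined by $X\in\cU(\beta,\supp(\rho);c')$.

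\textbf{Part (3).} I would decompose $X_s=\sigma W_s+J_s$ with $J_s$ independent of $W_s$, so that the density of $X_s$ is the convolution $p_s=\gamma_{\sigma^2 s}\ast\p_{J_s}$ of the Gaussian density $\gamma_{\sigma^2 s}(y)=(2\pi\sigma^2 s)^{-1/2}\e^{-y^2/(2\sigma^2 s)}$ with $\p_{J_s}$. Hence $p_s\in C^1(\R)$, $\partial p_s/\partial y=(\partial\gamma_{\sigma^2 s}/\partial y)\ast\p_{J_s}$, and Young's inequality yields $\|\partial p_s/\partial y\|_{L_1(\R)}\leqslant\|\partial\gamma_{\sigma^2 s}/\partial y\|_{L_1(\R)}=\sqrt{2/(\pi\sigma^2 s)}$. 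The fundamental theorem of calculus and Fubini give
\begin{align*}
\|\p_{z+X_s}-\p_{X_s}\|_{\mathrm{TV}}=\int_\R|p_s(y-z)-p_s(y)|\od y\leqslant |z|\,\|\partial p_s/\partial y\|_{L_1(\R)},
\end{align*}
from which $X\in\cU(2,\R;\sqrt{2/(\pi\sigma^2)})$ is immediate.

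\textbf{Part (2).} The elementary bound $\|\p_{z+X_s}-\p_{X_s}\|_{\mathrm{TV}}\leqslant 2$ combined with the Fubini estimate recalled above (valid as soon as $p_s\in C^1(\R)$) gives
\begin{align*}
\frac{\|\p_{z+X_s}-\p_{X_s}\|_{\mathrm{TV}}}{|z|}\leqslant \min\!\left\{\frac{2}{|z|},\,\|\partial p_s/\partial y\|_{L_1(\R)}\right\}.
\end{align*}
This minimum is therefore an admissible potential $P$ in \cref{definition:TVrho}, and the claim follows by the infimum defining $\|X_s\|_{{\rm TV}(\rho,\eta)}$.

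\textbf{Part (1), the main content.} The hypothesis $X\in\cU(\beta,\supp(\rho);c')$ makes $P(z):=\min\{c's^{-1/\beta},2/|z|\}$ an admissible potential $\rho$-a.e. I introduce the crossover level $d^\ast(s):=(2/c')s^{1/\beta}\in(0,\infty]$ at which the two arguments of the minimum coincide, and split
\begin{align*}
\|X_s\|_{{\rm TV}(\rho,\eta)}&\leqslant (c's^{-1/\beta})^{1-\eta}\,\rho\big([-d^\ast(s)\wedge 1,\,d^\ast(s)\wedge 1]\big)\\
&\quad+2^{1-\eta}\!\!\int_{d^\ast(s)\wedge 1<|z|\leqslant 1}\!\!|z|^{\eta-1}\rho(\od z)+2^{1-\eta}\,\rho(\{|z|>1\}).
\end{align*}
Using $\rho\in U(\varepsilon;c)$ the first summand is at most $c(c')^{1-\eta}s^{-(1-\eta)/\beta}(d^\ast(s)\wedge 1)^\varepsilon$, which is a constant multiple of $s^{-(1-\eta-\varepsilon)/\beta}$ when $d^\ast(s)\leqslant 1$ and is uniformly bounded (using $s\leqslant T$) otherwise. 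For the middle summand I would integrate by parts against the non-decreasing primitive $\phi(r):=\rho([-r,r])$ with $\phi(r)\leqslant cr^\varepsilon$ on $[0,1]$, discard the non-positive boundary contribution at $r=d^\ast(s)\wedge 1$, and arrive at
\begin{align*}
\int_{d^\ast(s)\wedge 1<|z|\leqslant 1}|z|^{\eta-1}\rho(\od z)\leqslant c+c(1-\eta)\int_{d^\ast(s)\wedge 1}^{1}r^{\eta+\varepsilon-2}\od r.
\end{align*}
The elementary primitive on the right produces the three regimes of \eqref{eqn:statement:end_point_estimate_levy_new}: for $\eta<1-\varepsilon$ it is comparable to $(d^\ast(s)\wedge 1)^{\eta+\varepsilon-1}\asymp s^{-(1-\eta-\varepsilon)/\beta}$; for $\eta=1-\varepsilon<1$ it equals $-\ln(d^\ast(s)\wedge 1)$, which I would bound by $C+C'\ln(1+s^{-(1-\eta)/\beta})$; and for $\eta\in(1-\varepsilon,1)\cup\{1\}$ it is uniformly bounded. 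Collecting contributions and absorbing all $(\varepsilon,\beta,\eta,c',T)$-dependent factors into a single constant yields $c_{\eqref{eqn:statement:end_point_estimate_levy_new}}$.

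\textbf{Main obstacle.} The principal technicality is the bookkeeping around the two regimes $d^\ast(s)\leqslant 1$ versus $d^\ast(s)>1$ and the endpoint $\eta=1-\varepsilon$: the logarithmic factor in the middle regime arises precisely from the scaling identity $-\ln d^\ast(s)\leqslant C+(1/\beta)\ln(1+s^{-(1-\eta)/\beta})$, while keeping the constants' dependence on $T$ under control when $d^\ast(s)$ crosses $1$ requires a careful splitting of the integration region.
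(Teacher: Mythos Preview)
Your proof is correct. Parts (2) and (3) coincide with the paper's argument essentially verbatim (the fundamental-theorem/Fubini bound on total variation and the Gaussian convolution). For Part~(1), both you and the paper start from the same admissible potential $P(z)=\min\{c's^{-1/\beta},\,2/|z|\}$, but the estimation of $\int_{\RO} P^{1-\eta}\od\rho$ differs: the paper first factors out $1\vee(c')^{1-\eta}$ to reduce to $\int_{\RO}(A\wedge w)\od\rho$ with $A(z)=(2/|z|)^{1-\eta}$ and $w=s^{-(1-\eta)/\beta}$, then invokes a separate lemma (\cref{statement:upper_bound_function_A}) that evaluates this via a dyadic decomposition over shells $2^{-n}\leqslant |z|<2^{1-n}$. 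You instead split at the crossover scale $d^\ast(s)$ and use Stieltjes integration by parts against $\phi(r)=\rho([-r,r]\setminus\{0\})$. Both are standard ways to handle such weighted-measure estimates; your route is more self-contained, while the paper's modular packaging of \cref{statement:upper_bound_function_A} is reused later in the proof of \cref{statement:upper_bounds_Drho} through the abstract interpolation of \cref{statement:abstract_interpolation_new}. One small point: strictly, \cref{definition:TVrho} requires the potential bound on all of $\RO$, not just $\rho$-a.e.; this is harmless since one can redefine $P(z):=2/|z|$ off $\supp(\rho)$ without changing $\int_{\RO}P^{1-\eta}\od\rho$.
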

\medskip
\begin{rema}\hspace{0em}
The existence of the transition density $p_{T-t}$ of a L\'evy process is a time 
dependent distributional property (see, e.g., \cite[Ch.5]{Sa13}).
In \cref{statement:properties:Gamma1} and \cref{statement:X-rho_to_TV}
we used $\left \| \partial p_{T-t}/\partial x \right \|_{L_1(\R)}$ and $\|p_{T-t}\|_{L_s(\R)}$. 
Results for $\partial p_t/\partial x$ and $p_t$ for a L\'evy process can be found, for example, in
\cite{KS13, KS18, SSW12, Sz11}.
\end{rema}

For the proof we need the following estimate:

\begin{lemm}
\label{statement:upper_bound_function_A}
Let $\varepsilon\geqslant 0$, $\eta \in [0,1)$, $c,w_0>0$, and 
$\rho \in U(\varepsilon;c)$. Define 
$A:\RO \to [0,\infty)$ by $A(z) := (2/|z|)^{1-\eta}$. Then there is a 
$  c_{\eqref{eqn:statement:upper_bound_function_A}} 
 = c_{\eqref{eqn:statement:upper_bound_function_A}}(\varepsilon,\eta,w_0)>0$
such that for $w\geqslant w_0$ one has
\begin{equation}\label{eqn:statement:upper_bound_function_A}
 \int_\RO  (A\wedge w)  \od \rho
\leqslant c_{\eqref{eqn:statement:upper_bound_function_A}} (1 \vee c)
   \begin{cases}
   w^{1-\frac{\varepsilon}{1-\eta}} &:  \varepsilon + \eta < 1 \\
   \ln (1+w)                       &:  \varepsilon + \eta = 1 \\
   1                                &:  \varepsilon + \eta > 1
\end{cases}.
\end{equation}
\end{lemm}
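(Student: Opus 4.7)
\textbf{Proof plan for \cref{statement:upper_bound_function_A}.} The plan is to split the integral at the level set where $A(z) = w$ and then dyadically estimate the contribution from small $|z|$ using the hypothesis $\rho\in U(\varepsilon;c)$. Set $d_w := 2 w^{-1/(1-\eta)}$, so that $A(z) \leqslant w \iff |z|\geqslant d_w$. Since the interesting regime is $d_w \leqslant 1$, i.e.\ $w\geqslant 2^{1-\eta}$, I would first reduce to this case: for $w\in [w_0, 2^{1-\eta})$ (a compact range depending only on $\eta, w_0$) the trivial bound $\int_\RO (A\wedge w)\od\rho \leqslant w$ is already $\leqslant C(\eta,w_0)$, which is absorbed into the constant in each of the three right-hand sides.

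Now assume $w\geqslant 2^{1-\eta}\vee w_0$, so $d_w\in (0,1]$. I would write
\[
\int_\RO (A\wedge w)\od\rho
= w\,\rho(\{0<|z|\leqslant d_w\})
+ \int_{\{d_w<|z|\leqslant 1\}} A\od\rho
+ \int_{\{|z|>1\}} A\od\rho.
\]
The first term is $\leqslant c\, w d_w^\varepsilon = c\, 2^\varepsilon\, w^{1-\varepsilon/(1-\eta)}$ by $\rho\in U(\varepsilon;c)$. The third is $\leqslant 2^{1-\eta}$ since $A\leqslant 2^{1-\eta}$ on $\{|z|>1\}$ and $\rho$ is a probability measure. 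The main work is in the middle piece, which I would decompose dyadically: let $K\geqslant 0$ be the largest integer with $2^K d_w\leqslant 1$, and cover $\{d_w<|z|\leqslant 1\}$ by the annuli $A_k := \{2^k d_w < |z| \leqslant (2^{k+1}d_w)\wedge 1\}$ for $k=0,\dots,K$. On $A_k$ I have $A(z)\leqslant (2/(2^k d_w))^{1-\eta}$ and, using $U(\varepsilon;c)$ once more, $\rho(A_k)\leqslant c(2^{k+1}d_w)^\varepsilon$. Substituting $d_w^{\varepsilon-(1-\eta)} = 2^{\varepsilon-(1-\eta)}w^{1-\varepsilon/(1-\eta)}$ this yields
\[
\int_{A_k} A\od\rho \leqslant C(\varepsilon,\eta)\,(1\vee c)\, 2^{k(\varepsilon-(1-\eta))}\, w^{1-\varepsilon/(1-\eta)}.
\]

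The three cases then follow by summing this geometric series. If $\varepsilon+\eta<1$ the ratio is $<1$, so $\sum_{k\geqslant 0} 2^{k(\varepsilon-(1-\eta))}<\infty$ and the total is $\leqslant C(1\vee c)w^{1-\varepsilon/(1-\eta)}$. If $\varepsilon+\eta=1$ the ratio is $1$, so the sum equals $K+1\leqslant C\ln(1+1/d_w)\leqslant C'\ln(1+w)$, while $w^{1-\varepsilon/(1-\eta)}=1$; combined with the first term (which is now also logarithmic-free) and the third, one obtains $\leqslant C(1\vee c)\ln(1+w)$. If $\varepsilon+\eta>1$ the ratio exceeds $1$, so $\sum_{k=0}^K 2^{k(\varepsilon-(1-\eta))}\leqslant C\cdot 2^{K(\varepsilon-(1-\eta))}\leqslant C\, d_w^{-(\varepsilon-(1-\eta))}$ (since $2^K\leqslant 1/d_w$); multiplying by $w^{1-\varepsilon/(1-\eta)}$ the powers of $w$ cancel exactly and the bound reduces to a constant.

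I expect no serious obstacle; the only delicate point is the careful bookkeeping in the borderline case $\varepsilon+\eta=1$ (to see that the logarithm appears with the correct argument $\ln(1+w)$ rather than $\ln w$, which matters when $w$ is close to $w_0$) and in the $\varepsilon+\eta>1$ case, where it is crucial that the geometric sum is truncated exactly at $2^K d_w\leqslant 1$ so that the diverging sum is compensated by the factor $d_w^{\varepsilon-(1-\eta)}$ from the first-term estimate. Collecting the three contributions (and the reduction for small $w$) gives the required bound with a constant $c_{\eqref{eqn:statement:upper_bound_function_A}}$ depending only on $(\varepsilon,\eta,w_0)$.
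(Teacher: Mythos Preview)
Your proof is correct and follows essentially the same approach as the paper: split the integral at the level set $\{A=w\}$, bound the contribution from $\{|z|\leqslant d_w\}$ directly via $\rho\in U(\varepsilon;c)$, handle $\{|z|>1\}$ trivially, and estimate the intermediate range by a dyadic decomposition leading to a geometric series whose behaviour distinguishes the three cases. The only cosmetic difference is that you index the dyadic shells from $d_w$ upward ($A_k=\{2^k d_w<|z|\leqslant 2^{k+1}d_w\}$) whereas the paper uses the fixed shells $\{2^{-n}\leqslant |z|<2^{1-n}\}$ intersected with $\{|z|\geqslant d_w\}$; the resulting sums are the same up to re-indexing.
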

\begin{proof}
It is sufficient to prove the inequality for any fixed $w_0>0$ and obtain the inequality for other $w_0>0$ by a change 
of the constant $c_{\eqref{eqn:statement:upper_bound_function_A}}>0$. Here we choose $w_0>0$ such that 
$2 w_0^{-\frac{1}{1-\eta}}=1$ and obtain for $w\geqslant w_0$ that
\[ w \rho (A>w) 
   \leqslant c w \left (  2 w^{-\frac{1}{1-\eta}} \right )^\varepsilon 
   =  (c  2^\varepsilon) \, w^{1-\frac{\varepsilon}{1-\eta}}.\]
For the remaining term and $w\geqslant w_0$ we get
\begin{align*}
          2^{\eta-1} \int_{\{A \leqslant w \}}  A(z) \rho(\od z)
& =       \int_{\left \{  |z| \geqslant 2 w^{-\frac{1}{1-\eta}}  \right \}} |z|^{\eta-1} \rho(\od z) \\
&\leqslant 1 + \int_{\left \{1 >  |z| \geqslant 2 w^{-\frac{1}{1-\eta}}  \right \}} |z|^{\eta-1} \rho(\od z) \\
&\leqslant 1 + \sum_{n=1}^\infty 
           \1_{ \left \{     2^{1-n} \geqslant 2 w^{-\frac{1}{1-\eta}}    \right \} } 
\rho \left ( \frac{1}{2^n} \leqslant |z| < \frac{1}{2^{n-1}} \right ) 2^{n(1-\eta)} \\
&\leqslant 1 + c \sum_{n = 1}^\infty 
           \1_{ \left \{     2^{1-n} \geqslant 2 w^{-\frac{1}{1-\eta}}    \right \} } 
           \left ( \frac{1}{2^{n-1}} \right )^\varepsilon 2^{n(1-\eta)} \\
&          =  1 + c 2^\varepsilon \sum_{n = 1}^\infty 
           \1_{ \left \{     2^{n} \leqslant  w^{\frac{1}{1-\eta}}    \right \} }  2^{n(1-(\varepsilon+\eta))} \\
&\leqslant (1 \vee c) \left (1 + 2^\varepsilon \sum_{n = 1}^\infty 
           \1_{ \left \{     2^{n} \leqslant  w^{\frac{1}{1-\eta}}    \right \} }  2^{n(1-(\varepsilon+\eta))}\right ).
\end{align*}
We conclude with 
\[ \sum_{n = 1}^\infty 
   \1_{ \left \{     2^{n} \leqslant  w^{\frac{1}{1-\eta}}    \right \} }  2^{n(1-(\varepsilon+\eta))}
   \leqslant 
    \begin{cases}
     d \, w^{\frac{1-(\varepsilon+\eta)}{1-\eta}}               &: \varepsilon + \eta < 1 \\
    \frac{1}{1-\eta}   \log_2 (1+w)                             &: \varepsilon + \eta = 1\\
    \sum_{n =1}^\infty 2^{n(1-(\varepsilon+\eta))} < \infty     &: \varepsilon + \eta > 1
    \end{cases}
    \]
for some $d=d(\varepsilon,\eta)>0$.
\end{proof}

\begin{proof}[Proof of \cref{statement:X-rho_to_TV}]
\eqref{item:1:statement:X-rho_to_TV}
For $\eta=1$ the estimate is obvious, so let us assume $\eta \in [0,1)$ and $s\in (0,T]$. Set
$\gamma := \frac{1-\eta}{\beta}\in [0,\infty)$ and $w_0 := T^{-\gamma}$. In the notation of \cref{statement:upper_bound_function_A}
we deduce
\begin{align*}
             \|X_s\|_{{\rm TV}(\rho,\eta)}
 & \leqslant \int_{\supp(\rho)\setminus \{0\}}  \left ( \min\left \{ \frac{2}{|z|},  c' s^{-\frac{1}{\beta}} \right \} \right )^{1-\eta} \rho(\od z) \\ 
 &   =       \int_{\supp(\rho)\setminus \{0\}} \hspace*{-2em} \min \{ A(z),  (c')^{1-\eta}  s^{-\gamma} \} \rho(\od z) \\
 &\leqslant  (1\vee (c')^{1-\eta}) \int_{\supp(\rho)\setminus \{0\}} \hspace*{-2em} (A \wedge  s^{-\gamma})\od \rho \\
 &\leqslant  (1\vee (c')^{1-\eta}) \int_\RO (A \wedge  s^{-\gamma})\od \rho.
\end{align*}
We conclude with \cref{statement:upper_bound_function_A}.

\eqref{item:2:statement:X-rho_to_TV}
We observe that
\begin{align}
    \|\p_{z+X_s} - \p_{X_s}\|_{\rm TV} 
& = \left \| p_s(\cdot -z) - p_s\right \|_{L_1(\R)} 
  = \int_\R \left |\int_{x-z}^x \frac{\partial p_s}{\partial y}(y) \od y \right | \od x \notag \\
& \leqslant \sign(z) \int_\R \int_{x-z}^x\left | \frac{\partial p_s}{\partial y}(y) \right | \od y \od x  \notag \\
& = |z| \int_\R \left | \frac{\partial p_s}{\partial y}(y) \right | \od y. \label{eqn:TV_vs_gradient_density}
\end{align}
As we  have $\|\p_{z+X_s} - \p_{X_s}\|_{\rm TV}\leqslant 2$ as well, the claim follows.
\medskip

\eqref{item:3:statement:X-rho_to_TV}
If $\sigma>0$ and $s\in (0,T]$, then the density of $X_s$ is given by
$p_s(y):=\E p_{\sigma W_s}(y-J_s)$ where 
$J_s:= X_s - \sigma W_s$, and
$p_{\sigma W_s}$ is the $C^\infty(\R)$-density of $\sigma W_s$
and satisfies
\[
          \left \| \frac{\partial p_s}{\partial y} \right \|_{L_1(\R)} 
     =    \left \| \E \frac{\partial p_{\sigma W_s}}{\partial y}(\cdot-J_s) \right \|_{L_1(\R)} 
\leqslant \left \| \frac{\partial p_{\sigma W_s}}{\partial y}\right \|_{L_1(\R)} 
     =    \sqrt{\frac{2}{\pi \sigma^2}} s^{-\frac{1}{2}}. 
\]
Part \eqref{item:2:statement:X-rho_to_TV} implies that 
\[           \| \p_{z+X_s} - \p_{X_s} \|_{\rm TV}
   \leqslant |z|  \sqrt{\frac{2}{\pi \sigma^2}} s^{-\frac{1}{2}} \]
and therefore $X \in \cU\left (2,\R;\sqrt{\frac{2}{\pi \sigma^2}}\right )$.
\end{proof}

\begin{theo}[{\bf upper bounds for $D_\rho F$}]
\label{statement:upper_bounds_Drho}
Assume a L\'evy process $X=(X_t)_{t\in [0,T]}$, $\eta\in [0,1]$,
\begin{enumerate}[{\rm (a)}]
\item $(X_t)_{t\in [0,T]}\subseteq L_\eta$,
\item $(\varepsilon,\beta) \in [0,\infty)\times (0,\infty]$,  $c,c'>0$, and 
      $\rho \in U(\varepsilon;c)$  and $X \in \cU(\beta,\supp(\rho);c')$,
\item $\alpha:= \frac{1-(\varepsilon+\eta)}{\beta}$.
\end{enumerate}
Then the following holds:
\begin{enumerate}[{\rm (1)}]
\item \label{item:1:statement:upper_bounds_Drho}
      One has $\hoel \eta \subseteq \dom(\Gamma_\rho^0)$ and 
      there is a 
      $  c_{\eqref{eqn:item:1:statement:upper_bounds_Drho}}>0$, that depends at most on 
      $(\varepsilon,\beta,\eta,c',T)$,
      such that for $t\in [0,T)$ one has
      \begin{equation}\label{eqn:item:1:statement:upper_bounds_Drho}
           \| \Gamma_{t,\rho}^0 \|_{(\hoel \eta)^\ast} 
      \leqslant c_{\eqref{eqn:item:1:statement:upper_bounds_Drho}} \, (1\vee c) \,
      \begin{cases}
           (T-t)^{-\frac{1-(\varepsilon+\eta)}{\beta}}            &:  \eta \in [0,1-\varepsilon) \\
            \ln \left ( 1+(T-t)^{-\frac{1-\eta}{\beta}} \right ) &:  \eta = 1 - \varepsilon < 1 \\
           1                                                      &:  \eta \in (1-\varepsilon,1) \cup \{ 1 \}
      \end{cases}. 
      \end{equation}

\item \label{item:2:statement:upper_bounds_Drho}
      If $\varepsilon\in [0,1)$, $(\eta,q,\beta)\in (0,1-\varepsilon)\times [1,\infty)\times (0,\infty)$, and 
      $X\subseteq L_{\eta+\gamma}$ for some $\gamma>0$, then there is a 
      $ c_\eqref{eqn:item:2:statement:upper_bounds_Drho}
              =c_\eqref{eqn:item:2:statement:upper_bounds_Drho}(\varepsilon,\beta,\eta,\gamma,q,c',T)>0$
      such that 
      \begin{equation}\label{eqn:item:2:statement:upper_bounds_Drho}
                 \left \| D_\rho F \right \|_{\B_{b,q}^\alpha}
      \leqslant c_\eqref{eqn:item:2:statement:upper_bounds_Drho} \, (1\vee c) \, \| f \|_{\hoel{\eta,q}}
      \sptext{1}{for}{1}
      f\in \hoel{\eta,q}.
      \end{equation}
\end{enumerate}
\end{theo}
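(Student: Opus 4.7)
The plan is to deduce part (1) directly from the preceding machinery, and to derive part (2) via the abstract interpolation theorem \cref{statement:abstract_interpolation_new}, in which the parameters $\kappa_i,\gamma_i$ will be chosen so that the interpolation exponent lands exactly on $\alpha=(1-(\varepsilon+\eta))/\beta$.

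Part (1) follows by concatenating \cref{statement:TV_to_Gamma}\eqref{item:1:statement:TV_to_Gamma}, which under the standing hypothesis $X\subseteq L_\eta$ gives $\hoel{\eta}\subseteq\dom(\Gamma^0_\rho)$ together with $\|\Gamma_{t,\rho}^0\|_{(\hoel{\eta})^*}\le\|X_{T-t}\|_{{\rm TV}(\rho,\eta)}$, and the three-case upper bound for $\|X_{T-t}\|_{{\rm TV}(\rho,\eta)}$ from \cref{statement:X-rho_to_TV}\eqref{item:1:statement:X-rho_to_TV} applied at $s=T-t$. The auxiliary condition $\|X_s\|_{{\rm TV}(\rho,\eta)}<\infty$ needed by \cref{statement:TV_to_Gamma} is then automatic from the same bound.

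For part (2), pick $0<\eta_1<\eta<\eta_0<\min\{1-\varepsilon,\,\eta+\gamma\}$, which is possible since $\eta<1-\varepsilon$ and $\gamma>0$. Fix $t\in[0,T)$ and introduce the linear operators $T_{t,r},T_{t,\pi}:\hoelO{\eta_0,\infty}+\hoelO{\eta_1,\infty}\to B_b(\R)$ by
\[ (T_{t,r}f)(x):=\frac{F(t,x+r)-F(t,x)}{r},\qquad T_{t,\pi}f:=\int_\RO T_{t,r}f\,\rho(\od r)=D_\rho F(t,\cdot). \]
The bound \eqref{eq:interpol_between_01} (established inside the proof of \cref{statement:TV_to_Gamma}), combined with $X\in\cU(\beta,\supp(\rho);c')$ and the trivial estimate $\|\p_{r+X_{T-t}}-\p_{X_{T-t}}\|_{\rm TV}\le 2$, gives
\[ \|T_{t,r}f\|_{B_b(\R)}\le(1\vee c')^{1-\eta_i}\,\|f\|_{\hoelO{\eta_i,\infty}}\,\min\{(2/|r|)^{1-\eta_i},\,(T-t)^{-(1-\eta_i)/\beta}\}, \]
so we are in the framework of \cref{statement:abstract_interpolation_new} with $A_i(r):=(2/|r|)^{1-\eta_i}$, $\gamma_i:=(1-\eta_i)/\beta$ (and $\gamma_0<\gamma_1$ since $\eta_0>\eta_1$), and $d_i:=(1\vee c')^{1-\eta_i}$. \cref{statement:upper_bound_function_A} with its index parameter taken as $\eta_i$ (using $\varepsilon+\eta_i<1$) then produces $\kappa_i=\varepsilon/(1-\eta_i)\in[0,1)$ and a constant proportional to $(1\vee c)$. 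The monotonicity hypothesis of \cref{statement:abstract_interpolation_new} is provided by \cref{statement:easy_properties_Gamma}\eqref{item:2:statement:easy_properties_Gamma}, since $\|T_{s,\pi}f\|_{B_b(\R)}=\|D_\rho F(s,\cdot)\|_{B_b(\R)}$ is non-decreasing in $s$.

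The computation $(1-\kappa_i)\gamma_i=(1-\eta_i-\varepsilon)/\beta$, together with the choice $\delta:=(\eta_0-\eta)/(\eta_0-\eta_1)\in(0,1)$ so that $(1-\delta)\eta_0+\delta\eta_1=\eta$, gives the target exponent $(1-\delta)(1-\kappa_0)\gamma_0+\delta(1-\kappa_1)\gamma_1=(1-\eta-\varepsilon)/\beta=\alpha$. The reiteration identity \eqref{eq:reiteration_for_Hoelder} further identifies $(\hoelO{\eta_0,\infty},\hoelO{\eta_1,\infty})_{\delta,q}=\hoelO{\eta,q}$ up to an equivalent norm (as $\eta_0\ne\eta_1$). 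Invoking \cref{statement:abstract_interpolation_new} then yields \eqref{eqn:item:2:statement:upper_bounds_Drho}; the passage between $\hoelO{\eta,q}$ and $\hoel{\eta,q}$ is harmless because $D_\rho F$ is invariant under the shift $f\mapsto f-f(0)$ and the relevant H\"older semi-norms are translation invariant. The main delicate step is precisely the fine-tuning of $(\eta_0,\eta_1,\delta)$ so that the interpolated exponent collapses to $(1-(\varepsilon+\eta))/\beta$; once it is arranged, the argument proceeds mechanically from the two interpolation lemmas \cref{statement:abstract_interpolation_new} and \cref{statement:upper_bound_function_A}.
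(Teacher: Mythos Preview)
Your proof is correct and follows essentially the same route as the paper: part (1) by combining \cref{statement:TV_to_Gamma}\eqref{item:1:statement:TV_to_Gamma} with \cref{statement:X-rho_to_TV}\eqref{item:1:statement:X-rho_to_TV}, and part (2) by feeding the pointwise bound \eqref{eq:interpol_between_01} and \cref{statement:upper_bound_function_A} into \cref{statement:abstract_interpolation_new}, then invoking the reiteration identity \eqref{eq:reiteration_for_Hoelder}. The only cosmetic difference is that you order $\eta_1<\eta<\eta_0$ whereas the paper takes $\eta_0<\eta<\eta_1$; your labelling is in fact the one that literally matches the convention $\gamma_0<\gamma_1$ in the setup of \cref{statement:abstract_interpolation_new}, though the ordering plays no role in that proof.
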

\bigskip

\begin{rema}\label{remark:relation_to_SSW12}
\cref{statement:upper_bounds_Drho}\eqref{item:1:statement:upper_bounds_Drho} for $\varepsilon=\eta=0$
corresponds to the upper bound of 
Schilling, Sztonyk, and Wang
\cite[Theorem 1.3]{SSW12} as we get in this case
$\rho\in U(0;1)$ for any probability measure $\rho$ and
\[              \| \Gamma_{t,\rho}^0 \|_{(\hoel 0)^\ast} 
      \leqslant c_{\eqref{eqn:statement:end_point_estimate_levy_new}}
                (T-t)^{-\frac{1}{\beta}}. \]
\end{rema}
\medskip

\begin{proof}[Proof of \cref{statement:upper_bounds_Drho}]
\eqref{item:1:statement:upper_bounds_Drho} 
follows from combining
\cref{statement:TV_to_Gamma}\eqref{item:1:statement:TV_to_Gamma} and
\cref{statement:X-rho_to_TV}\eqref{item:1:statement:X-rho_to_TV} 
with the constant 
$c_{\eqref{eqn:statement:end_point_estimate_levy_new}}(\varepsilon,\beta,\eta,c',T)>0$ 
if $\hoel \eta$ is equipped with the semi-norm
      $|f|_{\eta,\infty}:= \| f-f(0)\|_{\hoelO{\eta,\infty}}$ if $\eta\in (0,1)$.
Using the remarks in \cref{sec:intro:function_spaces}, $|f|_{\eta,\infty}$ is equivalent to 
$|f|_\eta$ if $\eta \in (0,1)$ up to a multiplicative constant depending on $\eta$ only.

\eqref{item:2:statement:upper_bounds_Drho}
We choose $0<\eta_0 < \eta < \eta_1 < \min \{1-\varepsilon,\eta +\gamma\}$,
define $E_i:= \hoelO{\eta_i,\infty}$, $E:= B_b(\R)$, and 
\[ (T_{t,z} f)(x) := \frac{F(t,x+z) - F(t,x)}{z}
   \sptext{1}{for}{1}
   z\in \RO. \]
To have less dependencies on the parameters we choose specifically
$\eta_0:=\eta/2$ and $\eta_1:=  (\eta + \min \{1-\varepsilon,\eta +\gamma\})/2$,
so that $(\eta_0,\eta_1)$ depends on $(\eta,\varepsilon,\gamma)$ only.
For $z\in \supp(\rho)\setminus \{0\}$ it follows from \eqref{eq:interpol_between_01} that 
\begin{align*}
           \| T_{t,z} f\|_E
&\leqslant \|f\|_{E_i} \min \{ A_i(z), (c')^{1-\eta_i} (T-t)^{-\gamma_i} \}
\end{align*}
with $\gamma_i:= \frac{1-\eta_i}{\beta}$ and $A_i(z):=(2/|z|)^{1-\eta_i}$.
Moreover, for $\kappa_i:= \frac{\varepsilon}{1-\eta_i}$  \cref{statement:upper_bound_function_A}  gives 
\[ \int_{\supp(\rho)\setminus \{0\}} (A_i \wedge w)  \od \rho  
         \leqslant c^{(i)}_{\eqref{eqn:statement:upper_bound_function_A}} \, (1\vee c) \,
         w^{1-\kappa_i} 
         \sptext{1}{for}{1}
         w \geqslant T^{-\gamma_i}\]
where 
$ c^{(i)}_{\eqref{eqn:statement:upper_bound_function_A}}
 = c_{\eqref{eqn:statement:upper_bound_function_A}}(\varepsilon,\eta_i,T^{-\frac{1-\eta_i}{\beta}})>0$.
If $\delta\in (0,1)$ is such that $\eta=(1-\delta)\eta_0+\delta \eta_1$, then 
$(1-\kappa_0)\gamma_0 \not = (1-\kappa_1)\gamma_1$ 
and we apply \cref{statement:abstract_interpolation_new}
with $(R,\cR,\pi)=(\supp(\rho)\setminus \{0\}, \cB(\supp(\rho)\setminus \{0\}),\rho)$,
$T_{t,\pi} f := D_\rho F(t,\cdot)$, 
\[ c_i :=  c^{(i)}_{\eqref{eqn:statement:upper_bound_function_A}} (1\vee c), 
   \sptext{1}{and}{1}
   d_i :=  1 \vee (c')^{1-\eta_i}, \]
we note that
\[ \alpha=(1-\delta)(1-\kappa_0)\gamma_0 + \delta (1-\kappa_1)\gamma_1, \]
and finally use the reiteration theorem in the form of \eqref{eq:reiteration_for_Hoelder}.
\end{proof}
\medskip


\subsection{Lower bounds for the gradient processes}
\label{sec:oscillation_gradient_levy}
In this section we focus on lower bounds for the oscillation of the directional gradients.
The main quantitative result is \cref{statement:lower_bound_complete}. To obtain this result 
we proceed as follows:
\begin{enumerate}[(A)]
\item In \cref{statement:lower-bound-Levy-gen_new} we provide a more general result 
      when a martingale is of {\it maximal oscillation} and satisfies a certain {\it forward uniqueness},
      where we use the concept of {\it consistency} introduced in \cref{definition:consistent_family}.
\item In \cref{statement:maximal_oscillation_application} we apply 
      \cref{statement:lower-bound-Levy-gen_new} to the directional gradients to derive
      {\it maximal oscillation} and {\it forward uniqueness}.
\item In \cref{statement:lower_bounds_levy_new} we provide lower $L_\infty$-bounds for the directional gradients.
\item Our point is to combine the lower $L_\infty$-bounds from \cref{statement:lower_bounds_levy_new}
      with the {\it maximal oscillation} and {\it forward uniqueness} from \cref{statement:maximal_oscillation_application}
      to prove the main result \cref{statement:lower_bound_complete}.
\end{enumerate}
\bigskip

\begin{theo}[\bf maximal oscillation and forward uniqueness under consistency]
\label{statement:lower-bound-Levy-gen_new} 
Let $H$ be $Y$-consistent and $\varphi_t := H(t,Y_t)$, $t\in [0,T)$. 
Then $\varphi=(\varphi_t)_{t\in [0,T)}$ is a martingale of maximal oscillation with constant $2$.
Moreover, if for all $t\in [0,T)$ there is an $\overline{t}\in (t,T)$ such that 
$H(t,Y_{\overline{t}})\in L_2$, then the following assertions are equivalent:
\begin{enumerate}[{\rm (1)}]
\item \label{item:1:prop:lower-bound-Levy-gen_new}
      $\inf_{t\in (0,T)} \losc_t(\varphi)=0$.
\item \label{item:2:prop:lower-bound-Levy-gen_new}
      $\varphi_t=\varphi_0$ a.s. for all $t\in (0,T)$.
\item \label{item:3:prop:lower-bound-Levy-gen_new}
      $\varphi_t=\varphi_0$ a.s. for some $t\in (0,T)$.
\end{enumerate}
\end{theo}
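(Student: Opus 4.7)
The plan is to handle the two structural claims first, then the trivial implications in the equivalence, and finally the main direction $(1)\Rightarrow(2)$. For the martingale property, I observe that $Y_t - Y_s \stackrel{\text{a.s.}}{=} X_t - X_s - \ell(t-s)$ is independent of $\F_s$ and has the law of $Y_{t-s}$; inserting $y = Y_s$ into the consistency relation (\cref{definition:consistent_family}(3)) yields
\[
\E[\varphi_t | \F_s] = \E\bigl[H(t, Y_s + (Y_t - Y_s)) | \F_s\bigr] = H(s, Y_s) = \varphi_s \quad \text{a.s.}
\]
For maximal oscillation with constant $2$ I will invoke \cref{exam:oscillation_Levy_process} with $\Q = \p$: continuity of $H(t, \cdot)$ on $Q$, integrability $\E|H(t, Y_t)| < \infty$, and centering $\E H(t, Y_t) = H(0, 0)$ are precisely items (1)--(3) of $Y$-consistency, hence $\losc_t(\varphi) \ge \tfrac{1}{2}\|\varphi_t - \varphi_0\|_{L_\infty}$ on $(0, T)$.

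For the equivalences, the universal bound $\losc_t(\varphi) \le \|\varphi_t - \varphi_0\|_{L_\infty}$ (take $s = 0$ in the infimum) makes $(2)\Rightarrow(3)$ and $(3)\Rightarrow(1)$ immediate. For $(1)\Rightarrow(2)$, maximal oscillation gives $t_n \in (0, T)$ with $\|\varphi_{t_n} - \varphi_0\|_{L_\infty} \to 0$; extracting a monotone subsequence $t_n \to t_*$ and using that deterministic times are a.s.\ continuity points of a c\`adl\`ag martingale (so $\varphi_{t_n} \to \varphi_{t_*}$ a.s.), combined with the $L_\infty$-convergence, will force $\varphi_{t_*} = \varphi_0$ a.s. Since $H(t_*, \cdot)$ is continuous on $Q$ and $\supp Y_{t_*} = Q$, this upgrades to $H(t_*, y) = H(0,0)$ for every $y \in Q$. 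Using $Q + Q = Q$, backward propagation via consistency then yields $H(s, y) = H(0, 0)$ on $Q$ for every $s \le t_*$, so $\varphi_s = \varphi_0$ a.s.\ for $s \le t_*$. Let $t^* := \sup\{t : \varphi_t = \varphi_0 \text{ a.s.}\} \ge t_*$.

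If $t^* < T$, the $L_2$ hypothesis must be brought in to extend past $t^*$. Fix $t \in (t^*, T)$ and set $G := H(t, \cdot) - H(0, 0)$ on $Q$. Writing consistency as $H(s, \cdot) = P_{t-s} H(t, \cdot)$ on $Q$, where $(P_r g)(y) := \E g(y + Y_r)$, the fact that $H(s, \cdot) \equiv H(0, 0)$ on $Q$ for $s < t^*$ translates into
\[
\int G(y + z) \, \mu_r(\od z) = 0 \quad \text{for all } y \in Q \text{ and } r \in (t - t^*, t],
\]
where $\mu_r$ denotes the law of $Y_r$. The hypothesis furnishes $\overline{t} > t$ with $G \in L_2(\mu_{\overline{t}})$, placing the problem in a Hilbert setting where Fourier injectivity of the L\'evy convolution semigroup can be invoked: the characteristic function $\widehat{\mu_r}(\xi) = \e^{r \psi(\xi)}$ never vanishes, so the display forces $G \equiv 0$ on $Q$, giving $\varphi_t = \varphi_0$ a.s.\ and contradicting $t > t^*$. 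Hence $t^* = T$ and $(2)$ holds.

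The hard part will be this final uniqueness step: turning the family of convolution identities $\mu_r * G = 0$ on $Q$ into $G \equiv 0$ on $Q$ when $Q$ is a proper closed additive subsemigroup of $\R$. Standard Fourier inversion lives on $\R$, so one must either extend $G$ by zero off $Q$ in a way that preserves the global convolution vanishing or work intrinsically with the semigroup structure of $Q$, using the $L_2$ control. The $L_2$ assumption is precisely what makes the Hilbert/Fourier framework applicable here; without it the consistency relation, being strictly backward, cannot be inverted to give forward extension.
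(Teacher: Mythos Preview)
Your handling of the martingale property, maximal oscillation, and the trivial implications $(2)\Rightarrow(3)\Rightarrow(1)$ is fine and matches the paper.

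For $(1)\Rightarrow(2)$ there is first a minor gap in the initial step: since $t\mapsto\sup_{y\in Q}|H(t,y)-H(0,0)|$ is non-decreasing (by consistency, as in \cref{statement:consistent_increasing_suprema}), any sequence $t_n$ with $\|\varphi_{t_n}-\varphi_0\|_{L_\infty}\to 0$ either already hits zero at some $t_0>0$ or satisfies $t_n\to 0$; in the latter case your extracted $t_*=0$ yields nothing (and $\supp Y_0=\{0\}$, not $Q$). The paper bypasses this by proving the uniform bound $\|\varphi_t-\varphi_s\|_{L_\infty}\ge\sup_{y\in Q}|H(0,y)-H(0,0)|$ for \emph{all} $0\le s<t<T$: one writes $\|\varphi_t-\varphi_s\|_{L_\infty}=\sup_{y,y'\in Q}|H(t,y{+}y')-H(s,y')|$, bounds below by the value at $(y_1'+(Y_t-Y_s)(\omega),\,y_2'+Y_s(\omega))$, and averages over $\omega$ so that consistency reduces both terms to time zero. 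Hence $(1)$ gives $H(0,\cdot)\equiv C$ on $Q$ directly, with no limiting argument.

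The serious gap is the forward step, which you yourself flag. The Fourier sketch does not close: the hypothesis places $G:=H(t,\cdot)-C$ only in $L_2(\p_{Y_{\bar t}})$, not in any Lebesgue space where $\widehat G$ is defined, and the identity $\int G(y+z)\,\mu_r(\od z)=0$ holds only for $y\in Q$, which is in general merely a closed subsemigroup. The paper's mechanism is the It\^o chaos expansion, and this is exactly where the $L_2$ hypothesis is spent. By \cref{chaos-decom}, $H(t,Y_{\bar t})\in L_2$ has an expansion $\E H(t,Y_{\bar t})+\sum_{n\ge1}I_n(\tilde h_n\1_{(0,\bar t]}^{\otimes n})$ with spatial kernels $\tilde h_n\in L_2(\mu^{\otimes n})$ not depending on time. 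Conditioning on $\F_{\Delta t}$ with $\Delta t:=\bar t-t$ turns $\1_{(0,\bar t]}^{\otimes n}$ into $\1_{(0,\Delta t]}^{\otimes n}$ while keeping the same $\tilde h_n$; but by independence and consistency this conditional expectation equals $H(0,Y_{\Delta t})=C$. Orthogonality of the chaoses forces $\tilde h_n=0$ for all $n\ge1$, hence $H(t,Y_{\bar t})=C$ a.s., and continuity on $Q=\supp(Y_{\bar t})$ gives $H(t,\cdot)\equiv C$ on $Q$, i.e.\ $\varphi_t=C$ a.s.\ for every $t\in[0,T)$. The time-homogeneity of the chaos kernels is precisely the forward ingredient your convolution argument lacks.
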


The implication 
\eqref{item:3:prop:lower-bound-Levy-gen_new}$\Rightarrow$\eqref{item:2:prop:lower-bound-Levy-gen_new}
states a so-called {\it forward uniqueness}.

\begin{proof}[Proof of \cref{statement:lower-bound-Levy-gen_new}]
The martingale property follows by the definition and the maximal oscillation with constant $2$
follows from \cref{exam:oscillation_Levy_process}.
Regarding the equivalences,
\eqref{item:2:prop:lower-bound-Levy-gen_new}$\Rightarrow$\eqref{item:3:prop:lower-bound-Levy-gen_new} is obvious
and \eqref{item:3:prop:lower-bound-Levy-gen_new}$\Rightarrow$\eqref{item:1:prop:lower-bound-Levy-gen_new}
follows from $\losc_{t}(\varphi)=0$ if $\varphi_t=\varphi_0$ a.s. So we 
only need to show \eqref{item:1:prop:lower-bound-Levy-gen_new}$\Rightarrow$\eqref{item:2:prop:lower-bound-Levy-gen_new}.
For $0 < s < t<T$, $y'_1, y'_2 \in Q$ and $\omega \in (Y_t - Y_s)^{-1}(Q)$ we obtain that
\begin{align*}
           \|\varphi_t -\varphi_s\|_{L_\infty} 
& =    \sup_{y,y' \in Q}  | H(t, y+ y') - H(s, y')|\\
&\geqslant   |H(t, y'_1+(Y_t-Y_s)(\omega) + y'_2+ Y_s(\omega)) -H(s, y'_2 + Y_s(\omega)) |\\
&=  |H(t, y'_1+ y'_2+ Y_t(\omega)) - H(s, y'_2 + Y_s(\omega)) |,
\end{align*}
where the first inequality comes from 
$\varphi_t-\varphi_s=H(t,Y_t-Y_s + Y_s)-H(s,Y_s)$,
$\supp(Y_t-Y_s,Y_s)=Q\times Q$, and from the continuity of
$Q\times Q \ni (y,y')\mapsto H(t,y+y')-H(s,y')$.
This implies 
\begin{align*}
\|\varphi_t -\varphi_s\|_{L_\infty} 
 \geqslant \sup_{y,y'\in Q}  |\E H(t, y+ y' + Y_{t}) - \E H(s, y' + Y_{s})|
& = \sup_{y,y'\in Q}  | H(0,y+ y') - H(0,y')| \\
&\geqslant  \sup_{y\in Q}   | H(0,y) - H(0,0)|.
\end{align*} 
For $s=0$ we use the same idea with $y'=y'_2=0$ to get 
$\|\varphi_t -\varphi_0\|_{L_\infty} \geqslant  \sup_{y\in Q}   | H(0,y) - H(0,0)|$.
So \eqref{item:1:prop:lower-bound-Levy-gen_new} yields to
$C:=H(0,0)= H(0,y)$ for all $y\in Q$. Fix $0\leqslant t < \overline{t} <T$ as in our assumption.
According to \cref{chaos-decom}, we have a chaos expansion
$$H(t, Y_{\overline{t}})=\E H(t, Y_{\overline{t}}) + \sum_{n=1}^\infty I_n\(\tilde h_n\1_{[0, \overline{t}]}^{\otimes n}\)$$
with $\tilde h_n \in L_2(\mu^{\otimes n})$. For $\Delta t := \overline{t}-t>0$ this implies
$\ce{\F_{\Delta t}}{H(t,Y_{\ov{t}})} \stackrel{a.s.}{=} \wt{\E} H(t,Y_{\Delta t}+\wt{Y}_t)= H(0,Y_{\Delta t})=C$,
where $\wt{Y}_t$ is an independent copy of $Y_t$ with corresponding expectation $\wt{\E}$, and
\[
  C = \ce{\F_{\Delta t}}{H(t, Y_{\overline{t}})} 
    = \E H(t, Y_{\overline{t}}) + \sum_{n=1}^\infty I_n\(\tilde h_n\1_{[0, \Delta t]}^{\otimes n}\)  \;\mbox{a.s.}
\]
Therefore, $\tilde h_n=0 $ in $L_2(\mu^{\otimes n})$ for all $n\geqslant 1$, which yields
$H(t, Y_{\overline{t}})= C$ a.s.
Since $\supp(Y_{\overline{t}}) = Q = \supp(Y_t)$, together with the continuity of $H(t, \cdot)$ on $Q$, we derive 
that $H(t, y) = C$ for all $y\in Q$. Therefore $\varphi_t  = H(t, Y_t)=C$ a.s.
\end{proof}
\smallskip

In the next statement we show that our directional gradients are of {\it maximal oscillation:}
\medskip

\begin{theo}[{\bf maximal oscillation and forward uniqueness of gradient}]
\label{statement:maximal_oscillation_application}
We assume a L\'evy process $(X_t)_{t\in [0,T]}$, $\eta\in [0,1]$,
\begin{enumerate}[{\rm (a)}]
\item \label{ass:a:statement:maximal_oscillation_application}
      $(X_t)_{t\in [0,T]}\subseteq L_\eta$,
\item \label{ass:b:statement:maximal_oscillation_application}
      $\|X_s\|_{{\rm TV}(\rho,\eta)}< \infty$
      for $s\in (0,T]$, where $\rho$ is a probability measure on $(\R,\cB(\R))$ 
      with $\rho(\{0\})=0$ if $\sigma=0$. 
      \footnote{Regarding the condition $\|X_s\|_{{\rm TV}(\rho,\eta)}< \infty$ for $s\in (0,T]$, 
                see also the remarks following \cref{definition:TVrho}.}
\end{enumerate}
Assume further that $f\in \hoel\eta$ with the additional assumptions that 
\begin{enumerate}[{\rm (a)}]
\item [{\rm (c)}]  \label{ass:c:statement:maximal_oscillation_application}
      $y \mapsto f(y+ \ell T)$ is continuous as map from $Q$ to $\R$ if $\eta=\sigma=0$,
\item [{\rm (d)}]  \label{ass:d:statement:maximal_oscillation_application}
      $\E|f(X_T)|^q<\infty$ for some $q>1$ if $\rho(\{0\})>0$ and $\eta \in (0,1]$.
\end{enumerate}
For $(t,x) \in [0,T)\times \R$ define  
\[ \opD_\rho F(t,x) := \rho(\{0\}) \frac{\partial F}{\partial x}(t,x) + D_\rho F(t,x)
   \sptext{1}{and}{1}
   \varphi_t  := \opD_\rho F(t,X_t), \]
where we omit the first term in $\opD_\rho F(t,x) $ if $\rho(\{0\})=0$.
Then the following holds:
\begin{enumerate}[{\rm (1)}]
\item \label{item:1:statement:maximal_oscillation_application}
      $f\in \dom(\Gamma_\rho^0)$.
\item \label{item:2:statement:maximal_oscillation_application}
      $F(t,\cdot)\in C^\infty(\R)$ for $t\in [0,T)$ if $\rho(\{0\})>0$.
\item \label{item:3:statement:maximal_oscillation_application}
      $\|\varphi_t\|_{L_\infty} = \sup_{x\in \supp(X_t)} |\opD_\rho F(t,x)|<\infty$ for $t\in [0,T)$.
\item \label{item:4:statement:maximal_oscillation_application}
      $(\varphi_t)_{t\in [0,T)}$ is a martingale of maximal oscillation with constant 2.
\item \label{item:5:statement:maximal_oscillation_application}
      Unless  $\varphi_t = \varphi_0$ a.s. for some $t\in [0,T)$, one has
      $\inf_{t\in (0,T)} \losc_t(\varphi)>0$.
\end{enumerate}
\end{theo}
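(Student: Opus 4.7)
The plan is to deduce (1)--(5) by assembling three earlier results: Theorems \ref{statement:TV_to_Gamma}, \ref{statement:consistence-upper_bound_sigma>0}, and \ref{statement:lower-bound-Levy-gen_new}. Item (1) is a direct instance of Theorem \ref{statement:TV_to_Gamma}(1) under (a)--(b). For (2), the convention in (b) that $\rho(\{0\})=0$ whenever $\sigma=0$ forces $\sigma>0$ as soon as $\rho(\{0\})>0$, and the integrability assumption (d) then lets us apply Theorem \ref{statement:consistence-upper_bound_sigma>0}(1) to obtain $F(t,\cdot)\in C^\infty(\R)$.

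The core of the proof is to represent $\varphi_t$ through the shifted process $Y$. Put $K(t,y):=F(t,y+\ell t)$ on $[0,T]\times Q$ and
\[ H(t,y):=\int_{Q\setminus\{0\}}\frac{K(t,y+z)-K(t,y)}{z}\,\rho(\od z), \qquad \wt G(t,y):=\frac{\partial K}{\partial y}(t,y), \]
where $\wt G$ is needed only if $\rho(\{0\})>0$ (so $\sigma>0$ and $Q=\R$). On the full-probability event $\{X_t\in\supp(X_t)\}$ one has $X_t=Y_t+\ell t$, and a one-line substitution gives $\varphi_t=\rho(\{0\})\wt G(t,Y_t)+H(t,Y_t)$ a.s. Theorem \ref{statement:TV_to_Gamma}(2), whose hypotheses are checked via (a)--(c), asserts that $H$ is $Y$-consistent. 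When $\sigma>0$, Theorem \ref{statement:consistence-upper_bound_sigma>0}(1) says $\partial F/\partial x$ is $X$-consistent; combined with $Q=\R$ and $Y_{t-s}=X_{t-s}-\ell(t-s)$ a.s., the identity $\E\wt G(t,y+Y_{t-s})=\E G(t,(y+\ell s)+X_{t-s})=G(s,y+\ell s)=\wt G(s,y)$ upgrades this to $Y$-consistency of $\wt G$. The sum is therefore $Y$-consistent, and Theorem \ref{statement:lower-bound-Levy-gen_new} immediately yields (4).

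For (3), $Y$-consistency supplies continuity of $H(t,\cdot)$ and $\wt G(t,\cdot)$ on $Q$, hence continuity of $\opD_\rho F(t,\cdot)$ on $\supp(X_t)=Q+\ell t$. Boundedness on this support is obtained by combining Theorem \ref{statement:TV_to_Gamma}(1) (through Lemma \ref{statement:Bb_vs_computation_in_zero} applied to translates of $f$, to bound $H$) with \eqref{eqn:Bb_bound_H_part_1} (to bound $\wt G$ when $\sigma>0$); the support identity recalled in Section \ref{sec:preliminaries} then gives the claimed equality $\|\varphi_t\|_{L_\infty}=\sup_{x\in\supp(X_t)}|\opD_\rho F(t,x)|$. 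For (5), invoke the equivalence in Theorem \ref{statement:lower-bound-Levy-gen_new}: the integrability hypothesis $H(t,Y_{\ov t})\in L_2$ for some $\ov t\in(t,T)$ is automatic, since $\rho(\{0\})\wt G(t,\cdot)+H(t,\cdot)$ is bounded on $Q$ by (3), so $\inf_{t\in(0,T)}\losc_t(\varphi)=0$ is equivalent to $\varphi_t=\varphi_0$ a.s.\ for some $t\in(0,T)$, and the contrapositive is precisely (5).

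There is no deep obstacle; the proof is essentially an assembly of previous work. The main point of care is the bookkeeping around the drift $\ell$: rewriting everything in terms of $Y_t=(X_t-\ell t)\1_{\{X_t\in\supp(X_t)\}}$ so that the two summands of $\opD_\rho F$ combine into a single $Y$-consistent map, allowing Theorem \ref{statement:lower-bound-Levy-gen_new} to handle both the maximal oscillation in (4) and the forward-uniqueness half of (5) in one stroke.
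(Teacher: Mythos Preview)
Your proof is correct and follows essentially the same route as the paper's: reduce to $Y$-consistency of the translated map $(t,y)\mapsto \opD_\rho F(t,y+\ell t)$ via \cref{statement:TV_to_Gamma}\eqref{item:2:statement:TV_to_Gamma} and \cref{statement:consistence-upper_bound_sigma>0}\eqref{item:1:statement:consistence-upper_bound_sigma>0}, then invoke \cref{statement:lower-bound-Levy-gen_new}. The only cosmetic differences are that the paper packages the derivative and integral parts into a single $H(t,y):=\opD_\rho F(t,y+\ell t)$ and cites both theorems at once for its $Y$-consistency (leaving the $X$-to-$Y$ translation implicit), and obtains the uniform bound on $H$ directly from \eqref{eqn:Bb_bound_H_part_2} rather than via \cref{statement:Bb_vs_computation_in_zero}; your explicit splitting and verification of the drift translation for $\wt G$ is a bit more transparent but logically equivalent.
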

\medskip

\begin{proof}
\eqref{item:1:statement:maximal_oscillation_application} 
follows from \cref{statement:TV_to_Gamma}\eqref{item:1:statement:TV_to_Gamma}
and assumptions \eqref{ass:a:statement:maximal_oscillation_application} and \eqref{ass:b:statement:maximal_oscillation_application}.
\smallskip

\eqref{item:2:statement:maximal_oscillation_application}
Assumption  \eqref{ass:b:statement:maximal_oscillation_application} implies $\sigma>0$.
Therefore $Q=\R$ by \cref{statement:consistence-upper_bound_sigma>0},
and \eqref{item:2:statement:maximal_oscillation_application} follows from (d) and
\cref{statement:consistence-upper_bound_sigma>0}\eqref{item:1:statement:consistence-upper_bound_sigma>0}. 
\smallskip

\eqref{item:3:statement:maximal_oscillation_application}
For $(t,y)\in [0,T)\times Q$ let us set 
\begin{equation*}
k(y)   := f(y+\ell T), \quad 
K(t,y) :=F(t,y+\ell t),  
  \sptext{1}{and}{1}
H(t,y) := \opD_\rho F(t,y + \ell t).
\end{equation*}
For $(t,y)\in [0,T)\times Q$ this 
yields to 
\[   H(t,y)
   = \opD_\rho F(t,y + \ell t) 
   = \rho(\{0\}) \frac{\partial K}{\partial y}(t,y) + \int_{Q\setminus \{0\}} \frac{K(t,y+z)-K(t,y)}{z}\rho(\od z).
   \]
We also have $\sup_{y\in Q}|H(t,y)|< \infty$ for all $t\in [0,T)$ because of \eqref{eqn:Bb_bound_H_part_1}, 
\eqref{eqn:Bb_bound_H_part_2}, and the assumption (b).
What is left is to show
$\|\varphi_t\|_{L_\infty} = \sup_{x\in \supp(X_t)} |\opD_\rho F(t,x)|$ for $t\in [0,T)$.
For $t=0$ we simply have
$\| \varphi_0 \|_{L_\infty} =  |\opD_\rho F(0,0)|$ and $\supp(X_0)=\{0\}$.
So let us assume that $t\in (0,T)$.
By  \cref{statement:consistence-upper_bound_sigma>0}\eqref{item:1:statement:consistence-upper_bound_sigma>0} 
and \cref{statement:TV_to_Gamma}\eqref{item:2:statement:TV_to_Gamma} and assumptions (c) and (d) 
we obtain that $H$ is $Y$-consistent. This implies 
\[ \| H(t,Y_t)\|_{L_\infty} = \sup_{y\in Q} |H(t,y)| \]
and
\begin{align*}
     \| \varphi_t          \|_{L_\infty}
& =  \| \opD_\rho F(t,X_t) \|_{L_\infty} 
  =  \| \opD_\rho F(t,Y_t+\ell t) \|_{L_\infty} 
  =  \|           H(t,Y_t       ) \|_{L_\infty} 
  = \sup_{y\in Q} |H(t,y)| \\
& = \sup_{y\in Q} |\opD_\rho F(t,y+\ell t)| 
  = \sup_{x\in \supp(X_t)} |\opD_\rho F(t,x)|.
\end{align*}
\eqref{item:4:statement:maximal_oscillation_application} and \eqref{item:5:statement:maximal_oscillation_application}
follows from \cref{statement:lower-bound-Levy-gen_new} because $\varphi_t = H(t,Y_t)$ a.s. for $t\in [0,T)$.
\end{proof}
\medskip

Now we provide in \cref{statement:lower_bound_complete} the corresponding lower bounds to \cref{statement:upper_bounds_Drho}.
Here we use \cref{defi:lower_bounds_X_rho} which is the counterpart to \cref{defi:uppper_bounds_X_rho}
and use with \cref{defi:lbvlocal} functions that have a certain lower variation at zero:
\medskip

\begin{defi}
\label{defi:lower_bounds_X_rho}
\hspace{0em}
\begin{enumerate}
\item For a probability measure $\rho$ on $(\R,\cB(\R))$ and $\varepsilon > 0$ we let
      $\rho\in L(\varepsilon)$ if one has
      \[ \liminf_{d\downarrow 0} \frac{\rho([-d,d]\setminus \{0\})}{d^\varepsilon}>0.\]
\item For a L\'evy process $X=(X_t)_{t\in [0,T]}$ and $\beta \in (0,2]$ we let $X\in \cL(\beta)$ 
      provided that there are $c,r>0$ such that
      for all $-r \leqslant a < b \leqslant r$ and $s\in (0,T]$ one has
      \begin{equation}\label{eqn:condition_Lbeta}
      \p \left ( s^{-\frac{1}{\beta}}X_s \in (a,b) \right ) \geqslant \frac{1}{c}(b-a).
      \end{equation}
\end{enumerate}
\end{defi}

\begin{rema}\hspace{0em}
\begin{enumerate}
\item In \cref{defi:lower_bounds_X_rho} the parameter $\varepsilon=0$ is not possible
      as there are no measures $\rho\in L(0)$.
\item In an adapted form the condition $\rho\in L(\varepsilon)$
      is used for L\'evy measures in the context of infinitely divisible distributions in 
      \cite[Condition $D_\beta$]{Orey:68} (cf. \cite[Proposition 28.3]{Sa13})
      to derive  distributional properties.
\end{enumerate}
\end{rema}

\begin{defi}
\label{defi:lbvlocal}
For $\eta \in [0,1]$ and $f\in \bvlocal$ we let $f\in \lbvlocal \eta$ if $f'=(\mu_f,0)$ in the sense of \eqref{eqn:f'_measure} with
\[ \liminf_{d\downarrow 0} \frac{\mu_f([-d,d])}{d^\eta}
   = \liminf_{d\downarrow 0} \frac{f(d)-f(d-)}{d^\eta}>0. \]
\end{defi}

The functions $f\in \lbvlocal \eta$ are non-decreasing and right-continuous.
Now we are ready to formulate our lower bounds:
\medskip

\begin{theo}[\bf size of oscillation of gradient]
\label{statement:lower_bound_complete}
For $\eta\in (0,1)$, a L\'evy process $(X_t)_{t\in [0,T]}\subseteq L_\eta$,  
$f\in \lbvlocal \eta \cap \hoel \eta$,
and $F: [0,T] \times \R \to \R$ with $F(t,x): = \E f(x+X_{T-t})$ one has:
\bigskip

\begin{enumerate}[{\rm (1)}]
\item \label{item:1:statement:lower_bound_complete}
      If $(\varepsilon,\beta)\in (0,1)\times (0,2]$, 
      $\eta \in (0,1-\varepsilon)$,
      $\|X_s\|_{{\rm TV}(\rho,\eta)}< \infty$ for $s\in (0,T]$, 
      $\rho \in L(\varepsilon)$ with $\rho(\{0\})=0$, and $X \in \cL(\beta)$, then 
      $\varphi=(D_\rho F(t,X_t))_{t\in [0,T)}$ 
      is an $L_\infty$-martingale with
      \[ \inf_{t\in (0,T)} (T-t)^\alpha \losc_t(\varphi)>0
         \sptext{1}{for}{1}
         \alpha:= \frac{1-(\varepsilon+\eta)}{\beta}\in \left (0,\frac{1}{\beta}\right ). \]

\item \label{item:3:statement:lower_bound_complete_new}
      If $\sigma>0$, then $X\in \cL(2)$.

\item \label{item:2:statement:lower_bound_complete}
      If $\sigma>0$, $q>1$, and $\E |f(X_T)|^q<\infty$,
      then $\varphi=(\frac{\partial F}{\partial x} (t,X_t))_{t\in [0,T)}$
      is an $L_\infty$-martingale with  
      \[ \inf_{t\in (0,T)} (T-t)^\alpha \losc_t(\varphi)>0
         \sptext{1}{for}{1}
         \alpha:= \frac{1-\eta}{2}\in \left (0,\frac{1}{2}\right ). \]
\end{enumerate}         
\end{theo}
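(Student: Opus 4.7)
In both cases the strategy is to apply \cref{statement:maximal_oscillation_application} to obtain that the relevant gradient process $\varphi$ is an $L_\infty$-martingale of maximal oscillation with constant $2$, which in turn yields
\[
\losc_t(\varphi) \;\geq\; \tfrac{1}{2}\|\varphi_t-\varphi_0\|_{L_\infty} \;\geq\; \tfrac{1}{2}\bigl(\|\varphi_t\|_{L_\infty}-|\varphi_0|\bigr),
\]
and then to lower-bound $\|\varphi_t\|_{L_\infty}$ at the scale $(T-t)^{-\alpha}$ for $t$ sufficiently close to $T$; on the complementary interval $(0,T']$ the desired estimate follows from \cref{statement:maximal_oscillation_application}\eqref{item:5:statement:maximal_oscillation_application} once $\varphi$ is seen to be non-trivial, which is automatic once the blow-up is established. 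For (1) this applies with $\opD_\rho F=D_\rho F$ because $\rho(\{0\})=0$, and all the hypotheses of \cref{statement:maximal_oscillation_application} are contained in the assumptions of the theorem.

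\textbf{The lower bound in (1).} The key representation, via \cref{statement:properties:Gamma1}\eqref{item:2:statement:properties:Gamma1}, is
\[
D_\rho F(t,0)=\int_\R \gamma_{t,\rho}(v)\,\mu_f(\od v)\;\geq\;0,
\]
which rests on verifying that $f\in\dom(\Gamma_\rho^1)$; this uses $f\in\hoel{\eta}\cap\lbvlocal{\eta}$ together with the standing hypothesis $\|X_s\|_{\rm TV(\rho,\eta)}<\infty$. I then combine three one-sided bounds. First, $X\in\cL(\beta)$ gives $\p(X_{T-t}\in J(v;z))\geq c^{-1}|z|(T-t)^{-1/\beta}$ whenever $|v|+|z|\leq r(T-t)^{1/\beta}$, since $J(v;z)$ is an interval of length $|z|$ lying inside $[-r(T-t)^{1/\beta},r(T-t)^{1/\beta}]$. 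Second, $\rho\in L(\varepsilon)$ yields $\rho([-d,d]\setminus\{0\})\geq c_\rho d^\varepsilon$ for small $d$; choosing $d=\tfrac{r}{2}(T-t)^{1/\beta}$ and integrating the first bound against $\rho$ produces $\gamma_{t,\rho}(v)\geq c_1(T-t)^{-(1-\varepsilon)/\beta}$ on $|v|\leq d$, valid for $t$ close enough to $T$. Third, $f\in\lbvlocal{\eta}$ gives $\mu_f([-d,d])\geq c_f d^\eta$, whence
\[
D_\rho F(t,0)\;\geq\;c_2\,(T-t)^{-(1-\varepsilon-\eta)/\beta}\;=\;c_2\,(T-t)^{-\alpha}.
\]
Since $0\in\supp(X_t)$ (again from $\cL(\beta)$), this forces $\|\varphi_t\|_{L_\infty}\geq c_2(T-t)^{-\alpha}$, and the $L_\infty$-martingale and maximal-oscillation properties come from \cref{statement:maximal_oscillation_application}\eqref{item:3:statement:maximal_oscillation_application}--\eqref{item:4:statement:maximal_oscillation_application}.

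\textbf{Parts (2) and (3), and the main obstacle.} For (2), I decompose $X_s=\sigma W_s+J_s$ with $W,J$ independent, split $(0,T]$ into $(0,s_0]$ and $[s_0,T]$, and show that $p_s(y)=\E\,p_{\sigma W_s}(y-J_s)$ satisfies $p_s(y)\geq c\,s^{-1/2}$ on $|y|\leq r\sqrt{s}$; for small $s$ the crucial input is $\p(|J_s|\leq R\sqrt{s})\geq \tfrac{1}{2}$, which I obtain by bounding the compensated small-jump martingale by Chebyshev (using $\int_{|x|<1}x^2\nu(\od x)<\infty$) and the large-jump compound Poisson by its Poisson tail, while for $s\in[s_0,T]$ the c\`adl\`ag bound $\p(\sup_{u\le T}|J_u|\le M_0)\ge\tfrac12$ suffices. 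Rescaling gives the required density bound for $s^{-1/2}X_s$ on $[-r,r]$, i.e.\ $X\in\cL(2)$. Part (3) is then the counterpart of (1) with $\varepsilon=0$, $\beta=2$: \cref{statement:maximal_oscillation_application} applied to $\rho=\delta_0$ (whose TV-quantity is zero and for which $\opD_{\delta_0}F=\partial F/\partial x$; the hypothesis $\E|f(X_T)|^q<\infty$ supplied by the theorem matches condition (d) of that result) yields maximal oscillation of $(\partial F/\partial x(t,X_t))_{t\in[0,T)}$, and the analogue of the $\gamma_{t,\rho}$--representation,
\[
\frac{\partial F}{\partial x}(t,x)=\int_\R p_{T-t}(u-x)\,\mu_f(\od u),
\]
follows by differentiating $F(t,x)=\int f(u)p_{T-t}(u-x)\od u$ under the integral sign (legitimate because $\sigma>0$ makes $p_{T-t}$ smooth with controlled derivatives) and integrating by parts using $f\in\bvlocal$ and $f'=(\mu_f,0)$; the boundary terms $f(\pm R)p_{T-t}(\pm R-x)$ vanish along a subsequence $R_n\to\infty$ because $f(\cdot)p_{T-t}(\cdot-x)\in L_1(\od u)$. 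Part (2) then gives $p_{T-t}(u)\geq c(T-t)^{-1/2}$ on $|u|\leq r(T-t)^{1/2}$, and the identical three-bound bootstrap as in (1) produces $\partial F/\partial x(t,0)\geq c'(T-t)^{-(1-\eta)/2}$. The main obstacle is the small-$s$ density estimate in (2): the Gaussian smoothing has to dominate the jump part exactly at the scale $\sqrt{s}$, which forces the truncation-plus-Chebyshev argument sketched above rather than any direct moment estimate on $J_s$ (which need not have finite variance).
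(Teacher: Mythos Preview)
Your proposal is correct and, for part (1), matches the paper's argument (packaged there as \cref{statement:lower_bounds_levy_new}\eqref{item:1:statement:lower_bounds_levy_new}). One simplification: you need not verify $f\in\dom(\Gamma_\rho^1)$; since $f$ is non-decreasing all integrands in the Fubini computation are non-negative, so the identity $D_\rho F(t,0)=\int_\R\gamma_{t,\rho}(v)\,\mu_f(\od v)$ holds unconditionally and that is how the paper proceeds.

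The technical routes in (2) and (3) differ from the paper's. For $X\in\cL(2)$ the paper lower-bounds $\E\exp\bigl(-J_s^2/(\sigma^2 s)\bigr)$ directly, splitting $J=J^{(0)}+J^{(1)}$ into small and large jumps, then applying Jensen's inequality to the first factor (using $\sup_s \E(J_s^{(0)})^2/s<\infty$) and a continuity-in-$s$ argument to the second; your Chebyshev/no-jump route is an equally clean alternative. For the lower bound on $\partial F/\partial x(t,0)$ the paper avoids your integration-by-parts representation entirely: it introduces auxiliary measures $\rho_\varepsilon(\od z):=\varepsilon\1_{(0,1]}(z)z^{\varepsilon-1}\od z$ satisfying $\rho_\varepsilon([-d,d])=d^\varepsilon$, reuses the part-(1) computation verbatim to obtain $D_{\rho_\varepsilon}F(t,0)\geqslant c\,(T-t)^{-(1-\varepsilon-\eta)/2}$ with $c$ independent of $\varepsilon$, and then lets $\varepsilon\downarrow 0$ using $F(t,\cdot)\in C^\infty(\R)$. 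This recycles (1) wholesale and sidesteps the boundary-term and density-moment issues you had to address; your direct approach is valid but requires more justification.
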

\medskip

\begin{rema}
The current formulation of \cref{statement:lower_bound_complete} excludes $\eta=0$:
On the one side, $\lbvlocal 0$ contains only functions, that are not continuous, as $f\in \lbvlocal 0$ with $f'=(\mu_f,0)$ implies
$\mu_f(\{0\})>0$. On the other side, \cref{statement:maximal_oscillation_application}, used in the 
proof of \cref{statement:lower_bound_complete}, requires 
formally continuity properties when $\eta=0$. So far, we did not investigate this further.
\end{rema}
\medskip

For the proof of \cref{statement:lower_bound_complete} we use the following lemma:

\begin{lemm}
\label{statement:lower_bounds_levy_new}
Assume a L\'evy process $(X_t)_{t\in [0,T]}$, $\eta \in [0,1]$, $f\in \lbvlocal{\eta}\cap \defX$,
and let $F: [0,T] \times \R \to \R$ be given by $F(t,x): = \E f(x+X_{T-t})$.
\begin{enumerate}[{\rm (1)}]
\item  \label{item:1:statement:lower_bounds_levy_new}
       If $\rho \in L(\varepsilon)$ and $X \in \cL(\beta)$ with 
       $(\varepsilon,\beta)\in (0,\infty)\times (0,2]$, then
       \[ \inf_{t\in [0,T)} (T-t)^{\frac{1-(\varepsilon+\eta)}{\beta}}  D_\rho F(t,0) \in (0,\infty]. \]

\item  \label{item:2:statement:lower_bounds_levy_new}
       If $\sigma>0$ and additionally $\E |f(X_T)|^q < \infty$ for some $q>1$, then
       $X\in \cL(2)$ and
       \[ \inf\limits_{t\in [0,T)} (T-t)^{\frac{1- \eta}{2}}  \frac{\partial F}{\partial x}(t,0) \in (0,\infty]. \]
\end{enumerate}
\end{lemm}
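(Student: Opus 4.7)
The plan is to reduce both statements to a lower bound on a kernel at the origin, integrated against $\mu_f$. For part~(1), a direct Tonelli computation (valid since the integrand is non-negative, because $f$ is non-decreasing with $f'=(\mu_f,0)$) yields
\[
   D_\rho F(t,0)\;=\;\int_\R \gamma_{t,\rho}(v)\,\mu_f(\od v),
\]
which is the non-negative analogue of \cref{statement:properties:Gamma1}\eqref{item:2:statement:properties:Gamma1}. Whenever $|v|+|z|\leqslant r(T-t)^{1/\beta}$, the interval $J(v;z)$ has length $|z|$ and lies in $[-r(T-t)^{1/\beta},r(T-t)^{1/\beta}]$, so the hypothesis $X\in\cL(\beta)$ gives $\p(X_{T-t}\in J(v;z))/|z|\geqslant 1/(c(T-t)^{1/\beta})$. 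Integrating in $z$ against $\rho$ and invoking $\rho\in L(\varepsilon)$ yields, for $T-t$ small,
\[
   \gamma_{t,\rho}(v)\;\geqslant\; c_1\,(T-t)^{(\varepsilon-1)/\beta}
   \quad\text{for } |v|\leqslant r(T-t)^{1/\beta}/2.
\]
Integrating in $v$ and using the lower-variation bound $\mu_f([-d,d])\geqslant c_f d^\eta$ at $d=r(T-t)^{1/\beta}/2$ yields $D_\rho F(t,0)\geqslant c_2(T-t)^{(\varepsilon+\eta-1)/\beta}$, i.e.\ the asserted blow-up. For $t\in[0,T-\delta]$ I rerun the same chain of inequalities at the fixed scale $r\delta^{1/\beta}/2$, which uniformly activates the $\cL(\beta)$, $L(\varepsilon)$ and $\lbvlocal\eta$ estimates and gives $D_\rho F(t,0)\geqslant c_\delta>0$; combined with the trivial lower bound $(T-t)^{(1-\varepsilon-\eta)/\beta}\geqslant \min(\delta^{(1-\varepsilon-\eta)/\beta},T^{(1-\varepsilon-\eta)/\beta})$ on this interval, this closes the infimum.

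For part~(2), I first verify $X\in\cL(2)$ from the decomposition $X_s=\sigma W_s+J_s$ with $J$ independent of $W$ and of pure-jump type. The density
\[
   p_s(x)\;=\;\E\bigl[(2\pi\sigma^2 s)^{-1/2}\,\e^{-(x-J_s)^2/(2\sigma^2 s)}\bigr]
\]
is bounded below, for $|x|\leqslant r s^{1/2}$, by $(2\pi\sigma^2 s)^{-1/2}\,\e^{-(r+M)^2/(2\sigma^2)}\,\p(|J_s|\leqslant M s^{1/2})$. The uniform bound $\inf_{s\in(0,T]}\p(|J_s|\leqslant Ms^{1/2})\geqslant 1/2$ for $M$ large is obtained by splitting the range of $s$: on $(0,s_0]$ one uses $J_s/\sqrt{s}\to 0$ in probability as $s\downarrow 0$ (established by truncating $J$ at small jumps $|x|\leqslant\epsilon$, applying Chebyshev to the compensated small-jump martingale whose second moment is $s\int_{|x|\leqslant\epsilon}x^2\nu(\od x)$, bounding the compound-Poisson large-jump part by $1-\e^{-s\nu(|x|>\epsilon)}\to 0$, and letting $\epsilon\downarrow 0$), while on $[s_0,T]$ one uses the a.s.\ finiteness of $\sup_{s\in[0,T]}|J_s|$. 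This produces $p_s(x)\geqslant c\,s^{-1/2}$ on $|x|\leqslant r s^{1/2}$, giving $X\in\cL(2)$. Integration by parts on $F(t,x)=\int f(z)p_{T-t}(z-x)\od z$ (legal via $p_{T-t}\in C^1(\R)$ with $\partial p_{T-t}\in L_1(\R)$ from \cref{statement:X-rho_to_TV}\eqref{item:3:statement:X-rho_to_TV}, together with the bounded-variation structure of $f$) then gives
\[
   \tfrac{\partial F}{\partial x}(t,0)\;=\;\int_\R p_{T-t}(v)\,\mu_f(\od v),
\]
and the density lower bound plus $\mu_f([-d,d])\geqslant c_f d^\eta$ at $d=r(T-t)^{1/2}$ close the argument exactly as in part~(1).

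The main obstacle is the uniform positivity $\inf_{s\in(0,T]}\p(|J_s|\leqslant Ms^{1/2})\geqslant 1/2$ behind $X\in\cL(2)$: crude Chebyshev on $J_s$ itself fails when $J$ has infinite activity or infinite variance, and one is forced to pass through the Khintchine-type scaling $J_s/\sqrt{s}\to 0$ in probability via the truncation-and-compensation argument sketched above (equivalently, via $s\psi_J(\xi/\sqrt{s})\to 0$ in the characteristic exponent when the Gaussian coefficient of $J$ vanishes). Once that scaling is in hand, the remaining density estimate and the interplay with $\lbvlocal\eta$ are routine.
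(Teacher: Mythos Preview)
Your argument for part~(1) is essentially the paper's: both rewrite $D_\rho F(t,0)=\int_\R\gamma_{t,\rho}(v)\,\mu_f(\od v)$ via the Tonelli computation behind \cref{statement:properties:Gamma1}\eqref{item:2:statement:properties:Gamma1} (using non-negativity from $f'=(\mu_f,0)$), then localise to $|v|,|z|\lesssim (T-t)^{1/\beta}$ and feed in the three lower bounds. Your split into ``$T-t$ small'' and $t\in[0,T-\delta]$ is unnecessary: since $\rho([-d,d]\setminus\{0\})$ and $\mu_f([-d,d])$ are non-decreasing in $d$, the $\liminf$ hypotheses upgrade to uniform bounds on any $(0,d_0]$, which is how the paper handles all $t\in[0,T)$ at once with $d_0=\tfrac{r}{2}T^{1/\beta}$.

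For part~(2) your route differs from the paper's in two places. For $X\in\cL(2)$, the paper bounds $p_s(x)\geqslant(\sigma\sqrt{2\pi})^{-1}e^{-x^2/(\sigma^2 s)}\int_\R e^{-z^2/(\sigma^2 s)}\p_{J_s}(\od z)$ and shows the integral has positive infimum by splitting $J=J^{(0)}+J^{(1)}$ into small jumps (handled by Jensen and $\sup_u\E(J_u^{(0)})^2/u<\infty$) and compound Poisson plus drift (handled by $(J_u^{(1)})^2/u\to 0$ a.s.\ and continuity in $u$). Your probability-in-probability argument for $J_s/\sqrt s\to 0$ is an equivalent packaging of the same estimates. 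For the derivative lower bound, however, the paper does \emph{not} integrate by parts: it introduces the family $\rho_\varepsilon(\od x)=\varepsilon\1_{(0,1]}(x)x^{\varepsilon-1}\od x$ (so $\rho_\varepsilon\in L(\varepsilon)$ with constant~$1$), applies the already-established part~(1) with $\beta=2$, and lets $\varepsilon\downarrow 0$, using $\int_{\RO}\frac{F(t,z)-F(t,0)}{z}\rho_\varepsilon(\od z)\to\partial_xF(t,0)$. This cleanly avoids any growth or boundary issues. Your integration-by-parts formula $\partial_xF(t,0)=\int_\R p_{T-t}(v)\,\mu_f(\od v)$ is morally correct but, as written, the justification is thin: $f\in\lbvlocal\eta\cap\defX$ does not force $f$ to be bounded, so the boundary terms $f(z)p_{T-t}(z-x)\big|_{\pm\infty}$ and the absolute integrability of $\int|f|\,|p_{T-t}'|$ are not automatic. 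Since you only need a \emph{lower} bound, you can rescue this by writing $\frac{F(t,h)-F(t,0)}{h}=\int_\R\bigl(\frac{1}{h}\int_{v-h}^v p_{T-t}(y)\od y\bigr)\mu_f(\od v)$ for $h>0$ and applying Fatou together with the continuity of $p_{T-t}$, which gives $\partial_xF(t,0)\geqslant\int_\R p_{T-t}(v)\,\mu_f(\od v)$ without any boundary discussion; but the paper's $\rho_\varepsilon$-trick is the cleaner way to close part~(2) from part~(1).
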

\smallskip

\begin{proof}
For the proof we use the notation $f'=(\mu_f,0)$.

\eqref{item:1:statement:lower_bounds_levy_new}
For $t\in [0,T)$ we observe that 
$\frac{F(t,x+z)-F(t,x)}{z}\geqslant 0$ for all $z\in \RO$ as $f$ is non-decreasing by assumption. 
For this reason we can use the proof of item \eqref{item:2:statement:properties:Gamma1} of \cref{statement:properties:Gamma1} 
{\it without} checking integrability assumptions to derive, for $d>0$, that
\begin{align*}
&   \int_{\RO} \frac{F(t,z)-F(t,0)}{z} \rho(\od z) \\
&  =       \int_\R \int_{\RO} \frac{\p(X_{T-t} \in J(v;z))}{|z|}  \rho(\od z) f'(\od v) \\
&\geqslant \int_{|v| \leqslant d} \int_{0<|z| \leqslant d}  \frac{\p(X_{T-t} \in J(v;z))}{|z|}  \rho(\od z) f'(\od v) \\ 
&\geqslant \inf_{|v|\leqslant d, 0<|z|\leqslant d} \frac{\p(X_{T-t} \in J(v;z))}{|z|}
           \rho([-d,d]\setminus \{0\}) \mu_f([-d,d])  \\
&\geqslant \inf_{-2d \leqslant a < b \leqslant 2d} \frac{\p(X_{T-t} \in [a,b))}{b-a}
           \rho([-d,d]\setminus \{0\}) \mu_f([-d,d]) \\
&= (T-t)^{-\frac{1}{\beta}} \left [ \inf_{-2d (T-t)^{-\frac{1}{\beta}} \leqslant a < b \leqslant 2d (T-t)^{-\frac{1}{\beta}}}
                   \frac{\p\left ( (T-t)^{-\frac{1}{\beta}} X_{T-t} \in [a,b) \right )}{b-a} \right ]  
           \rho([-d,d]\setminus \{ 0 \}) \mu_f([-d,d]).
\end{align*}
By assumption there are $c,r>0$ such that 
\[ \inf_{-r\leqslant a < b \leqslant r}
    \frac{\p\left ( (T-t)^{-\frac{1}{\beta}} X_{T-t} \in [a,b) \right )}{b-a} 
    \geqslant \frac{1}{c}.\]
Let $d_0:= \frac{r}{2} T^\frac{1}{\beta}$. Then there are $c',c''>0$ such that
\[ \rho([-d,d]\setminus \{0\}) \geqslant \frac{1}{c'} d^\varepsilon
   \sptext{1}{and}{1}
   \mu_f([-d,d]) \geqslant \frac{1}{c''} d^\eta \]
for $d\in (0,d_0]$. Therefore for $d:= \frac{r}{2} (T-t)^\frac{1}{\beta}\in (0,d_0]$
we can continue to 
\[            \int_{\RO} \frac{F(t,z)-F(t,0)}{z} \rho(\od z) 
   \geqslant (T-t)^{-\frac{1}{\beta}} \, \frac{1}{c c' c''} d^{\varepsilon+\eta} 
     = \left ( \frac{r}{2} \right )^{\varepsilon+\eta} \frac{1}{c c' c''} 
       (T-t)^{\frac{\varepsilon+\eta-1}{\beta}}.
\]
(\ref{item:2:statement:lower_bounds_levy_new}a) 
For $s\in (0,T]$ we let $p_s = p_{\sigma W_s} * \p_{J_s}$ be the continuous density of the law of $X_s$ (see \cref{statement:X-rho_to_TV}). Then we have
\begin{align}\label{eq:lower_bound_density_sigma_positiv}
          p_s(x)
   =      \frac{1}{\sigma \sqrt{2\pi s}} \int_{\R}   \e^{-\frac{(x-z)^2}{2\sigma^2 s}}\p_{J_{s}}(\od z)
\geqslant \frac{1}{\sigma \sqrt{2\pi s}} \e^{-\frac{x^2}{\sigma^2 s}} \int_\R \e^{-\frac{z^2}{\sigma^2 s}}\p_{J_{s}}(\od z)
\geqslant c_\eqref{eq:lower_bound_density_sigma_positiv} s^{-\frac{1}{2}} \e^{-\frac{x^2}{\sigma^2 s}} 
\end{align}
with 
$   c_\eqref{eq:lower_bound_density_sigma_positiv}
 := (\sigma\sqrt{2\pi})^{-1} \inf_{u\in (0,T]} \int_\R \e^{-\frac{z^2}{\sigma^2 u}} \p_{J_u}(\od z)$.
It remains to verify that
\[   \inf_{u\in (0,T]} \int_\R \e^{-\frac{z^2}{\sigma^2 u}} \p_{J_u}(\od z) >0. \]
For this distributional estimate we may simply assume that $J_t=J_t^{(0)} + J_t^{(1)}$, where 
$(J_t^{(i)})_{t\in [0,T]}$ are independent  L\'evy processes without Brownian part with L\'evy measures $\nu^{(i)}$ satisfying 
$\nu^{(0)}(\{ z\in \R: |z|  > 1\})=0$ and $\nu^{(1)}(\{ z\in \R:|z| \leqslant 1\})=0$. Then
\[ \E \e^{-\frac{J_u^2}{\sigma^2 u}}
   \geqslant  \E \e^{- 2 \frac{(J_u^{(0)})^2}{\sigma^2 u} - 2 \frac{(J_u^{(1)})^2}{\sigma^2 u}} 
       =      \E \e^{- 2 \frac{(J_u^{(0)})^2}{\sigma^2 u}} \E \e^{ - 2 \frac{(J_u^{(1)})^2}{\sigma^2 u}}. \]
For the first term we get by Jensen's inequality that
\[  \E \e^{- 2 \frac{(J_u^{(0)})^2}{\sigma^2 u}}  \geqslant \e^{- 2 \frac{\E (J_u^{(0)})^2}{\sigma^2 u}} \]
and observe that $\sup_{u\in (0,T]} \frac{\E (J_u^{(0)})^2}{u} < \infty$ (see \cite[Example 25.12]{Sa13}).
Regarding the second term, $(J_t^{(1)})_{t\in [0,T]}$ is a compound Poisson process 
plus a possible linear drift \cite[Theorem 21.2]{Sa13} so that 
\[ \lim_{u\downarrow 0}\frac{(J_u^{(1)})^2}{u} = 0 \mbox{ a.s.} \]
As $J^{(1)}$ has a jump at a given deterministic time with probability zero, this also implies that the map 
$[0,T] \ni u \mapsto \E \e^{- 2 \frac{(J_u^{(1)})^2}{\sigma^2 u}} \in (0,\infty)$ is continuous,
where the value at $u=0$ is defined as limit from the right-hand side,
and has therefore a positive infimum. This proves $c_\eqref{eq:lower_bound_density_sigma_positiv}>0$.
Finally, since $p_s(x) \geqslant c_\eqref{eq:lower_bound_density_sigma_positiv} s^{-\frac{1}{2}} \e^{-\frac{1}{\sigma^2}}$
for $|x| \leqslant s^\frac{1}{2}$ this implies
condition \eqref{eqn:condition_Lbeta} with 
\begin{equation}\label{eqn:parameters_L_positive_sigma}
\beta=2, r=1, \sptext{.5}{and}{.5}
c:=c_\eqref{eq:lower_bound_density_sigma_positiv}^{-1} \e^{\frac{1}{\sigma^2}}.
\end{equation}

(\ref{item:2:statement:lower_bounds_levy_new}b) 
As $\E|f(X_T)|^q<\infty$ for some $q>1$ we apply \cref{statement:consistence-upper_bound_sigma>0}
to deduce $F(t,\cdot)\in C^\infty(\R)$ for $t\in [0,T)$. For $\varepsilon\in (0,1]$ we define the probability measures 
$\rho_\varepsilon (\od x) := \varepsilon \1_{(0,1]}(x) x^{\varepsilon-1} \od x$ that satisfy
$\rho_\varepsilon([-d,d]\setminus \{0\}) = d^\varepsilon$ for $d\in (0,1]$.
The above computation in \eqref{item:1:statement:lower_bounds_levy_new} with the parameters from \eqref{eqn:parameters_L_positive_sigma}
gives 
\[           \int_{\RO} \frac{F(t,z)-F(t,0)}{z} \rho_\varepsilon(\od z)
   \geqslant \frac{1}{2^{\varepsilon+\eta} c c''} 
             \,\, (T-t)^{\frac{\varepsilon+\eta-1}{2}}. \] 
The statement follows by $\varepsilon \downarrow 0$.
\end{proof}
\smallskip

\begin{rema}
\hspace{0em}
\begin{enumerate}
\item In \cref{statement:lower_bounds_levy_new}
      Item \eqref{item:2:statement:lower_bounds_levy_new} can be seen as the limiting case of \eqref{item:1:statement:lower_bounds_levy_new}
      for $\varepsilon=0$.
\item If $\mu_f$ in \cref{statement:lower_bounds_levy_new}
      is a finite measure, then $f$ is bounded and therefore $f\in \defX$ and $\E |f(X_T)|^q < \infty$ for all $q \in (0,\infty)$.
\item Given $\eta \in [0,1]$, a typical example for $f\in \lbvlocal \eta \cap \hoel \eta$
      is $f(x) := (x\wedge 1)^\eta \1_{[0,\infty)}(x)$ (with the convention $0^0=1$).
\end{enumerate}
\end{rema}
\bigskip

\begin{proof}[Proof of \cref{statement:lower_bound_complete}]
\eqref{item:1:statement:lower_bound_complete}
The assumptions of \cref{statement:maximal_oscillation_application} are satisfied and the
assumptions of \cref{statement:lower_bounds_levy_new}\eqref{item:1:statement:lower_bounds_levy_new}
are satisfied as well, where we use that $f\in \hoel \eta$ and $X\subseteq L_\eta$ imply $f\in \defX$.
\smallskip

Now $X\in \cL(\beta)$ implies $0\in \supp(X_s)$ for $s\in (0,T]$ so that 
\begin{equation}\label{eqn:lower_bound_oscillation}
             \inf_{t\in (0,T)} (T-t)^{\frac{1-(\varepsilon+\eta)}{\beta}} \|\varphi_t \|_{L_\infty}
   \geqslant \inf_{t\in (0,T)} (T-t)^{\frac{1-(\varepsilon+\eta)}{\beta}} D_\rho F(t,0) 
             =: c_\eqref{eqn:lower_bound_oscillation}> 0 
\end{equation}
by \cref{statement:maximal_oscillation_application}\eqref{item:3:statement:maximal_oscillation_application} and
\cref{statement:lower_bounds_levy_new}\eqref{item:1:statement:lower_bounds_levy_new}.
By \cref{statement:maximal_oscillation_application}\eqref{item:4:statement:maximal_oscillation_application} the martingale 
$\varphi$ is of maximal oscillation with constant $2$ so that
we get, for $t\in (0,T)$,  
\[ \losc_t(\varphi) \geqslant \frac{1}{2} \| \varphi_t - \varphi_0\|_{L_\infty}
                    \geqslant \frac{1}{2} \| \varphi_t \|_{L_\infty} - \frac{1}{2} \| \varphi_0\|_{L_\infty}
                    \geqslant \frac{1}{2 c_\eqref{eqn:lower_bound_oscillation}} (T-t)^{-\frac{1-(\varepsilon+\eta)}{\beta}}   - \frac{1}{2} \| \varphi_0\|_{L_\infty}.\]
It remains to show that $\inf_{t\in (0,T)} \losc_t(\varphi)>0$ which follows from 
\cref{statement:maximal_oscillation_application}\eqref{item:5:statement:maximal_oscillation_application} because 
$\lim_{t\uparrow T} \| \varphi_t - \varphi_0\|_{L_\infty}=\infty$.
\smallskip

\eqref{item:3:statement:lower_bound_complete_new} and \eqref{item:2:statement:lower_bound_complete} 
\cref{statement:lower_bounds_levy_new}\eqref{item:2:statement:lower_bounds_levy_new}
gives $X\in \cL(2)$ and therefore also $0\in \supp(X_s)$, $s\in [0,T]$.
Now the proof of \eqref{item:2:statement:lower_bound_complete}  is analogous to \eqref{item:1:statement:lower_bound_complete},
where we use \cref{statement:consistence-upper_bound_sigma>0},
\cref{statement:maximal_oscillation_application} with $\rho=\delta_0$,
and \cref{statement:lower_bounds_levy_new}\eqref{item:2:statement:lower_bounds_levy_new}.
\end{proof}
\bigskip

We conclude with the natural example of $\beta$-stable like processes where one part of the example
is known from the literature and the other part is close to existing results:

\begin{exam}
\label{statement:U-L-for-beta-like}
Let $\beta \in (0,2)$ and $X=(X_t)_{t\in [0,T]}$ be a symmetric L\'evy process with 
$\sigma=0$ and  L\'evy measure $\nu(\od z) = p_\nu(z)\od z$, where $p_\nu$ is symmetric and satisfies
\begin{align}\label{eq:exam-assumption-levy-density}
      0 < \liminf_{|z| \to 0}|z|^{1+ \beta} p_\nu(z) \leqslant \limsup_{|z| \to 0}|z|^{1+ \beta} p_\nu(z) <\infty.
\end{align}
Then one has  
$X \in \left ( \bigcup_{c>0} \cU(\beta,\R;c) \right ) \cap \cL(\beta)$.
\end{exam}

\begin{proof}
The fact $X \in \bigcup_{c>0} \cU(\beta,\R;c)$ has been proven in
\cite[Example 1.3]{Boetcher:Schilling:Wang:11}. As we work with a $\liminf$-condition instead
a global lower bound and as we need the same preparations to prove 
$X \in \cL(\beta)$ we include the proof for the upper bound for the convenience of the reader.
Let $\psi$ be the characteristic exponent of $X$, i.e. $\E \e^{\im u X_s} = \e^{-s \psi(u)}$
(see \cite[Theorem 8.1]{Sa13}) for $s\in [0,T]$. 
By \cref{eq:exam-assumption-levy-density} we obtain
\begin{align}\label{eq:exam:asymptotic-chracteristic-exponent}
0 < \liminf_{|u| \to \infty} \frac{\mathrm{Re}\psi(u)}{|u|^{\beta}} 
    \leqslant \limsup_{|u| \to \infty} \frac{\mathrm{Re}\psi(u)}{|u|^{\beta}} <\infty.
\end{align}
If $s\in (0,T]$, then $X_s$ has a symmetric density $p_s \in C^\infty(\R)$ with 
$\lim_{|x|\to \infty} (\pd^m p_s/ \pd x^m) (x)=0$ for $m\in \bN_0$
by \cite{Orey:68} (see \cite[Proposition 28.3]{Sa13}).
We combine \cref{eq:exam:asymptotic-chracteristic-exponent} with \cite[Theorem 1.3]{SSW12} 
and \cite[Lemma 4.1]{KS18} and obtain $s_0 \in (0,T]$ and $c_0>0$ such that
$\left\|\pd p_s/\pd x\right\|_{L_1(\R)} \leqslant c_0 s^{-\frac{1}{\beta}}$
for $s \in (0, s_0]$.
If $s\in (s_0, T]$, then
$\left\|\pd p_s/\pd x\right\|_{L_1(\R)} = \left\|(\pd p_{s_0}/\pd x) * p_{s-s_0}\right\|_{L_1(\R)} \leqslant c_0 
 s_0^{-\frac{1}{\beta}}$, so that
\begin{equation}\label{exam:upper-gradient-density_new}
           \left\|\frac{\pd p_s}{\pd x}\right\|_{L_1(\R)} 
\leqslant c_{\cref{exam:upper-gradient-density_new}} s^{-\frac{1}{\beta}} 
\sptext{1}{for}{1} s\in (0,T]
\end{equation}
with $c_{\cref{exam:upper-gradient-density_new}}: = c_0 (\frac{T}{s_0})^{\frac{1}{\beta}}$.
On the other hand, by \cref{eq:exam:asymptotic-chracteristic-exponent} there is a 
$c_{\cref{eq:exam:chracteristic-exponent-2}}=c(\beta, p_\nu)>0$ such that, for $s\in (0,T]$,
\begin{align}\label{eq:exam:chracteristic-exponent-2}
\frac{1}{c_{\cref{eq:exam:chracteristic-exponent-2}} } s^{-\frac{1}{\beta}} \leqslant \int_{\R} \e^{-s \mathrm{Re} \psi(u)}\od u  
\sptext{1}{and}{1}
\int_{\R} \e^{-s \mathrm{Re} \psi(u)}|u|\od u \leqslant c_{\cref{eq:exam:chracteristic-exponent-2}} 
   s^{-\frac{2}{\beta}}.
\end{align}
Combining \cref{eq:exam:chracteristic-exponent-2} with  the proof of \cite[Lemma 7]{KS15} yields
$c_{\cref{eq:exam:lower-bound}},\tilde c_{\cref{eq:exam:lower-bound}}>0$, not depending on $(s,x)$, such that
\begin{align}\label{eq:exam:lower-bound}
  p_s(x) \geqslant c_{\cref{eq:exam:lower-bound}} s^{-\frac{1}{\beta}}
  \sptext{1}{for}{1} |x| < \tilde c_{\cref{eq:exam:lower-bound}} s^{\frac{1}{\beta}} 
  \sptext{1}{and}{1} s\in (0, T]. 
\end{align}
To verify our claim we first deduce from \eqref{eqn:TV_vs_gradient_density} that
\[ \| \p_{z+X_s} - \p_{X_s}\|_{\rm TV}
   \leqslant |z| \left \| \frac{\partial p_s}{\partial x}\right \|_{L_1}
   \leqslant |z| c_{\cref{exam:upper-gradient-density_new}} s^{-\frac{1}{\beta}} \]
so that $X\in \cU(\beta,\R;c_{\cref{exam:upper-gradient-density_new}})$.
On the other side we have 
\[ \p\left ( s^{-\frac{1}{\beta}} X_s \in (a,b) \right )
   = \int_{s^\frac{1}{\beta} a}^{s^\frac{1}{\beta}b} p_s(x) \od x
  \geqslant s^{\frac{1}{\beta}}(b-a) c_{\cref{eq:exam:lower-bound}} s^{-\frac{1}{\beta}}
   = c_{\cref{eq:exam:lower-bound}} (b-a) \]
for $-\tilde c_{\cref{eq:exam:lower-bound}} < a < b < \tilde c_{\cref{eq:exam:lower-bound}}$ 
which verifies $X\in \cL(\beta)$.
\end{proof}

\bigskip

\begin{proof}[Proof of \cref{statement:intro:gradient_LI_space_upper_boud}]
Item \eqref{eqn:1:statement:intro:gradient_LI_space_upper_boud}
follows from \cref{statement:upper_bounds_Drho}\eqref{item:2:statement:upper_bounds_Drho}.
For \eqref{eqn:2:statement:intro:gradient_LI_space_upper_boud} we first apply \eqref{eqn:1:statement:intro:gradient_LI_space_upper_boud}
for $q=2$ and observe that 
\begin{equation}\label{eqn:proof:statement:intro:gradient_LI_space_upper_boud}
           (1/2) \| (\varphi_t(f,\rho)-\varphi_0(f,\rho))_{t\in [0,T)} \|_{\B_{\infty,2}^\alpha} 
 \leqslant \| \varphi(f,\rho) \|_{\B_{\infty,2}^\alpha}
 \leqslant \| D_\rho F         \|_{\B_{b,2}^\alpha}
 \leqslant c_\eqref{eqn:1:statement:intro:gradient_LI_space_upper_boud} \|f\|_{\hoelO{\eta,2}}.
\end{equation}

To estimate the first term in \eqref{eqn:2:statement:intro:gradient_LI_space_upper_boud}, we apply 
\eqref{eqn:relations_between_Bpqalpha} to
$\| \varphi(f,\rho) \|_{\B_{\infty,2}^\alpha}$ and finish with 
\cref{statement:properties_Besov-spaces}\eqref{item:3:statement:properties_Besov-spaces}.
For the second term in \eqref{eqn:2:statement:intro:gradient_LI_space_upper_boud}
we use \cref{statement:properties_Besov-spaces}\eqref{item:2:statement:properties_Besov-spaces}
and again \eqref{eqn:proof:statement:intro:gradient_LI_space_upper_boud}.
\end{proof}


\section[On representations of H\"older functionals]{On representations of H\"older functionals on the  L\'evy-It\^o space}
\label{sec:approximation_random_measure}


\subsection{Galtchouk-Kunita-Watanabe projection}
\label{sec:GKW_projection_general}
For \cref{sec:GKW_projection_general} we assume 
the setting from \cref{sec:setting_levy_case}.
Assume $D\in L_2(\R,\mu)$ and 
$\xi \in L_2$ with a chaos decomposition 
\begin{equation}\label{eqn:special_chaos}
\xi = \E \xi + \sum_{n=1}^\infty I_n\left (\1_{(0,T]}^{\otimes n} f_n\right )
\end{equation}
with {\it symmetric} $f_n\in L_2(\R^n,\cB(\R^n),\mu^{\otimes n})$.
For $n\in \bN$, $x_1,\ldots,x_{n-1}\in\R$, and $0<t_1<\cdots<t_n<T$
we define
\begin{align*}
 h_{n-1}^D (x_1,\ldots,x_{n-1})&:= \!\!
  \begin{cases}
  \int_\R f_n(x_1,\ldots,x_{n-1},z) D(z) \mu(\od z)\hspace*{-.75em} & : \int_\R |f_n(x_1,\ldots,x_{n-1},z)|^2 \mu(\od z) < \infty \\
  0 & : \mbox{ else }
  \end{cases}\!, \\
f_n^D ((t_1,x_1),\ldots,(t_n,x_n)) &:= h_{n-1}^D (x_1,\ldots,x_{n-1}) D(x_n), 
\end{align*}
and extend $f_n^D$ symmetrically to $((0,T]\times \R)^n$ where the function is defined to be zero if
there are $k\not = l$ with $t_k=t_l$.
If $\| D \|_{L_2(\mu)}>0$, then we define
\[ \xi^D :=  \| D\|_{L_2(\mu)}^{-2} \sum_{n=1}^\infty  I_n(f_n^D). \]
The random variable $\xi^D$ can be written as stochastic integral
\begin{equation}\label{eqn:representation_stochastic_integral_wrt_XD}
\xi^D = \| D\|_{L_2(\mu)}^{-2}\int_{(0,T)} \psi_{t-}(\xi,D) \od X_t^D \mbox{ a.s.}
\end{equation}
where the c\`adl\`ag $L_2$-martingales $(\psi_t(\xi,D))_{t\in [0,T)}$ and 
$(X_t^D)_{t\in [0,T]}$ are determined by
\begin{align*}
\psi_t(\xi,D) = \sum_{n=0}^\infty (n+1) I_n\left (\1_{(0, t]}^{\otimes n} h_n^D \right ) \mbox{ a.s.}
\sptext{1}{and}{1}
X_t^D = I_1 \left (\1_{(0,t]}\otimes D \right ) \mbox{ a.s.}
\end{align*}
The proof of \eqref{eqn:representation_stochastic_integral_wrt_XD} is standard by 
the multiplication formula for the product $I_n(\1_{(0,s]}^{\otimes n} h_n^D) I_1(\1_{(s,t]} D)$ 
for $0\leqslant s < t \leqslant T$ (cf. also \cite{lee:shih:04} for general results).
It is known that $L_2(\R,\mu)$ is separable. 
If $(D_j)_{j\in J}$ is an orthonormal basis, where $J=\bN$ or $J$ is finite, then 
$X^{D_i}$ and $X^{D_j}$ are strongly orthogonal for $i\not = j$ (i.e. 
$(X_t^{D_i}X_t^{D_j})_{t\in [0,T]}$ is a martingale) and we obtain in $L_2(\Omega,\F,\p)$
the orthogonal decomposition 
\begin{equation}\label{eqn:orthogonal_decomposition_LI_space}
\xi = \E\xi+\sum_{j\in J} \int_{(0,T)} \psi_{t-}(\xi,D_j) \od X_t^{D_j}.
\end{equation}
Now we link $(\psi_t(\xi,D))_{t\in [0,T)}$ to the approach from \cref{sec:application_levy_case}:
\smallskip

\begin{prop}[Gradient of GKW-projection]
\label{statement:expression_for_integrand}
Assume that $f\in \defX$ with $\E |f(X_T)|^2<\infty$, 
$D\in L_1(\R,\mu)\cap L_2(\R,\mu)$ with $D\ge 0$ and $\mu(D>0)>0$, that $F$ 
is given by \eqref{eqn:def_F} and $\od \rho := \frac{D\od \mu}{\int_\R D \od\mu}$,
and that $t\in (0, T)$. Then there is a null-set $N_t\in \F$ such that for $\omega \not \in N_t$ one has
\begin{equation}\label{eq:expression-integrand-1}
  \frac{\psi_t(f(X_T),D)(\omega)}{\| D \|_{L_1(\mu)}} 
= \rho(\{0\}) \frac{\partial F}{\partial x}(t,X_t(\omega)) + \int_{\RO}
  \frac{F(t, X_t(\omega)+z) - F(t,X_t(\omega))}{z} \rho(\od z)
\footnote{We have
$\int_{\RO}\left |\frac{F(t, X_t(\omega)+z) - F(t,X_t(\omega))}{z} \right |\rho_D(\od z) <\infty$
for $\omega \not \in N_t$
and we omit $\rho(\{0\}) (\partial F/\partial x)(t,X_t(\omega))$ if $\rho(\{0\})=0$.}.
\end{equation}  
\end{prop}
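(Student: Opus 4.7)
The plan is to identify $\psi_t(f(X_T), D)$ as the directional Malliavin derivative of $\ce{\F_t}{f(X_T)}$ integrated against $D(z)\mu(\od z)$, and then translate the Malliavin derivative into the pathwise ``difference-quotient'' form using the identification from \cref{subsec:directional_gradient}. Once this is done, the final expression is obtained by splitting $\mu = \sigma^2 \delta_0 + z^2\nu$.

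First I would start from the chaos expansion $f(X_T) = \E f(X_T) + \sum_{n=1}^\infty I_n(\1_{(0,T]}^{\otimes n} f_n)$ with symmetric kernels $f_n$, so that $\ce{\F_t}{f(X_T)} = \E f(X_T) + \sum_{n=1}^\infty I_n(\1_{(0,t]}^{\otimes n} f_n)$. For every fixed $s \in (0,t]$ the formal chaos differentiation reads $D_{s,z}\ce{\F_t}{f(X_T)} = \sum_{n \geqslant 1} n I_{n-1}(\1_{(0,t]}^{\otimes (n-1)} f_n(\cdot, z))$, and integrating against $D(z)\mu(\od z)$ (using Cauchy–Schwarz together with $D \in L_2(\mu)$ to move the integral inside the series) produces
\[
\int_\R D_{s,z}\ce{\F_t}{f(X_T)} \, D(z)\, \mu(\od z) \;=\; \sum_{m \geqslant 0} (m+1) I_m(\1_{(0,t]}^{\otimes m} h_m^D) \;=\; \psi_t(f(X_T), D)
\]
as equality in $L_2(\p)$. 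Averaging over $s \in (0,t]$ gives the equivalent assertion $\psi_t(f(X_T), D) = t^{-1}\int_0^t\!\int_\R D_{s,z}\ce{\F_t}{f(X_T)} D(z)\mu(\od z)\,\od s$ in $L_2(\p)$, which has the advantage of being canonical in $s$ (removing the ambiguity of $D_{s,z}$ up to $\lambda\otimes\mu\otimes\p$-null sets).

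Next I would invoke the identification recalled in \cref{subsec:directional_gradient}, namely $\1_{(0,t]}(s)(\partial F(t, X_t(\omega)))(z) = (D_{s,z}F(t, X_t))(\omega)$ for $\lambda\otimes\mu\otimes\p$-a.e. $(s,z,\omega)$, together with the referenced integrated form \eqref{eqn:integrated:eija_equation} applied to the probability measure $\rho$. Since the right-hand side of this identification does not depend on $s\in (0,t]$, the averaging over $s$ disappears and Fubini yields a $\p$-null set $N_t$ such that for $\omega \notin N_t$,
\[
\psi_t(f(X_T), D)(\omega) \;=\; \int_\R (\partial F(t, X_t(\omega)))(z) \, D(z)\, \mu(\od z),
\]
and simultaneously $\int_{\RO} |F(t, X_t(\omega)+z) - F(t, X_t(\omega))|/|z| \, \rho(\od z) < \infty$, which is the integrability assertion of the proposition.

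Finally, using the decomposition $\mu(\od z) = \sigma^2 \delta_0(\od z) + z^2\nu(\od z)$ and the definition of $\partial F$, the right-hand side becomes
\[
\sigma^2 D(0) \frac{\partial F}{\partial x}(t, X_t(\omega)) + \int_{\RO}\frac{F(t, X_t(\omega)+z) - F(t, X_t(\omega))}{z}\, D(z)\, z^2\nu(\od z),
\]
where the $\partial F/\partial x$ term is well-defined when $\sigma > 0$ by \cref{statement:consistence-upper_bound_sigma>0} and is absent otherwise (here $\rho(\{0\})=0$ by assumption). Dividing by $\|D\|_{L_1(\mu)}$ and recognising $\rho(\od z) = D(z)\mu(\od z)/\|D\|_{L_1(\mu)}$ together with $\rho(\{0\}) = \sigma^2 D(0)/\|D\|_{L_1(\mu)}$ delivers \eqref{eq:expression-integrand-1}. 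The main technical obstacle I expect is the bookkeeping of null sets: the Malliavin derivative is only an equivalence class in $L_2(\lambda\otimes\mu\otimes\p)$, so producing a pointwise statement outside an explicit $\F$-null set $N_t$ at a fixed time $t$ requires applying Fubini to the integrated form carefully and simultaneously controlling the $\rho$-integrability on the right-hand side, rather than a single direct substitution.
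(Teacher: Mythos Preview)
Your proposal is correct and follows essentially the same route as the paper: identify $\psi_t(f(X_T),D)/\|D\|_{L_1(\mu)}$ with the time-averaged, $\rho$-integrated Malliavin derivative $\tfrac{1}{t}\int_0^t\int_\R D_{s,z}F(t,X_t)\,\rho(\od z)\,\od s$, and then invoke the pointwise representation \eqref{eqn:Eija_representation} together with Fubini to obtain $\opD_\rho F(t,X_t)$ outside a null set. The only methodological difference is in how the first identification is justified: you interchange $\int_\R D(z)\mu(\od z)$ with the chaos series directly (via Cauchy--Schwarz), whereas the paper tests both sides weakly against $I_m(g_m)$ using the characterisation \eqref{eq:characterization_Malliavin_derivative}, which sidesteps any term-by-term interchange argument; both are legitimate and yield the same conclusion.
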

\bigskip

We prove \cref{statement:expression_for_integrand} in \cref{sec:proof_of_expression_for_integrand}. 
Results related to \cref{statement:expression_for_integrand} are provided in  
\cite[Theorem 2.4]{JMP00},
\cite[Theorems 2.1, 3.11, 4.1]{BNLOP03}, and
\cite[Proposition 2]{CTV05}. 
Other techniques use the Fourier transform (see, e.g., \cite{BT11}).

\subsection{Upper and lower bounds for the Galtchouk-Kunita-Watanabe projection}
\label{sec:GKW_projection_upper_lower_bounds}
For Sections \ref{sec:GKW_projection_upper_lower_bounds}-\ref{sec:orthogonal_decomposition_Levy-Ito} we assume 
in addition to \cref{sec:setting_levy_case} the following:
\begin{enumerate}
\item $X=(X_t)_{t\in [0,T]} \subseteq L_2$ is symmetric with $\sigma=0$ and non-degenerate (i.e $\mu(\RO)\in (0,\infty)$).
\item $\beta \in (0,2)$ and the L\'evy measure is of form $\nu(\od z) = p_\nu(z)\od z$, where $p_\nu\geqslant 0$ is symmetric and
      \begin{align}\label{eq:exam-assumption-levy-density-II}
      0 < \liminf_{|z| \to 0}|z|^{1+ \beta} p_\nu(z) \leqslant \limsup_{|z| \to 0}|z|^{1+ \beta} p_\nu(z) <\infty.
      \end{align}
\end{enumerate}
The condition $\mu(\R)<\infty$ guarantees in \cref{statement:space_time_approximation_random_measure}
uniform bounds that do not depend on the choice of the orthonormal system used to decompose the L\'evy-It\^o space.
From assumption \cref{eq:exam-assumption-levy-density-II} we
get a constant $ c_\eqref{eqn:small_ball_mu_beta_like}>0$ such that 
\begin{equation}\label{eqn:small_ball_mu_beta_like}
	\mu([-d,d]) \leqslant c_\eqref{eqn:small_ball_mu_beta_like}  d^{2-\beta} 
	\sptext{1}{for}{1}
	d \in (0,1].
\end{equation}
The results of this section (including their proofs) hold in a
more general setting using the conditions $\cU(\beta,\R;c')$, $\cL(\beta)$, and
\eqref{eqn:small_ball_mu_beta_like}. The symmetry assumptions are due to
\cref{statement:U-L-for-beta-like} to obtain lower bounds. To simplify
the presentation we decided to restrict ourselves to the $\beta$-stable like 
case.
\smallskip
For a time-net $\tau= \{t_i\}_{i=0}^n \in \mathcal T$, $D\in L_2(\mu)$, 
a Borel function $f:\R\to \R$ with $\E |f(X_T)|^2<\infty$, and 
$\xi:= f(X_T)$ we let
\begin{align*}
E_t(f;\tau,D) := \int_{(0,t]} \psi_{s-}(\xi,D) \od X_s^D - \sum_{i=1}^n \psi_{t_{i-1}-}(\xi,D)(X^D_{t_i \wedge t} - X^D_{t_{i-1} \wedge t}),
\quad t\in [0,T),
\end{align*}
where we exploit \cref{chaos-decom} and assume kernels $f_n$ for the representation of $\xi=f(X_T)$ as in  \eqref{eqn:special_chaos}
and define $(\psi_t(f(X_T),D))_{t\in [0,T)} = (\psi_t(\xi,D))_{t\in [0,T)}$
accordingly.

\medskip
For $0 \leqslant a \leqslant t < T$  we use It\^o's isometry and
and choose $\od \langle X^D \rangle_u = \left (\int_\R D^2 \od \mu \right ) \od u$
to get, a.s.,  
\begin{align*}
      \ce{\F_a}{|E_t(f; \tau,D) - E_a(f; \tau,D)|^2} 
& = \ce{\F_a}{\int_a^t \left| \psi_{u-}(\xi,D) - \sum_{i=1}^n \varphi_{t_{i-1}-}(\xi,D) \1_{(t_{i-1}, t_i]}(u)\right|^2 \od \langle X^D \rangle_u} \\
& = \left (\int_\R D^2 \od \mu \right )  \ce{\F_a}{\intsq{\psi(\xi,D)}{\tau}_t - \intsq{\psi(\xi,D)}{\tau}_a}
\end{align*}
where we use $\psi_u(\xi,D) = \psi_{u-}(\xi,D)$ a.s., $u\in [0,T)$, which follows from the
chaos expansion.
Hence
\begin{equation}\label{eqn:E_t_vs_squarefunction}
   \frac{1}{\left (\int_\R D^2 \od \mu \right )} \left \| E(f; \tau,D)        \right \|_{\bmo_2([0,T))}^2 
   = \left \| \intsq{\psi(\xi,D)}{\tau} \right \|_{\bmo_1([0,T))} 
   = \left \| \intsq{\psi(\xi,D)}{\tau} \right \|_{\BMO_1([0,T))}.
\end{equation}

Now we formulate in \cref{statement:upper_bounds_alpha_like} and \cref{statement:lower_bounds_alpha_like} the upper and lower bounds for the gradients
$(\psi_t(f(X_T),D))_{t\in [0,T)}$
and the error process $(E_t(f;\tau,D))_{t\in [0,T)}$:
\bigskip

\begin{theo}[{\bf upper bounds}]
\label{statement:upper_bounds_alpha_like}
For $\beta \in (0,2)$, $\eta\in [0,1]$, $r\in [1,2]$, $1=\frac{1}{r}+\frac{1}{r'}$, 
$D \in L_2(\R,\mu) \cap L_{r'}([-1,1],\mu)$,
$\varepsilon :=  \frac{2-\beta}{r}$,
$\alpha:=\frac{1-(\varepsilon+\eta)}{\beta}$, and
\[ \vvvert D \vvvert_{(1,r')}:=\| D \|_{L_1(\R,\mu)}  \vee \| D \|_{L_{r'}([-1,1],\mu)} \]
one has:
\medskip
\begin{enumerate}[{\rm\bf (1)}]
		
\item \label{item:0:statement:upper_bounds_alpha_like}	
      There is a $c>0$ independent from $D$ such that, for all $f \in \hoel{\eta}$,
      \[ \|\psi_t(f(X_T),D)\|_{L_\infty}
         \leqslant  c \,\, |f|_\eta \,\, \vvvert D \vvvert_{(1,r')}
        \begin{cases}
			(T-t)^{-\alpha}            &:  \eta \in [0,1-\varepsilon) \\
			\ln \left ( 1+(T-t)^{-\frac{1-\eta}{\beta}} \right ) &:  \eta = 1 - \varepsilon < 1 \\
			1                                                      &:  \eta \in (1-\varepsilon,1) \cup \{ 1 \}
       \end{cases}. \]
      \smallskip
	  
\item \label{item:00:statement:upper_bounds_alpha_like}
      Assume additionally, $\beta \in (2-r,2)$, $\eta \in (0,1-\varepsilon)$,
	  $\theta:=1-2\alpha$, and  $q\in [1,\infty)$.
	  \bigskip
      \begin{enumerate}[{\rm\bf (a)}]
      \item \label{item:1:statement:upper_bounds_alpha_like}
      {\sc Singularity of the gradient:} There is a $c>0$ independent from $D$ such that, for $f \in \hoel{\eta,q}$, one has
      \[ \|   \psi(f(X_T),D) \|_{\B_{\infty,q}^\alpha} 
		 \leqslant  c \,  \vvvert D \vvvert_{(1,r')} \, \| f\|_{\hoelO{\eta,q}}. \]
      \vspace*{-.4em}

      \item \label{item:2:statement:upper_bounds_alpha_like}
      {\sc Approximation:}  
	  There is a $c>0$ independent from $D$ such that, for all $f \in \hoel{\eta,2}$ and $\tau\in \cT$, one has
		\[
			\left \| E(f; \tau,D)        \right \|_{\bmo_2([0,T))} 
			\leqslant c \,
			\| D \|_{L_2(\R,\mu)} \, \vvvert D \vvvert_{(1,r')}
			\sqrt{\| \tau \|_{\theta}} \,\, \| f \|_{\hoelO {\eta,2}}.
		\]\vspace*{-.4em}
		
		\item \label{item:3:statement:upper_bounds_alpha_like}
		 {\sc Tail behaviour of the approximation:} 
		If  $f \in \hoelO {\eta,2}$ and $p\in (2,\infty)$, then there is a $c>0$ such that for
		$0\leqslant a < t < T$, $\Phi\in \CL^+([0,t])$ with $1\vee |\Delta X_s^D| \leqslant \Phi_s$ on $[0,t]$, $\Phi_t^*\in L_p$,
		and $\lambda >0$ one has
		\[
			\p_{\F_a}\left (|E_t(f;\tau^\theta_n,D)- E_a(f;\tau^\theta_n,D)|> \lambda \right )
			\leqslant c \min \left \{ \frac{1}{n\lambda^2} , \frac{\ce{\F_a}{\sup_{u\in [a,t]}\Phi_u^p}}{(T-t)^{p\alpha} \lambda^p} \right \}
			\,\, \mbox{a.s.}
		\]
	\end{enumerate}
	\end{enumerate}
\end{theo}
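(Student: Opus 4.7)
The whole statement will be reduced to the directional gradient estimates of \cref{statement:upper_bounds_Drho} via the gradient representation in \cref{statement:expression_for_integrand}. First I would split $D=D^+-D^-$. Since $D\mapsto \psi_t(f(X_T),D)$ is linear (the kernels $f_n^D$ depend linearly on $D$), and since $\sigma=0$ forces $\mu(\{0\})=0$ and thus $\rho^\pm(\{0\})=0$, \cref{statement:expression_for_integrand} applied to $D^+$ and $D^-$ separately yields
\[
 \psi_t(f(X_T),D) \;=\; \|D^+\|_{L_1(\mu)}\, D_{\rho^+}F(t,X_t)\;-\;\|D^-\|_{L_1(\mu)}\, D_{\rho^-}F(t,X_t)
\]
with $\rho^\pm:=\|D^\pm\|_{L_1(\mu)}^{-1}\,D^\pm\mu$. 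So everything follows once the directional gradient bounds of \cref{statement:upper_bounds_Drho} are applied to each $\rho^\pm$.

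The key preparation is to check that $\rho^\pm\in U(\varepsilon;c_\pm)$ with the right constant. Using H\"older with $1=\tfrac1r+\tfrac1{r'}$ and the small-ball bound \eqref{eqn:small_ball_mu_beta_like} for $\mu$,
\[
\rho^\pm([-d,d])\;\leqslant\;\|D^\pm\|_{L_1(\mu)}^{-1}\,\|D^\pm\|_{L_{r'}([-1,1],\mu)}\,\mu([-d,d])^{1/r}\;\leqslant\;\|D^\pm\|_{L_1(\mu)}^{-1}\,\|D^\pm\|_{L_{r'}([-1,1],\mu)}\,c_\eqref{eqn:small_ball_mu_beta_like}^{1/r}\,d^{\varepsilon}
\]
for $d\in[0,1]$, giving $\rho^\pm\in U(\varepsilon;c_\pm)$ with $c_\pm=\|D^\pm\|_{L_1(\mu)}^{-1}\|D^\pm\|_{L_{r'}([-1,1],\mu)} c_\eqref{eqn:small_ball_mu_beta_like}^{1/r}$. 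From \cref{statement:U-L-for-beta-like} (upper half) we also have $X\in\bigcup_{c>0}\cU(\beta,\R;c)$. These two facts meet exactly the hypotheses of \cref{statement:upper_bounds_Drho}.

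\textbf{Parts (1) and (2a).} For (1), \cref{statement:upper_bounds_Drho}\eqref{item:1:statement:upper_bounds_Drho} applied to $\rho=\rho^\pm$ gives the case-by-case bound on $\|D_{\rho^\pm}F(t,\cdot)\|_{B_b(\R)}$ with constant $c(1\vee c_\pm)$. Multiplying by $\|D^\pm\|_{L_1(\mu)}$, using $\|D^\pm\|_{L_1(\mu)}(1\vee c_\pm)\leqslant \|D^\pm\|_{L_1(\mu)}+c_\eqref{eqn:small_ball_mu_beta_like}^{1/r}\|D^\pm\|_{L_{r'}([-1,1],\mu)}\leqslant 2\,c_\eqref{eqn:small_ball_mu_beta_like}^{1/r}\vvvert D^\pm\vvvert_{(1,r')}$, and summing the two parts yields (1). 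Part (2a) is the same story with \cref{statement:upper_bounds_Drho}\eqref{item:2:statement:upper_bounds_Drho}: one bounds $\|D_{\rho^\pm}F\|_{\B_{b,q}^\alpha}$ by $c(1\vee c_\pm)\|f\|_{\hoel{\eta,q}}$, multiplies by $\|D^\pm\|_{L_1(\mu)}$, and uses $\|\psi(f(X_T),D)\|_{\B_{\infty,q}^\alpha}\leqslant \|\psi(f(X_T),D^+)\|_{\B_{\infty,q}^\alpha}+\|\psi(f(X_T),D^-)\|_{\B_{\infty,q}^\alpha}\leqslant \|D_{\rho^+}F\|_{\B_{b,q}^\alpha}\|D^+\|_{L_1(\mu)}+\|D_{\rho^-}F\|_{\B_{b,q}^\alpha}\|D^-\|_{L_1(\mu)}$.

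\textbf{Part (2b).} Set $\varphi_t:=\psi_t(f(X_T),D)$; by the martingale property and the chaos expansion, $\varphi$ is a c\`adl\`ag $L_2$-martingale. Apply \cref{statement:intsq_vs_Binfty2} (with $\theta=1-2\alpha$) to $\varphi$:
\[
\|\intsq{\varphi}{\tau}\|_{\BMO_1([0,T))}\;\leqslant\; c_\eqref{eqn:statement:intsq_vs_Binfty2}\,\|\tau\|_\theta\,\|\varphi-\varphi_0\|_{\B_{\infty,2}^\alpha}^2.
\]
Combine with \eqref{eqn:E_t_vs_squarefunction}, which equates $\|E(f;\tau,D)\|_{\bmo_2([0,T))}^2$ with $\|D\|_{L_2(\mu)}^2\|\intsq{\varphi}{\tau}\|_{\BMO_1([0,T))}$, and plug in the bound on $\|\varphi\|_{\B_{\infty,2}^\alpha}$ from (2a) with $q=2$. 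Taking square roots delivers (2b).

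\textbf{Part (2c).} The $(n\lambda^2)^{-1}$-bound follows from Chebyshev together with the $L_2$-control $\|E_t(f;\tau_n^\theta,D)-E_a(f;\tau_n^\theta,D)\|_{L_2}^2\leqslant \|E(f;\tau_n^\theta,D)\|_{\bmo_2([0,T))}^2\leqslant c\,\|\tau_n^\theta\|_\theta\leqslant c'/n$ by (2b) and \eqref{eqn:upper_bound_adapted_nets}. For the $p$-th moment bound I would first show that the process $(E_t(f;\tau_n^\theta,D))_{t\in[0,T)}$ has jumps controlled by $\|\psi_\cdot(f(X_T),D)\|_{L_\infty}|\Delta X^D_\cdot|\leqslant C\Phi_\cdot$ (using (1)), so that when one transfers the $\bmo_2$-bound on $E(f;\tau_n^\theta,D)$ from (2b) to a $\BMO_2^{c\Phi}$-bound on $[a,t]$ via the $\bmo\leftrightarrow\BMO$ comparison in the appendix (controlling the single large jump by $\Phi$), the $\BMO_2^\Phi\to L_p$ passage from \cref{statement:Lp-BMO_new} combined with Chebyshev delivers the $L_p$-tail bound with factor $\|\Phi_t^*\|_{L_p(\F_a)}(T-t)^{-p\alpha}/\lambda^p$ after noting that $\|\tau_n^\theta\|_\theta\sim 1/n$ is absorbed into $(T-t)^{-p\alpha}$ via $\sqrt{\|\tau_n^\theta\|_\theta}\leqslant c(T-t)^{-\alpha}\cdot (T-t)^\alpha/\sqrt n$ and then taking the minimum of the two bounds.

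The main obstacle I expect is in (2c): bookkeeping the passage from the unweighted $\bmo_2$-estimate of (2b) to a weighted $\BMO_2^{c\Phi}$-estimate on the sub-interval $[a,t]$ while correctly tracking the constants and keeping the dependence on $D$ and on $(T-t)^{-\alpha}$ sharp; this is where the control of the jumps of $X^D$ by $\Phi$ and the $\BMO\to L_p$ transfer of \cref{statement:Lp-BMO_new} are put together, and it is the only place where the hypothesis $p>2$ is really used.
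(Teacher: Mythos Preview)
Your approach is essentially identical to the paper's: split $D=D^+-D^-$, verify $\rho^\pm\in U(\varepsilon;c_\pm)$ via H\"older and \eqref{eqn:small_ball_mu_beta_like} (this is \cref{statement:D_to_rho}), pull $X\in\cU(\beta,\R;c')$ from \cref{statement:U-L-for-beta-like}, and feed everything into \cref{statement:upper_bounds_Drho} for (1) and (2a); then combine \eqref{eqn:E_t_vs_squarefunction} with \cref{statement:intsq_vs_Binfty2} for (2b).

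For (2c) your outline is correct in spirit but imprecise in two places. First, the jump bound is not $|\Delta E_s|\leqslant C\Phi_s$ but $|\Delta E_s|\leqslant C(T-s)^{-\alpha}\Phi_s$, since $\|\psi_s\|_{L_\infty}\leqslant C(T-s)^{-\alpha}$ from part (1); restricting to $s\in[0,t]$ gives $|\Delta E|_{\Phi,[0,t]}\leqslant C(T-t)^{-\alpha}$, and \emph{this} is where the $(T-t)^{-p\alpha}$ factor actually originates, not from any manipulation of $\|\tau_n^\theta\|_\theta$. Second, the paper does not transfer the unweighted $\bmo_2$-bound to a weighted $\BMO_2^\Phi$-bound and then invoke \cref{statement:Lp-BMO_new} separately; instead it packages exactly this two-weight argument (unweighted $\bmo_2$ for the Chebyshev term, weighted jump control plus John--Nirenberg for the $L_p$ term) into the dedicated \cref{statement:JN_bmo_new}, applied with $r=2$, weight $\Phi'\equiv 1$ for the first branch of the minimum and the given $\Phi$ for the second. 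The $\bmo_2^\Phi$-norm contribution $c/\sqrt n$ is then simply absorbed into $C(T-t)^{-\alpha}$ since $(T-t)^{-\alpha}\geqslant T^{-\alpha}>0$.
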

\bigskip

The quantity $\vvvert D \vvvert_{(1,r')}$ describes how well $D$ behaves around the origin. The 
bigger $r'$ is, the more regular $D$ is, which yields to  
a weaker singularity of $(\psi_t(f(X_T),D))_{t\in [0,T)}$ as $t\uparrow T$
because of $\alpha=(1-(\frac{2-\beta}{r}+\eta))/\beta$.
Item \eqref{item:3:statement:upper_bounds_alpha_like} looks at first glance technical, but it gives some freedom with respect 
to the choice of the weight $\Phi$ and its integrability. 
In particular, the term $(T-t)^{-p\alpha}\lambda^{-p}$ behaves better than $n^{-1} \lambda^{-2}$ for large $\lambda$.
Therefore we can take an advantage of the John-Nirenberg theorem, behind the estimate 
$(T-t)^{-p\alpha}\lambda^{-p}$, even if we cannot exploit the factor $n^{-1}$.
The case  $D \in L_2([-1,1],\mu)$ is the typical case that appears later when $D$ is an element of our orthonormal basis,
the case $D \in L_\infty([-1,1],\mu)$ appears in the classical  Galtchouk-Kunita-Watanabe projection when $D\equiv 1$. The following table 
gives an overview about the parameters in  \eqref{item:2:statement:upper_bounds_alpha_like} in the limit cases $r=1$ ($r'=\infty$) and $r=r'=2$:
\bigskip
\begin{center}
\begin{tabular}{|c|c|c|c|c|c|c|}\hline
$r$ & $D$                & $\beta$ & $\eta$                & $\alpha$ & $\theta$ & range for $(\alpha,\theta)$ in dep. on $\eta$ \\ \hline
1   & $L_\infty([-1,1],\mu)$ & $(1,2)$ & $(0,\beta-1)$         &   $1-\frac{\eta+1}{\beta}$ & $\frac{2}{\beta}(\eta+1)-1 $  &
$(0,1-\frac{1}{\beta})\times \left ( \frac{2}{\beta} -1,1 \right )$ \\
2   & $L_2([-1,1],\mu)$      & $(0,2)$ & $(0,\frac{\beta}{2})$ & $\frac{1}{2}-\frac{\eta}{\beta}$ & $\frac{2}{\beta}\eta$ & $(0,\frac{1}{2})\times(0,1)$ \\ \hline
\end{tabular}
\end{center}
\bigskip
One has to read the table in the following way: The range 
for $(\alpha,\theta)$ is the range - in dependence on $\eta$ - where we have a singularity in the 
gradient that has to be compensated. If $r=1$, which means we have 
a 'good' $D\in L_\infty([-1,1],\mu)$, then this range is smaller
than for $r=2$.
\bigskip

\begin{theo}[\bf lower bound]
\label{statement:lower_bounds_alpha_like}
Let $r\in [1,2)$, $1=\frac{1}{r}+\frac{1}{r'}$, $\beta\in (2-r,2)$,
$\varepsilon:= \frac{2-\beta}{r}$, $\eta\in (0,1-\varepsilon)$,
$\alpha := \frac{1-(\varepsilon+\eta)}{\beta}$, and
$\theta := 1 - 2 \alpha$.
Then there is a non-negative Borel function $D:\R\to \R$
supported on $[-1,1]$, with $\mu(D>0)>0$,
\begin{enumerate}[{\rm (a)}]
\item $D\in L_\infty([-1,1],\mu)$ \, if $r=1$,
\item $\sup_{\lambda>0} \lambda^{r'} \mu \left ( \{ z\in [-1,1] : D(z) \geqslant \lambda\} \right ) < \infty$ \, if $r\in (1,2)$, 
\end{enumerate}
such that for all bounded 
$f\in \lbvlocal \eta \cap \hoel \eta$ 
and $\varphi=(\psi_t(f(X_T),D))_{t\in [0,T)}$ one has:
\begin{enumerate}[{\rm (1)}]
\item \label{item:1:statement:lower_bounds_alpha_like} 
      $\inf_{t\in (0,T)} (T-t)^\alpha \losc_t(\varphi)>0$.
\item \label{item:2:statement:lower_bounds_alpha_like} 
      There is a constant $c>0$ such that
      \[            \left \| E(f; \tau,D)        \right \|_{\bmo_2([0,T))} 
         \geqslant  c\, \sqrt{\| \tau \|_{\theta}}\sptext{1}{for all}{1} \tau\in\cT. \]
\end{enumerate}
\end{theo}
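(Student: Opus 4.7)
The strategy is to convert the statement, via the Galtchouk-Kunita-Watanabe identification, into an instance of the directional gradient lower bound \cref{statement:lower_bound_complete}\eqref{item:1:statement:lower_bound_complete}, and then to invoke \cref{thm:general_lower_bound} to pass from oscillation to the $\BMO_1$-lower bound for the square function.

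\textbf{Step 1: choice of $D$.} Set $D(z) := \mathbf{1}_{[-1,1]}(z)$ when $r=1$ and $D(z) := |z|^{-(2-\beta)/r'}\mathbf{1}_{[-1,1]}(z)$ when $r\in(1,2)$, and let $\rho(\od z) := \|D\|_{L_1(\mu)}^{-1} D(z)\mu(\od z)$. Using \eqref{eq:exam-assumption-levy-density-II}, i.e.\ $\mu(\od z)\sim |z|^{1-\beta}\od z$ near $0$, a direct computation gives $\rho([-d,d])\sim d^{\varepsilon}$ for $d\in(0,1]$ with $\varepsilon=(2-\beta)/r$, so $\rho\in L(\varepsilon)$. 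The same computation verifies the stated $L_\infty$- resp.\ weak-$L_{r'}$-bound on $D$.

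\textbf{Step 2: identification and oscillation.} By \cref{statement:U-L-for-beta-like} we have $X\in\cU(\beta,\R;c)\cap\cL(\beta)$; combining this with $\rho\in U(\varepsilon;c')$ (which follows by the same small-ball computation as above) and \cref{statement:X-rho_to_TV}\eqref{item:1:statement:X-rho_to_TV} yields $\|X_s\|_{{\rm TV}(\rho,\eta)}<\infty$ for $s\in(0,T]$ since $\eta+\varepsilon<1$. Because $\rho(\{0\})=0$ and $\E|f(X_T)|^2<\infty$ (as $f$ is bounded), \cref{statement:expression_for_integrand} identifies
\[
 \varphi_t \;=\; \psi_t(f(X_T),D) \;=\; \|D\|_{L_1(\mu)}\, D_\rho F(t,X_t) \quad\text{a.s.}
\]
Now all hypotheses of \cref{statement:lower_bound_complete}\eqref{item:1:statement:lower_bound_complete} are met (with the bounded $f\in\lbvlocal\eta\cap\hoel\eta$, $\rho\in L(\varepsilon)$, $X\in\cL(\beta)$), which yields exactly
\[
 \inf_{t\in(0,T)}(T-t)^{\alpha}\losc_t(\varphi)\;>\;0,
\]
proving (1).

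\textbf{Step 3: from oscillation to $\BMO_1$-lower bound.} To apply \cref{thm:general_lower_bound} with parameter $\theta=1-2\alpha$ (so that $(1-\theta)/2=\alpha$), observe that $\varphi$ is an $L_2$-martingale and therefore, by conditional Jensen, it satisfies the a priori bound \eqref{eq:assum:thm:general_lower_bound} with $c_{\eqref{eq:assum:thm:general_lower_bound}}=1$: indeed $\ce{\F_a}{|\varphi_u-Z|^2}\ge |\varphi_a-Z|^2$ a.s.\ for $u\ge a$ and any $\F_a$-measurable $Z$, so averaging over $u\in[a,b]$ preserves this lower bound. Thus the implication \eqref{item:1:thm-general-lower}$\Rightarrow$\eqref{item:3:thm-general-lower} of \cref{thm:general_lower_bound} applies and gives a constant $c_0>0$ with
\[
 \|[\varphi;\tau]\|_{\BMO_1([0,T))}\;\geqslant\; c_0^2\,\|\tau\|_{\theta}\qquad\text{for all }\tau\in\cT.
\]
Combining this with the identity \eqref{eqn:E_t_vs_squarefunction}, namely $\|E(f;\tau,D)\|_{\bmo_2([0,T))}^2=\|D\|_{L_2(\mu)}^2\,\|\intsq{\varphi}{\tau}\|_{\BMO_1([0,T))}$, yields (2).

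\textbf{Expected main obstacle.} The nontrivial point is the construction and verification of $D$: we must choose its small-ball profile so that the \emph{image measure} $D\mu$ has precisely the exponent $\varepsilon=(2-\beta)/r$ around $0$ (matching the exponent in $L(\varepsilon)$), while simultaneously controlling the size of $D$ in the stated $L_{r'}$ or weak-$L_{r'}$ sense; the interaction between the density $|z|^{1-\beta}$ of $\mu$ near $0$ and the desired profile of $\rho$ dictates the exact power $-(2-\beta)/r'$. Everything else reduces to a bookkeeping of the already established theorems.
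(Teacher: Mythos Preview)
Your proof is correct and follows essentially the same approach as the paper: the same choice $D(z)=|z|^{-(2-\beta)/r'}\1_{\{0<|z|\le 1\}}$ (which for $r=1$ reduces to your $\1_{[-1,1]}$), the same verification that $\rho\in U(\varepsilon;c)\cap L(\varepsilon)$ and $X\in\cU(\beta,\R;c')\cap\cL(\beta)$, the same appeal to \cref{statement:lower_bound_complete}\eqref{item:1:statement:lower_bound_complete} for part~(1), and the same passage through \cref{thm:general_lower_bound} combined with \eqref{eqn:E_t_vs_squarefunction} for part~(2). The only cosmetic difference is that the paper records the intermediate inequality~\eqref{eqn:item:1:statement:lower_bounds_alpha_like} (item~(2) of \cref{thm:general_lower_bound}, valid for arbitrary adapted approximations) before concluding~(2), whereas you go straight from~(1) to~(3) in \cref{thm:general_lower_bound}.
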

From  the conditions on $(r,\beta)$ it automatically follows that
$\varepsilon\in(0,1)$, $\alpha\in (0,1/2)$, and $\theta \in (0,1)$.

\begin{rema}
In \cref{statement:lower_bounds_alpha_like} the lower bound is formulated for  Riemann approximations used in $ E(f; \tau,D)$. However, this lower bound is also true for {\it any approximation} having the correct measurability. In fact, we show in the proof of  \cref{statement:lower_bounds_alpha_like} that one has:
\medskip

There is a $c_{\eqref{eqn:item:1:statement:lower_bounds_alpha_like}} >0$ such that  for all 
$\tau=\{t_i\}_{i=0}^n \in \mathcal T$ with $\|\tau\|_\theta = \frac{t_k-t_{k-1}}{(T-t_{k-1})^{1-\theta}}$ one has 
\begin{align}\label{eqn:item:1:statement:lower_bounds_alpha_like}
\inf_{\vartheta_{i-1}\in L_0(\F_{t_{i-1}})} \sup_{a\in [t_{k-1},t_k)}
\left \|\ce{\F_a}{\int_a^T \left| \varphi_u - \sum_{i=1}^n \vartheta_{i-1} \1_{(t_{i-1},t_i]}(u)  \right|^2 \od u} \right \|_\infty
\geqslant c_{\eqref{eqn:item:1:statement:lower_bounds_alpha_like}}^2  \| \tau  
          \|_{\theta}.
\end{align}
\end{rema}
\bigskip

\subsection{Proofs of the upper and lower bounds}
First we show the impact of the moments to the functional $D$ 
in our context:
\smallskip

\begin{lemm}
	\label{statement:D_to_rho}
	For $r\in [1,2]$, $1=\frac{1}{r}+\frac{1}{r'}$, 
	$\vvvert D\vvvert_{(1,r')}<\infty$ with $D\ge 0$ and $\mu(D>0)>0$ we define on $(\R,\cB(\R))$ 
	the probability measure 
	\begin{equation}\label{eqn:rho_D}
		\od \rho := \frac{D\od \mu}{\int_\R D \od\mu}.
	\end{equation}
	Then, for $d\in (0,1]$ and $c:= \sqrt[r]{c_\eqref{eqn:small_ball_mu_beta_like}}
	\frac{\| D \|_{L_{r'}([-d,d],\mu)}}
	{\| D \|_{L_1(\R,\mu)}}$, one has
	\[ \rho([-d,d])
	\leqslant \frac{\| D \|_{L_{r'}([-d,d],\mu)}}{\int_\R D \od\mu}  \mu([-d,d])^\frac{1}{r}
	\sptext{1}{so that}{1} 
	\rho \in U\left (\frac{2-\beta}{r};c
	\right ). \]
\end{lemm}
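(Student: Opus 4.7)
The plan is to obtain the claim by a single application of H\"older's inequality followed by the small-ball estimate \eqref{eqn:small_ball_mu_beta_like} for $\mu$.

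First I would write, for $d\in (0,1]$,
\[
\rho([-d,d])
= \frac{1}{\int_\R D\,\od\mu} \int_{[-d,d]} D\cdot 1 \,\od\mu,
\]
and apply H\"older's inequality to the dual pair $(r',r)$ on the measure space $([-d,d],\mu)$. Since $\vvvert D\vvvert_{(1,r')}<\infty$ implies $D\in L_{r'}([-1,1],\mu)\subseteq L_{r'}([-d,d],\mu)$, this yields
\[
\int_{[-d,d]} D\,\od\mu
\leqslant \|D\|_{L_{r'}([-d,d],\mu)}\,\mu([-d,d])^{1/r},
\]
which already gives the first displayed inequality of the statement after dividing by $\|D\|_{L_1(\R,\mu)}=\int_\R D\,\od\mu$.

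Next I would insert the small-ball estimate \eqref{eqn:small_ball_mu_beta_like}, namely $\mu([-d,d])\leqslant c_\eqref{eqn:small_ball_mu_beta_like}\,d^{2-\beta}$ for $d\in(0,1]$, to obtain
\[
\rho([-d,d])
\leqslant
\frac{\|D\|_{L_{r'}([-d,d],\mu)}}{\|D\|_{L_1(\R,\mu)}}\,
c_\eqref{eqn:small_ball_mu_beta_like}^{\,1/r}\,d^{(2-\beta)/r}.
\]
Since $\|D\|_{L_{r'}([-d,d],\mu)}\leqslant \|D\|_{L_{r'}([-1,1],\mu)}$ for $d\in(0,1]$, the right-hand side is bounded by $c\, d^{(2-\beta)/r}$ with the uniform constant $c:=\sqrt[r]{c_\eqref{eqn:small_ball_mu_beta_like}}\,\|D\|_{L_{r'}([-1,1],\mu)}/\|D\|_{L_1(\R,\mu)}$, which, according to Definition \ref{defi:uppper_bounds_X_rho}, means $\rho\in U\!\left(\frac{2-\beta}{r};c\right)$.

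There is no real obstacle here: the only point worth noting is the slight abuse of notation between the $d$-dependent constant appearing in the first displayed bound of the statement and the $d$-independent constant required in the definition of $U(\varepsilon;c)$, which is resolved by monotonicity of $d\mapsto\|D\|_{L_{r'}([-d,d],\mu)}$ together with the restriction $d\in[0,1]$ built into \eqref{eqn:small_ball_mu_beta_like} and the definition of $U(\varepsilon;c)$.
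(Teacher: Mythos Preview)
Your proof is correct and follows essentially the same route as the paper: write $\rho([-d,d])$ as an integral, apply H\"older's inequality with exponents $(r',r)$ on $([-d,d],\mu)$, and then invoke the small-ball bound \eqref{eqn:small_ball_mu_beta_like}. Your extra remark on replacing the $d$-dependent constant by the uniform one via monotonicity of $d\mapsto\|D\|_{L_{r'}([-d,d],\mu)}$ is a useful clarification that the paper leaves implicit.
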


\begin{proof}
For $d\in (0,1]$ one has 
\[          \rho([-d,d])
      =     \frac{1}{\int_\R D \od\mu} \int_{[-d,d]} D(z) \mu(\od z) \\
  \leqslant \frac{1}{\int_\R D \od\mu} \| D \|_{L_{r'}([-d,d],\mu)} \mu([-d,d])^\frac{1}{r}. \]
So we conclude with inequality \eqref{eqn:small_ball_mu_beta_like}.
\end{proof}
\smallskip

For the proof of \cref{statement:upper_bounds_alpha_like} we use the following lemma:
\smallskip

\begin{lemm}\label{statement:JN_bmo_new}
For $0\leqslant a \leqslant t \leqslant T$, $r,p\in (0,\infty)$, $Y\in \CL_0([0,t])$,
$\Phi\in \CL^+([0,t])$ with $\Phi_t^* \in L_p$,
and for $\lambda>0$ one has, a.s., 
\begin{multline}\label{eqn:statement:JN_bmo}
	\p_{\F_a}(|Y_t-Y_a| > \lambda) \\ \leqslant 
	c_\eqref{eqn:statement:JN_bmo}   \min \left \{ 
	\frac{\Phi_a^r}{\lambda^r} \| Y\|_{\bmo_r^\Phi([0,t])}^r ,
	\frac{\ce{\F_a}{\sup_{u\in [a,t]} \Phi_u^p}}{\lambda^p}  
	\left [\|Y\|_{\bmo_r^{\Phi}([0,t])}^p  + |\Delta Y|_{\Phi,[0,t]}^p\right ] \right \},
\end{multline}
where 
$|\Delta Y|_{\Phi,[0,t]} := \inf\{c>0 : |\Delta Y_u| \leqslant c \Phi_u \mbox{ for } u\in [0,t] \mbox{ a.s.}\}$
and $c_\eqref{eqn:statement:JN_bmo}>0$ depends at most on $(r,p)$.
\end{lemm}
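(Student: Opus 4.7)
The first bound in the minimum is a direct consequence of conditional Chebyshev together with the definition of $\bmo_r^\Phi$: one has
\[
  \p_{\F_a}\!\left(|Y_t-Y_a|>\lambda\right) \leqslant \lambda^{-r}\,\ce{\F_a}{|Y_t-Y_a|^r}
  \leqslant \lambda^{-r}\|Y\|_{\bmo_r^\Phi([0,t])}^r\,\Phi_a^r.
\]
This handles one side of the minimum with constant $1$, and also settles the case $p\leqslant r$ by Jensen and Chebyshev (absorbing $\Phi_a^r$ into $\ce{\F_a}{\sup_{u\in[a,t]}\Phi_u^p}^{r/p}$ via H\"older and monotonicity of $L_q$-norms in $q$). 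The plan is therefore to concentrate on the case $p>r$, where the second bound is the genuine John-Nirenberg type upgrade of moments.

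For the upgrade, the key step is an auxiliary inequality I would prove first: for some $c=c(r,p)>0$ and for every stopping time $\rho\in\cS_t$,
\begin{equation}\label{eq:plan:FS}
  \ce{\F_\rho}{|Y_t-Y_\rho|^p}
  \leqslant c\,\bigl(\|Y\|_{\bmo_r^\Phi([0,t])}^p + |\Delta Y|_{\Phi,[0,t]}^p\bigr)\,
            \ce{\F_\rho}{\sup_{u\in[\rho,t]}\Phi_u^p}.
\end{equation}
Once \eqref{eq:plan:FS} is established, the second half of \eqref{eqn:statement:JN_bmo} follows by taking $\rho=a$ and applying conditional Chebyshev.

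To prove \eqref{eq:plan:FS}, I would use a good-$\lambda$ stopping time iteration in the spirit of the martingale John-Nirenberg theorem, adapted to the weight $\Phi$. Write $K:=\|Y\|_{\bmo_r^\Phi([0,t])}+|\Delta Y|_{\Phi,[0,t]}$. Fix a level $\kappa>0$ (to be chosen large enough depending on $r$) and set $\tau_0:=\rho$ and, recursively,
\[
  \tau_{k}:=\inf\Bigl\{u\in[\tau_{k-1},t]: |Y_u-Y_{\tau_{k-1}}|>\kappa K\,\Phi_{\tau_{k-1}}\Bigr\}\wedge t.
\]
By the bmo$_r^\Phi$-property applied at $\tau_{k-1}$ and conditional Chebyshev,
\[
  \p_{\F_{\tau_{k-1}}}\!\bigl(\tau_k<t,\,|Y_{\tau_k}-Y_{\tau_{k-1}}|>\kappa K\Phi_{\tau_{k-1}}\bigr)
  \leqslant (\kappa K)^{-r}\|Y\|_{\bmo_r^\Phi}^r \leqslant \kappa^{-r};
\]
on the complement, the jump bound $|\Delta Y_{\tau_k}|\leqslant |\Delta Y|_{\Phi,[0,t]}\Phi_{\tau_k}$ together with the c\`adl\`ag definition of $\tau_k$ yields $|Y_{\tau_k}-Y_{\tau_{k-1}}|\leqslant \kappa K\Phi_{\tau_{k-1}}+K\Phi_{\tau_k}\leqslant (\kappa+1)K\,\sup_{u\in[\rho,t]}\Phi_u$. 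Choosing $\kappa=\kappa(r)$ with $\kappa^{-r}\leqslant 1/2$ and iterating gives the geometric estimate
\[
  \p_{\F_\rho}\!\bigl(|Y_t-Y_\rho|>n(\kappa+1)K\,\Phi^*_{\rho,t}\bigr)\leqslant 2^{-n},\qquad n\in\bN,
\]
where $\Phi^*_{\rho,t}:=\sup_{u\in[\rho,t]}\Phi_u$. Integrating this layer-cake estimate in $n$ against the $p$-th power then produces \eqref{eq:plan:FS}.

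The main obstacle is the careful bookkeeping at the stopping times $\tau_k$, where one must separate the ``jump part'' (controlled by $|\Delta Y|_{\Phi,[0,t]}$) from the ``continuous part'' (controlled by $\|Y\|_{\bmo_r^\Phi}$), and propagate the weight $\Phi_{\tau_{k-1}}$ through the iteration into a uniform $\Phi^*_{\rho,t}$. This is exactly why both $\|Y\|_{\bmo_r^\Phi}$ and $|\Delta Y|_{\Phi,[0,t]}$ appear in \eqref{eqn:statement:JN_bmo}, and why the right-hand side involves $\ce{\F_a}{\sup_{u\in[a,t]}\Phi_u^p}$ rather than $\Phi_a^p$.
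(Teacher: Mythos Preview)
Your first bound via conditional Chebyshev and the $\bmo_r^\Phi$ definition is exactly what the paper does.

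For the second bound the paper takes a much shorter route than your good-$\lambda$ iteration. It invokes two results already established in the appendix: first \cref{statement:relation_BMO_bmo}\eqref{item:1:relation-bmo}, which gives
\[
\|Y\|_{\BMO_r^{\Phi}([0,t])}\leqslant 2^{(\frac{1}{r}-1)^+}\bigl[\|Y\|_{\bmo_r^{\Phi}([0,t])}+|\Delta Y|_{\Phi,[0,t]}\bigr],
\]
and then \eqref{eqn:2:statement:Lp-BMO_new} of \cref{statement:Lp-BMO_new} (the weighted John--Nirenberg inequality, taken from \cite{Geiss:05}), which upgrades the $\BMO_r^\Phi$ control to
\[
\ce{\F_a}{|Y_t-Y_a|^p}\leqslant c(r,p)^p\,\|Y\|_{\BMO_r^\Phi([0,t])}^p\,\ce{\F_a}{\sup_{u\in[a,t]}\Phi_u^p}.
\]
A final Chebyshev gives the second term in the minimum. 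So the paper's proof is three lines of citation, and the jump term enters precisely through the $\bmo\to\BMO$ conversion.

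Your plan reproves the John--Nirenberg step from scratch. That is a legitimate alternative, but the Chebyshev step you write,
\[
\p_{\F_{\tau_{k-1}}}\!\bigl(\tau_k<t,\ |Y_{\tau_k}-Y_{\tau_{k-1}}|>\kappa K\Phi_{\tau_{k-1}}\bigr)\leqslant (\kappa K)^{-r}\|Y\|_{\bmo_r^\Phi}^r,
\]
is not justified as stated: the $\bmo_r^\Phi$ definition bounds $\ce{\F_\rho}{|Y_t-Y_\rho|^r}$ for a \emph{fixed} terminal time $t$ and a stopping time $\rho\leqslant t$, not $\ce{\F_{\tau_{k-1}}}{|Y_{\tau_k}-Y_{\tau_{k-1}}|^r}$ for the random exit time $\tau_k$. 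You are effectively trying to control the running supremum, which $\bmo_r^\Phi$ alone does not give for general c\`adl\`ag $Y$. The standard way to repair this is exactly to pass to $\BMO_r^\Phi$ (which conditions on $Y_{\rho-}$ and hence interacts correctly with first-exit times), and that is precisely why the paper converts to $\BMO$ before invoking John--Nirenberg. Your iteration can be made to work, but not without this extra ingredient; as written, the bookkeeping gap you flag at the end is in fact already present at the Chebyshev step.
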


\begin{proof}
	First we observe that
	\begin{equation}\label{eqn:tail_bmo}
		\p_{\F_a}(|Y_t-Y_a| > \lambda) \leqslant \frac{\Phi_a^r}{\lambda^r} \| Y\|_{\bmo_r^\Phi([0,t])}^r
		\mbox{ a.s.}
	\end{equation}
	Moreover, from \cref{statement:relation_BMO_bmo}\eqref{item:1:relation-bmo} we know that
	\begin{equation}\label{eqn:upper_bound_BMO}
		\| Y\|_{\BMO_r^{\Phi}([0,t])}\leqslant 2^{\left ( \frac{1}{r} - 1 \right )^+} 
		\left [ \|Y\|_{\bmo_r^{\Phi}([0,t])}  + |\Delta Y|_{\Phi,[0,t]}\right ]. 
	\end{equation}
	Using 
        \eqref{eqn:2:statement:Lp-BMO_new} of \cref{statement:Lp-BMO_new} this yields to
	\begin{equation*}
		(\ce{\F_a}{|Y_t-Y_a|^p})^\frac{1}{p}
		\leqslant c_{\eqref{eqn:2:statement:Lp-BMO_new}} \|Y\|_{\BMO_r^{\Phi}([0,t])} \left ( \ce{\F_a}{\sup_{u\in [a,t]} \Phi_u^p}\right )^\frac{1}{p}
		\mbox{ a.s.}
	\end{equation*}
	which implies 
	\begin{equation}\label{eqn:tail_BMO}
		\p_{\F_a}(|Y_t-Y_a| > \lambda) \leqslant \frac{c_{\eqref{eqn:2:statement:Lp-BMO_new}}^p}{\lambda^p}  \|Y\|_{\BMO_r^{\Phi}([0,t])}^p 
		\ce{\F_a}{\sup_{u\in [a,t]} \Phi_u^p} \mbox{ a.s.}
	\end{equation}
	Combining \eqref{eqn:tail_bmo}, \eqref{eqn:tail_BMO}, and \eqref{eqn:upper_bound_BMO}
	implies our statement.
\end{proof}
\bigskip

\begin{proof}[Proof of \cref{statement:upper_bounds_alpha_like}]
Since $\mu$ is a finite measure, $D\in  L_2(\R,\mu)$ implies
$D\in  L_1(\R,\mu)$, so that our assumptions guarantee that 
$\vvvert D \vvvert_{(1,r')}<\infty$. Moreover,
we may assume that $\vvvert D \vvvert_{(1,r')}>0$, otherwise there is nothing to prove
because $h_{n-1}^D\equiv 0$ for $n\in \bN$.
We decompose $D=D^+-D^-$ with $D^+ := D \vee 0$ and $D^- := (-D) \vee 0$
and define 
\[  \od \rho^+ := \frac{D^+ \od \mu}{\int_\R D^+ \od \mu}
    \sptext{1}{and}{1}
    \od \rho^- := \frac{D^- \od \mu}{\int_\R D^- \od \mu} \]
if $\int_\R D^+ \od \mu >0$ or (and) $\int_\R D^- \od \mu >0$,
where we leave out here and in the following the corresponding terms
$(D^+,\rho^+)$ or $(D^-,\rho^-)$ if $\rho^+$ or $\rho^-$ is not defined.
Directly from the definition we get that
\begin{equation}\label{eqn:representation_psi_t}
    \psi_t(f(X_T),D) 
  = \psi_t(f(X_T),D^+) - \psi_t(f(X_T),D^-) \sptext{.7}{for}{.7} t\in [0,T)  \mbox{ a.s.},
\end{equation}
\cref{statement:D_to_rho} implies that
\[ \rho^\pm \in U(\varepsilon; c^\pm )
\sptext{1}{with}{1}
c^{\pm}:= \sqrt[r]{c_\eqref{eqn:small_ball_mu_beta_like}} 
\frac{\| D \|_{L_{r'}([-1,1],\mu)}}{\| D^\pm \|_{L_1(\R,\mu)}}.
\]
Moreover, from \cref{statement:U-L-for-beta-like} we know that there is a constant $c'>0$ such that 
\[ X \in \cU(\beta,\R;c').\]
\eqref{item:0:statement:space_time_approximation_random_measure}
As our assumptions allow us to apply \cref{statement:upper_bounds_Drho},
we obtain $\hoel \eta \subseteq \dom(\Gamma_\rho^0)$.
If we  define
\begin{equation}\label{eqn:definition_G}
G(t,x):=    \| D^+ \|_{L_1(\R,\mu)} D_{\rho^+}F(t,x) - \| D^- \|_{L_1(\R,\mu)} D_{\rho^-}F(t,x),
\end{equation}
then
\[ G(t,X_t) = \psi_t(f(X_T),D) \mbox{ a.s.}\]
by \cref{statement:expression_for_integrand} and \eqref{eqn:representation_psi_t}.
Moreover, we have 
\begin{equation}\label{eqn:Dpm_cpm_vs_D1r}
	 \| D^\pm \|_{L_1(\R,\mu)} \, (1\vee c^\pm)
 =  \| D^\pm \|_{L_1(\R,\mu)} \, \left (1\vee \left (     
     \sqrt[r]{c_\eqref{eqn:small_ball_mu_beta_like}} 
     \frac{\| D \|_{L_{r'}([-1,1],\mu)}}{\| D^\pm \|_{L_1(\R,\mu)}} \right ) \right ) 
\leqslant  \left( 1\vee \sqrt[r]{c_\eqref{eqn:small_ball_mu_beta_like}}\right )
         \vvvert D \vvvert_{(1,r')}
\end{equation}
so that \eqref{item:0:statement:space_time_approximation_random_measure}
follows by 
\begin{align*}
           \| G(t,\cdot)\|_{B_b(\R)}
&\leqslant \| D^+ \|_{L_1(\R,\mu)} \| D_{\rho^+}F(t,\cdot)\|_{B_b(\R)} + \| D^- \|_{L_1(\R,\mu)} \| D_{\rho^-}F(t,\cdot)\|_{B_b(\R)} \\
&\leqslant |f|_\eta c_{\eqref{eqn:item:1:statement:upper_bounds_Drho}} 
           \left [   \| D^+ \|_{L_1(\R,\mu)} (1\vee c^+)  +   \| D^- \|_{L_1(\R,\mu)} (1\vee c^-)  \right ] A(t) \\
&\leqslant |f|_\eta c_{\eqref{eqn:item:1:statement:upper_bounds_Drho}}\, \Big [ 2 \left( 1\vee \sqrt[r]{c_\eqref{eqn:small_ball_mu_beta_like}}\right ) 
         \vvvert D \vvvert_{(1,r')} \Big ] \, A(t)
\end{align*}
where $A(t)$ is the factor in $t$ on the right-hand side of inequality \eqref{eqn:item:1:statement:upper_bounds_Drho},
and where we use  
\cref{statement:upper_bounds_Drho}\eqref{item:1:statement:upper_bounds_Drho},
\cref{statement:Bb_vs_computation_in_zero}, and
\cref{def:Gamma_functional_semi-norm}.
So, if
$   c_{\eqref{eq:stronger_item:0:statement:upper_bounds_alpha_like}}
 := 2 c_{\eqref{eqn:item:1:statement:upper_bounds_Drho}} \left( 1\vee \sqrt[r]{c_\eqref{eqn:small_ball_mu_beta_like}}\right )$,
then 
\begin{equation}\label{eq:stronger_item:0:statement:upper_bounds_alpha_like}
\| G(t,\cdot)\|_{B_b(\R)}
   \leqslant c_{\eqref{eq:stronger_item:0:statement:upper_bounds_alpha_like}}
          |f|_\eta  \vvvert D \vvvert_{(1,r')} \, A(t).
\end{equation}
\medskip
\eqref{item:1:statement:upper_bounds_alpha_like}
We use \cref{statement:upper_bounds_Drho}\eqref{item:2:statement:upper_bounds_Drho}
(observe that $\beta\in (2-r,2)$ implies that $\varepsilon \in (0,1)$)
and \eqref{eqn:Dpm_cpm_vs_D1r} to get
 \begin{align*}
  \| D^\pm \|_{L_1(\R,\mu)} \| D_{\rho^\pm} F\|_{\B_{b,q}^\alpha}
& \leqslant  c_\eqref{eqn:item:2:statement:upper_bounds_Drho} \| D^\pm \|_{L_1(\R,\mu)} 
            \, (1\vee c^\pm) \, \| f\|_{\hoelO{\eta,q}} \\
&\leqslant c_\eqref{eqn:item:2:statement:upper_bounds_Drho}
           \left ( 1\vee \sqrt[r]{c_\eqref{eqn:small_ball_mu_beta_like}}\right ) 
           \,  \vvvert D \vvvert_{(1,r')}
           \, \| f\|_{\hoelO{\eta,q}}
\end{align*}
which implies, with 
$   c_\eqref{eqn:Bbqalpha_Hoelder}
 := 2 c_\eqref{eqn:item:2:statement:upper_bounds_Drho}
    \left ( 1\vee \sqrt[r]{c_\eqref{eqn:small_ball_mu_beta_like}}\right )$,
\begin{align}
	       \| \psi(f(X_T),D)\|_{\B_{\infty,q}^\alpha} 
&\leqslant \| D^+ \|_{L_1(\R,\mu)}  \| D_{\rho^+}F(t,\cdot)\|_{\B_{b,q}^\alpha} + \| D^- \|_{L_1(\R,\mu)} \| D_{\rho^-}F(t,\cdot)\|_{\B_{b,q}^\alpha}
           \label{eqn:sum_for_psi} \\ 
&\leqslant  c_\eqref{eqn:Bbqalpha_Hoelder} \vvvert D \vvvert_{(1,r')} \, \| f\|_{\hoelO{\eta,q}}.\label{eqn:Bbqalpha_Hoelder}
\end{align}
\eqref{item:2:statement:upper_bounds_alpha_like}
We continue from \eqref{item:1:statement:upper_bounds_alpha_like}, 
but fix $q=2$ in the following. From \cref{eqn:E_t_vs_squarefunction} and \cref{statement:intsq_vs_Binfty2} we obtain
\begin{align*}
    \left \| E(f;\tau,D) \right \|_{\bmo_2([0,T))} 
& =  \| D \|_{L_2(\R,\mu)} 
	\sqrt{\left \| \intsq{\psi(f(X_T),D)}{\tau}\right \|_{\BMO_1([0,T))}}\\
&\leqslant \| D \|_{L_2(\R,\mu)} 	\sqrt{c_\eqref{eqn:statement:intsq_vs_Binfty2}}
	\sqrt{\| \tau\|_\theta} \,\, \| \psi(f(X_T),D)- \psi_0(f(X_T),D) \|_{\B_{\infty,2}^\alpha} \\
&\leqslant 2 \| D \|_{L_2(\R,\mu)} 	\sqrt{c_\eqref{eqn:statement:intsq_vs_Binfty2}}
\sqrt{\| \tau\|_\theta} \,\, \| \psi(f(X_T),D) \|_{\B_{\infty,2}^\alpha}.
\end{align*}
Combining this with \eqref{eqn:Bbqalpha_Hoelder} yields 
\begin{equation}\label{eqn:error_vs_Hoelder}
	            \left \| E(f;\tau,D) \right \|_{\bmo_2([0,T))}
   \leqslant c_\eqref{eqn:error_vs_Hoelder}  \| D \|_{L_2(\R,\mu)} \,
             \vvvert D \vvvert_{(1,r')}
             \sqrt{\| \tau\|_\theta} \,\,   \, \| f\|_{\hoelO{\eta,2}}
\end{equation}
with
$c_\eqref{eqn:error_vs_Hoelder}:= 2 \sqrt{c_\eqref{eqn:statement:intsq_vs_Binfty2}} 
 c_\eqref{eqn:Bbqalpha_Hoelder}$.

\eqref{item:3:statement:upper_bounds_alpha_like}
Using $\| \tau_n^\theta \|_{\theta} \leqslant \frac{T^\theta}{\theta n}$
from \eqref{eqn:upper_bound_adapted_nets}, part 
\eqref{item:2:statement:upper_bounds_alpha_like} implies
\begin{equation}\label{eqn:upper_bound_bmo_error_process}
   \left \| E(f; \tau_n^\theta,D) \right \|_{\bmo_2([0,T))} 
   \leqslant  c_\eqref{eqn:error_vs_Hoelder}  \| D \|_{L_2(\R,\mu)} \, 
              \vvvert D \vvvert_{(1,r')} \sqrt{\frac{T^\theta}{\theta n}} \,\, \| f \|_{\hoelO {\eta,2}}.
\end{equation}
Moreover, for $s\in [0,T)$ and $G$ defined in \eqref{eqn:definition_G} we have 
$\psi_t(f(X_T),D)=G(t,X_t)$ a.s. and
\begin{align*}
            \| G (t,\cdot)\|_{B_b(\R)}
&\leqslant \| D^+ \|_{L_1(\R,\mu)}  \| D_{\rho^+}F(t,\cdot)\|_{B_b(\R)} + \| D^- \|_{L_1(\R,\mu)} \| D_{\rho^-}F(t,\cdot)\|_{B_b(\R)}\\ 
&\leqslant \sqrt{2\alpha} c_\eqref{eqn:Bbqalpha_Hoelder} (T-t)^{-\alpha}
           \vvvert D \vvvert_{(1,r')} \, \| f\|_{\hoelO{\eta,2}}
\end{align*}
where we first use \eqref{eqn:relations_between_Bpqalpha} for $\B_{b,2}^\alpha$ 
on the right-hand side of \eqref{eqn:sum_for_psi} and then apply \eqref{eqn:Bbqalpha_Hoelder}.
We derive
\begin{align*}
           |\Delta E_s(f;\tau_n^\theta,D)| 
&\leqslant 2 \left (\sqrt{2\alpha} c_\eqref{eqn:Bbqalpha_Hoelder} (T-s)^{-\alpha} \vvvert D \vvvert_{(1,r')}
           \| f \|_{\hoelO{\eta,2}}
           \right ) |\Delta X_s^D| \\
&\leqslant 2 \left (\sqrt{2\alpha} c_\eqref{eqn:Bbqalpha_Hoelder} (T-s)^{-\alpha}  \vvvert D \vvvert_{(1,r')} 
           \| f \|_{\hoelO{\eta,2}} \right ) \Phi_s 
\end{align*}           
for $s\in [0,t]$ a.s. by \cite[Theorem IX.9.3]{He:Wang:Yan:92} and our assumption on $\Phi_s$.
Finally we conclude by \cref{statement:JN_bmo_new} with $r=2$, where we use $\Phi'_s\equiv 1$ for the
first term in the minimum and $\Phi$ as defined in item 
\eqref{item:2:statement:upper_bounds_alpha_like} for the second term in the minimum.
\end{proof}

\begin{proof}[Proof of \cref{statement:lower_bounds_alpha_like}]
Define
\[ D(z) := |z|^{-\frac{2-\beta}{r'}} \1_{\{ 0<|z|\leqslant 1 \}}. \]
For $r=1$ we have $D\in L_\infty([-1,1],\mu)$ by definition, for
$r\in (1,2)$ we obtain for $\lambda \geqslant 1$ that 
\begin{align*}
     \mu \left ( \left \{ z \in [-1,1] : D(z) \geqslant \lambda \right \} \right )
& =  \mu \left ( \left \{ z \in [-1,1]\setminus \{0\} : |z|^{-\frac{2-\beta}{r'}} \geqslant \lambda \right \} \right ) \\
& =  \mu \left ( \left [-\lambda^{-\frac{r'}{2-\beta}},\lambda^{-\frac{r'}{2-\beta}} \right ] \right ) \\
&\leqslant c_\eqref{eqn:small_ball_mu_beta_like}  \left ( \lambda^{-\frac{r'}{2-\beta}} \right )^{2-\beta} 
  =  c_\eqref{eqn:small_ball_mu_beta_like}  \lambda^{-r'}
\end{align*}
where we used the inequality \eqref{eqn:small_ball_mu_beta_like}.
As $\mu$ is a finite measure,  by replacing $c_\eqref{eqn:small_ball_mu_beta_like}$ by another constant  
we get the same inequality for $\lambda \in (0,1)$.
In both cases we have, for $d \in (0,1]$, that
\[   \int_{-d}^d D(z) |z|^2 |z|^{-1-\beta} \od z
   = 2  \int_0^d z^{\varepsilon -1} \od z 
   = \frac{2}{\varepsilon} d^{\varepsilon}. \]
Therefore we get $\rho\in \left (\bigcup_{c>0} U(\varepsilon;c)\right )\cap L(\varepsilon)$
with $\od \rho := \frac{D\od \mu}{\|D\|_{L_1(\R,\mu)}}$. Moreover, it holds 
$X \in \left ( \bigcup_{c>0} \cU(\beta,\R;c) \right ) \cap \cL(\beta)$ by
\cref{statement:U-L-for-beta-like} and $\| X_s\|_{{\rm TV}(\rho,\eta)}< \infty$ for
$s\in (0,T]$ by  \cref{statement:X-rho_to_TV}\eqref{item:1:statement:X-rho_to_TV}.
Now \cref{statement:lower_bound_complete}\eqref{item:1:statement:lower_bound_complete}
yields
\[ \inf_{t\in (0,T)} (T-t)^{\alpha} \losc_t(\varphi)>0 \]
which proves part \eqref{item:1:statement:lower_bounds_alpha_like}.
Because of $\alpha=\frac{1-\theta}{2}$ \cref{thm:general_lower_bound} implies \eqref{eqn:item:1:statement:lower_bounds_alpha_like}, 
and part \eqref{item:2:statement:lower_bounds_alpha_like} follows from \eqref{eqn:item:1:statement:lower_bounds_alpha_like}.
\end{proof}


\subsection{Orthogonal decomposition of H\"older functionals on the L\'evy-It\^o space}
\label{sec:orthogonal_decomposition_Levy-Ito}

We close with an orthogonal decomposition of the L\'evy-It\^o-space into 
stochastic integrals, where we have the necessary control on the integrands
in order to apply our approximation techniques.
This might be not only of interest in connection to approximation problems, but of general interest, as we obtain 
significantly better bounds as for the corresponding integrands in Clark-Ocone 
type formulas. To illustrate this, assume $\sigma=0$. In this case, under moment 
conditions, a predictable version of 
\[ \frac{F(t,X_t+z)-F(t,X_t)}{z} \]
is used as integrand against the random measure $z \widetilde{N}(\od t,\od z)$
(for recent results and corresponding references the reader is referred to 
\cite{Nguyen:20b}).
Starting with $f(X_T)$, where $f$ is H\"older continues with $\eta\in (0,1)$,
the $L_\infty$-singularity of these integrands is arbitrary close to the order $(T-t)^{-\frac{1}{\beta}}$ according to 
\cref{statement:lower_bound_complete}\eqref{item:1:statement:lower_bound_complete}.
As $-\frac{1}{\beta}<-\frac{1}{2}$ because of $\beta\in (0,2)$, we might
get singularities that are stronger than $\frac{1}{2}$.

The order of integrands, we obtain in \cref{statement:space_time_approximation_random_measure} below, is
$(T-t)^{-\alpha}$ with  $\alpha= \frac{1}{2}-\frac{\eta}{\beta}<1/2$ whenever $\eta>0$.
For example, we get 
$\alpha:= \frac{1}{2}-\frac{\eta}{\beta} \in \left (0,\frac{1}{2} \right )$
if $\eta \in (0,\frac{\beta}{2})$. 
The reason for this is the averaging procedure 
by the operators $D_\rho F$ from 
\cref{sec:application_levy_case} and the crucial
estimate in \cref{statement:D_to_rho} which restricts the measures 
$\rho$ in contrast to \cref{sec:application_levy_case}.

\begin{theo}
\label{statement:space_time_approximation_random_measure}
Let 
$\beta \in (0,2)$, 
$\eta\in [0,1]$,
$\alpha:= \frac{1}{2}-\frac{\eta}{\beta}$,
and let $(D_j)_{j\in J}\subseteq L_2(\R,\mu)$ be an orthonormal basis.
Then, for $f\in \hoel \eta$, one has the orthogonal decomposition 
\[ f(X_T) = \E f(X_T) + \sum_{j\in J} \int_{(0,T)} \psi_{t-}(f(X_T),D_j) \od X^{D_j}_t \quad\mbox{a.s.}\]
and the decomposition satisfies:
\begin{enumerate}[{\rm\bf (1)}]
	
\item \label{item:0:statement:space_time_approximation_random_measure}	
      There is a $c>0$ such that, for all $j\in J$ and $f \in \hoel{\eta}$,
      \[ \|\psi_t(f(X_T),D_j)\|_{L_\infty}
	 \leqslant  c \,\, |f|_\eta \,\, 
		\begin{cases}
			(T-t)^{-\alpha}            &:  \eta \in [0,\frac{\beta}{2}) \\
			\ln \left ( 1+(T-t)^{-\frac{1-\eta}{\beta}} \right ) &:  \eta = \frac{\beta}{2}  \\
			1                                                      &:  \eta \in (\frac{\beta}{2},1]
		\end{cases}. \]
		\bigskip
		
		\item Assume additionally
		 $\eta\in (0,\frac{\beta}{2})$, 
	     $\theta:=\frac{2}{\beta} \eta$, and
	     $q\in [1,\infty)$.
		\bigskip
		\begin{enumerate}[{\rm\bf (a)}]
			\item \label{item:1:statement:space_time_approximation_random_measure}
			{\sc Singularity of the gradient:} There is a $c>0$ such that, for 
			all $j\in J$ and $f \in \hoel{\eta,q}$,
	\[ \|   \psi(f(X_T),D_j) \|_{\B_{\infty,q}^\alpha} 
	\leqslant  c \, \| f\|_{\hoelO{\eta,q}}. \]
	\vspace*{-.4em}
		
	\item  \label{item:2:statement:space_time_approximation_random_measure}
	{\sc Approximation:}  
	There is a $c>0$ such that,
	for all $j\in J$, $f \in \hoel{\eta,2}$ and $\tau\in \cT$,
	\[
	\left \| E(f; \tau,D_j)        \right \|_{\bmo_2([0,T))} 
	\leqslant c \,
	\sqrt{\| \tau \|_{\theta}} \,\, \| f \|_{\hoelO {\eta,2}}.
	\]\
				
	\end{enumerate}
	\end{enumerate}
The constants $c>0$ in 
\eqref{item:0:statement:space_time_approximation_random_measure},
\eqref{item:1:statement:space_time_approximation_random_measure}, and
\eqref{item:2:statement:space_time_approximation_random_measure}
do not depend on the choice of the orthogonal basis $(D_j)_{j\in J}$.

\end{theo}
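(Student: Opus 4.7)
The plan is to reduce Theorem \ref{statement:space_time_approximation_random_measure} to Theorem \ref{statement:upper_bounds_alpha_like} applied with $r=r'=2$, where the orthonormality of $(D_j)_{j\in J}$ supplies uniform control over all parameters appearing in Theorem \ref{statement:upper_bounds_alpha_like}.

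First I would establish the orthogonal decomposition. Since $\sigma=0$ and $X_T\in L_2$ with $f\in\hoel{\eta}$ for some $\eta\in[0,1]$, the linear-growth estimate $|f(x)|\leq |f(0)|+|f|_\eta|x|^\eta$ together with $\eta\leq 1$ yields $f(X_T)\in L_2$. Hence \eqref{eqn:orthogonal_decomposition_LI_space} from \cref{sec:GKW_projection_general} applies and gives the claimed $L_2$-orthogonal decomposition with the integrands $\psi_{t-}(f(X_T),D_j)$ identified, via \cref{statement:expression_for_integrand}, with the c\`adl\`ag processes constructed from the $D_\rho F$-operators of \cref{sec:application_levy_case}.

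Next I would specialize \cref{statement:upper_bounds_alpha_like} with $r=r'=2$: this gives $\varepsilon=(2-\beta)/2\in(0,1)$, $\alpha=(1-(\varepsilon+\eta))/\beta=\tfrac12-\tfrac{\eta}{\beta}$, and $\theta=1-2\alpha=\tfrac{2\eta}{\beta}$, matching all the exponents appearing in the statement. The regimes $\eta\in[0,1-\varepsilon)=[0,\beta/2)$, $\eta=1-\varepsilon=\beta/2$, and $\eta\in(\beta/2,1]$ produce precisely the three cases in \eqref{item:0:statement:space_time_approximation_random_measure}, and the range $\eta\in(0,\beta/2)$ needed in (2a) and (2b) coincides with the range $\eta\in(0,1-\varepsilon)$ required in the second half of \cref{statement:upper_bounds_alpha_like}. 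The condition $\beta\in(2-r,2)=(0,2)$ needed in parts (2a)--(2b) is automatic.

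The decisive step, and the one that requires the basis hypothesis, is showing that the constants obtained from \cref{statement:upper_bounds_alpha_like} are uniform in $j$. Here I use the finiteness $\mu(\R)=\int_\R z^2\nu(\od z)<\infty$, which follows from $X_T\in L_2$ and $\sigma=0$. For any $D_j$ in an orthonormal system of $L_2(\R,\mu)$ one has $\|D_j\|_{L_2(\R,\mu)}=1$, hence by the Cauchy-Schwarz inequality
\[
 \|D_j\|_{L_1(\R,\mu)}\leqslant \mu(\R)^{1/2},\qquad \|D_j\|_{L_2([-1,1],\mu)}\leqslant 1,
\]
and consequently $\vvvert D_j\vvvert_{(1,2)}\leqslant \mu(\R)^{1/2}\vee 1$, uniformly in $j\in J$. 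Plugging these bounds into \cref{statement:upper_bounds_alpha_like}\eqref{item:0:statement:space_time_approximation_random_measure} yields (1) of Theorem \ref{statement:space_time_approximation_random_measure}; plugging them into \eqref{item:1:statement:space_time_approximation_random_measure} yields (2a); and plugging the product $\|D_j\|_{L_2(\R,\mu)}\vvvert D_j\vvvert_{(1,2)}\leqslant \mu(\R)^{1/2}\vee 1$ into \eqref{item:2:statement:space_time_approximation_random_measure} yields (2b).

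The main obstacle I anticipate is a bookkeeping one rather than a conceptual one: since $D_j$ is in general sign-changing, one must invoke the splitting $D_j=D_j^+-D_j^-$ used in the proof of \cref{statement:upper_bounds_alpha_like} and verify that the combined constant stays controlled by $\vvvert D_j\vvvert_{(1,r')}$. This is exactly the content of inequality \eqref{eqn:Dpm_cpm_vs_D1r} in the proof of \cref{statement:upper_bounds_alpha_like}, and no additional work is required once $r=2$ is fixed and the uniform bound $\vvvert D_j\vvvert_{(1,2)}\leqslant\mu(\R)^{1/2}\vee 1$ is in hand. In particular the constants $c>0$ in (1), (2a), (2b) depend only on $(\beta,\eta,q,T,\mu(\R))$ and are independent of the orthonormal basis, as required.
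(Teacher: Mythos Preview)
Your proposal is correct and follows essentially the same route as the paper: invoke the orthogonal decomposition \eqref{eqn:orthogonal_decomposition_LI_space}, apply \cref{statement:upper_bounds_alpha_like} with $r=r'=2$, and use orthonormality together with $\mu(\R)<\infty$ to get the uniform bound $\vvvert D_j\vvvert_{(1,2)}\leqslant \mu(\R)^{1/2}\vee 1$. The paper's proof is a terse three-line version of exactly this argument; your additional remarks on the parameter matching and the sign-splitting are accurate but already absorbed into the statement and proof of \cref{statement:upper_bounds_alpha_like}.
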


\begin{proof}
The result follows from \eqref{eqn:orthogonal_decomposition_LI_space},
\cref{statement:upper_bounds_alpha_like} for $r=2$, and
\[  \vvvert D_j \vvvert_{(1,2)} 
    \leqslant \left ( \mu(\R)^\frac{1}{2} \vee 1 \right ) \| D_j \|_{L_2(\R,\mu)}
       = \left ( \mu(\R)^\frac{1}{2} \vee 1 \right ). \qedhere \]
\end{proof}


\appendix
\addtocontents{toc}{\protect\setcounter{tocdepth}{0}}


\section{The class $\cSM_p(\timed)$ and BMO-spaces}
\label{sec:general_properties_BMO}
We summarize some basic facts about the class $\cSM_p(\timed)$ and BMO-spaces that are used in the article.
Regarding the classical non-weighted BMO-spaces, i.e. $\Phi_t\equiv 1$ for $t\in \timed$,
the reader is also referred to \cite[Section X.1]{He:Wang:Yan:92}.
For this we assume a stochastic basis 
$(\Omega,\F,\p,(\F_t)_{t\in [0,T]})$ with $T\in (0,\infty)$ such that 
$(\Omega,\F,\p)$ is complete, $\F_0$ contains all null-sets, and such that $\F_t=\bigcap_{s\in (t,T]} \F_s$ for 
all $t\in [0,T)$. We do not assume that $\F_0$ is generated by the null-sets only.
In the computations below we exploit the following fact:
given stopping times $\sigma,\tau:\Omega\to \timed$ and an integrable random variable $Z:\Omega\to\R$,
we have $\{\sigma=\tau\}\in \F_{\sigma\wedge \tau}$ and
\[   \ce{\F_\sigma}{\1_{\{\sigma=\tau\}} Z} 
   = \ce{\F_{\sigma\wedge \tau}}{\1_{\{\sigma=\tau\}}Z} 
   = \ce{\F_\tau}{\1_{\{\sigma=\tau\}} Z} \mbox{ a.s.} \]
Moreover, we again use $\inf \emptyset := \infty$.

\subsection{Properties of the class $\cSM_p$}
We start by a convenient reduction.
Since $\F_0$ does not need to be trivial we add the assumption 
$\Phi_0\in L_p$ to the definition of
$\cSM_p(\timed)$ in \cref{def:SM_p}.

\begin{prop}
\label{statement:smp_determinstic}
For $p\in (0,\infty)$ and $\Phi\in \CL^+(\timed)$ with $\Phi_0\in L_p$ one has 
$|\Phi|_{\cSM_p(\timed)} = \|\Phi\|_{\cSM_p(\timed)}$,
where 
$| \Phi|_{\cSM_p(\timed)} := \inf c$ is the infimum over 
$c \in [1, \infty)$ such that for all $a\in \timed$ one has
\[ \ce{\F_a}{\sup_{a \leqslant t \in \timed} \Phi_t^p} \leqslant c^p \Phi_a^p \quad \mbox{a.s.} \]
\end{prop}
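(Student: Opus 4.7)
The inequality $|\Phi|_{\cSM_p(\timed)} \leqslant \|\Phi\|_{\cSM_p(\timed)}$ is immediate, since every constant $a \in \timed$ is a stopping time. So the substance is the reverse direction: assuming $|\Phi|_{\cSM_p(\timed)} \leqslant c$, I would show $\ce{\F_\rho}{\sup_{\rho \leqslant t \in \timed} \Phi_t^p} \leqslant c^p \Phi_\rho^p$ a.s.\ for every stopping time $\rho \colon \Omega \to \timed$. The plan is first to handle stopping times $\rho$ taking only finitely many values, and then to approximate a general $\rho$ from above by such discrete stopping times, using the c\`adl\`ag property of $\Phi$ to pass to the limit.

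For a \emph{discrete} stopping time $\rho$ with values $t_1 < \cdots < t_N$ in $\timed$, the events $\{\rho = t_i\}$ belong to $\F_{t_i} \cap \F_\rho$, so $\1_{\{\rho = t_i\}} \ce{\F_\rho}{X} = \1_{\{\rho = t_i\}} \ce{\F_{t_i}}{X}$ a.s.\ for any integrable $X$. Applied to $X = \sup_{t_i \leqslant t \in \timed} \Phi_t^p \geqslant \sup_{\rho \leqslant t \in \timed} \Phi_t^p \cdot \1_{\{\rho=t_i\}}$ and combined with the deterministic hypothesis at each $t_i$, summing over $i$ gives
\[
   \ce{\F_\rho}{\sup_{\rho \leqslant t \in \timed} \Phi_t^p}
   = \sum_{i=1}^N \1_{\{\rho=t_i\}} \ce{\F_{t_i}}{\sup_{t_i \leqslant t \in \timed} \Phi_t^p}
   \leqslant c^p \sum_{i=1}^N \1_{\{\rho=t_i\}} \Phi_{t_i}^p
   = c^p \Phi_\rho^p \mbox{ a.s.}
\]
Note that the deterministic case at $a = 0$, together with $\Phi_0 \in L_p$, ensures $\E \sup_{t \in \timed} \Phi_t^p \leqslant c^p \E \Phi_0^p < \infty$, which will be needed for the next step.

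For a general stopping time $\rho \colon \Omega \to \timed$, I would approximate $\rho$ from above by dyadic discrete stopping times $\rho_n$ taking values in $\timed$ with $\rho_n \downarrow \rho$ (with a slight truncation when $\timed = [0,T)$ to ensure $\rho_n < T$). The discrete case applied to each $\rho_n$ yields
\[
   \ce{\F_{\rho_n}}{\sup_{\rho_n \leqslant t \in \timed} \Phi_t^p} \leqslant c^p \Phi_{\rho_n}^p \mbox{ a.s.}
\]
Taking $\ce{\F_\rho}{\cdot}$ of both sides (legitimate since $\F_\rho \subseteq \F_{\rho_n}$), the right-hand side converges a.s.\ to $c^p \Phi_\rho^p$ by the c\`adl\`ag property of $\Phi$, and conditional dominated convergence applies because $\Phi_{\rho_n}^p \leqslant \sup_{t \in \timed} \Phi_t^p \in L_1$. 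On the left-hand side, the sets $[\rho_n, T] \cap \timed$ increase and, by right-continuity of $\Phi$, $\sup_{\rho_n \leqslant t \in \timed} \Phi_t^p \uparrow \sup_{\rho \leqslant t \in \timed} \Phi_t^p$ a.s.; conditional monotone convergence then gives the desired inequality.

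The main obstacle is purely the limiting step in the third paragraph, ensuring that the approximating suprema genuinely recover $\sup_{\rho \leqslant t \in \timed} \Phi_t^p$. This relies crucially on the c\`adl\`ag assumption $\Phi \in \CL^+(\timed)$: the value $\Phi_\rho$ is captured as a right-limit along any sequence $t_k \downarrow \rho$ with $t_k > \rho$, so no mass is lost at the endpoint. The integrability condition $\Phi_0 \in L_p$ (which, together with the deterministic case at $a = 0$, yields $\sup_{t \in \timed} \Phi_t^p \in L_1$) is precisely what is needed to activate dominated convergence on the right-hand side; without it, one could only conclude a weaker (non-a.s.) statement.
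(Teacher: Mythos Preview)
Your approach is correct and essentially identical to the paper's: approximate $\rho$ from above by discretely-valued stopping times, verify the inequality on each level set via the deterministic hypothesis, condition on $\F_\rho$, and pass to the limit using monotone convergence on the left and dominated convergence on the right (with $\sup_{t\in\timed}\Phi_t^p\in L_1$ as the dominating variable). One small refinement worth noting: when $\timed=[0,T)$ and $\esssup_\omega\rho(\omega)=T$, no \emph{finitely}-valued stopping time in $[0,T)$ can dominate $\rho$, so your ``slight truncation'' cannot work as stated; the paper sidesteps this by reparametrizing via $h(t)=\frac{1}{T-t}-\frac{1}{T}$ and discretizing dyadically in $[0,\infty)$, which yields approximants $\rho^N\downarrow\rho$ with \emph{countably} many values in $[0,T)$ --- your level-set computation carries over verbatim to that setting.
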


\begin{proof}
It is clear that $|\Phi|_{\cSM_p(\timed)} \leqslant \|\Phi\|_{\cSM_p(\timed)}$, so that we assume that 
$c:=|\Phi|_{\cSM_p(\timed)} <\infty$. Let $\rho :\Omega \to \timed$ be a stopping time,
$h:[0,T)\to [0,\infty)$ be given by $h(t):= \frac{1}{T-t} - \frac{1}{T}$. For $k,N\in \bN_0$ set
\[ [a_k^N,b_k^N) := h^{-1} \left ( \left [ \frac{k}{2^N},\frac{k+1}{2^N} \right ) \right ) \subseteq [0,T)
   \sptext{1}{and let}{1}
   H^N(t) := \1_{\{T\}}(t) T + \sum_{k=0}^\infty \1_{[a_k^N,b_k^N)}(t) b_k^N. \]
Then $H^N(t) \downarrow t$ for all $t\in [0,T]$ and 
$\rho^N:= H^N(\rho):\Omega \to \timed$ is a stopping time as well.
Then, a.s.,
\begin{align*}
\ce{\F_{\rho^N}}{\sup_{\rho^N \leqslant t \in \timed} \Phi_t^p}
& = \ce{\F_{\rho^N}}{\1_{\{\rho^N=T\}} \Phi_T^p} + \sum_{k=0}^\infty \ce{\F_{\rho^N}}{\1_{\{ \rho^N = b_k^N\}} \sup_{b_k^N \leqslant t \in \timed} \Phi_t^p}\\
& = \1_{\{\rho^N=T\}} \Phi_T^p + \sum_{k=0}^\infty \1_{\{ \rho^N = b_k^N\}}  \ce{\F_{b_k^N}}{\1_{\{ \rho^N = b_k^N\}} \sup_{b_k^N \leqslant t \in \timed} \Phi_t^p}\\
&\leqslant  \1_{\{\rho^N=T\}} \Phi_T^p + \sum_{k=0}^\infty \1_{\{ \rho^N = b_k^N\}}  c^p \Phi_{b_k^N}^p \\
&\leqslant  c^p \Phi_{\rho^N}^p
\end{align*}
where we omit $\1_{\{\rho^N=T\}} \Phi_T^p$ if $\timed=[0,T)$.
This implies that 
$\ce{\F_{\rho}}{\sup_{\rho^N \leqslant t \in \timed} \Phi_t^p} \leqslant c^p \ce{\F_\rho}{\Phi_{\rho^N}^p}$ a.s.
By $N\to \infty$, monotone convergence on the left-hand side and because $\Phi$ is c\`adl\`ag, and dominated convergence on the right-hand side
($\Phi$ is c\`adl\`ag and $\E \sup_{t\in \timed} \Phi_t^p<\infty$) we obtain the assertion.
\end{proof}

We continue with structural properties of the class $\cSM_p$:

\begin{prop}
\label{statement:SM-properties}
For $0<p, p_0, p_1<\infty$ with $\frac{1}{p}=\frac{1}{p_0} + \frac{1}{p_1}$ the following holds:
\begin{enumerate}[\quad \rm(1)]
	
\item\label{item:2:statement:SM-properties}
     $\cSM_q(\timed) \subseteq \cSM_p(\timed)$ and $\|\Phi\|_{\cSM_p(\timed)} \leqslant \|\Phi\|_{\cSM_q(\timed)}$ whenever $0 <  p <q < \infty$.
			
\item\label{item:3:statement:SM-properties}
     If $\Phi \in \cSM_p(\timed)$, then $\Phi^* \in \cSM_p(\timed)$ and 
     $\|\Phi^*\|_{\cSM_p(\timed)} \leqslant \sqrt[p]{1 + \|\Phi\|_{\cSM_p(\timed)}^p}$.

\item\label{item:4:statement:SM-properties}
      For $\Phi^i\in \cSM_{p_i}(\timed)$, $i=0,1$,  and $\Phi=(\Phi_a)_{a\in [0,T)}$ with 
      $\Phi_a := \Phi_a^0 \Phi_a^1$, one has
	  \[ \| \Phi \|_{\cSM_p(\timed)} \leqslant  \| \Phi^0 \|_{\cSM_{p_0}(\timed)} \| \Phi^1 \|_{\cSM_{p_1}(\timed)}.\]
\end{enumerate}
\end{prop}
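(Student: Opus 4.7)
All three parts reduce to the defining inequality $\E^{\F_\rho}[\sup_{\rho\le t\in\timed}\Phi_t^p]\le c^p\Phi_\rho^p$ for an arbitrary stopping time $\rho:\Omega\to\timed$; I will verify each item by producing a pointwise bound under the conditional expectation and then taking the $p$-th root. None of the steps require anything beyond conditional Jensen and conditional H\"older, so I do not expect a serious obstacle.

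For \eqref{item:2:statement:SM-properties} the map $x\mapsto x^{q/p}$ is convex on $[0,\infty)$ with $q/p>1$. Applying conditional Jensen to $X:=\sup_{\rho\le t\in\timed}\Phi_t^p\ge 0$ gives $\bigl(\E^{\F_\rho}X\bigr)^{q/p}\le \E^{\F_\rho}X^{q/p}=\E^{\F_\rho}[\sup_{\rho\le t\in\timed}\Phi_t^q]\le \|\Phi\|_{\cSM_q(\timed)}^q\Phi_\rho^q$ a.s.; raising to the power $p/q$ and taking the infimum over admissible constants gives the claim.

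For \eqref{item:3:statement:SM-properties} the key observation is that the non-decreasing process $\Phi^*$ satisfies, for any stopping time $\rho$, the path-wise identity $\sup_{\rho\le t\in\timed}\Phi^*_t=\Phi^*_\rho\vee\sup_{\rho\le t\in\timed}\Phi_t$ (because $\Phi^*_t=\sup_{s\le t}\Phi_s$ is non-decreasing in $t$ and either the supremum on the left is attained for some $t\le\rho$, where $\Phi^*_t\le\Phi^*_\rho$, or for some $t>\rho$, where $\Phi^*_t=\Phi^*_\rho\vee\sup_{\rho<s\le t}\Phi_s$). Thus $\bigl(\sup_{\rho\le t\in\timed}\Phi^*_t\bigr)^p\le (\Phi^*_\rho)^p+\sup_{\rho\le t\in\timed}\Phi_t^p$; taking conditional expectation, bounding the second term by $\|\Phi\|_{\cSM_p(\timed)}^p\Phi_\rho^p\le\|\Phi\|_{\cSM_p(\timed)}^p(\Phi^*_\rho)^p$, and factoring yields $\E^{\F_\rho}[\sup_{\rho\le t\in\timed}(\Phi^*_t)^p]\le(1+\|\Phi\|_{\cSM_p(\timed)}^p)(\Phi^*_\rho)^p$, from which the stated constant follows.

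For \eqref{item:4:statement:SM-properties} I first majorise path-wise, $\sup_{\rho\le t\in\timed}(\Phi^0_t\Phi^1_t)^p\le\bigl(\sup_{\rho\le t\in\timed}\Phi^0_t\bigr)^p\bigl(\sup_{\rho\le t\in\timed}\Phi^1_t\bigr)^p$, and then invoke conditional H\"older with the conjugate exponents $p_0/p$ and $p_1/p$ (which sum to one by hypothesis):
\begin{align*}
\E^{\F_\rho}\!\Bigl[\bigl(\sup_{\rho\le t}\Phi^0_t\bigr)^p\bigl(\sup_{\rho\le t}\Phi^1_t\bigr)^p\Bigr]
&\le \Bigl(\E^{\F_\rho}\!\bigl[\sup_{\rho\le t}\Phi^0_t\bigr]^{p_0}\Bigr)^{p/p_0}\Bigl(\E^{\F_\rho}\!\bigl[\sup_{\rho\le t}\Phi^1_t\bigr]^{p_1}\Bigr)^{p/p_1}\\
&\le \|\Phi^0\|_{\cSM_{p_0}(\timed)}^p\|\Phi^1\|_{\cSM_{p_1}(\timed)}^p(\Phi^0_\rho)^p(\Phi^1_\rho)^p
\end{align*}
a.s. Taking the $p$-th root gives the product inequality. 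The only subtlety worth double-checking is the measurability of the suprema so that the conditional expectations are well defined; by \cref{statement:smp_determinstic} the $\cSM_p$-condition is already equivalent to the version with deterministic times, and the right-continuity of the paths makes the suprema measurable, so this is routine.
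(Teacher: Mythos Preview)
Your proof is correct and follows essentially the same route as the paper's: item~(1) via conditional Jensen with exponent $q/p$ (which the paper merely says ``follows from the definition''), item~(2) via the pathwise splitting $\sup_{\rho\le t}\Phi^*_t=\Phi^*_\rho\vee\sup_{\rho\le t}\Phi_t$ together with $(a\vee b)^p\le a^p+b^p$, and item~(3) via conditional H\"older with exponents $p_0/p,\,p_1/p$. The only cosmetic difference is that the paper works with deterministic times $a$ throughout (invoking \cref{statement:smp_determinstic}), whereas you carry a general stopping time $\rho$; neither choice affects the argument.
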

\smallskip

\begin{proof}  
\eqref{item:2:statement:SM-properties} follows from the definition. Now let $a\in \timed$.
To check \eqref{item:3:statement:SM-properties} we observe 
$\Phi_0^*=\Phi_0\in L_p$ and  
\[ \ce{\F_a}{\sup_{a \leqslant t \in \timed} |\Phi^*_t|^p}  
   = \ce{\F_a}{\sup_{ t \in \timed} \Phi_t^p}
  \leqslant |\Phi^*_a|^p + \|\Phi\|_{\cSM_p(\timed)}^p \Phi_a^p 
  \leqslant (1 + \|\Phi\|_{\cSM_p(\timed)}^p )|\Phi^*_a|^p \mbox{ a.s.} \]
\eqref{item:4:statement:SM-properties} 
We get $\Phi_0^0 \Phi_0^1 \in L_p$ and by the conditional H\"older inequality that, a.s.,  
	\begin{align*}
	\sqrt[p]{\ce{\F_a}{\sup_{a \leqslant t \in \timed} \Phi_t^p}} 
	& = \sqrt[p]{\ce{\F_a}{\sup_{a \leqslant t \in \timed} (\Phi_t^0\Phi_t^1)^p}} \leqslant \sqrt[p]{\ce{\F_a}{ \sup_{a \leqslant t \in \timed} (\Phi_t^0)^p 
            \sup_{a \leqslant t \in \timed} (\Phi_t^1)^p}} \\
	&\leqslant \sqrt[p_0]{\ce{\F_a}{\sup_{a \leqslant t \in \timed} (\Phi_t^0)^{p_0}}} 
	\sqrt[p_1]{\ce{\F_a}{\sup_{a \leqslant t \in \timed} (\Phi_t^1)^{p_1}}} \\
	&\leqslant \| \Phi^0 \|_{\cSM_{p_0}(\timed)} \| \Phi^1 \|_{\cSM_{p_1}(\timed)} \Phi_a^0 \Phi_a^1 \\
	& =        \| \Phi^0 \|_{\cSM_{p_0}(\timed)} \| \Phi^1 \|_{\cSM_{p_1}(\timed)} \Phi_a. \qedhere
	\end{align*}
\end{proof}
\medskip

\subsection{Simplifications in the definitions of BMO-spaces}
The first simplification concerns the case $\timed=[0,T]$:
\medskip
\begin{prop}
\label{statement:only_T_in_bmo_definitions}
For $p\in(0,\infty)$, $Y\in \CL_0([0,T])$, and $\Phi \in \CL^+([0,T])$ define
$|Y|_{\BMO_p^{\Phi}([0,T])}:=\inf c$
and 
$|Y|_{\bmo_p^{\Phi}([0,T])}:=\inf c$, respectively, to be the infimum over all $c\in [0,\infty)$ such that,
for all $\rho\in  \mathcal S_T$,
\[ \ce{\F_\rho}{|Y_T-Y_{\rho-}|^p} \leqslant c^p \Phi_{\rho}^p \mbox{ a.s.} 
   \sptext{1}{and}{1}
   \ce{\F_\rho}{|Y_T-Y_{\rho}|^p} \leqslant c^p \Phi_{\rho}^p \quad \mbox{a.s.}, \]
respectively. Then one has 
\begin{align*}
   |Y|_{\BMO_p^{\Phi}([0,T])}
   &\leqslant \|Y\|_{\BMO_p^{\Phi}([0,T])}
   \leqslant 2^{(\frac{1}{p} -1)^+} [1+\| \Phi\|_{\cSM_p([0,T])}] |Y|_{\BMO_p^{\Phi}([0,T])}, \\
|Y|_{\bmo_p^{\Phi}([0,T])}
&   \leqslant \|Y\|_{\bmo_p^{\Phi}([0,T])}
   \leqslant 2^{(\frac{1}{p} -1)^+} [1+ \| \Phi\|_{\cSM_p([0,T])}] |Y|_{\bmo_p^{\Phi}([0,T])},
\end{align*}
where we additionally assume for the right-hand side inequalities that $\Phi\in \cSM_p([0,T])$.
\end{prop}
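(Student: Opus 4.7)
The left-hand inequalities $|Y|_\Theta\leqslant \|Y\|_\Theta$ are immediate, since the defining inequality for $|Y|_\Theta$ is obtained from the one for $\|Y\|_\Theta$ by taking $t=T$. For the right-hand inequalities, fix $c:=|Y|_{\BMO_p^\Phi([0,T])}$ (respectively $c:=|Y|_{\bmo_p^\Phi([0,T])}$), $t\in[0,T)$ (the case $t=T$ is already contained in the hypothesis), and $\rho\in\cS_t$. The idea is the splitting
\[ Y_t-Y_{\rho-} = (Y_T-Y_{\rho-})-(Y_T-Y_t) \quad\big(\text{resp.\ } Y_t-Y_\rho = (Y_T-Y_\rho)-(Y_T-Y_t)\big), \]
followed by the elementary $c_p$-inequality: for $p\geqslant 1$ I use the triangle inequality in $L_p(\F_\rho)$, while for $p\in(0,1)$ I use $|a+b|^p\leqslant |a|^p+|b|^p$ and then $(1+x^p)^{1/p}\leqslant 2^{1/p-1}(1+x)$ for $x\geqslant 0$; in both regimes this produces the overall factor $2^{(1/p-1)^+}$ after taking $p$-th roots.

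The first summand is controlled directly by the defining hypothesis, since $\cS_t\subseteq\cS_T$:
\[ \ce{\F_\rho}{|Y_T-Y_{\rho-}|^p}\leqslant c^p\Phi_\rho^p
   \quad\text{and}\quad \ce{\F_\rho}{|Y_T-Y_\rho|^p}\leqslant c^p\Phi_\rho^p \mbox{ a.s.}\]
What remains is the estimate $\ce{\F_\rho}{|Y_T-Y_t|^p}\leqslant c^p\|\Phi\|_{\cSM_p([0,T])}^p\Phi_\rho^p$, from which one infers the desired bound: combining with the first summand and the $c_p$-inequality gives exactly $c\,2^{(1/p-1)^+}(1+\|\Phi\|_{\cSM_p([0,T])})\Phi_\rho$.

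In the $\bmo$-case this is easy: applying the hypothesis to the constant stopping time $\sigma\equiv t\in\cS_T$ yields $\ce{\F_t}{|Y_T-Y_t|^p}\leqslant c^p\Phi_t^p$ a.s., and conditioning down together with $\Phi\in\cSM_p([0,T])$ gives $\ce{\F_\rho}{|Y_T-Y_t|^p}\leqslant c^p\ce{\F_\rho}{\sup_{u\in[\rho,T]}\Phi_u^p}\leqslant c^p\|\Phi\|_{\cSM_p([0,T])}^p\Phi_\rho^p$ a.s.

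The main obstacle is the $\BMO$-case, where the constant stopping time $\sigma\equiv t$ only gives a bound on $|Y_T-Y_{t-}|^p$, which differs from $|Y_T-Y_t|^p$ by the jump $\Delta Y_t$ that we have no a priori control on. My remedy is a right approximation: for large $n$, set $\sigma_n:=t+\tfrac{1}{n}\in\cS_T$ (a constant stopping time in $(t,T]$). The hypothesis and the tower property, together with $\rho\leqslant t<\sigma_n$ and $\Phi\in\cSM_p([0,T])$, produce
\[ \ce{\F_\rho}{|Y_T-Y_{\sigma_n-}|^p}=\ce{\F_\rho}{\ce{\F_{\sigma_n}}{|Y_T-Y_{\sigma_n-}|^p}}\leqslant c^p\ce{\F_\rho}{\Phi_{\sigma_n}^p}\leqslant c^p\|\Phi\|_{\cSM_p([0,T])}^p\Phi_\rho^p \mbox{ a.s.} \]
Because $Y$ is c\`adl\`ag, $Y_{\sigma_n-}\to Y_t$ a.s.\ as $n\to\infty$ (the left limit at $t+1/n$ is caught by right-continuity of $Y$ at $t$), so conditional Fatou transfers the estimate:
\[ \ce{\F_\rho}{|Y_T-Y_t|^p}\leqslant \liminf_n\ce{\F_\rho}{|Y_T-Y_{\sigma_n-}|^p}\leqslant c^p\|\Phi\|_{\cSM_p([0,T])}^p\Phi_\rho^p \mbox{ a.s.}, \]
completing the proof.
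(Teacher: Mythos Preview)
Your proof is correct and follows essentially the same route as the paper's: the same splitting $Y_t-A=(Y_T-A)-(Y_T-Y_t)$, the same $c_p$-inequality, the same tower-plus-$\cSM_p$ argument in the $\bmo$ case, and the same right-approximation $t_n\downarrow t$ combined with conditional Fatou in the $\BMO$ case. The only cosmetic differences are that you spell out the tower property and the choice $t_n=t+\tfrac{1}{n}$ explicitly, whereas the paper leaves these implicit.
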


\begin{proof}
The inequalities on the left are obvious. To check the inequalities on the right we may assume that 
$c:=|Y|_{\BMO_p^{\Phi}([0,T])}$ or $c:=|Y|_{\bmo_p^{\Phi}([0,T])}$ are finite. To treat both cases 
simultaneously, we let $t\in [0,T]$, $\rho\in\cS_t$, and 
$A=Y_{\rho-}$ or $A=Y_{\rho}$, respectively. Then, a.s.,
\begin{align*}
            \left ( \ce{\F_\rho}{|Y_t-A|^p} \right )^\frac{1}{p} 
& \leqslant 2^{(\frac{1}{p} -1)^+} \left [ \left ( \ce{\F_\rho}{|Y_T-A|^p} \right )^\frac{1}{p} + \left ( \ce{\F_\rho}{|Y_T-Y_t|^p} \right )^\frac{1}{p} 
            \right ] \\
& \leqslant 2^{(\frac{1}{p} -1)^+} \left [ c \Phi_{\rho} + \left ( \ce{\F_\rho}{|Y_T-Y_t|^p} \right )^\frac{1}{p} 
            \right ].
\end{align*}
To estimate the second term we may assume $t\in [0,T)$. In case of $\bmo$-spaces this term can be estimated by
\[     \left ( \ce{\F_\rho}{|Y_T-Y_t|^p} \right )^\frac{1}{p} 
     = \left ( \ce{\F_\rho}{ \ce{\F_t}{|Y_T-Y_t|^p}} \right )^\frac{1}{p} 
   \leqslant  c \left ( \ce{\F_\rho}{ \Phi_t^p} \right )^\frac{1}{p} 
   \leqslant  c \| \Phi \|_{\cSM_p([0,T])} \Phi_\rho \quad\mbox{a.s.}
\]
In case of $\BMO$-spaces we find a sequence $t_n\in (t,T]$ with 
$t_n\downarrow t$. Using Fatou's Lemma for conditional expectations we get, a.s., 
\begin{align*}
    \left ( \ce{\F_\rho}{|Y_T-Y_t|^p} \right )^\frac{1}{p} 
\leqslant \liminf_n \left ( \ce{\F_\rho}{|Y_T-Y_{t_n-}|^p} \right )^\frac{1}{p}  
\leqslant \liminf_n c \left ( \ce{\F_\rho}{\Phi_{t_n}^p} \right )^\frac{1}{p}  
\leqslant  c \| \Phi \|_{\cSM_p([0,T])} \Phi_\rho.
\end{align*}
\end{proof}
\medskip

The second simplification concerns the  $\bmo$-spaces.
For $p\in(0,\infty)$, $Y\in \CL_0(\timed)$, and $\Phi \in \CL^+(\timed)$ we let
$|Y|_{\bmo_p^{\Phi}(\timed)}^{\rm det}:= \inf c$ be the infimum over all 
$c\in [0,\infty)$ such that
\[ \ce{\F_{a}}{|Y_t-Y_a|^p} \leqslant c^p \Phi_a^p \mbox{ a.s.} 
      \sptext{1}{for all}{1} t\in \timed \sptext{.5}{and}{.5} a\in  [0,t]  . \]
With this definition we obtain:

\medskip
\begin{prop}
\label{statement:bmo_determinstic} 
One has $|\cdot|_{\bmo_p^{\Phi}(\timed)}^{\rm det} =\|\cdot\|_{\bmo_p^{\Phi}(\timed)}$ for all $p\in (0,\infty)$.
\end{prop}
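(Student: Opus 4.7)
The inequality $|\cdot|^{\det}_{\bmo_p^\Phi(\timed)} \leq \|\cdot\|_{\bmo_p^\Phi(\timed)}$ is immediate, so the plan is to prove the converse under $c := |Y|^{\det}_{\bmo_p^\Phi(\timed)} < \infty$; the target is $\ce{\F_\rho}{|Y_t-Y_\rho|^p} \leq c^p \Phi_\rho^p$ a.s.\ for every $t \in \timed$ and $\rho \in \cS_t$. First I handle the case in which $\rho$ takes at most countably many values $(a_i)_{i\geq 1}\subseteq[0,t]$: right-continuity of $\bF$ gives $\{\rho=a_i\}=\{\rho\leq a_i\}\cap\{\rho\geq a_i\}\in\F_{a_i}$, so $B\cap\{\rho=a_i\}\in\F_{a_i}$ for every $B\in\F_\rho$; summing the deterministic inequality at each $a_i$ over $B\cap\{\rho=a_i\}$ produces $\int_B|Y_t-Y_\rho|^p\od\p\leq c^p\int_B\Phi_\rho^p\od\p$, which is the desired conditional inequality.

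For a general $\rho\in\cS_t$ I will use the dyadic approximation $\rho_n := \lceil 2^n\rho/t\rceil\cdot 2^{-n}t\wedge t$, a simple stopping time with $\rho_n\downarrow\rho$. Right-continuity of $Y$ and $\Phi$ then yields $Y_{\rho_n}\to Y_\rho$ and $\Phi_{\rho_n}\to\Phi_\rho$ a.s., while $(\F_{\rho_n})_n$ is decreasing with intersection $\F_\rho$ by right-continuity of $\bF$, so that $\Phi_{\rho_k}$ is $\F_{\rho_n}$-measurable for every $k\geq n$. The first step applied at each $\rho_n$ gives $\ce{\F_{\rho_n}}{|Y_t-Y_{\rho_n}|^p}\leq c^p\Phi_{\rho_n}^p$ a.s. The main obstacle is now the limit passage: conditional Fatou controls the LHS in the right direction, but on the RHS it produces only $\Phi_\rho^p\leq\liminf_n\ce{\F_\rho}{\Phi_{\rho_n}^p}$, i.e.\ the \emph{wrong} direction.

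I will circumvent this by localization. For $M>0$ the sets $D_M^{(n)}:=\{\sup_{k\geq n}\Phi_{\rho_k}^p\leq M\}\in\F_{\rho_n}$ increase a.s.\ to $D_M^\rho:=\{\Phi_\rho^p\leq M\}\in\F_\rho$. For $B\in\F_\rho\subseteq\F_{\rho_n}$, applying the first step at $\rho_n$ on $B\cap D_M^{(n)}\in\F_{\rho_n}$ yields
\[
\int_B |Y_t-Y_{\rho_n}|^p \1_{D_M^{(n)}} \od\p \leq c^p \int_B \Phi_{\rho_n}^p \1_{D_M^{(n)}} \od\p.
\]
The right integrand is dominated by $M$ and converges a.s.\ to $\Phi_\rho^p\1_{D_M^\rho}$, so bounded convergence identifies its limit as $c^p\int_B\Phi_\rho^p\1_{D_M^\rho}\od\p$; Fatou on the left provides the lower bound $\int_B|Y_t-Y_\rho|^p\1_{D_M^\rho}\od\p$. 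Letting $M\to\infty$ and using $\Phi_\rho<\infty$ a.s.\ (so $D_M^\rho\uparrow\Omega$), monotone convergence then gives $\int_B|Y_t-Y_\rho|^p\od\p\leq c^p\int_B\Phi_\rho^p\od\p$, and since $B\in\F_\rho$ is arbitrary, this completes the argument.
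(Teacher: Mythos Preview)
Your proof is correct and follows essentially the same route as the paper: approximate $\rho$ from above by simple stopping times, verify the inequality at each level, and pass to the limit via a truncation at $M$. The paper's truncation is a bit more direct---it applies $\wedge M$ to both sides of $\ce{\F_{\rho_n}}{|Y_t-Y_{\rho_n}|^p}\le c^p\Phi_{\rho_n}^p$, then takes $\ce{\F_\rho}{\cdot}$ and uses dominated convergence on both sides---whereas your localisation sets $D_M^{(n)}$ achieve the same goal with slightly more bookkeeping. One minor inaccuracy: $\bigcup_n D_M^{(n)}$ need not coincide with $\{\Phi_\rho^p\le M\}$ on the event $\{\Phi_\rho^p=M\}$ (e.g.\ if $\Phi_{\rho_n}^p\downarrow\Phi_\rho^p$ strictly), but since $\{\Phi_\rho^p<M\}\subseteq\bigcup_n D_M^{(n)}\subseteq\{\Phi_\rho^p\le M\}$ a.s., your final $M\to\infty$ step is unaffected.
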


\begin{proof}
It is obvious that $|Y|_{\bmo_p^{\Phi}(\timed)}^{\rm det} \leqslant \|Y\|_{\bmo_p^{\Phi}(\timed)}$.
To show $\|Y\|_{\bmo_p^{\Phi}(\timed)} \leqslant |Y|_{\bmo_p^{\Phi}(\timed)}^{\rm det}$ we assume that
$c:= |Y|_{\bmo_p^{\Phi}(\timed)}^{\rm det} <\infty$, otherwise there is nothing to prove.
For $t\in \timed$, $\rho\in \mathcal S_t$, and 
$L\in \bN_0$ we define the new stopping times $\rho_L(\omega):= \psi_L(\rho(\omega))$ where $\psi_L(0):=0$ and
$\psi_{L}(s)= s_\ell^L:=\ell 2^{-L} t$ when $s\in  \left ( s_{\ell-1}^L,s_\ell^L \right ]$ for
$\ell\in \{1,\ldots,2^L\}$. By definition, $\rho_L (\omega) \downarrow \rho(\omega)$ for all $\omega \in \Omega$
as $L\to \infty$.
Then
\[ \ce{\F_{s_\ell^L}}{|Y_t-Y_{s_\ell^L}|^p} \leqslant c^p \Phi_{s_\ell^L}^p \mbox{ a.s.} \]
for $\ell=0,\ldots,2^L$.
Multiplying both sides with $\1_{\{\rho_L = s_\ell^L\}}$ and summing 
over $\ell=0,\ldots,2^L$, we get that
\[ \ce{\F_{\rho_L}}{ | Y_t- Y_{\rho_L}|^p} \leqslant c^p \Phi_{\rho_L}^p \mbox{ a.s.} \]
For any $M>0$ this implies 
\[ \ce{\F_{\rho_L}}{| Y_t-Y_{\rho_L}|^p\wedge M} \leqslant (c^p \Phi_{\rho_L}^p)\wedge M \mbox{ a.s.} \]
and
\[ \ce{\F_{\rho}}{| Y_t- Y_{\rho_L}|^p\wedge M} \leqslant  \ce{\F_\rho}{(c^p \Phi_{\rho_L}^p)\wedge M} \mbox{ a.s.} \]
The c\`adl\`ag properties of $Y$ and $\Phi$ imply 
\[ \ce{\F_{\rho}}{| Y_t- Y_{\rho}|^p\wedge M} \leqslant  \ce{\F_\rho}{(c^p \Phi_{\rho}^p)\wedge M} \mbox{ a.s.} \]
By $M\uparrow \infty$ it follows that $\|Y\|_{\bmo_p^{\Phi}(\timed)}\leqslant c$ as desired.
\end{proof}

\subsection{The relation between $\BMO_p^\Phi$ and $\bmo_p^\Phi$}
The BMO- and bmo-spaces are related to each other as follows:
\smallskip

\begin{prop}
\label{statement:relation_BMO_bmo}
For $\Phi\in \CL^+(\timed)$, $Y\in \CL_0(\timed)$,
\[ |\Delta Y|_{\Phi,\timed} := \inf\{c>0 : |\Delta Y_t| \leqslant c \Phi_t \mbox{ for all } t\in \timed \mbox{ a.s.}\}, \]
and $p\in (0,\infty)$ the following assertions are true:
\begin{enumerate}[{\rm (1)}]
\item \label{item:1:relation-bmo}
      $\|Y\|_{\BMO_p^\Phi(\timed)} \leqslant 2^{(\frac{1}{p}-1)^+} \left [ \|Y\|_{\bmo_p^\Phi(\timed)} + \left|\Delta Y\right|_{\Phi,\timed}\right ]$.
\item \label{item:2:relation-bmo}
      If $\E |\Phi^*_t|^p<\infty$ for all $t\in\timed$, then 
      $\|Y\|_{\bmo_p^\Phi(\timed)}  \leqslant \|Y\|_{\BMO_p^\Phi(\timed)}$
      and
      $|\Delta Y|_{\Phi,\timed}     \leqslant 2^{{\frac{1}{p}\vee 1}}  \|Y\|_{\BMO_p^\Phi(\timed)}$.
\end{enumerate}
\end{prop}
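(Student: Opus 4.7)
Part (1) follows from the pointwise identity $Y_t - Y_{\rho-} = (Y_t - Y_\rho) + \Delta Y_\rho$ and the fact that $\Delta Y_\rho$ is $\F_\rho$-measurable (as $Y_{\rho-}$ is $\F_{\rho-}$-measurable). For $p \geqslant 1$, conditional Minkowski gives
\[
 \bigl(\ce{\F_\rho}{|Y_t - Y_{\rho-}|^p}\bigr)^{1/p} \leqslant \bigl(\ce{\F_\rho}{|Y_t-Y_\rho|^p}\bigr)^{1/p} + |\Delta Y_\rho|,
\]
from which $\|Y\|_{\BMO_p^\Phi} \leqslant \|Y\|_{\bmo_p^\Phi} + |\Delta Y|_{\Phi,\timed}$ drops out. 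For $p \in (0,1)$, the subadditivity $|a+b|^p \leqslant |a|^p+|b|^p$ yields the same bound after $p$-th roots, with the additional factor $2^{1/p-1}$ coming from the elementary inequality $(x^p+y^p)^{1/p}\leqslant 2^{1/p-1}(x+y)$ valid on $(0,1]$. Combining the two regimes gives the factor $2^{(1/p-1)^+}$.

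For the first inequality of part (2), fix $\rho \in \cS_t$; the bound is trivial on $\{\rho=t\}$, so work on $\{\rho<t\}$. Approximate $\rho$ dyadically by $\rho_n \in \cS_t$ with $\rho_n > \rho$ on $\{\rho < t\}$ and $\rho_n \downarrow \rho$ (e.g.\ round $\rho$ upward on a grid of mesh $2^{-n}t$, keeping $\rho_n=t$ on $\{\rho = t\}$). The $\BMO$ hypothesis applied at $\rho_n$, combined with the tower property using $\F_\rho \subseteq \F_{\rho_n}$, yields
\[
 \ce{\F_\rho}{\1_{\{\rho<t\}}|Y_t - Y_{\rho_n-}|^p} \leqslant \|Y\|_{\BMO_p^\Phi}^p\,\ce{\F_\rho}{\1_{\{\rho<t\}}\Phi_{\rho_n}^p}\quad\mbox{a.s.}
\]
On $\{\rho<t\}$ the strict approach $\rho_n\downarrow\rho$ forces, by right-continuity of $Y$ and $\Phi$, the pointwise limits $Y_{\rho_n-}\to Y_\rho$ and $\Phi_{\rho_n}\to\Phi_\rho$. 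Conditional Fatou on the left, together with conditional dominated convergence on the right (which is why the hypothesis $\E|\Phi_t^*|^p<\infty$ is needed — it supplies the integrable dominant $(\Phi_t^*)^p$), concludes the proof.

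For the second inequality of part (2), any stopping time $\tau \in \cS_t$ gives $\Delta Y_\tau = (Y_t - Y_{\tau-}) - (Y_t - Y_\tau)$, and the $\F_\tau$-measurability of $\Delta Y_\tau$ turns $|\Delta Y_\tau|^p$ into $\ce{\F_\tau}{|(Y_t-Y_{\tau-})-(Y_t-Y_\tau)|^p}$. Applying $|a-b|^p\leqslant 2^{(p-1)^+}(|a|^p+|b|^p)$ (or subadditivity for $p<1$), then the $\BMO$ bound to the first summand and the $\bmo$ bound (already established) to the second, produces
\[
 |\Delta Y_\tau| \leqslant 2^{1/p\vee 1}\,\|Y\|_{\BMO_p^\Phi}\,\Phi_\tau\quad\mbox{a.s.}
\]
for every stopping time $\tau\in\cS_t$. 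The passage from this stopping-time bound to the uniform $|\Delta Y_t|\leqslant 2^{1/p\vee 1}\|Y\|_{\BMO_p^\Phi}\Phi_t$ for all $t\in\timed$ simultaneously is supplied by the optional section theorem applied to the optional set $\{(t,\omega):|\Delta Y_t(\omega)|>2^{1/p\vee 1}\|Y\|_{\BMO_p^\Phi}\Phi_t(\omega)\}$, whose fixed-time sections are negligible; any non-negligible projection would yield a stopping time on which the pointwise estimate fails.

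The main technical obstacle is the first inequality of part (2): the limit argument is delicate because $Y_{\rho_n-}\to Y_\rho$ fails in general unless $\rho_n>\rho$ strictly (c\`adl\`ag paths do not give $Y_{s_n-}\to Y_\rho$ if $s_n\downarrow \rho$ may equal $\rho$), and the conditional dominated convergence on the right forces the global assumption $\E|\Phi_t^*|^p<\infty$. Once this step is in place, parts (1) and the second half of (2) are comparatively routine algebraic rearrangements.
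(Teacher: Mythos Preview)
Your proof is correct and follows essentially the same route as the paper's: the decomposition $Y_t-Y_{\rho-}=(Y_t-Y_\rho)+\Delta Y_\rho$ for part (1), the approximation by stopping times strictly above $\rho$ combined with conditional Fatou on the left and dominated convergence (via $\E|\Phi_t^*|^p<\infty$) on the right for the first half of part (2), and the splitting $\Delta Y_\tau=(Y_t-Y_{\tau-})-(Y_t-Y_\tau)$ for the second half, are exactly the paper's arguments.

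The one point of divergence is the passage from the stopping-time bound $|\Delta Y_\tau|\leqslant 2^{1/p\vee1}\|Y\|_{\BMO_p^\Phi}\Phi_\tau$ a.s.\ for every $\tau$ to the pathwise uniform bound. You invoke the optional section theorem; the paper instead constructs, for each $k\in\bN$, the explicit sequence of stopping times $\rho_n^k$ that exhaust the jumps of size $>1/k$ on $[0,t]$, applies the bound at each $\rho_n^k$, and intersects the resulting full-measure sets over $n,k$. Both arguments are valid. The paper's approach is more elementary (no appeal to measurable selection), while yours is shorter and conceptually cleaner; your parenthetical remark about ``fixed-time sections'' is correct but superfluous, since the relevant input to optional section is precisely the stopping-time statement you already have.
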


\begin{proof} 
For the proof we set $c_p:= 2^{(\frac{1}{p}-1)^+}$.
\eqref{item:1:relation-bmo}
For $t\in \timed$ and $\rho\in \cS_t$ we have, a.s.,
\[
  \left|\ce{\F_{\rho}}{|Y_t - Y_{\rho-}|^p}\right|^{\frac{1}{p}} 
  \leqslant c_p \left [ \left|\ce{\F_{\rho}}{|Y_t - Y_{\rho}|^p}\right|^{\frac{1}{p}} 
          + |\Delta Y_\rho| \right ]
  \leqslant c_p \Phi_\rho \left [ \|Y\|_{\bmo_p^\Phi(\timed)} + \left|\Delta Y\right|_{\Phi,\timed}
 \right ]
\]
so that $\|Y\|_{\BMO_p^\Phi(\timed)} \leqslant c_p \left [ \|Y\|_{\bmo_p^\Phi(\timed)} + \left|\Delta Y\right|_{\Phi,\timed}\right ]$.
\eqref{item:2:relation-bmo} For $t\in \timed$ and $\rho\in \cS_t$ we have, a.s.,
\begin{align*}
\left|\ce{\F_{\rho}}{|Y_t - Y_{\rho}|^p}\right|^{\frac{1}{p}} 
& = \left|\ce{\F_{\rho}}{\1_{\{\rho <t\}} \lim_n |Y_t - Y_{((\rho+\frac{1}{n})\wedge t)-}|^p}\right|^{\frac{1}{p}} \\
&\leqslant  \liminf_n \left|\ce{\F_{\rho}}{\1_{\{\rho <t\}} |Y_t - Y_{((\rho+\frac{1}{n})\wedge t)-}|^p}\right|^{\frac{1}{p}} \\
& = \liminf_n \left|\ce{\F_{\rho}}{\ce{\F_{(\rho+\frac{1}{n})\wedge t}}{\1_{\{\rho <t\}} |Y_t - Y_{((\rho+\frac{1}{n})\wedge t)-}|^p}}\right|^{\frac{1}{p}} \\
&\leqslant  \liminf_n \| Y \|_{\BMO_p^\Phi(\timed)} \left|\ce{\F_{\rho}}{\Phi_{(\rho+\frac{1}{n})\wedge t}^p}\right|^{\frac{1}{p}} \\
&\leqslant  \| Y \|_{\BMO_p^\Phi(\timed)} \Phi_\rho
\end{align*}
where we used $\E |\Phi_t^*|^p<\infty$. Hence 
$\|Y\|_{\bmo_p^\Phi(\timed)} \leqslant \|Y\|_{\BMO_p^\Phi(\timed)}$.
Moreover, for $\rho\in \cS_t$ with $t\in\timed$ we get that, a.s., 
\begin{align*}
|\Delta Y_{\rho}| 
\leqslant c_p \left [ 
           \left|\ce{\F_{\rho}}{|Y_t - Y_{\rho-}|^p} \right|^\frac{1}{p}
        +  \left|\ce{\F_{\rho}}{|Y_t - Y_{\rho}|^p} \right|^\frac{1}{p} \right ] &
\leqslant 2 c_p \|Y\|_{\BMO_p^\Phi(\timed)} \Phi_\rho.
\end{align*}
Now we show that this implies  
\begin{equation}\label{eqn:uniform_bound_Delta_Y}
 |\Delta Y_s| \leqslant [2 c_p \|Y\|_{\BMO_p^\Phi(\timed)}] \Phi_s \mbox{ for all } s\in \timed \mbox{ a.s.}
\end{equation}
which yields to $|\Delta Y|_{\Phi,\timed}     \leqslant 2 c_p \|Y\|_{\BMO_p^\Phi(\timed)}$.
It is sufficient to check \eqref{eqn:uniform_bound_Delta_Y} for $s\in [0,t]$ for $0<t\in \timed$.
So we define for $k\in \bN$ that
	\begin{align*}
	\rho_{1}^k&:=\inf\left\{s\in (0,t] :  |\Delta Y_s|>\tfrac{1}{k}\right\} \wedge t,\\
	\rho_{n}^{k}&:=\inf\left\{s \in (\rho_{n-1}^{k},t] :  |\Delta Y_s|>\tfrac{1}{k}\right\} \wedge t,\quad n\geqslant 2.
	\end{align*} 
Since the stochastic basis satisfies the usual conditions and $Y$ is adapted and c\`adl\`ag, 
each $\rho_{n}^k \colon \Omega \to [0, t]$ is a stopping time 
(this is known and can be checked            with \cite[Lemma 1, Chapter 3]{Billingsley:99}). 
Hence 
\[ |\Delta Y_{\rho_{n}^k}|\leqslant 2 c_p \|Y\|_{\BMO_p^\Phi(\timed)} \Phi_{\rho_{n}^k} \quad \mbox{a.s.,} \]
and we denote by $\Omega_{n}^k$ the set in which the above inequality holds. 
Set $\Omega^*=\cap_{k =1}^\infty \cap_{n=1}^\infty \Omega_{n}^k$, then $\p(\Omega^*)=1$ and
\[ |\Delta Y_s(\omega)| \leqslant  2 c_p \|Y\|_{\BMO_p^\Phi(\timed)} \Phi_t(\omega) \quad\mbox{for all } (\omega, s) \in \Omega^* \times [0, t],\]
which gives the desired statement.
\end{proof}

\subsection{Distributional estimates}
The BMO-spaces allow for John-Nirenberg theorems.  
One consequence of the following equivalence of moments:

\begin{prop}
\label{statement:Lp-BMO_new}
Let $0<p\leqslant q<\infty$, $r\in (0,\infty)$,  and $\Phi\in \CL^+(\timed)$.
\begin{enumerate}[\rm (1)]
\item \label{item:1:statement:Lp-BMO}
      If $\Phi\in \cSM_q(\timed)$ with $\|\Phi\|_{\cSM_q(\timed)} \leqslant d < \infty$,
      then there is a $c=c(p,q,d)\geqslant 1$ such that
      \[ \|\cdot\|_{\BMO_p^\Phi(\timed)}\sim_{c} \|\cdot\|_{\BMO_q^\Phi(\timed)}. \]

\item \label{item:2:statement:Lp-BMO_new}
      There are $b=b(r)>0$, $\beta=\beta(r) >0$, and $c_{\eqref{eqn:2:statement:Lp-BMO_new}}=c_{\eqref{eqn:2:statement:Lp-BMO_new}}(r,q) >0$ such that for $Y\in \CL_0(\timed)$,
      $0 \leqslant a \leqslant t \in \timed$, $D:= \| (Y_u-Y_a)_{u\in [a,t]}\|_{\BMO_r^\Phi([a,t])}<\infty$, and $\mu,\nu>0$ one has
      \begin{align}
                   \p_{\F_a}\left (\sup_{u\in [a,t]} | Y_u - Y_a | > b D \mu \nu \right ) 
        &\leqslant e^{1-\mu} + \beta \p_{\F_a} \left (\sup_{u\in [a,t]} \Phi_u >  \nu \right ) \mbox{ a.s.},
        \label{eqn:1:statement:Lp-BMO_new}\\
      \ce{\F_a}{\sup_{u\in [a,t]}|Y_u-Y_a|^q} & \leqslant c_{\eqref{eqn:2:statement:Lp-BMO_new}}^q D^q
      \ce{\F_a}{\sup_{u\in [a,t]} \Phi_u^q} \mbox{a.s.} 
      \sptext{1}{if}{1} \sup_{u\in [a,t]} \Phi_u \in L_q.
      \label{eqn:2:statement:Lp-BMO_new}
      \end{align}
\end{enumerate}
\end{prop}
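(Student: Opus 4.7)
The three items are naturally coupled: I would prove (2)(a), the distributional inequality \eqref{eqn:1:statement:Lp-BMO_new}, first, deduce the moment bound \eqref{eqn:2:statement:Lp-BMO_new} by integration, and then obtain the $L_p$--$L_q$ equivalence of item (1) by combining \eqref{eqn:2:statement:Lp-BMO_new} with the assumption $\Phi\in\cSM_q(\timed)$. The only genuinely delicate piece is a weighted John--Nirenberg iteration behind \eqref{eqn:1:statement:Lp-BMO_new}.

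Fix $0\leqslant a\leqslant t\in\timed$ and assume $D:=\|(Y_u-Y_a)_{u\in[a,t]}\|_{\BMO_r^\Phi([a,t])}<\infty$. For a parameter $\nu>0$ I would introduce the stopping time
\[
\sigma := \inf\{u\in[a,t]:\Phi_u>\nu\}\wedge t,
\]
so that $\Phi_{u-}\leqslant \nu$ on $[a,\sigma]$, and then split
\[
\p_{\F_a}\!\left(\sup_{u\in[a,t]}|Y_u-Y_a|>bD\mu\nu\right)
\leqslant
\p_{\F_a}\!\left(\sup_{u\in[a,\sigma]}|Y_u-Y_a|>bD\mu\nu\right)
+ \p_{\F_a}\!\left(\sup_{u\in[a,t]}\Phi_u>\nu\right).
\]
The second term already has the desired shape (with $\beta=1$ on this piece). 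The first term is handled by iteration: with $\lambda>0$ to be chosen, set $\tau_0:=a$ and recursively $\tau_{k+1}:=\inf\{u\in(\tau_k,\sigma]:|Y_u-Y_{\tau_k}|>\lambda\}\wedge \sigma$. On $\{\tau_k<\sigma\}$ the definition of $\sigma$ ensures $\Phi_{\tau_k}\leqslant \nu$, so \cref{statement:bmo_determinstic} together with the BMO condition (via \cref{statement:relation_BMO_bmo}) applied at the stopping time $\tau_k$ yields
\[
\p_{\F_{\tau_k}}(\tau_{k+1}<\sigma)\leqslant \Big(\tfrac{D\,\Phi_{\tau_k}}{\lambda}\Big)^{\!r}\leqslant \Big(\tfrac{D\nu}{\lambda}\Big)^{\!r}.
\]
Choosing $\lambda:=2^{1/r}D\nu$ gives $\p_{\F_{\tau_k}}(\tau_{k+1}<\sigma)\leqslant 1/2$, and iterating yields $\p_{\F_a}(\tau_k<\sigma)\leqslant 2^{-k}$. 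On the complementary event $\{\tau_k=\sigma\}$ one has $\sup_{u\in[a,\sigma]}|Y_u-Y_a|\leqslant k\lambda$, so picking $k\sim \mu$ gives the exponential bound $e^{1-\mu}$, with $b$ proportional to $2^{1/r}$ and $\beta=\beta(r)$ absorbing constants from the Markov-type step at $\sigma$ (in particular the right-continuous jump of $\Phi$ at $\sigma$).

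The moment estimate \eqref{eqn:2:statement:Lp-BMO_new} is then obtained by writing $\E^{\F_a}[\sup_{u\in[a,t]}|Y_u-Y_a|^q]=q\int_0^\infty \lambda^{q-1}\p_{\F_a}(\sup|Y_u-Y_a|>\lambda)\,\od\lambda$, substituting $\lambda=bD\mu\nu$ (with $\mu=\lambda/(bD\nu)$, $\nu$ left free), integrating the exponential factor $e^{1-\mu}$ in $\mu$, and bounding the weight part via $\E^{\F_a}\!\sup_{u\in[a,t]}\Phi_u^q<\infty$ by Fubini; the resulting constant $c_\eqref{eqn:2:statement:Lp-BMO_new}$ depends only on $(r,q)$. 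Finally, for item (1), the direction $\|\cdot\|_{\BMO_p^\Phi}\leqslant\|\cdot\|_{\BMO_q^\Phi}$ is Jensen's inequality, while the reverse direction follows by applying \eqref{eqn:2:statement:Lp-BMO_new} with exponent $r:=p$ to the process $(Y_u-Y_{\rho-})_{u\in[\rho,t]}$ for arbitrary $\rho\in\cS_t$, combined with $\Phi\in\cSM_q(\timed)$ to bound $\E^{\F_\rho}\sup_{u\in[\rho,t]}\Phi_u^q\leqslant d^q\Phi_\rho^q$; this controls the $\bmo_q^\Phi$-part, while the jump contribution $|\Delta Y|_{\Phi,\timed}$ is already contained in $\|Y\|_{\BMO_p^\Phi}$ by \cref{statement:relation_BMO_bmo}\eqref{item:2:relation-bmo}.

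\textbf{Main obstacle.} The delicate point is Step~2: because $\Phi$ is only c\`adl\`ag, the stopping time $\sigma$ guarantees $\Phi_{u-}\leqslant\nu$ but not $\Phi_\sigma\leqslant\nu$, and similarly the BMO condition \eqref{eqn:definition:weighted_bmo:item_1} involves $Y_{\tau_k-}$ while the iteration naturally produces $Y_{\tau_k}$. These two left-/right-limit mismatches must be absorbed simultaneously (either by passing through \cref{statement:bmo_determinstic} and \cref{statement:relation_BMO_bmo}, or by a small inflation of $\nu$); this is precisely where the constant $\beta=\beta(r)>1$ enters, and keeping track of it while ensuring the BMO inequality is applied at stopping times (not just deterministic ones) is the technical heart of the argument.
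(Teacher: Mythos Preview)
Your outline is sound: the level-crossing John--Nirenberg iteration you sketch for \eqref{eqn:1:statement:Lp-BMO_new}, followed by integration to obtain \eqref{eqn:2:statement:Lp-BMO_new}, and then the deduction of item~(1) via $\cSM_q$ and \cref{statement:relation_BMO_bmo}, is the standard route and the obstacles you flag (the left-limit/right-limit mismatches at $\sigma$ and at the $\tau_k$) are exactly the real ones. The paper, however, does not carry out this argument: it simply reduces to the case $a=0$, $\Phi>0$ by restricting the stochastic basis to $(A,\F_a\cap A,\p_A)$ for $A\in\F_a$ and by replacing $\Phi$ with $\Phi+\varepsilon$, and then cites \cite[Corollary~1 and inequalities~(5),(6)]{Geiss:05} for both items. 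So the mathematical content you sketch is essentially what lives in that reference; your proposal is a self-contained reconstruction rather than a different method. One small imprecision: in deriving (1) from \eqref{eqn:2:statement:Lp-BMO_new} you write that you apply it ``to the process $(Y_u-Y_{\rho-})_{u\in[\rho,t]}$ for arbitrary $\rho\in\cS_t$'', but \eqref{eqn:2:statement:Lp-BMO_new} is stated for deterministic $a$; the clean way is to apply it at deterministic $a$, bound $D\leqslant\|Y\|_{\BMO_p^\Phi(\timed)}$ and $\ce{\F_a}{\sup_u\Phi_u^q}\leqslant d^q\Phi_a^q$, conclude the $\bmo_q^\Phi$-bound, and then invoke \cref{statement:bmo_determinstic} and \cref{statement:relation_BMO_bmo} to pass to $\BMO_q^\Phi$.
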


\begin{proof}
(\ref{item:1:statement:Lp-BMO}a)
For $\timed=[0,T]$ and $\Phi>0$ on $[0,T]\times \Omega$ the result follows from \cite[Corollary 1(i)]{Geiss:05},
where we use \cref{statement:only_T_in_bmo_definitions} to relate the formally different BMO-definitions to each other and 
\cref{statement:SM-properties}\eqref{item:2:statement:SM-properties}.

(\ref{item:1:statement:Lp-BMO}b) For $\timed=[0,T)$ and $\Phi>0$ on $[0,T)\times \Omega$ this follows from (1a) by considering
the restrictions of the processes to $[0,t]$ for $t\in [0,T)$.

(\ref{item:1:statement:Lp-BMO}c) For $\timed=[0,T]$ or $\timed=[0,T)$,
and $\Phi\geqslant 0$ on $\timed \times \Omega$ we proceed as follows: For $\varepsilon>0$ we consider $\Phi_t^\varepsilon:=\Phi_t+ \varepsilon$
and observe that $\|\Phi^\varepsilon\|_{\cSM_p(\timed)} \leqslant c_p \|\Phi\|_{\cSM_p(\timed)}$ and 
$\sup_{\varepsilon>0} \|\cdot\|_{\BMO_p^{\Phi^\varepsilon}(\timed)} =  \|\cdot\|_{\BMO_p^{\Phi}(\timed)}$.

\eqref{item:2:statement:Lp-BMO_new}  
We restrict the stochastic basis to $(A,\F_a\cap A,\p_A,(\F_u \cap A)_{u\in [a,t]})$ with $A\in \F_a$ and $\p(A)>0$,
where $\p_A$ is the normalized restriction of $\p$ to $A$ and $\F_u \cap A$ denotes the trace $\sigma$-algebra.
So we can assume that $a=0$ and can replace $\p_{\F_a}$ by $\p$ and $\ce{\F_a}{}$ by $\E$.
Moreover, by replacing $\Phi_u$ by $\Phi_u^\varepsilon$ as above, proving the statement for the new weight, and letting 
$\varepsilon\downarrow 0$, we may assume that $\Phi>0$ on $[0,t]\times \Omega$
($\varepsilon\downarrow 0$ gives $\sup_{u\in [a,t]} \Phi_u \geqslant \nu$ in \eqref{eqn:1:statement:Lp-BMO_new},
by adjusting $b$ it can be changed into $>$).
Now \eqref{eqn:1:statement:Lp-BMO_new} and \eqref{eqn:2:statement:Lp-BMO_new} follow from 
\cite[inequalities (5,6) and step (a) of the proof of Corollary 1]{Geiss:05}.
\end{proof}


\section{Transition density}

\begin{theo}[{\cite[p. 263, p. 44]{Friedman:64}}]
\label{statement:friedman}
For $\hat{b},\hat{\sigma}\in C_b^\infty$ with $\widehat{\sigma}\ge \varepsilon_0>0$ there is a jointly continuous 
transition density $\Gamma_X: (0,T]\times\R\times\R\to (0,\infty)$ such that
$\p(X_t^x\in B) = \int_B \Gamma_X(t,x,\xi)\od\xi$ for $t\in (0,T]$ and
$B\in \mathcal{B}(\R)$, where $(X^x_t)_{t\in [0,T]}$ is the
solution to the SDE (\ref{eqn:sde:X}) starting in $x\in \R$,
such that the following is satisfied:
\begin{enumerate}[{\rm (1)}]
\item One has $\Gamma_X(s,\cdot,\xi)\in C^\infty(\R)$ for $(s,\xi)\in (0,T]\times \R$.
\item For $k\in \bN_0$ there is a $c=c(k)>0$ such that for
       $(s,x,\xi)\in(0,T]\times\R\times\R$ one has that
       \begin{equation}\label{eq:statement:friedman}
      \left | \frac{\partial^k \Gamma_X}{\partial x^k} 
              (s,x,\xi)\right |
          \leqslant c_{\eqref{eq:statement:friedman}}  s^{-\frac{k}{2}} \gamma_{c_{\eqref{eq:statement:friedman}}s}(x-\xi)
          \sptext{.6}{where}{.6}
             \gamma_t(\eta) 
          := \frac{1}{\sqrt{2\pi t}}e^{-\frac{\eta^2}{2t}}.
         \end{equation}
\item For $k\in \bN$ and $f\in C_X$  (the set $C_Y$ from \cref{sec:application_brownian_case} in the case (C1))
      one has
      \[     \frac{\partial^k}
                  {\partial x^k} \int_\R \Gamma_{X}(s,x,\xi)f(\xi)\od\xi
         =  \int_\R \frac{\partial^k \Gamma_X}{\partial x^k} 
                     (s,x,\xi)f(\xi)\od \xi
      \sptext{1}{for}{1}
      (s,x)\in(0,T]\times\R.\]
\end{enumerate}
\end{theo}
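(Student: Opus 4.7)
The plan is to realize $\Gamma_X(s,x,\xi)$ as the fundamental solution of the backward parabolic operator
\[
 \mathcal{L} := \partial_s + \tfrac{1}{2}\hat\sigma^2(x)\partial_x^2 + \hat b(x)\partial_x,
\]
which is uniformly parabolic because $\hat\sigma\ge\varepsilon_0>0$ and has $C_b^\infty$ coefficients. The construction proceeds by Friedman's parametrix method. First I would freeze coefficients and take as the parametrix the Gaussian kernel
\[
 Z(s,x,\xi) := \frac{1}{\sqrt{2\pi s\,\hat\sigma^2(\xi)}}\exp\!\left(-\frac{(x-\xi)^2}{2s\,\hat\sigma^2(\xi)}\right),
\]
which is $C^\infty$ in $x$ and satisfies pointwise Gaussian bounds $|\partial_x^k Z(s,x,\xi)|\le c(k)\,s^{-k/2}\gamma_{cs}(x-\xi)$ directly from explicit differentiation, using $\varepsilon_0\le \hat\sigma\le\|\hat\sigma\|_\infty$.

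Next I would look for $\Gamma_X = Z + Z\star\Phi$, where $\star$ denotes the space-time Volterra convolution $g\star h(s,x,\xi)=\int_0^s\int_\R g(s-u,x,\eta)h(u,\eta,\xi)\od\eta\od u$, and $\Phi$ is characterized by the integral equation $\Phi = (\mathcal{L}Z) + (\mathcal{L}Z)\star \Phi$, i.e. $\Phi=\sum_{n\ge 1}(\mathcal{L}Z)^{\star n}$. The crucial estimate is $|\mathcal{L}Z(s,x,\xi)|\le C\,s^{-1+\alpha/2}\gamma_{Cs}(x-\xi)$ for some $\alpha\in(0,1]$, because the $x$-dependence of $\hat\sigma$ produces a factor $|x-\xi|^\alpha$ that kills one half-power of the $s^{-1}$ singularity coming from $\partial_x^2 Z$. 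Iterating this Gaussian-against-Gaussian bound through a Beta-function calculation in the time variable gives uniform convergence of the Neumann series and provides Gaussian bounds for $\Phi$ with the same exponent; hence $\Gamma_X$ exists, is jointly continuous, strictly positive, and obeys \eqref{eq:statement:friedman} for $k=0$.

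For $k\ge 1$ I would differentiate inside the representation $\Gamma_X=Z+Z\star\Phi$. The derivative $\partial_x^k Z$ satisfies the required Gaussian bound by construction. In $Z\star\Phi$ one applies $\partial_x^k$ to the $Z$-factor (which is where all $x$-dependence sits) and uses the already-established Gaussian bound on $\Phi$; the Beta-integral bookkeeping then yields the $s^{-k/2}\gamma_{cs}(x-\xi)$ bound, which simultaneously establishes $\Gamma_X(s,\cdot,\xi)\in C^\infty(\R)$. Item~(3), the interchange of $\partial_x^k$ with $\int_\R \Gamma_X(s,x,\xi)f(\xi)\od\xi$ for $f\in C_X$, is then a standard application of dominated convergence: the dominant $s^{-k/2}\gamma_{cs}(x-\xi)$ has polynomial-in-$|\xi|$ moments of every order against any measure dominated by $\e^{-y^2}\od y$, which matches the exponential-type growth allowed by the definition \eqref{eqn:CY} of $C_X$ (respectively $C_Y$ in case (C1)).

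The genuinely delicate step is the parametrix iteration: one must track that the smoothing gain $s^{\alpha/2}$ from each application of $\mathcal{L}Z$ compounds correctly through the Volterra convolutions without losing the Gaussian spatial decay. This is where $\hat\sigma,\hat b\in C_b^\infty$ with $\hat\sigma\ge\varepsilon_0$ is used essentially: the H\"older continuity of $\hat\sigma^2$ (in fact $C^\infty$, but H\"older suffices) converts the apparent $s^{-1}$ singularity of $\partial_x^2 Z$ into an integrable $s^{-1+\alpha/2}$, while the ellipticity bound $\hat\sigma\ge\varepsilon_0$ keeps the Gaussian constants $c_{\eqref{eq:statement:friedman}}$ uniform in $(s,x,\xi)\in(0,T]\times\R\times\R$. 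Once these uniform bounds are in hand, the Markov property of $X^x$ and uniqueness in law for \eqref{eqn:sde:X} (guaranteed by $\hat b,\hat\sigma\in C_b^\infty$) identify this fundamental solution with the transition density of $X$.
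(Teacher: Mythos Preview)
The paper does not prove this theorem at all: it is stated in the appendix purely as a citation of Friedman's book \cite[p.~263, p.~44]{Friedman:64}, with no argument given. Your parametrix sketch is the classical construction one finds in that reference, so your approach is correct and is in fact the proof the paper is pointing to; there is nothing further to compare.
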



\section{A technical lemma}

\begin{lemm}\label{lemm:upper-bound-intergrand-equivalence}
For $\theta \in [0,1] $, a function $\varphi:[0,T)\to \R$, and 
a non-decreasing function $\Psi:[0,T)\to [0,\infty)$ the following assertions are equivalent:
\begin{enumerate}[{\rm (1)}]
\item \label{item:1-lemm:upper-bound-intergrand-equivalence} 
      There is a $c_{\cref{eq:item:1-lemm:upper-bound-intergrand-equivalence}}>0$ such that for any $0 \leqslant s \leqslant a <T$ one has 
      \begin{align} \label{eq:item:1-lemm:upper-bound-intergrand-equivalence}
     |\varphi_a - \varphi_s| \leqslant c_{\cref{eq:item:1-lemm:upper-bound-intergrand-equivalence}} 
     \frac{(T-s)^{\frac{\theta}{2}}}{(T-a)^{\frac{1}{2}}} \Psi_a.
     \end{align}
     \item \label{eq:item:2-lemm:upper-bound-intergrand-equivalence}
     \begin{enumerate}[\rm (a)]
			\item $\theta \in [0,1)$: There is a $c_{\cref{eq:item:2-lemm:upper-bound-intergrand-equivalence-theta<1}}>0$ such that 
for $a\in [0,T)$ one has
	\begin{align} \label{eq:item:2-lemm:upper-bound-intergrand-equivalence-theta<1}
	|\varphi_a - \varphi_0| \leqslant c_{\cref{eq:item:2-lemm:upper-bound-intergrand-equivalence-theta<1}}  
         (T-a)^{\frac{\theta-1}{2}} \Psi_a.
			\end{align}
			
			\item $\theta =1$: There is a  $c_{\cref{eq:item:2-lemm:upper-bound-intergrand-equivalence-theta=1}} >0$ such that 
for $0 \leqslant s \leqslant a <T$ one has
			\begin{align}\label{eq:item:2-lemm:upper-bound-intergrand-equivalence-theta=1}
			|\varphi_a - \varphi_s| \leqslant c_{\cref{eq:item:2-lemm:upper-bound-intergrand-equivalence-theta=1}} 
            \left( 1 + \ln \frac{T-s}{T-a}\right)\Psi_a.
			\end{align}
		\end{enumerate}
		
	\end{enumerate}
\end{lemm}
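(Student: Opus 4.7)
\medskip

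\textbf{Proof plan for Lemma \ref{lemm:upper-bound-intergrand-equivalence}.} The strategy is to prove each direction separately; the easier direction is \eqref{eq:item:2-lemm:upper-bound-intergrand-equivalence}$\Rightarrow$\eqref{item:1-lemm:upper-bound-intergrand-equivalence}, which follows from the triangle inequality and a purely algebraic comparison of the weights, while \eqref{item:1-lemm:upper-bound-intergrand-equivalence}$\Rightarrow$\eqref{eq:item:2-lemm:upper-bound-intergrand-equivalence} requires a dyadic telescoping chain where the case $\theta=1$ produces the logarithmic factor by counting levels.

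For \eqref{eq:item:2-lemm:upper-bound-intergrand-equivalence}$\Rightarrow$\eqref{item:1-lemm:upper-bound-intergrand-equivalence} with $\theta\in[0,1)$, I would write $|\varphi_a-\varphi_s|\le|\varphi_a-\varphi_0|+|\varphi_s-\varphi_0|$, use \eqref{eq:item:2-lemm:upper-bound-intergrand-equivalence-theta<1} on each summand, and exploit that $\Psi$ is non-decreasing and that $(T-\cdot)^{(\theta-1)/2}$ is non-decreasing on $[0,T)$ to bound the second summand by the first. Since $(T-a)^{(\theta-1)/2}=(T-a)^{\theta/2}/(T-a)^{1/2}\le (T-s)^{\theta/2}/(T-a)^{1/2}$, this yields \eqref{eq:item:1-lemm:upper-bound-intergrand-equivalence}. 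For $\theta=1$ no triangle step is needed: set $t:=(T-s)/(T-a)\ge 1$ and observe that $1+\ln t\le c\sqrt{t}$ for an absolute constant, so \eqref{eq:item:2-lemm:upper-bound-intergrand-equivalence-theta=1} gives \eqref{eq:item:1-lemm:upper-bound-intergrand-equivalence} directly.

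For \eqref{item:1-lemm:upper-bound-intergrand-equivalence}$\Rightarrow$\eqref{eq:item:2-lemm:upper-bound-intergrand-equivalence} I would introduce a dyadic chain anchored at $a$. Given $0\le s\le a<T$, define $s_k:=T-2^k(T-a)$ while $s_k\ge s$, and let $N$ be the largest such index (for the case $s=0$, $N$ is characterised by $2^N\le T/(T-a)<2^{N+1}$; for general $s\in[0,a]$ we append $s$ as the final node if $s_N>s$). Applying \eqref{eq:item:1-lemm:upper-bound-intergrand-equivalence} to the consecutive pair $(s_{k+1},s_k)$ and using $\Psi_{s_k}\le\Psi_a$ gives
\[
|\varphi_{s_k}-\varphi_{s_{k+1}}|\le c_{\cref{eq:item:1-lemm:upper-bound-intergrand-equivalence}}\,\frac{(T-s_{k+1})^{\theta/2}}{(T-s_k)^{1/2}}\Psi_a
= c_{\cref{eq:item:1-lemm:upper-bound-intergrand-equivalence}}\,2^{\theta/2}\,(T-a)^{(\theta-1)/2}\,2^{k(\theta-1)/2}\,\Psi_a.
\]
For $\theta\in[0,1)$ the factors $2^{k(\theta-1)/2}$ form a convergent geometric series, so summing $k=0,\dots,N-1$ plus the final hop from $s_N$ down to $0$ (treated via a direct application of \eqref{eq:item:1-lemm:upper-bound-intergrand-equivalence} together with the bound $2^N\ge T/(2(T-a))$) yields \eqref{eq:item:2-lemm:upper-bound-intergrand-equivalence-theta<1}; here the bound $T^{(\theta-1)/2}\le(T-a)^{(\theta-1)/2}$ absorbs the boundary term. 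For $\theta=1$ the individual increments are all bounded by $c_{\cref{eq:item:1-lemm:upper-bound-intergrand-equivalence}}\sqrt{2}\,\Psi_a$, so the telescoping sum produces a factor $N+1$, and since $N+1\le 2+\log_2((T-s)/(T-a))$ one obtains \eqref{eq:item:2-lemm:upper-bound-intergrand-equivalence-theta=1} after changing the base of the logarithm.

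The main technical obstacle is the clean bookkeeping of the dyadic chain in the borderline case $\theta=1$, where one must verify that the number of steps is exactly of order $1+\ln((T-s)/(T-a))$ and that the boundary hops at both ends of the chain do not contribute uncontrolled additive terms. Once the chain is chosen so that $T-s_k=2^{k}(T-a)$ and the last step is controlled by $T-s_N\le 2(T-a)$, both the geometric (for $\theta<1$) and the arithmetic (for $\theta=1$) summation proceed cleanly and deliver the claimed constants.
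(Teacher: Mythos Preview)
Your proposal is correct and follows essentially the same route as the paper: the direction \eqref{eq:item:2-lemm:upper-bound-intergrand-equivalence}$\Rightarrow$\eqref{item:1-lemm:upper-bound-intergrand-equivalence} via the triangle inequality together with $1+\ln x\le 2\sqrt{x}$ is identical, and for \eqref{item:1-lemm:upper-bound-intergrand-equivalence}$\Rightarrow$\eqref{eq:item:2-lemm:upper-bound-intergrand-equivalence} both arguments use a dyadic telescoping chain in the time-to-maturity variable $T-\cdot$, summing a geometric series when $\theta<1$ and counting levels when $\theta=1$. The only cosmetic difference is that the paper fixes the global dyadic grid $t_n=T-T/2^n$ and locates $s,a$ within it, whereas you anchor the chain at $a$ via $s_k=T-2^k(T-a)$; your variant is arguably cleaner since the increment bound $c\,2^{\theta/2}(T-a)^{(\theta-1)/2}2^{k(\theta-1)/2}\Psi_a$ comes out in closed form and the boundary hop is handled exactly as you describe.
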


\begin{proof} 
$\eqref{item:1-lemm:upper-bound-intergrand-equivalence} \Rightarrow \eqref{eq:item:2-lemm:upper-bound-intergrand-equivalence}$
We let $t_n :=T-\frac{T}{2^n}$ for $n\geqslant 0$. 
If $s,a \in [t_{n-1}, t_n]$, $n\geqslant 1$, then \cref{eq:item:1-lemm:upper-bound-intergrand-equivalence} implies
	\begin{align*}
	|\varphi_a - \varphi_s| 
	\leqslant c_{\cref{eq:item:1-lemm:upper-bound-intergrand-equivalence}} \Psi_a T^{\frac{\theta}{2}-\frac{1}{2}} \frac{[1-(1-\frac{1}{2^{n-1}})]^{\frac{\theta}{2}}}{[1-(1-\frac{1}{2^{n}})]^{\frac{1}{2}}}\leqslant c_{\cref{eq:item:1-lemm:upper-bound-intergrand-equivalence}} \Psi_a T^{\frac{\theta -1}{2}}  (\sqrt{2})^{1 + (1-\theta)n}.
	\end{align*}
	We now let $s\in [t_{n-1}, t_n)$ and $a\in [t_{n+m-1}, t_{n+m})$ for  $n\geqslant 1$, $m\geqslant 0$ arbitrarily.
	If $\theta \in [0,1)$, then the triangle inequality and the monotonicity of $\Psi$ give 
\[ 	|\varphi_a - \varphi_0| 
	\leqslant  c_{\cref{eq:item:1-lemm:upper-bound-intergrand-equivalence}}  \Psi_a   T^{\frac{\theta -1}{2}} \sum_{k=1}^{n+m} (\sqrt{2})^{1 + (1-\theta)k} 
	\leqslant  c_{\cref{eq:item:1-lemm:upper-bound-intergrand-equivalence}} c_\theta  \Psi_a T^{\frac{\theta -1}{2}} (\sqrt{2})^{(1-\theta)(n+m-1)}\\
	\leqslant    \frac{c_{\cref{eq:item:1-lemm:upper-bound-intergrand-equivalence}} c_\theta \Psi_a}{(T-a)^{\frac{1-\theta}{2}}} 
\]
for some $c_\theta>0$ depending on $\theta$ only.	
	When $\theta =1$, similarly as above we get
	\begin{align*}
	|\varphi_a - \varphi_s| \leqslant c_{\cref{eq:item:1-lemm:upper-bound-intergrand-equivalence}} \Psi_a  \sqrt{2} (1 + m) \leqslant 2\sqrt{2} c_{\cref{eq:item:1-lemm:upper-bound-intergrand-equivalence}} \Psi_a   \left(1 + \ln \frac{T-s}{T-a}\right).
	\end{align*}
$\eqref{eq:item:2-lemm:upper-bound-intergrand-equivalence} \Rightarrow \eqref{item:1-lemm:upper-bound-intergrand-equivalence}$ 
    If $\theta \in [0,1) $, then  \cref{eq:item:2-lemm:upper-bound-intergrand-equivalence-theta<1} implies for any $0\leqslant s \leqslant a < T$ that 
	\begin{align*}
	|\varphi_a - \varphi_s|
	&\leqslant |\varphi_a -\varphi_0| + |\varphi_s - \varphi_0|
	\leqslant  c_{\cref{eq:item:2-lemm:upper-bound-intergrand-equivalence-theta<1}}\left[\Psi_a(T-a)^{\frac{\theta-1}{2}} + \Psi_s(T-s)^{\frac{\theta-1}{2}}\right]\\
	& \leqslant  c_{\cref{eq:item:2-lemm:upper-bound-intergrand-equivalence-theta<1}} \Psi_a  \left[\(\frac{T-a}{T-s}\)^{\frac{\theta}{2}} + \(\frac{T-a}{T-s}\)^{\frac{1}{2}}\right] 
	\frac{(T-s)^{\frac{\theta}{2}}}{(T-a)^{\frac{1}{2}}} 
	\leqslant 2 c_{\cref{eq:item:2-lemm:upper-bound-intergrand-equivalence-theta<1}} \Psi_a 
	\frac{(T-s)^{\frac{\theta}{2}}}{(T-a)^{\frac{1}{2}}}.
	\end{align*}
	The case $\theta=1$ is derived from the inequality $1 + \ln x \leqslant 2 \sqrt{x}$, $x \geqslant 1$.
\end{proof}


\section{Malliavin Calculus}
\label{sec:malliavin_calculus}
\subsection{It\^o's chaos decomposition and the Malliavin derivative}
\label{sec:malliavin_calculus:basics}

We assume the setting from Section \ref{sec:setting_levy_case}.
The compensated random measure $\wt N$ of $N$ is given by 
$\wt N:= N-\lambda \otimes \nu$ on the ring of $E\in \cB((0,T]\times \R)$ such that 
$(\lambda\otimes \mu)(E)<\infty$. By means of $\wt N$ the random measure $M$ is defined for sets $E \in\mathcal B((0,T]\times \R)$ with 
$(\lambda \otimes \mu)(E) <\infty$ by
\begin{align*}
M(E)   := \sigma \int_{(t,0)\in E} \od W_t 
          + \lim_{n \to \infty}\int_{E\cap ((0,T] \times \{\frac{1}{n} <|x| <n\})}x \widetilde{N}(\od t, \od x),
\end{align*}
where the limit is taken in $L_2$. For $n\geqslant 1$, set 
\begin{align*}
L_2^n :=L_2\(((0,T]\times\R)^n, \mathcal B(((0,T]\times \R)^n), (\lambda\otimes \mu)^{\otimes n}\).
\end{align*}
Let $I_n(f_n)$ denote the multiple integral of an $f_n\in L_2^n$ with respect to the random measure $M$ in the sense of 
\cite{It56} and let $I_n(L_2^n):=\{I_n(f_n) : f_n\in L_2^n\}$. 
If $f^s_n(z_1, \ldots, z_n)=\frac{1}{n!}\sum_{\pi}f_n(z_{\pi(1)}, \ldots, z_{\pi(n)})$ for $z_i=(t_i, x_i)\in (0,T]\times \R$
is the symmetrization of $f_n$, where the sum is taken over all permutations of $\{1,\ldots, n\}$, then $I_n(f_n)=I_n(f^s_n)$ a.s. 
For $n=0$ we agree about $L_2^0=\R$ and that $I_0:\R\to\R$ is the identity, so that $I_0(L_2^0)=\R$.
We also use $f_i^s=f_i$ for $f_i\in L_2^i$, $i=0,1$. 
The orthogonal chaos expansion 
$L_2 =\bigoplus_{n=0}^\infty I_n(L_2^n)$
is due to It\^o \cite{It56}:
given $\xi\in L_2$ there are $f_n\in L_2^n$  such that 
\begin{equation*}
\xi=\sum_{n=0}^\infty I_n(f_n) \mbox{ a.s.}, 
\end{equation*}
so that $I_0(f_0) = \E \xi$. By orthogonality one has 
$\|\xi\|_{L_2}^2=\sum_{n=0}^\infty \|I_n(f_n)\|_{L_2}^2=\sum_{n=0}^\infty n!\|f^s_n\|^2_{L_2^n}$. 
The Malliavin-Sobolev space $\mathbb D_{1,2}$ consists of all $\xi=\sum_{n=0}^\infty I_n(f_n) \in L_2$ such that 
\begin{align*}
\|\xi\|_{\D_{1, 2}}^2:=\sum_{n=0}^\infty (n+1)\|I_n(f_n)\|_{L_2}^2<\infty.
\end{align*}
Given $\xi\in \D_{1,2}$, the Malliavin derivative $D_\cdot \xi:(0,T]\times \R\times \Omega\to \R \in 
L_2(\lambda\otimes\mu\otimes\p)$ 
is formally given by $D_{s,z}\xi = \sum_{n=1}^\infty n I_{n-1}(f_n^s(\cdot,(s,z))$ 
(cf. \cite[Section 2.2]{SUV:07a}) and satisfies
\begin{multline}\label{eq:characterization_Malliavin_derivative}
    \int_\R \int_0^T \E \Big ( (D_{s,z} \xi) I_m(g_m) h(s,z) \Big ) \od s \mu(\od z) \\
 = (m+1)! \int_\R \int_0^T \cdots \int_\R \int_0^T \Big ( f^s_{m+1}((t_1,x_1),\ldots,(t_m,x_m),(s,z)) g_m((t_1,x_1),\ldots,(t_m,x_m))h(s,z) \Big )\\
         \od t_1 \mu(\od x_1) \cdots \od t_m \mu(\od x_m) \od s \mu(\od z) 
\end{multline}
for $h\in L_2^1$, $m\in \bN_0$, and symmetric $g_m \in L_2^m$.
\medskip

We summarize results from the literature we need:

\begin{lemm}\label{chaos-decom}
If a Borel function $f\colon \R\to \R$ satisfies 
$\E|f(X_T)|^2<\infty$, then there exist 
symmetric $f_n^s \in L_2(\mu^{\otimes n}):= L_2(\R^n, \cB(\R^n), \mu^{\otimes n})$ such that the following holds:
\begin{enumerate}[\rm (1)]
\item \label{item:1:chaos-decom} 
      One has $f(X_T)=\E f(X_T) + \sum_{n=1}^\infty I_n(f_n^s \1_{(0,T]}^{\otimes n})$ a.s.
\item \label{item:2:chaos-decom} 
      For $t\in [0, T)$ one has 
      $\ce{\F_t}{f(X_T)} = \E f(X_T) + \sum_{n=1}^\infty I_n(f_n^s \1_{(0,t]}^{\otimes n})$ a.s. 
      and $\ce{\F_t}{f(X_T)} \in \D_{1, 2}$.
\item \label{item:3:chaos-decom} 
      Assume additionally $f\in \defX$, let $F(s,x):= \E f(x+X_{T-s})$ for $(s,x)\in [0,T]\times \R$, and fix 
      $t\in (0,T)$. Then one has that 
      $\int_0^T \int_\Omega \int_\R |D_{s, z} F(t, X_t(\omega))|^2 \mu(\od z) \od \p(\omega) \od s < \infty$ and
      \begin{equation}\label{eqn:Eija_representation} 
           D_{s, z} F(t, X_t) 
         = \frac{\partial F}{\partial x}(t,X_t) \1_{(0, t] \times \{0\}}(s, z) + \frac{F(t, X_t + z) - F(t, X_t)}{z}\1_{(0, t] \times (\RO)}(s, z)
      \end{equation}
      for $\lambda \otimes \mu \otimes \p$-a.a. $(s, z, \omega) \in (0, T] \times \R \times \Omega$,
      where for $\sigma=0$ the first term on the right-hand side is omitted and for $\sigma>0$ one has that
      $(\partial F/\partial x)(t,\cdot)\in C^\infty(\R)$.
\end{enumerate}
\end{lemm}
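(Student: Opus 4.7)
The plan for this three-part statement is as follows.

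For part (1), I would invoke Itô's chaos decomposition theorem applied to $\xi=f(X_T)\in L_2$, which immediately yields a representation $f(X_T)=\E f(X_T)+\sum_{n\ge 1}I_n(\tilde f_n)$ with symmetric $\tilde f_n\in L_2^n$. The additional structural claim—that the kernels may be chosen of the product form $\tilde f_n((t_1,x_1),\ldots,(t_n,x_n))=\1_{(0,T]}^{\otimes n}(t_1,\ldots,t_n)\,f_n^s(x_1,\ldots,x_n)$ with $f_n^s\in L_2(\mu^{\otimes n})$—reflects that $f(X_T)$ is a time-independent functional of the terminal value. The cleanest route is to verify it first on the exponential family $\{\e^{\im\lambda X_T}/\E\e^{\im\lambda X_T}:\lambda\in\R\}$, whose chaos expansion is given explicitly as a stochastic exponential whose kernels have this product form (this is essentially the Nualart--Schoutens expansion and is stated before \eqref{eqn:special_chaos}), and then extend to all of $L_2$ by density of the exponential functionals and by the $L_2$-continuity of chaos projections.

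For part (2), the conditional expectation acts on multiple integrals by time-truncation of the kernels: the process $(I_n(f_n^s\1_{(0,t]}^{\otimes n}))_{t\in[0,T]}$ is an $L_2$-martingale that coincides at $t=T$ with $I_n(f_n^s\1_{(0,T]}^{\otimes n})$, hence $\ce{\F_t}{I_n(f_n^s\1_{(0,T]}^{\otimes n})}=I_n(f_n^s\1_{(0,t]}^{\otimes n})$ a.s. Orthogonality of chaoses then yields the displayed formula. For $\ce{\F_t}{f(X_T)}\in\D_{1,2}$, by the very definition of the $\D_{1,2}$-norm it suffices to show $\sum_n(n+1)n!\,\|f_n^s\1_{(0,t]}^{\otimes n}\|_{L_2^n}^2<\infty$. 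Since $f_n^s$ is constant in the time coordinates, $\|f_n^s\1_{(0,t]}^{\otimes n}\|_{L_2^n}^2=t^n\|f_n^s\|_{L_2(\mu^{\otimes n})}^2$, while $f(X_T)\in L_2$ gives $\sum_n n!\,T^n\|f_n^s\|_{L_2(\mu^{\otimes n})}^2<\infty$. For $t\in[0,T)$ the factor $(n+1)(t/T)^n$ is bounded uniformly in $n$, and summability follows.

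For part (3), I would obtain \eqref{eqn:Eija_representation} by combining the chaos representation in (2) with the Malliavin chain rule on the Lévy--Itô space. Since $X_t=I_1(\1_{(0,t]\times\R})$, one has $D_{s,z}X_t=\1_{(0,t]}(s)$ for all $z\in\R$; applying the chain rule to $F(t,X_t)$ then produces the difference quotient $(F(t,X_t+z)-F(t,X_t))/z$ on the jump part $z\neq 0$, and the derivative $\partial_x F(t,X_t)$ on the Brownian atom $z=0$ whenever $\sigma>0$ (here the necessary smoothness of $F(t,\cdot)$ comes from \cref{statement:consistence-upper_bound_sigma>0}\eqref{item:1:statement:consistence-upper_bound_sigma>0}, since $f\in\defX$ with $\E|f(X_T)|^2<\infty$). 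Integrability of $D_{\cdot,\cdot}F(t,X_t)$ over $(0,T]\times\R\times\Omega$ is nothing but the squared $\D_{1,2}$-seminorm of $F(t,X_t)$, which is finite by (2).

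The main obstacle will be reconciling, for part (3), the abstract chaos-series representative of $D_{s,z}F(t,X_t)$ (an element of $L_2(\lambda\otimes\mu\otimes\p)$, defined only up to a null set) with the explicit pointwise formula in \eqref{eqn:Eija_representation}. I would handle this by showing that for $z\neq 0$ the pointwise object $F(t,X_t+z)$ coincides a.s.\ with $\E[f(X_t+z+(X_T-X_t))\mid\F_t]$ by independence of increments and Fubini, and then identifying both sides on the dense subset of exponential functionals $\e^{\im\lambda X_T}$—where the chaos kernels, the conditional expectations, and the Malliavin derivatives can all be computed in closed form—before extending to arbitrary $f\in\defX\cap L_2(\p_{X_T})$ by $L_2$-continuity of each term.
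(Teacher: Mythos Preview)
Your argument for part~(2) is essentially identical to the paper's: both note that conditioning truncates the time support and then compute
\[
\sum_{n\ge 1}(n+1)\|I_n(f_n^s\1_{(0,t]}^{\otimes n})\|_{L_2}^2
=\sum_{n\ge 1}(n+1)(t/T)^n\|I_n(f_n^s\1_{(0,T]}^{\otimes n})\|_{L_2}^2<\infty
\]
for $t<T$.

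For parts~(1) and~(3) the paper takes a much shorter route: it simply cites \cite[Theorem~4]{BG17} for the time-homogeneous kernel structure in~(1), and \cite[Corollary~3.1]{La13} for the explicit derivative formula \eqref{eqn:Eija_representation} in~(3), with $F(t,\cdot)\in C^\infty(\R)$ for $\sigma>0$ coming from \cref{statement:consistence-upper_bound_sigma>0} with $q=2$. Your exponential-density approach is a valid and more self-contained alternative that effectively reproves these cited results rather than invoking them. One caution for~(3): the direct chain rule you invoke for $D_{s,z}F(t,X_t)$ presupposes $X_t\in\D_{1,2}$, which is \emph{not} among the hypotheses (the lemma does not assume $\E|X_T|^2<\infty$, so $X_t$ need not be square-integrable). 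Your announced fallback---verifying the identity on the exponentials $\e^{\im\lambda X_T}$ and extending by $L_2$-density---is the correct way around this and is indeed how such formulas are established in the references the paper cites. A minor correction: the product-kernel form is not ``stated before \eqref{eqn:special_chaos}''; that display \emph{assumes} the structure that item~(1) asserts to exist.
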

\smallskip

\begin{proof} \eqref{item:1:chaos-decom} follows from \cite[Theorem 4]{BG17}.
\eqref{item:2:chaos-decom} The first claim is known. For the second one we use
	\begin{align*}
	\sum_{n=1}^\infty (n+1) \|I_n(f_n^s \1_{(0,t]}^{\otimes n})\|_{L_2}^2 & 
 =  \sum_{n=1}^\infty (n+1)! t^n \|f_n^s\|_{L_2(\mu^{\otimes n})}^2 
 =  \sum_{n=1}^\infty (n+1) \frac{t^n}{T^n} \|I_n(f_n^s \1_{(0,T]}^{\otimes n})\|_{L_2}^2 <\infty,
	\end{align*}
	which verifies $\ce{\F_t}{f(X_T)} \in \D_{1, 2}$ for $t\in [0, T)$. 
\eqref{item:3:chaos-decom}
Because $F(t, X_t)=\ce{\F_t}{f(X_T)}$ a.s. item \eqref{item:2:chaos-decom} implies 
$F(t, X_t) \in \D_{1, 2}$ so that \eqref{eqn:Eija_representation} follows by 
by  \cite[Corollary 3.1 of the second article]{La13}
(see also \cite{SUV:07a,Alos:Leon:Vives:08,Steinicke:14,CGeiss:Steinicke:16}).
If $\sigma>0$, then $F(t, \cdot) \in C^\infty(\R)$ by \cref{statement:consistence-upper_bound_sigma>0} for $q=2$.
\end{proof}

\subsection{Proof of \cref{statement:expression_for_integrand}}
\label{sec:proof_of_expression_for_integrand} 
We fix $t\in (0,T)$.
As  both sides in \cref{eqn:Eija_representation} are square-integrable in $(s,z,\omega)$ with respect to 
$\lambda \otimes \mu \otimes \p$,
$\rho \ll \mu$, and because $D\in L_2(\R,\mu)$ implies that both sides of \cref{eqn:Eija_representation} are integrable 
with respect to $\lambda \otimes \rho \otimes \p$,
we apply Fubini's theorem to get 
\begin{align}
&   \frac{1}{t}\int_0^t \int_\R  (D_{s, z} F(t, X_t))(\omega) \rho(\od z) \od s \notag \\
& = \frac{\partial F}{\partial x}(t,X_t(\omega)) \rho(\{0\}) + \int_{\RO} \frac{F(t, X_t(\omega) + z) - F(t, X_t(\omega))}{z} \rho(\od z) 
  = \opD_\rho F(t,X_t(\omega)) \label{eqn:integrated:eija_equation}
\end{align}
for $\omega\in \Omega\setminus N_t$ for some null-set $N_t$, where the integrals on the left-hand side and on the right-hand side (with respect to
$\rho(\od z) \od s$ and  $\rho(\od z)$, respectively) exist for $\omega\not\in N_t$.
Then, for $m\in \bN_0$ and a symmetric $g_m\in L_2^m$ we obtain from 
\eqref{eq:characterization_Malliavin_derivative} with $h(s,z):=\1_{(0,t]}(s) D(z)/\int_\R D\od \mu$ that 
\begin{align*}
&   \E  \left ( \frac{1}{t} \int_0^t \int_\R  D_{s,z} F(t, X_t) \rho(\od z) \od s \right ) I_m(g_m) \\
& = \frac{(m+1)!}{t} \int_0^t \int_\R \int_0^T \int_\R \cdots \int_0^T \int_\R f_{m+1}^s((t_1,x_1),\ldots,(t_n,x_n),(s,z)) \\
&   \hspace*{17em}                     g_m((t_1,x_1),\ldots,(t_n,x_n)) \mu(\od x_1) \od t_1 \cdots \mu(\od x_m) \od t_m \rho(\od z) \od s \\
& = (m+1)! \int_\R \int_0^t \int_\R \cdots \int_0^t \int_\R f_{m+1}^s(x_1,\ldots,x_n,z) \\
&   \hspace*{18em}                     g_m((t_1,x_1),\ldots,(t_n,x_n)) \mu(\od x_1) \od t_1 \cdots \mu(\od x_m) \od t_m \rho(\od z)  \\
& = \frac{m!}{\int_\R D \od \mu} \int_0^T \int_\R \cdots \int_0^T \int_\R \left [ (m+1) h_m^{D}(x_1,\ldots,x_n) \1_{(0,t]}^{\otimes n}(t_1,\ldots,t_n) \right ]
       g_m((t_1,x_1),\ldots,(t_n,x_n)) \\
& \hspace*{31em}  \mu(\od x_1) \od t_1 \cdots  \mu(\od x_m) \od t_m \\
& = \E \frac{\psi_t(f(X_T),D)}{\int_\R D \od \mu} I_m(g_m).
\end{align*}
So \eqref{eqn:integrated:eija_equation} gives
$  \frac{\psi_t(f(X_T),D)}{\int_\R D \od \mu}
 = \frac{1}{t} \int_0^t \int_\R  D_{s,z} F(t, X_t) \rho(\od z) \od s  
 = \opD_\rho F(t,X_t)$ 
outside a null-set $\widetilde{N}_t \supseteq N_t$.
\qed


\section{Proof of $h_{\theta,a}\in \hoelO{\theta,q}$}
\label{sec:proof_example_Hoelder}
The case $a=0$ is obvious as $\hoelO{\theta,\infty}$ are the $\theta$-H\"older continuous functions vanishing 
at zero. Let $a>0$. As 
$K(v,h_{\theta,a};C_b^0(\R),\hoell{1}{0}) \leqslant h_{\theta,a}(1)$
for $v\in [1,\infty)$, we only need to check
that $\left \| v^{-\theta} K(v,h_{\theta,a};C_b^0(\R),\hoell{1}{0}) \right \|_{L_q\left ( (0,1];\frac{\od v}{v} \right )}< \infty$
for $a>1/q$. This follows from 
\[ v^{-\theta} K(v,h_{\theta,a};C_b^0(\R),\hoell{1}{0}) \leqslant (1+\theta) \left (\frac{A}{A-\log v}\right )^a 
  \sptext{1}{for}{1} v\in (0,1]. \]
To verify this fix $v\in (0,1]$, and let
$f:=h_{\theta,a}$ and
$K(y) :=  \1_{\{ 0< y \leqslant 1\}} \theta y^{\theta -1} \left ( \frac{A}{A-\log y} \right )^a$.
For $x \geqslant 0$ we decompose 
$f(x) = f_1^{(v)}(x) + f_b^{(v)}(x)$ with 
$ f_1^{(v)}(x):= \int_0^{x\wedge 1} (K(y) \wedge K(v)) \od y$ and $f_b^{(v)}(x):= f(x) -f_1^{(v)}(x)$
(for $x<0$ the decomposing functions are defined to be zero). 
By definition we have $\| f_1^{(v)}\|_{\hoell{1}{0}}\leqslant K(v)$.
Exploiting the monotonicity of $K$, where we use $a<(1-\theta)A$, we also have
$\|f_b^{(v)}\|_{C_b^0(\R)}\leqslant \int_0^v K(y) \od y$. Finally, a computation yields
$\int_0^v K(y) \od y \leqslant \frac{v}{\theta} K(v)$ so that
\[ K(v,h_{\theta,a};C_b^0(\R),\hoell{1}{0}) 
   \leqslant \frac{v}{\theta} K(v) + v K(v) 
      =  \left ( \frac{1}{\theta}+1\right ) v K(v)
      =  \left ( 1 + \theta \right ) v^\theta \left ( \frac{A}{A-\log v} \right )^a. \]
\qed


\bibliographystyle{amsplain}

\end{document}